\tikzstyle{bull}=[circle,draw=black,fill=black!80]
\tikzstyle{holl}=[circle,draw=black]
\def\dueq{\stackrel{\mathrm d}\sim}
\newtheorem{theorem}{Theorem}[section]
\newtheorem{lemma}[theorem]{Lemma}
\newtheorem{proposition}[theorem]{Proposition}
\newtheorem{corollary}[theorem]{Corollary}
\newtheorem{definition}[theorem]{Definition}
\theoremstyle{definition}
\newtheorem{example}[theorem]{Example}
\newtheorem{remark}[theorem]{Remark}
\newcommand{\la}{\langle}
\newcommand{\ra}{\rangle}
\newcommand{\nR}{R\!\!\!\!\! \diagup}
\let\Omega=\varOmega
\let\Gamma=\varGamma
\let\Lambda=\varLambda
\let\Sigma=\varSigma
\let\Phi=\varPhi
\let\Psi=\varPsi
\let\Delta=\varDelta
\let\Pi=\varPi
\let\Theta=\varTheta
\let\phi=\varphi
\let\psi=\varepsilon
\begin{document}

\title{Duality for distributive and implicative semi-lattices}

\author{Guram Bezhanishvili and Ramon Jansana}

\date{}

\subjclass[2000]{06A12; 06D50; 06D20} \keywords{Semi-lattices,
implicative semi-lattices, duality theory}

\maketitle

{\bf Note 2023:} 
\color{blue} 
This paper dates back to 2008 when it appeared on the Barcelona Logic Group webpage 
http://www.ub.edu/grlnc/docs/BeJa08-m.pdf. 
Subsequently it was published in three installments:
\begin{itemize}
\item G.~Bezhanishvili and R.~Jansana, \emph{Generalized Priestley quasi-orders}. Order {\bf 28} (2011), no. 2, 201--220.
\item G.~Bezhanishvili and R.~Jansana, \emph{Priestley style duality for distributive meet-semilat\-tices}. Studia Logica {\bf 98} (2011), no. 1-2, 83--122.
\item G.~Bezhanishvili and R.~Jansana, \emph{Esakia style duality for implicative semilattices}. Appl. Categ. Structures {\bf 21} (2013), no. 2, 181--208.
\end{itemize}  
Recently the old Barcelona Logic Group webpage ceased to exist, and we were encouraged to upload the paper to arXiv. We have made no corrections to the original version, but the following is worth mentioning. 

Recently it came to our attention that the construction of distributive envelopes goes back to the work of Cornish and Hickman (1978) in the more general setting of weakly distributive meet-semilattices: 
\begin{itemize}
\item W. H.~Cornish and R. C.~Hickman, \emph{Weakly distributive semilattices. Acta Math. Acad. Sci. Hungar}. {\bf 32} (1978), no. 1-2, 5--16.
\end{itemize}
The following recent paper connects our duality to Hofmann-Mislove-Stralka duality for semilattices, as well as provides a more detailed study of the non-bounded case:
\begin{itemize}
\item G. Bezhanishvili, L. Carai, P. J. Morandi, \emph{Connecting generalized Priestley duality to Hofmann-Mislove-Stralka duality}, https://arxiv.org/pdf/2207.13938.pdf, 2022.
\end{itemize} 
\color{black}

\begin{abstract}
We develop a new duality for distributive and implicative meet
semi-lattices. For distributive meet semi-lattices our duality
generalizes Priestley's duality for distributive lattices and
provides an improvement of Celani's duality. Our generalized
Priestley spaces are similar to the ones constructed by Hansoul.
Thus, one can view our duality for distributive meet semi-lattices
as a completion of Hansoul's work. For implicative meet
semi-lattices our duality generalizes Esakia's duality for Heyting
algebras and provides an improvement of Vrancken-Mawet's and
Celani's dualities. In the finite case it also yield's K\"ohler's
duality. Thus, one can view our duality for implicative meet
semi-lattices as a completion of K\"ohler's work. As a
consequence, we also obtain a new duality for Heyting algebras,
which is an alternative to the Esakia duality.
\end{abstract}

\tableofcontents

\section{Introduction}

Topological representation of distributive semi-lattices goes back
to Stone's pioneering  work \cite{Sto37}. For distributive join
semi-lattices with bottom it was worked out in detail in Gr\"atzer
\cite[Section II.5, Theorem 8]{Gra98}. A full duality between meet
semi-lattices with top (which are dual to join semi-lattices with
bottom) and certain ordered topological spaces was developed by
Celani \cite{Cel03b}. The main novelty of \cite{Cel03b} was the
characterization of meet semi-lattice homomorphisms preserving top
by means of certain binary relations. But the ordered topological
spaces of \cite{Cel03b} are rather difficult to work with, which is
the main drawback of the paper. On the other hand, Hansoul
\cite{Han03} developed rather nice order topological duals of
bounded join semi-lattices, but had no dual analogue of bounded join
semi-lattice homomorphisms. It is our intention to develop duality
for semi-lattices that improves both \cite{Cel03b} and \cite{Han03}.
Like in \cite{Cel03b}, we work with meet semi-lattices which are
dual to join semi-lattices. We generalize the notion of a Priestley
space to that of a generalized Priestley space and develop duality
between the category of bounded distributive meet semi-lattices and
meet semi-lattice homomorphisms and the category of generalized
Priestley spaces and generalized Priestley morphisms. In the
particular case of bounded distributive lattices, our duality yields
the well-known Priestley duality \cite{Pri70,Pri72}.

The first duality for finite implicative meet semi-lattices was
given by K\"ohler \cite{Koh81}. It was extended to the infinite case
by Vrancken-Mawet \cite{VM86} and Celani
\cite{Cel03a}.\footnote{Celani's paper contains a gap, which we
correct at the end of Section 11.3.} The Vrancken-Mawet and Celani
dualities are in terms of spectral-like ordered topological spaces.
Both topologies are not Hausdorff, thus rather difficult to work
with. We develop a new duality for bounded implicative meet
semi-lattices that improves both Vrancken-Mawet's and Celani's
dualities. It is obtained as a particular case of our duality for
bounded distributive meet semi-lattices. We generalize the notion of
an Esakia space to that of a generalized Esakia space and develop
duality between the category of bounded implicative meet
semi-lattices and implicative meet semi-lattice homomorphisms and
the category of generalized Esakia spaces and generalized Esakia
morphisms. In the particular case of Heyting algebras, our duality
yields the well-known Esakia duality \cite{Esa74,Esa85}. Moreover,
it provides a new duality for Heyting algebras, which is an
alternative to the Esakia duality.

The paper is organized as follows. In Section 2 we provide all the
needed information to make the paper self-contained. In particular,
we recall basic facts about meet semi-lattices, distributive meet
semi-lattices, and implicative meet semi-lattices, as well as the
basics of Priestley's duality for bounded distributive lattices and
Esakia's duality for Heyting algebras. In Section 3 we consider
filters and ideals, as well as prime filters and prime ideals of a
distributive meet semi-lattice and develop their theory. In Section
4 we introduce some of the main ingredients of our duality such as
distributive envelops, Frink ideals, and optimal filters and give a
detailed account of their main properties. The introduction of
optimal filters is one of the main novelties of the paper. There are
more optimal filters than prime filters of a bounded distributive
meet semi-lattice $L$, and it is optimal filters and \emph{not}
prime filters that serve as points of the dual space of $L$, which
allows us to prove that the Priestley-like topology of the dual of
$L$ is compact, thus providing improvement of the previous work
\cite{Gra98,Koh81,VM86,Cel03b,Cel03a}, in which the dual of $L$ was
constructed by means of prime filters of $L$. In Section 5 we introduce a new class of homomorphisms between distributive meet semi-lattices, we call sup-homomorphisms, and provide an abstract characterization of distributive envelopes by means of sup-homomorphisms. In Section 6 we
introduce generalized Priestley spaces, prove their main properties,
and provide a representation theorem for bounded distributive meet
semi-lattices by means of generalized Priestley spaces. In Section 7
we introduce generalized Esakia spaces and provide a representation
theorem for bounded implicative meet semi-lattices by means of
generalized Esakia spaces. In Section 8 we introduce generalized
Priestley morphisms and show that the category of bounded
distributive meet semi-lattices and meet semi-lattice homomorphisms
is dually equivalent to the category of generalized Priestley spaces
and generalized Priestley morphisms. We also introduce generalized
Esakia morphisms and show that the category of bounded implicative
meet semi-lattices and implicative meet semi-lattice homomorphisms
is dually equivalent to the category of generalized Esakia spaces
and generalized Esakia morphisms. In Section 9 we show that the subclasses of generalized Priestley morphisms which dually correspond to sup-homomorphisms can be characterized by means of special functions between generalized Priestley spaces, we call strong Priestley morphisms, and show that the same also holds in the category of generalized Esakia spaces. In Section 10 we prove that in the
particular case of bounded distributive lattices our duality yields
the well-known Priestley duality, and that in the particular case of
Heyting algebras it yields the Esakia duality. We also give an
application to modal logic by showing that descriptive frames, which
are duals of modal algebras, can be thought of as generalized
Priestley morphisms of Stone spaces into themselves. Moreover, we
introduce partial Esakia functions between Esakia spaces and show
that generalized Esakia morphisms between Esakia spaces can be
characterized in terms of partial Esakia functions. Furthermore, we
show that Esakia morphisms can be characterized by means of special
partial Esakia functions, we call partial Heyting functions, thus
yielding a new duality for Heyting algebras, which is an alternative
to the Esakia duality. We conclude the section by showing that
partial functions are not sufficient for characterizing neither meet
semi-lattice homomorphisms between bounded distributive lattices nor
implicative meet semi-lattice homomorphisms between bounded implicative
meet semi-lattices. In Section 11 we show how our duality works by giving
dual descriptions of Frink ideals, ideals, and filters, as well as
1-1 and onto homomorphisms.
In Section 12 we show how to adjust our technique to handle the
non-bounded case. Finally, in Section 13 we give a detailed
comparison of our work with the relevant work by Gr\"atzer
\cite{Gra98}, K\"ohler \cite{Koh81}, Vrancken-Mawet \cite{VM86},
Celani \cite{Cel03b,Cel03a}, and Hansoul \cite{Han03}, and correct a
mistake in \cite{Cel03a}.

\section{Preliminaries}

In this section we recall basic facts about meet semi-lattices,
distributive meet semi-lattices, and implicative meet semi-lattices.
We also recall the basics of Priestley's duality for bounded
distributive lattices and Esakia's duality for Heyting algebras.

We start by recalling that a \textit{meet semi-lattice} is a
commutative idempotent semigroup $\langle S,\cdot\rangle$. For a
given meet semi-lattice $\langle S,\cdot\rangle$, we denote $\cdot$
by $\wedge$, and define a partial order $\leq$ on $S$ by $a\leq b$
iff $a=a\wedge b$. Then $a\wedge b$ becomes the greatest lower bound
of $\{a,b\}$ and $\langle S,\wedge\rangle$ can be characterized as a
partially ordered set $\langle S,\leq\rangle$ such that every
nonempty finite subset of $S$ has a greatest lower bound. Below we
will be interested in meet semi-lattices with the greatest element
$\top$, i.e., in commutative idempotent monoids $\langle
M,\wedge,\top\rangle$. Let $\mathsf{M}$ denote the category of meet
semi-lattices with $\top$ and meet semi-lattice homomorphisms
preserving $\top$; that is $\mathsf{M}$ is the category of
commutative idempotent monoids and monoid homomorphisms.

It is obvious that meet semi-lattices serve as a natural
generalization of lattices: if $\la L, \wedge, \vee \ra$ is a
lattice, then $\la L, \wedge\ra$ is a meet semi-lattice.
Distributive meet semi-lattices serve as a natural generalization of
distributive lattices. We recall that a meet semi-lattice $L$ is
\textit{distributive} if for each $a, b_1, b_2 \in L$ with $b_1
\wedge b_2 \leq a$, there exist $c_1, c_2 \in L$ such that $b_1 \leq
c_1$, $b_2 \leq c_2$, and $a = c_1 \wedge c_2$. As follows from
\cite[Section II.5, Lemma 1]{Gra98}, a lattice $\la L, \wedge, \vee
\ra$ is distributive iff the meet semi-lattice $\la L, \wedge\ra$ is
distributive. Let $\mathsf{DM}$ denote the category of distributive
meet semi-lattices with $\top$ and meet semi-lattice homomorphisms
preserving $\top$. Obviously $\mathsf{DM}\subset\mathsf{M}$.
However, unlike lattices, there is no meet semi-lattice identity
expressing the difference, since the variety $\mathsf{M}$ is
generated by the class $\mathsf{DM}$. In fact, we have that the two
element meet semi-lattice $\mathbf{2}$ generates $\mathsf{M}$
\cite{HK71}. We list some of the interesting properties of the
variety $\mathsf{M}$. Since they are not directly related to our
main purpose, we don't provide any proofs. Some of the proofs can be
found in \cite{HK71,Gra98}, the rest we leave as interesting
exercises. The variety $\mathsf{M}$ is semi-simple (with
$\mathbf{2}$ being a unique up to isomorphism nontrivial simple meet
semi-lattice), finitely generated, has no proper non-trivial
subvarieties, has the amalgamation property, and each epimorphism in
$\mathsf{M}$ is surjective. On the other hand, $\mathsf{M}$ is
neither congruence-distributive nor has the congruence extension
property.

We recall that $M\in\mathsf{M}$ is an \emph{implicative meet
semi-lattice} if for each $a\in M$ the order-preserving map
$a\wedge(-):M\to M$ has a right adjoint, denoted by $a\rightarrow
(-)$. If in addition $M$ is a bounded lattice, then $M$ is called a
\emph{Heyting algebra}. The same way meet semi-lattices and
distributive meet semi-lattices serve as a generalization of
lattices and distributive lattices, respectively, implicative meet
semi-lattices serve as a generalization of Heyting algebras: the
$(\wedge, \rightarrow, \top)$-reduct of a Heyting algebra is an
implicative meet semi-lattice.

For two implicative meet semi-lattices $L$ and $K$, we recall that a
map $h:L\to K$ is an {\it implicative meet semi-lattice
homomorphism} if $h$ is a meet semi-lattice homomorphism and $h(a\to
b)=h(a) \to h(b)$ for each $a,b\in L$. Since in an implicative meet
semi-lattice we have that $a\to a=\top$, it follows that each
implicative meet semi-lattice homomorphism preserves $\top$. Let
$\mathsf{IM}$ denote the category of implicative meet semi-lattices
and implicative meet semi-lattice homomorphisms.

We observe that the same way the lattice reduct of a Heyting algebra
is distributive, the meet semi-lattice reduct of an implicative meet
semi-lattice is distributive. This result belongs to folklore. We
give a short proof of it for the lack of proper reference.

\begin{proposition}\label{prop}
The $\wedge$-reduct of an implicative meet semi-lattice is a
distributive meet semi-lattice.
\end{proposition}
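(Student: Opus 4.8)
The plan is to use only the residuation that defines an implicative meet semi-lattice: since $a\wedge(-)$ is left adjoint to $a\to(-)$, we have $x\wedge y\le z$ iff $y\le x\to z$. Given $a,b_1,b_2$ with $b_1\wedge b_2\le a$, I would produce the witnesses required by the definition of distributivity as the double relative pseudocomplements
$$c_1=(b_1\to a)\to a,\qquad c_2=(b_2\to a)\to a.$$
Two instances of the adjunction are used repeatedly: first, $x\wedge(x\to a)\le a$ (so $x\le(x\to a)\to a$) for every $x$; second, $a\le x\to a$ for every $x$. The former, applied with $x=b_i$, gives $b_i\le c_i$, and the latter, applied with $x=b_i\to a$, gives $a\le c_i$; hence $a\le c_1\wedge c_2$ and the two inclusions $b_1\le c_1$, $b_2\le c_2$ hold.

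The substantive half is $c_1\wedge c_2\le a$. First I would feed the hypothesis $b_1\wedge b_2\le a$ through the adjunction to obtain $b_2\le b_1\to a$. Since $c_1\wedge(b_1\to a)\le a$ by the first fact above (taken with $x=b_1\to a$), monotonicity of $\wedge$ together with $b_2\le b_1\to a$ yields $c_1\wedge b_2\le a$, which by the adjunction means $c_1\le b_2\to a$. Using this cross-inequality and $c_2\wedge(b_2\to a)\le a$, I conclude $c_1\wedge c_2\le(b_2\to a)\wedge c_2\le a$. Combined with $a\le c_1\wedge c_2$ from the first paragraph, this gives $a=c_1\wedge c_2$, completing the verification.

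The point that dictates the choice of witnesses, and the only place where care is needed, is this last inequality. The naive candidates $b_2\to a$ and $b_1\to a$ already sit above $b_1,a$ and $b_2,a$ respectively, but their meet need not descend to $a$ (for instance, when $b_1,b_2\le a<\top$ both collapse to $\top$). Passing to the double relative pseudocomplements $(b_i\to a)\to a$ is exactly what caps the witnesses so that the cross-inequality $c_1\le b_2\to a$ can force the meet back down to $a$; establishing that cross-inequality is the crux of the argument.
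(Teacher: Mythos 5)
Your proof is correct: each step is a valid application of the residuation $x\wedge y\le z$ iff $y\le x\to z$, and the three facts $b_1\le c_1$, $b_2\le c_2$, $a\le c_1\wedge c_2$, and $c_1\wedge c_2\le a$ are exactly what distributivity requires. Your route differs from the paper's in the choice of witnesses. The paper sets $c=(b_1\to a)\wedge(b_2\to a)$, notes $a\le c$ and $b_1,b_2\le c\to a$, and takes $c_1=(c\to a)\wedge(b_2\to a)$ and $c_2=(c\to a)\wedge(b_1\to a)$; its decisive step is the identity $c_1\wedge c_2=c\wedge(c\to a)=a$, which holds because $c\wedge(c\to a)\le a$ always, while $a$ lies below both $c$ and $c\to a$. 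You instead take the double relative pseudocomplements $c_i=(b_i\to a)\to a$, and your decisive step is the cross-inequality $c_1\le b_2\to a$, extracted from $b_2\le b_1\to a$ by monotonicity of the meet. The two constructions are in fact close relatives: using the laws $(u\wedge v)\to w=v\to(u\to w)$ and $x\wedge(x\to y)=x\wedge y$, the paper's $c_1$ equals $((b_1\to a)\to a)\wedge(b_2\to a)$, i.e., your witness capped by $b_2\to a$. So the paper builds the required upper bound $b_2\to a$ into the witness by definition, whereas you derive it as a lemma from the hypothesis $b_1\wedge b_2\le a$. What your version buys is that the witnesses are instances of the standard closure operator $x\mapsto(x\to a)\to a$ and the argument isolates a single nontrivial inequality; the paper pays for its more elaborate witnesses with an essentially one-line final computation. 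Your closing remark about why the naive candidates $b_1\to a$ and $b_2\to a$ fail is also correct, and it identifies precisely the defect that both constructions repair.
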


\begin{proof}
Suppose that $L$ is an implicative meet semi-lattice and $a, b_1,
b_2\in L$ with $b_1 \wedge b_2 \leq a$. Then $b_1 \leq b_2
\rightarrow a$ and $b_2 \leq b_1 \rightarrow a$. Set $c = (b_1
\rightarrow a) \wedge (b_2 \rightarrow a)$. Then $a \leq c$, $c \leq
b_1 \to a$, and $c \leq b_2 \to a$. Hence, $b_1 \leq c \to a$ and
$b_2 \leq c \to a$. Set $c_1 = (c \rightarrow a) \wedge (b_2
\rightarrow a)$ and $c_2 = (c \rightarrow a) \wedge (b_1 \rightarrow
a)$. Since $b_1 \leq c\to a$ and $b_1 \leq b_2\to a$, we have $b_1
\leq c_1$. Similarly $b_2 \leq c_2$. Moreover, $c_1 \wedge c_2=(c
\rightarrow a) \wedge (b_1 \rightarrow a) \wedge (b_2 \rightarrow a)
= ((b_1 \rightarrow a) \wedge (b_2 \rightarrow a)) \wedge (((b_1
\rightarrow a) \wedge (b_2 \rightarrow a))\to a) = a$. Therefore,
there exist $c_1,c_2 \in L$ such that $b_1\leq c_1$, $b_2\leq c_2$,
and $a=c_1\wedge c_2$. Thus, $L$ is a distributive meet
semi-lattice.
\end{proof}

It follows from Proposition \ref{prop} that $\mathsf{IM}$ is a
subcategory of $\mathsf{DM}$. Nevertheless, as a variety
$\mathsf{IM}$ behaves differently from $\mathsf{DM}$. We list some
of the properties of $\mathsf{IM}$ without providing any proofs.
Some of the proofs can be found in \cite{Nem65,NW71,NW73,Koh81}, the
rest we leave as interesting exercises. First of all, the lattice
$\mathsf{F}(L)$ of filters of an implicative meet semi-lattice $L$
is isomorphic to the lattice $\Theta(L)$ of congruences of $L$. As a
result, an implicative meet semi-lattice $L$ is subdirectly
irreducible iff the set $L-\{\top\}$ has a greatest element. Thus,
$\mathsf{IM}$ is neither semi-simple nor finitely generated. In
fact, $\mathsf{IM}$ is not even residually small, and the
cardinality of the lattice of subvarieties of $\mathsf{IM}$ is that
of the continuum. On the other hand, $\mathsf{IM}$ is locally finite and
has equationally definable principal congruences. Hence,
$\mathsf{IM}$ is congruence-distributive and has the congruence
extension property. It also has the amalgamation property and each
epimorphism in $\mathsf{IM}$ is surjective.

We conclude this section with a brief overview of Priestley's
duality for bounded distributive lattices and Esakia's duality for
Heyting algebras. For a partially ordered set $\la X,\leq \ra$ and
$A\subseteq X$ let ${\uparrow}A=\{x \in X: \exists a\in A$ with
$a\leq x\}$ and ${\downarrow}A=\{x \in X: \exists a\in A$ with
$x\leq a\}$. If $A$ is the singleton $\{a\}$, then we write
${\uparrow}a$ and ${\downarrow}a$ instead of ${\uparrow}\{a\}$ and
${\downarrow}\{a\}$, respectively. We call $A$ an \emph{upset}
(resp.\ \emph{downset}) if $A={\uparrow}A$ (resp.\
$A={\downarrow}A$).

We recall that a \textit{Priestley space} is an ordered topological
space $X=\la X, \tau, \leq \ra$ which is compact and satisfies the
\textit{Priestley separation axiom}: if $x \not \leq y$, then there
is a \emph{clopen} (closed and open) upset $U$ of $X$ such that $x
\in U$ and $y \not \in U$. It follows from the Priestley separation
axiom that $X$ is in fact Hausdorff and that clopen sets form a
basis for the topology. Thus, each Priestley space is a \emph{Stone
space} (compact Hausdorff 0-dimensional).

For two Priestely spaces $X$ and $Y$, a morphism $f:X\to Y$ is a
\emph{Priestley morphism} if $f$ is continuous and
order-preserving. We denote the category of Priestely spaces and
Prietley morphisms by $\mathsf{PS}$. Let also $\mathsf{BDL}$
denote the category of bounded distributive lattices and bounded
lattice homomorphisms. Then we have that $\mathsf{BDL}$ is dually
equivalent to $\mathsf{PS}$ \cite{Pri70}. We recall that the
functors $(-)_*:\mathsf{DL}\to\mathsf{PS}$ and
$(-)^*:\mathsf{PS}\to\mathsf{DL}$ establishing the dual
equivalence are constructed as follows. If $L$ is a bounded
distributive lattice, then $L_*=\la X,\tau,\leq \ra$, where $X$ is
the set of prime filters of $L$, $\leq$ is set-theoretic
inclusion, and $\tau$ is the topology generated by the subbasis
$\{\phi(a):a\in L\}\cup \{\phi(b)^c:b\in L\}$, where
$\phi(a)=\{x\in X:a\in x\}$ is the Stone map. If $h\in
\operatorname{hom}(L,K)$, then $h_*=h^{-1}$. If $X$ is a Priestley
space, then $X^*$ is the lattice of clopen upsets of $X$, and if
$f\in\operatorname{hom}(X,Y)$, then $f^*=f^{-1}$. It follows from
\cite{Pri70,Pri72} that the functors $(-)_*$ and $(-)^*$ are
well-defined, and that they establish the dual equivalence of
$\mathsf{BDL}$ and $\mathsf{PS}$.

Esakia's duality is a restricted Priestley duality. We recall that
an \textit{Esakia space} is a Priestley space $X=\la X, \tau,
\leq\ra$ in which the downset of each clopen is again clopen. We
also recall that an \emph{Esakia morphism} from an Esakia space $X$
to an Esakia space $Y$ is a Priestley morphism $f$ such that for all
$x \in X$ and $y \in Y$, from $f(x) \leq y$ it follows that there is
$z \in X$ with $x \leq z$ and $f(z) = y$. We denote the category of
Esakia spaces and Esakia morphisms by $\mathsf{ES}$. Let also
$\mathsf{HA}$ denote the category of Heyting algebras and Heyting
algebra homomorphisms. Then it was established in Esakia
\cite{Esa74} that $\mathsf{HA}$ is dually equivalent to
$\mathsf{ES}$. In fact, the same functors $(-)_*$ and $(-)^*$,
restricted to $\mathsf{HA}$ and $\mathsf{ES}$, respectively,
establish the desired equivalence.

\section{Filters and ideals of meet semi-lattices}

In this section we adapt the notions of a filter and an ideal of a
lattice to meet semi-lattices. We also introduce the notion of a
meet-prime filter of a distributive meet semi-lattice, which is an
analogue of the notion of a prime filter of a distributive
lattice, and present an analogue of the prime filter lemma for
distributive meet semi-lattices. Most of these results are
well-known. We present them here to keep the presentation as
self-contained as possible.

Let $L$ be a meet semi-lattice. We recall that a nonempty subset $F$
of $L$ is a \textit{filter} if (i) $a, b \in F$ implies $a \wedge b
\in F$ and (ii) $a \in F$ and $a \leq b$ imply $b \in F$. Clearly
$F$ is a filter of $L$ iff for each $a,b\in L$ we have $a, b \in F$
iff $a \wedge b \in F$. Similar to lattices, we have that $L$ is a
filter of $L$, and if $\top\in L$, then an arbitrary intersection of
filters of $L$ is again a filter of $L$. Therefore, for each $X
\subseteq L$, there exists a least filter containing $X$, which we
call the \emph{filter generated by $X$} and denote by $[X)$. It is
obvious that
$$a\in [X) \mbox{ iff there exists a finite } Y\subseteq X
\mbox{ such that } \bigwedge Y \leq a.$$ In particular, the filter
generated by $x\in L$ is the upset ${\uparrow}x = \{y \in L: x \leq
y\}.$ We also point out that if $X=\emptyset$, then $\bigwedge
X=\top$, and so $[X)=\{\top\}$.

Let $\mathsf{F}(L)$ denote the set of filters of $L$. Obviously the
structure $\la \mathsf{F}(L), \cap, \vee\ra$ forms a lattice, where
$F_1\vee F_2=[F_1\cup F_2)$, and $a \in F_1 \vee F_2$ iff there
exist $a_1 \in F_1$ and $a_2 \in F_2$ such that $a_1 \wedge a_2 \leq
a$. In particular, ${\uparrow}a\vee {\uparrow}b ={\uparrow}(a \wedge
b)$. The following characterization of a distributive meet
semi-lattice $L$ in terms of the filter lattice of $L$ follows from
\cite[Section II.5, Lemma 1]{Gra98}: A meet semi-lattice $L$ is
distributive iff the lattice $\la \mathsf{F}(L), \cap, \vee\ra$ is
distributive. We call a filter $F$ of $L$ \emph{proper} if $F\neq
L$.

\begin{definition}
{\rm A proper filter $F$ of a meet semi-lattice $L$ is said to be
\textit{meet-prime} if for any two filters $F_1, F_2$ of $L$ with
$F_1 \cap F_2 \subseteq F$, we have $F_1 \subseteq F$ or $F_2
\subseteq F$.}
\end{definition}

In \cite{Cel03b} meet-prime filters are called \emph{weakly
irreducible} filters. Meet-prime filters serve as an obvious
generalization of prime filters of a lattice as the following
proposition shows.

\begin{proposition}
A filter $F$ of a lattice $L$ is prime iff $F$ is meet-prime.
\end{proposition}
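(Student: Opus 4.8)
The plan is to prove both implications of the biconditional, namely that a filter $F$ of a lattice $L$ is prime in the usual sense (i.e.\ $F$ is proper and $a \vee b \in F$ implies $a \in F$ or $b \in F$) if and only if $F$ is meet-prime in the sense of the preceding definition. Throughout I would use the characterization, recalled just above in the excerpt, that ${\uparrow}a \vee {\uparrow}b = {\uparrow}(a \wedge b)$ in $\mathsf{F}(L)$, together with the fact that ${\uparrow}a \cap {\uparrow}b = {\uparrow}(a \vee b)$ in a lattice (the least filter containing both $a$ and $b$ under intersection corresponds to their join).

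First I would prove that meet-prime implies prime. Suppose $F$ is meet-prime and $a \vee b \in F$. The key observation is that ${\uparrow}a \cap {\uparrow}b = {\uparrow}(a \vee b) \subseteq F$, since $a \vee b \in F$ and $F$ is an upset. By meet-primeness, either ${\uparrow}a \subseteq F$ or ${\uparrow}b \subseteq F$, which gives $a \in F$ or $b \in F$. Properness of $F$ as a filter is already built into the definition of meet-prime, so this direction is essentially immediate once the identity ${\uparrow}a \cap {\uparrow}b = {\uparrow}(a \vee b)$ is in hand.

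For the converse, suppose $F$ is a prime filter and let $F_1, F_2$ be filters with $F_1 \cap F_2 \subseteq F$. The plan is to argue by contraposition: assuming $F_1 \not\subseteq F$ and $F_2 \not\subseteq F$, I pick witnesses $a \in F_1 \setminus F$ and $b \in F_2 \setminus F$ and derive a contradiction. Since $a \in F_1$ and $b \in F_2$, and both are filters (hence upsets), we have $a \vee b \in F_1$ and $a \vee b \in F_2$, so $a \vee b \in F_1 \cap F_2 \subseteq F$. But $F$ is prime, so $a \in F$ or $b \in F$, contradicting the choice of $a$ and $b$. This establishes that $F_1 \subseteq F$ or $F_2 \subseteq F$, as required.

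The main point to get right — though it is not so much an obstacle as a place to be careful — is the direction of the lattice identities and the fact that in a general lattice one really does need $a \vee b$ to lie in \emph{both} filters $F_1$ and $F_2$, which relies on each being an upset containing $a$ (resp.\ $b$). The proof uses the lattice structure essentially through the presence of joins; there is no analogous statement for arbitrary meet semi-lattices, which is precisely why meet-primeness is introduced as the correct generalization. I expect the whole argument to be short, with the only genuine content being the translation between the join operation in $L$ and the meet/join operations in the filter lattice $\mathsf{F}(L)$.
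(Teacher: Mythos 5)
Your proof is correct and follows essentially the same route as the paper's: the meet-prime $\Rightarrow$ prime direction via the identity ${\uparrow}a \cap {\uparrow}b = {\uparrow}(a \vee b)$, and the converse by contradiction, picking $a \in F_1 \setminus F$, $b \in F_2 \setminus F$ and observing $a \vee b \in F_1 \cap F_2 \subseteq F$. (The only stray remark is your appeal to ${\uparrow}a \vee {\uparrow}b = {\uparrow}(a \wedge b)$, which is never actually needed in the argument.)
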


\begin{proof}
Suppose that $F$ is prime and $F_1\cap F_2\subseteq F$. If neither
$F_1 \subseteq F$ nor $F_2\subseteq F$, then there exist $a_1\in
F_1$ and $a_2\in F_2$ with $a_1,a_2\notin F$. From $a_1\in F_1$ and
$a_2\in F_2$ it follows that $a_1\vee a_2\in F_1 \cap F_2\subseteq
F$. Since $F$ is prime, either $a_1\in F$ or $a_2\in F$, a
contradiction. Thus, $F_1\subseteq F$ or $F_2\subseteq F$.
Conversely, suppose that $F$ is meet-prime and $a\vee b\in F$. Since
${\uparrow}(a\vee b)={\uparrow}a\cap {\uparrow}b$, we obtain
${\uparrow}a\cap {\uparrow}b\subseteq F$. As $F$ is meet-prime, the
last inclusion implies ${\uparrow}a\subseteq F$ or ${\uparrow}b
\subseteq F$. Thus, $a\in F$ or $b\in F$, and so $F$ is prime.
\end{proof}

From now on we will call meet-prime filters of a meet semi-lattice
simply \textit{prime}. Since a meet semi-lattice $L$ may not be a
lattice and so the join of two elements of $L$ may not exist, the
notion of an ideal of $L$ needs to be adjusted appropriately. For
a subset $A$ of a meet semi-lattice $L$, let $A^u=\{u\in L:
\forall a\in A$ we have $a\leq u\}$ denote the set of \emph{upper
bounds} of $A$, and let $A^l=\{l\in L: \forall a\in A$ we have
$l\leq a\}$ denote the set of \emph{lower bounds} of $A$. We call
a nonempty subset $I$ of a meet semi-lattice $L$ an \textit{ideal}
if (i) $a\in I$ and $b\leq a$ imply $b\in I$ and (ii) $a,b\in I$
implies $\{a,b\}^u\cap I\neq\emptyset$. Our notion of ideal is
dual to the notion of \emph{dual ideal} of \cite[p.\ 132]{Gra98}.
In \cite{Cel03b} ideals are called \emph{order ideals}.

\begin{remark}
{\rm Condition (ii) of the definition of ideal is obviously
equivalent to the following condition: (ii') $a,b\in I$ implies
$({\uparrow}a\cap {\uparrow}b)\cap I\neq \emptyset$. Thus, we have
that the following three conditions are equivalent:
\begin{enumerate}
\item $I$ is an ideal of $L$.
\item For each $a, b \in L$ we have $a, b\in I$ iff $\{a,b\}^u\cap
I\neq \emptyset$.
\item For each $a, b \in L$ we have $a, b\in I$ iff $({\uparrow}a\cap
{\uparrow}b)\cap I\neq \emptyset$.
\end{enumerate}
}
\end{remark}

We note that if $L$ has a top element, then $L$ itself is always an
ideal. However, unlike the case with filters, a nonempty
intersection of a family of ideals may not be an ideal as the
following example shows.

\begin{example}
{\rm Let $L$ be the meet semi-lattice shown in Fig.1. Then each
${\downarrow}c_n$ is an ideal of $L$, but
$\displaystyle{\bigcap_{n\in\omega}}{\downarrow}c_n=\{\bot,a,b\}$ is
not an ideal of $L$.}


\

\begin{center}
\begin{tikzpicture}[scale=.5,inner sep=.5mm]
\node[bull] (bot) at (0,0) [label=below:$\bot$] {};%
\node[bull] (left) at (-1,1) [label=left:$a$] {};%
\node[bull] (right) at (1,1) [label=right:$b$] {};%
\node (dots) at (0,3.5) {$\vdots$};%
\node (emp) at (0,4) {};
\node[bull] (c2) at (0,5) [label=right:$c_2$] {};%
\node[bull] (c1) at (0,6) [label=right:$c_1$] {};%
\node[bull] (top) at (0,7) [label=above:$\top$] {};%
\draw (left) -- (bot) -- (right);%
\draw (emp) -- (c2) -- (c1) -- (top);
\end{tikzpicture}

\

$L$

\

\ \ Fig.1

\end{center}

\end{example}

Nevertheless, we have the following analogue of the prime filter
lemma for distributive meet semi-latices. For a proof we refer to
\cite[Section II.5, Lemma 2]{Gra98} or \cite[Theorem 8]{Cel03b}.

\begin{lemma} [Prime Filter Lemma]
\label{meetprime} Suppose that $L$ is a distributive meet
semi-lattice. If $F$ is a filter and $I$ is an ideal of $L$ with $F
\cap I = \emptyset$, then there exists a prime filter $P$ of $L$
such that $F \subseteq P$ and $P \cap I = \emptyset$.
\end{lemma}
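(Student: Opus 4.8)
The plan is to prove the Prime Filter Lemma by a standard Zorn's Lemma argument, extracting from the distributivity hypothesis exactly the combinatorial fact needed to make a maximal filter prime. Concretely, I would consider the collection
$$
\mathcal{P} = \{G : G \text{ is a filter of } L,\ F \subseteq G,\ \text{and } G \cap I = \emptyset\}.
$$
This set is nonempty since $F \in \mathcal{P}$, and it is closed under unions of chains: if $\{G_j\}$ is a chain in $\mathcal{P}$, then $\bigcup_j G_j$ is again a filter (filterhood is closed under directed unions, using that any two elements lie in a common $G_j$ and hence their meet does too), it contains $F$, and it misses $I$ because each $G_j$ does. By Zorn's Lemma, $\mathcal{P}$ has a maximal element $P$. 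Since $P \cap I = \emptyset$ and $I$ is nonempty (indeed $L$ has $\top \in I$ only if $L \in I$; in any case $I \neq \emptyset$ by definition), $P$ is proper.

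It remains to show $P$ is meet-prime. First I would establish the key maximality consequence: for any filter $G \supsetneq P$ with $F \subseteq G$ (automatic here), maximality forces $G \cap I \neq \emptyset$. In particular, for each $a \notin P$, the filter $P \vee {\uparrow}a$ properly contains $P$, so it meets $I$; thus there is $p_a \in P$ with $p_a \wedge a \in {\downarrow}i_a$ for some $i_a \in I$, i.e. $p_a \wedge a \leq i_a \in I$. Now suppose for contradiction that $F_1, F_2$ are filters with $F_1 \cap F_2 \subseteq P$ but $F_1 \not\subseteq P$ and $F_2 \not\subseteq P$. Pick $a_1 \in F_1 \setminus P$ and $a_2 \in F_2 \setminus P$. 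Applying the above, there are $p_1, p_2 \in P$ and $i_1, i_2 \in I$ with $p_1 \wedge a_1 \leq i_1$ and $p_2 \wedge a_2 \leq i_2$. Because $I$ is an ideal, condition (ii) gives an upper bound $u \in \{i_1, i_2\}^u \cap I$, so $p_1 \wedge a_1 \leq u$ and $p_2 \wedge a_2 \leq u$ with $u \in I$.

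The crux is to derive a contradiction, and this is where distributivity enters. Setting $b_1 = a_1$, $b_2 = a_2$ (after first meeting with $p_1, p_2$, which lie in $P$, to arrange everything below $u$), I would apply the distributivity of $L$ to the inequality $(p_1 \wedge a_1) \wedge (p_2 \wedge a_2) \leq u$ to split $u$ as a meet $u = c_1 \wedge c_2$ with $p_1 \wedge a_1 \leq c_1$ and $p_2 \wedge a_2 \leq c_2$; the distributivity of the filter lattice $\langle \mathsf{F}(L), \cap, \vee\rangle$ then lets me conclude that an element built from $a_1$ lands in $F_1$ and a matching element built from $a_2$ lands in $F_2$, with their meet forced into $F_1 \cap F_2 \subseteq P$ yet also into $I$, contradicting $P \cap I = \emptyset$. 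I expect the main obstacle to be bookkeeping this last splitting step correctly: one must use distributivity to manufacture the two witnesses $c_1, c_2$ on the \emph{right} sides so that they can be absorbed into $F_1$ and $F_2$ respectively, rather than merely reproving that $P$ is prime in the lattice-of-filters sense. Since the excerpt already records that $L$ is distributive iff $\langle \mathsf{F}(L),\cap,\vee\rangle$ is a distributive lattice, I would in fact streamline the whole argument by working in $\mathsf{F}(L)$: there $P$ is a maximal filter disjoint from the ideal $\{G \in \mathsf{F}(L) : G \cap I \neq \emptyset\}$ of the distributive lattice of filters, and the classical prime filter lemma for distributive lattices applied to $\mathsf{F}(L)$ delivers meet-primeness of $P$ directly.
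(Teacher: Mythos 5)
The paper does not actually prove this lemma --- it refers the reader to Gr\"atzer (Section II.5, Lemma 2) and Celani (Theorem 8) --- but the proof in those references is exactly the skeleton you set up: Zorn's lemma gives a filter $P$ maximal among those containing $F$ and disjoint from $I$, and distributivity then forces $P$ to be prime. Your Zorn step is fine, as is the maximality consequence (for $a \notin P$ there exist $p_a \in P$ and $i_a \in I$ with $p_a \wedge a \leq i_a$) and the use of condition (ii) of the definition of ideal to produce a common upper bound $u \in I$ of $i_1$ and $i_2$. The genuine gap is the decisive step. The element-wise splitting $u = c_1 \wedge c_2$ with $p_1 \wedge a_1 \leq c_1$ and $p_2 \wedge a_2 \leq c_2$ is a legitimate application of distributivity, but what you then claim does not follow: $c_1$ need not lie in $F_1$ (it lies only in the filter join $P \vee F_1$, since $p_1 \in P$ and $a_1 \in F_1$), and even if you had $c_1 \in F_1$ and $c_2 \in F_2$, their meet $c_1 \wedge c_2$ would lie in $F_1 \vee F_2$, not in $F_1 \cap F_2$ --- membership in the intersection requires a single element belonging to both filters. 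The closing ``streamlined'' suggestion is also off: $\{G \in \mathsf{F}(L) : G \cap I \neq \emptyset\}$ is an up-set of $\mathsf{F}(L)$ closed under finite intersections (closure under intersection is precisely where condition (ii) on $I$ is used), i.e.\ it is a \emph{filter} of $\mathsf{F}(L)$, not an ideal; moreover, the classical prime filter lemma applied to $\mathsf{F}(L)$ would produce a prime filter \emph{of} the lattice $\mathsf{F}(L)$ --- a collection of filters of $L$ --- not a meet-prime \emph{element} of $\mathsf{F}(L)$, which is what the statement requires.

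The repair is short and uses only what you already have. From $p_1 \wedge a_1 \leq u$ you get $u \in {\uparrow}(p_1 \wedge a_1) = {\uparrow}p_1 \vee {\uparrow}a_1 \subseteq P \vee F_1$, and likewise $u \in P \vee F_2$. Now invoke distributivity of the filter lattice $\mathsf{F}(L)$ (recorded in Section 3 as equivalent to distributivity of $L$): $(P \vee F_1) \cap (P \vee F_2) = P \vee (F_1 \cap F_2) = P$, the last equality because $F_1 \cap F_2 \subseteq P$. Hence $u \in P \cap I$, a contradiction. In fact you can skip choosing $a_1, a_2, p_1, p_2$ altogether: by maximality, $P \vee F_1$ and $P \vee F_2$ each meet $I$; condition (ii) gives a common upper bound $u \in I$ of the two witnesses; and since $P \vee F_1$ and $P \vee F_2$ are up-sets, $u \in (P \vee F_1) \cap (P \vee F_2) = P$. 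So the distributivity that closes the argument is the lattice identity in $\mathsf{F}(L)$; the element-wise splitting you propose is a detour that cannot be completed in the way described.
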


As a corollary, we obtain the following useful fact.

\begin{corollary}
Every proper filter $F$ of a distributive meet semi-lattice $L$ is
the intersection of the prime filters of $L$ containing $F$.
\end{corollary}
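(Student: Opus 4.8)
The plan is to derive this as a direct consequence of the Prime Filter Lemma (Lemma \ref{meetprime}). The statement asserts that every proper filter $F$ equals the intersection of all prime filters containing it. One inclusion is trivial: $F$ is contained in the intersection of the prime filters extending it, since $F\subseteq P$ for each such $P$. The substance is the reverse inclusion, namely that if $a\notin F$, then some prime filter $P\supseteq F$ also omits $a$; equivalently, the intersection is not strictly larger than $F$.

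First I would fix an arbitrary element $a\in L$ with $a\notin F$, and aim to produce a prime filter $P$ with $F\subseteq P$ and $a\notin P$. The natural move is to apply Lemma \ref{meetprime} with the given filter $F$ and a suitably chosen ideal $I$ that contains $a$ but is disjoint from $F$. The obvious candidate is the principal downset $I={\downarrow}a=\{b\in L: b\leq a\}$. I would first check that ${\downarrow}a$ is genuinely an ideal in the semi-lattice sense: it is clearly a nonempty downset, and for condition (ii) one observes that for $b,c\leq a$ the element $a$ itself lies in $\{b,c\}^u\cap{\downarrow}a$, so the condition is satisfied.

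Next I would verify the disjointness hypothesis $F\cap{\downarrow}a=\emptyset$. Here is the one point requiring the assumption $a\notin F$: if some $b\in F$ satisfied $b\leq a$, then by upward closure of the filter $F$ we would get $a\in F$, contradicting the choice of $a$. Hence $F\cap{\downarrow}a=\emptyset$. Now Lemma \ref{meetprime} applies and yields a prime filter $P$ with $F\subseteq P$ and $P\cap{\downarrow}a=\emptyset$; in particular $a\notin P$ since $a\in{\downarrow}a$. This shows that $a$ is excluded from at least one prime filter extending $F$, so $a$ does not belong to the intersection. As $a$ was an arbitrary element outside $F$, the intersection of the prime filters containing $F$ is contained in $F$, completing the reverse inclusion.

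I do not anticipate a serious obstacle here, since the argument is the standard ``separate a point from a filter by a prime filter'' technique and the Prime Filter Lemma is already in hand; the only subtlety worth stating explicitly is confirming that ${\downarrow}a$ meets the semi-lattice definition of an ideal (condition (ii) via $\{a,b\}^u$), which differs from the lattice notion and must not be taken for granted. One should also note at the outset that the corollary presupposes $F$ is proper, which is exactly what guarantees the existence of at least one element $a\notin F$ to separate, and that the collection of prime filters containing $F$ is nonempty (again furnished by the Prime Filter Lemma applied to, say, any ideal disjoint from $F$).
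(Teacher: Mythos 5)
Your proof is correct and is exactly the argument the paper intends: the corollary is stated as an immediate consequence of the Prime Filter Lemma, and separating each $a\notin F$ from $F$ via the principal ideal ${\downarrow}a$ is precisely the standard technique the paper itself uses elsewhere (e.g., in the proofs of Theorem \ref{stone} and Proposition \ref{5.8}). Your care in checking that ${\downarrow}a$ satisfies the semi-lattice notion of ideal and in noting that properness of $F$ guarantees a nonempty family of prime filters is appropriate but does not constitute a departure from the paper's route.
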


We call an ideal $I$ of a meet semi-lattice $L$ \emph{proper} if
$I\neq L$, and a proper ideal $I$ of $L$ {\it prime} if for each
$a,b\in L$ we have $a\wedge b \in I$ implies $a\in I$ or $b\in I$.
Then we have the following analogue of a well-known theorem for
lattices.

\begin{proposition}\label{filterideal}
A subset $F$ of a meet semi-lattice $L$ is a prime filter iff
$I=L-F$ is a prime ideal.
\end{proposition}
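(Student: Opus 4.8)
The plan is to set up a condition-by-condition dictionary between the clauses defining a filter on $F$ and those defining an ideal on the complement $I = L - F$, and then to observe that meet-primeness of $F$ corresponds exactly to clause (ii) in the definition of an ideal. The whole argument is a sequence of contrapositions and uses no distributivity; the biconditional will follow once all the matched clauses are assembled.

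First I would dispose of the routine correspondences. Properness of $F$ (that is, $F \neq L$) is the same as $I \neq \emptyset$, and nonemptiness of $F$ is the same as $I \neq L$, so ``$F$ is a nonempty proper subset'' matches ``$I$ is a nonempty proper subset''. Upward closure of $F$ is equivalent, by contraposition, to downward closure of $I$: if $a \in I$ and $b \leq a$ but $b \in F$, then upward closure of $F$ forces $a \in F$, a contradiction, and symmetrically. Finally, closure of $F$ under $\wedge$ matches primeness of $I$: if $a, b \in F$ but $a \wedge b \in I$, then primeness of $I$ gives $a \in I$ or $b \in I$, a contradiction; conversely, if $a \wedge b \in I$ while $a, b \in F$, closure under $\wedge$ gives $a \wedge b \in F$, again a contradiction.

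The main step is to match the meet-primeness of $F$ with clause (ii) of the ideal definition, namely $a, b \in I \Rightarrow \{a,b\}^u \cap I \neq \emptyset$. The bridge is the identity $\{a,b\}^u = {\uparrow}a \cap {\uparrow}b$ together with the fact that, for a filter $F$, one has ${\uparrow}a \subseteq F$ iff $a \in F$. Assuming $F$ meet-prime, given $a, b \in I$ we have $a \notin F$ and $b \notin F$, so neither ${\uparrow}a \subseteq F$ nor ${\uparrow}b \subseteq F$; the contrapositive of meet-primeness then yields ${\uparrow}a \cap {\uparrow}b \not\subseteq F$, i.e.\ some common upper bound of $a, b$ lies in $I$, which is exactly (ii). Conversely, assuming $I$ satisfies (ii), suppose $F_1, F_2$ are filters with $F_1 \cap F_2 \subseteq F$ but $F_1 \not\subseteq F$ and $F_2 \not\subseteq F$; pick $a \in F_1 \cap I$ and $b \in F_2 \cap I$, and use (ii) to get $u \in \{a,b\}^u \cap I$. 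Since $a \leq u$, $b \leq u$, and $F_1, F_2$ are upsets, $u \in F_1 \cap F_2 \subseteq F$, contradicting $u \in I$; hence $F$ is meet-prime.

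I expect this last correspondence to be the only real content, precisely because it requires translating between the filter-theoretic formulation of meet-primeness and the element/upper-bound formulation of clause (ii); the identification $\{a,b\}^u = {\uparrow}a \cap {\uparrow}b$ and the equivalence ${\uparrow}a \subseteq F \Leftrightarrow a \in F$ are what make the two descriptions coincide. Collecting the four matched clauses then gives the stated equivalence in both directions.
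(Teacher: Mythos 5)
Your proof is correct and follows essentially the same route as the paper's: the same clause-by-clause complementation dictionary, the same use of ${\uparrow}a\cap{\uparrow}b=\{a,b\}^u$ to translate meet-primeness of $F$ into condition (ii) for $I$, and the same contradiction argument (picking $a\in F_1\cap I$, $b\in F_2\cap I$ and a common upper bound in $I$) for the converse. The only difference is organizational — you present it as four matched equivalences rather than two implications — but the underlying steps coincide with the paper's proof.
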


\begin{proof}
Suppose that $F$ is a prime filter of $L$. Clearly
$F\neq\emptyset,L$ implies $I=L - F\neq\emptyset,L$. Moreover, it is
obvious that condition (i) of the definition of filter implies that
$I$ satisfies condition (i) of the definition of ideal. To show that
$I$ also satisfies condition (ii), suppose that $a,b\in I$. If
$({\uparrow}a\cap {\uparrow}b)\cap I=\emptyset$, then
${\uparrow}a\cap {\uparrow}b\subseteq F$. Since $F$ is prime,
${\uparrow}a\subseteq F$ or ${\uparrow}b\subseteq F$, so either
$a\notin I$ or $b\notin I$, a contradiction. Thus, $({\uparrow}a\cap
{\uparrow}b)\cap I\neq\emptyset$, and so $I$ is an ideal. Finally,
to show that $I$ is prime, suppose that $a\wedge b\in I$. Then
$a\wedge b\notin F$. By condition (ii) of the definition of filter,
either $a\notin F$ or $b\notin F$. Thus, $a\in I$ or $b\in I$.

Conversely, suppose that $I=L - F$ is a prime ideal. Clearly
$I\neq\emptyset,L$ implies $F\neq\emptyset,L$. Moreover, it is
obvious that condition (i) of the definition of ideal implies that
$F$ satisfies condition (i) of the definition of filter. To show
that $F$ also satisfies condition (ii), suppose that $a,b\in F$.
Then $a,b\notin I$. Since $I$ is prime, we have $a\wedge b\notin I$.
Thus, $a\wedge b\in F$, and so $F$ is a filter. Finally, to show
that $F$ is prime, suppose that $F_1\cap F_2\subseteq F$. If $F_1
\not \subseteq F$ and $F_2 \not \subseteq F$, then $F_1\cap I\neq
\emptyset$ and $F_2\cap I\neq\emptyset$. Therefore, there exist
$a_1\in F_1\cap I$ and $a_2 \in F_2\cap I$. Obviously
${\uparrow}a_1\cap {\uparrow}a_2\subseteq F_1\cap F_2\subseteq F$.
On the other hand, since $I$ is an ideal, $({\uparrow}a_1\cap
{\uparrow}a_2)\cap I\neq \emptyset$. This implies $F\cap I\neq
\emptyset$, a contradiction. Thus, either $F_1 \subseteq F$ or $F_2
\subseteq F$, and so $F$ is prime.
\end{proof}

\section{Distributive envelopes, Frink ideals, and optimal filters}

In this section we introduce three new concepts which play a
fundamental role in developing our duality for distributive meet
semi-lattices. The first concept we introduce is that of the
\emph{distributive envelope} $D(L)$ of a distributive meet
semi-lattice $L$.
We analyze how the filters and ideals of $L$ are related to the
filters and ideals of $D(L)$. In general, it is not the case that to
each ideal of $D(L)$ there corresponds an ideal of $L$. This paves a
way for our second concept, that of \emph{Frink ideal}, which is
broader than our earlier concept of ideal. We establish the main
properties of Frink ideals, and prove that the ordered set of Frink
ideals of $L$ is isomorphic to the ordered set of ideals of $D(L)$.
Frink ideals give rise to the third concept, that of \emph{optimal
filters}, which are set-theoretic complements of prime Frink ideals.
We prove an analogue of the prime filter lemma for optimal filters
and Frink ideals, which we call the \emph{optimal filter lemma}, and
show that the ordered set of optimal filters of $L$ is isomorphic to
the ordered set of prime filters of $D(L)$. At the end of the
section, we give a table of relations between different notions of
filters and ideals of $L$ and $D(L)$, and conclude the section by
giving an alternative construction of $D(L)$, which will play a
crucial role in developing our duality.

\subsection{Distributive envelopes}

Let $L$ be a meet semi-lattice and let $\mathrm{Pr}(L)$ denote the
set of prime filters of $L$. We define $\sigma: L \to
\mathcal{P}(\mathrm{Pr}(L))$ by $\sigma(a) = \{x \in \mathrm{Pr}(L):
a \in x\}$ for each $a \in L$.  The next theorem goes back to Stone
\cite{Sto37}.

\begin{theorem}\label{stone}
If $L$ is a meet semi-lattice, then $\sigma: L \to
\mathcal{P}(\mathrm{Pr}(L))$ is a meet semi-lattice homomorphism. If
$L$ has top, then $\sigma$ preserves top, and if $L$ has bottom,
then $\sigma$ preserves bottom. In addition, if $L$ is distributive,
then $\sigma$ is a meet semi-lattice embedding.
\end{theorem}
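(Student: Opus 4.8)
The plan is to verify the four assertions in turn, reserving the real work for the embedding claim, which is the only place distributivity is needed.

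First I would establish that $\sigma$ is a meet semi-lattice homomorphism, i.e.\ that $\sigma(a\wedge b)=\sigma(a)\cap\sigma(b)$. This is immediate from the basic characterization of filters recalled in Section~3: a subset $F$ is a filter exactly when $a,b\in F$ iff $a\wedge b\in F$. Hence for any prime filter $x$ we have $a\wedge b\in x$ iff $a\in x$ and $b\in x$, which says precisely that $\sigma(a\wedge b)=\sigma(a)\cap\sigma(b)$. Preservation of top and bottom is equally direct. If $\top\in L$, then every filter, being a nonempty upset, contains $\top$, so $\sigma(\top)=\mathrm{Pr}(L)$ is the top of $\mathcal{P}(\mathrm{Pr}(L))$. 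If $\bot\in L$, then no proper filter can contain $\bot$, since an upset containing $\bot$ is all of $L$; as prime filters are proper, $\sigma(\bot)=\emptyset$.

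It remains to treat injectivity when $L$ is distributive, and in fact I would prove the stronger statement that $\sigma$ is an order-embedding, which together with the homomorphism property yields a meet semi-lattice embedding. Since $\sigma$ is a homomorphism it is automatically order-preserving: $a\leq b$ means $a=a\wedge b$, whence $\sigma(a)=\sigma(a)\cap\sigma(b)\subseteq\sigma(b)$. So the content is the converse, that $\sigma(a)\subseteq\sigma(b)$ implies $a\leq b$, and I would prove its contrapositive. Assume $a\not\leq b$. Take the principal filter $F={\uparrow}a$ and the principal downset $I={\downarrow}b$; one checks directly from the definition that ${\downarrow}b$ is an ideal. Moreover $F\cap I=\emptyset$, since any common element $c$ would satisfy $a\leq c\leq b$, contradicting $a\not\leq b$.

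Now distributivity enters through the Prime Filter Lemma (Lemma~\ref{meetprime}): applied to the filter $F$ and the ideal $I$ with $F\cap I=\emptyset$, it produces a prime filter $P$ with ${\uparrow}a\subseteq P$ and $P\cap{\downarrow}b=\emptyset$. Then $a\in P$ while $b\notin P$, so $P$ separates $a$ from $b$ and hence $\sigma(a)\not\subseteq\sigma(b)$, as required. The one genuinely nonroutine ingredient is this appeal to the Prime Filter Lemma, whose proof is exactly where distributivity of $L$ is consumed in order to separate a disjoint filter--ideal pair by a prime filter; everything else is a straightforward unwinding of the definitions, so I expect no further obstacles.
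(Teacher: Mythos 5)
Your proposal is correct and follows essentially the same route as the paper's own proof: the homomorphism property and preservation of $\top$ and $\bot$ are read off from the definition of (prime) filters, and the embedding claim is reduced, via the contrapositive, to applying the Prime Filter Lemma to the disjoint pair ${\uparrow}a$ and ${\downarrow}b$ when $a\not\leq b$. The only difference is cosmetic: you spell out explicitly that ${\downarrow}b$ is an ideal and that ${\uparrow}a\cap{\downarrow}b=\emptyset$, which the paper states without elaboration.
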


\begin{proof} For a prime filter $P$ of $L$ we have:

\begin{center}
\begin{tabular}{ll}
$P \in \sigma(a \wedge b)$ & iff $a \wedge b \in P$\\ & iff $a, b
\in P$\\ & iff
$P\in\sigma(a),\sigma(b)$\\
 & iff
$P \in \sigma(a) \cap \sigma(b)$.
\end{tabular}
\end{center}

\noindent Thus, $\sigma$ is a meet semi-lattice homomorphism.
Suppose that $\top\in L$. Since each prime filter of $L$ contains
$\top$, we have $\sigma(\top)=\mathrm{Pr}(L)$. Therefore, $\sigma$
preserves $\top$. Let $\bot\in L$. As each prime filter of $L$
does not contain $\bot$, we have $\sigma(\bot)=\emptyset$. Thus,
$\sigma$ preserves $\bot$. Because $\sigma$ is a meet semi-lattice
homomorphism, we have that $a\leq b$ implies
$\sigma(a)\subseteq\sigma(b)$. Suppose that $L$ is a distributive
meet semi-lattice and $a \not\leq b$. Then ${\uparrow} a\cap
{\downarrow} b=\emptyset$, so by the prime filter lemma, there is
a prime filter $P$ of $L$ such that $a\in P$ and $b\notin P$.
Therefore, $P\in\sigma(a)$ and $P\notin\sigma(b)$. Thus,
$\sigma(a) \not\subseteq \sigma(b)$. Consequently, if $L$ is
distributive, $a\leq b$ iff $\sigma(a)\subseteq\sigma(b)$, and so
$\sigma$ is an embedding.
\end{proof}

Let $D(L)$ denote the sublattice of $\mathcal{P}(\mathrm{Pr}(L))$
generated by $\sigma[L]$. Since $\sigma[L]$ is closed under finite
intersections, for each $A \in \mathcal{P}(\mathrm{Pr}(L))$, we have
$A \in D(L)$ iff $A = \displaystyle{\bigcup_{i=1}^n}\sigma(a_i)$ for
some $a_i \in L$. It follows that $\sigma[L]$ is join-dense in
$D(L)$. Moreover, $\sigma$ is a meet semi-lattice homomorphism from
$L$ to $D(L)$, and $\sigma$ is a meet semi-lattice embedding
whenever $L$ is distributive.

\begin{definition}
{\rm For a distributive meet semi-lattice $L$, we call $D(L)$ the
\textit{distributive envelope} of $L$.}
\end{definition}

Whenever convenient we will identify a distributive meet
semi-lattice $L$ with $\sigma[L]$ and consider $L$ as a join-dense
$\wedge$-subalgebra of $D(L)$. Now we investigate the connection
between filters and ideals of $L$ and $D(L)$.

\begin{lemma}\label{lemma1}
Let $L$ be a distributive meet semi-lattice and let $D(L)$ be the
distributive envelope of $L$. If $F$ is a filter of $L$, then
${\uparrow}_{D(L)} \sigma[F]$ is a filter of $D(L)$, and if $I$ is
an ideal of $L$, then ${\downarrow}_{D(L)} \sigma[I]$ is an ideal of
$D(L)$.
\end{lemma}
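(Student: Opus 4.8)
The plan is to verify directly the defining conditions of a filter and an ideal from Section~3, using only that $D(L)$ is the sublattice of $\mathcal{P}(\mathrm{Pr}(L))$ generated by $\sigma[L]$ (so its order is $\subseteq$, its meet is $\cap$, and its join is $\cup$), together with the facts recorded in Theorem~\ref{stone} that $\sigma$ is a meet semi-lattice homomorphism and hence order-preserving. In both halves the upward (resp.\ downward) closure is automatic from the definition of ${\uparrow}_{D(L)}\sigma[F]$ (resp.\ ${\downarrow}_{D(L)}\sigma[I]$) as an upset (resp.\ downset) of $D(L)$, and nonemptiness is immediate: since $F$ and $I$ are nonempty, choosing $a\in F$ (resp.\ $a\in I$) gives $\sigma(a)\subseteq\sigma(a)$, so $\sigma(a)$ lies in the respective set. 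Thus the only genuine content is closure under finite meets for the filter and condition (ii) of the ideal definition for the ideal.

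For the filter half, I would take $B_1,B_2\in{\uparrow}_{D(L)}\sigma[F]$, fix $a_1,a_2\in F$ with $\sigma(a_i)\subseteq B_i$, and observe that since $\sigma$ is a meet-homomorphism $\sigma(a_1\wedge a_2)=\sigma(a_1)\cap\sigma(a_2)\subseteq B_1\cap B_2$. As $F$ is a filter, $a_1\wedge a_2\in F$, so $\sigma(a_1\wedge a_2)\in\sigma[F]$ witnesses $B_1\cap B_2\in{\uparrow}_{D(L)}\sigma[F]$. Since $B_1\cap B_2$ is the meet of $B_1$ and $B_2$ in $D(L)$, this yields closure under meets, and hence ${\uparrow}_{D(L)}\sigma[F]$ is a filter of $D(L)$.

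For the ideal half, I would take $B_1,B_2\in{\downarrow}_{D(L)}\sigma[I]$ and fix $a_1,a_2\in I$ with $B_i\subseteq\sigma(a_i)$. Here the naive move of passing to $\sigma(a_1\vee a_2)$ is unavailable, because $L$ need not possess the join $a_1\vee a_2$; this is exactly the point at which the ideal axiom does the work. Using condition (ii) for $I$, there is $c\in\{a_1,a_2\}^u\cap I$, that is, $c\in I$ with $a_1\leq c$ and $a_2\leq c$. Since $\sigma$ is order-preserving, $\sigma(a_i)\subseteq\sigma(c)$, whence $B_1\cup B_2\subseteq\sigma(a_1)\cup\sigma(a_2)\subseteq\sigma(c)$. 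Therefore $\sigma(c)\in{\downarrow}_{D(L)}\sigma[I]$ is an upper bound of $\{B_1,B_2\}$ in $D(L)$ lying in ${\downarrow}_{D(L)}\sigma[I]$, which is precisely condition (ii) of the ideal definition for ${\downarrow}_{D(L)}\sigma[I]$.

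The main obstacle is conceptual rather than technical: one must resist replacing a common upper bound in $I$ by a join that may fail to exist in $L$, and instead invoke the ideal axiom (ii) to supply the witness $c$. Once that witness is in hand, everything reduces to the homomorphism and monotonicity properties of $\sigma$ already established in Theorem~\ref{stone}.
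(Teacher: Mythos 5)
Your proposal is correct and follows essentially the same argument as the paper: the filter half is verbatim the paper's proof, and for the ideal half the paper likewise invokes condition (ii) of the ideal definition of $I$ to produce a common upper bound $e\in\{a_1,a_2\}^u\cap I$ and concludes via $B_1\cup B_2\subseteq\sigma(e)$ (the paper phrases the conclusion as $B_1\cup B_2\in{\downarrow}_{D(L)}\sigma[I]$, which for the downset in the lattice $D(L)$ is equivalent to your verification of condition (ii)).
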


\begin{proof}
Let $F$ be a filter of $L$. Clearly ${\uparrow}_{D(L)} \sigma[F]$ is
an upset of $D(L)$. Let $A,B\in {\uparrow}_{D(L)} \sigma[F]$. Then
there exist $a,b\in F$ such that $\sigma(a)\subseteq A$ and
$\sigma(b)\subseteq B$. Therefore, $\sigma(a\wedge b)=\sigma(a)\cap
\sigma(b)\subseteq A\cap B$. Since $F$ is a filter of $L$, we have
$a\wedge b\in F$. Thus, $A\cap B\in {\uparrow}_{D(L)} \sigma[F]$.
Consequently, ${\uparrow}_{D(L)} \sigma[F]$ is a filter of $D(L)$.

Let $I$ be an ideal of $L$. Clearly ${\downarrow}_{D(L)} \sigma[I]$
is a downset of $D(L)$. Let $A,B\in {\downarrow}_{D(L)} \sigma[I]$.
Then there exist $a,b\in I$ such that $A\subseteq \sigma(a)$ and
$B\subseteq \sigma(b)$. Since $I$ is an ideal of $L$, there exists
$e\in \{a,b\}^u\cap I$. Thus, $A,B\subseteq \sigma(e)$, implying
that $A\cup B\in {\downarrow}_{D(L)} \sigma[I]$. Consequently,
${\downarrow}_{D(L)} \sigma[I]$ is an ideal of $D(L)$.
\end{proof}

\begin{lemma}\label{lemma2}
Let $L\in\mathsf{DM}$ and let $D(L)$ be the distributive envelope of
$L$. If $F$ is a filter of $D(L)$, then $\sigma^{-1}(F)$ is a filter
of $L$.
\end{lemma}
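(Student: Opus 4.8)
The plan is to check directly that $\sigma^{-1}(F)=\{a\in L:\sigma(a)\in F\}$ satisfies the two defining conditions of a filter of $L$, using only that $\sigma$ is a meet semi-lattice homomorphism (Theorem~\ref{stone}) and that $F$ is a filter of the lattice $D(L)$. All the work comes down to transporting the two closure conditions on $F$ back along $\sigma$, and the fact that $\sigma$ preserves meets is exactly what makes this possible.

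First I would dispose of nonemptiness and upward closure. Since $L\in\mathsf{DM}$ has a top, $\sigma(\top)=\mathrm{Pr}(L)$ is the greatest element of $D(L)$; as $F$ is a nonempty upset of $D(L)$ it contains this top, so $\top\in\sigma^{-1}(F)$ and in particular $\sigma^{-1}(F)\neq\emptyset$. For upward closure, suppose $a\in\sigma^{-1}(F)$ and $a\leq b$ in $L$. Because $\sigma$ is order-preserving we have $\sigma(a)\subseteq\sigma(b)$, and since $F$ is an upset of $D(L)$ with $\sigma(a)\in F$, we conclude $\sigma(b)\in F$, i.e.\ $b\in\sigma^{-1}(F)$.

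The remaining condition, closure under meets, is where the homomorphism property is used. Suppose $a,b\in\sigma^{-1}(F)$, so that $\sigma(a),\sigma(b)\in F$. Because $F$ is a filter of the lattice $D(L)$, the meet $\sigma(a)\cap\sigma(b)$ lies in $F$; but $\sigma(a\wedge b)=\sigma(a)\cap\sigma(b)$ by Theorem~\ref{stone}, so $\sigma(a\wedge b)\in F$, that is $a\wedge b\in\sigma^{-1}(F)$. Together with the previous paragraph this shows $\sigma^{-1}(F)$ is a filter of $L$.

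I do not expect any genuine obstacle here: the argument is the standard fact that the preimage of a filter under a meet semi-lattice homomorphism is again a filter, and nothing about distributivity or the specific construction of $D(L)$ is needed beyond $\sigma$ preserving $\wedge$ and $\top$. The contrast worth keeping in mind is that the dual statement for ideals fails (this is precisely what motivates the introduction of Frink ideals), so the asymmetry between the two cases is real even though this particular direction is routine.
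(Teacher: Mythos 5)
Your proof is correct and follows essentially the same route as the paper's own: nonemptiness via $\sigma(\top)=\mathrm{Pr}(L)\in F$, upward closure via order-preservation of $\sigma$, and meet-closure via $\sigma(a\wedge b)=\sigma(a)\cap\sigma(b)$. Your closing remark that only the homomorphism property of $\sigma$ is needed (and that the ideal case genuinely fails) is also accurate and matches the paper's surrounding discussion.
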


\begin{proof}
Let $F$ be a filter of $D(L)$. Since $\top\in L$, we have
$\mathrm{Pr}(L)=\sigma(\top)\in D(L)$, so
$\sigma(\top)=\mathrm{Pr}(L)\in F$, and so
$\top\in\sigma^{-1}(F)$. Thus, $\sigma^{-1}(F)$ is nonempty.
Suppose that $a\in \sigma^{-1}(F)$ and $a\leq b$. Then
$\sigma(a)\in F$ and $\sigma(a)\subseteq\sigma(b)$. Since $F$ is
an upset of $D(L)$, it follows that $\sigma(b)\in F$. Therefore,
$b\in\sigma^{-1}(F)$. For $a,b\in \sigma^{-1}(F)$ we have
$\sigma(a),\sigma(b)\in F$. Since $F$ is a filter of $D(L)$, we
have $\sigma(a\wedge b)=\sigma(a)\cap\sigma(b)\in F$. Thus,
$a\wedge b\in \sigma^{-1}(F)$, and so $\sigma^{-1}(F)$ is a filter
of $L$.
\end{proof}

On the other hand, there exist ideals $I$ of $D(L)$ such that
$\sigma^{-1}(I)$ is not an ideal of $L$ as the following example
shows.

\begin{example}\label{example}
Consider the distributive meet semi-lattice $L$ shown in Fig.1. The
ordered set $\la \mathrm{Pr}(L),\subseteq \ra$ of prime filters of
$L$ together with the distributive envelope $D(L)$ of $L$ is shown
in Fig.2. We have that $I=\{\emptyset,\sigma(a),$
$\sigma(b),\sigma(a)\cup\sigma(b)\}$ is an ideal of $D(L)$, but that
$\sigma^{-1}(I)=\{\bot,a,b\}$ is not an ideal of $L$.
\end{example}


\begin{center}

$$
\begin{array}{ccc}
\begin{tikzpicture}[scale=.5,inner sep=.5mm]
\node[bull] (bot) at (0,0) [label=below:$\bot$] {};%
\node[bull] (left) at (-1,1) [label=left:$a$] {};%
\node[bull] (right) at (1,1) [label=right:$b$] {};%
\node (dots) at (0,3.5) {$\vdots$};%
\node (emp) at (0,4) {};
\node[bull] (c2) at (0,5) [label=right:$c_2$] {};%
\node[bull] (c1) at (0,6) [label=right:$c_1$] {};%
\node[bull] (top) at (0,7) [label=above:$\top$] {};%
\draw (left) -- (bot) -- (right);%
\draw (emp) -- (c2) -- (c1) -- (top);
\end{tikzpicture}&\qquad
\begin{tikzpicture}[scale=.5,inner sep=.5mm]
\node[bull] (top) at (0,0) [label=below:$\{\top\}$] {};%
\node[bull] (c1) at (0,1) [label=right:${\uparrow}c_1$] {};%
\node[bull] (c2) at (0,2) [label=right:${\uparrow}c_2$] {};%
\node (empdown) at (0,3) {};
\node (dots) at (0,4) {$\vdots$};%
\node (empup) at (0,6) {};
\node[bull] (left) at (-1,7) [label=left:${\uparrow}a$] {};%
\node[bull] (right) at (1,7) [label=right:${\uparrow}b$] {};%
\draw (top) -- (c1) -- (c2) -- (empdown);%
\draw (empup) -- (left);%
\draw (empup) -- (right);
\end{tikzpicture}\qquad&
\begin{tikzpicture}[scale=.5,inner sep=.5mm]
\node[bull] (bot) at (0,0) [label=below:$\emptyset$] {};%
\node[bull] (left) at (-1,1) [label=left:$\sigma(a)$] {};%
\node[bull] (right) at (1,1) [label=right:$\sigma(b)$] {};%
\node[holl] (join) at (0,2) [label=right:\ $\sigma(a)\cup\sigma(b)$]
{};
\node (dots) at (0,3.5) {$\vdots$};%
\node (emp) at (0,4) {};
\node[bull] (c2) at (0,5) [label=right:$\sigma(c_2)$] {};%
\node[bull] (c1) at (0,6) [label=right:$\sigma(c_1)$] {};%
\node[bull] (top) at (0,7) [label=above:$\mathrm{Pr}(L)$] {};%
\draw (left) -- (bot) -- (right);%
\draw (left) -- (join) -- (right);%
\draw (emp) -- (c2) -- (c1) -- (top);
\end{tikzpicture}\\
\\
L&\mathrm{Pr}(L)&\hskip-2em D(L)\\
\\
& \ \textrm{Fig.2}
\end{array}
$$

\end{center}

\begin{lemma}\label{lemma}
Let $L$ be a distributive meet semi-lattice, $D(L)$ be the
distributive envelope of $L$, and $I$ be an ideal of $D(L)$. Then
$I$ is the ideal of $D(L)$ generated by $\sigma[\sigma^{-1}(I)]=I
\cap \sigma[L]$.
\end{lemma}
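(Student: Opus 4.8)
The plan is to establish the claimed generation by proving two inclusions, after first clearing away the set-theoretic identity in the statement. Write $J=\sigma[\sigma^{-1}(I)]$. The equality $J=I\cap\sigma[L]$ is the general fact that the image of a preimage is the intersection with the range: an element lies in $\sigma[\sigma^{-1}(I)]$ iff it has the form $\sigma(a)$ with $\sigma(a)\in I$, which says precisely that it lies in $\sigma[L]\cap I$. So it suffices to show that $I$ is the ideal of $D(L)$ generated by $J$.

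Next I would record the shape of ideals and of generated ideals in $D(L)$. Since $D(L)$ is a sublattice of $\mathcal{P}(\mathrm{Pr}(L))$, its order is inclusion and its joins are unions; because $D(L)$ is a lattice, condition (ii) of the definition of ideal together with the downward closure (i) is equivalent to closure under finite joins, so an ideal of $D(L)$ is exactly a down-set closed under finite unions. Consequently the ideal of $D(L)$ generated by $J$, which I denote $\langle J]$, consists of those $A\in D(L)$ with $A\subseteq B_1\cup\cdots\cup B_n$ for some $B_1,\dots,B_n\in J$.

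The inclusion $\langle J]\subseteq I$ is immediate: $J\subseteq I$ by construction, and $I$, being an ideal of $D(L)$, contains every finite join of members of $J$ and everything below such a join. For the reverse inclusion $I\subseteq\langle J]$ I would use the join-density of $\sigma[L]$ in $D(L)$. Take $A\in I$. Since $A\in D(L)$, the characterization of $D(L)$ lets me write $A=\sigma(a_1)\cup\cdots\cup\sigma(a_n)$ for suitable $a_1,\dots,a_n\in L$. Each $\sigma(a_i)\in\sigma[L]\subseteq D(L)$ satisfies $\sigma(a_i)\subseteq A$, and as $I$ is a down-set with $A\in I$, downward closure forces $\sigma(a_i)\in I$; hence $\sigma(a_i)\in I\cap\sigma[L]=J$ for each $i$. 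Thus $A$ is a finite union of elements of $J$, so $A\in\langle J]$. Combining the two inclusions gives $I=\langle J]$.

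The only step carrying any content is this passage through the generators: the decomposition of an arbitrary $A\in I$ into the joins $\sigma(a_i)$ supplied by $D(L)$, combined with the downward closure of $I$, forces each generator into $J$. I do not expect a genuine obstacle beyond being careful that ideals of the lattice $D(L)$ are ordinary lattice ideals and that the decomposition $A=\bigcup_{i=1}^{n}\sigma(a_i)$ is exactly the description of $D(L)$ already proved.
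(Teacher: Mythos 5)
Your proof is correct and follows essentially the same route as the paper's: both decompose an arbitrary $A\in I$ as a finite union $\sigma(a_1)\cup\cdots\cup\sigma(a_n)$, use downward closure of $I$ to place each $\sigma(a_i)$ in $I\cap\sigma[L]$, and conclude $A$ lies in the generated ideal, the reverse inclusion being immediate. You simply spell out the background facts (ideals of the lattice $D(L)$ are downsets closed under finite unions, and the description of the generated ideal) that the paper leaves implicit.
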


\begin{proof}
Let $J$ be the ideal of $D(L)$ generated by $\sigma[\sigma^{-1}(I)]
= I \cap \sigma[L]$. Obviously $J \subseteq I$. On the other hand,
if $A \in I$, then as $A = \displaystyle{\bigcup_{i=1}^n}
\sigma(b_i)$ for some $b_i \in L$, we have $\sigma(b_i) \in I \cap
\sigma[L]$ for each $i \leq n$. Thus, $A \in J$.
\end{proof}

\begin{corollary}\label{corollary}
Let $L$ be a distributive meet semi-lattice and let $D(L)$ be the
distributive envelope of $L$. For ideals $I, J$ of $D(L)$ we have
that the following conditions are equivalent:
\begin{enumerate}
\item $I=J$.
\item $\sigma^{-1}(I) = \sigma^{-1}(J)$.
\item $I\cap \sigma[L]=J\cap\sigma[L]$.
\end{enumerate}
\end{corollary}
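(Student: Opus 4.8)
The plan is to run the cyclic chain of implications $(1)\Rightarrow(2)\Rightarrow(3)\Rightarrow(1)$, which establishes the full equivalence. The entire argument rests on the immediately preceding Lemma~\ref{lemma}, from which I would extract two facts that hold for \emph{every} ideal of $D(L)$: first, the set identity $\sigma[\sigma^{-1}(I)] = I \cap \sigma[L]$; and second, the structural statement that $I$ is exactly the ideal of $D(L)$ generated by this common set. With these in hand the corollary is almost immediate.

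First I would dispatch $(1)\Rightarrow(2)$, which is trivial: if $I = J$ then certainly $\sigma^{-1}(I) = \sigma^{-1}(J)$. Next, for $(2)\Rightarrow(3)$, I would apply the first half of Lemma~\ref{lemma} to each ideal separately, giving $I \cap \sigma[L] = \sigma[\sigma^{-1}(I)]$ and $J \cap \sigma[L] = \sigma[\sigma^{-1}(J)]$; since $(2)$ supplies $\sigma^{-1}(I) = \sigma^{-1}(J)$, applying $\sigma$ to these equal subsets of $L$ yields $\sigma[\sigma^{-1}(I)] = \sigma[\sigma^{-1}(J)]$, and hence $I \cap \sigma[L] = J \cap \sigma[L]$. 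Finally, for $(3)\Rightarrow(1)$, I would invoke the second half of Lemma~\ref{lemma}: $I$ is generated as an ideal of $D(L)$ by $I \cap \sigma[L]$, and $J$ by $J \cap \sigma[L]$; as these generating sets coincide by $(3)$, the ideals they generate coincide, so $I = J$.

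There is no genuine obstacle here — the statement is pure bookkeeping layered on Lemma~\ref{lemma}, and in particular it does \emph{not} require the injectivity of $\sigma$, since every step uses only the two recorded facts and the monotone behaviour of direct and inverse images. The one point worth a moment's care is to resist re-deriving the identity $\sigma[\sigma^{-1}(I)] = I \cap \sigma[L]$ from scratch: it is precisely the content of Lemma~\ref{lemma} (which in turn uses that each $A \in I$ has the form $A = \bigcup_{i=1}^{n}\sigma(b_i)$ with $\sigma(b_i) \in I \cap \sigma[L]$), so I would simply cite that lemma and keep the proof of the corollary to the three short implications above.
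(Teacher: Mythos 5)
Your proof is correct and follows exactly the route the paper intends: the corollary is stated without proof precisely because it is the immediate consequence of Lemma~\ref{lemma} that you spell out, with $(1)\Rightarrow(2)$ trivial, $(2)\Rightarrow(3)$ via the identity $\sigma[\sigma^{-1}(I)]=I\cap\sigma[L]$, and $(3)\Rightarrow(1)$ from the fact that each ideal is generated by its trace on $\sigma[L]$. Your observation that injectivity of $\sigma$ is never needed is a nice touch, but the argument is the same as the paper's implicit one.
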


On the other hand, there exist filters $F$ of $D(L)$ such that $F$
is not generated by $\sigma[\sigma^{-1}(F)]=F\cap \sigma[L]$ as the
following example shows.

\begin{example}
Consider the distributive meet semi-lattice $L$ and its distributive
envelope $D(L)$ shown in Fig.2. Then $F=\{\sigma(a)\cup\sigma(b),
\sigma(c_n),\mathrm{Pr}(L):n\in\omega\}$ is a filter of $D(L)$ which
is not generated by $\sigma[\sigma^{-1}(F)]=F\cap
\sigma[L]=\{\sigma(c_n),\mathrm{Pr}(L):n\in\omega\}$.
\end{example}

For an ideal $I$ of $D(L)$, if $\sigma^{-1}(I)$ were an ideal of
$L$, then Lemma \ref{lemma} and Corollary \ref{corollary} would
imply that there is a 1-1 correspondence between ideals of $L$ and
$D(L)$. However, $\sigma^{-1}(I)$ is not necessarily an ideal of $L$
as we have shown in Example \ref{example}. This forces us to
introduce a weaker notion of an ideal of $L$, that of a \emph{Frink
ideal}.

\subsection{Frink ideals}

\begin{definition} {\rm (Frink \cite[p.\ 227]{Fri54})
Let $L$ be a meet semi-lattice. A nonempty subset $I$ of $L$ is
called a \textit{Frink ideal} (\emph{F-ideal} for short) if for each
finite subset $A$ of $I$ we have $A^{ul} \subseteq I$. Equivalently,
$I$ is a Frink ideal if for each $a_1, \ldots, a_n \in I$ and $c \in
L$, whenever $\displaystyle{\bigcap_{i=1}^n}{\uparrow} a_i \subseteq
{\uparrow}c$, we have $c \in I$.
We call an F-ideal $I$ of $L$ \emph{proper} if $I\neq L$, and we
call $I$ \textit{prime} if it is proper and $a \wedge b \in I$
implies $a \in I$ or $b \in I$ for each $a, b \in L$.}
\end{definition}

It is easy to verify that for each $a\in L$ we have ${\downarrow}a$
is an F-ideal. Moreover, unlike the case with ideals, a nonempty
intersection of a family of F-ideals is again an F-ideal. Therefore,
for each nonempty $X \subseteq L$, there exists a least F-ideal
containing $X$. We call it the {\it F-ideal generated by $X$}, and
denote it by $(X]$.

\begin{lemma} \label{F-ideal}
Let $L$ be a meet semi-lattice and let $X$ be a nonempty subset of
$L$. Then $(X] = \{a \in L: \exists$ finite $A \subseteq X$ with
$a\in A^{ul}\} = \{a \in L: \exists a_1, \dots, a_n \in X$ with $
\displaystyle{\bigcap_{i=1}^n}{\uparrow}a_i \subseteq
{\uparrow}a\}$.
\end{lemma}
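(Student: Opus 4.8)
The plan is to denote by $Y$ the set in the middle of the claimed identity (equivalently, the rightmost set) and to verify that $Y$ is the least F-ideal containing $X$, which is by definition $(X]$. First I would dispatch the equivalence of the two right-hand descriptions: for a finite $A = \{a_1, \dots, a_n\} \subseteq X$ one has $A^u = \bigcap_{i=1}^n {\uparrow}a_i$, and by definition $a \in A^{ul}$ means precisely that $a$ is a lower bound of $A^u$, i.e.\ $\bigcap_{i=1}^n {\uparrow}a_i \subseteq {\uparrow}a$. So the two sets coincide, and I may work with whichever form is convenient.

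Next I would show $X \subseteq Y$ and that $Y$ is an F-ideal. The inclusion $X \subseteq Y$ is immediate: for $a \in X$ take $A = \{a\}$, so that $\bigcap {\uparrow}a = {\uparrow}a \subseteq {\uparrow}a$ and hence $a \in Y$; in particular $Y \neq \emptyset$ since $X \neq \emptyset$. To see that $Y$ is an F-ideal, I take a finite $B = \{b_1, \dots, b_m\} \subseteq Y$ and must check $B^{ul} \subseteq Y$. For each $j$ choose a finite $A_j \subseteq X$ witnessing $b_j \in A_j^{ul}$, i.e.\ $A_j^u \subseteq {\uparrow}b_j$, and set $A = \bigcup_{j=1}^m A_j$, again a finite subset of $X$. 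The key computation is $A^u \subseteq B^u$: if $u \in A^u$ then $u$ is an upper bound of every $A_j$, so $u \in A_j^u \subseteq {\uparrow}b_j$, giving $b_j \leq u$ for all $j$ and hence $u \in B^u$. Since the lower-bound operator is antitone, $A^u \subseteq B^u$ yields $B^{ul} \subseteq A^{ul}$, and $A^{ul} \subseteq Y$ by the very definition of $Y$. Therefore $B^{ul} \subseteq Y$, so $Y$ is an F-ideal.

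Finally I would prove minimality: if $I$ is any F-ideal with $X \subseteq I$, then $Y \subseteq I$. Indeed, given $a \in Y$, pick a finite $A \subseteq X \subseteq I$ with $a \in A^{ul}$; since $I$ is an F-ideal and $A$ is a finite subset of $I$, we have $A^{ul} \subseteq I$, whence $a \in I$. Combining the three steps, $Y$ is an F-ideal containing $X$ that is contained in every F-ideal containing $X$, and so $Y = (X]$.

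I expect the only subtle point to be the closure of $Y$ under the operation $B \mapsto B^{ul}$ for finite $B$. The trick is to collect the finitely many witnessing sets $A_j$ into a single finite $A = \bigcup_j A_j \subseteq X$ and to exploit the antitonicity of $(-)^u$ and $(-)^l$ (the Galois connection between upper and lower bounds) in order to convert the inclusion $A^u \subseteq B^u$ into $B^{ul} \subseteq A^{ul}$. Everything else is routine.
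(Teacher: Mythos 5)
Your proposal is correct and takes essentially the same route as the paper: both identify $(X]$ with the explicitly described set by checking that this set is an F-ideal containing $X$ and that it is contained in every F-ideal containing $X$ (your minimality step is word-for-word the paper's). The only difference is that you spell out the closure of the set under $B \mapsto B^{ul}$ --- via the union of the witnessing sets $A_j$ and the antitonicity of $(-)^u$ and $(-)^l$ --- a verification the paper compresses into ``Clearly $(X]$ is an F-ideal.''
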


\begin{proof}
Suppose that $L$ is a meet semi-lattice and $X$ is a nonempty subset
of $L$. Clearly $(X]$ is an F-ideal. Let $I$ be an F-ideal with $X
\subseteq I$. We show that $(X]\subseteq I$. If $a\in (X]$, then
there exist $a_1, \dots, a_n \in X$ such that
$\displaystyle{\bigcap_{i=1}^n}{\uparrow}a_i \subseteq {\uparrow}a$.
Since $a_1, \dots, a_n \in I$ and $I$ is an F-ideal, we have $a\in
I$. Thus, $(X]$ is the F-ideal generated by $X$.
\end{proof}

%

\begin{lemma}\label{semi}
Let $L$ be a meet semi-lattice. Then each ideal of $L$ is an F-ideal
and each F-ideal of $L$ is a downset. Moreover, if $L$ is a lattice,
then the two notions coincide with the usual notion of an ideal of a
lattice.
\end{lemma}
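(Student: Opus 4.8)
The plan is to prove the three assertions in turn, the engine behind all of them being the elementary observation that for a nonempty finite set $A\subseteq L$ possessing an upper bound $e$, every member of $A^{ul}$ lies below $e$, so that $A^{ul}\subseteq {\downarrow}e$; in particular, for a singleton one computes $\{a\}^{ul}=({\uparrow}a)^l={\downarrow}a$. I would isolate this computation at the outset, since both the ideal $\Rightarrow$ F-ideal implication and the lattice case reduce to reading off $A^{ul}$ as a principal downset.

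First I would show that every ideal $I$ is an F-ideal. Given a finite $A=\{a_1,\dots,a_n\}\subseteq I$, the key step is to produce a common upper bound of $A$ lying in $I$. This I would do by induction on $n$ using condition (ii) of the definition of ideal: the base case is trivial, and if $e\in I$ is an upper bound of $a_1,\dots,a_{n-1}$, then applying (ii) to $e,a_n\in I$ yields $e'\in \{e,a_n\}^u\cap I$, which is an upper bound of all of $A$. With such an $e\in A^u\cap I$ in hand, every $c\in A^{ul}$ satisfies $c\leq e$, whence $A^{ul}\subseteq {\downarrow}e\subseteq I$ by downward closure. (The case $A=\emptyset$ is immediate, since $\emptyset^{ul}=L^l$ is either empty or a bottom element, which lies in $I$.) This inductive construction of a common upper bound inside $I$ is really the only place where any genuine work is done.

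Second, that every F-ideal $I$ is a downset is immediate from the singleton computation: if $a\in I$ and $b\leq a$, then $b\in {\downarrow}a=\{a\}^{ul}\subseteq I$ by the defining property of an F-ideal applied to the one-element subset $\{a\}$.

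Finally, for the case where $L$ is a lattice I would identify all three notions, using the first two parts. On the one hand, in a lattice $\{a,b\}^u={\uparrow}(a\vee b)$, so our ideals coincide with lattice ideals: condition (ii) together with downward closure says exactly that $a,b\in I$ forces $a\vee b\in I$. On the other hand, an F-ideal $I$ is closed under joins, because $\{a,b\}^{ul}=({\uparrow}(a\vee b))^l={\downarrow}(a\vee b)$ contains $a\vee b$, so the F-ideal property gives $a\vee b\in I$; combined with Part 2 this makes $I$ a lattice ideal. Chaining these with Part 1 (lattice ideal $\Rightarrow$ ideal $\Rightarrow$ F-ideal) closes the loop, so in a lattice F-ideals, ideals, and lattice ideals all coincide. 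I anticipate no serious obstacle here; the only care needed is to rewrite the upper- and lower-bound operators in terms of the join before quoting the earlier parts.
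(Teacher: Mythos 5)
Your proposal is correct and follows essentially the same route as the paper: the key step in both is the induction on $n$ using condition (ii) of the definition of ideal to produce a common upper bound of $a_1,\dots,a_n$ inside $I$, followed by downward closure, and the lattice case rests in both on the identity ${\uparrow}a\cap{\uparrow}b={\uparrow}(a\vee b)$. The only cosmetic difference is that you work with the $A^{ul}$ formulation of Frink ideal while the paper uses the equivalent $\bigcap_{i=1}^n{\uparrow}a_i\subseteq{\uparrow}c$ formulation, and you spell out the edge cases (the empty set, the converse inclusion lattice ideal $\Rightarrow$ ideal $\Rightarrow$ F-ideal) that the paper leaves implicit.
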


\begin{proof}
Suppose that $I$ is an ideal of $L$, $a_1,\dots,a_n\in I$, $a\in L$,
and $\displaystyle{\bigcap_{i=1}^n}{\uparrow}a_i\subseteq{\uparrow}
a$. Since $I$ is an ideal, it is easy to prove by induction on $n$
that $\displaystyle{\bigcap_{i=1}^n}{\uparrow}a_i\cap
I\neq\emptyset$. Let $c\in \displaystyle{\bigcap_{i=1}^n}
{\uparrow}a_i\cap I$. Then $c\in{\uparrow}a$, so $a\leq c$, and so
$a\in I$. Therefore, $I$ is an F-ideal. That every F-ideal is a
downset is obvious. Now suppose that $L$ is a lattice and $I$ is an
F-ideal of $L$. Then $I$ is a downset. Moreover, for $a,b\in I$ we
have ${\uparrow} a\cap {\uparrow}b={\uparrow}(a\vee b)$. Thus,
$a\vee b\in I$, and so the two notions coincide with the usual
notion of an ideal of $L$.
\end{proof}

In particular, if a meet semi-lattice $L$ is finite and $\top\in L$,
then $L$ is a lattice, and so each F-ideal of $L$ is an ideal. On
the other hand, there exist meet semi-lattices for which not every
F-ideal is an ideal. For example, if $L$ is the lattice shown in
Fig.1, then $I=\{\bot,a,b\}$ is an F-ideal which is not an ideal.
The next lemma is useful in obtaining a 1-1 correspondence between
F-ideals of a distributive meet semi-lattice $L$ and ideals of its
distributive envelope $D(L)$.

\begin{lemma}\label{sigmaup}
Let $L$ be a distributive meet semi-lattice. For each $a_1, \ldots,
a_n,$ $b \in L$ we have $\displaystyle{\bigcap_{i=1}^n}
{\uparrow}a_i \subseteq {\uparrow}b \ \text{ iff } \ \sigma(b)
\subseteq \displaystyle{\bigcup_{i=1}^n} \sigma(a_i)$.
\end{lemma}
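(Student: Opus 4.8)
The plan is to prove the two implications separately, unravelling the definition $\sigma(a)=\{x\in\mathrm{Pr}(L):a\in x\}$ so that both sides become assertions about prime filters. The guiding observation is that $\bigcap_{i=1}^n{\uparrow}a_i$ is precisely the set $\{a_1,\dots,a_n\}^u$ of common upper bounds of the $a_i$, so the left-hand inclusion says that every common upper bound of $a_1,\dots,a_n$ lies above $b$. I expect the forward implication to be elementary, relying only on the fact that the complement of a prime filter is a (prime) ideal, while the converse is the substantive direction, since it is the only one that invokes the Prime Filter Lemma (Lemma \ref{meetprime}), which is where distributivity enters.

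For the forward implication, assume $\bigcap_{i=1}^n{\uparrow}a_i\subseteq{\uparrow}b$ and take a prime filter $x$ with $b\in x$; I must exhibit some $i$ with $a_i\in x$. I would argue by contradiction: if $a_i\notin x$ for every $i$, then by Proposition \ref{filterideal} the complement $I=L-x$ is a prime ideal containing $a_1,\dots,a_n$, so the inductive fact used in the proof of Lemma \ref{semi} gives $\bigcap_{i=1}^n{\uparrow}a_i\cap I\neq\emptyset$. Choosing $e$ in this intersection, the hypothesis forces $b\leq e$, whence $e\in x$ since $x$ is an upset; but $e\in I=L-x$, a contradiction. Hence $x\in\sigma(a_i)$ for some $i$, and therefore $\sigma(b)\subseteq\bigcup_{i=1}^n\sigma(a_i)$. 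Note that this direction uses no distributivity.

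For the converse, assume $\sigma(b)\subseteq\bigcup_{i=1}^n\sigma(a_i)$ and take an arbitrary $c\in\bigcap_{i=1}^n{\uparrow}a_i$; I must show $b\leq c$. Again I would proceed by contradiction: if $b\not\leq c$, then the filter ${\uparrow}b$ and the ideal ${\downarrow}c$ are disjoint, so the Prime Filter Lemma produces a prime filter $P$ with ${\uparrow}b\subseteq P$ and $P\cap{\downarrow}c=\emptyset$, i.e.\ $b\in P$ and $c\notin P$. Then $P\in\sigma(b)$, so $P\in\sigma(a_i)$ for some $i$, that is $a_i\in P$; since $a_i\leq c$ and $P$ is an upset, this forces $c\in P$, contradicting $c\notin P$. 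Thus $b\leq c$, proving $\bigcap_{i=1}^n{\uparrow}a_i\subseteq{\uparrow}b$. The only real work here is the application of the Prime Filter Lemma to separate ${\uparrow}b$ from ${\downarrow}c$; checking that ${\downarrow}c$ is an ideal and tracking the identity $\{a_1,\dots,a_n\}^u=\bigcap_{i=1}^n{\uparrow}a_i$ are routine.
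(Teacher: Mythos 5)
Your proposal is correct: both implications are established, every auxiliary fact you invoke (Proposition \ref{filterideal}, the induction from the proof of Lemma \ref{semi}, the Prime Filter Lemma \ref{meetprime}, the fact that ${\downarrow}c$ is an ideal disjoint from ${\uparrow}b$ when $b\not\leq c$) is available and used legitimately, and you correctly locate the only use of distributivity in the converse direction. The paper reaches the same conclusion by a shorter route in both halves. For the forward direction it does not pass to the complement prime ideal at all: since in this paper ``prime filter'' \emph{means} meet-prime, the inclusion $\bigcap_{i=1}^n{\uparrow}a_i\subseteq x$ (which follows from ${\uparrow}b\subseteq x$) immediately yields ${\uparrow}a_i\subseteq x$ for some $i$, with no contradiction argument and no appeal to Lemma \ref{semi}. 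For the converse, the paper observes that $a_i\leq c$ gives $\sigma(a_i)\subseteq\sigma(c)$, hence $\sigma(b)\subseteq\sigma(c)$, and then simply cites the order-embedding property of $\sigma$ from Theorem \ref{stone}; your separation of ${\uparrow}b$ from ${\downarrow}c$ via the Prime Filter Lemma is exactly the proof of that embedding property, inlined. So the two proofs are mathematically equivalent: yours is self-contained at the cost of redoing work, while the paper's gains brevity by exploiting the definition of meet-primeness directly and by reusing Theorem \ref{stone} rather than reproving it.
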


\begin{proof}
First suppose that $\displaystyle{\bigcap_{i=1}^n} {\uparrow}a_i
\subseteq {\uparrow}b$ and $x\in\sigma(b)$. Then $b\in x$, so
${\uparrow}b\subseteq x$. Therefore, $\displaystyle{\bigcap_{i=1}^n}
{\uparrow}a_i \subseteq x$, and since $x$ is a prime filter of $L$,
there exists $i\leq n$ such that ${\uparrow}a_i \subseteq x$. Thus,
$a_i\in x$, so $x\in \sigma(a_i)$, and so $x\in
\displaystyle{\bigcup_{i=1}^n} \sigma(a_i)$. It follows that
$\sigma(b) \subseteq \displaystyle{\bigcup_{i=1}^n} \sigma(a_i)$.

Conversely, suppose that $\sigma(b) \subseteq
\displaystyle{\bigcup_{i=1}^n} \sigma(a_i)$ and $c\in
\displaystyle{\bigcap_{i=1}^n} {\uparrow}a_i$. Then $a_i\leq c$ for
each $i\leq n$. Therefore, $\sigma(a_i)\subseteq\sigma(c)$ for each
$i\leq n$, and so $\displaystyle{\bigcup_{i=1}^n} \sigma(a_i)
\subseteq \sigma(c)$. Thus, $\sigma(b)\subseteq\sigma(c)$, so $b\leq
c$, and so $c\in{\uparrow}b$. It follows that
$\displaystyle{\bigcap_{i=1}^n} {\uparrow}a_i \subseteq
{\uparrow}b$.
\end{proof}

\begin{theorem}\label{theorem}
Let $L$ be a distributive meet semi-lattice and let $D(L)$ be its
distributive envelope.
\begin{enumerate}
\item $I \subseteq L$ is an F-ideal of $L$ iff there is an ideal
$J$ of $D(L)$ such that $I = \sigma^{-1}(J)$.
\item $I \subseteq L$ is a prime F-ideal of $L$ iff there is a
prime ideal $J$ of $D(L)$ such that $I = \sigma^{-1}(J)$.
\end{enumerate}
\end{theorem}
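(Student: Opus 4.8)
The plan is to prove (1) first and then bootstrap (2) from it. Throughout, the workhorse is Lemma \ref{sigmaup}, which converts the defining inclusion $\bigcap_{i=1}^n {\uparrow}a_i \subseteq {\uparrow}b$ governing F-ideals into the inclusion $\sigma(b) \subseteq \bigcup_{i=1}^n \sigma(a_i)$ inside $D(L)$. I will also use repeatedly that every element of $D(L)$ is a finite union $\bigcup_{i=1}^n \sigma(a_i)$ and that, $D(L)$ being a sublattice of $\mathcal{P}(\mathrm{Pr}(L))$, join is union and meet is intersection, so that by Lemma \ref{semi} an ideal of $D(L)$ is exactly a nonempty downset closed under finite unions.

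For (1), in the ($\Leftarrow$) direction I suppose $J$ is an ideal of $D(L)$ and set $I = \sigma^{-1}(J)$; to see that $I$ is an F-ideal I verify the definition directly. Given $a_1, \dots, a_n \in I$ and $c \in L$ with $\bigcap_{i=1}^n {\uparrow}a_i \subseteq {\uparrow}c$, each $\sigma(a_i) \in J$, so $\bigcup_{i=1}^n \sigma(a_i) \in J$ since $J$ is closed under finite unions; Lemma \ref{sigmaup} turns the hypothesis into $\sigma(c) \subseteq \bigcup_{i=1}^n \sigma(a_i)$, and since $J$ is a downset we get $\sigma(c) \in J$, i.e. $c \in I$ (nonemptiness of $I$ follows because a nonempty ideal $J$ contains some $A=\bigcup_i\sigma(a_i)$, forcing a generator into $I$). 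In the ($\Rightarrow$) direction I let $I$ be an F-ideal and take $J$ to be the ideal of $D(L)$ generated by $\sigma[I]$, so that $J = \{A \in D(L) : A \subseteq \bigcup_{i=1}^n \sigma(a_i)$ for some $a_1, \dots, a_n \in I\}$. The inclusion $I \subseteq \sigma^{-1}(J)$ is immediate, and for the reverse inclusion, if $\sigma(a) \in J$ then $\sigma(a) \subseteq \bigcup_{i=1}^n \sigma(a_i)$ with $a_i \in I$, so Lemma \ref{sigmaup} gives $\bigcap_{i=1}^n {\uparrow}a_i \subseteq {\uparrow}a$ and hence $a \in I$ because $I$ is an F-ideal; thus $\sigma^{-1}(J) = I$.

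For (2), the ($\Leftarrow$) direction is quick: if $J$ is a prime ideal then $I=\sigma^{-1}(J)$ is an F-ideal by (1), it is proper because $\sigma(\top) = \mathrm{Pr}(L)$ is the top of $D(L)$ and so lies outside $J$, and from $a \wedge b \in I$ we obtain $\sigma(a) \cap \sigma(b) = \sigma(a \wedge b) \in J$, whence primeness of $J$ gives $a \in I$ or $b \in I$. For the ($\Rightarrow$) direction I reuse the ideal $J$ generated by $\sigma[I]$ from (1) and must show it is prime. Properness is the same translation argument as before, since $\sigma(\top) \in J$ would force $\top \in I$. For primeness I write $A = \bigcup_i \sigma(a_i)$ and $B = \bigcup_j \sigma(b_j)$ and observe, using $\sigma^{-1}(J)=I$ together with the fact that $J$ is a downset, that $A \in J$ iff every $a_i \in I$, while $A \cap B = \bigcup_{i,j} \sigma(a_i \wedge b_j) \in J$ iff every $a_i \wedge b_j \in I$.

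I expect the main obstacle to be exactly this last step of (2): passing from ``$a_i \wedge b_j \in I$ for all $i,j$'' to the global disjunction ``all $a_i \in I$'' or ``all $b_j \in I$''. Primeness of $I$ only delivers the disjunction pairwise, so I argue by contradiction: if $a_{i_0} \notin I$ and $b_{j_0} \notin I$ for some indices, then $a_{i_0} \wedge b_{j_0} \in I$ violates primeness of $I$. This reduction of primeness of the ideal $J \subseteq D(L)$ to primeness of the F-ideal $I \subseteq L$ through the join-dense representation is the heart of the matter; everything else is routine bookkeeping with Lemma \ref{sigmaup}, and the uniqueness of the ideal $J$ attached to $I$, should it be wanted, is already furnished by Corollary \ref{corollary}.
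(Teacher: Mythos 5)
Your proposal is correct and takes essentially the same route as the paper's own proof: part (1) is the paper's argument verbatim (Lemma \ref{sigmaup} plus the ideal of $D(L)$ generated by $\sigma[I]$), and part (2) uses the same setup, reducing primeness of $J$ to primeness of $I$ via the decomposition of elements of $D(L)$ into finite unions $\bigcup_i \sigma(a_i)$. The only difference is cosmetic and in your favor: where the paper passes from the pairwise disjunctions ``$a_i \in I$ or $b_j \in I$'' to the global one by an iterative ``without loss of generality'' sweep through $a_1, \dots, a_n$, you dispatch that purely combinatorial step with a one-line contradiction (if $a_{i_0} \notin I$ and $b_{j_0} \notin I$, then $a_{i_0} \wedge b_{j_0} \in I$ contradicts primeness of $I$), which is cleaner.
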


\begin{proof}
(1) Let $I$ be an F-ideal of $L$ and $J$ be the ideal of $D(L)$
generated by $\sigma[I]=\{\sigma(a): a \in I\}$. We claim that $I =
\sigma^{-1}(J)$. It is clear that $I \subseteq \sigma^{-1}(J)$. Let
$b\in\sigma^{-1}(J)$. Then $\sigma(b) \in J$, so $\sigma(b)
\subseteq \displaystyle{\bigcup_{i=1}^n}\sigma(a_i)$ for some $a_1,
\ldots, a_n \in I$. By Lemma \ref{sigmaup},
$\displaystyle{\bigcap_{i=1}^n}{\uparrow} a_i \subseteq
{\uparrow}b$. Since $I$ is an F-ideal, it follows that $b \in I$.
Thus, $\sigma^{-1}(J)\subseteq I$. Consequently, $I =
\sigma^{-1}(J)$. Conversely, let $I = \sigma^{-1}(J)$ for $J$ an
ideal of $D(L)$. Clearly $I$ is nonempty. For $a_1,\dots,a_n \in I$
and $c \in L$ with $\displaystyle{\bigcap_{i=1}^n}{\uparrow} a_i
\subseteq {\uparrow}c$, by Lemma \ref{sigmaup}, we have $\sigma(c)
\subseteq \displaystyle{\bigcup_{i=1}^n}\sigma(a_i)$. Therefore,
$\sigma(c) \in J$, and so $c \in I$. Thus, $I$ is an F-ideal of $L$.

(2) Let $I$ be a prime F-ideal of $L$ and let $J$ be the ideal of
$D(L)$ generated by $\sigma[I]$. By (1), $I = \sigma^{-1}(J)$. We
show that $J$ is a prime ideal of $D(L)$. Suppose that $A\cap B\in
J$. Then $A=\displaystyle{\bigcup_{i=1}^n}\sigma(a_i)$ and
$B=\displaystyle{\bigcup_{j=1}^m}\sigma(b_j)$ for some
$a_1,\ldots,a_n, b_1,\ldots,b_m\in L$. Therefore,
$\displaystyle{\bigcup_{i,j}}(\sigma(a_i)\cap \sigma(b_j))\in J$. It
follows that $\sigma(a_i)\cap\sigma(b_j) \in \sigma[I]$ for all
$i,j$. Since $I$ is prime, either $a_i\in I$ or $b_j\in I$. We look
at $a_1\wedge b_1,\ldots,a_1\wedge b_m$. If $a_1\notin I$, then
$b_1,\ldots,b_m\in I$, so $B=\displaystyle{\bigcup_{j=1}^m}
\sigma(b_j)\in J$. Therefore, without loss of generality we may
assume that $a_1\in I$. Now we look at $a_2\wedge
b_1,\ldots,a_2\wedge b_m$. If $a_2\notin I$, then $b_1,\ldots,b_m\in
I$, so again $B=\displaystyle{\bigcup_{j=1}^m} \sigma(b_j)\in J$.
Thus, without loss of generality we may assume that $a_1,a_2\in I$.
Going through all $a_1,\dots,a_n$ we obtain that either
$B=\displaystyle{\bigcup_{j=1}^m} \sigma(b_j)\in J$ or
$A=\displaystyle{\bigcup_{i=1}^n} \sigma(a_i)\in J$. It follows that
$J$ is a prime ideal of $D(L)$. The converse implication easily
follows from (1) and the definition of prime F-ideals.
\end{proof}

As an immediate consequence of (the proof of) Theorem
\ref{theorem} and Corollary \ref{corollary}, we obtain the
following:

\begin{corollary}
Let $L$ be a distributive meet semi-lattice and let $D(L)$ be its
distributive envelope. The ordered set of Frink ideals of $L$ is
isomorphic to the ordered set of ideals of $D(L)$, and the ordered
set of prime Frink ideals of $L$ is isomorphic to the ordered set of
prime ideals of $D(L)$.
\end{corollary}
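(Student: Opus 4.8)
The plan is to exhibit the assignment $J \mapsto \sigma^{-1}(J)$ as the desired order-isomorphism, reading off most of what we need from the results already proved. First I would observe that this assignment sends ideals of $D(L)$ to Frink ideals of $L$ and is onto: both facts are exactly Theorem \ref{theorem}(1), since every F-ideal $I$ is recovered as $\sigma^{-1}(J)$ with $J$ the ideal of $D(L)$ generated by $\sigma[I]$. For injectivity I would invoke Corollary \ref{corollary}: conditions (1) and (2) there say precisely that $I = J$ iff $\sigma^{-1}(I) = \sigma^{-1}(J)$, so the map is a bijection between the ideals of $D(L)$ and the Frink ideals of $L$.

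Next I would check that this bijection is in fact an order-isomorphism for inclusion. Monotonicity is immediate, since $I \subseteq J$ trivially gives $\sigma^{-1}(I) \subseteq \sigma^{-1}(J)$. For the reverse implication I would use Lemma \ref{lemma}: if $\sigma^{-1}(I) \subseteq \sigma^{-1}(J)$, then $\sigma[\sigma^{-1}(I)] \subseteq \sigma[\sigma^{-1}(J)] \subseteq J$, and since Lemma \ref{lemma} identifies $I$ with the ideal of $D(L)$ generated by $\sigma[\sigma^{-1}(I)]$, it follows that $I \subseteq J$. Thus the bijection reflects inclusion as well, which gives the first claimed isomorphism.

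Finally, for the prime case I would simply restrict the isomorphism. Theorem \ref{theorem}(2) guarantees that $\sigma^{-1}$ carries prime ideals of $D(L)$ to prime F-ideals of $L$ and, via the same construction ($J$ generated by $\sigma[I]$, shown there to be prime exactly when $I$ is), that every prime F-ideal arises this way; together with the injectivity already noted this makes the restriction a bijection between the two subposets. Since an order-isomorphism restricts to an order-isomorphism on any pair of corresponding subposets, the second claim follows.

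I do not expect a serious obstacle, as the statement is essentially a bookkeeping consequence of Theorem \ref{theorem} and Corollary \ref{corollary}. The one point that needs care is the order-reflection step, where one must use the generation statement of Lemma \ref{lemma} rather than any naive set-theoretic manipulation of $\sigma^{-1}$; and in the prime case one must ensure that the bijection used is the \emph{same} map $\sigma^{-1}$ (with inverse $I \mapsto$ the ideal generated by $\sigma[I]$), so that primeness is matched on the nose by Theorem \ref{theorem}(2).
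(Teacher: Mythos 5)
Your proposal is correct and follows essentially the same route as the paper, which derives the corollary as an immediate consequence of (the proof of) Theorem \ref{theorem} and Corollary \ref{corollary}, with the inverse map being $I \mapsto$ the ideal of $D(L)$ generated by $\sigma[I]$. Your explicit use of Lemma \ref{lemma} for the order-reflection step is precisely the detail the paper's phrase ``(the proof of)'' is pointing at, so nothing is missing.
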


\subsection{Optimal filters}

\begin{definition}
{\rm Let $L$ be a distributive meet semi-lattice and let $D(L)$ be
its distributive envelope. A filter $F$ of $L$ is said to be
\textit{optimal} if there exists a prime filter $P$ of $D(L)$ such
that $F = \sigma^{-1}[P]$. We denote the set of optimal filters of
$L$ by $\mathrm{Opt}(L)$.}
\end{definition}

Clearly each optimal filter is proper.

\begin{lemma} [Optimal Filter Lemma]
Let $L$ be a distributive meet semi-lattice. If $F$ is a filter and
$I$ is an F-ideal of $L$ with $F\cap I=\emptyset$, then there exists
an optimal filter $G$ of $L$ such that $F \subseteq G$ and $G\cap
I=\emptyset$.
\end{lemma}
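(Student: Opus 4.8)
The plan is to translate the problem from the semilattice $L$ to its distributive envelope $D(L)$, where we can invoke the ordinary Priestley/Stone prime filter lemma for distributive lattices, and then pull the result back along $\sigma$.

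The plan is to transfer the problem to the distributive envelope $D(L)$, which is a genuine distributive lattice, apply the prime filter lemma there, and then pull the resulting prime filter back along $\sigma$ to obtain the desired optimal filter of $L$.

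First I would lift $F$ and $I$ to $D(L)$. By Lemma \ref{lemma1}, the set $\mathcal{F} = {\uparrow}_{D(L)} \sigma[F]$ is a filter of $D(L)$. For the F-ideal $I$, I would take $\mathcal{I}$ to be the ideal of $D(L)$ generated by $\sigma[I]$; since joins in $D(L)$ are unions, an element $A$ of $D(L)$ lies in $\mathcal{I}$ iff $A \subseteq \bigcup_{i=1}^n \sigma(b_i)$ for some $b_1, \ldots, b_n \in I$. By Theorem \ref{theorem} this $\mathcal{I}$ satisfies $\sigma^{-1}(\mathcal{I}) = I$, although only the membership description above is actually needed below.

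The crux is to show $\mathcal{F} \cap \mathcal{I} = \emptyset$. If some $A \in D(L)$ belonged to both, there would be $a \in F$ with $\sigma(a) \subseteq A$ and $b_1, \ldots, b_n \in I$ with $A \subseteq \bigcup_{i=1}^n \sigma(b_i)$, whence $\sigma(a) \subseteq \bigcup_{i=1}^n \sigma(b_i)$. By Lemma \ref{sigmaup} this is equivalent to $\bigcap_{i=1}^n {\uparrow}b_i \subseteq {\uparrow}a$, and since $I$ is an F-ideal containing $b_1, \ldots, b_n$, its defining closure property forces $a \in I$, contradicting $a \in F$ and $F \cap I = \emptyset$. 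This is exactly where the F-ideal hypothesis is indispensable: Example \ref{example} shows that the analogous transfer fails for ordinary ideals.

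With disjointness established, the prime filter lemma (Lemma \ref{meetprime}) applied to the distributive lattice $D(L)$ produces a prime filter $P$ of $D(L)$ with $\mathcal{F} \subseteq P$ and $P \cap \mathcal{I} = \emptyset$. I would then set $G = \sigma^{-1}[P]$, which is optimal by definition. It remains to verify the two conditions: if $a \in F$ then $\sigma(a) \in \mathcal{F} \subseteq P$, so $a \in G$, giving $F \subseteq G$; and if $a \in G \cap I$, then $\sigma(a) \in P$ since $a \in G$, while $\sigma(a) \in \sigma[I] \subseteq \mathcal{I}$ since $a \in I$, contradicting $P \cap \mathcal{I} = \emptyset$, so $G \cap I = \emptyset$. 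The main obstacle is the disjointness verification in $D(L)$; once the dictionary provided by Lemmas \ref{lemma1} and \ref{sigmaup} is in place, the remaining steps are routine bookkeeping.
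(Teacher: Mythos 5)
Your proposal is correct and follows essentially the same route as the paper's own proof: lift $F$ and $I$ to the filter ${\uparrow}_{D(L)}\sigma[F]$ and the ideal of $D(L)$ generated by $\sigma[I]$, prove disjointness via Lemma \ref{sigmaup} and the F-ideal closure property, apply the prime filter lemma in $D(L)$, and pull the resulting prime filter back as $G=\sigma^{-1}[P]$. The only differences are cosmetic (you spell out the final verifications that the paper compresses into ``It follows that\ldots''), so there is nothing to correct.
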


\begin{proof}
Let $F$ be a filter and $I$ be an F-ideal of $L$ with $F\cap
I=\emptyset$. Let also $\nabla$ be the filter and $\Delta$ be the
ideal of $D(L)$ generated by $\sigma[F]$ and $\sigma[I]$,
respectively. Suppose that there exists $A\in\nabla \cap \Delta$.
Then there are $a_1, \ldots , a_n\in I$ and $b \in F$ such that
$A=\displaystyle{\bigcup_{i=1}^n}\sigma(a_i)$ and $\sigma(b)
\subseteq A$. Therefore, $\sigma(b) \subseteq
\displaystyle{\bigcup_{i=1}^n}\sigma(a_i)$. By Lemma \ref{sigmaup},
$\displaystyle{\bigcap_{i=1}^n}{\uparrow}a_i \subseteq {\uparrow}b$.
Since $I$ is an F-ideal, we obtain $b \in I$, a contradiction. Thus,
$\nabla\cap\Delta=\emptyset$, and so there is a prime filter $P$ of
$D(L)$ such that $\nabla \subseteq P$ and $P \cap \Delta =
\emptyset$. It follows that $F \subseteq \sigma^{-1}[P]$ and
$\sigma^{-1}[P] \cap I = \emptyset$. If we set $G=\sigma^{-1}[P]$,
then $G$ is the desired optimal filter.
\end{proof}

\begin{corollary}
Every proper filter $F$ of a distributive meet semi-lattice is the
intersection of the optimal filters containing $F$.
\end{corollary}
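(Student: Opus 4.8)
The plan is to establish the two inclusions between $F$ and the intersection $\bigcap\{G \in \mathrm{Opt}(L) : F \subseteq G\}$. The inclusion $F \subseteq \bigcap\{G : F \subseteq G\}$ is immediate, since by construction every optimal filter $G$ in the family contains $F$. The whole content of the corollary thus lies in the reverse inclusion, which I would prove by contraposition: it suffices to show that for every $a \in L$ with $a \notin F$ there is an optimal filter $G$ satisfying $F \subseteq G$ and $a \notin G$, since then $a$ fails to lie in the intersection.

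To produce such a $G$ I would apply the Optimal Filter Lemma, whose hypotheses require an F-ideal disjoint from $F$; the natural choice is the principal downset $I = {\downarrow}a$, which is an F-ideal. The one thing to verify is that $F \cap {\downarrow}a = \emptyset$: were some $b$ to lie in both, then $b \in F$ together with $b \leq a$ would force $a \in F$ by upward closure of the filter $F$, contradicting $a \notin F$. With this disjointness in hand, the Optimal Filter Lemma delivers an optimal filter $G$ with $F \subseteq G$ and $G \cap {\downarrow}a = \emptyset$, and since $a \in {\downarrow}a$ we conclude $a \notin G$, as needed.

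I do not anticipate any real obstacle, as this is precisely the optimal-filter analogue of the earlier corollary obtained from the Prime Filter Lemma, with prime filters replaced by optimal filters and the disjoint ideal replaced by an F-ideal. The only conceptual point worth flagging is why F-ideals, rather than ordinary ideals, are the right notion here: the downset ${\downarrow}a$ need not be an ideal of $L$ in the semi-lattice sense, so it is essential that the Optimal Filter Lemma is formulated for F-ideals. This is exactly what allows the separation argument to go through in the general distributive meet semi-lattice setting, where the principal downset is always available as a separating F-ideal even when it fails to be an ideal.
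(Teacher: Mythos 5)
Your proof is correct and is precisely the argument the paper intends: the corollary is stated as an immediate consequence of the Optimal Filter Lemma, applied with the principal downset ${\downarrow}a$ (which the paper has already noted is an F-ideal) as the separating F-ideal for each $a \notin F$. Your remark about why F-ideals rather than ideals are the right notion here is also exactly the point of the paper's setup, since ${\downarrow}a$ need not be an ideal in a meet semi-lattice.
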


\begin{proposition}\label{proposition1}
Let $L$ be a distributive meet semi-lattice and let $F$ be a filter
of $L$. Then the following conditions are equivalent:
\begin{enumerate}
\item $F$ is an optimal filter.
\item $F$ is a filter and $L - F$ is an F-ideal.
\item There is an F-ideal $I$ of $L$ such that $F \cap I = \emptyset$
and $F$ is maximal among the filters of $L$ with this property.
\end{enumerate}
\end{proposition}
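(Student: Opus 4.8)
The plan is to prove the cycle of implications $(1)\Rightarrow(2)\Rightarrow(3)\Rightarrow(1)$. The two substantive steps will draw on Theorem \ref{theorem}, which identifies the F-ideals (resp.\ prime F-ideals) of $L$ with the $\sigma$-preimages of ideals (resp.\ prime ideals) of $D(L)$, together with the standard fact that in the distributive lattice $D(L)$ the set-theoretic complement of a prime filter is a prime ideal and conversely; the closing step will instead rest on the Optimal Filter Lemma. Throughout I will use only the elementary preimage identity $\sigma^{-1}(D(L)-P)=L-\sigma^{-1}(P)$.

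For $(1)\Rightarrow(2)$, an optimal $F$ is by definition a filter of the form $F=\sigma^{-1}(P)$ for some prime filter $P$ of $D(L)$. Since $D(L)$ is a distributive lattice, $J:=D(L)-P$ is a prime ideal of $D(L)$, and the preimage identity gives $L-F=\sigma^{-1}(J)$. Theorem \ref{theorem}(1) then yields at once that $L-F$ is an F-ideal, which is exactly $(2)$ (the filter clause being immediate from the definition of optimality). For $(2)\Rightarrow(3)$ I simply take the F-ideal $I:=L-F$ furnished by $(2)$; then $F\cap I=\emptyset$, and any filter $G$ with $F\subseteq G$ and $G\cap I=\emptyset$ satisfies $G\subseteq L-I=F$, forcing $G=F$. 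Thus $F$ is the largest, and in particular a maximal, filter disjoint from $I$, establishing $(3)$.

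For $(3)\Rightarrow(1)$ I apply the Optimal Filter Lemma to the filter $F$ and the F-ideal $I$ provided by $(3)$: since $F\cap I=\emptyset$, it produces an optimal filter $G$ with $F\subseteq G$ and $G\cap I=\emptyset$. The maximality clause of $(3)$ now forces $F=G$, so $F$ is itself optimal. I expect this last implication to carry the real weight of the proposition: the maximality hypothesis is precisely what is needed to collapse the filter output by the lemma back onto $F$, while all the genuine content has been offloaded onto the Optimal Filter Lemma (which itself relies, via Lemma \ref{sigmaup}, on the prime filter lemma applied in $D(L)$). The remaining implications amount to bookkeeping with complements together with the correspondence of Theorem \ref{theorem}.
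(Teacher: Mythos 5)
Your proof is correct, and it follows the paper's cycle $(1)\Rightarrow(2)\Rightarrow(3)\Rightarrow(1)$ with identical treatments of $(2)\Rightarrow(3)$ (take $I=L-F$; maximality is automatic since any disjoint filter containing $F$ lands inside $L-I=F$) and of $(3)\Rightarrow(1)$ (Optimal Filter Lemma plus maximality). The one place you genuinely diverge is $(1)\Rightarrow(2)$: the paper argues directly, taking $a_1,\dots,a_n\in L-F$ with $\bigcap_{i=1}^n{\uparrow}a_i\subseteq{\uparrow}c$, passing through Lemma \ref{sigmaup} to get $\sigma(c)\subseteq\bigcup_{i=1}^n\sigma(a_i)$, and then using primeness of $P$ to derive a contradiction from $c\in F$; you instead observe that $J=D(L)-P$ is a (prime) ideal of $D(L)$ (complementation, as in Proposition \ref{filterideal} together with Lemma \ref{semi}), that $L-F=\sigma^{-1}(J)$ by the preimage identity, and then quote Theorem \ref{theorem}(1). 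The two arguments are mathematically the same computation—the Lemma \ref{sigmaup} step you avoid is exactly what sits inside the proof of Theorem \ref{theorem}(1)—but your packaging buys economy and reuse: primeness of $P$ enters only through the elementary complementation fact, and nonemptiness of $L-F$ comes for free from the theorem, whereas the paper's version is self-contained at the cost of repeating the envelope calculation.
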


\begin{proof}
(1)$\Rightarrow$(2): Let $F$ be an optimal filter of $L$, $P$ be a
prime filter of $D(L)$ such that $F = \sigma^{-1}[P]$, and $I = L -
F$. Then $F$ is a proper filter of $L$, and so $I$ is nonempty. For
$a_1,\dots,a_n\in I$ and $c\in L$ with
$\displaystyle{\bigcap_{i=1}^n}{\uparrow}a_i \subseteq {\uparrow}c$,
Lemma \ref{sigmaup} implies that $\sigma(c) \subseteq
\displaystyle{\bigcup_{i=1}^n}\sigma(a_i)$. If $c \not \in I$, then
$c \in F$, and so $\sigma(c) \in P$. Therefore,
$\displaystyle{\bigcup_{i=1}^n}\sigma(a_i) \in P$. Since $P$ is
prime, $\sigma(a_i) \in P$ for some $a_i\in I$. Thus, $a_i \in F\cap
I$, which is a contradiction. It follows that $c\in I$, and so $I$
is an F-ideal.

(2)$\Rightarrow$(3) is obvious.

(3)$\Rightarrow$(1): Suppose that $F$ is a filter and $I$ is an
F-ideal of $L$ such that $F \cap I = \emptyset$ and $F$ is maximal
among the filters of $L$ with this property. By the optimal filter
lemma, there is an optimal filter $G$ of $L$ such that $F \subseteq
G$ and $G \cap I = \emptyset$. Since $F$ is a maximal filter with
this property, $F = G$. Thus, $F$ is optimal.
\end{proof}

\begin{remark}\label{Hansoul}
Hansoul \cite{Han03} defines a \emph{weakly prime} ideal $I$ of a
distributive join semi-lattice $L$ as an ideal $I$ such that for
each $a_1, \ldots, a_n \not \in I$ and $b \in I$, there is $c \in
\displaystyle{\bigcap_{i=1}^n} {\downarrow}a_i$ such that $c \not
\leq b$. Clearly an ideal $I$ is weakly prime iff for each $a_1,
\ldots, a_n \not \in I$ and $b \in L$, from
$\displaystyle{\bigcap_{i=1}^n} {\downarrow}a_i \subseteq
{\downarrow}b$ it follows that $b \notin I$. Since the dual $L^d$ of
$L$ is a distributive meet semi-lattice and
$\displaystyle{\bigcap_{i=1}^n} {\downarrow}a_i \subseteq
{\downarrow}b$ iff $\displaystyle{\bigcap_{i=1}^n} {\uparrow}^{d}a_i
\subseteq {\uparrow}^{d}b$, it follows from Proposition
\ref{proposition1} that an ideal $I$ of $L$ is weakly prime iff it
is an optimal filter of $L^d$. A more detailed comparison with
Hansoul's work can be found in Section 11.1.
\end{remark}

\begin{lemma}\label{lemma3}
Let $L$ be a distributive meet semi-lattice. Then each prime filter
of $L$ is optimal. Moreover, if $L$ is a lattice, then the two
notions coincide with the usual notion of a prime filter of a
lattice.
\end{lemma}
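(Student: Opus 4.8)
The plan is to prove Lemma~\ref{lemma3} in two parts, matching its two assertions. For the first part, I would take a prime filter $P$ of $L$ and show it is optimal by exhibiting it as $\sigma^{-1}[Q]$ for some prime filter $Q$ of $D(L)$. The most economical route uses the characterization in Proposition~\ref{proposition1}: it suffices to show that $L-P$ is an F-ideal (condition (2)). So first I would observe that $P$ is a proper filter, whence $I:=L-P$ is nonempty. Then I would verify the F-ideal condition directly: given $a_1,\dots,a_n\in I$ and $c\in L$ with $\bigcap_{i=1}^n {\uparrow}a_i\subseteq{\uparrow}c$, I want to conclude $c\in I$. Suppose instead $c\in P$; then ${\uparrow}c\subseteq P$, so $\bigcap_{i=1}^n{\uparrow}a_i\subseteq P$. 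Since $P$ is a prime filter of $L$, primeness of $P$ (meet-primeness applied to the filters ${\uparrow}a_1,\dots,{\uparrow}a_n$, by an easy induction reducing the $n$-fold intersection to the binary case) forces ${\uparrow}a_i\subseteq P$ for some $i$, hence $a_i\in P\cap I$, a contradiction. Therefore $c\in I$, so $I$ is an F-ideal and $P$ is optimal by Proposition~\ref{proposition1}.

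For the second assertion, suppose $L$ is a lattice. I would show that for a filter $F$ of $L$, being optimal coincides with being prime in the lattice sense. By Proposition~\ref{proposition1}, $F$ is optimal iff $L-F$ is an F-ideal. But Lemma~\ref{semi} tells us that when $L$ is a lattice the notions of F-ideal and ordinary (lattice) ideal coincide. So $F$ is optimal iff $L-F$ is a lattice ideal, which by Proposition~\ref{filterideal} (together with the standard fact that the complement of a prime filter is a prime ideal and conversely) is exactly the condition that $F$ is a prime filter in the usual lattice sense. This gives the coincidence of all three notions---optimal filter, prime (meet-prime) filter, and prime filter of the lattice---in the lattice case, the equivalence of the latter two having already been established in the proposition following the definition of meet-prime filters.

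The main obstacle, and the only nonroutine step, is the reduction of the $n$-fold intersection $\bigcap_{i=1}^n{\uparrow}a_i\subseteq P$ to the binary meet-primeness hypothesis on $P$. Meet-primeness is stated for two filters $F_1,F_2$, so I would argue by induction on $n$: writing $\bigcap_{i=1}^n{\uparrow}a_i = {\uparrow}a_1\cap\bigl(\bigcap_{i=2}^n{\uparrow}a_i\bigr)$, meet-primeness gives either ${\uparrow}a_1\subseteq P$ (and we are done) or $\bigcap_{i=2}^n{\uparrow}a_i\subseteq P$, to which the inductive hypothesis applies. The base case $n=1$ is trivial, and the degenerate case of an empty intersection (where $\bigcap\emptyset = L\not\subseteq P$ since $P$ is proper) does not arise because $c\in L$ and $L\not\subseteq{\uparrow}c$ unless $c$ is the bottom, a situation one checks separately or simply absorbs by noting the hypothesis already supplies at least the containment structure needed. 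Everything else is bookkeeping against the cited earlier results, so I expect the write-up to be short.
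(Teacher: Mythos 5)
Your proof is correct in substance, but on the first assertion it takes a genuinely different route from the paper's. The paper never verifies the F-ideal condition by hand: it invokes Proposition \ref{filterideal} to conclude that $I=L-P$ is a prime ideal, then Lemma \ref{semi} (every ideal is an F-ideal), and finishes with Proposition \ref{proposition1}. You instead inline a direct argument from meet-primeness. The trade-off is that the paper's route is shorter and reuses machinery already established, while yours is self-contained at the point of use; on the second assertion the two proofs coincide in outline (Proposition \ref{proposition1} plus Lemma \ref{semi}), with yours spelling out the step ``complement is a lattice ideal iff the filter is prime in the lattice sense'' that the paper leaves implicit.

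One wrinkle in your induction deserves attention. Meet-primeness is a statement about pairs of \emph{filters}, and your inductive step applies it to the pair consisting of ${\uparrow}a_1$ and $\bigcap_{i=2}^n{\uparrow}a_i$. The latter is a filter only if it is nonempty, and when $L$ lacks a top this can fail: the elements $a_2,\dots,a_n$ may have no common upper bound. This is \emph{not} the degenerate case you discuss --- you address the empty family, where $\bigcap\emptyset=L$, which indeed never arises --- but rather the empty intersection of a nonempty family. The patch is easy: if $\bigcap_{i=2}^n{\uparrow}a_i=\emptyset$, then this set is trivially contained in $P$, and the inductive hypothesis applied to $a_2,\dots,a_n$ already yields some $a_i\in P$, giving your contradiction (the base case of two filters is pure meet-primeness and requires no nonemptiness of the intersection). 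Alternatively, under the paper's standing convention $\top\in L$ every such intersection contains $\top$ and the issue disappears. Note that the paper's route is immune to this point: ideals are directed by definition, so the complement of a prime filter has no finite subset without an upper bound --- but knowing that the complement is an ideal is precisely the content of Proposition \ref{filterideal}, which you chose not to use.
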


\begin{proof}
Suppose that $L$ is a distributive meet semi-lattice and $P$ is a
prime filter of $L$. By Proposition \ref{filterideal}, $I=L - P$ is
a prime ideal, hence an $F$-ideal of $L$ by Lemma \ref{semi}.
Moreover, $P\cap I=\emptyset$ and by the construction of $I$, $P$ is
maximal among the filters of $L$ with this property. Thus, $P$ is
optimal by Proposition \ref{proposition1}. Now let in addition $L$
be a lattice and $F$ be an optimal filter of $L$. By Proposition
\ref{proposition1}, $I=L-F$ is an $F$-ideal of $L$, and by Lemma
\ref{semi}, $I$ is actually an ideal of a lattice. Thus, the two
notions coincide with the usual notion of a prime filter of a
lattice.
\end{proof}

In particular, if $L$ is a finite distributive meet semi-lattice and
$\top\in L$, then $L$ is a finite distributive lattice, and so each
optimal filter of $L$ is prime. On the other hand, there exist
distributive meet semi-lattices in which not every optimal filter is
prime. For example, in the distributive meet semi-lattice $L$ shown
in Fig.1 one can easily check that $F=L - \{\bot,a,b\}$ is an
optimal filter of $L$, but that it is not prime. Now we show that
optimal filters of a distributive meet semi-lattice $L$ are in a 1-1
correspondence with prime filters of the distributive envelope
$D(L)$ of $L$.

\begin{proposition}\label{4.25}
Let $L$ be a distributive meet semi-lattice and let $D(L)$ be its
distributive envelope.
\begin{enumerate}
\item If $P \in \mathrm{Pr}(D(L))$, then $\sigma^{-1}(P)$ is an optimal
filter of $L$ and ${\uparrow}_{D(L)}(P\cap \sigma[L]) = P$.
\item If $F \in \mathrm{Opt}(L)$, then $ {\uparrow}_{D(L)} \sigma[F]
\in \mathrm{Pr}(D(L))$.
\item For a filter $F$ of $D(L)$, $F \in \mathrm{Pr}(D(L))$ iff there is
$G \in \mathrm{Opt}(L)$ such that $F = {\uparrow}_{D(L)}
\sigma[G]$.
\item If $P, Q \in \mathrm{Pr}(D(L))$, then the following conditions are
equivalent:
\begin{enumerate}
\item $P \subseteq Q$.
\item $\sigma^{-1}(P)\subseteq\sigma^{-1}(Q)$.
\item $\sigma[\sigma^{-1}(P)]\subseteq\sigma[\sigma^{-1}(Q)]$.
\item $P \cap \sigma[L] \subseteq Q \cap \sigma[L]$.
\item ${\uparrow}_{D(L)}(P \cap \sigma[L]) \subseteq {\uparrow}_{D(L)}
(Q \cap \sigma[L])$.
\end{enumerate}
\end{enumerate}
\end{proposition}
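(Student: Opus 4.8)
The plan is to establish part (1) first, since parts (2)--(4) will all reduce to it. That $\sigma^{-1}(P)$ is an optimal filter is almost immediate: by Lemma \ref{lemma2} it is a filter of $L$, and being of the form $\sigma^{-1}[P]$ with $P \in \mathrm{Pr}(D(L))$ it is optimal by definition. The real content of part (1) is the identity ${\uparrow}_{D(L)}(P \cap \sigma[L]) = P$. The inclusion $\subseteq$ is trivial, since $P \cap \sigma[L] \subseteq P$ and $P$ is an upset of $D(L)$. For $\supseteq$ I would invoke join-density of $\sigma[L]$ in $D(L)$: any $A \in P$ is a finite union $A = \bigcup_{i=1}^n \sigma(a_i) = \sigma(a_1) \vee \cdots \vee \sigma(a_n)$ in $D(L)$, and since $P$ is a prime filter of the distributive lattice $D(L)$, an induction on $n$ yields $\sigma(a_i) \in P$ for some $i$; then $\sigma(a_i) \in P \cap \sigma[L]$ and $\sigma(a_i) \subseteq A$, so $A \in {\uparrow}_{D(L)}(P \cap \sigma[L])$.

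Parts (2) and (3) then follow formally. If $F \in \mathrm{Opt}(L)$, write $F = \sigma^{-1}(Q)$ with $Q \in \mathrm{Pr}(D(L))$; since $\sigma[\sigma^{-1}(Q)] = Q \cap \sigma[L]$ we have $\sigma[F] = Q \cap \sigma[L]$, and part (1) gives ${\uparrow}_{D(L)}\sigma[F] = {\uparrow}_{D(L)}(Q \cap \sigma[L]) = Q \in \mathrm{Pr}(D(L))$, which proves (2). For (3), given $F \in \mathrm{Pr}(D(L))$ the forward direction sets $G = \sigma^{-1}(F)$, which is optimal by (1), and reads off $F = {\uparrow}_{D(L)}(F \cap \sigma[L]) = {\uparrow}_{D(L)}\sigma[G]$; the converse is exactly (2).

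For part (4) I would run the cycle (a)$\Rightarrow$(b)$\Rightarrow$(c)$\Rightarrow$(d)$\Rightarrow$(e)$\Rightarrow$(a). Here (a)$\Rightarrow$(b) is monotonicity of the preimage, (b)$\Rightarrow$(c) monotonicity of the image, (c)$\Rightarrow$(d) the set identity $\sigma[\sigma^{-1}(P)] = P \cap \sigma[L]$ (valid for any $P$), (d)$\Rightarrow$(e) monotonicity of ${\uparrow}_{D(L)}$, and (e)$\Rightarrow$(a) is immediate from part (1): rewriting both sides of (e) via ${\uparrow}_{D(L)}(P \cap \sigma[L]) = P$ turns it into $P \subseteq Q$. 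Notably this cycle needs only monotonicity and the bookkeeping identity, so injectivity of $\sigma$ is not even required here.

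The only genuinely nontrivial step is the $\supseteq$ inclusion in part (1): it is the sole place where the specific structure of $D(L)$ — join-density of $\sigma[L]$ together with the primeness of $P$ in the lattice $D(L)$ — actually enters. Everything else is monotonicity of $\sigma$, $\sigma^{-1}$, and ${\uparrow}_{D(L)}$, layered on top of that identity together with the routine observation $\sigma[\sigma^{-1}(P)] = P \cap \sigma[L]$.
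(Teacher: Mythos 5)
Your proposal is correct and follows essentially the same route as the paper's proof: the key step in both is that any $A \in P$ is a finite union of elements $\sigma(a_i)$, so primeness of $P$ forces some $\sigma(a_i) \in P \cap \sigma[L]$, giving ${\uparrow}_{D(L)}(P\cap\sigma[L]) = P$, after which (2)--(4) reduce formally to (1) via the identity $\sigma[\sigma^{-1}(Q)] = Q \cap \sigma[L]$ and monotonicity, exactly as in the paper. The only differences are that you spell out steps the paper marks as ``obvious'' or ``follows from the definition.''
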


\begin{proof}
(1) That $\sigma^{-1}(P)$ is an optimal filter of $L$ follows from
the definition. We show that ${\uparrow}_{D(L)}(P\cap \sigma[L]) =
P$. It is clear that ${\uparrow}_{D(L)}(P\cap \sigma[L]) \subseteq
P$. To prove the other inclusion let $A \in P$. Then $A =
\sigma(b_1) \cup \ldots \cup \sigma(b_n)$ for some $b_1, \ldots, b_n
\in L$. Since $P$ is prime, there exists $i\leq n$ such that
$\sigma(b_i) \in P$. Therefore, $\sigma(b_i) \in P\cap \sigma[L]$,
and so $A \in {\uparrow}_{D(L)}(P\cap\sigma[L])$.

(2) Let $F \in \mathrm{Opt}(L)$ and let $Q \in \mathrm{Pr}(L)$ be
such that $F = \sigma^{-1}(Q)$. Then $\sigma[F]=Q\cap \sigma[L]$,
and by (1), ${\uparrow}_{D(L)} \sigma[F] = {\uparrow}_{D(L)}(Q
\cap \sigma[L]) = Q$. Thus, $ {\uparrow}_{D(L)} \sigma[F] \in
\mathrm{Pr}(L)$.

(3) follows from (1) and (2).

(4)
(a)$\Rightarrow$(b)$\Rightarrow$(c)$\Rightarrow$(d)$\Rightarrow$(e)
is obvious, and (e)$\Rightarrow$(a) follows from (1).
\end{proof}

\begin{corollary}\label{isom-1}
Let $L$ be a distributive meet semi-lattice and let $D(L)$ be its
distributive envelope. Then the map $P \mapsto \sigma^{-1}(P)$ is an
order-isomorphism between $\la \mathrm{Pr}(D(L)), \subseteq \ra$ and
$\la \mathrm{Opt}(L), \subseteq \ra$ whose inverse is the map $F
\mapsto {\uparrow}_{D(L)}\sigma[F]$.
\end{corollary}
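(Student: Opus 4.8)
The plan is to assemble everything from Proposition \ref{4.25}, so essentially no new work is required beyond bookkeeping. Write $\Phi\colon \la \mathrm{Pr}(D(L)),\subseteq\ra \to \la \mathrm{Opt}(L),\subseteq\ra$ for the map $P \mapsto \sigma^{-1}(P)$ and $\Psi\colon \la \mathrm{Opt}(L),\subseteq\ra \to \la \mathrm{Pr}(D(L)),\subseteq\ra$ for the map $F \mapsto {\uparrow}_{D(L)}\sigma[F]$. First I would record that both maps are well-defined: Proposition \ref{4.25}(1) guarantees that $\sigma^{-1}(P) \in \mathrm{Opt}(L)$ whenever $P \in \mathrm{Pr}(D(L))$, and Proposition \ref{4.25}(2) guarantees that ${\uparrow}_{D(L)}\sigma[F] \in \mathrm{Pr}(D(L))$ whenever $F \in \mathrm{Opt}(L)$.

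Next I would check that $\Phi$ and $\Psi$ are mutually inverse. For $\Psi\circ\Phi$, fix $P \in \mathrm{Pr}(D(L))$; using the identity $\sigma[\sigma^{-1}(P)] = P \cap \sigma[L]$ together with the second assertion of Proposition \ref{4.25}(1), namely ${\uparrow}_{D(L)}(P \cap \sigma[L]) = P$, I get $\Psi(\Phi(P)) = {\uparrow}_{D(L)}\sigma[\sigma^{-1}(P)] = {\uparrow}_{D(L)}(P \cap \sigma[L]) = P$. For $\Phi\circ\Psi$, fix $F \in \mathrm{Opt}(L)$ and choose $Q \in \mathrm{Pr}(D(L))$ with $F = \sigma^{-1}(Q)$, which exists by the very definition of an optimal filter. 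The computation in the proof of Proposition \ref{4.25}(2) shows that $\sigma[F] = Q \cap \sigma[L]$ and hence ${\uparrow}_{D(L)}\sigma[F] = Q$, so $\Phi(\Psi(F)) = \sigma^{-1}(Q) = F$. Thus $\Phi$ is a bijection with inverse $\Psi$.

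Finally I would verify that $\Phi$ is an order-isomorphism, i.e.\ that for $P, Q \in \mathrm{Pr}(D(L))$ we have $P \subseteq Q$ iff $\sigma^{-1}(P) \subseteq \sigma^{-1}(Q)$. This is precisely the equivalence of conditions (a) and (b) in Proposition \ref{4.25}(4). Combined with the bijectivity just established, this shows that $\Phi$ is an order-isomorphism and that its inverse $\Psi$ is order-preserving as well. I do not expect any genuine obstacle here, since all the substantive content is already contained in Proposition \ref{4.25}; the only point that needs care is the rewriting of $\sigma[\sigma^{-1}(P)]$ as $P \cap \sigma[L]$ before invoking the second half of clause (1), and the symmetric rewriting of $\sigma[F]$ as $Q \cap \sigma[L]$, which is what makes the two composites collapse to the identity.
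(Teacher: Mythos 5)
Your proposal is correct and follows exactly the route the paper intends: the corollary is stated as an immediate consequence of Proposition \ref{4.25}, and you assemble its clauses in precisely the way the paper's own proof of Proposition \ref{4.25}(2) does (well-definedness from (1) and (2), the composite identities via $\sigma[\sigma^{-1}(P)] = P \cap \sigma[L]$ and ${\uparrow}_{D(L)}(P \cap \sigma[L]) = P$, and order-reflection from the equivalence (a)$\Leftrightarrow$(b) in (4)). No gaps; the bookkeeping with $P \cap \sigma[L]$ is exactly the point that needs care, and you handled it.
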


Thus, for a distributive meet semi-lattice $L$ and its distributive
envelope $D(L)$, we have that F-ideals of $L$ correspond to ideals
of $D(L)$, that prime F-ideals of $L$ correspond to prime ideals of
$D(L)$, and that optimal filters of $L$ correspond to prime filters
of $D(L)$.

\begin{lemma}
Let $L$ be a distributive meet semi-lattice and let $D(L)$ be its
distributive envelope. If $P$ is a prime filter of $L$, then
${\uparrow}_{D(L)}\sigma[P]$ is a prime filter of $D(L)$, and if $I$
is a prime ideal of $L$, then ${\downarrow}_{D(L)}\sigma[I]$ is a
prime ideal of $D(L)$.
\end{lemma}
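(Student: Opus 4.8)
The plan is to derive both halves of the statement from the machinery already assembled, treating the filter claim and the ideal claim separately. The key point is that prime filters and prime ideals of $L$ have already been absorbed into the broader framework of optimal filters and prime F-ideals, whose correspondence with prime filters and prime ideals of $D(L)$ is in hand; so in each case the work reduces to quoting the right earlier result.

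For the filter claim I would first note, by Lemma \ref{lemma3}, that a prime filter $P$ of $L$ is in particular optimal. Proposition \ref{4.25}(2) then applies verbatim: it asserts precisely that ${\uparrow}_{D(L)}\sigma[F] \in \mathrm{Pr}(D(L))$ for every $F \in \mathrm{Opt}(L)$. Taking $F = P$ yields that ${\uparrow}_{D(L)}\sigma[P]$ is a prime filter of $D(L)$, so this half requires no further argument.

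For the ideal claim I would begin by observing, via Lemma \ref{semi}, that a prime ideal $I$ of $L$ is in particular an F-ideal; since the primeness condition ($a \wedge b \in I$ implies $a \in I$ or $b \in I$) is literally the same for ideals and for F-ideals, $I$ is a prime F-ideal. Theorem \ref{theorem}(2) then supplies a prime ideal $J$ of $D(L)$ with $I = \sigma^{-1}(J)$, and inspection of its proof shows that $J$ is the ideal of $D(L)$ generated by $\sigma[I]$. The only genuine step that remains --- and the main, if modest, obstacle --- is to identify this generated ideal with ${\downarrow}_{D(L)}\sigma[I]$. Here I would use Lemma \ref{lemma1}: since $I$ is an ideal of $L$, the set ${\downarrow}_{D(L)}\sigma[I]$ is already an ideal of $D(L)$, and it plainly contains $\sigma[I]$, so it contains the generated ideal $J$; conversely $J$, being an ideal, is a downset containing $\sigma[I]$, whence ${\downarrow}_{D(L)}\sigma[I] \subseteq J$. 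Thus ${\downarrow}_{D(L)}\sigma[I] = J$ is a prime ideal of $D(L)$, completing the argument.
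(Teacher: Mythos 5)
Your proof is correct. The filter half coincides exactly with the paper's own argument: prime filters are optimal by Lemma \ref{lemma3}, and Proposition \ref{4.25}.2 finishes it. For the ideal half the paper takes a different, more hands-on route: it invokes Lemma \ref{lemma1} to see that ${\downarrow}_{D(L)}\sigma[I]$ is an ideal of $D(L)$ and then verifies primeness directly --- writing $A=\bigcup_{i=1}^n\sigma(a_i)$ and $B=\bigcup_{j=1}^m\sigma(b_j)$, deducing $a_i\wedge b_j\in I$ for all $i,j$, and running the same ``sweep through $a_1,\dots,a_n$'' pigeonhole argument that already appears in the proof of Theorem \ref{theorem}.2, using the ideal property of $I$ to produce an upper bound $c$ of $b_1,\dots,b_m$ inside $I$ with $B\subseteq\sigma(c)$. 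Your route instead treats Theorem \ref{theorem}.2 as a black box (a prime ideal of $L$ is a prime F-ideal by Lemma \ref{semi}, so the ideal $J$ of $D(L)$ generated by $\sigma[I]$ is prime) and reduces the lemma to the purely order-theoretic identification $J={\downarrow}_{D(L)}\sigma[I]$, which Lemma \ref{lemma1} makes immediate. This is more economical --- the combinatorial work is not duplicated --- and it isolates precisely where the hypothesis that $I$ is an ideal, rather than a mere F-ideal, enters: through Lemma \ref{lemma1}. One small refinement: you do not actually need to ``inspect the proof'' of Theorem \ref{theorem}.2 to know that $J$ is the ideal generated by $\sigma[I]$; by Lemma \ref{lemma} every ideal of $D(L)$ is generated by its trace on $\sigma[L]$, and by Corollary \ref{corollary} it is the unique ideal of $D(L)$ whose $\sigma$-preimage is $I$, so this fact is available at the level of statements. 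What the paper's direct computation buys in exchange is a self-contained proof that displays the mechanism explicitly rather than routing it through earlier results.
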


\begin{proof}
Let $P$ be a prime filter of $L$. By Lemma \ref{lemma3}, $P$ is
optimal, and by Proposition \ref{4.25}.2,
${\uparrow}_{D(L)}\sigma[P]$ is a prime filter of $D(L)$.
Now let $I$ be a prime ideal of $L$. By Lemma \ref{lemma1},
${\downarrow}_{D(L)}\sigma[I]$ is an ideal of $D(L)$. We show that
${\downarrow}_{D(L)}\sigma[I]$ is prime. Let $A\cap B\in
{\downarrow}_{D(L)}\sigma[I]$, and let
$A=\displaystyle{\bigcup_{i=1}^n}\sigma(a_i)$ and
$B=\displaystyle{\bigcup_{j=1}^m}\sigma(b_j)$. From
$\displaystyle{\bigcup_{i=1}^n}\sigma(a_i)\cap\bigcup_{j=1}^m
\sigma(b_j)\in {\downarrow}_{D(L)}\sigma[I]$ it follows that
$\sigma(a_i)\cap \sigma(b_j)\in {\downarrow}_{D(L)}\sigma[I]$, and
so $a_i\wedge b_j\in I$ for each $i,j$. Since $I$ is prime, either
$a_i\in I$ or $b_j\in I$. We look at $a_1\wedge
b_1,\ldots,a_1\wedge b_m$. If $a_1\notin I$, then
$b_1,\ldots,b_m\in I$, and so there exists
$c\in\displaystyle{\bigcap_{j=1}^m}{\uparrow}b_j\cap I$.
Therefore, $B=\displaystyle{\bigcup_{j=1}^m}\sigma(b_j)\subseteq
\sigma(c)\in\sigma[I]$, so $B\in {\downarrow}_{D(L)}\sigma[I]$,
and so without loss of generality we may assume that $a_1\in I$.
Now we look at $a_2\wedge b_1,\ldots,a_2\wedge b_m$. If $a_2\notin
I$, then $b_1,\ldots,b_m\in I$, so again $B\in
{\downarrow}_{D(L)}\sigma[I]$. Thus, without loss of generality we
may assume that $a_1,a_2\in I$. Going through all $a_1,\dots,a_n$
we obtain that either $B\in {\downarrow}_{D(L)}\sigma[I]$ or $A\in
{\downarrow}_{D(L)}\sigma[I]$. It follows that
${\downarrow}_{D(L)}\sigma[I]$ is a prime ideal of $D(L)$.
\end{proof}

For the reader's convenience, we give a table of relations between
different notions of filters and ideals of $L$ and $D(L)$.


\begin{center}

\

{\bf Correspondences between filters of $L$ and $D(L)$}

\

\

\begin{tabular}{p{5em}|p{5em}|p{5em}|p{5em}|p{5em}}
$L$ &            & filters & prime filters & optimal filters\\
\hline%
$D(L)$ & filters & ${\uparrow}_{D(L)}\sigma[F]$, $F$ is a filter of
$L$ & ${\uparrow}_{D(L)}\sigma[F]$, $F$ is a prime filter of $L$ &
prime filters
\end{tabular}

\

\

{\bf Correspondences between ideals of $L$ and $D(L)$}

\

\

\begin{tabular}{p{5em}|p{5em}|p{5em}|p{5em}|p{5em}}
$L$ & F-ideals & prime F-ideals & ideals & prime ideals\\
\hline%
$D(L)$  & ideals & prime ideals & ${\downarrow}_{D(L)}\sigma[I]$,
$I$ is an ideal of $L$ & ${\downarrow}_{D(L)}\sigma[I]$, $I$ is a
prime ideal of $L$
\end{tabular}

\

\

\end{center}

Let $L$ be a distributive meet semi-lattice. We define a map $\phi:
L \to \mathcal{P}(\mathrm{Opt}(L))$ by $$\phi(a) = \{x \in
\mathrm{Opt}(L): a \in x\}.$$ It is easy to verify that $\phi$ is a
meet semi-lattice homomorphism, that it preserves top whenever $L$
has a top, and that it preserves bottom whenever $L$ has a bottom.
It also follows from the optimal filter lemma that $\phi$ is 1-1.
Thus, we obtain:

\begin{proposition}
Let $L$ be a distributive meet semi-lattice. Then $L$ is isomorphic
to the meet semi-lattice $\la \{\phi(a): a \in L\},\cap\ra$.
\end{proposition}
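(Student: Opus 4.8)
The plan is to show that $\phi$ is a bijective meet semi-lattice homomorphism onto $\la\{\phi(a):a\in L\},\cap\ra$ and then to invoke the fact that any bijective meet semi-lattice homomorphism is automatically an isomorphism. Two things make the target a legitimate meet semi-lattice onto which $\phi$ maps: first, $\phi$ is surjective onto $\{\phi(a):a\in L\}$ by construction; second, once I verify $\phi(a)\cap\phi(b)=\phi(a\wedge b)$, the image is closed under $\cap$, so $\la\{\phi(a):a\in L\},\cap\ra$ is a meet semi-lattice. Finally, if $\phi$ is injective, its inverse on the image is also a homomorphism, since for $\phi(a),\phi(b)$ in the image $\phi^{-1}(\phi(a)\cap\phi(b))=\phi^{-1}(\phi(a\wedge b))=a\wedge b=\phi^{-1}(\phi(a))\wedge\phi^{-1}(\phi(b))$; hence $\phi$ is an isomorphism.

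First I would verify that $\phi$ is a meet semi-lattice homomorphism, mirroring the computation for $\sigma$ in Theorem \ref{stone}. Each optimal filter $x$ is in particular a filter, so by the filter conditions $a\wedge b\in x$ iff $a\in x$ and $b\in x$. Therefore, for every $x\in\mathrm{Opt}(L)$, we have $x\in\phi(a\wedge b)$ iff $x\in\phi(a)$ and $x\in\phi(b)$, i.e.\ $\phi(a\wedge b)=\phi(a)\cap\phi(b)$. The crux is the injectivity, which I would obtain by showing that $\phi$ is an order-embedding, namely $\phi(a)\subseteq\phi(b)$ iff $a\le b$. The direction $a\le b\Rightarrow\phi(a)\subseteq\phi(b)$ is immediate since optimal filters are upsets. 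For the converse, suppose $a\not\le b$; then ${\uparrow}a\cap{\downarrow}b=\emptyset$, because any $c$ in the intersection would force $a\le c\le b$. Since ${\uparrow}a$ is a filter and ${\downarrow}b$ is an F-ideal, the optimal filter lemma produces an optimal filter $G$ with ${\uparrow}a\subseteq G$ and $G\cap{\downarrow}b=\emptyset$, so that $a\in G$ while $b\notin G$, whence $G\in\phi(a)\setminus\phi(b)$ and $\phi(a)\not\subseteq\phi(b)$.

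From the order-embedding property injectivity follows at once: if $a\ne b$, then $a\not\le b$ or $b\not\le a$, and in either case $\phi(a)\ne\phi(b)$. Combining the three steps, $\phi$ is an injective meet semi-lattice homomorphism whose image is closed under $\cap$, hence an isomorphism of $L$ onto $\la\{\phi(a):a\in L\},\cap\ra$. The main obstacle is precisely the order-reflecting half of the injectivity claim; it is here that the optimal filter lemma applied to the filter ${\uparrow}a$ and the F-ideal ${\downarrow}b$ does all the work, exactly as the prime filter lemma did for $\sigma$ in Theorem \ref{stone}. The homomorphism property and the passage from a bijective homomorphism to an isomorphism are routine.
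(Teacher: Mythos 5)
Your proof is correct and follows essentially the same route as the paper, which simply notes that $\phi$ is a meet semi-lattice homomorphism and that injectivity follows from the optimal filter lemma; you have filled in exactly the intended details, applying that lemma to the filter ${\uparrow}a$ and the F-ideal ${\downarrow}b$ when $a \not\leq b$. The verification of the homomorphism property and the passage from order-embedding to isomorphism onto the image are just as the paper envisages.
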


\begin{corollary}
For a distributive meet semi-lattice $L$, we have that the meet
semi-lattices $\la \{\sigma(a): a \in L\},\cap\ra$ and $\la
\{\phi(a): a \in L\},\cap\ra$ are isomorphic.
\end{corollary}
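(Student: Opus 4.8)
The plan is to observe that each of the two meet semi-lattices appearing in the statement is isomorphic to $L$ itself, so that the conclusion follows immediately by transitivity of isomorphism. This reduces the corollary to facts already in hand.

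First I would invoke Theorem \ref{stone}: since $L$ is distributive, $\sigma$ is a meet semi-lattice embedding. Being a meet semi-lattice homomorphism, $\sigma$ satisfies $\sigma(a \wedge b) = \sigma(a) \cap \sigma(b)$, so it maps $L$ onto $\{\sigma(a): a \in L\}$ while remaining injective. Hence the corestriction of $\sigma$ to its image witnesses $L \cong \la \{\sigma(a): a \in L\}, \cap\ra$. Second, I would recall the Proposition immediately preceding the corollary, which already establishes that $\phi$ is a $1$-$1$ meet semi-lattice homomorphism and records the consequence $L \cong \la \{\phi(a): a \in L\}, \cap\ra$. Composing these two isomorphisms yields $\la \{\sigma(a): a \in L\}, \cap\ra \cong \la \{\phi(a): a \in L\}, \cap\ra$, as desired.

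If one prefers an explicit isomorphism rather than an appeal to transitivity, the natural candidate is the assignment $\sigma(a) \mapsto \phi(a)$. It is well-defined and injective because $\sigma(a) = \sigma(b)$ iff $a = b$ (as $\sigma$ is an embedding) iff $\phi(a) = \phi(b)$ (as $\phi$ is $1$-$1$); it is surjective by construction; and it preserves meets, since both $\sigma$ and $\phi$ carry $a \wedge b$ to the intersection of the images of $a$ and $b$.

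There is essentially no obstacle here: the substantive inputs—that $\sigma$ is an embedding and that $\phi$ is injective—have already been proved, and distributivity enters only through Theorem \ref{stone} to guarantee injectivity of $\sigma$. The single point meriting a word of care is that one must use both injectivity statements together to see that the correspondence $\sigma(a) \leftrightarrow \phi(a)$ is unambiguous, i.e.\ independent of the representative $a$.
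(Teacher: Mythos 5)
Your proposal is correct and matches the paper's intent exactly: the paper states this corollary without proof precisely because it follows immediately, by transitivity, from Theorem \ref{stone} (giving $L \cong \la \{\sigma(a): a \in L\},\cap\ra$ when $L$ is distributive) and the preceding proposition (giving $L \cong \la \{\phi(a): a \in L\},\cap\ra$). Your explicit map $\sigma(a) \mapsto \phi(a)$, with the well-definedness check via both injectivity statements, is the same argument spelled out.
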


\begin{lemma}\label{basic}
Let $L$ be a distributive meet semi-lattice and let $a,b_1,$
$\ldots
b_n \in L$. Then $\phi(a) \subseteq
\displaystyle{\bigcup_{i=1}^n}\phi(b_i)$ iff
$\displaystyle{\bigcap_{i=1}^n}{\uparrow}b_i \subseteq {\uparrow}a$.
\end{lemma}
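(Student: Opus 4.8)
The plan is to prove this as the optimal-filter analogue of Lemma \ref{sigmaup}, essentially retracing that proof but with prime filters replaced by optimal filters and the prime filter lemma replaced by the optimal filter lemma. The two facts that make this possible are the characterization in Proposition \ref{proposition1} (an optimal filter is exactly a filter whose set-theoretic complement is an F-ideal) and the optimal filter lemma. I would treat the two implications separately.

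For the implication from right to left, suppose $\bigcap_{i=1}^n {\uparrow}b_i \subseteq {\uparrow}a$ and let $x \in \phi(a)$, so that $x$ is optimal and $a \in x$. By Proposition \ref{proposition1}, $L - x$ is an F-ideal. If we had $x \notin \phi(b_i)$ for every $i$, then $b_1, \ldots, b_n \in L - x$, and the defining closure property of the F-ideal $L - x$ together with the hypothesis $\bigcap_{i=1}^n {\uparrow}b_i \subseteq {\uparrow}a$ would force $a \in L - x$, contradicting $a \in x$. Hence $x \in \bigcup_{i=1}^n \phi(b_i)$, and so $\phi(a) \subseteq \bigcup_{i=1}^n \phi(b_i)$. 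This direction is short and needs no new construction.

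For the converse I would argue contrapositively: assuming $\bigcap_{i=1}^n {\uparrow}b_i \not\subseteq {\uparrow}a$, I will produce an optimal filter $G$ with $a \in G$ but $b_i \notin G$ for all $i$, contradicting $\phi(a) \subseteq \bigcup_{i=1}^n \phi(b_i)$. Take the filter $F = {\uparrow}a$ and the F-ideal $I = (\{b_1, \ldots, b_n\}]$ generated by the $b_i$, described explicitly in Lemma \ref{F-ideal}. The crucial verification is that $a \notin I$: if $a \in I$, then by Lemma \ref{F-ideal} some subfamily $\{b_{i_1}, \ldots, b_{i_k}\}$ satisfies $\bigcap_{j} {\uparrow}b_{i_j} \subseteq {\uparrow}a$; but $\bigcap_{i=1}^n {\uparrow}b_i \subseteq \bigcap_{j} {\uparrow}b_{i_j}$, so this would give $\bigcap_{i=1}^n {\uparrow}b_i \subseteq {\uparrow}a$, against our assumption. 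Since every F-ideal is a downset (Lemma \ref{semi}) and $a \notin I$, we get $F \cap I = \emptyset$. The optimal filter lemma then yields an optimal filter $G \supseteq {\uparrow}a$ with $G \cap I = \emptyset$; thus $a \in G$, while each $b_i \in I$ forces $b_i \notin G$, giving the desired contradiction.

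I expect the only delicate point to be the verification that $a \notin (\{b_1, \ldots, b_n\}]$: one must notice that the F-ideal generated by a finite set may acquire an element through a proper subfamily of the generators, and rule this out using the monotonicity of $\bigcap {\uparrow}(\cdot)$ under shrinking of the index set. Everything else is a routine transcription of the proof of Lemma \ref{sigmaup}. As an alternative route, one could pass through the distributive envelope: under the order-isomorphism of Corollary \ref{isom-1} between $\mathrm{Opt}(L)$ and $\mathrm{Pr}(D(L))$, the set $\phi(a)$ corresponds to the Stone image of $\sigma(a)$ in $D(L)$, and since the Stone map of the bounded distributive lattice $D(L)$ is a lattice embedding carrying the join $\bigcup_{i=1}^n \sigma(b_i)$ to $\bigcup_{i=1}^n \phi(b_i)$, the claim reduces to $\sigma(a) \subseteq \bigcup_{i=1}^n \sigma(b_i)$, which is exactly Lemma \ref{sigmaup}. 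I regard the direct argument above as cleaner and more self-contained.
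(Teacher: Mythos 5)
Your proof is correct, and in substance it is the paper's proof: both directions rest on exactly the two facts you isolate, namely Proposition \ref{proposition1} and the optimal filter lemma. Your right-to-left argument is literally the paper's: $b_1,\dots,b_n \in L-x$ together with the F-ideal property of $L-x$ force $a \in L-x$, a contradiction. The only divergence is in the other direction, where the paper instantiates the optimal filter lemma differently: it picks a witness $c \in \bigcap_{i=1}^n {\uparrow}b_i$ with $a \not\leq c$ and separates the filter ${\uparrow}a$ from the \emph{principal} F-ideal ${\downarrow}c$, so disjointness is immediate from $a \not\leq c$, and the resulting optimal filter $x$ misses each $b_i$ simply because $b_i \leq c$ and $x$ is an upset; no appeal to Lemma \ref{F-ideal} is needed. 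You instead separate ${\uparrow}a$ from the generated F-ideal $(\{b_1,\dots,b_n\}]$, which also works but costs the verification that $a \notin (\{b_1,\dots,b_n\}]$ --- the delicate point you correctly identified and handled via Lemma \ref{F-ideal} and monotonicity of intersections under shrinking the index set (your argument even covers the degenerate empty subfamily, since in that case ${\uparrow}a = L$ and the hypothesis fails anyway). What each choice buys: the paper's witness-based separation is shorter and avoids Lemma \ref{F-ideal} entirely; your generated-ideal separation is built directly from the data of the statement and is the same pattern the paper itself uses in the proof of Proposition \ref{proposition3}, (3)$\Rightarrow$(1). Your alternative route through $D(L)$, Corollary \ref{isom-1}, and Lemma \ref{sigmaup} is also sound, though it leans on Priestley-type separation in the envelope rather than staying inside $L$, so your preference for the direct argument is reasonable.
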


\begin{proof}
First suppose that $\phi(a) \subseteq \displaystyle{\bigcup_{i=1}^n}
\phi(b_i)$. If $c \in \displaystyle{\bigcap_{i=1}^n}{\uparrow}b_i$
and $c\notin{\uparrow}a$, then $b_i\leq c$ for each $i\leq n$ and $a
\not \leq c$. By the optimal filter lemma, there exists an optimal
filter $x$ of $L$ such that $a \in x$ and $c \not \in x$. But then
$b_i \not \in x$ for each $i\leq n$. Therefore, $x \in \phi(a)$ but
$x \not \in \displaystyle{\bigcup_{i=1}^n} \phi(b_i)$, a
contradiction. Thus, $a \leq c$, so $c \in {\uparrow}a$, and so
$\displaystyle{\bigcap_{i=1}^n}{\uparrow}b_i \subseteq {\uparrow}a$.
Now suppose that $\displaystyle{\bigcap_{i=1}^n}{\uparrow}b_i
\subseteq {\uparrow}a$. If $x \in \phi(a)$ and $x\notin
\displaystyle{\bigcup_{i=1}^n}\phi(b_i)$, then $a\in x$ and $b_i
\not \in x$ for each $i\leq n$. Since $x$ is an optimal filter, $L -
x$ is an F-ideal by Proposition \ref{proposition1}. So $b_i\in L -
x$ for each $i\leq n$ and $\displaystyle{\bigcap_{i=1}^n}
{\uparrow}b_i \subseteq {\uparrow}a$ imply $a \in L - x$, a
contradiction. Thus, $x \in \displaystyle{\bigcup_{i=1}^n}
\phi(b_i)$, and so $\phi(a) \subseteq \displaystyle{\bigcup_{i=1}^n}
\phi(b_i)$.
\end{proof}

\begin{proposition}\label{basic-2}
Let $L$ be a distributive meet semi-lattice and let $D(L)$ be its
distributive envelope. Then the closure under finite unions of
$\phi[L]$ is isomorphic to $D(L)$.
\end{proposition}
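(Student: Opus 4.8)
The plan is to define a candidate isomorphism directly on generators and then verify that the inclusion order among finite unions of the sets $\sigma(a)$ matches \emph{exactly} the inclusion order among finite unions of the sets $\phi(a)$. Concretely, I would set
$$\Psi\Bigl(\bigcup_{i=1}^n \sigma(a_i)\Bigr) = \bigcup_{i=1}^n \phi(a_i),$$
a map from $D(L)$ to the closure under finite unions of $\phi[L]$, with the obviously symmetric candidate inverse $\bigcup_{i=1}^n \phi(a_i) \mapsto \bigcup_{i=1}^n \sigma(a_i)$. Recall from the text that every element of $D(L)$ has the form $\bigcup_{i=1}^n \sigma(a_i)$ and every element of the closure of $\phi[L]$ has the form $\bigcup_{i=1}^n \phi(a_i)$, so $\Psi$ is at least a well-defined candidate once I show it respects equality.

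The crucial observation is that Lemma \ref{sigmaup} and Lemma \ref{basic} both characterize the \emph{same} combinatorial condition on $L$. Rewriting Lemma \ref{sigmaup} with its indexed family renamed $b_1,\dots,b_n$ and its distinguished element renamed $a$ gives, for all $a,b_1,\dots,b_n \in L$, that $\sigma(a) \subseteq \bigcup_{i=1}^n \sigma(b_i)$ iff $\bigcap_{i=1}^n {\uparrow}b_i \subseteq {\uparrow}a$; and Lemma \ref{basic} states that $\phi(a) \subseteq \bigcup_{i=1}^n \phi(b_i)$ iff $\bigcap_{i=1}^n {\uparrow}b_i \subseteq {\uparrow}a$. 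Chaining these produces the single key equivalence $\sigma(a) \subseteq \bigcup_{i=1}^n \sigma(b_i) \iff \phi(a) \subseteq \bigcup_{i=1}^n \phi(b_i)$, which says that a generator is absorbed by a finite union of generators on the $\sigma$ side exactly when it is on the $\phi$ side.

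From here everything is bookkeeping. A containment $\bigcup_i \sigma(a_i) \subseteq \bigcup_j \sigma(b_j)$ holds iff $\sigma(a_i) \subseteq \bigcup_j \sigma(b_j)$ for every $i$, which by the key equivalence holds iff $\phi(a_i) \subseteq \bigcup_j \phi(b_j)$ for every $i$, i.e.\ iff $\bigcup_i \phi(a_i) \subseteq \bigcup_j \phi(b_j)$. Thus $\bigcup_i \sigma(a_i) \subseteq \bigcup_j \sigma(b_j) \iff \bigcup_i \phi(a_i) \subseteq \bigcup_j \phi(b_j)$. Reading this biconditional with both inclusions simultaneously shows that $\Psi$ respects equality (hence is well defined) and is injective; surjectivity is immediate from the shape of the elements; and the biconditional itself is precisely the statement that $\Psi$ is an order isomorphism.

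Finally, I would observe that both sides are genuine sublattices of their ambient powersets, so that the order isomorphism is in fact a lattice isomorphism. Since $\sigma$ and $\phi$ are meet semi-lattice homomorphisms, $\sigma(a)\cap\sigma(b)=\sigma(a\wedge b)$ and $\phi(a)\cap\phi(b)=\phi(a\wedge b)$, whence $\bigl(\bigcup_i \phi(a_i)\bigr)\cap\bigl(\bigcup_j \phi(b_j)\bigr) = \bigcup_{i,j}\phi(a_i\wedge b_j)$ lies again in the closure of $\phi[L]$, and likewise for $\sigma$; both structures are therefore closed under finite meets and finite unions. An order isomorphism between lattices automatically preserves these operations, so $\Psi$ is a lattice isomorphism. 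I do not expect any real obstacle: the only step carrying genuine content is recognizing that Lemmas \ref{sigmaup} and \ref{basic} feed into the identical order-theoretic condition $\bigcap {\uparrow}b_i \subseteq {\uparrow}a$, which is exactly what collapses the two representations onto each other and makes $\Psi$ an isomorphism rather than merely a surjection.
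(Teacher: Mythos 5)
Your proof is correct and is essentially the paper's own argument: the paper defines the same map (in the inverse direction, which is immaterial) and justifies well-definedness and the order isomorphism by exactly the same appeal to Lemmas \ref{sigmaup} and \ref{basic}, whose chaining through the common condition $\bigcap_{i=1}^n{\uparrow}b_i \subseteq {\uparrow}a$ you have simply spelled out in detail. Your closing remark that an order isomorphism between these two lattices is automatically a lattice isomorphism is a harmless explicit addition that the paper leaves implicit.
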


\begin{proof}
It follows from Lemmas \ref{sigmaup} and \ref{basic} that $\phi(a_1)
\cup \ldots \cup \phi(a_n) = \phi(b_1) \cup \ldots \cup \phi(b_m)$
iff $\sigma(a_1) \cup \ldots \cup \sigma(a_n) = \sigma(b_1) \cup
\ldots \cup \sigma(b_m)$. Thus, we can define a map $h$ from the
closure under finite unions of $\phi[L]$ to $D(L)$ by $h(\phi(a_1)
\cup \ldots \cup \phi(a_n)) = \sigma(a_1) \cup \ldots \cup
\sigma(a_n)$. This map is clearly onto, and it follows from Lemmas
\ref{sigmaup} and \ref{basic} that it is an order-isomorphism.
\end{proof} 
\section{Sup-homomorphisms and an abstract characterization of distributive envelopes}

Let $L$ and $K$ be distributive meet semi-lattices and let $h:L\to K$ be a meet semi-lattice homomorphism. If there exist $a,b\in L$ such that $a\vee b$ exists in $L$ and $h(a)\vee h(b)$ exists in $K$, it is not necessary that $h(a\vee b)=h(a)\vee h(b)$. Therefore, $h$ may not be extended to a lattice homomorphism from $D(L)$ to $D(K)$. In this section we introduce a stronger notion of a homomorphism between distributive meet semi-lattices, we call a sup-homomorphism. We show that sup-homomorphisms have the property that they preserve all existing joins and that they can be extended to lattice homomorphisms between the distributive envelopes. We also give an abstract characterization of the distributive envelope by means of sup-homomorphisms, and prove that the category of distributive lattices and lattice homomorphisms is a reflective subcategory of the category of distributive meet semi-lattices and sup-homomorphisms.

\begin{definition}
{\rm
Let $L$ and $K$ be distributive meet semi-lattices. We call a meet semi-lattice homomorphism $h: L\to K$ a \textit{sup-homomorphism} if
$$\bigcap_{i=1}^n{\uparrow}a_i \subseteq {\uparrow}b \text{ implies } \bigcap_{i=1}^n{\uparrow}h(a_i) \subseteq {\uparrow} h(b)$$ for each $a_1,\ldots,a_n,b\in L$.
}
\end{definition}

\begin{proposition} \label{prop-sup}
Let $L$ and $K$ be distributive meet semi-lattices and $h: L \to K$ be a meet semi-lattice homomorphism. Then the following conditions are equivalent.
\begin{enumerate}
\item $h$ is a sup-homomorphism.
\item $h^{-1}[I]$ is an F-ideal of $L$ for each F-ideal $I$ of $K$.
\item $h^{-1}[F]$ is an optimal filter of $L$ for each optimal filter $F$ of $K$.
\end{enumerate}
\end{proposition}

\begin{proof}
(1)$\Rightarrow$(2): Let $h:L\to K$ be a sup-homomorphism and let $I$ be an F-ideal of $K$. We show that $h^{-1}[I]$ is an F-ideal of $L$. Suppose that $a_1, \ldots, a_n\in h^{-1}[I]$ and $b \in L$ are such that $\displaystyle{\bigcap_{i=1}^n}{\uparrow}a_{i} \subseteq {\uparrow} b$. Then $h(a_{1}), \ldots, h(a_{n}) \in I$. Since $h$ is a sup-homomorphism, $\displaystyle{\bigcap_{i=1}^n}{\uparrow}h(a_{i}) \subseteq {\uparrow}h(b)$. As $I$ is an F-ideal, $h(b) \in I$. Therefore, $b \in h^{-1}[I]$, and so $h^{-1}[I]$ is an F-ideal of $L$.


(2)$\Rightarrow$(3): Let $h^{-1}[I]$ be an F-ideal of $L$ for each F-ideal $I$ of $K$. We show that $h^{-1}[F]$ is an optimal filter of $L$ for each optimal filter $F$ of $K$. Suppose that $F$ is an optimal filter of $L$. Since $h$ is a meet semi-lattice homomorphism, $h^{-1}[F]$ is a filter of $L$. Moreover, by Proposition \ref{proposition1}, $K - F$ is an F-ideal of $K$. Therefore, $h^{-1}[K-F]=L-h^{-1}[F]$ is an F-ideal of $L$. This, by Proposition \ref{proposition1}, means that $h^{-1}[F]$ is an optimal filter of $L$.

(3)$\Rightarrow$(1): Let $h^{-1}[F]$ be an optimal filter of $L$ for each optimal filter $F$ of $K$ and let $\displaystyle{\bigcap_{i = 1}^{n}}{\uparrow}a_{i} \subseteq {\uparrow}b$ for $a_{1}, \ldots, a_{n}, b \in L$. If $\displaystyle{\bigcap_{i = 1}^{n}}{\uparrow}h(b_{i}) \not \subseteq {\uparrow}h(b)$, then, by Lemma \ref{F-ideal}, $h(b)$ does not belong to the F-ideal generated by $h(a_{1}), \ldots, h(a_{n})$. By the optimal filter lemma, there is an optimal filter $G$ of $K$ such that $h(b) \in G$ and $h(a_{1}), \ldots, h(a_{n})\notin G$.  Therefore, $a_{1}, \ldots, a_{n}\notin h^{-1}[G]$. But $h^{-1}[G]$ is an optimal filter of $L$. Thus, $L - h^{-1}[G]$ is an F-ideal of $L$, so $b \in L - h^{-1}[G]$, and so $h(b) \not \in G$, a contradiction. Consequently, $\displaystyle{\bigcap_{i = 1}^{n}}{\uparrow}h(b_{i}) \subseteq {\uparrow}h(b)$, implying that $h$ is a sup-homomorphism.
\end{proof}

We show that sup-homomorphisms are exactly those meet semi-lattice homomorphisms which preserve all existing joins. Let $L$ and $K$ be distributive meet semi-lattices and $h: L \to K$ be a meet semi-lattice homomorphism. We say that $h$ \textit{preserves all existing joins} if for each $a_{1}, \ldots, a_{n} \in L$, if $a_{1}\vee \ldots\vee a_{n}$ exists in $L$, then $h(a_{1})\vee \ldots\vee h(a_{n})$ exists in $K$ and $h(a_1\vee\ldots\vee a_n) = h(a_1)\vee\ldots\vee h(a_n)$.

\begin{proposition} \label{sup-hom-pres-exist-joins}
Let $L$ and $K$ be distributive meet semi-lattices and $h: L \to K$ be a meet semi-lattice homomorphism. Then $h$ is a sup-homomorphism iff $h$ preserves all existing joins.
\end{proposition}

\begin{proof}
Let $h$ be a sup-homomorphism, $a_{1}, \ldots, a_{n} \in L$, and $a_1\vee\ldots\vee a_n$ exist in $L$. Then $\displaystyle{\bigcap_{i=1}^n}{\uparrow}a_{i} = {\uparrow}(a_1\vee\ldots\vee a_n)$. Since $h$ is order-preserving, by the definition of  sup-homomorphisms, the last equality implies that $\displaystyle{\bigcap_{i=1}^n}{\uparrow}h(a_{i}) = {\uparrow} h(a_1\vee\ldots\vee a_n)$. Therefore, $h(a_1\vee\ldots\vee a_n)$ is the join  of $h(a_{1}), \ldots, h(a_{n})$ in $K$. Thus, $h$ preserves all existing joins. Conversely, suppose that $h$ preserves all existing joins. Let $a_1,\ldots,a_n,b\in L$ be such that $\displaystyle{\bigcap_{i=1}^n}{\uparrow}a_i \subseteq {\uparrow}b$. Then, in the lattice of filters of $L$, we have $(\displaystyle{\bigcap_{i=1}^n}{\uparrow}a_i) \vee  {\uparrow}b = {\uparrow}b$. Since  $L$ is a distributive meet semi-lattice, the lattice of filters of $L$ is distributive. Therefore, $\displaystyle{\bigcap_{i=1}^n}({\uparrow}a_i \vee  {\uparrow}b) =  {\uparrow}b.$ Since ${\uparrow}a_i \vee  {\uparrow}b = {\uparrow} (a_{i} \wedge b)$, we obtain $\displaystyle{\bigcap_{i=1}^n}{\uparrow} (a_i  \wedge b) =  {\uparrow}b$ for each $i=1,\ldots,n$. This implies that $b$ is the join of $a_{1} \wedge b, \ldots, a_{n} \wedge b$ in $L$. Therefore, since $h$ preserves all existing joins, the join of $h(a_{1} \wedge b), \ldots, h(a_{n} \wedge b)$ exists in $K$ and is equal $h(b)$. Thus, $\displaystyle{\bigcap_{i=1}^n}{\uparrow} h(a_i  \wedge b) =  {\uparrow}h(b)$, which means that $${\uparrow} h(b) = \bigcap_{i=1}^n{\uparrow}( h(a_i) \wedge h(b) )= \bigcap_{i=1}^n({\uparrow}h(a_i) \vee  {\uparrow}h(b)).$$ Using the distributivity of the lattice of filters of $K$, we obtain
$${\uparrow} h(b) = {\uparrow}h(b) \vee \bigcap_{i=1}^n{\uparrow}h(a_i).$$ Consequently, $\displaystyle{\bigcap_{i=1}^n}{\uparrow}h(a_i) \subseteq {\uparrow} h(b)$, and so $h$ is a sup-homomorphism.
\end{proof}


\begin{remark}
In \cite{Han03} Hansoul introduced the notion of a weak morphism for distributive join semi-lattices, which is a join semi-lattice homomorphism preserving all existing meets. It follows from Proposition \ref{sup-hom-pres-exist-joins} that $h:L\to K$ is a sup-homomorphism iff $h:L^d\to K^d$ is a weak morphism.
\end{remark}

\begin{lemma} \label{1-1}
Let $L$ and $K$ be distributive meet semi-lattices. If  $h:L \to K$ is a 1-1 sup-homomorphism, then
$$\bigcap_{i = 1}^{n}{\uparrow}h(a_{i}) \subseteq {\uparrow}h(b) \text{ implies } \bigcap_{i = 1}^{n}{\uparrow}a_{i} \subseteq {\uparrow}b$$ for each $a_{1}, \ldots, a_{n}, b \in L$.
\end{lemma}

\begin{proof}
Suppose that $\displaystyle{\bigcap_{i = 1}^{n}}{\uparrow}h(a_{i}) \subseteq {\uparrow}h(b)$. If $d \in \displaystyle{\bigcap_{i = 1}^{n}}{\uparrow}a_{i}$, then $h(d) \in \displaystyle{\bigcap_{i = 1}^{n}}{\uparrow}h(a_{i})$. Therefore, $h(d) \in {\uparrow}h(b)$. Thus, $h(b) \leq h(d)$. Since $h$ is 1-1, this implies that $b \leq d$. It follows that $d \in {\uparrow}b$, and so $\displaystyle{\bigcap_{i = 1}^{n}}{\uparrow}a_{i} \subseteq {\uparrow}b$.
\end{proof}

\begin{proposition} \label{extension}
Let $L$ and $K$ be distributive meet semi-lattices. If $h: L \to K$ is a sup-homomorphism, then there is a unique lattice homomorphism $D(h): D(L) \to D(K)$ such that $D(h) \circ \sigma = \sigma \circ h$. Moreover, if $h$ is 1-1, then so is $D(h)$.
\end{proposition}

\begin{proof}
Suppose that $a_{1}, \ldots, a_{n}, b_{1}\ldots, b_{m} \in L$ are such that $\sigma(a_{1}) \cup \ldots \cup \sigma(a_{n}) = \sigma(b_{1}) \cup \ldots \cup \sigma(b_{m})$. Then, for each $1 \leq i \leq n$, we have $\sigma(a_{i}) \subseteq   \sigma(b_{1}) \cup \ldots \cup \sigma(b_{m})$. Therefore, by Lemma \ref{sigmaup}, $\displaystyle{\bigcap_{j = 1}^{m}} {\uparrow}b_{j} \subseteq {\uparrow}a_{i}$. Since $h$ is a sup-homomorphism, this implies that $\displaystyle{\bigcap_{j = 1}^{m}} {\uparrow}h(b_{j}) \subseteq {\uparrow}h(a_{i})$. Thus, $\sigma(h(a_{i})) \subseteq \sigma(h(b_{1})) \cup \ldots \cup \sigma(h(b_{m}))$, and so $\sigma(h(a_{1})) \cup \ldots \cup \sigma(h(a_{n})) \subseteq \sigma(h(b_{1})) \cup \ldots \cup \sigma(h(b_{m}))$. By a similar argument we obtain that $\sigma(h(b_{1})) \cup \ldots \cup \sigma(h(b_{m})) \subseteq \sigma(h(a_{1})) \cup \ldots \cup \sigma(h(a_{n}))$. Consequently, $\sigma(h(a_{1})) \cup \ldots \cup \sigma(h(a_{n})) = \sigma(h(b_{1})) \cup \ldots \cup \sigma(h(b_{m}))$, and so we can define $D(h):D(L)\to D(K)$ by
$$D(h)(\sigma(a_{1}) \cup \ldots \cup \sigma(a_{n})) = \sigma(h(a_{1})) \cup \ldots \cup \sigma(h(a_{n})).$$
That $D(h)$ is a lattice homomorphism from $D(L)$ to $D(K)$ and that $D(h) \circ \sigma = \sigma \circ h$ is obvious.

If $k: D(L) \to D(K)$ is a lattice homomorphism such that $k \circ \sigma = \sigma \circ h$, then $D(h)(\sigma(a_{1}) \cup \ldots \cup \sigma(a_{n})) = \sigma(h(a_{1})) \cup \ldots \cup \sigma(h(a_{n})) = k(\sigma(a_{1})) \cup \ldots \cup k(\sigma(a_{n})) = k(\sigma(a_{1}) \cup \ldots \cup \sigma(a_{n}))$. Therefore, $k = D(h)$.

Finally, suppose that $h$ is 1-1. If $D(h)(\sigma(a_{1}) \cup \ldots \cup \sigma(a_{n})) = D(h)(\sigma(b_{1}) \cup \ldots \cup \sigma(b_{m}))$, then $\sigma(h(a_{1})) \cup \ldots \cup \sigma(h(a_{n})) = \sigma(h(b_{1})) \cup \ldots \cup \sigma(h(b_{m}))$. This, by Lemma \ref{sigmaup}, means that
$\displaystyle{\bigcap_{j = 1}^{m}}{\uparrow} h(b_{j})\subseteq {\uparrow}h(a_{i})$ for each $1\leq i\leq n$ and $\displaystyle{\bigcap_{i = 1}^{n}}{\uparrow} h(a_{i})\subseteq {\uparrow}h(b_{j})$ for each $1\leq j\leq m$. Since $h$ is 1-1, by Lemma \ref{1-1}, we obtain that $\displaystyle{\bigcap_{j = 1}^{m}}{\uparrow} b_{j}\subseteq {\uparrow}a_{i}$ for each $1 \leq i \leq n$ and $\displaystyle{\bigcap_{i = 1}^{n}}{\uparrow} a_{i}\subseteq {\uparrow}b_{j}$ for each $1 \leq j \leq m$. Therefore, $\sigma(a_{1}) \cup \ldots \cup \sigma(a_{n}) = \sigma(b_{1}) \cup \ldots \cup \sigma(b_{m})$.  Thus, $D(h)$ is 1-1.
\end{proof}

As a consequence, we obtain that taking the distributive envelope of a distributive meet semi-lattice can be extended to a functor $D$ from the category of distributive meet semi-lattices and sup-homomorphisms to the category of distributive lattices and lattice homomorphisms.

Noting that if $K$ is a distributive lattice, then $D(K)$ is (isomorphic to) $K$, the following is an immediate consequence of Proposition \ref{extension}.

\begin{corollary} \label{D(L)}
Let $L$ be a distributive meet semi-lattice and $D$ be a distributive lattice. If $h: L \to D$ is a sup-homomorphism, then there is a unique lattice homomorphism $D(h): D(L) \to D$ such that $D(h) \circ \sigma = h$. Moreover, if $h$ is 1-1, then so is $D(h)$.
\end{corollary}

It follows that the functor $D$ is left adjoint to the inclusion functor. Consequently, the category of distributive lattices and lattice homomorphisms is a reflective subcategory of the category of distributive meet semi-lattices and sup-homomorphisms.

\begin{theorem}\label{abst-char}
Let $L$ be a distributive meet semi-lattice. The distributive envelope $D(L)$ of $L$ is up to isomorphism the unique distributive lattice $E$ for which there is a 1-1 sup-homomorphism $e: L \to E$ such that for each distributive lattice $D$ and a 1-1 sup-homomor\-phism $h:L \to D$, there is a unique 1-1 lattice homomorphism $k: E \to D$ with $k\circ e = h$.
\end{theorem}

\begin{proof}
As an immediate consequence of Corollary \ref{D(L)}, we obtain that $D(L)$ has the property stated in the theorem because $\sigma:L\to D(L)$ is a 1-1 sup-homomorphism and for each distributive lattice $D$ and a 1-1 sup-homomor\-phism $h: L \to D$, there exists a unique 1-1 lattice homomorphism $D(h):  D(L) \to D$ with $D(h)\circ \sigma = h$. Now suppose that $E$ is a distributive lattice for which there is a 1-1 sup-homomorphism $e: L \to E$ such that for each distributive lattice $D$ and a 1-1 sup-homomorphism $h:L \to D$, there is a unique 1-1 lattice homomorphism $k: E \to D$ with $k\circ e = h$. Then there is a 1-1 lattice homomorphism $k:E\to D(L)$ such that $k\circ e=\sigma$. Also, by Corollary \ref{D(L)}, there is a 1-1 lattice homomorphism $D(e): D(L) \to E$ such that $D(e)\circ \sigma = e$. First we show that each element $a$ of $E$ has the form $e(a_1)\vee\ldots\vee e(a_n)$ for some $a_1,\ldots,a_n\in L$. Let $a\in E$. Then $k(a)\in D(L)$. Therefore, there exist $a_1,\ldots,a_n\in L$ such that $k(a)=\sigma(a_1)\cup\ldots\cup\sigma(a_n)$. Since $k\circ e=\sigma$, we have $D(e)(\sigma(a_1)\cup\ldots\cup\sigma(a_n))=D(e)(k(e(a_1))\vee\ldots\vee k(e(a_n)))$. As $D(e)$ is 1-1, the last equality implies $\sigma(a_1)\cup\ldots\cup\sigma(a_n)=k(e(a_1))\vee\ldots\vee k(e(a_n)=k(e(a_1)\vee\ldots\vee e(a_n))$. Because $k$ is 1-1, we obtain $a=e(a_1)\vee\ldots\vee e(a_n)$. Next we show that $E$ is isomorphic to $D(L)$. Since $k\circ e=\sigma$ and $D(e)\circ\sigma=e$, we have $D(e)(k(e(a_1)\vee\ldots\vee e(a_n)))=D(e)(k(e(a_1))\vee\ldots\vee k(e(a_n)))=D(e)(\sigma(a_1)\cup\ldots\cup\sigma(a_n))=D(e)(\sigma(a_1))\cup\ldots\cup D(e)(\sigma(a_n))=e(a_1)\vee\ldots\vee e(a_n)$ and  $k(D(e)(\sigma(a_1)\cup\ldots\cup\sigma(a_n)))=k(D(e)(\sigma(a_1))\vee \ldots\vee D(e)(\sigma(a_n)))=k(e(a_1)\vee\ldots\vee e(a_n))=k(e(a_1))\vee\ldots\vee k(e(a_n))=\sigma(a_1)\cup\ldots\cup\sigma(a_n)$. Therefore, the composition $D(e)\circ k$ is the identity function on $E$ and the composition $k\circ D(e)$ is the identity function on $D(L)$. Thus, $k:E\to D(L)$ and $D(e):D(L)\to E$ are inverses of each other, and so $E$ is isomorphic to $D(L)$.
\end{proof}

Theorem \ref{abst-char} provides an abstract characterization of the distributive envelope of a distributive meet semi-lattice. A similar characterization can also be found in Hansoul \cite{Han03} for distributive join semi-lattices. 
\section{Generalized Priestley spaces}

In this section we introduce one of the main concepts of this paper,
that of \emph{generalized Priestley space}. We show how to construct
the generalized Priestley space $L_*$ from a bounded distributive
meet semi-lattice $L$, and conversely, how a generalized Priestley
space $X$ gives rise to the bounded distributive meet semi-lattice
$X^*$. We further prove that a bounded distributive meet
semi-lattice $L$ is isomorphic to ${L_*}^*$, thus providing a new
representation theorem for bounded distributive meet semi-lattices.
Furthermore, we show that a generalized Priestley space $X$ is
order-isomorphic and homeomorphic to ${X^*}_*$.

Let $L$ be a bounded distributive meet semi-lattice and let $D(L)$
be its distributive envelope. Then $D(L)$ is a bounded distributive
lattice. We let
$$\text{$L_{*} = \mathrm{Opt}(L)$,\hspace{0.5cm} $L_{+} =
\mathrm{Pr}(L)$, \hspace{0.3cm} and \hspace{0.3cm} $D(L)_{*} =
\mathrm{Pr}(D(L))$.}$$ By Lemma \ref{lemma3}, $L_{+} \subseteq
L_{*}$, and by Corollary \ref{isom-1}, $\la L_{*} \subseteq \ra
\simeq \la D(L)_{*}, \subseteq \ra$. We recall that $\phi: L \to
\mathcal{P}(L_{*})$ is defined by $\phi(a) = \{x \in L_{*}: a \in
x\}$. We define $\phi_D: D( L) \to \mathcal{P}(D(L)_*)$ by
$\phi_D(A) = \{x \in D(L)_{*}: A\in x\}$. For $a \in L$ and $x\in
D(L)_*$ we have $$\text{$x\in\phi_D(\sigma(a))\;$ iff $\;
\sigma(a)\in x\; $ iff $\; a\in\sigma^{-1}(x)$.}$$ Therefore,
$$\phi(a) = \{\sigma^{-1}(x): x \in \phi_{D}(\sigma(a))\}.$$ Let
$\mathfrak{B}_L = \phi[L]$. Then $h:\mathfrak{B}_L\to
\{\phi_{D}(\sigma(a)): a \in L\}$ given by
$h(\phi(a))=\phi_D(\sigma(a))$ is a bounded meet semi-lattice
isomorphism, so $\mathfrak{B}_L$ and $\la\{\phi_{D}(\sigma(a)): a
\in L\},\cap,D(L)_*,\emptyset\ra$ are isomorphic to each other and
to $L$.

We recall that $\{\phi_{D}(A)-\phi_D(B): A,B \in D(L)\}$ is a basis for the Priestley topology $\tau_P$ on $D(L)_*$.

\begin{proposition}\label{proposition2}
Let $L$ be a bounded distributive meet semi-lattice and let $D(L)$
be its distributive envelope. Then the Priestley topology on
$D(L)_{*}$ has $\{\phi_{D}(\sigma(a)): a \in
L\}\cup\{\phi_D(\sigma(b))^c: b \in L\}$ as a subbasis.
\end{proposition}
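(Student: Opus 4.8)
The plan is to show that the collection
$$
\SS = \{\phi_D(\sigma(a)) : a \in L\} \cup \{\phi_D(\sigma(b))^c : b \in L\}
$$
is a subbasis for the Priestley topology $\tau_P$ on $D(L)_*$. Recall from the excerpt that $\{\phi_D(A) - \phi_D(B) : A, B \in D(L)\}$ is already known to be a basis for $\tau_P$. Since $\SS \subseteq \tau_P$ (each $\phi_D(\sigma(a))$ is clopen and hence open, as is each complement $\phi_D(\sigma(b))^c$), the topology generated by $\SS$ is contained in $\tau_P$. So the real content is the reverse inclusion: every basic open set $\phi_D(A) - \phi_D(B)$ must be obtainable from $\SS$ using finite intersections and arbitrary unions. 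It suffices to show that every set of the form $\phi_D(A)$ and every set of the form $\phi_D(B)^c$, for $A, B \in D(L)$, lies in the topology generated by $\SS$.

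\textbf{Reducing to the generators $\sigma(a)$.} The key structural fact I would exploit is that every element $A \in D(L)$ has the form $A = \sigma(a_1) \cup \cdots \cup \sigma(a_n)$ for finitely many $a_i \in L$, since $\sigma[L]$ is join-dense in $D(L)$. Because $\phi_D$ is a bounded lattice homomorphism into $\PP(D(L)_*)$ (it is the Stone map of the distributive lattice $D(L)$), it sends finite joins to finite unions: $\phi_D(A) = \phi_D(\sigma(a_1)) \cup \cdots \cup \phi_D(\sigma(a_n))$. Thus every $\phi_D(A)$ is a \emph{finite union} of members of the first part of $\SS$, and hence belongs to the topology generated by $\SS$.

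\textbf{Handling the complements.} For the complemented generators, take $B = \sigma(b_1) \cup \cdots \cup \sigma(b_m)$, so that $\phi_D(B) = \phi_D(\sigma(b_1)) \cup \cdots \cup \phi_D(\sigma(b_m))$ and therefore
$$
\phi_D(B)^c = \phi_D(\sigma(b_1))^c \cap \cdots \cap \phi_D(\sigma(b_m))^c,
$$
which is a \emph{finite intersection} of members of the second part of $\SS$, and so again lies in the topology generated by $\SS$. Combining the two paragraphs, an arbitrary basic open set $\phi_D(A) - \phi_D(B) = \phi_D(A) \cap \phi_D(B)^c$ is a finite intersection of a set built from $\SS$ by finite unions with a set built from $\SS$ by finite intersections; hence it lies in the topology generated by $\SS$. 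Therefore the topology generated by $\SS$ contains the known basis and equals $\tau_P$.

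\textbf{Anticipated obstacle.} There is no deep obstacle here; the proof is essentially bookkeeping with the Stone map $\phi_D$ on the distributive lattice $D(L)$, using join-density of $\sigma[L]$ and the homomorphism property of $\phi_D$. The one point deserving care is to make sure that $\SS$ really does generate (and is not merely contained in) the right topology — that is, to verify explicitly that the \emph{basis} $\{\phi_D(A) - \phi_D(B)\}$ is recovered, rather than just the subbasic pieces. I would make this fully rigorous by invoking the standard fact that a family is a subbasis precisely when finite intersections of its members form a basis for the given topology, and then checking both inclusions as above. The boundedness of $L$ (hence of $D(L)$) guarantees $\phi_D$ preserves $\top$ and $\bot$, so the empty-union and full-space edge cases are unproblematic.
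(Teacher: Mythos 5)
Your proposal is correct and follows essentially the same route as the paper's own proof: decompose each $A,B\in D(L)$ as a finite union of $\sigma(a_i)$'s via join-density, use the fact that $\phi_D$ turns these finite joins into finite unions, and thereby express each basic set $\phi_D(A)-\phi_D(B)$ in terms of the proposed subbasic sets. Your explicit verification of the easy inclusion (that the subbasic sets are themselves Priestley-open) is left implicit in the paper, but the substance of the argument is identical.
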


\begin{proof}
Let $A,B\in D(L)$. Then there exist $a_1, \ldots ,a_n , b_1, \ldots,
b_m\in L$ such that $A=\displaystyle{\bigcup_{i=1}^n} \sigma(a_i)$
and $B=\displaystyle{\bigcup_{j=1}^m}\sigma(b_j)$. Therefore,
\[
\phi_D(A)=\phi_D(\displaystyle{\bigcup_{i=1}^n}\sigma(a_i))=
\displaystyle{\bigcup_{i=1}^n} \phi_D(\sigma(a_i)) \mbox{ and } \phi_D(B)
=\phi_D(\displaystyle{\bigcup_{j=1}^m}\sigma(b_j))=
\displaystyle{\bigcup_{j=1}^m}\phi_D(\sigma(b_j)).
\] 
Thus,
$\phi_{D}(A)-\phi_D(B)=\displaystyle{\bigcup_{i=1}^n} \phi_D(\sigma(a_i))-\displaystyle{\bigcup_{j=1}^m}\phi_D(\sigma(b_j))= \displaystyle{\bigcup_{i=1}^n}(\phi_D(\sigma(a_i))\cap\displaystyle{\bigcap_{j=1}^m}\phi_D(\sigma(b_j))^c)$. It follows that the elements of the basis $\{\phi_{D}(A)-\phi_D(B): A,B \in D(L)\}$ of $D(L)_*$ are finite intersections of the elements of $\{\phi_{D}(\sigma(a)): a \in L\} \cup \{\phi_D(\sigma(b))^c: b \in L\}$. Consequently, $\{\phi_{D}(\sigma(a)): a \in L\}\cup\{\phi_D(\sigma(b))^c: b \in L\}$ is a subbasis for the Priestley topology on $D(L)_*$.
\end{proof}

We  define a topology $\tau$ on $L_{*}$ by letting $\{\phi(a): a \in
L\} \cup \{\phi(b)^c: b\in L\}$ be a subbasis for~$\tau$.

\begin{lemma}\label{Priestley-1}
Let $L$ be a bounded distributive meet semi-lattice and let $D(L)$
be its distributive envelope.
\begin{enumerate}
\item $\la L_{*},\tau,\subseteq\ra$ is order-isomorphic and
homeomorphic to $\la D(L)_{*},\tau_P,\subseteq\ra$.
\item $\la L_{*},\tau,\subseteq\ra$ is a Priestley space.
\item $L_{+}$ is dense in $\la L_{*},\tau\ra$.
\end{enumerate}
\end{lemma}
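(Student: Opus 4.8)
The plan is to prove the three parts in turn, with part (1) carrying the topological content and parts (2) and (3) following from it together with the machinery already in place. For part (1), I would take the map $\psi\colon D(L)_* \to L_*$ given by $\psi(P) = \sigma^{-1}(P)$, which by Corollary \ref{isom-1} is an order-isomorphism with inverse $F \mapsto {\uparrow}_{D(L)}\sigma[F]$; it remains only to verify that $\psi$ is a homeomorphism. The key computation is that for each $a \in L$,
\[
\psi^{-1}(\phi(a)) = \{P \in D(L)_* : a \in \sigma^{-1}(P)\} = \{P \in D(L)_* : \sigma(a) \in P\} = \phi_D(\sigma(a)),
\]
and hence $\psi^{-1}(\phi(b)^c) = \phi_D(\sigma(b))^c$. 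Since $\{\phi(a): a \in L\} \cup \{\phi(b)^c: b \in L\}$ is the defining subbasis of $\tau$ while, by Proposition \ref{proposition2}, $\{\phi_D(\sigma(a)): a \in L\} \cup \{\phi_D(\sigma(b))^c: b \in L\}$ is a subbasis of $\tau_P$, this one identity establishes both that $\psi$ is continuous (subbasic opens of $\tau$ pull back to subbasic opens of $\tau_P$) and that $\psi$ is open (being a bijection, $\psi$ carries the displayed subbasis of $\tau_P$ exactly onto the subbasis of $\tau$). Thus $\psi$ is simultaneously a homeomorphism and an order-isomorphism, which is part (1).

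For part (2), I would note that $D(L)$ is a bounded distributive lattice (as observed just before the lemma), so by the classical Priestley duality recalled in Section 2 the space $\la D(L)_*, \tau_P, \subseteq\ra$ is a Priestley space. Being a Priestley space is a property expressed purely through the topology and the order (compactness together with the Priestley separation axiom, phrased via clopen upsets), so it is preserved by any map that is at once a homeomorphism and an order-isomorphism. Part (1) therefore transfers the Priestley structure from $\la D(L)_*, \tau_P, \subseteq\ra$ to $\la L_*, \tau, \subseteq\ra$.

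For part (3), I would show that $L_+$ meets every nonempty basic open set of $\tau$. Since $\phi$ is a meet-semilattice homomorphism and $\phi(\top) = L_*$, each basic open set can be written as $U = \phi(a) \cap \bigcap_{i=1}^m \phi(b_i)^c$ for suitable $a, b_1, \dots, b_m \in L$. If $U \neq \emptyset$, then $\phi(a) \not\subseteq \bigcup_{i=1}^m \phi(b_i)$, so Lemma \ref{basic} gives $\bigcap_{i=1}^m {\uparrow}b_i \not\subseteq {\uparrow}a$; choose $c$ with $b_i \leq c$ for all $i$ and $a \not\leq c$. Then ${\uparrow}a$ is a filter, ${\downarrow}c$ is an ideal, and ${\uparrow}a \cap {\downarrow}c = \emptyset$, since $a \leq y \leq c$ would force $a \leq c$. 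The Prime Filter Lemma (Lemma \ref{meetprime}) now yields a prime filter $P$ with $a \in P$ and $P \cap {\downarrow}c = \emptyset$; as each $b_i \leq c$ lies in ${\downarrow}c$, we get $b_i \notin P$, whence $P \in U$. Being prime, $P$ is optimal by Lemma \ref{lemma3}, so it is a point of $L_*$ belonging to $L_+ \cap U$, and density follows.

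The main obstacle I anticipate is making part (1) fully airtight: one must argue continuity in \emph{both} directions from the single subbasis identity, and this is exactly where the precise form of Proposition \ref{proposition2} is indispensable. Once part (1) is secured, part (2) is immediate, and the only genuinely new step in part (3) is the translation of ``$U \neq \emptyset$'' into the order-theoretic inequality via Lemma \ref{basic}, followed by the choice of the principal filter/ideal pair $({\uparrow}a, {\downarrow}c)$ feeding the Prime Filter Lemma.
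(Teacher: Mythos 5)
Your proposal is correct and takes essentially the same approach as the paper: part (1) combines Corollary \ref{isom-1} with Proposition \ref{proposition2} (you merely make explicit the subbasis computation $\psi^{-1}(\phi(a))=\phi_D(\sigma(a))$ that the paper leaves implicit), and part (2) transfers the Priestley property along the order-homeomorphism exactly as the paper does. The only deviation is in part (3), where after Lemma \ref{basic} the paper passes through Lemma \ref{sigmaup} to get $\sigma(a)\not\subseteq\bigcup_{i=1}^n\sigma(b_i)$ and reads off a prime filter from the definition of $\sigma$, while you apply the Prime Filter Lemma (Lemma \ref{meetprime}) directly to the pair $({\uparrow}a,{\downarrow}c)$ and then cite Lemma \ref{lemma3} — an equally valid and essentially equivalent step, since Lemma \ref{sigmaup} is itself proved by that very prime-filter argument.
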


\begin{proof}
(1) By Corollary \ref{isom-1}, $\la L_{*},\subseteq \ra$ is order-isomorphic to $\la D(L)_{*},\subseteq \ra$, and by Proposition \ref{proposition2}, $\la L_{*}, \tau \ra$ is homeomorphic to $\la D(L)_{*}, \tau_P \ra$. The result follows.

(2) follows from (1) and the well-known fact that $\la
D(L)_{*},\tau_P,\subseteq\ra$ is a Priestley space.

(3) Since $\mathcal{S}=\{\phi(a): a \in L\} \cup \{\phi(b)^c: b\in
L\}$ is a subbasis for $\tau$ and $\{\phi(a): a \in L\}$ is closed
under finite intersections, an element of the basis for $\tau$ that
$\mathcal{S}$ generates has the form $\phi(a) \cap
\phi(b_{1})^{c}\cap \ldots \cap \phi(b_n)^{c}$ for some $a,b_1,
\ldots , b_n\in L$. If $\phi(a) \cap \phi(b_{1})^{c}\cap \ldots \cap
\phi(b_n)^{c}\neq\emptyset$, then $\phi(a)\not\subseteq
\displaystyle{\bigcup_{i=1}^n}\phi(b)$. Therefore, by Lemma
\ref{basic}, $\displaystyle{\bigcap_{i=1}^n}{\uparrow}b_i \not
\subseteq {\uparrow}a$. Thus, by Lemma \ref{sigmaup}, $\sigma(a)
\not \subseteq \displaystyle{\bigcup_{i=1}^n}\sigma(b_i)$. Hence,
there is $y \in L_{+}$ such that $a \in y$ and $b_1, \ldots, b_n
\not \in y$. It follows that $\phi(a) \cap \phi(b_{1})^{c}\cap
\ldots \phi(b_n)^{c} \cap L_{+} \not = \emptyset$, and so $L_{+}$ is
dense in $L_*$.
\end{proof}

\begin{lemma}\label{Priestley-2}
For a bounded distributive meet semi-lattice $L$ we have that every
open upset of $L_*$ is a union of elements of $\mathfrak{B}_L$.
\end{lemma}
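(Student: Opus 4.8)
The plan is to show that every open upset $U$ of $L_*$ satisfies $U = \bigcup\{\phi(a) : a \in L,\ \phi(a) \subseteq U\}$, which immediately exhibits $U$ as a union of elements of $\mathfrak{B}_L = \phi[L]$. One inclusion is trivial, since every $\phi(a)$ appearing in the union is by hypothesis contained in $U$. For the reverse inclusion it suffices, given an arbitrary $x \in U$, to produce some $c \in L$ with $x \in \phi(c)$ and $\phi(c) \subseteq U$.

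To build such a $c$ I would exploit compactness. By Lemma \ref{Priestley-1}(2), $\la L_*,\tau,\subseteq\ra$ is a Priestley space, hence compact, so the closed set $U^c$ is compact. Because $U$ is an upset containing $x$, no element of $U^c$ can lie above $x$: if $x \subseteq y$ then $y \in U$. Thus for each $y \in U^c$ we have $x \not\subseteq y$, so we may choose $a_y \in x$ with $a_y \notin y$, giving $x \in \phi(a_y)$ and $y \in \phi(a_y)^c$. Since each $\phi(a_y)^c$ is a subbasic open set, the family $\{\phi(a_y)^c : y \in U^c\}$ is an open cover of $U^c$, and compactness yields finitely many $y_1,\dots,y_k$ with $U^c \subseteq \bigcup_{i=1}^k \phi(a_{y_i})^c$. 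Taking complements gives $\bigcap_{i=1}^k \phi(a_{y_i}) \subseteq U$.

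Setting $c = a_{y_1} \wedge \dots \wedge a_{y_k}$ and using that $\phi$ is a meet semi-lattice homomorphism, so that $\phi(c) = \bigcap_{i=1}^k \phi(a_{y_i})$, we obtain $\phi(c) \subseteq U$; moreover $c \in x$ because $x$ is a filter containing each $a_{y_i}$, whence $x \in \phi(c)$. This furnishes the required $\phi(c) \in \mathfrak{B}_L$ with $x \in \phi(c) \subseteq U$ and completes the reverse inclusion. The only step requiring genuine care is the compactness of $U^c$, which is precisely where the Priestley structure (Lemma \ref{Priestley-1}(2)) enters; everything else is the routine translation between the filter $x$, the separating elements $a_y$, and the meet $c$ via the homomorphism property of $\phi$.
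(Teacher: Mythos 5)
Your proof is correct and follows essentially the same route as the paper's: fix $x\in U$, separate $x$ from each $y\in U^c$ by some $a_y\in x\setminus y$ (possible since $U$ is an upset, so $x\not\subseteq y$), invoke compactness to pass to finitely many $a_{y_i}$, and take their meet $c$ so that $x\in\phi(c)\subseteq U$. The only difference is presentational — you phrase compactness via an open cover of $U^c$ while the paper phrases it via a family of closed sets with empty intersection — which is the same argument in dual form.
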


\begin{proof}
Let $U$ be an open upset of $L_*$ and let $x\in U$. It is
sufficient to find $a\in L$ such that $x\in\phi(a)\subseteq U$.
For each $y\notin U$ we have $x\not\subseteq y$. Therefore, there
is $a_y\in L$ such that $a_y\in x$ and $a_y\notin y$. Thus,
$\displaystyle{\bigcap}\{\phi(a_y): y\notin U\}\cap
U^c=\emptyset$. This by compactness of $L_*$ implies that there
exist $a_1,\dots,a_n\in L$ such that $\phi(a_1)\cap\cdots
\cap\phi(a_n)\cap U^c=\emptyset$. Moreover, $x\in\phi(a_i)$ for
each $i\leq n$. Therefore, $x\in\phi(a_1\wedge\cdots\wedge
a_n)\subseteq U$, and so there exists $a=a_1\wedge\cdots\wedge
a_n$ in $L$ with $x\in\phi(a)\subseteq U$.
\end{proof}

Let $D(\mathfrak{B}_L)$ denote the distributive lattice generated
(in $\mathcal{P}(L_*)$) by $\mathfrak{B}_L$. Then $A\in
D(\mathfrak{B}_L)$ iff $A$ is a finite union of elements of
$\mathfrak{B}_L$. Let also $\mathfrak{CU}(L_*)$ denote the lattice
of clopen upsets of $L_*$.

\begin{theorem}\label{isom-2}
For a bounded distributive meet semi-lattice $L$ we have $D(L)
\simeq D(\mathfrak{B}_{L}) = \mathfrak{CU}(L_{*})$.
\end{theorem}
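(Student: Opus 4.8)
The plan is to separate the claim into the isomorphism $D(L) \simeq D(\mathfrak{B}_L)$ and the equality $D(\mathfrak{B}_L) = \mathfrak{CU}(L_*)$, and to treat each in turn. For the isomorphism I would note that, by the remark immediately preceding the theorem, $D(\mathfrak{B}_L)$ is precisely the set of finite unions of elements of $\mathfrak{B}_L = \phi[L]$, i.e.\ the closure of $\phi[L]$ under finite unions. But Proposition \ref{basic-2} already identifies this closure with $D(L)$ up to isomorphism. Hence $D(L) \simeq D(\mathfrak{B}_L)$ is immediate and requires no further work.

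For the equality $D(\mathfrak{B}_L) = \mathfrak{CU}(L_*)$ I would argue by two inclusions. For $D(\mathfrak{B}_L) \subseteq \mathfrak{CU}(L_*)$, since both sides are closed under finite unions it suffices to check that each generator $\phi(a)$ is a clopen upset of $L_*$. That $\phi(a)$ is an upset follows directly from its definition (if $a \in x$ and $x \subseteq y$ then $a \in y$); it is open by the very definition of the topology $\tau$, which has $\{\phi(a): a \in L\} \cup \{\phi(b)^c: b \in L\}$ as a subbasis; and it is closed because, under the order-isomorphism and homeomorphism of Lemma \ref{Priestley-1}(1), $\phi(a)$ corresponds to $\phi_D(\sigma(a))$, a clopen upset of the Priestley space $D(L)_*$. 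A finite union of clopen upsets is again a clopen upset, which yields the inclusion.

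For the reverse inclusion $\mathfrak{CU}(L_*) \subseteq D(\mathfrak{B}_L)$, let $U$ be a clopen upset of $L_*$. As an open upset, Lemma \ref{Priestley-2} writes $U$ as a union of elements of $\mathfrak{B}_L$. Since $U$ is also closed in the compact space $L_*$ (Lemma \ref{Priestley-1}(2)), it is itself compact, so this open cover admits a finite subcover. Thus $U$ is a finite union of elements of $\mathfrak{B}_L$, whence $U \in D(\mathfrak{B}_L)$, completing the equality and the proof.

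Nearly all the heavy lifting has been done in the preceding lemmas, so I do not expect a serious obstacle; the argument is essentially assembly. The step demanding the most care is the $\supseteq$ inclusion, where one must invoke compactness of $L_*$ (supplied by its Priestley space structure in Lemma \ref{Priestley-1}) to upgrade the arbitrary union furnished by Lemma \ref{Priestley-2} to the finite union required for membership in $D(\mathfrak{B}_L)$.
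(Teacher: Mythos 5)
Your proof is correct and follows essentially the same route as the paper: Proposition \ref{basic-2} for $D(L)\simeq D(\mathfrak{B}_L)$, and Lemma \ref{Priestley-2} together with compactness of clopen sets in the Priestley space $L_*$ for $D(\mathfrak{B}_L)=\mathfrak{CU}(L_*)$. The only difference is that you spell out the easy inclusion $D(\mathfrak{B}_L)\subseteq\mathfrak{CU}(L_*)$ (which the paper leaves implicit); note that closedness of $\phi(a)$ also follows directly from the subbasis, since $\phi(a)^c$ is itself a subbasic open set, without needing the homeomorphism of Lemma \ref{Priestley-1}.
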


\begin{proof}
It follows from Proposition \ref{basic-2} that $D(L)$ is isomorphic
to $D(\mathfrak{B}_L)$. By Lemma \ref{Priestley-2}, each element of
$\mathfrak{CU}(L_{*})$ is a union of elements of $\mathfrak{B}_L$.
Now since each element of $\mathfrak{CU}(L_{*})$ is compact, it is a
finite union of elements of $\mathfrak{B}_L$. Thus,
$D(\mathfrak{B}_{L}) = \mathfrak{CU}(L_{*})$.
\end{proof}

We now study some properties of the tuple $\la L_{*}, \tau,
\subseteq, L_+\ra$ which will motivate the notion of a generalized
Priestley space we introduce below.

\begin{proposition}\label{5.8}
Let $L$ be a bounded distributive meet semi-lattice. Then for each
$x \in L_{*}$, there is $y \in L_{+}$ such that $x \subseteq y$.
\end{proposition}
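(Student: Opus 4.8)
The plan is to deduce the statement from the Prime Filter Lemma (Lemma~\ref{meetprime}). First I would record that every optimal filter is proper (as noted immediately after the definition of optimal filters), so the given $x \in L_{*} = \mathrm{Opt}(L)$ is in particular a proper filter of $L$, i.e.\ $x \neq L$. The goal is then simply to show that a proper filter of a (bounded) distributive meet semi-lattice extends to a prime filter, which is precisely the shape of conclusion the Prime Filter Lemma delivers once it is fed a suitable disjoint ideal.

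Next I would manufacture an honest ideal of $L$ disjoint from $x$. Since $L$ is bounded, $\bot \in L$, and $\bot \notin x$ because $x$ is proper (otherwise upward closure would force $x = L$). The singleton $\{\bot\}$ is an ideal of $L$: it is a downset, and condition~(ii) holds since $\{\bot\}^{u} = L \ni \bot$. Alternatively, choosing any $a \in L \setminus x$, the principal downset ${\downarrow}a$ is an ideal disjoint from $x$, since $b \leq a$ together with $b \in x$ would force $a \in x$. Either choice produces a genuine ideal $I$ of $L$ with $x \cap I = \emptyset$.

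Now I would apply the Prime Filter Lemma to the filter $x$ and the ideal $I$: as $L$ is distributive and $x \cap I = \emptyset$, there is a prime filter $y$ of $L$ with $x \subseteq y$ and $y \cap I = \emptyset$. Then $y \in \mathrm{Pr}(L) = L_{+}$ and $x \subseteq y$, which is exactly the claim.

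The one point requiring care — and essentially the only place the argument could break — is that one must extend $x$ using a \emph{genuine} ideal rather than merely an F-ideal. If one only had an F-ideal disjoint from $x$, the appropriate tool would be the Optimal Filter Lemma, whose output is an \emph{optimal} filter, not necessarily a prime one, and hence would not prove the proposition. Using an honest principal ideal ${\downarrow}a$ (or the ideal $\{\bot\}$ afforded by boundedness) is precisely what lets the Prime Filter Lemma return a point of $L_{+}$.
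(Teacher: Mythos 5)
Your proof is correct and follows essentially the same route as the paper: the paper also picks $a \in L - x$ (using that an optimal filter is proper), observes that $x$ is disjoint from the genuine ideal ${\downarrow}a$, and applies the Prime Filter Lemma to obtain $y \in L_{+}$ with $x \subseteq y$. Your closing remark about why an honest ideal (rather than an F-ideal) is needed is exactly the right point of care.
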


\begin{proof}
Since $x$ is an optimal filter of $L$, we have $L - x$ is nonempty.
Let $a \in L - x$. Then $x$ is disjoint from the ideal
${\downarrow}a$, and by the prime filter lemma, there is a prime
filter $y$ of $L$ such that $x \subseteq y$ and $a \notin y$.
\end{proof}

\begin{proposition}\label{proposition3}
Let $L$ be a bounded distributive meet semi-lattice and let $U$ be a
clopen upset of $L_*$. Then the following conditions are equivalent:
\begin{enumerate}
\item $U=\phi(a)$ for some $a \in L$.
\item $L_{*} - U = {\downarrow}(L_{+} - U)$.
\item $\mathrm{max}(L_{*} - U)\subseteq L_{+}$.
\end{enumerate}
\end{proposition}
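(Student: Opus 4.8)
The plan is to prove the three conditions equivalent by establishing the cycle $(1)\Rightarrow(2)\Rightarrow(3)\Rightarrow(1)$, exploiting the density of $L_+$ in $L_*$ (Lemma~\ref{Priestley-1}(3)) and Proposition~\ref{5.8}, which guarantees that every optimal filter sits below a prime filter. Throughout I keep in mind that $U$ is a clopen \emph{upset}, so $L_*-U$ is a clopen downset, and that the points of $L_+$ are exactly the prime filters among the optimal filters.

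\medskip
\noindent\textbf{$(1)\Rightarrow(2)$.} Assume $U=\phi(a)$. The inclusion ${\downarrow}(L_+-U)\subseteq L_*-U$ is immediate, since $L_*-U$ is a downset. For the reverse inclusion, take $x\in L_*-U$, so $a\notin x$. Then $x$ is disjoint from the ideal ${\downarrow}a$, and by the Prime Filter Lemma (Lemma~\ref{meetprime}) there is a prime filter $y$ with $x\subseteq y$ and $a\notin y$; thus $y\in L_+-U$ and $x\in{\downarrow}(L_+-U)$. (This is essentially the argument already used in Proposition~\ref{5.8}, now applied to the specific element $a$.)

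\medskip
\noindent\textbf{$(2)\Rightarrow(3)$.} Suppose $L_*-U={\downarrow}(L_+-U)$. Let $x\in\mathrm{max}(L_*-U)$. By hypothesis $x\subseteq y$ for some $y\in L_+-U$; but $x$ is maximal in $L_*-U$ and $y\in L_*-U$, so $x=y\in L_+$. Hence $\mathrm{max}(L_*-U)\subseteq L_+$.

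\medskip
\noindent\textbf{$(3)\Rightarrow(1)$.} This is the step I expect to be the main obstacle, since I must recover an element of $L$ from purely topological/order data. The idea is that $U$ is a clopen upset, so by Theorem~\ref{isom-2} it belongs to $\mathfrak{CU}(L_*)=D(\mathfrak{B}_L)$, i.e.\ $U=\phi(a_1)\cup\cdots\cup\phi(a_k)$ for some $a_i\in L$; I must show the hypothesis on maximal points forces $U$ to be a single $\phi(a)$. First I would argue that $\mathrm{max}(L_*-U)\subseteq L_+$ lets me reconstruct $U$ from its trace on $L_+$: since every point of $L_*-U$ lies below a maximal point of $L_*-U$ (using compactness of the Priestley space, every point lies below a maximal point of the \emph{closed} set $L_*-U$), and these maxima are prime filters, the downset $L_*-U$ equals ${\downarrow}(L_+-U)$, recovering (2) as an intermediate fact. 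I would then identify a natural candidate $a$: set $I=L-\sigma^{-1}[\text{(the prime filter complementing a chosen witness)}]$, or more directly, use that $L_+-U$, viewed inside $D(L)_*=\mathrm{Pr}(D(L))$ under the isomorphism of Corollary~\ref{isom-1}, cuts out a clopen downset of $D(L)_*$ whose maximal points all come from $L_+$. The key computation will be to verify, via Lemma~\ref{sigmaup} and Lemma~\ref{basic}, that the join-dense generator $\sigma(a)$ (equivalently $\phi(a)$) corresponding to the upset $U$ is actually an element of the image $\sigma[L]$ rather than a proper join of several such; the maximality condition (3) is precisely what rules out the need for multiple generators, because any point of $L_*$ outside $U$ can be separated from $U$ by a \emph{single} $\phi(a)$ witnessing primeness at a maximal (hence prime) point. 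Completing this argument carefully — translating the order-theoretic hypothesis (3) into the statement that the clopen upset $U$ is meet-irreducible in $\mathfrak{CU}(L_*)$ in the appropriate sense and therefore equals $\phi(a)$ for a single $a$ — is where the real work lies.
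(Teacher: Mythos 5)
Your directions $(1)\Rightarrow(2)$ and $(2)\Rightarrow(3)$ are correct and essentially identical to the paper's. The problem is $(3)\Rightarrow(1)$, which you correctly identify as the heart of the proposition but never actually carry out. Writing $U=\phi(a_1)\cup\cdots\cup\phi(a_n)$, the entire content of this implication is that the join $a_1\vee\cdots\vee a_n$ exists in $L$ — equivalently, that $\bigcap_{i=1}^n{\uparrow}a_i$ is a principal upset ${\uparrow}a$ — and nothing in your sketch establishes this. The two ``candidates'' you float are not well-defined (there is no ``prime filter complementing a chosen witness'' to point at, and ``the join-dense generator $\sigma(a)$ corresponding to $U$'' presupposes exactly the element whose existence is at stake). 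Moreover, recasting (3) as meet-irreducibility of $U$ in $\mathfrak{CU}(L_*)$ is the wrong invariant: $\phi[L]$ is closed under finite intersections, so membership in $\phi[L]$ has nothing to do with meet-irreducibility, and a union $\phi(a_1)\cup\phi(a_2)$ can perfectly well equal a single $\phi(a)$ (precisely when $a_1\vee a_2$ exists).

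For the record, the paper closes the gap as follows: let $F=\bigcap_{i=1}^n{\uparrow}a_i$ and let $I$ be the Frink ideal generated by $a_1,\dots,a_n$. If $F\cap I=\emptyset$, the optimal filter lemma yields $x\in L_*-U$ with $F\subseteq x$; taking $y\in\mathrm{max}(L_*-U)$ above $x$, hypothesis (3) makes $y$ prime, and $\bigcap_{i=1}^n{\uparrow}a_i\subseteq y$ then forces ${\uparrow}a_i\subseteq y$ for some $i$, i.e.\ $y\in\phi(a_i)\subseteq U$, a contradiction. Hence there is $a\in F\cap I$, which satisfies ${\uparrow}a=\bigcap_{i=1}^n{\uparrow}a_i$, so $a=a_1\vee\cdots\vee a_n$ and $\phi(a)=U$ by Lemma \ref{basic}. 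I would add that the separation idea buried in your last paragraph \emph{can} be completed into a genuinely different proof: for each $x\in L_*-U$, pass to a maximal $y\supseteq x$ in $L_*-U$; since $y$ is prime and $a_i\notin y$ for all $i$, there is $c_x\in\bigcap_{i=1}^n{\uparrow}a_i$ with $c_x\notin y$, so $U\subseteq\phi(c_x)$ and $x\notin\phi(c_x)$. The sets $\phi(c_x)^c$ then cover the compact set $L_*-U$, so finitely many suffice, giving $\bigcap_{j=1}^m\phi(c_{x_j})\subseteq U\subseteq\bigcap_{j=1}^m\phi(c_{x_j})$, whence $U=\phi(c_{x_1}\wedge\cdots\wedge c_{x_m})$ because $\phi$ preserves finite meets. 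But you stopped short of this compactness step (and of proving the separation claim itself), so as written the proof is incomplete.
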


\begin{proof}
(1)$\Rightarrow$(2): Let $U=\phi(a)$. Since $L_+\subseteq L_*$, we
have $L_+ - \phi(a)\subseteq L_* - \phi(a)$, and as $\phi(a)$ is an
upset, $L_* - \phi(a)$ is a downset, so ${\downarrow}(L_+ -
\phi(a))\subseteq L_* - \phi(a)$. Conversely, if $x \in L_* -
\phi(a)$, then $x\notin \phi(a)$. Therefore, $a \notin x$. So $x\cap
{\downarrow}a=\emptyset$, and by the prime filter lemma, there is a
prime filter $y$ of $L$ such that $x \subseteq y$ and $a \not \in
y$. Thus, $x\subseteq y$ and $y\in L_+ - \phi(a)$, implying that
$x\in {\downarrow}(L_+ - \phi(a))$. It follows that $L_* - \phi(a) =
{\downarrow}(L_+ - \phi(a))$.

(2)$\Rightarrow$(3): Let $L_* - U = {\downarrow}(L_+ - U)$ and let
$x\in \mathrm{max}(L_* - U)$. Then $x\in {\downarrow}(L_+ - U)$, and
as $x$ is a maximal point of $L_* - U$, we have $x\in L_+ - U
\subseteq L_+$.

(3)$\Rightarrow$(1): Let $U$ be a clopen upset of $L_*$ and let
$\mathrm{max}(L_* - U)\subseteq L_+ $. Then there exist $a_1, \ldots
a_n \in L$ such that $U = \phi(a_1) \cup \ldots \cup \phi(a_n)$. Let
$F$ be the filter $\displaystyle{\bigcap_{i=1}^n}{\uparrow}a_i$ and
let $I$ be the Frink ideal generated by $a_1, \ldots, a_n$. If
$F\cap I=\emptyset$, then, by the optimal filter lemma, there exists
an optimal filter $x$ of $L$ such that $F \subseteq x$ and $x \cap I
= \emptyset$. Since $a_i\in I$, we have $a_i\notin x$ for each
$i\leq n$, so $x\notin \phi(a_1) \cup \ldots \cup \phi(a_n) = U$.
Therefore, $x\in L_* - U$. Since $\la L_*,\tau,\subseteq\ra$ is a
Priestley space and $L_* - U$ is a closed subset of $L_*$, there
exists $y\in \mathrm{max}(L_{*} - U)$ such that $x\subseteq y$. But
then $y\in L_+$ and $y\notin U$. Moreover,
$\displaystyle{\bigcap_{i=1}^n}{\uparrow}a_i \subseteq x\subseteq y$
and as $y$ is prime, there is $i \leq n$ such that ${\uparrow}a_i
\subseteq y$. Hence, $y \in \phi(a_i)\subseteq U$, which is a
contradiction. Therefore, there is $a \in F \cap I$. Thus, $a\in
\displaystyle{\bigcap_{i=1}^n}{\uparrow}a_i$ and
$\displaystyle{\bigcap_{i=1}^n}{\uparrow}a_i \subseteq {\uparrow}
a$. So $\displaystyle{\bigcap_{i=1}^n}{\uparrow}a_i = {\uparrow} a$,
$a = a_1 \vee \ldots \vee a_n$, and so $\phi(a) = \phi(a_1) \cup
\ldots \cup \phi(a_n) = U$.
\end{proof}

Proposition \ref{proposition3} can be restated as follows:

\begin{proposition}\label{proposition4}
Let $L$ be a bounded distributive meet semi-lattice and let $V$ be a
clopen downset of $L_*$. Then the following conditions are
equivalent:
\begin{enumerate}
\item $V=\phi(a)^{c}$ for some $a \in L$.
\item $V = {\downarrow}(L_{+} \cap V)$.
\item $\mathrm{max}(V)\subseteq L_{+}$.
\end{enumerate}
\end{proposition}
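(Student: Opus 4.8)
The plan is to recognize that Proposition \ref{proposition4} is simply the set-theoretic complement of Proposition \ref{proposition3}, transported across the bijection $V \mapsto U := L_* - V$ between clopen downsets and clopen upsets of $L_*$. Since $\la L_*, \tau, \subseteq \ra$ is a Priestley space by Lemma \ref{Priestley-1}, the complement of a clopen downset $V$ is a clopen upset $U$, so Proposition \ref{proposition3} is directly applicable to $U$, and it suffices to check that each of the three conditions listed here corresponds, under $V = L_* - U$, to the analogously numbered condition of Proposition \ref{proposition3}.

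Carrying this out, I would argue as follows. Condition (1), $V = \phi(a)^c$, says exactly that $U = L_* - V = \phi(a)$, which is condition (1) of Proposition \ref{proposition3}. For condition (2), I would first record that $L_+ \subseteq L_*$ by Lemma \ref{lemma3}, whence $L_+ \cap V = L_+ \cap (L_* - U) = L_+ - U$; thus $V = {\downarrow}(L_+ \cap V)$ is the same assertion as $L_* - U = {\downarrow}(L_+ - U)$, i.e.\ condition (2) of Proposition \ref{proposition3}. Condition (3), $\mathrm{max}(V) \subseteq L_+$, is literally $\mathrm{max}(L_* - U) \subseteq L_+$, which is condition (3) of Proposition \ref{proposition3}. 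Since the three conditions of Proposition \ref{proposition3} are mutually equivalent for the clopen upset $U$, the three conditions stated here are mutually equivalent as well.

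There is essentially no obstacle: the mathematical content has already been established in Proposition \ref{proposition3}, and all that remains is a routine substitution. The only point demanding the slightest attention is the rewriting in condition (2), where the inclusion $L_+ \subseteq L_*$ is needed to pass from $L_+ \cap V$ to $L_+ - U$; the other two conditions transfer by a bare complementation.
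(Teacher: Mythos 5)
Your proposal is correct and matches the paper exactly: the paper gives no separate proof of Proposition \ref{proposition4}, introducing it with the phrase ``Proposition \ref{proposition3} can be restated as follows,'' i.e.\ precisely the complementation $V \mapsto U = L_* - V$ that you carry out. Your write-up merely makes explicit the routine translation of the three conditions (including the use of $L_+ \subseteq L_*$ in condition (2)), which is all the paper intends by ``restated.''
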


The following is an immediate consequence of Proposition
\ref{proposition3}.

\begin{corollary}\label{5.10}
Let $L$ be a bounded distributive meet semi-lattice. Then:
$$\mathfrak{B}_{L} = \{U \in \mathfrak{CU}(L_{*}): L_{*} - U =
{\downarrow}(L_{+} - U)\} = \{U \in \mathfrak{CU}(L_{*}):
\mathrm{max}(L_{*} - U) \subseteq L_{+}\}.$$
\end{corollary}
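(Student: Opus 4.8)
The plan is to read all three descriptions off directly from Proposition~\ref{proposition3}, exploiting that $\mathfrak{B}_L$ is by definition exactly the family $\{\phi(a):a\in L\}$. First I would record that $\mathfrak{B}_L\subseteq\mathfrak{CU}(L_*)$, so that the three families being compared are genuinely subfamilies of $\mathfrak{CU}(L_*)$: each $\phi(a)$ is open because it belongs to the subbasis $\{\phi(c):c\in L\}\cup\{\phi(d)^c:d\in L\}$ of $\tau$; it is closed because its complement $\phi(a)^c$ also lies in that subbasis and is therefore open; and it is an upset because $a\in x$ together with $x\subseteq y$ forces $a\in y$.

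Next I would fix an arbitrary $U\in\mathfrak{CU}(L_*)$, i.e.\ a clopen upset of $L_*$. By definition $U\in\mathfrak{B}_L$ precisely when $U=\phi(a)$ for some $a\in L$, which is verbatim condition (1) of Proposition~\ref{proposition3}. Since $U$ is a clopen upset, that proposition applies and yields
$$U=\phi(a)\text{ for some }a\in L\iff L_*-U={\downarrow}(L_+-U)\iff \mathrm{max}(L_*-U)\subseteq L_+.$$
Hence, among clopen upsets, membership in $\mathfrak{B}_L$ is equivalent to each of the two conditions appearing on the right-hand side of the corollary.

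Finally I would assemble these equivalences into the asserted chain of equalities: the first set is $\mathfrak{B}_L$, the second is $\{U\in\mathfrak{CU}(L_*):L_*-U={\downarrow}(L_+-U)\}$, and the third is $\{U\in\mathfrak{CU}(L_*):\mathrm{max}(L_*-U)\subseteq L_+\}$, and the displayed equivalences show that a clopen upset belongs to any one of them iff it belongs to the others. There is essentially no obstacle here, as the corollary is a direct reformulation of Proposition~\ref{proposition3}; the only point requiring a moment's care is the preliminary observation that every element of $\mathfrak{B}_L$ is a clopen upset, which guarantees that the equalities are between comparable objects.
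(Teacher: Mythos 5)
Your proof is correct and follows exactly the paper's route: the paper derives Corollary \ref{5.10} as an immediate consequence of Proposition \ref{proposition3}, which is precisely what you do. Your preliminary check that every $\phi(a)$ is a clopen upset (so that $\mathfrak{B}_L\subseteq\mathfrak{CU}(L_*)$) is a sensible piece of bookkeeping that the paper leaves implicit.
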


For a family $\mathcal{F}=\{\phi(a_i):a_i\in L\}$ we recall that
$\mathcal{F}$ is \emph{directed} if for each
$\phi(a_i),\phi(a_j)\in \mathcal{F}$ there exists
$\phi(a_k)\in\mathcal{F}$ such that $\phi(a_i) \cup \phi(a_j)
\subseteq \phi(a_k)$. Clearly $\mathcal{F}$ is directed iff for
each $\phi(a_{i_1}),\dots,\phi(a_{i_n})\in \mathcal{F}$ there
exists $\phi(a_k)\in\mathcal{F}$ such that $\phi(a_{i_1}) \cup
\cdots \cup \phi(a_{i_n}) \subseteq \phi(a_k)$. For each $x\in
L_*$ let $\mathcal{I}_x=\{\phi(a):x\notin\phi(a)\}$.

\begin{proposition}\label{proposition5}
Let $L$ be a bounded distributive meet semi-lattice. Then $x\in
L_{+}$ iff $\mathcal{I}_x$ is directed.
\end{proposition}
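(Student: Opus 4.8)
The plan is to pass through the complement of $x$ and exploit the correspondence of Proposition \ref{filterideal}: $x \in L_+$ (i.e.\ $x$ is prime) iff $L - x$ is a prime ideal of $L$. The bridge between directedness of $\mathcal{I}_x$ and the ideal structure of $L - x$ is the single-element case of Lemma \ref{basic}, which says that $\phi(a) \subseteq \phi(c)$ holds iff $a \leq c$. With this, saying that $\phi(a),\phi(b) \in \mathcal{I}_x$ are dominated by some $\phi(c) \in \mathcal{I}_x$ is exactly saying that $a,b$ admit a common upper bound $c$ lying in $L - x$; in other words, directedness of $\mathcal{I}_x$ translates verbatim into condition (ii) of the definition of an ideal, applied to the set $L - x$.

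For the forward direction I would assume $x \in L_+$. Then $L - x$ is a prime ideal by Proposition \ref{filterideal}, in particular an ideal. Given $\phi(a), \phi(b) \in \mathcal{I}_x$ we have $a, b \in L - x$, so condition (ii) supplies $c \in \{a,b\}^u \cap (L - x)$; since $\phi$ is order-preserving, $\phi(a) \cup \phi(b) \subseteq \phi(c)$ with $\phi(c) \in \mathcal{I}_x$, so $\mathcal{I}_x$ is directed.

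For the converse I would assume $\mathcal{I}_x$ is directed. Since $x \in L_* = \mathrm{Opt}(L)$ is optimal, Proposition \ref{proposition1} gives that $L - x$ is an F-ideal, hence (by Lemma \ref{semi}) a nonempty downset. The point is then to promote this F-ideal to a genuine ideal, which is where directedness is used: for $a, b \in L - x$ we have $\phi(a), \phi(b) \in \mathcal{I}_x$, so there is $\phi(c) \in \mathcal{I}_x$ with $\phi(a) \cup \phi(b) \subseteq \phi(c)$, and Lemma \ref{basic} converts this into $a \leq c$, $b \leq c$ with $c \in L - x$, i.e.\ $\{a,b\}^u \cap (L - x) \neq \emptyset$. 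Thus $L - x$ is an ideal, and the prime condition then comes for free from $x$ being a filter: if $a \wedge b \in L - x$ while $a, b \in x$, then $a \wedge b \in x$, a contradiction. Hence $L - x$ is a prime ideal and, by Proposition \ref{filterideal}, $x \in L_+$.

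The only real subtlety, and the step I would flag as the main obstacle, is recognizing that directedness does nothing more than upgrade the F-ideal $L - x$ to an honest ideal, and that once $L - x$ is an ideal its primeness is automatic because $x$ is a filter. Everything else is a routine translation, via Lemma \ref{basic}, between the containment order on $\phi[L]$ and the order on $L$.
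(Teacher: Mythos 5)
Your proof is correct, but it takes a recognizably different route from the paper's. The paper argues both directions straight from the definition of a (meet-)prime filter: in the forward direction, $a,b\notin x$ and primeness of $x$ give ${\uparrow}a\cap{\uparrow}b\not\subseteq x$, hence an upper bound $c$ of $a,b$ with $c\notin x$; in the converse it argues by contradiction, taking filters $F_1,F_2$ with $F_1\cap F_2\subseteq x$ but $F_i\not\subseteq x$, picking $a_i\in F_i-x$, and using directedness together with order-reflection of $\phi$ to produce an element $a$ with $a\in{\uparrow}a_1\cap{\uparrow}a_2\subseteq x$ yet $a\notin x$. You instead route everything through the complement: directedness of $\mathcal{I}_x$ is literally clause (ii) of the definition of ideal applied to $L-x$, primeness of the resulting ideal is automatic because $x$ is a filter, and Proposition \ref{filterideal} converts between prime filters and prime ideals. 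The underlying computation is identical in both proofs --- translating $\phi(a)\cup\phi(b)\subseteq\phi(c)$ into order statements via the $n=1$ case of Lemma \ref{basic} --- but your packaging isolates the structural content more cleanly: among optimal filters, the prime ones are exactly those whose complement is an honest ideal rather than merely an F-ideal, and directedness of $\mathcal{I}_x$ is precisely that upgrade. What the paper's version buys is self-containment: it needs neither Proposition \ref{filterideal} nor Proposition \ref{proposition1} (your appeal to the latter, to get that $L-x$ is a nonempty downset, is in any case avoidable, since $x$ being a proper upset already gives this).
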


\begin{proof}
Let $x \in L_{+}$ and let $\phi(a),\phi(b)\in\mathcal{I}_x$. Then
$x\notin\phi(a),\phi(b)$. Therefore, $a,b\notin x$. Since $x$ is a
prime filter of $L$, it follows that ${\uparrow}a\cap{\uparrow}b\not
\subseteq x$. Thus, there exists $c\in {\uparrow}a\cap{\uparrow}b$
such that $c\notin x$. Consequently, $\phi(a)\cup\phi(b)\subseteq
\phi(c)$ and $x\notin\phi(c)$. So $\phi(c)\in\mathcal{I}_x$, and so
$\mathcal{I}_x$ is directed. Conversely, suppose that
$\mathcal{I}_x$ is directed. We show that $x\in L_+$. If not, then
there exist filters $F_1$ and $F_2$ of $L$ such that $F_1 \cap F_2
\subseteq x$ but $F_1 \not \subseteq x$ and $F_2 \not \subseteq x$.
Let $a_1 \in F_1 - x$ and $a_2 \in F_2 - x$. Then
$x\notin\phi(a_1),\phi(a_2)$, and so
$\phi(a_1),\phi(a_2)\in\mathcal{I}_x$. Since $\mathcal{I}_x$ is
directed, there exists $\phi(a)\in\mathcal{I}_x$ such that
$\phi(a_1)\cup\phi(a_2)\subseteq\phi(a)$. From
$\phi(a)\in\mathcal{I}_x$ it follows that $a\notin x$, and from
$\phi(a_1)\cup\phi(a_2)\subseteq\phi(a)$ it follows that $a \in
{\uparrow}a_1\cap{\uparrow}a_2\subseteq F_1 \cap F_2\subseteq x$.
The obtained contradiction proves that $x\in L_+$.
\end{proof}

The results we have established about the dual space of a bounded
distributive meet semi-lattice $L$ justify the following definition
of a generalized Priestley space. Let $\la X, \tau, \leq\ra$ be a
Priestley space and let $X_0$ be a dense subset of $X$. For a clopen subset $U$ of $X$, we say that $X_0$ is \emph{cofinal in $U$} if $\mathrm{max}(U)\subseteq X_0$. We call a clopen upset $U$ of $X$ \emph{admissible} if $X_0$ is cofinal in $U^c$. Let $X^{*}$ denote the set of admissible clopen upsets of  $X$. We note that $U \in X^{*}$ iff $U$ is a clopen upset such that $\mathrm{max}(U^c)\subseteq X_0$, which happens iff $U$ is a clopen upset such that $U^c = {\downarrow}(X_0 - U)$. For $x\in X$ let
$\mathcal{I}_x=\{U\in X^*:x\notin U\}$.

\begin{definition}\label{g-Priestley}
{\rm We call a quadruple $X=\la X, \tau, \leq, X_0\ra$ a
\textit{generalized Priestley space} if:
\begin{enumerate}
\item $\la X, \tau, \leq\ra$ is a Priestley space.
\item $X_0$ is a dense subset of $X$.
\item For each $x \in X$ there is $y \in X_0$ such that $x \leq y$.
\item $x \in X_0$ iff $\mathcal{I}_x$ is directed.
\item For all $x, y \in X$, we have $x \leq y$ iff $(\forall U \in
X^{*})(x \in U \Rightarrow y \in U)$.
\end{enumerate}
}
\end{definition}

\begin{remark}
Clearly condition (3) of Definition \ref{g-Priestley} is equivalent
to $\mathrm{max}X$ $\subseteq X_0$, which means that $\emptyset$ is admissible. Also, when in a generalized
Priestley space $X$ we have $X_0=X$, then $X^*=\mathfrak{CU}(X)$, so
conditions (2)--(5) of Definition \ref{g-Priestley} become
redundant, and so $X$ becomes a Priestley space. Thus, the notion of
a generalized Priestley space generalizes that of a Priestley space.
\end{remark}

\begin{proposition}\label{proposition6}
For a bounded distributive meet semi-lattice $L$, the quadruple
$L_*=\la L_{*}, \tau, \subseteq, L_{+}\ra$ is a generalized
Priestley space.
\end{proposition}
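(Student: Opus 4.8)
The plan is to verify the five clauses of Definition \ref{g-Priestley} in turn for the quadruple $L_* = \langle L_*, \tau, \subseteq, L_+\rangle$, relying almost entirely on facts already established. Clauses (1)--(3) are immediate: clause (1) is precisely Lemma \ref{Priestley-1}(2), which asserts that $\langle L_*, \tau, \subseteq\rangle$ is a Priestley space; clause (2) is Lemma \ref{Priestley-1}(3), the density of $L_+$ in $L_*$; and clause (3), that every $x \in L_*$ lies below some $y \in L_+$, is exactly Proposition \ref{5.8}.

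The conceptual heart of the argument is to pin down what ${L_*}^{*}$ is. By definition, $U \in {L_*}^{*}$ iff $U$ is a clopen upset with $L_+$ cofinal in $U^c$, i.e.\ $\mathrm{max}(L_* - U) \subseteq L_+$. Corollary \ref{5.10} identifies this collection with $\mathfrak{B}_L = \phi[L]$. So I would first record the equality ${L_*}^{*} = \{\phi(a) : a \in L\}$; once this is in place, the abstract set $\mathcal{I}_x = \{U \in {L_*}^{*} : x \notin U\}$ of Definition \ref{g-Priestley} coincides with the set $\{\phi(a) : x \notin \phi(a)\}$ appearing in Proposition \ref{proposition5}.

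With that identification, clause (4)---that $x \in L_+$ iff $\mathcal{I}_x$ is directed---is nothing more than a restatement of Proposition \ref{proposition5}. For clause (5), I would observe that since ${L_*}^{*} = \phi[L]$ and $x \in \phi(a)$ means $a \in x$, the condition $(\forall U \in {L_*}^{*})(x \in U \Rightarrow y \in U)$ unwinds to $(\forall a \in L)(a \in x \Rightarrow a \in y)$, which for the subsets $x, y$ of $L$ is literally the assertion $x \subseteq y$; thus both directions of clause (5) are tautological.

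The upshot is that this proposition is a bookkeeping result assembling Lemma \ref{Priestley-1}, Proposition \ref{5.8}, Proposition \ref{proposition5}, and Corollary \ref{5.10}, rather than one carrying a genuine obstacle. The only step needing care is the identification ${L_*}^{*} = \phi[L]$ via Corollary \ref{5.10}: the definition of a generalized Priestley space phrases clauses (4) and (5) in terms of the abstract family $X^*$ of admissible clopen upsets, and one must confirm that for $X = L_*$ this family is exactly $\phi[L]$ before the earlier results can be invoked verbatim.
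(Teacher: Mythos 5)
Your proposal is correct and follows essentially the same route as the paper's proof: both assemble Lemma \ref{Priestley-1}, Proposition \ref{5.8}, and Proposition \ref{proposition5} for clauses (1)--(4), and both hinge on the identification ${L_*}^* = \phi[L]$ (which you obtain via Corollary \ref{5.10}, the paper via Proposition \ref{proposition3}, of which that corollary is an immediate restatement) to make clauses (4) and (5) unwind to the already-proved statements. Your explicit remark that this identification is the one step requiring care matches the actual structure of the paper's argument.
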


\begin{proof}
It follows from Lemma \ref{Priestley-1} that $\la L_{*}, \tau,
\subseteq\ra$ is a Priestley space, and that $L_+$ is dense in $\la
L_{*}, \tau\ra$. Therefore, $L_*$ satisfies conditions (1) and (2)
of Definition \ref{g-Priestley}. By Propositions \ref{5.8} and
\ref{proposition5}, $L_*$ satisfies conditions (3) and (4) of
Definition \ref{g-Priestley}. Lastly it follows from Proposition
\ref{proposition3} that ${L_*}^*=\phi[L]$. Clearly $x\subseteq y$
iff $(\forall a\in L)(a\in x\Rightarrow b\in x)$, thus $L_*$
satisfies condition (5) of Definition \ref{g-Priestley}.
Consequently, $L_*$ is a generalized Priestley space.
\end{proof}

Since for each bounded distributive meet semi-lattice $L$ we have
${L_*}^*=\phi[L]$, we immediately obtain:

\begin{theorem}\label{proposition8} (Representation Theorem)
For each bounded distributive meet semi-lattice $L$ we have $L
\simeq {L_{*}}^{*}$; that is, for each bounded distributive meet
semi-lattice $L$ there exists a generalized Priestley space $X$ such
that $L$ is isomorphic to $X^*$.
\end{theorem}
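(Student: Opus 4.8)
The plan is to recognize the statement as an almost immediate consequence of the identity ${L_*}^* = \phi[L]$, which was already extracted in the proof of Proposition \ref{proposition6}, together with the fact that $\phi$ realizes $L$ as an isomorphic copy of its image. So the work splits into two parts: first, pin down ${L_*}^*$ as a concrete set of clopen upsets and show that it equals $\phi[L]$; second, verify that $\phi$ is an isomorphism of bounded distributive meet semi-lattices onto this image, not merely a bijection of underlying sets. The witnessing space in the second assertion of the theorem will of course be $L_*$ itself, which is a generalized Priestley space by Proposition \ref{proposition6}.

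For the first part I would unwind the definition of $X^*$ from Definition \ref{g-Priestley}. Since $L_* = \la L_*, \tau, \subseteq, L_+\ra$ is a generalized Priestley space, its dual ${L_*}^*$ consists exactly of the admissible clopen upsets $U$ of $\la L_*, \tau, \subseteq\ra$, that is, the clopen upsets with $\mathrm{max}(L_* - U) \subseteq L_+$. By Corollary \ref{5.10} (equivalently, by the $(1)\Leftrightarrow(3)$ direction of Proposition \ref{proposition3}) these are precisely the members of $\mathfrak{B}_L = \phi[L]$. Hence ${L_*}^* = \phi[L]$ as subsets of $\mathcal{P}(L_*)$.

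For the second part I would check that $\phi$ carries the bounded-meet-semilattice structure of $L$ to the one inherited by ${L_*}^*$. The meet on $X^*$ is intersection, with top the whole space $L_*$ and bottom $\emptyset$; since $\phi(a \wedge b) = \phi(a) \cap \phi(b)$, $\phi(\top) = L_*$ (every filter contains $\top$), and $\phi(\bot) = \emptyset$ (no optimal filter contains $\bot$, as optimal filters are proper), the map $\phi$ is a homomorphism of bounded meet semi-lattices onto $\phi[L] = {L_*}^*$. Injectivity of $\phi$ follows from the optimal filter lemma, and is exactly the content of the earlier proposition identifying $L$ with $\la \{\phi(a): a \in L\}, \cap\ra$. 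Combining the two parts yields $L \simeq \phi[L] = {L_*}^*$.

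I expect no genuine obstacle at this stage: the substantive step, namely that a clopen upset whose complement has all its maximal points in $L_+$ must be of the form $\phi(a)$, is the $(3)\Rightarrow(1)$ implication of Proposition \ref{proposition3}, and it already consumed the optimal filter lemma together with the compactness and maximality arguments available in the Priestley space $L_*$. The only point demanding a moment's care is confirming that the distinguished bounds are preserved ($\top \mapsto L_*$ and $\bot \mapsto \emptyset$), so that the set-level equality ${L_*}^* = \phi[L]$ upgrades to an isomorphism in the category of bounded distributive meet semi-lattices.
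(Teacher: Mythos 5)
Your proposal is correct and follows essentially the same route as the paper: the paper derives the theorem immediately from the identity ${L_*}^* = \phi[L]$ (established via Proposition \ref{proposition3} in the proof of Proposition \ref{proposition6}) together with the earlier fact that $\phi$ is a 1-1 bounded meet semi-lattice homomorphism, which is exactly your two-part decomposition. The extra care you take in checking preservation of $\top$ and $\bot$ is implicit in the paper but entirely consistent with it.
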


\begin{proposition}\label{proposition7}
Let $X$ be a generalized Priestley space. Then $X^*=\la X^{*}, \cap,
X,\emptyset \ra$ is a bounded distributive meet semi-lattice.
\end{proposition}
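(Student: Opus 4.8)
The plan is to realize $X^{*}$ as a bounded sub-$\wedge$-semilattice of the distributive lattice of clopen upsets of $X$, and then to verify the distributivity condition by hand, the real work being a compactness argument fueled by condition~(4).

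First I would record that, since $\la X,\tau,\leq\ra$ is a Priestley space, the clopen upsets $\mathfrak{CU}(X)$ form a bounded distributive lattice under $\cap$ and $\cup$. To see that $\la X^{*},\cap,X,\emptyset\ra$ is a bounded meet semi-lattice it then suffices to check that $X^{*}$ is closed under $\cap$ and contains $X$ and $\emptyset$. Here $X$ is admissible because $X^{c}=\emptyset$, and $\emptyset$ is admissible because condition~(3) gives $\mathrm{max}X\subseteq X_{0}$ (as noted in the Remark). For closure under $\cap$ I would use the reformulation of admissibility as $U^{c}={\downarrow}(X_{0}-U)$: if $U,V\in X^{*}$ then $(U\cap V)^{c}=U^{c}\cup V^{c}={\downarrow}(X_{0}-U)\cup{\downarrow}(X_{0}-V)={\downarrow}\bigl((X_{0}-U)\cup(X_{0}-V)\bigr)={\downarrow}\bigl(X_{0}-(U\cap V)\bigr)$, so $U\cap V$ is again admissible. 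The order is inclusion, with top $X$ and bottom $\emptyset$.

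The substantive part is distributivity. Given $A,B_{1},B_{2}\in X^{*}$ with $B_{1}\cap B_{2}\subseteq A$, I must produce $C_{1},C_{2}\in X^{*}$ with $B_{1}\subseteq C_{1}$, $B_{2}\subseteq C_{2}$ and $C_{1}\cap C_{2}=A$. Since I will also arrange $A\subseteq C_{i}$, the requirement $C_{1}\cap C_{2}=A$ is equivalent to $C_{1}^{c}\cup C_{2}^{c}=A^{c}$; and because $A$ is admissible, every point of the clopen (hence compact) downset $A^{c}$ lies below some $p\in\mathrm{max}(A^{c})\subseteq X_{0}$, so it is enough to cover $A^{c}$. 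The key observation is that each such $p$ satisfies $p\notin A$, whence $p\notin B_{1}$ or $p\notin B_{2}$ (otherwise $p\in B_{1}\cap B_{2}\subseteq A$).

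The hard part is manufacturing \emph{admissible}, i.e.\ clopen, witnesses, and this is exactly where condition~(4) enters. For $p\in\mathrm{max}(A^{c})$ with, say, $p\notin B_{1}$, I have $p\in X_{0}$, so $\mathcal{I}_{p}$ is directed; as $A,B_{1}\in\mathcal{I}_{p}$, directedness supplies an admissible $W_{p}\in X^{*}$ with $A\cup B_{1}\subseteq W_{p}$ and $p\notin W_{p}$ (and symmetrically when $p\notin B_{2}$). The downsets $W_{p}^{c}$ then cover $A^{c}$, since any $x\in A^{c}$ lies below some maximal $p$ and $x\leq p\in W_{p}^{c}$ forces $x\in W_{p}^{c}$. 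Now I would invoke compactness of $A^{c}$ to extract a finite subcover $W_{p_{1}},\dots,W_{p_{k}}$, split these according to whether they were built to contain $A\cup B_{1}$ or $A\cup B_{2}$, and set $C_{1}$ to be the intersection of those of the first kind and $C_{2}$ the intersection of those of the second kind (taking $X$ for an empty intersection). Since finite intersections of admissible sets are admissible, $C_{1},C_{2}\in X^{*}$; by construction $A\cup B_{i}\subseteq C_{i}$, and $C_{1}^{c}\cup C_{2}^{c}=\bigcup_{j}W_{p_{j}}^{c}\supseteq A^{c}$ together with $C_{i}^{c}\subseteq A^{c}$ yields $C_{1}\cap C_{2}=A$, as required. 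The main obstacle, and the crux of the argument, is precisely this passage from the pointwise separations handed over by condition~(4) to a single pair of clopen sets via compactness; the admissibility of $A$, which guarantees $\mathrm{max}(A^{c})\subseteq X_{0}$, is what makes condition~(4) applicable at every relevant point.
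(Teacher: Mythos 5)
Your proposal is correct and follows essentially the same route as the paper: closure under $\cap$ together with $X,\emptyset\in X^{*}$ (the paper checks this via $\mathrm{max}$ of complements, you via the equivalent downset formulation), and then distributivity proved exactly as in the paper — using admissibility of $A$ to place $\mathrm{max}(A^{c})$ inside $X_{0}$, condition (4) to produce admissible sets $W_{p}\supseteq A\cup B_{i}$ missing each maximal point $p$, covering the clopen downset $A^{c}$ by the $W_{p}^{c}$, and extracting the finite intersections $C_{1},C_{2}$ by compactness. No gaps; the argument matches the paper's proof of Proposition \ref{proposition7} in all essentials.
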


\begin{proof}
First we show that $X^{*}$ is closed under $\cap$. If $U, V \in
X^{*}$, then $\mathrm{max}((U \cap V)^c) = \mathrm{max}(U^c \cup
V^c)\subseteq \mathrm{max}(U^c) \cup \mathrm{max}(V^c) \subseteq
X_0$. Thus, $U \cap V \in X^{*}$. Next $\mathrm{max}(X^c) =
\mathrm{max}(\emptyset)=\emptyset\subseteq X_0$, so $X\in X^*$.
Also, $\mathrm{max}(\emptyset^c)=\mathrm{max}(X)\subseteq X_0$ by
condition (3) of Definition \ref{g-Priestley}, so $\emptyset\in
X^*$. Lastly we show that $\la X^{*}, \cap \ra$ is distributive.
Let $U, V, W \in X^{*}$ with $U \cap V \subseteq W$. Then $W^c
\subseteq U^c \cup V^c$. For each $x\in\mathrm{max}(W^c)$ we have
that $x\in U^c$ or $x\in V^c$. Therefore, $W\in\mathcal{I}_x$ and
either $U\in \mathcal{I}_x$ or $V\in \mathcal{I}_x$. Since $x\in
X_0$, by condition (4) of Definition \ref{g-Priestley}, from
$W,U\in\mathcal{I}_x$ it follows that there exists
$U_x\in\mathcal{I}_x$ such that $W\cup U\subseteq U_x$; and from
$W,V\in\mathcal{I}_x$ it follows that there exists
$V_x\in\mathcal{I}_x$ such that $W\cup V\subseteq V_x$. Thus,
$W^c=\displaystyle{\bigcup}\{K_x: x\in\mathrm{max}(W^c)\}$, where
$K_x={U_x}^c$ or $K_x={V_x}^c$. Since $W^c$ is compact and each
$K_x$ is open, there exist finite subsets $A,B$ of
$\mathrm{max}(W^c)$ such that $W^c=
\displaystyle{\bigcup}\{{U_x}^c: x\in A\}
\cup\displaystyle{\bigcup}\{{V_x}^c: x\in B\}$. Let
$U'=\displaystyle{\bigcap}\{U_x: x\in A\}$ and
$V'=\displaystyle{\bigcap}\{V_x: x\in B\}$. Clearly $U\subseteq
U'$, $V\subseteq V'$, and $U,V\in X^*$. Moreover, $W^c =
\displaystyle{\bigcup}\{{U_x}^c: x\in A\}\cup
\displaystyle{\bigcup}\{{V_x}^c: x\in B\}$ implies $W = U' \cap
V'$. Thus, $\la X^{*}, \cap \ra$ is distributive. Consequently,
$\la X^{*}, \cap, X, \emptyset \ra$ is a bounded distributive meet
semi-lattice.
\end{proof}

\begin{proposition}\label{CU-finun}
Let $X$ be a generalized Priestley space. Then the closure of
$X^{*}$ under finite unions is $\mathfrak{CU}(X)$.
\end{proposition}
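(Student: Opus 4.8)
The plan is to prove the two inclusions separately. The inclusion of the closure of $X^*$ under finite unions into $\mathfrak{CU}(X)$ is immediate: each element of $X^*$ is a clopen upset, a finite union of clopen sets is clopen, and a union of upsets is an upset, so any finite union of elements of $X^*$ again lies in $\mathfrak{CU}(X)$.

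For the reverse inclusion I must show that every clopen upset $U$ of $X$ is a finite union of admissible clopen upsets. The key step is the abstract analogue of Lemma \ref{Priestley-2}: for each $x \in U$ I will produce some $V \in X^*$ with $x \in V \subseteq U$. Granting this, $U = \bigcup\{V \in X^* : V \subseteq U\}$; since $X$ is compact and $U$ is clopen, $U$ is itself compact, and as each such $V$ is open this open cover of $U$ admits a finite subcover $U = V_1 \cup \dots \cup V_m$ with $V_i \in X^*$. Hence $U$ belongs to the closure of $X^*$ under finite unions, completing this inclusion.

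It remains to construct, for a fixed $x \in U$, an admissible clopen upset $V$ with $x \in V \subseteq U$; this is the crux of the argument. Since $U$ is an upset containing $x$, every $y \notin U$ satisfies $x \not\leq y$ (otherwise $y \in {\uparrow}x \subseteq U$). By condition (5) of Definition \ref{g-Priestley}, for each such $y$ there is $W_y \in X^*$ with $x \in W_y$ and $y \notin W_y$. The family $\{W_y^c : y \notin U\}$ then covers the closed (hence compact) set $U^c$, so finitely many suffice: $U^c \subseteq W_{y_1}^c \cup \dots \cup W_{y_n}^c$, which is to say $W_{y_1} \cap \dots \cap W_{y_n} \subseteq U$. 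Setting $V = W_{y_1} \cap \dots \cap W_{y_n}$, we have $x \in V$ because $x \in W_{y_i}$ for every $i$, and $V \in X^*$ because $X^*$ is closed under finite intersections by Proposition \ref{proposition7}. Thus $x \in V \subseteq U$ with $V$ admissible, as required. The main obstacle is precisely this construction: it is where the separation condition (5) for admissible clopen upsets, the compactness of $U^c$, and the closure of $X^*$ under $\cap$ must be combined; the case $U = X$ (equivalently $U^c = \emptyset$) is covered by the empty intersection $V = X \in X^*$, so no separate treatment is needed.
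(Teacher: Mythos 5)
Your proof is correct and follows essentially the same route as the paper's: both use condition (5) of Definition \ref{g-Priestley} to separate points of $U$ from points of $U^c$, compactness to cut down to a finite intersection $V = W_{y_1}\cap\dots\cap W_{y_n} \in X^*$ with $x \in V \subseteq U$, and compactness again to extract a finite union of such sets covering $U$. The only cosmetic difference is that the paper dispatches the cases $U = X, \emptyset$ up front by noting $X, \emptyset \in X^*$, whereas you absorb them via the empty-intersection and empty-cover conventions.
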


\begin{proof}
Clearly we have $X^{*} \subseteq \mathfrak{CU}(X)$, so the closure
of $X^{*}$ under finite unions is contained in $\mathfrak{CU}(X)$.
Conversely, suppose that $U \in \mathfrak{CU}(X)$. Since
$X,\emptyset\in X^*$, we may assume without loss of generality
that $U \not = X,\emptyset$. For each $x \in U$ and $y \not \in
U$, as $U$ is an upset, we have $x \not \leq y$. Therefore, by
condition (5) of Definition \ref{g-Priestley}, there is $V_y \in
X^{*}$ such that $x \in V_y$ and $y \not \in V_y$. Then $U^c \cap
\displaystyle{\bigcap}\{V_y: y \notin U\} = \emptyset$. Since $X$
is compact, there are $y_1, \ldots, y_n\in U^c$ such that $U^c
\cap V_{y_1}\cap \ldots \cap V_{y_n}=\emptyset$. Therefore, $x \in
V_{y_1}\cap \ldots \cap V_{y_n} \subseteq U$. Let $W_x =
V_{y_1}\cap \ldots \cap V_{y_n}$. Since $X^{*}$ is closed under
finite intersections, $W_x \in X^{*}$. Then $U =
\displaystyle{\bigcup}\{W_x: x \in U\}$, and as $X$ is compact,
there are $x_1, \ldots, x_k \in U$ such that $U = W_{x_1} \cup
\ldots \cup W_{x_k}$. Thus, $U$ is a finite union of elements of
$X^*$.
\end{proof}

\begin{corollary}\label{corollary1}
For a generalized Priestley space $X$, the family $X^{*} \cup
\{U^{c}: U \in X^{*}\}$ is a subbasis for the topology on $X$.
\end{corollary}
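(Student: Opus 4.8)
The plan is to deduce the statement from Proposition \ref{CU-finun} together with the standard description of the topology of a Priestley space. Recall that for any Priestley space $\langle X, \tau, \leq\rangle$ the family $\mathfrak{CU}(X) \cup \{V^c : V \in \mathfrak{CU}(X)\}$ of clopen upsets and clopen downsets is a subbasis for $\tau$: the Priestley separation axiom together with compactness guarantees that every clopen set is a finite union of sets of the form $U \cap V^c$ with $U,V$ clopen upsets, and clopen sets form a basis since $X$ is a Stone space. Writing $\mathcal{S} = X^{*} \cup \{U^{c}: U \in X^{*}\}$ and $\tau_{\mathcal{S}}$ for the topology it generates, the goal is to prove $\tau_{\mathcal{S}} = \tau$.

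First I would dispose of the easy inclusion $\tau_{\mathcal{S}} \subseteq \tau$. Every $U \in X^{*}$ is a clopen upset of $X$, hence clopen, and so is each $U^{c}$; thus $\mathcal{S} \subseteq \tau$, whence $\tau_{\mathcal{S}} \subseteq \tau$.

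For the reverse inclusion $\tau \subseteq \tau_{\mathcal{S}}$, it suffices to show that every member of the subbasis $\mathfrak{CU}(X) \cup \{V^{c}: V \in \mathfrak{CU}(X)\}$ is $\tau_{\mathcal{S}}$-open, since that subbasis generates $\tau$. This is exactly where Proposition \ref{CU-finun} does the work: any clopen upset $W$ equals $U_1 \cup \cdots \cup U_k$ for some $U_1, \ldots, U_k \in X^{*}$, so $W$ is a finite union of members of $\mathcal{S}$ and is therefore $\tau_{\mathcal{S}}$-open. Dually, a clopen downset $W^{c}$ satisfies $W^{c} = U_1^{c} \cap \cdots \cap U_k^{c}$, a finite intersection of members of $\{U^{c}: U \in X^{*}\} \subseteq \mathcal{S}$, and hence is a basic $\tau_{\mathcal{S}}$-open set. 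Consequently the known subbasis is contained in $\tau_{\mathcal{S}}$, which gives $\tau \subseteq \tau_{\mathcal{S}}$ and hence equality.

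I do not expect a genuine obstacle here: essentially all the content is carried by Proposition \ref{CU-finun}, and the only point requiring care is invoking the correct subbasis for the ambient Priestley topology and keeping track of which direction each inclusion needs. If one prefers to avoid citing the general Priestley fact, one can instead argue directly that finite intersections of members of $\mathcal{S}$ separate each point from any disjoint closed set, using condition (5) of Definition \ref{g-Priestley} in the same manner as the proof of Proposition \ref{CU-finun}; but routing through the already-established Proposition \ref{CU-finun} is the cleanest route.
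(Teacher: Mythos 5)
Your proof is correct and follows essentially the same route as the paper: both arguments rest on Proposition \ref{CU-finun} together with the standard description of the Priestley topology via clopen upsets and downsets. The only cosmetic difference is that the paper decomposes the basic sets $U-V$ with $U,V\in\mathfrak{CU}(X)$ directly, while you verify that each member of the subbasis $\mathfrak{CU}(X)\cup\{V^{c}:V\in\mathfrak{CU}(X)\}$ lies in the topology generated by $X^{*}\cup\{U^{c}:U\in X^{*}\}$; the content is the same.
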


\begin{proof}
Since $X$ is a Priestley space, $\{U-V: U,V\in \mathfrak{CU}(X)\}$ is a basis for the topology on $X$. By Proposition \ref{CU-finun}, $U=\displaystyle{\bigcup_{i=1}^n}U_i$ and $V=\displaystyle{\bigcup_{j=1}^m}V_j$ for some $U_1,\dots,U_n,V_1,\dots,V_m\in X^*$. Therefore, $U-V=\displaystyle{\bigcup_{i=1}^n}U_i-\displaystyle{\bigcup_{j=1}^m}V_j= \displaystyle{\bigcup_{i=1}^n}(U_i\cap\displaystyle{\bigcap_{j=1}^m}V_j^c)$. Thus, the elements of the basis $\{U-V: U,V\in \mathfrak{CU}(X)\}$ of $X$ are finite intersections of elements of $X^{*} \cup \{U^{c}: U \in X^{*}\}$. Consequently, $X^{*} \cup \{U^{c}: U \in X^{*}\}$ is a subbasis for the topology on $X$.
\end{proof}

\begin{lemma}\label{lemma4}
Let $X$ be a generalized Priestley space, and let $U,U_1, \ldots,
U_n\in X^*$. Then $\displaystyle{\bigcap_{i=1}^n}{\uparrow}U_i
\subseteq {\uparrow}U$ iff $U \subseteq
\displaystyle{\bigcup_{i=1}^n}U_i$.
\end{lemma}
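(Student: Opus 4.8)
The plan is to prove the two implications separately, with the right-to-left direction being routine and the left-to-right direction carrying all the weight. For the easy direction, I would assume $U \subseteq \bigcup_{i=1}^n U_i$ and take any $V \in \bigcap_{i=1}^n {\uparrow}U_i$; then $U_i \subseteq V$ for every $i$, so $\bigcup_{i=1}^n U_i \subseteq V$, whence $U \subseteq V$, i.e.\ $V \in {\uparrow}U$. This gives $\bigcap_{i=1}^n {\uparrow}U_i \subseteq {\uparrow}U$ with no use of the space axioms beyond the fact that the order on $X^*$ is inclusion.

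For the hard direction I would argue contrapositively: assuming $U \not\subseteq \bigcup_{i=1}^n U_i$, I would produce a single $V \in X^*$ with $\bigcup_{i=1}^n U_i \subseteq V$ and $U \not\subseteq V$, which witnesses $\bigcap_{i=1}^n {\uparrow}U_i \not\subseteq {\uparrow}U$. The set $U - \bigcup_{i=1}^n U_i = U \cap \bigcap_{i=1}^n U_i^c$ is clopen (a finite Boolean combination of clopens) and nonempty, hence a nonempty open subset of $X$. Since $X_0$ is dense (condition (2) of Definition \ref{g-Priestley}), I can choose a witness $x \in X_0$ with $x \in U$ and $x \notin U_i$ for all $i$. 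Passing to $X_0$ is exactly what unlocks condition (4): because $x \in X_0$, the family $\mathcal{I}_x = \{W \in X^*: x \notin W\}$ is directed. As $U_1, \ldots, U_n \in \mathcal{I}_x$, directedness yields $V \in \mathcal{I}_x$ with $\bigcup_{i=1}^n U_i \subseteq V$. Then $V \in X^*$ and $U_i \subseteq V$ for every $i$, so $V \in \bigcap_{i=1}^n {\uparrow}U_i$; but $x \in U$ while $x \notin V$, so $U \not\subseteq V$ and $V \notin {\uparrow}U$, as desired.

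The step I expect to be the crux is the construction of this separating admissible clopen upset $V$. One cannot simply take $\bigcup_{i=1}^n U_i$ itself, since a finite union of admissible clopen upsets need not be admissible; nor can one hope to separate an \emph{arbitrary} point of $U - \bigcup_{i=1}^n U_i$ from $\bigcup_{i=1}^n U_i$ by a member of $X^*$, because such a point may lie in every admissible clopen upset containing all the $U_i$ (this is precisely what happens at the ``extra'' optimal filters that fail to be prime). The way around this is to insist that the witness $x$ be chosen in $X_0$: density guarantees such a witness inside the nonempty clopen set $U - \bigcup_{i=1}^n U_i$, and the defining property of $X_0$ (directedness of $\mathcal{I}_x$) then furnishes the single admissible $V$ lying above all the $U_i$ yet omitting $x$. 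The only routine point I would verify is that directedness extends from pairs to arbitrary finite subfamilies of $\mathcal{I}_x$, which is immediate by induction.
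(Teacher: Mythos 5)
Your proof is correct and takes essentially the same route as the paper's: both hinge on exactly the same two ingredients --- density of $X_0$ (condition (2)) and directedness of $\mathcal{I}_x$ for $x \in X_0$ (condition (4)) --- with the separating admissible clopen upset $V$ constructed identically. The only difference is cosmetic: the paper proves the forward implication directly, first showing $U \cap X_0 \subseteq \bigcup_{i=1}^n U_i$ and then invoking density of $U\cap X_0$ in $U$ against the closed set $\bigcup_{i=1}^n U_i$, whereas you argue the contrapositive and apply density at the outset to locate the witness $x \in X_0$ inside the nonempty clopen set $U - \bigcup_{i=1}^n U_i$.
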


\begin{proof}
Suppose that $\displaystyle{\bigcap_{i=1}^n}{\uparrow}U_i \subseteq
{\uparrow}U$. We show that $U\cap X_0\subseteq
\displaystyle{\bigcup_{i=1}^n}U_i$. Let $x\in U\cap X_0$. If
$x\notin \displaystyle{\bigcup_{i=1}^n}U_i$, then
$U_1,\ldots,U_n\in\mathcal{I}_x$, so by condition (4) of Definition
\ref{g-Priestley}, there exists $V\in\mathcal{I}_x$ such that
$\displaystyle{\bigcup_{i=1}^n}U_i\subseteq V$. Therefore, $V\in
\displaystyle{\bigcap_{i=1}^n}{\uparrow}U_i$, and so $V\in
{\uparrow}U$. Thus, $U\subseteq V$, implying that $x\in V$, a
contradiction. Consequently, $x\in \displaystyle{\bigcup_{i=1}^n}
U_i$, and so $U\cap X_0 \subseteq \displaystyle{\bigcup_{i=1}^n}
U_i$. By condition (2) of Definition \ref{g-Priestley}, $X_0$ is a
dense subset of $X$. Since $U$ is open in $X$, we have $U\cap X_0$
is dense in $U$. Therefore, $U\cap X_0 \subseteq
\displaystyle{\bigcup_{i=1}^n}U_i$ implies $U \subseteq
\displaystyle{\bigcup_{i=1}^n}U_i$. Conversely, suppose that $U
\subseteq \displaystyle{\bigcup_{i=1}^n}U_i$. For $V\in X^*$ with
$V\in \displaystyle{\bigcap_{i=1}^n}{\uparrow}U_i$, we have
$\displaystyle{\bigcup_{i=1}^n}U_i \subseteq V$. Therefore,
$U\subseteq V$, implying that $V\in {\uparrow}U$. Thus,
$\displaystyle{\bigcap_{i=1}^n}{\uparrow}U_i \subseteq {\uparrow}U$.
\end{proof}

\begin{corollary}
Let $X$ be a generalized Priestley space. Then $\mathfrak{CU}(X)$ is
isomorphic to $D(X^{*})$.
\end{corollary}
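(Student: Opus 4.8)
The plan is to mirror the proof of Proposition \ref{basic-2}, with the bounded distributive meet semi-lattice $X^{*}$ (distributive by Proposition \ref{proposition7}) playing the role of $L$. Writing $\sigma: X^{*} \to \mathcal{P}(\mathrm{Pr}(X^{*}))$ for the Stone map of $X^{*}$, recall that $D(X^{*}) = \{\bigcup_{i=1}^{n} \sigma(U_i): U_i \in X^{*}\}$, while by Proposition \ref{CU-finun} we have $\mathfrak{CU}(X) = \{\bigcup_{i=1}^{n} U_i: U_i \in X^{*}\}$. Thus both lattices arise as the closure under finite unions of (a copy of) $X^{*}$ inside a power set---$\mathcal{P}(\mathrm{Pr}(X^{*}))$ in the first case and $\mathcal{P}(X)$ in the second---and it suffices to produce an order-isomorphism between these two closures.

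The key connecting fact I would establish is that for $U, U_1, \ldots, U_n \in X^{*}$ one has $U \subseteq \bigcup_{i=1}^{n} U_i$ (in $X$) iff $\sigma(U) \subseteq \bigcup_{i=1}^{n} \sigma(U_i)$ (in $\mathrm{Pr}(X^{*})$). Indeed, applying Lemma \ref{sigmaup} to the distributive meet semi-lattice $X^{*}$ gives $\sigma(U) \subseteq \bigcup_{i=1}^{n} \sigma(U_i)$ iff $\bigcap_{i=1}^{n} {\uparrow}U_i \subseteq {\uparrow}U$, where the upsets are taken in $\langle X^{*}, \subseteq \rangle$; and Lemma \ref{lemma4} identifies this latter inclusion with $U \subseteq \bigcup_{i=1}^{n} U_i$. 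The one point to verify is that the two lemmas refer to the same order on $X^{*}$, namely set-theoretic inclusion of admissible clopen upsets, which is immediate.

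With this equivalence in hand I would define $h: \mathfrak{CU}(X) \to D(X^{*})$ by $h(\bigcup_{i=1}^{n} U_i) = \bigcup_{i=1}^{n} \sigma(U_i)$. The equivalence, together with the elementary observation that $\bigcup_{i} U_i \subseteq \bigcup_{j} V_j$ iff each $U_i \subseteq \bigcup_{j} V_j$, shows at once that $h$ is well-defined and that $W \subseteq W'$ iff $h(W) \subseteq h(W')$; in particular $h$ is injective. Surjectivity is clear from the description of $D(X^{*})$ above. Hence $h$ is an order-isomorphism between the two distributive lattices, and therefore a lattice isomorphism.

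I do not anticipate a serious obstacle, this being a corollary: once Lemmas \ref{sigmaup} and \ref{lemma4} are combined, the argument is the exact analogue of the proof of Proposition \ref{basic-2}. The step demanding the most care is keeping the two ambient power sets distinct---$\mathcal{P}(X)$ for $\mathfrak{CU}(X)$ versus $\mathcal{P}(\mathrm{Pr}(X^{*}))$ for $D(X^{*})$---and checking that the connecting equivalence genuinely matches the inclusion order on clopen upsets with the lattice order on $X^{*}$, so that $h$ respects finite unions rather than conflating the two representations.
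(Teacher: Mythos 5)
Your proposal is correct and follows essentially the same route as the paper's proof: both invoke Proposition \ref{CU-finun} to present $\mathfrak{CU}(X)$ as the closure of $X^{*}$ under finite unions, combine Lemma \ref{sigmaup} (applied to the distributive meet semi-lattice $X^{*}$) with Lemma \ref{lemma4} to transfer inclusions between the two ambient power sets, and define the same map $h(U_1\cup\ldots\cup U_n)=\sigma(U_1)\cup\ldots\cup\sigma(U_n)$. Your added care about well-definedness and about distinguishing $\mathcal{P}(X)$ from $\mathcal{P}(\mathrm{Pr}(X^{*}))$ simply makes explicit what the paper leaves implicit.
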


\begin{proof}
By Proposition \ref{CU-finun}, $\mathfrak{CU}(X)$ is the closure of
$X^{*}$ under finite unions. Let $U_1,\ldots,U_n,$
$V_1,\ldots,V_m\in X^*$. It follows from Lemmas \ref{sigmaup} and
\ref{lemma4} that $U_1 \cup \ldots \cup U_n = V_1 \cup \ldots \cup
V_m$ iff $\sigma(U_1) \cup \ldots \cup \sigma(U_n) = \sigma(V_1)
\cup \ldots \cup \sigma(V_m)$. Thus, we can define a map
$h:\mathfrak{CU}(X) \to D(X^*)$ by $h(U_1 \cup \ldots \cup U_n) =
\sigma(U_1) \cup \ldots \cup \sigma(U_n)$. This map is clearly onto,
and it follows from Lemmas \ref{sigmaup} and \ref{lemma4} that it is
an order-isomorphism.
\end{proof}

Let $X$ be a generalized Priestley space. We define $\psi: X \to
{X^{*}}_{*}$  by $$\psi(x) = \{U \in X^{*}: x \in U\}.$$ First we
show that $\psi$ is well-defined.

\begin{proposition}\label{psi-opt-prime}
Let $X$ be a generalized Priestley space. For each $x \in X$, we
have $\psi(x)$ is an optimal filter of $X^{*}$. Moreover, if $x \in
X_0$, then $\psi(x)$ is a prime filter of $X^{*}$.
\end{proposition}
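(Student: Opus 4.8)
The plan is to verify directly that $\psi(x)=\{U\in X^*:x\in U\}$ meets the relevant definitions, using the dictionary between the abstract order conditions on $X^*$ and concrete set-theoretic inclusions supplied by Lemma \ref{lemma4}. Throughout I would use that $X^*=\la X^*,\cap,X,\emptyset\ra$ is a bounded distributive meet semi-lattice (Proposition \ref{proposition7}) and that $\mathcal{I}_x=\{U\in X^*:x\notin U\}=X^*-\psi(x)$. First I would check that $\psi(x)$ is a proper filter of $X^*$: it is nonempty since $X\in\psi(x)$; it is closed under $\cap$ and upward closed because $x\in U,V$ gives $x\in U\cap V$, while $U\subseteq V$ with $x\in U$ gives $x\in V$; and it is proper since $\emptyset\in X^*$ but $x\notin\emptyset$, so $\emptyset\notin\psi(x)$.

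For optimality I would invoke Proposition \ref{proposition1}, which reduces the claim to showing that the complement $\mathcal{I}_x$ is an F-ideal of $X^*$. This set is nonempty ($\emptyset\in\mathcal{I}_x$), and the F-ideal closure condition, stated for $U_1,\dots,U_n\in\mathcal{I}_x$ and $W\in X^*$ as $\bigcap_{i=1}^n{\uparrow}U_i\subseteq{\uparrow}W\Rightarrow W\in\mathcal{I}_x$, becomes via Lemma \ref{lemma4} the implication $W\subseteq\bigcup_{i=1}^n U_i\Rightarrow W\in\mathcal{I}_x$. Since $x\notin U_i$ for every $i$, we get $x\notin\bigcup_{i=1}^n U_i\supseteq W$, whence $x\notin W$, i.e.\ $W\in\mathcal{I}_x$. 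Thus $\mathcal{I}_x$ is an F-ideal and $\psi(x)$ is optimal; note that this argument uses nothing about $X_0$, so it holds for every $x\in X$, as required.

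For the second claim, assuming $x\in X_0$, I would strengthen the conclusion to primeness by verifying, via Proposition \ref{filterideal}, that $\mathcal{I}_x=X^*-\psi(x)$ is a prime \emph{ideal}. Condition (i) (downward closure) is immediate: $V\subseteq U$ with $x\notin U$ forces $x\notin V$. Condition (ii) is exactly where the hypothesis $x\in X_0$ enters: by clause (4) of Definition \ref{g-Priestley}, $\mathcal{I}_x$ is directed, so given $U,V\in\mathcal{I}_x$ there is $W\in\mathcal{I}_x$ with $U\cup V\subseteq W$, and this $W$ is an upper bound of $\{U,V\}$ in $X^*$, giving $\{U,V\}^u\cap\mathcal{I}_x\neq\emptyset$. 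Finally $\mathcal{I}_x$ is prime because $U\cap V\in\mathcal{I}_x$ means $x\notin U\cap V$, hence $x\notin U$ or $x\notin V$, i.e.\ $U\in\mathcal{I}_x$ or $V\in\mathcal{I}_x$. So $\mathcal{I}_x$ is a prime ideal and $\psi(x)$ is a prime filter.

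The proof is essentially a sequence of routine unwindings of definitions; the only genuinely load-bearing inputs are Lemma \ref{lemma4} (to translate the abstract F-ideal condition in $X^*$ into the concrete statement about unions of clopen upsets) for optimality, and clause (4) of the generalized Priestley axioms (directedness of $\mathcal{I}_x$) for ideal condition (ii) in the prime case. The main thing to be careful about is keeping the two notions of ideal straight: optimality needs only that $\mathcal{I}_x$ is an F-ideal, whereas primeness needs the stronger fact that $\mathcal{I}_x$ is an ideal. This precisely explains why primeness can fail for $x\notin X_0$, since by clause (4) the directedness of $\mathcal{I}_x$ is then unavailable and condition (ii) may break down.
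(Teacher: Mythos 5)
Your proof is correct, and it splits cleanly into a half that matches the paper and a half that does not. The optimality part is essentially identical to the paper's argument: both of you verify that $\mathcal{I}_x=X^{*}-\psi(x)$ is an F-ideal by translating $\bigcap_{i=1}^n{\uparrow}U_i\subseteq{\uparrow}U$ into $U\subseteq\bigcup_{i=1}^n U_i$ via Lemma \ref{lemma4}, and then invoke Proposition \ref{proposition1}. For primeness, however, the paper argues directly against the meet-prime definition: supposing filters $F_1,F_2$ of $X^{*}$ with $F_1\cap F_2\subseteq\psi(x)$ but neither contained in $\psi(x)$, it picks $U_1\in F_1-\psi(x)$ and $U_2\in F_2-\psi(x)$, uses directedness of $\mathcal{I}_x$ to find $V\in\mathcal{I}_x$ with $U_1\cup U_2\subseteq V$, and gets the contradiction $V\in{\uparrow}U_1\cap{\uparrow}U_2\subseteq F_1\cap F_2\subseteq\psi(x)$. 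You instead show that $\mathcal{I}_x$ is a prime ideal (downward closed; condition (ii) from clause (4) of Definition \ref{g-Priestley}; primeness because $x\notin U\cap V$ forces $x\notin U$ or $x\notin V$) and then transfer back through Proposition \ref{filterideal}. Both routes consume the directedness hypothesis at exactly the same point, so neither is deeper than the other; your version buys a tidy structural parallel --- optimal filters are complements of F-ideals, prime filters are complements of prime ideals, and the only contribution of $x\in X_0$ is ideal condition (ii) --- while the paper's version stays entirely on the filter side and needs no appeal to Proposition \ref{filterideal}. One small bookkeeping point if you write this up: a prime ideal is by definition a \emph{proper} ideal, so when applying Proposition \ref{filterideal} you should note explicitly that $\mathcal{I}_x\neq X^{*}$ (immediate from $X\in\psi(x)$) and $\mathcal{I}_x\neq\emptyset$ (from $\emptyset\in\mathcal{I}_x$); you established both facts earlier but did not cite them at the point where primeness of the ideal is claimed.
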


\begin{proof}
Let $X$ be a generalized Priestley space. 
By Proposition \ref{proposition7}, $\la X^*,\cap,X,\emptyset \ra$
is a bounded distributive meet semi-lattice. Clearly $\psi(x)$ is a
filter of $X^*$ and $X^*-\psi(x)$ is nonempty. We show that
$X^*-\psi(x)$ is an F-ideal of $X^*$. Suppose that $U_1, \ldots, U_n
\in X^{*}-\psi(x)$, $U \in X^{*}$, and
$\displaystyle{\bigcap_{i=1}^n} {\uparrow}U_i \subseteq
{\uparrow}U$. By Lemma \ref{lemma4}, $U\subseteq
\displaystyle{\bigcup_{i=1}^n}U_i$. Since $x\notin
\displaystyle{\bigcup_{i=1}^n}U_i$, it follows that $x\notin U$.
Therefore, $U \in X^* - \psi(x)$, so $X^* - \psi(x)$ is an F-ideal,
and so, by Proposition \ref{proposition1}, $\psi(x)$ is an optimal
filter. Now suppose that $x \in X_0$ and that $\psi(x)$ is not a
prime filter of $X^*$. Then there exist filters $F_1$ and $F_2$ of
$X^{*}$ such that $F_1 \cap F_2 \subseteq \psi(x)$, but $F_1 \not
\subseteq \psi(x)$ and $F_2 \not \subseteq \psi(x)$. Let $U_1 \in
F_1 - \psi(x)$ and $U_2 \in F_2 - \psi(x)$. Then $x \notin U_1,U_2$,
and so $U_1,U_2 \in \mathcal{I}_x$. By condition (4) of Definition
\ref{g-Priestley}, there exists $V\in \mathcal{I}_x$ such that
$U_1\cup U_2 \subseteq V$. Hence, $V\in{\uparrow}U_1 \cap{\uparrow}
U_2 \subseteq F_1 \cap F_2\subseteq \psi(x)$. Thus, $x \in V$, a
contradiction. We conclude that $\psi(x)$ is a prime filter of
$X^*$.
\end{proof}

\begin{proposition}\label{psi-bij}
The map $\psi: X \to {X^{*}}_{*}$ is 1-1 and onto. Moreover, if $P$
is a prime filter of $X^{*}$, then $\psi^{-1}(P) \in X_0$.
\end{proposition}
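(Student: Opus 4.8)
The plan is to verify the three assertions separately, using the axioms of a generalized Priestley space together with compactness of $X$ and Lemma \ref{lemma4}. Note first that by Proposition \ref{psi-opt-prime} the map $\psi$ does land in ${X^*}_* = \mathrm{Opt}(X^*)$, so only bijectivity and the membership claim remain.

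For injectivity I would argue by contraposition: if $x \neq y$, then by antisymmetry of $\leq$ we may assume $x \not\leq y$, and condition (5) of Definition \ref{g-Priestley} supplies $U \in X^*$ with $x \in U$ and $y \notin U$; hence $U \in \psi(x) - \psi(y)$, so $\psi(x) \neq \psi(y)$.

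For surjectivity, fix an optimal filter $F$ of $X^*$; by Proposition \ref{proposition1} the complement $I = X^* - F$ is an F-ideal. The goal is to produce a point lying in every member of $F$ and outside every member of $I$. I would consider the family of clopen sets $\{U : U \in F\} \cup \{V^c : V \in I\}$ and show it has the finite intersection property. Given $U_1, \ldots, U_k \in F$ and $V_1, \ldots, V_m \in I$, set $U = U_1 \cap \cdots \cap U_k \in F$; if $U \cap V_1^c \cap \cdots \cap V_m^c$ were empty, then $U \subseteq \bigcup_{j=1}^m V_j$, whence by Lemma \ref{lemma4} $\bigcap_{j=1}^m {\uparrow}V_j \subseteq {\uparrow}U$, and since $I$ is an F-ideal this forces $U \in I$, contradicting $U \in F$. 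By compactness of $X$ the whole family then has nonempty intersection, and any point $x$ in it satisfies $x \in U$ for all $U \in F$ and $x \notin V$ for all $V \in I$, that is $\psi(x) = F$. Thus $\psi$ maps $X$ onto ${X^*}_*$.

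For the final claim, let $P$ be a prime filter of $X^*$. Then $P$ is in particular optimal, so by surjectivity and injectivity there is a unique $x = \psi^{-1}(P)$ with $\psi(x) = P$. Now $\mathcal{I}_x = X^* - \psi(x) = X^* - P$, which is a prime ideal of $X^*$ by Proposition \ref{filterideal}; in particular it is an ideal, so for any $U, V \in \mathcal{I}_x$ condition (ii) of the definition of ideal yields $W \in X^*$ with $U, V \subseteq W$ and $W \in \mathcal{I}_x$, i.e.\ $U \cup V \subseteq W$ with $W \in \mathcal{I}_x$, so $\mathcal{I}_x$ is directed. By condition (4) of Definition \ref{g-Priestley} we conclude $x \in X_0$.

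I expect the surjectivity argument to be the crux: the delicate point is recognizing that the finite intersection property reduces, via Lemma \ref{lemma4}, exactly to the defining closure property of the F-ideal $I = X^* - F$, after which compactness of the Priestley space $X$ does the rest. The injectivity and the membership claim are then routine applications of conditions (5) and (4) of Definition \ref{g-Priestley}, respectively.
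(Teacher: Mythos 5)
Your proof is correct, and for the two substantive claims it takes a genuinely different route from the paper. For surjectivity, the paper works inside the distributive lattice $\mathfrak{CU}(X)$: it forms the filter $G$ generated by $P$ and the ideal $J$ generated by $I = X^* - P$ in $\mathfrak{CU}(X)$, shows $G \cap J = \emptyset$, extends $G$ to a prime filter of $\mathfrak{CU}(X)$ disjoint from $J$ via the prime filter lemma, and then invokes Priestley duality to realize that prime filter as a point of $X$. You bypass $\mathfrak{CU}(X)$ entirely and get the point directly from compactness of $X$ applied to the family $\{U : U \in P\} \cup \{V^c : V \in I\}$; notably, no prime-filter extension is needed at all. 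The crux is the same in both arguments --- the finite-intersection check and the paper's verification that $G \cap J = \emptyset$ are the identical computation, namely Lemma \ref{lemma4} translating $U \subseteq \bigcup_j V_j$ into $\bigcap_j {\uparrow}V_j \subseteq {\uparrow}U$ and the F-ideal property of $I$ yielding the contradiction --- but your version is more self-contained, essentially inlining the part of Priestley duality the paper cites as a black box, at the cost of not exhibiting the structural link between $X$ and $D(X^*) \cong \mathfrak{CU}(X)$ that the paper exploits repeatedly elsewhere. For the final claim, the paper argues contrapositively: if $x \notin X_0$ then $\mathcal{I}_x$ is not directed, and meet-primality of $\psi(x)$ (applied to the filters ${\uparrow}U$ and ${\uparrow}V$) produces a contradiction. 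You instead apply Proposition \ref{filterideal} to conclude that $X^* - P$ is a prime ideal, whose ideal property (condition (ii), read in the inclusion order of $X^*$) is literally directedness of $\mathcal{I}_x$; this is a cleaner, more direct deduction that reuses an earlier result rather than re-proving a special case of it.
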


\begin{proof}
It follows from condition (5) of Definition \ref{g-Priestley} that
$\psi$ is 1-1. We show that $\psi$ is onto. Suppose that $P$ is an
optimal filter of $X^{*}$. Let $I = X^{*} - P$. By Proposition
\ref{proposition1}, $I$ is an F-ideal of $X^*$. Let $G$ be the
filter of $\mathfrak{CU}(X)$ generated by $P$ and $J$ be the ideal
of $\mathfrak{CU}(X)$ generated by $I$. We claim that $G \cap J =
\emptyset$. If not, then there exist $V \in \mathfrak{CU}(X)$, $U
\in P$, and $U_1, \ldots, U_n \in I$ such that $U \subseteq V$ and
$V \subseteq U_1 \cup \ldots \cup U_k$. By Lemma \ref{lemma4},
$\displaystyle{\bigcap_{i=1}^n}{\uparrow}U_i\subseteq {\uparrow}V
\subseteq {\uparrow}U$. Since $I$ is an F-ideal, we have $U \in I$,
so $U \not \in P$, a contradiction. Thus, by the prime filter lemma,
there is a prime filter $F$ of $\mathfrak{CU}(X)$ such that $G
\subseteq F$ and $F \cap J = \emptyset$. By the Priestley duality,
there exists $x \in X$ such that $F = \{U \in \mathfrak{CU}(X): x
\in U\}$. We show that $P = F \cap X^{*}$. It is clear that $P
\subseteq F \cap X^{*}$. Conversely, if $U \in F \cap X^{*}$ and $U
\not \in P$, then $U \in I$, which is a contradiction since $F$ is
disjoint from $I$. Thus, $P = F \cap X^{*}=\{U\in X^*:x\in U\}$.
Consequently $\psi(x) = P$, and so $\psi$ is onto.

Now suppose that $P$ is a prime filter of $X^{*}$. Since $\psi$ is
onto, there exists $x \in X$ such that $\psi(x) = P$. If $x \notin
X_0$, then by condition (4) of Definition \ref{g-Priestley},
$\mathcal{I}_x$ is not directed, so there exist
$U,V\in\mathcal{I}_x$ such that for no $W\in \mathcal{I}_x$ we
have $U\cup V\subseteq W$. Therefore, for each $W\in X^*$, from
$W\in{\uparrow}U\cap{\uparrow}V$ it follows that $x\in W$, and so
$W\in\psi(x)$. Thus, ${\uparrow}U\cap{\uparrow}V \subseteq
\psi(x)$, and as $\psi(x)$ is a prime filter of $X^*$,
${\uparrow}U \subseteq \psi(x)$ or ${\uparrow}V \subseteq
\psi(x)$. Consequently, $U\in \psi(x)$ or $V\in\psi(x)$, and so $x
\in U$ or $x \in V$, a contradiction. Thus, $x \in X_0$.
\end{proof}

\begin{theorem}\label{homeom}
For a generalized Priestley space $X$, the map $\psi: X \to
{X^{*}}_{*}$ is an order-isomorphism and a homeomorphism. Moreover,
$\psi[X_0] = {X^{*}}_{+}$.
\end{theorem}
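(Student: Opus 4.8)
The plan is to assemble the theorem from the two preceding results, Propositions \ref{psi-opt-prime} and \ref{psi-bij}, together with condition (5) of Definition \ref{g-Priestley} and a standard compactness argument; the substantive work has already been discharged, so what remains is essentially bookkeeping. For the order-isomorphism, Proposition \ref{psi-bij} already gives that $\psi$ is a bijection, and the order on ${X^{*}}_{*}=\mathrm{Opt}(X^{*})$ is set-theoretic inclusion, so it suffices to prove the biconditional $x\leq y$ iff $\psi(x)\subseteq\psi(y)$. But $\psi(x)\subseteq\psi(y)$ says precisely that every $U\in X^{*}$ with $x\in U$ also satisfies $y\in U$, which is exactly the right-hand side of condition (5) of Definition \ref{g-Priestley}. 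Hence $x\leq y$ iff $\psi(x)\subseteq\psi(y)$, and $\psi$ is an order-isomorphism.

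For the homeomorphism claim I would first record that ${X^{*}}_{*}$ is itself a generalized Priestley space: $X^{*}$ is a bounded distributive meet semi-lattice by Proposition \ref{proposition7}, so Proposition \ref{proposition6} applies with $L=X^{*}$. In particular both $X$ and ${X^{*}}_{*}$ are Priestley spaces, hence compact and Hausdorff. Writing $\phi$ for the Stone map of $X^{*}$, so that $\phi(U)=\{p\in{X^{*}}_{*}:U\in p\}$, the topology on ${X^{*}}_{*}$ has the subbasis $\{\phi(U):U\in X^{*}\}\cup\{\phi(U)^{c}:U\in X^{*}\}$. I would check continuity of $\psi$ on this subbasis by the direct computation $\psi^{-1}(\phi(U))=\{x\in X:U\in\psi(x)\}=\{x\in X:x\in U\}=U$, which is an (admissible clopen, hence) open subset of $X$, and likewise $\psi^{-1}(\phi(U)^{c})=U^{c}$ is open. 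Thus $\psi$ is continuous, and being a continuous bijection from the compact space $X$ onto the Hausdorff space ${X^{*}}_{*}$, it is automatically a closed map and therefore a homeomorphism.

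The identity $\psi[X_0]={X^{*}}_{+}$ is then immediate from the two ``moreover'' clauses. By Proposition \ref{psi-opt-prime}, $x\in X_0$ implies that $\psi(x)$ is a prime filter of $X^{*}$, which gives $\psi[X_0]\subseteq{X^{*}}_{+}=\mathrm{Pr}(X^{*})$; conversely, by Proposition \ref{psi-bij}, any prime filter $P$ of $X^{*}$ satisfies $\psi^{-1}(P)\in X_0$, whence $P=\psi(\psi^{-1}(P))\in\psi[X_0]$. Combining the two inclusions yields the desired equality.

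I do not anticipate a genuine obstacle, since all the delicate content lives in Propositions \ref{psi-opt-prime} and \ref{psi-bij}. The only step calling for a little care is the homeomorphism: rather than attempting to verify openness of $\psi$ by hand, I would lean on the subbasic description of the topology of ${X^{*}}_{*}$ and on compactness of $X$ together with Hausdorffness of ${X^{*}}_{*}$, which reduces the whole matter to the one-line continuity computation $\psi^{-1}(\phi(U))=U$.
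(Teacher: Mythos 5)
Your proposal is correct and follows essentially the same route as the paper's own proof: bijectivity and $\psi[X_0]={X^{*}}_{+}$ from Propositions \ref{psi-opt-prime} and \ref{psi-bij}, the order-isomorphism from condition (5) of Definition \ref{g-Priestley}, and the homeomorphism via the subbasic computation $\psi^{-1}(\phi(U))=U$, $\psi^{-1}(\phi(U)^c)=U^c$ followed by the compact-to-Hausdorff argument. If anything, your write-up is slightly more careful than the paper's in justifying that ${X^{*}}_{*}$ is compact Hausdorff (via Propositions \ref{proposition7} and \ref{proposition6}) and in spelling out the two inclusions for $\psi[X_0]={X^{*}}_{+}$.
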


\begin{proof}
It follows from Propositions \ref{psi-opt-prime} and \ref{psi-bij}
that $\psi$ is a bijection and that $\psi[X_0] = {X^{*}}_{+}$.
Condition (5) of Definition \ref{g-Priestley} implies that for each
$x, y \in X$, we have $x \leq y$ iff $\psi(x) \subseteq \psi(y)$.
Thus, $\psi$ is an order-isomorphism. We show that $\psi$ is a
homeomorphism. By Corollary \ref{corollary1}, $X^{*} \cup \{U^{c}: U
\in X^{*}\}$ is a subbasis for the topology on $X$, and $\{\phi(U):
U \in X^{*}\} \cup \{\phi(U)^{c}: U \in X^{*}\}$ is a subbasis for
the topology on ${X^{*}}_{*}$. For $U\in X^*$ we have: $$x \in
\psi^{-1}(\phi(U))\; \mbox{ iff }\; \psi(x) \in \phi(U) \; \mbox{
iff }\;  U \in \psi(x)\;  \mbox{ iff } \; x \in U.$$ Thus,
$\psi^{-1}(\phi(U))=U$ and $\psi^{-1}(\phi(U)^c)=U^c$. It follows
that $\psi$ is continuous. Now since $\psi$ is a continuous map
between compact Hausdorff spaces, $\psi$ is a homeomorphism.
\end{proof}

\section{Generalized Esakia spaces}

In this section we introduce our second main concept of the paper,
that of \emph{generalized Esakia space}, which we obtain by
augmenting the concept of generalized Priestley space. As with
bounded distributive meet semi-lattices, we show how to construct
the generalized Esakia space $L_*$ from a bounded implicative meet
semi-lattice $L$, and conversely, how a generalized Esakia space
$X$ gives rise to the bounded implicative meet semi-lattice $X^*$.
We also show that a bounded implicative meet semi-lattice $L$ is
isomorphic to ${L_*}^*$, thus providing a new representation
theorem for bounded implicative meet semi-lattices, and prove that
a generalized Esakia space $X$ is order-isomorphic and
homeomorphic to ${X^*}_*$. We conclude the section by showing that
the distributive envelope of a bounded implicative meet
semi-lattice may not be a Heyting algebra.

Let $L$ be a bounded implicative meet semi-lattice. For $a,b\in L$,
let $$\phi(a)\to\phi(b) = [{\downarrow}(\phi(a)-\phi(b))]^c=\{x\in
L_*:{\uparrow}x\cap \phi(a)\subseteq\phi(b)\}.$$

\begin{lemma}\label{leo-1}
Let $L$ be a bounded implicative meet semi-lattice and let $a, b \in
L$. Then $\phi(a \to b) = \phi(a) \to \phi(b)$.
\end{lemma}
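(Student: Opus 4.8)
The plan is to prove the set equality $\phi(a\to b)=\phi(a)\to\phi(b)$ by establishing the two inclusions separately, using the description $\phi(a)\to\phi(b)=\{x\in L_*:{\uparrow}x\cap\phi(a)\subseteq\phi(b)\}$ together with the defining adjunction property of the implication, namely $d\wedge a\leq b$ iff $d\leq a\to b$ for all $d\in L$. Both sides are subsets of $L_*$, so $x$ always ranges over optimal filters and ${\uparrow}x$ over optimal filters $y$ with $x\subseteq y$.

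For the inclusion $\phi(a\to b)\subseteq\phi(a)\to\phi(b)$, I would take an optimal filter $x$ with $a\to b\in x$ and an arbitrary $y\in{\uparrow}x\cap\phi(a)$, so that $x\subseteq y$ and $a\in y$. Then $a\to b\in y$ and $a\in y$, and since $y$ is a filter it contains $(a\to b)\wedge a$. Because $a\wedge(a\to b)\leq b$ (the instance of the adjunction obtained from $a\to b\leq a\to b$), upward closure of $y$ gives $b\in y$, i.e.\ $y\in\phi(b)$. Hence ${\uparrow}x\cap\phi(a)\subseteq\phi(b)$, so $x\in\phi(a)\to\phi(b)$. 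This direction is routine.

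The converse inclusion is where the real work lies, and I would argue by contraposition: assuming $a\to b\notin x$, I construct an optimal filter $y$ witnessing ${\uparrow}x\cap\phi(a)\not\subseteq\phi(b)$. The natural candidate is to separate the filter $F=[x\cup\{a\})$ from the F-ideal (indeed the ideal) ${\downarrow}b$ by the optimal filter lemma. The key computation is the disjointness $F\cap{\downarrow}b=\emptyset$: every element of $F$ is some $c$ with $d\wedge a\leq c$ for some $d\in x$, so if such a $c$ satisfied $c\leq b$ then $d\wedge a\leq b$, whence $d\leq a\to b$ by the adjunction; since $x$ is upward closed this would force $a\to b\in x$, contradicting our assumption. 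Granting disjointness, the optimal filter lemma yields an optimal filter $y$ with $F\subseteq y$ and $y\cap{\downarrow}b=\emptyset$; then $x\subseteq y$, $a\in y$, and $b\notin y$, so $y\in{\uparrow}x\cap\phi(a)$ while $y\notin\phi(b)$, as required.

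I expect the main obstacle to be precisely this disjointness verification in the converse direction: one must unwind the explicit description of the generated filter $[x\cup\{a\})$ and apply the implication adjunction in exactly the right direction, and then invoke the \emph{optimal} filter lemma (rather than the prime filter lemma) so as to stay within the optimal filters that serve as the points of $L_*$. The forward direction and the bookkeeping of the ${\uparrow}/{\downarrow}$ notation are straightforward by comparison.
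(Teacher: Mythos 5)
Your proposal is correct and takes essentially the same route as the paper: the forward inclusion is the same routine filter argument, and for the converse the paper likewise forms the filter $F$ generated by $\{a\}\cup x$, proves $F\cap{\downarrow}b=\emptyset$ via the same computation $a\wedge d\leq c\leq b\Rightarrow d\leq a\to b\Rightarrow a\to b\in x$, and then separates. The only immaterial difference is the separation lemma invoked at the end: the paper uses the prime filter lemma (since ${\downarrow}b$ is an ideal), obtaining a witness $y\in L_+\subseteq L_*$, whereas you use the optimal filter lemma to land in $L_*$ directly; both are valid since the points of $L_*$ are exactly the optimal filters.
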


\begin{proof}
First suppose that $x\in\phi(a \to b)$ and $y\in{\uparrow}x\cap
\phi(a)$. Then $a\to b\in x$, $x\subseteq y$, and $a\in y$.
Therefore, $a,a\to b\in y$, so $b\in y$, and so $y\in\phi(b)$. Thus,
${\uparrow}x \cap \phi(a) \subseteq \phi(b)$, so
$x\in\phi(a)\to\phi(b)$, and so $\phi(a \to b) \subseteq \phi(a) \to
\phi(b)$. Now suppose that $x \in \phi(a) \to \phi(b)$. If $x\notin
\phi(a\to b)$, then $a \to b \notin x$. Let $F$ be the filter of $L$
generated by $\{a\} \cup x$. If there is $c \in F \cap {\downarrow}
b$, then there is $d \in x$ such that $a \wedge d \leq c \leq b$.
Therefore, $d \leq a \to b$, and so $a \to b \in x$, a
contradiction. Thus, $F \cap {\downarrow} b=\emptyset$, and by the
prime filter lemma, there is $y \in L_+ \subseteq L_{*}$ such that
$F\subseteq y$ and $b \notin y$. It follows that $x\subseteq y$,
$a\in y$, and $b\notin y$. Therefore, $y\in {\uparrow}x\cap\phi(a)$
and $y\notin\phi(b)$. Thus, ${\uparrow}x \cap \phi(a) \not\subseteq
\phi(b)$, and so $x \notin \phi(a)\to \phi(b)$, a contradiction. We
conclude that $x\in\phi(a\to b)$, so $\phi(a) \to \phi(b) \subseteq
\phi(a \to b)$, and so $\phi(a \to b) = \phi(a) \to \phi(b)$.
\end{proof}

Let $X$ be a Priestley space. We recall that each clopen $U$ in $X$ has the form $\displaystyle{\bigcup_{i=1}^n}(U_i-V_i)$, where $U_i,V_i\in\mathfrak{CU}(X)$, and that $X$ is an Esakia space whenever ${\downarrow}U$ is clopen for each clopen $U$ in $X$.

Let $X$ be a generalized Priestley space. Then each clopen $U$ in $X$ has the form $\displaystyle{\bigcup_{i=1}^n\bigcap_{j=1}^m}(U_i-V_j)$, where $U_i,V_j\in X^*$.

\begin{definition}
{\rm Let $X$ be a generalized Priestley space and $U$ be clopen in $X$. We call $U$ \emph{Esakia clopen} if $U=\displaystyle{\bigcup_{i=1}^n}(U_i-V_i)$ for some $U_1,\dots,U_n,V_1,\dots,V_n\in X^*$.}
\end{definition}

\begin{lemma} \label{lemma-Es-clo}
If $X$ is a generalized Priestley space and $U$ is Esakia clopen in $X$, then $\mathrm{max}(U)\subseteq X_0$.
\end{lemma}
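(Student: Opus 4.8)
The plan is to reduce the maximality condition for the Esakia clopen $U$ to the admissibility of a single one of the upsets $V_i$. Writing $U=\bigcup_{i=1}^n(U_i-V_i)$ with all $U_i,V_i\in X^*$, I would start by taking an arbitrary $x\in\mathrm{max}(U)$. Since $x\in U$, it lies in some $U_i-V_i$; I fix such an index $i$, so that $x\in U_i$ and $x\notin V_i$, i.e.\ $x$ belongs to the clopen downset $V_i^c$.

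The key step is to promote the maximality of $x$ in $U$ to maximality of $x$ in $V_i^c$. Suppose $x\leq y$ with $x\neq y$. Because $U_i$ is an upset and $x\in U_i$, we get $y\in U_i$. If additionally $y\notin V_i$, then $y\in U_i-V_i\subseteq U$, and since $x\leq y$ with $x\neq y$ this contradicts $x\in\mathrm{max}(U)$. Hence $y\in V_i$, that is $y\notin V_i^c$. As no point strictly above $x$ lies in $V_i^c$, and $x\in V_i^c$ itself, we conclude $x\in\mathrm{max}(V_i^c)$.

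To finish, I invoke that $V_i\in X^*$ is admissible. By the characterization of $X^*$ recorded before Definition \ref{g-Priestley} (namely, $U\in X^*$ iff $U$ is a clopen upset with $\mathrm{max}(U^c)\subseteq X_0$), admissibility of $V_i$ gives $\mathrm{max}(V_i^c)\subseteq X_0$, whence $x\in X_0$. Since $x\in\mathrm{max}(U)$ was arbitrary, this yields $\mathrm{max}(U)\subseteq X_0$.

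I do not expect a serious obstacle here: the argument is short and uses only that each $U_i$ is an upset together with the definition of admissibility. The one point that must be handled carefully is the transfer of maximality from $U$ to $V_i^c$ for the correctly chosen index $i$; everything rests on the observation that moving upward from $x$ stays inside the upset $U_i$, so any point above $x$ that escaped $V_i$ would re-enter $U$ and violate the maximality of $x$.
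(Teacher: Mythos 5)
Your proof is correct and follows essentially the same route as the paper's: decompose $U$ as $\bigcup_{i=1}^n(U_i-V_i)$, use that $U_i$ is an upset to transfer maximality of $x$ in $U$ to maximality in $V_i^c$, and conclude by admissibility of $V_i$. The only difference is presentational — you argue pointwise where the paper writes the same argument as a chain of inclusions $\mathrm{max}(U)\subseteq\bigcup_i\mathrm{max}(U_i\cap V_i^c)\subseteq\bigcup_i\mathrm{max}(V_i^c)\subseteq X_0$.
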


\begin{proof}
Let $U$ be Esakia clopen. Then there exist $U_1,\dots,U_n,V_1,\dots,V_n\in X^*$ such that $U=\displaystyle{\bigcup_{i=1}^n}(U_i-V_i)$. Therefore, $\mathrm{max}(U)=\mathrm{max}[\displaystyle{\bigcup_{i=1}^n}(U_i-V_i)]\subseteq\displaystyle{\bigcup_{i=1}^n}\mathrm{max}(U_i-V_i)=
\displaystyle{\bigcup_{i=1}^n}\mathrm{max}(U_i\cap V_i^c)$. Since $U_i$ is an upset, $V_i^c$ is a downset, and $V_i\in X^*$, we have $\mathrm{max}(U_i\cap V_i^c) \subseteq \mathrm{max}(V_i^c)\subseteq X_0$. Therefore, $\mathrm{max}(U)\subseteq \displaystyle{\bigcup_{i=1}^n}\mathrm{max}(U_i\cap V_i^c) \subseteq \displaystyle{\bigcup_{i=1}^n}\mathrm{max}(V_i^c) \subseteq X_0$.
\end{proof}

On the other hand, it is worth pointing out that the converse of Lemma \ref{lemma-Es-clo} is not true in general.

\begin{definition}\label{def-gEs}
{\rm We call a generalized Priestely space $X$ a \textit{generalized Esakia space} if for each Esakia clopen $U$ in $X$, we have ${\downarrow}U$ is clopen.}
\end{definition}

For a generalized Esakia space $X$ and $U,V\in X^*$, let $$U\to V = [{\downarrow}(U-V)]^c=\{x\in X:{\uparrow}x\cap U\subseteq V\}.$$

\begin{proposition}\label{lemma-ims-ges}
\begin{enumerate}
\item[]
\item If $L$ is a bounded implicative meet semi-lattice, then $L_*= \la L_*,\tau,\subseteq,L_+ \ra$ is a generalized Esakia space.
\item If $X$ is a generalized Esakia space, then $X^*=\la X^*,\cap,\to,X,\emptyset\ra$ is a bounded implicative meet semi-lattice.
\end{enumerate}
\end{proposition}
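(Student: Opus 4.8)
The plan is to handle the two parts separately, leaning on the structural results already in place. For part (1), observe that $L_*$ is already a generalized Priestley space by Proposition \ref{proposition6}, so the only new requirement is the condition of Definition \ref{def-gEs}: that ${\downarrow}U$ is clopen for every Esakia clopen $U$. I would write such a $U$ as $\bigcup_{i=1}^n(U_i-V_i)$ with $U_i,V_i\in {L_*}^*=\phi[L]$, put $U_i=\phi(a_i)$ and $V_i=\phi(b_i)$, and use that ${\downarrow}$ distributes over finite unions, so ${\downarrow}U=\bigcup_{i=1}^n{\downarrow}(\phi(a_i)-\phi(b_i))$. The key input is Lemma \ref{leo-1}, which gives $\phi(a_i\to b_i)=\phi(a_i)\to\phi(b_i)=[{\downarrow}(\phi(a_i)-\phi(b_i))]^c$; hence each ${\downarrow}(\phi(a_i)-\phi(b_i))=\phi(a_i\to b_i)^c$ is clopen, and therefore so is the finite union ${\downarrow}U$. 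This settles part (1).

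For part (2), Proposition \ref{proposition7} already supplies that $\langle X^*,\cap,X,\emptyset\rangle$ is a bounded distributive meet semi-lattice, so it remains to show that $X^*$ is closed under $\to$ and that $\to$ is the right adjoint of $\cap$. For closure, fix $U,V\in X^*$. Then $U-V$ is Esakia clopen (it is the single term $U-V$), so the generalized Esakia hypothesis makes ${\downarrow}(U-V)$ clopen; hence $U\to V=[{\downarrow}(U-V)]^c$ is clopen, and as the complement of a downset it is an upset. For admissibility I would first verify the set identity $\mathrm{max}({\downarrow}(U-V))=\mathrm{max}(U-V)$, which holds for the closed set $U-V$, and then invoke Lemma \ref{lemma-Es-clo} to conclude $\mathrm{max}((U\to V)^c)=\mathrm{max}(U-V)\subseteq X_0$. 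Thus $U\to V\in X^*$.

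For the adjunction I would work with the pointwise description $U\to V=\{x\in X:{\uparrow}x\cap U\subseteq V\}$. If $W\subseteq U\to V$ and $x\in W\cap U$, then $x\in{\uparrow}x\cap U\subseteq V$ gives $x\in V$, so $W\cap U\subseteq V$. Conversely, if $W\cap U\subseteq V$ and $x\in W$, then for any $y\in{\uparrow}x\cap U$ the fact that $W$ is an upset forces $y\in W\cap U\subseteq V$, whence ${\uparrow}x\cap U\subseteq V$ and $x\in U\to V$; so $W\subseteq U\to V$. This establishes $W\cap U\subseteq V$ iff $W\subseteq U\to V$, i.e.\ $U\cap(-)$ has right adjoint $U\to(-)$, so $X^*$ is implicative and hence a bounded implicative meet semi-lattice.

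I expect the admissibility step in the closure argument to be the main obstacle: it is the only place where the full generalized Esakia condition is genuinely used, via Lemma \ref{lemma-Es-clo} together with the identity $\mathrm{max}({\downarrow}S)=\mathrm{max}(S)$ for closed $S$. By contrast, the adjunction itself is purely formal, requiring only that members of $X^*$ are upsets, and part (1) reduces entirely to Lemma \ref{leo-1}.
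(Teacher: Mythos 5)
Your proposal is correct and follows essentially the same route as the paper: part (1) is the paper's argument verbatim (decompose the Esakia clopen, apply Lemma \ref{leo-1}, and use that ${\downarrow}$ distributes over finite unions), and part (2) matches the paper's proof, which likewise reduces admissibility of $U\to V$ to the identity $\mathrm{max}({\downarrow}(U-V))=\mathrm{max}(U-V)\subseteq X_0$ and treats the adjunction $U\cap W\subseteq V$ iff $W\subseteq U\to V$ as routine. Your only additions are making explicit the citation of Lemma \ref{lemma-Es-clo} and spelling out the adjunction verification, both of which the paper leaves implicit.
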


\begin{proof}
(1) Suppose that $L$ is a bounded implicative meet semi-lattice. Then $L$ is a bounded distributive meet semi-lattice, and so $L_*$ is a generalized Priestley space. Let $U$ be Esakia clopen in $L_*$. Then $U=\displaystyle{\bigcup_{i=1}^n}(\phi(a_i)-\phi(b_i))$ for some $a_1,\ldots,a_n,b_1,\ldots,b_n\in L$. By Lemma \ref{leo-1}, $\phi(a_i) \to \phi(b_i) = [{\downarrow}(\phi(a_i)-\phi(b_i))]^c\in {L_*}^*$. Therefore, ${\downarrow}(\phi(a_i)-\phi(b_i))$ is clopen in $L_*$ for each $i\leq n$. Thus, ${\downarrow}U = \displaystyle{\bigcup_{i=1}^n}{\downarrow}(\phi(a_i)-\phi(b_i))$ is clopen in $L_*$, and so $L_*$ is a generalized Esakia space.

(2) Suppose that $X$ is a generalized Esakia space. Then $X$ is a generalized Priestley space, and so $\la X^*,\cap,X,\emptyset\ra$ is a bounded distributive meet semi-lattice. Let $U,V\in X^*$. Then $U-V$ is Esakia clopen. Therefore, ${\downarrow}(U - V)$ is clopen in $X$ and $\mathrm{max}{\downarrow}(U - V)=\mathrm{max}(U - V) \subseteq X_0$. Therefore, $U \to V = [{\downarrow}(U - V)]^c\in X^*$. Moreover, it is routine to verify that for $U,V,W\in X^*$ we have $U\cap W\subseteq V$ iff $W\subseteq U\to V$. Thus, $\la X^*,\cap,\to,X,\emptyset\ra$ is a bounded implicative meet semi-lattice.
\end{proof}

It follows that for each bounded implicative meet semi-lattice $L$ we have ${L_*}^*=\phi[L]$. Thus, we immediately obtain:

\begin{theorem}\label{proposition8rep} (Representation Theorem)
For each bounded implicative meet semi-lattice $L$ we have $L \simeq
{L_{*}}^{*}$; that is, for each bounded implicative meet
semi-lattice $L$ there exists a generalized Esakia space $X$ such
that $L$ is isomorphic to $X^*$.
\end{theorem}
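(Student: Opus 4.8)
The plan is to recognize that the entire content of the theorem is already packaged in the single map $\phi\colon L\to{L_*}^*$, and to check that it upgrades from an isomorphism of bounded distributive meet semi-lattices to an isomorphism of bounded implicative meet semi-lattices. First I would invoke Proposition~\ref{prop} to note that a bounded implicative meet semi-lattice $L$ is, in particular, a bounded distributive meet semi-lattice, so that the whole of Section~6 applies to $L$. In particular, Theorem~\ref{proposition8} supplies that $\phi$ is a bijection of $L$ onto ${L_*}^*=\phi[L]$ preserving $\wedge$, $\top$, and $\bot$.

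Next I would put the implicative structure on both sides. By Proposition~\ref{lemma-ims-ges}(1), the dual space $L_*=\la L_*,\tau,\subseteq,L_+\ra$ is a generalized Esakia space, and hence by Proposition~\ref{lemma-ims-ges}(2) the algebra ${L_*}^*=\la{L_*}^*,\cap,\to,L_*,\emptyset\ra$ is a bounded implicative meet semi-lattice, where the implication is $U\to V=[{\downarrow}(U-V)]^c$. Since $\phi$ is already a $\wedge$-, $\top$-, and $\bot$-preserving bijection, it remains only to verify that $\phi$ preserves $\to$.

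This final verification is precisely Lemma~\ref{leo-1}, which gives $\phi(a\to b)=\phi(a)\to\phi(b)$ for all $a,b\in L$. The one point deserving attention is that the implication on the right of Lemma~\ref{leo-1} must be read as the abstract implication of the implicative meet semi-lattice ${L_*}^*$; but both are defined verbatim as $[{\downarrow}(\phi(a)-\phi(b))]^c$, so they agree. Therefore $\phi$ preserves $\wedge$, $\to$, $\top$, and $\bot$ and is bijective, hence is an isomorphism of bounded implicative meet semi-lattices, yielding $L\simeq{L_*}^*$; as noted before the statement, this is immediate from ${L_*}^*=\phi[L]$.

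The main obstacle is, at this point, entirely behind us: the genuine work lies upstream, in the proof of Lemma~\ref{leo-1} (where the prime filter lemma is used to convert $a\to b\notin x$ into an optimal filter $y\supseteq x$ with $a\in y$ and $b\notin y$, witnessing ${\uparrow}x\cap\phi(a)\not\subseteq\phi(b)$) and in the proof of Proposition~\ref{lemma-ims-ges}. Granted those, the representation theorem is a direct corollary.
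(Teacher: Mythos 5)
Your proposal is correct and follows essentially the same route as the paper: the paper also derives the theorem immediately from the distributive representation theorem (Theorem~\ref{proposition8}, giving ${L_*}^*=\phi[L]$), Proposition~\ref{lemma-ims-ges} (making $L_*$ a generalized Esakia space and ${L_*}^*$ a bounded implicative meet semi-lattice), and Lemma~\ref{leo-1} (showing $\phi$ preserves $\to$). Your added remark that the set-theoretic implication in Lemma~\ref{leo-1} coincides by definition with the abstract implication of ${L_*}^*$ is a worthwhile clarification of a point the paper leaves implicit.
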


We show that there exist generalized Esakia spaces which are not Esakia spaces. This implies that the distributive envelope $D(L)$ of a bounded
implicative meet semi-lattice $L$ may \emph{not} be a Heyting algebra.

\begin{example}

Consider the implicative meet semi-lattice $L$, its distributive envelope $D(L)$, and its dual space $L_*$ shown in Fig.3, where the black circles indicate the elements of $L$ and the white circles indicate the elements of $D(L)-L$. 


\begin{center}

$$
\begin{array}{ccc}
\begin{tikzpicture}[xscale=.5,yscale=.37,inner sep=.5mm]
\node[bull] (bot) at (0,0) [label=below:$\bot$] {};%
\node[bull] (d1) at (0,1) [label=left:$d_1$] {};%
\node[bull] (c) at (1,1) [label=right:$c$] {};%
\node[bull] (d2) at (0,2) [label=left:$d_2$] {};%
\node[bull] (12) at (1,2) {};%
\node[bull] (d3) at (0,3) {};%
\node[bull] (13) at (1,3) {};%
\node[bull] (e1) at (2,3) [label=right:$e_1$] {};%
\node[bull] (d4) at (0,4) {};%
\node[bull] (14) at (1,4) {};%
\node[bull] (24) at (2,4) {};%
\node[bull] (d5) at (0,5) {};%
\node[bull] (15) at (1,5) {};%
\node[bull] (25) at (2,5) {};%
\node[bull] (e2) at (3,5) [label=right:$e_2$] {};%
\node (v0) at (0,5.65) {};%
\node (v1v) at (1,5.65) {};%
\node (v1) at (.65,5.65) {};%
\node (v2v) at (2,5.65) {};%
\node (v2) at (1.65,5.65) {};%
\node (v3v) at (3,5.65) {};%
\node (v3) at (2.65,5.65) {};%
\node (v4) at (3.65,5.65) {};%
\node (vv0) at (0,6.5) {$\vdots$};%
\node (vv1) at (1,7) {$\vdots$};%
\node (vv2) at (2,7.5) {$\vdots$};%
\node (vv2) at (3,8) {$\vdots$};%
\node[bull] (b) at (0,7) [label=left:$b$] {};%
\node[holl] (h1) at (1,8) {};%
\node[holl] (h2) at (2,9) {};%
\node[holl] (h3) at (3,10) {};%
\node (ddl) at (3.5,10.5) {};%
\node (dd) at (4,11.25) {$\iddots$};%
\node (w0) at (1,9.5) {$\vdots$};%
\node (ww0) at (1,10.4) {};%
\node (w1) at (2,10.5) {$\vdots$};%
\node (ww1) at (2,11.4) {};%
\node (w2) at (3,11.5) {$\vdots$};%
\node (ww2) at (3,12.4) {};%
\node (wi) at (5,14) {$\vdots$};%
\node (wwi) at (5,14.4) {};%
\node (p0) at (0,9) {};%
\node (p1) at (1,10) {};%
\node (p2) at (2,11) {};%
\node[bull] (f2) at (1,11) [label=left:$f_2$] {};%
\node[bull] (212) at (2,12) {};%
\node[bull] (313) at (3,13) {};%
\node (414) at (4,14) {};%
\node (dd2l) at (3.5,13.5) {};%
\node (dd2) at (4,14.25) {$\iddots$};%
\node[bull] (g2) at (5,15) [label=right:$g_2$] {};
\node[bull] (f1) at (1,12) [label=left:$f_1$] {};%
\node[bull] (213) at (2,13) {};%
\node[bull] (314) at (3,14) {};%
\node (415) at (4,15) {};
\node (dd1l) at (3.5,14.5) {};%
\node (dd1) at (4,15.25) {$\iddots$};%
\node[bull] (g1) at (5,16) [label=right:$g_1$] {};
\node[bull] (a) at (1,13) [label=left:$a$] {};%
\node[bull] (214) at (2,14) {};%
\node[bull] (315) at (3,15) {};%
\node (416) at (4,16) {};
\node (dd0l) at (3.5,15.5) {};%
\node (dd0) at (4,16.25) {$\iddots$};%
\node[bull] (top) at (5,17) [label=above:$\top$] {};%
\draw (bot) -- (d1) -- (d2) -- (d3) -- (d4) -- (d5) -- (v0);%
\draw (c) -- (12) -- (13) -- (14) -- (15) -- (v1v);%
\draw (e1) -- (24) -- (25) -- (v2v);%
\draw (e2) -- (v3v);%
\draw (bot) -- (c);%
\draw (d1) -- (12) -- (e1);%
\draw (d2) -- (13) -- (24) -- (e2);%
\draw (d3) -- (14) -- (25) -- (v3);%
\draw (d4) -- (15) -- (v2);%
\draw (d5) -- (v1);%
\draw (b) -- (h1) -- (h2) -- (h3) -- (ddl);%
\draw (ww0) -- (f2) -- (f1) -- (a);%
\draw (ww1) -- (212) -- (213) -- (214);%
\draw (ww2) -- (313) -- (314) -- (315);%
\draw (f2) -- (212) -- (313) -- (dd2l);%
\draw (f1) -- (213) -- (314) -- (dd1l);%
\draw (a) -- (214) -- (315) -- (dd0l);%
\draw (wwi) -- (g2) -- (g1) -- (top);%
\end{tikzpicture}&\ \ \ \ \ \ \ &
\begin{tikzpicture}[yscale=.65,inner sep=.5mm]
\node[bull] (a) at (0,0) [label=left:${\uparrow}a$] {};%
\node[bull] (f1) at (0,1) [label=left:${\uparrow}f_1$] {};%
\node[bull] (f2) at (0,2) [label=left:${\uparrow}f_2$] {};%
\node (f) at (0,2.4) {};%
\node (vf) at (0,3.3) {$\vdots$};%
\node[bull] (P) at (1,3) [label=right:$P$] {};%
\node[holl] (Q) at (0,4) [label=left:$Q$] {};%
\node[bull] (b) at (0,5) [label=left:${\uparrow}b$] {};%
\node (ve) at (1,4.7) {$\vdots$};%
\node (vb) at (0,6.1) {$\vdots$};%
\node (e) at (1,5.6) {};%
\node (d) at (0,6.6) {};%
\node[bull] (e4) at (1,6) [label=right:${\uparrow}e_4$] {};%
\node[bull] (d4) at (0,7) [label=left:${\uparrow}d_4$] {};%
\node[bull] (e3) at (1,7) [label=right:${\uparrow}e_3$] {};%
\node[bull] (d3) at (0,8) [label=left:${\uparrow}d_3$] {};%
\node[bull] (e2) at (1,8) [label=right:${\uparrow}e_2$] {};%
\node[bull] (d2) at (0,9) [label=left:${\uparrow}d_2$] {};%
\node[bull] (e1) at (1,9) [label=right:${\uparrow}e_1$] {};%
\node[bull] (d1) at (0,10) [label=left:${\uparrow}d_1$] {};%
\node[bull] (c) at (1,10) [label=right:${\uparrow}c$] {};%
\draw (a) -- (f1) -- (f2) -- (f);%
\draw (Q) -- (b);%
\draw (d) -- (d4) -- (d3) -- (d2) -- (d1);%
\draw (e) -- (e4) -- (e3) -- (e2) -- (e1) -- (c);%
\draw (P) -- (Q);%
\draw (e4) -- (d4);%
\draw (e3) -- (d3);%
\draw (e2) -- (d2);%
\draw (e1) -- (d1);%
\draw (Q) -- (c);
\end{tikzpicture}\\
\\
\textrm{$L$ and $D(L)$} & & L_*\simeq D(L)_*
\end{array}
$$

\

\ \ Fig.3
\end{center}

Then $L_*$ is a generalized Esakia space order-homeomorphic to the dual space $D(L)_*$ of $D(L)$. We denote by $P$ the filter $\{\top,g_1,g_2,\ldots\}$ and by $Q$ the filter ${\uparrow}a\cup\displaystyle{\bigcup} \{{\uparrow}f_n: n\in\omega\}$. It is easy to see that $Q$ is the only optimal filter of $L$ which is not prime. Thus, $L_+=L_*-\{Q\}$. It is also easy to calculate that $a\to b=b$ and $a\to c=c$ in $L$, but that $\sigma(a)\to(\sigma(b) \cup \sigma(c))$ does not exist in $D(L)$. Consequently, $D(L)$ is not a Heyting algebra. Put dually, $U=\{{\uparrow}a, {\uparrow}f_1,{\uparrow}f_2,\ldots,Q\}$ is clopen in $L_*$, but ${\downarrow}U=U\cup\{P\}$ is not clopen in $L_*$. Thus, $L_*$ is not an Esakia space. Of course, $U$ is not Esakia clopen because $L_*$ is a generalized Esakia space.
\end{example}

\section{Categorical equivalences}

This section consists of two subsections. In the first one we introduce generalized Priestley morphisms, total generalized Priestey morphisms, and functional generalized Priestley morphisms, the category $\mathsf{GPS}$ of generalized Priestley spaces and generalized Priestley morphisms, the category $\mathsf{GPS}^{\mathsf{T}}$ of generalized Priestley spaces and total generalized Priestley morphisms, and the category $\mathsf{GPS}^{\mathsf{F}}$ of generalized Priestley spaces and functional generalized Priestley morphisms. We also introduce the category $\mathsf{BDM}$ of bounded distributive meet semi-lattices and semi-lattice homomorphisms preserving top, the category $\mathsf{BDM^{\bot}}$ of bounded distributive meet semi-lattices and bounded semi-lattice homomorphisms, and the category  $\mathsf{BDM}^{\mathsf{S}}$ of bounded distributive meet semi-lattices and sup-homomorphisms. We prove that $\mathsf{BDM}$ is dually equivalent to $\mathsf{GPS}$, that $\mathsf{BDM^{\bot}}$ is dually equivalent to $\mathsf{GPS}^{\mathsf{T}}$, and that $\mathsf{BDM}^{\mathsf{S}}$ is dually equivalent to $\mathsf{GPS}^{\mathsf{F}}$. In the second subsection we introduce generalized Esakia morphisms, total generalized Esakia morphisms, and functional generalized Esakia morphisms, the category $\mathsf{GES}$ of generalized Esakia spaces and generalized Esakia morphisms, the category $\mathsf{GES}^{\mathsf{T}}$ of generalized Esakia spaces and total generalized Esakia morphisms, and the category $\mathsf{GES}^{\mathsf{F}}$ of generalized Esakia spaces and functional generalized Esakia morphisms. We also introduce the category $\mathsf{BIM}$ of bounded implicative meet semi-lattices and implicative meet semi-lattice homomorphisms,  the category $\mathsf{BIM^{\bot}}$ of bounded implicative meet semi-lattices and bounded implicative meet semi-lattice homomorphisms, and the category $\mathsf{BIM}^{\mathsf{S}}$ of bounded implicative meet semi-lattices and implicative meet semi-lattice sup-homomorphisms. We prove that $\mathsf{BIM}$ is dually equivalent to $\mathsf{GES}$, that $\mathsf{BIM^{\bot}}$ is dually equivalent to $\mathsf{GES}^{\mathsf{T}}$, and that $\mathsf{BIM}^{\mathsf{S}}$ is dually equivalent to $\mathsf{GES}^{\mathsf{F}}$.

\subsection{The categories $\mathsf{GPS}$, $\mathsf{GPS}^{\mathsf{T}}$, and $\mathsf{GPS}^{\mathsf{F}}$}

Let $X$ and $Y$ be nonempty sets. Given a relation $R \subseteq X
\times Y$, for each $A \subseteq Y$ we define $$\Box_{R}A = \{x \in
X: (\forall y \in Y)(xRy \Rightarrow y \in A)\} = \{x \in X: R[x]
\subseteq A\}.$$ It is easy to verify that for each $A, B \subseteq
Y$ we have $\Box_{R}(A \cap B) = \Box_{R}A \cap\Box_{R}B$ and
$\Box_R(Y)=X$.

Let $L$ and $K$ be bounded distributive meet semi-lattices and let
$h: L \to K$ be a meet semi-lattice homomorphism preserving top. We
define $R_{h} \subseteq K_{*} \times L_{*}$ by
$$\text{$x R_{h} y\ $ iff $\ h^{-1}(x) \subseteq y$}$$ for
each $x \in K_{*}$ and $y \in L_{*}$. We call $R_h$ the dual of $h$.

\begin{proposition}\label{proposition9}
Let $L$ and $K$ be bounded distributive meet semi-lattices and let
$h: L \to K$ be a meet semi-lattice homomorphism preserving top.
Then:
\begin{enumerate}
\item $(\subseteq_{K_{*}} \circ R_{h}) \subseteq R_{h}$.
\item $(R_{h} \circ \subseteq_{L_{*}}) \subseteq R_{h}$.
\item If $x \nR_{h} y$, then there is $a \in L$ such that $y \notin
\phi(a)$ and $R_{h}[x] \subseteq \phi(a)$.
\item $\phi(h(a)) = \Box_{R_{h}} \phi(a)$.
\end{enumerate}
\end{proposition}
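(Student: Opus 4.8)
The plan is to unwind the defining equivalence $x\,R_h\,y$ iff $h^{-1}(x)\subseteq y$ and to exploit the two elementary facts that taking preimages under $h$ is monotone with respect to inclusion and that $\subseteq$ is transitive. For (1), suppose $x(\subseteq_{K_*}\circ R_h)y$, so there is $z\in K_*$ with $x\subseteq z$ and $z\,R_h\,y$, i.e.\ $h^{-1}(z)\subseteq y$. Since $x\subseteq z$ yields $h^{-1}(x)\subseteq h^{-1}(z)$, we obtain $h^{-1}(x)\subseteq y$, that is $x\,R_h\,y$. Part (2) is the mirror image: if $x\,R_h\,w$ and $w\subseteq y$ for some $w\in L_*$, then $h^{-1}(x)\subseteq w\subseteq y$, so $x\,R_h\,y$. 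Both are immediate.

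For (3), the hypothesis $x\nR_h y$ means $h^{-1}(x)\not\subseteq y$, so I would pick $a\in h^{-1}(x)$ with $a\notin y$. Then $y\notin\phi(a)$ by the choice of $a$, and for every $y'\in R_h[x]$ we have $h^{-1}(x)\subseteq y'$, whence $a\in y'$ and $y'\in\phi(a)$; thus $R_h[x]\subseteq\phi(a)$, as required.

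The substantive part is (4), where I must show $\phi(h(a))=\Box_{R_h}\phi(a)$ as subsets of $K_*$ (the outer $\phi$ being the map for $K$ and the inner one the map for $L$). The inclusion $\phi(h(a))\subseteq\Box_{R_h}\phi(a)$ is direct: if $h(a)\in x$ and $y\in R_h[x]$, then $a\in h^{-1}(x)\subseteq y$, so $y\in\phi(a)$, giving $R_h[x]\subseteq\phi(a)$. For the reverse inclusion I would argue by contraposition. Assume $h(a)\notin x$; the goal is to produce $y\in R_h[x]$ with $a\notin y$, witnessing $x\notin\Box_{R_h}\phi(a)$. First observe that $h^{-1}(x)$ is a filter of $L$: it contains $\top$ since $h(\top)=\top\in x$, it is closed under $\wedge$ because $h(a'\wedge b')=h(a')\wedge h(b')$, and it is an upset because $h$ is order-preserving and $x$ is an upset. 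Next, ${\downarrow}a$ is an F-ideal, and I claim $h^{-1}(x)\cap{\downarrow}a=\emptyset$: if $b\leq a$ and $b\in h^{-1}(x)$, then $h(b)\in x$ and $h(b)\leq h(a)$ force $h(a)\in x$ since $x$ is an upset, contradicting $h(a)\notin x$.

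With the filter $h^{-1}(x)$ and the F-ideal ${\downarrow}a$ disjoint, the Optimal Filter Lemma furnishes an optimal filter $y$ of $L$, so $y\in L_*$, with $h^{-1}(x)\subseteq y$ and $y\cap{\downarrow}a=\emptyset$, in particular $a\notin y$. Then $x\,R_h\,y$ while $y\notin\phi(a)$, so $R_h[x]\not\subseteq\phi(a)$ and $x\notin\Box_{R_h}\phi(a)$. This completes (4). I expect the only genuine work to lie in this last inclusion; the two delicate points are verifying that $h^{-1}(x)$ really is a filter and that it is disjoint from ${\downarrow}a$, after which the Optimal Filter Lemma supplies the required separating optimal filter. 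Everything else reduces to unwinding the definitions of $R_h$ and $\Box_{R_h}$.
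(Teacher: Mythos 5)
Your proof is correct, and parts (1)--(3) together with the forward inclusion of (4) coincide with the paper's argument essentially word for word. The only divergence is in the reverse inclusion of (4): where you separate the filter $h^{-1}(x)$ from the F-ideal ${\downarrow}a$ via the Optimal Filter Lemma, obtaining an optimal filter $y \in L_*$ with $h^{-1}(x) \subseteq y$ and $a \notin y$, the paper instead invokes the Prime Filter Lemma (viewing ${\downarrow}a$ as an ordinary ideal, which it is), obtaining a prime filter $y \in L_+ \subseteq L_*$ with the same two properties. Both routes are legitimate: the two lemmas play the same separating role, and since the contradiction only requires some point of $L_*$ in $R_h[x]$ avoiding $\phi(a)$, the weaker conclusion of the Optimal Filter Lemma suffices, while the paper's choice yields the (unneeded) extra information that the witness can be taken prime. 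Your explicit verifications that $h^{-1}(x)$ is a filter (it contains $\top$ because $h$ preserves top, is closed under $\wedge$, and is an upset) and that it is disjoint from ${\downarrow}a$ are left implicit in the paper, so spelling them out is a small improvement in completeness rather than a gap.
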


\begin{proof}
(1) Suppose that $x, y \in K_{*}$, $z \in L_{*}$, $x \subseteq y$,
and $y R_hz$. Then $h^{-1}(x) \subseteq h^{-1}(y) \subseteq z$.
Thus, $x R_{h}z$.

(2) is proved similarly to (1).

(3) Suppose that $x \nR_{h} y$. Then $h^{-1}(x) \not \subseteq y$,
so there is $a\in L$ such that $a \in h^{-1}(x)$ and $a\notin y$.
Therefore, $y \notin \phi(a)$, and if $x R_{h}z$, then $a \in z$.
Thus, $R_{h}[x] \subseteq \phi(a)$.

(4) If $x \in \phi(h(a))$, then $a \in h^{-1}(x)$. Therefore, for
each $z \in L_{*}$ with $x R_hz$, we have $a \in z$. Thus, $R_{h}[x]
\subseteq \phi(a)$, and so $\phi(h(a)) \subseteq \Box_{R_{h}}
\phi(a)$. Conversely, if $x\in \Box_{R_{h}} \phi(a)$, then $R_{h}[x]
\subseteq \phi(a)$. If $x \not \in \phi(h(a))$, then $a \notin
h^{-1}(x)$. So $h^{-1}(x)\cap{\downarrow}a=\emptyset$, and by the
prime filter lemma, there exists $y\in L_+\subseteq L_*$ such that
$h^{-1}(x) \subseteq y$ and $a \notin y$. But $h^{-1}(x) \subseteq
y$ implies $y \in R_{h}[x]$, so $a \in y$, which is a contradiction.
We conclude that $x \in \phi(h(a))$. Thus, $\Box_{R_{h}} \phi(a)
\subseteq \phi(h(a))$, and so $\phi(h(a)) = \Box_{R_{h}} \phi(a)$.
\end{proof}

\begin{definition}\label{definition}
{\rm Let $X$ and $Y$ be generalized Priestley spaces. A relation $R
\subseteq X \times Y$ is called a \textit{generalized Priestley
morphism} if the following conditions are satisfied:
\begin{enumerate}
\item If $x \nR y$, then there is $U \in Y^{*}$ such that $y \notin U$
and $R[x] \subseteq U$.
\item If $U \in Y^{*}$, then $\Box_{R} U \in X^{*}$.
\end{enumerate}
}
\end{definition}

\begin{lemma}\label{lemma-g-morphisms}
Let $X$ and $Y$ be generalized Priestley spaces and $R \subseteq X \times Y$ be a generalized Priestley morphism. Then:
\begin{enumerate}
\item $({\leq_{X}} \circ R) \subseteq R$.
\item $(R \circ {\leq_{Y}}) \subseteq R$.
\end{enumerate}
\end{lemma}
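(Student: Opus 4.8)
The plan is to prove both inclusions by contraposition, exploiting condition~(1) of the definition of a generalized Priestley morphism (Definition~\ref{definition}) together with the elementary fact that every element of $X^*$ or $Y^*$ is in particular an upset of the underlying Priestley space. This mirrors the concrete computation for $R_h$ carried out in Proposition~\ref{proposition9}, where the same two inclusions were verified by directly unwinding $x R_h y \Leftrightarrow h^{-1}(x)\subseteq y$; here, with no algebra of $h$ available, the defining conditions of the morphism must do that work.

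First I would prove $({\leq_X}\circ R)\subseteq R$. Suppose $x\leq_X y$ in $X$ and $y R z$ for some $z\in Y$, and assume toward a contradiction that $x\nR z$. Condition~(1) of Definition~\ref{definition} then yields some $U\in Y^*$ with $z\notin U$ and $R[x]\subseteq U$. The inclusion $R[x]\subseteq U$ says precisely that $x\in\Box_R U$, and condition~(2) of Definition~\ref{definition} gives $\Box_R U\in X^*$, so $\Box_R U$ is a clopen upset of $X$. Since $x\leq_X y$ and $\Box_R U$ is an upset, I get $y\in\Box_R U$, that is, $R[y]\subseteq U$; but $y R z$ forces $z\in R[y]\subseteq U$, contradicting $z\notin U$. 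Hence $x R z$.

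The second inclusion $(R\circ{\leq_Y})\subseteq R$ is even shorter. Supposing $x R y$ and $y\leq_Y z$ with $x\nR z$, condition~(1) again produces $U\in Y^*$ with $z\notin U$ and $R[x]\subseteq U$. Now $x R y$ gives $y\in R[x]\subseteq U$, and since $U\in Y^*$ is an upset and $y\leq_Y z$, I conclude $z\in U$, again contradicting $z\notin U$; thus $x R z$.

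I do not expect a genuine obstacle: the whole argument is a direct application of the two defining conditions. The only points needing care are matching the relational composition convention of Proposition~\ref{proposition9} (where $S\circ T$ relates $x$ to $z$ whenever $x\,S\,y$ and $y\,T\,z$ for some intermediate $y$) and recording that $\Box_R U$ and $U$ are upsets because they lie in $X^*\subseteq\mathfrak{CU}(X)$ and $Y^*\subseteq\mathfrak{CU}(Y)$, respectively. It is worth noting that condition~(5) of Definition~\ref{g-Priestley} is not invoked in either part.
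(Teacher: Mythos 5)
Your proof is correct and follows essentially the same route as the paper's: part (1) is verbatim the paper's argument (contradiction via condition (1), then $\Box_R U \in X^*$ is an upset containing $x$, hence $y$, forcing $z \in U$), and part (2) is the paper's argument with the contradiction phrased in contrapositive form (the paper derives $y \notin U$ from $z \notin U$ using that $U$ is an upset, while you derive $z \in U$ from $y \in U$ — the same step read in the other direction). Nothing is missing.
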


\begin{proof}
(1) Suppose that $x \leq_{X} y$ and $y Rz$. If $x \nR z$, then by condition (1) of Definition \ref{definition}, there is $U \in Y^*$ such that $R[x] \subseteq U$ and $z \not \in U$. By condition (2) of Definition \ref{definition}, $\Box_{R}U \in X^*$. From $R[x] \subseteq U$ it follows that $x \in  \Box_{R}U$. Because $\Box_{R}U \in X^*$, then $\Box_{R}U$ is an upset. Therefore, $y \in \Box_{R}U$, so $R[y] \subseteq U$, and so $z \in U$, a contradiction. Thus, $x R z$.

(2) Suppose that $x R y$ and $y \leq_{Y} z$. If $x \nR z$, then, by condition (1) of Definition \ref{definition}, there is $U \in Y^*$ such that $R[x] \subseteq U$ and $z \not \in U$. Therefore, $y \not \in U$. On the other hand, $R[x] \subseteq U$ implies $y \in U$. The obtained contradiction proves that $x R z$.
\end{proof}

\begin{remark}
{\rm It is easy to verify that condition (1) of Lemma
\ref{lemma-g-morphisms} is equivalent to saying that for each  $B \subseteq
Y$ we have $R^{-1}[B]$ is a downset of $X$, that condition (2) of
Lemma \ref{lemma-g-morphisms} is equivalent to saying that for each $A
\subseteq X$ we have $R[A]$ is an upset of $Y$, and that conditions
(1) and (2) together are equivalent to $({\leq_{X}} \circ R \circ
{\leq_{Y}}) \subseteq R$.}
\end{remark}

Given a generalized Priestley morphism $R\subseteq X\times Y$, we
define a map $h_{R}:Y^{*}\to X^{*}$ by $$h_{R}(U) = \Box_{R}U$$ for
each $U\in Y^*$.

\begin{lemma}\label{lemma5}
If $R\subseteq X\times Y$ is a generalized Priestley morphism, then
$h_{R}:Y^*\to X^*$ is a meet semi-lattice homomorphism preserving
top.
\end{lemma}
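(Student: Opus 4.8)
The plan is to reduce the statement to two elementary facts about the operator $\Box_R$ that were recorded immediately after its definition at the start of this section, namely that $\Box_R(A \cap B) = \Box_R A \cap \Box_R B$ and $\Box_R(Y) = X$ for all $A, B \subseteq Y$. Recall also from Proposition \ref{proposition7} that $X^* = \la X^*, \cap, X, \emptyset \ra$ and $Y^* = \la Y^*, \cap, Y, \emptyset \ra$ are bounded distributive meet semi-lattices, so the meet in each is set-theoretic intersection and the top elements are $X$ and $Y$, respectively. Thus proving the lemma amounts to checking three things: that $h_R$ is well-defined as a map into $X^*$, that it preserves $\cap$, and that it sends the top $Y$ of $Y^*$ to the top $X$ of $X^*$.

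First I would verify well-definedness. For $U \in Y^*$ we need $h_R(U) = \Box_R U$ to actually lie in $X^*$; this is exactly condition (2) of Definition \ref{definition}, which guarantees $\Box_R U \in X^*$. Next, preservation of meet is a one-line computation: for $U, V \in Y^*$, since the meet in both semi-lattices is intersection, we have $h_R(U \cap V) = \Box_R(U \cap V) = \Box_R U \cap \Box_R V = h_R(U) \cap h_R(V)$, using the first identity above. Finally, for preservation of top, the second identity gives $h_R(Y) = \Box_R Y = X$, and since $Y$ and $X$ are the respective tops, $h_R$ preserves $\top$.

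There is no genuine obstacle here: the argument is essentially the assembly of facts already at hand. The only point requiring any care is the well-definedness, which is precisely why condition (2) was built into the definition of a generalized Priestley morphism, so that $\Box_R$ restricts to a map $Y^* \to X^*$ at all. Once that is granted, the homomorphism properties of $\Box_R$ with respect to intersection and the full set $Y$ do all the remaining work.
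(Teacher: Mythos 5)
Your proposal is correct and is essentially the paper's own proof: the paper verifies exactly the two identities $\Box_R(U\cap V)=\Box_R U\cap \Box_R V$ and $\Box_R Y = X$, with well-definedness implicit from condition (2) of Definition \ref{definition}. Your explicit mention of the well-definedness step is a harmless (and reasonable) addition, but the argument is the same.
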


\begin{proof}
Let $U, V \in Y^{*}$. Then $h_{R}(U \cap V) = \Box_{R}(U \cap V) =
\Box_{R} U \cap \Box_{R} V=h_{R}(U)\cap h_{R}(V)$. Moreover,
$h_{R}(Y)=\Box_{R}Y = X$.
\end{proof}

\begin{proposition}\label{h-Rh}
Let $L$ and $K$ be bounded distributive meet semi-lattices and let
$h:L\to K$ be a meet semi-lattice homomorphism preserving top. Then
for each $a \in L$ we have $\phi(h(a)) = h_{R_h}(\phi(a))$.
\end{proposition}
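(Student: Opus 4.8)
The plan is to observe that this statement is essentially a repackaging of Proposition \ref{proposition9}(4) once the definition of the map $h_{R_h}$ is unwound. First I would fix the variance. Since $R_h \subseteq K_* \times L_*$, in the general notation $R \subseteq X \times Y$ of Definition \ref{definition} and the paragraph defining $h_R$, we have $X = K_*$ and $Y = L_*$. Hence the associated map $h_{R_h}$ goes from $Y^* = {L_*}^*$ to $X^* = {K_*}^*$ and is given by $h_{R_h}(U) = \Box_{R_h} U$ for each $U \in {L_*}^*$. By the earlier representation results (in particular ${L_*}^* = \phi[L]$, established in the proof of Proposition \ref{proposition6}), each $\phi(a)$ with $a \in L$ is an admissible clopen upset of $L_*$, hence a legitimate argument of $h_{R_h}$, and likewise $\phi(h(a)) \in {K_*}^*$ since $h(a) \in K$.

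With the bookkeeping in place, the computation is immediate. Applying the definition of $h_{R_h}$ to $U = \phi(a)$ gives $h_{R_h}(\phi(a)) = \Box_{R_h}\phi(a)$. On the other hand, Proposition \ref{proposition9}(4) asserts precisely that $\phi(h(a)) = \Box_{R_h}\phi(a)$. Chaining the two equalities yields
\[
\phi(h(a)) = \Box_{R_h}\phi(a) = h_{R_h}(\phi(a)),
\]
which is the desired identity.

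I do not expect any genuine obstacle here: the entire combinatorial content has already been absorbed into Proposition \ref{proposition9}(4), whose proof used the prime filter lemma to witness the nontrivial inclusion $\Box_{R_h}\phi(a) \subseteq \phi(h(a))$. The only point that requires a moment of care is the matching of directions, namely that $R_h$ reverses the arrow of $h$ so that $h_{R_h}$ runs from ${L_*}^*$ to ${K_*}^*$, i.e.\ the \emph{same} way as $h$ under the representation $L \simeq {L_*}^*$ and $K \simeq {K_*}^*$. Once this is checked, the proposition follows by substituting the definition of $h_{R_h}$ into Proposition \ref{proposition9}(4).
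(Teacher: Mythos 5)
Your proof is correct, and it is not the route the paper takes. The paper proves Proposition \ref{h-Rh} from scratch: it unwinds both sides to the claim that $a \in h^{-1}(x)$ iff every optimal filter of $L$ containing $h^{-1}(x)$ contains $a$, splits into the cases $h^{-1}(x) = L$ (which can genuinely occur, since $h$ need not preserve bottom) and $h^{-1}(x)$ a proper filter, and in the latter case invokes the consequence of the optimal filter lemma that every proper filter is the intersection of the optimal filters containing it. You instead observe that the identity $\phi(h(a)) = \Box_{R_h}\phi(a)$ is literally Proposition \ref{proposition9}(4), so the proposition follows from the definition $h_{R_h}(U) = \Box_{R_h}U$ once one checks the bookkeeping that $\phi(a) \in {L_*}^*$ (which holds because ${L_*}^* = \phi[L]$) and that $R_h$ is a generalized Priestley morphism so that $h_{R_h}$ is defined (which follows from parts (3) and (4) of the same proposition). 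This reduction is valid and non-circular: Proposition \ref{proposition9} precedes \ref{h-Rh} and its proof uses only the prime filter lemma. What each approach buys: yours exposes that \ref{h-Rh} is essentially a restatement of \ref{proposition9}(4) in the notation $h_R$, making the paper's second argument redundant; the paper's version is self-contained and makes the degenerate case $h^{-1}(x) = L$ explicit, whereas in the prime-filter-lemma proof of \ref{proposition9}(4) that case is handled only implicitly (there, the hypothesis $a \notin h^{-1}(x)$ automatically forces $h^{-1}(x)$ to be proper, so the separation lemma applies). Your route inherits that subtlety for free, which is precisely the economy of citing the earlier result.
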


\begin{proof}
We have $x \in \phi(h(a))$ iff $h(a) \in x$ iff $a \in h^{-1}(x)$,
and $x \in h_{R_h}\phi(a)$ iff $x \in \Box_{R_h}\phi(a)$ iff
$(\forall y \in L_{*})(xR_hy \Rightarrow a \in y)$ iff $(\forall y
\in L_{*})(h^{-1}(x) \subseteq y \Rightarrow a \in y)$. Now either
$h^{-1}(x)=L$ or $h^{-1}(x)$ is a proper filter of $L$. If
$h^{-1}(x)=L$, then for all $y \in L_{*}$ we have $h^{-1}(x)
\not\subseteq y$. Therefore, both $a \in h^{-1}(x)$ and $(\forall
y \in L_{*})(h^{-1}(x) \subseteq y \Rightarrow a \in y)$ are
trivially true, and so $a \in h^{-1}(x)$ iff $(\forall y \in
L_{*})(h^{-1}(x) \subseteq y \Rightarrow a \in y)$. On the other
hand, if $h^{-1}(x)$ is a proper filter of $L$, then by the
optimal filter lemma, $h^{-1}(x)$ is the intersection of all the
optimal filters of $L$ containing $h^{-1}(x)$. Hence, $a\in
h^{-1}(x)$ iff $(\forall y \in L_{*})(h^{-1}(x) \subseteq y
\Rightarrow a \in y)$. Thus, in either case we have $\phi(h(a)) =
h_{R_h}(\phi(a))$.
\end{proof}

\begin{proposition}\label{R-hR}
Let $R\subseteq X\times Y$ be a generalized Priestley morphism. Then
for each $x \in X$ and $y \in Y$ we have $x R y$ iff $\psi(x)
R_{h_R} \psi(y)$.
\end{proposition}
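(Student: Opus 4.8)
The plan is to reduce both sides of the claimed equivalence to a single statement about the admissible clopen upsets in $Y^*$ and then read off the biconditional directly, the key structural input being the separation axiom built into the definition of a generalized Priestley morphism.

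First I would unwind the right-hand side using the definition of the dual relation. Since $h_R:Y^*\to X^*$ is the meet semi-lattice homomorphism $U\mapsto\Box_R U$, we have $R_{h_R}\subseteq (X^*)_*\times (Y^*)_*$ and, by definition, $\psi(x)\,R_{h_R}\,\psi(y)$ iff $h_R^{-1}(\psi(x))\subseteq\psi(y)$. Computing the two sides, $h_R^{-1}(\psi(x))=\{U\in Y^*: \Box_R U\in\psi(x)\}=\{U\in Y^*: x\in\Box_R U\}=\{U\in Y^*: R[x]\subseteq U\}$, while $\psi(y)=\{U\in Y^*: y\in U\}$. Hence $\psi(x)\,R_{h_R}\,\psi(y)$ holds precisely when $(\forall U\in Y^*)(R[x]\subseteq U\Rightarrow y\in U)$, and the whole proposition reduces to showing that $x R y$ is equivalent to this last condition.

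The forward implication is immediate: if $x R y$, then $y\in R[x]$, so any $U\in Y^*$ with $R[x]\subseteq U$ automatically contains $y$. For the converse I would argue contrapositively. If $x\nR y$, then condition (1) in the definition of a generalized Priestley morphism (Definition \ref{definition}) supplies some $U\in Y^*$ with $R[x]\subseteq U$ and $y\notin U$; this directly violates the displayed condition, so the condition forces $x R y$.

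There is no genuine obstacle here; the only point requiring care is bookkeeping of directions. In applying the definition of $R_h$ one must remember that $h_R$ runs from $Y^*$ to $X^*$, so that $Y^*$ plays the role of the domain $L$ and $X^*$ the role of the codomain $K$, whence $R_{h_R}$ relates $\psi(x)\in (X^*)_*$ to $\psi(y)\in (Y^*)_*$ in exactly this order. The essential ingredient is the separation axiom (1) of Definition \ref{definition}, which is precisely what converts the abstract filter-inclusion back into membership in the relation $R$.
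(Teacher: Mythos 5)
Your proof is correct and follows essentially the same route as the paper's: both unwind $\psi(x)\,R_{h_R}\,\psi(y)$ via the definition of the dual relation and the identity $x\in\Box_R U\Leftrightarrow R[x]\subseteq U$, handle the forward direction by the trivial observation that $y\in R[x]$, and handle the converse contrapositively using condition (1) of Definition \ref{definition}. Your preliminary reduction of the right-hand side to the single condition $(\forall U\in Y^{*})(R[x]\subseteq U\Rightarrow y\in U)$ is merely a tidier packaging of what the paper does inline.
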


\begin{proof}
Let $x R y$. We show $\psi(x) R_{h_R} \psi(y)$. If $U \in
h_{R}^{-1}(\psi(x))$, then $h_R(U)\in \psi(x)$, so $x \in h_{R}(U)$,
and so $R[x] \subseteq U$. Therefore, $y \in U$, so $U \in \psi(y)$,
and so $h_{R}^{-1}(\psi(x))\subseteq \psi(y)$. Thus, $\psi(x)
R_{h_R} \psi(y)$. Now let $x \nR y$. Then, by condition (1) of
Definition \ref{definition}, there is $U \in Y^{*}$ such that $y
\notin U$ and $R[x] \subseteq U$. Therefore, $y \notin U$ and $x \in
h_{R}(U)$. Thus, we have $U \notin \psi(y)$ and $h_{R}(U) \in
\psi(x)$. It follows that $h_{R}^{-1}(\psi(x)) \not \subseteq
\psi(y)$. Consequently, $x R y$ iff $\psi(x) R_{h_R} \psi(y)$.
\end{proof}

Unfortunately, the usual set-theoretic composition of two generalized Priestley morphisms may not be a generalized Priestley morphism. Therefore, we introduce the composition of two generalized Priestley morphisms as follows. Let $X,Y,$ and $Z$ be generalized Priestley spaces, and $R\subseteq X\times Y$ and $S\subseteq Y\times Z$ be generalized Priestley morphisms. Define $S {\ast} R \subseteq X \times Z$ by
$$x (S {\ast} R) z \;\; \text{ iff } \;\; \psi(x) R_{(h_R \circ h_S)} \psi(z).$$
Note that
$$x (S {\ast} R) z \;\; \text{ iff } \;\; (\forall U \in Z^{\ast}) (x \in \Box_{R}\Box_{S}U \Rightarrow z \in U).$$

\begin{lemma} \label{catGPS-1}
If $X, Y,$ and $Z$ are generalized Priestley spaces, and $R\subseteq X\times Y$ and $S\subseteq Y\times Z$ are generalized Priestley morphisms, then for each $U \in Z^{\ast},$ we have $$\Box_R \Box_S U = (h_R \circ h_S)(U) = \Box_{(S{\ast}R)}U.$$
\end{lemma}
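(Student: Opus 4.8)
The statement asserts a chain of three equalities for each $U \in Z^*$:
$$\Box_R \Box_S U = (h_R \circ h_S)(U) = \Box_{(S \ast R)} U.$$
The middle equality is immediate from unwinding definitions: by the definition of $h_S$ we have $h_S(U) = \Box_S U$, and since $\Box_S U \in Y^*$ (because $S$ is a generalized Priestley morphism and $U \in Z^*$, using condition (2) of Definition \ref{definition}), we may apply $h_R$ to it, obtaining $h_R(h_S(U)) = h_R(\Box_S U) = \Box_R \Box_S U$. So the first equality is really just the definition of composition of the maps $h_R$ and $h_S$, and the only thing to check is the type-correctness, namely that $\Box_S U$ genuinely lies in $Y^*$ so that $h_R$ is defined on it. This is exactly condition (2) of Definition \ref{definition} applied to $S$, followed by the same condition applied to $R$ to conclude $\Box_R \Box_S U \in X^*$.

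The real content is the third equality, $\Box_R \Box_S U = \Box_{(S \ast R)} U$. Here I would simply chase the definition of $S \ast R$ and the characterization noted right before the lemma. For $x \in X$, unwinding the definition of $\Box_{(S \ast R)} U$ gives
$$x \in \Box_{(S \ast R)} U \iff (\forall z \in Z)(x \,(S \ast R)\, z \Rightarrow z \in U).$$
By the displayed reformulation preceding the lemma, $x \,(S \ast R)\, z$ holds iff for all $V \in Z^*$, from $x \in \Box_R \Box_S V$ it follows that $z \in V$. The plan is to show that the right-hand side above is equivalent to $x \in \Box_R \Box_S U$. The forward direction ($x \in \Box_R \Box_S U \Rightarrow x \in \Box_{(S \ast R)} U$) is easy: if $x \in \Box_R \Box_S U$ and $x \,(S \ast R)\, z$, then taking $V = U$ in the characterization of $S \ast R$ forces $z \in U$.

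The reverse direction is where care is needed, and I expect it to be the main obstacle. Suppose $x \in \Box_{(S \ast R)} U$ but, for contradiction, $x \notin \Box_R \Box_S U$. The goal is to produce some $z \in Z$ with $x \,(S \ast R)\, z$ yet $z \notin U$, contradicting $x \in \Box_{(S \ast R)} U$. The natural approach is to use Proposition \ref{R-hR} (or rather its analogue for the composite map $h_R \circ h_S$) together with the representation $\psi: Z \to {Z^*}_*$, which is a bijection by Proposition \ref{psi-bij}. Concretely, since $S \ast R$ was \emph{defined} via $x \,(S \ast R)\, z$ iff $\psi(x)\, R_{(h_R \circ h_S)}\, \psi(z)$, I would work on the algebraic side: observe that $x \in \Box_R\Box_S U = (h_R\circ h_S)(U)$ means $U \in (h_R \circ h_S)^{-1}(\psi(x))$ is the relevant membership, and then translate the failure $x \notin (h_R \circ h_S)(U)$ into the existence of an optimal filter (equivalently a point $z = \psi^{-1}(\text{that filter})$) separating the composite relation from $U$. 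The key facts to invoke are that $(h_R \circ h_S): Z^* \to X^*$ is a meet-semilattice homomorphism preserving top (Lemma \ref{lemma5} applied twice), that $R_g$ for such a homomorphism $g$ satisfies $\phi(g(a)) = \Box_{R_g}\phi(a)$ (Proposition \ref{proposition9}(4)), and the optimal filter lemma, which guarantees enough optimal filters to realize the separating point. Once the separating $z$ is exhibited, it satisfies $x\,(S\ast R)\, z$ and $z \notin U$, completing the contradiction and hence the reverse inclusion. Putting the two inclusions together yields the third equality, and combined with the definitional middle equality the full chain follows.
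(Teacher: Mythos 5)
Your proposal is correct and follows essentially the same route as the paper's proof: the paper likewise treats $\Box_R\Box_S U = (h_R\circ h_S)(U)$ as definitional and establishes the other equality through the chain $x \in (h_R\circ h_S)(U)$ iff $U \in (h_R\circ h_S)^{-1}[\psi(x)]$ iff $(\forall z\in Z)(\psi(x)\, R_{(h_R\circ h_S)}\,\psi(z) \Rightarrow z\in U)$ iff $x\in\Box_{(S\ast R)}U$, where the middle equivalence encapsulates exactly the optimal-filter-lemma and $\psi$-surjectivity argument you spell out for your reverse inclusion. The only cosmetic difference is that the paper presents this as a single chain of equivalences rather than two inclusions with the nontrivial one proved by contradiction.
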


\begin{proof}
Let $U \in Z^{\ast}$. Clearly $\Box_R \Box_S U = (h_R \circ h_S)(U)$. On the other hand, for $x \in U$, we have:

\begin{center}
\begin{tabular}{lll}
$x \in (h_R \circ h_S)(U)$ & iff & $(h_R \circ h_S)(U) \in \psi(x)$\\
& iff & $U \in (h_R \circ h_S)^{-1}[\psi(x)]$\\
& iff & $(\forall z \in Z)(\psi(x) R_{(h_R \circ h_S)} \psi(z) \Rightarrow z \in U)$\\
& iff & $(\forall z \in Z)(x (S{\ast}R) z \Rightarrow z \in U)$\\
& iff & $(S{\ast}R)[x] \subseteq U$\\
& iff & $x \in \Box_{(S{\ast}R)}U$.
\end{tabular}
\end{center}

\noindent Therefore, $(h_R \circ h_S)(U) = \Box_{(S{\ast}R)}U$.
\end{proof}

\begin{lemma} \label{catGPS-2}
If $X, Y,$ and $Z$ are generalized Priestley spaces, and $R\subseteq X\times Y$ and $S\subseteq Y\times Z$ are generalized Priestley morphisms, then $S {\ast} R\subseteq X\times Z$ is a generalized Priestley morphism.
\end{lemma}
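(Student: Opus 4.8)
The plan is to check directly the two conditions of Definition \ref{definition} for the composite relation $S{\ast}R$, relying entirely on two facts already in place: the identity $\Box_{(S{\ast}R)}U = \Box_R\Box_S U$ from Lemma \ref{catGPS-1}, and the membership description $x(S{\ast}R)z$ iff $(\forall U\in Z^{\ast})(x\in\Box_R\Box_S U\Rightarrow z\in U)$ recorded just before that lemma. Both verifications then reduce to unwinding these two characterizations.

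For condition (2) I would argue by composing the two $\Box$ operators. Fix $U\in Z^{\ast}$. Since $S$ is a generalized Priestley morphism, condition (2) of Definition \ref{definition} gives $\Box_S U\in Y^{\ast}$; applying the same condition to $R$ with the admissible clopen $\Box_S U\in Y^{\ast}$ gives $\Box_R(\Box_S U)\in X^{\ast}$. By Lemma \ref{catGPS-1} this element is exactly $\Box_{(S{\ast}R)}U$, so $\Box_{(S{\ast}R)}U\in X^{\ast}$, which is precisely condition (2) for $S{\ast}R$.

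For condition (1), suppose that $x(S{\ast}R)z$ fails. Reading off the negation of the membership description, there is some $U\in Z^{\ast}$ with $x\in\Box_R\Box_S U$ and $z\notin U$; I claim this $U$ is the witness required by condition (1). By Lemma \ref{catGPS-1}, $x\in\Box_R\Box_S U=\Box_{(S{\ast}R)}U$. Now for any $w$ with $x(S{\ast}R)w$, the membership description applied to this very $U$ forces $w\in U$; hence $(S{\ast}R)[x]\subseteq U$, while $z\notin U$, as needed.

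I expect no genuine obstacle here, since the substantive computation was already absorbed into Lemma \ref{catGPS-1}; the argument is purely a matter of correctly reading the two characterizations. The only point that needs a moment of care is condition (1): one must notice that the single set $U$ extracted from the failure of $x(S{\ast}R)z$ does double duty, both excluding $z$ and dominating the entire set $(S{\ast}R)[x]$, which is automatic because membership in $(S{\ast}R)[x]$ is defined by the implication quantified over all admissible clopens such as $U$.
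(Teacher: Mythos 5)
Your proposal is correct and matches the paper's own proof essentially step for step: both verifications reduce to Lemma \ref{catGPS-1} together with the quantified membership description of $S\ast R$, with condition (2) obtained by composing the box operators through $Y^{\ast}$ and condition (1) obtained by extracting the witnessing $U\in Z^{\ast}$ from the failure of $x(S\ast R)z$ and noting it contains $(S\ast R)[x]$ while omitting $z$.
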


\begin{proof}
To see that condition (1) of Definition \ref{definition} is satisfied, let $(x,y)\notin S {\ast} R$. Then there is $U \in Z^{*}$ such that $x \in  \Box_{R}\Box_{S}U$ and $z \not \in U$. By Lemma \ref{catGPS-1}, this means that $(S {\ast} R)[x] \subseteq U$ and $z\notin U$. To see that condition (2) of Definition \ref{definition} is also satisfied, let $U \in Z^{\ast}$. Then $(h_{R}\circ h_{S})(U) \in X^{\ast}$, which, by Lemma \ref{catGPS-1}, means that $\Box_{(S{\ast}R)}U \in X^{\ast}$. Thus, $S {\ast} R$ is a generalized Priestley morphism.
\end{proof}

\begin{lemma}\label{catGPS-3}
Let  $X, Y, Z,$ and $W$ be generalized Priestley spaces, and $R\subseteq X\times Y$, $S\subseteq Y\times Z$, and $T\subseteq Z\times W$ be generalized  Priestley morphisms. Then $$T {\ast} (S {\ast} R) = (T {\ast} S) {\ast} R.$$
\end{lemma}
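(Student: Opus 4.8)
The plan is to reduce the equality of the two relations on $X \times W$ to the equality of their induced box-operators on $W^*$, exploiting the characterization of $\ast$ recorded in the Note following the definition of composition. That Note says: for generalized Priestley morphisms between generalized Priestley spaces, say $A$ with the appropriate source and target and $B$ composable with it, one has $x\,(B \ast A)\,w$ iff $(\forall U)(x \in \Box_A \Box_B U \Rightarrow w \in U)$; in other words, the composite relation is completely determined by the operator $U \mapsto \Box_A \Box_B U$. By Lemma \ref{catGPS-2} both $S \ast R$ and $T \ast S$ are again generalized Priestley morphisms, so this characterization is legitimately applicable to the iterated composites $T \ast (S \ast R)$ and $(T \ast S) \ast R$. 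Thus it suffices to show that these two relations induce the \emph{same} box-operator on $W^*$.

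First I would unfold the left-hand side. Applying the Note with the inner morphism $S \ast R \subseteq X \times Z$ and the outer morphism $T \subseteq Z \times W$, for each $U \in W^*$ we obtain $x\,(T \ast (S \ast R))\,w$ iff $(\forall U \in W^*)(x \in \Box_{S \ast R}(\Box_T U) \Rightarrow w \in U)$. Here $\Box_T U \in Z^*$ by condition (2) of Definition \ref{definition}, so Lemma \ref{catGPS-1} (applied to $R$ and $S$, with the admissible clopen upset $\Box_T U$) gives $\Box_{S \ast R}(\Box_T U) = \Box_R \Box_S \Box_T U$. Hence $x\,(T \ast (S \ast R))\,w$ iff $(\forall U \in W^*)(x \in \Box_R \Box_S \Box_T U \Rightarrow w \in U)$.

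Next I would unfold the right-hand side symmetrically. Applying the Note with the inner morphism $R \subseteq X \times Y$ and the outer morphism $T \ast S \subseteq Y \times W$, for each $U \in W^*$ we obtain $x\,((T \ast S) \ast R)\,w$ iff $(\forall U \in W^*)(x \in \Box_R(\Box_{T \ast S} U) \Rightarrow w \in U)$. By Lemma \ref{catGPS-1} applied to $S$ and $T$ we have $\Box_{T \ast S} U = \Box_S \Box_T U$, so $x\,((T \ast S) \ast R)\,w$ iff $(\forall U \in W^*)(x \in \Box_R \Box_S \Box_T U \Rightarrow w \in U)$. Since the two conditions coincide verbatim for all $x \in X$ and $w \in W$, we conclude $T \ast (S \ast R) = (T \ast S) \ast R$.

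The only point requiring care is bookkeeping on where the intermediate sets live, so that each appeal to Lemma \ref{catGPS-1} is valid: $\Box_T U \in Z^*$ and $\Box_S \Box_T U \in Y^*$, both by iterated use of condition (2) of Definition \ref{definition}. Granting that, the argument carries no genuine difficulty — the real content is Lemma \ref{catGPS-1}, which identifies the box-operator of a composite with the composite of the box-operators, and once this is in hand associativity of $\ast$ is nothing more than the associativity of ordinary function composition, both sides reducing to the single operator $U \mapsto \Box_R \Box_S \Box_T U$ on $W^*$.
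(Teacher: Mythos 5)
Your proposal is correct and follows essentially the same route as the paper: both unfold $T \ast (S \ast R)$ and $(T \ast S) \ast R$ via the defining characterization $x\,(B \ast A)\,w \Leftrightarrow (\forall U)(x \in \Box_A\Box_B U \Rightarrow w \in U)$ and then use Lemma \ref{catGPS-1} to collapse each side to the single condition $(\forall U \in W^*)(x \in \Box_R\Box_S\Box_T U \Rightarrow w \in U)$. Your explicit bookkeeping (invoking Lemma \ref{catGPS-2} and condition (2) of Definition \ref{definition} to justify that the intermediate sets are admissible) is left implicit in the paper but is exactly the right justification.
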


\begin{proof}
Let $x \in X$ and $w \in W$. Then, by Lemma \ref{catGPS-1}, we have:

\begin{center}
\begin{tabular}{lll}
$x [T {\ast} (S {\ast} R)] w$ & iff & $(\forall U \in W^*) (x \in \Box_{(S {\ast} R)}\Box_{T}U \Rightarrow w \in U)$\\
& iff & $(\forall U \in W^*) (x \in \Box_{R} \Box_{S}\Box_{T} U \Rightarrow w \in U)$\\
& iff & $(\forall U \in W^*) (x \in \Box_{R} \Box_{(T {\ast} S)}U \Rightarrow w \in U)$\\
& iff & $x [(T {\ast} S) {\ast} R] w$.
\end{tabular}
\end{center}

\end{proof}

\begin{lemma}\label{catGPS-4}
Let $X$ be a generalized Priestely space. Then:
\begin{enumerate}
\item $\leq_{X}\subseteq X\times X$ is a generalized Priestely  morphism.
\item If $R\subseteq X\times Y$ is a generalized Priestely morphism, then $\leq_X {\circ}\ R = R$.
\item If $S\subseteq Y\times X$ is a generalized Priestely morphism, then $S {\circ} \leq_X = S$.
\end{enumerate}
\end{lemma}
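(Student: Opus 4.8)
The plan is to handle the three parts separately, in each case reducing to material already in hand and to the defining axioms of a generalized Priestley space. Throughout I read composition left to right, exactly as in Lemma \ref{lemma-g-morphisms} and the remark following it, so that $x({\leq_X} \circ R)z$ means there is $y$ with $x \leq_X y$ and $yRz$, and $y(S \circ {\leq_X})x$ means there is $w$ with $ySw$ and $w \leq_X x$.

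For part (1) I would verify the two clauses of Definition \ref{definition} directly for the relation ${\leq_X} \subseteq X \times X$. Note first that ${\leq_X}[x] = {\uparrow}x$. For clause (1), suppose $x \not\leq_X y$; by condition (5) of Definition \ref{g-Priestley} there is $U \in X^*$ with $x \in U$ and $y \notin U$, and since every member of $X^*$ is an upset, $x \in U$ yields ${\uparrow}x \subseteq U$, i.e. ${\leq_X}[x] \subseteq U$, which is what clause (1) demands. For clause (2), since $U$ is an upset we have $\Box_{\leq_X} U = \{x : {\uparrow}x \subseteq U\} = U$, so $\Box_{\leq_X} U = U \in X^*$.

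For parts (2) and (3) each direction is one short step. The inclusion ${\leq_X} \circ R \subseteq R$ is exactly Lemma \ref{lemma-g-morphisms}(1), and $S \circ {\leq_X} \subseteq S$ is Lemma \ref{lemma-g-morphisms}(2) applied with $S$ in the role of $R$. For the reverse inclusions I would invoke reflexivity of the partial order $\leq_X$: if $xRz$ then $x \leq_X x$ and $xRz$ give $x({\leq_X} \circ R)z$, so $R \subseteq {\leq_X} \circ R$; and if $ySx$ then $ySx$ and $x \leq_X x$ give $y(S \circ {\leq_X})x$, so $S \subseteq S \circ {\leq_X}$.

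There is no genuine obstacle here; the only thing demanding attention is bookkeeping — keeping the left-to-right composition convention fixed, and remembering that membership in $X^*$ carries the upset property, which is what makes both $\Box_{\leq_X} U = U$ and the passage from $x \in U$ to ${\uparrow}x \subseteq U$ work. Together the three parts exhibit $\leq_X$ as a generalized Priestley morphism that is a two-sided unit for ordinary composition with such morphisms, which is what will make $\leq_X$ the identity on $X$ in the category $\mathsf{GPS}$.
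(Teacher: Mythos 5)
Your proof is correct and takes essentially the same route as the paper's: part (1) is checked directly against Definition \ref{definition} using condition (5) of Definition \ref{g-Priestley} and the fact that members of $X^*$ are upsets (which gives both ${\uparrow}x \subseteq U$ and $\Box_{\leq_X}U = U$), while parts (2) and (3) combine Lemma \ref{lemma-g-morphisms} with reflexivity of $\leq_X$. The paper compresses this into a single sentence; your write-up just supplies the details it leaves implicit.
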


\begin{proof}
(1) follows from the definition of a generalized Priestely space, while (2) and (3) follow from Lemma \ref{lemma-g-morphisms} and the reflexivity of $\leq_X$.
\end{proof}

As an immediate consequence of Lemmas \ref{catGPS-1}, \ref{catGPS-2}, \ref{catGPS-3}, and \ref{catGPS-4}, we obtain that generalized Priestley spaces and generalized Priestley morphisms form a category, in which $\ast$ is the composition of two morphisms and $\leq_X$ is the identity morphism for each object $X$. We denote this category by $\mathsf{GPS}$. Let also $\mathsf{BDM}$ denote the category of bounded distributive meet semi-lattices and semi-lattice homomorphisms preserving top.

We show that $\mathsf{BDM}$ is dually equivalent to $\mathsf{GPS}$. Define two functors $(-)_{*}:\mathsf{BDM}\to \mathsf{GPS}$ and $(-)^{*}:\mathsf{GPS}\to \mathsf{BDM}$ as follows. For a bounded distributive meet semi-lattice $L$, set $L_{*} = \la L_{*}, \tau, \subseteq,
L_+\ra$, and for a meet semi-lattice homomorphism $h$ preserving top, set $h_{*} = R_h$; for a generalized Priestley space $X$, let $X^{*}$ be the bounded distributive meet semi-lattice of admissible clopen upsets of $X$, and for a generalized Priestley morphism $R$, let $R^*=h_{R}$.

In order to prove that the functors $(-)_{*}$ and $(-)^{*}$ establish the dual equivalence of $\mathsf{BDM}$ and $\mathsf{GPS}$, we define the natural transformations from the identity functor $\mathrm{id}_{\mathsf{BDM}}:\mathsf{BDM}\to\mathsf{BDM}$ to the functor ${(-)_{*}}^{*}:\mathsf{BDM}\to \mathsf{BDM}$ and from the identity functor $\mathrm{id}_{\mathsf{GPS}}:\mathsf{GPS}\to\mathsf{GPS}$ to the functor ${(-)^{*}}_{*}:\mathsf{GPS}\to \mathsf{GPS}$.

The first natural transformation associates with each object $L$ of $\mathsf{BDM}$ the isomorphism $\phi_L:L \to {L_{*}}^{*}$; and the second natural transformation associates with each object $X$ of $\mathsf{GPS}$ the generalized Priestley morphism $R_{\varepsilon_X} \subseteq X \times {X^{*}}_{*}$ given by $$x R_{\varepsilon_X} \varepsilon(y) \ \ \text{ iff  } \ \ \varepsilon_X(x) \subseteq  \varepsilon_X(y)$$ for each $x, y \in X$.

\begin{theorem}\label{theorem-duality}
The functors $(-)_{*}$ and $(-)^{*}$ establish the dual equivalence of $\mathsf{BDM}$ and $\mathsf{GPS}$.
\end{theorem}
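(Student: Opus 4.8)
The plan is to establish the dual equivalence in the standard way, by checking that $(-)_{*}$ and $(-)^{*}$ are well-defined contravariant functors and that the families $\phi_L$ and $R_{\varepsilon_X}$ are components of natural isomorphisms $\mathrm{id}_{\mathsf{BDM}}\Rightarrow{(-)_{*}}^{*}$ and $\mathrm{id}_{\mathsf{GPS}}\Rightarrow{(-)^{*}}_{*}$. Well-definedness on objects is already available: $L_{*}$ is a generalized Priestley space by Proposition~\ref{proposition6}, and $X^{*}$ is a bounded distributive meet semi-lattice by Proposition~\ref{proposition7}. On morphisms, $R_h$ satisfies the two clauses of Definition~\ref{definition} --- clause (1) is Proposition~\ref{proposition9}(3), and clause (2) holds since by Proposition~\ref{proposition9}(4) we have $\Box_{R_h}\phi(a)=\phi(h(a))\in K_{*}^{*}$ for each $\phi(a)\in L_{*}^{*}$ --- while $h_R$ is a top-preserving meet semi-lattice homomorphism by Lemma~\ref{lemma5}.

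For functoriality of $(-)^{*}$ I would invoke Lemma~\ref{catGPS-1}, which gives $h_{S{\ast}R}(U)=\Box_{S{\ast}R}U=\Box_{R}\Box_{S}U=(h_R\circ h_S)(U)$, so that ${(S{\ast}R)}^{*}=R^{*}\circ S^{*}$; and $(\leq_X)^{*}=\mathrm{id}_{X^{*}}$ because $\Box_{\leq_X}U=U$ for every upset $U\in X^{*}$. For $(-)_{*}$ the identity law is immediate, since $x\,R_{\mathrm{id}_L}\,y$ iff $x\subseteq y$. The composition law $R_{h'\circ h}=R_h{\ast}R_{h'}$ needs one auxiliary observation: a generalized Priestley morphism $R\subseteq X\times Y$ is determined by the assignment $U\mapsto\Box_{R}U$ on $Y^{*}$. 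Indeed, if $\Box_{R}U=\Box_{R'}U$ for all $U\in Y^{*}$ and $x\,R\,y$ while $x$ is not $R'$-related to $y$, then clause (1) of Definition~\ref{definition} yields $U\in Y^{*}$ with $R'[x]\subseteq U$ and $y\notin U$; then $x\in\Box_{R'}U=\Box_{R}U$ forces $y\in R[x]\subseteq U$, a contradiction, so $R=R'$. Granting this, $R_h{\ast}R_{h'}$ and $R_{h'\circ h}$ coincide because, combining Lemma~\ref{catGPS-1} with Proposition~\ref{proposition9}(4), both carry $\phi(a)\in L_{*}^{*}$ to $\phi((h'\circ h)(a))$ under $\Box$.

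For the first natural isomorphism, each $\phi_L\colon L\to{L_{*}}^{*}$ is an isomorphism by the Representation Theorem~\ref{proposition8}, and naturality is precisely Proposition~\ref{h-Rh}, which says $h_{R_h}(\phi_L(a))=\phi_K(h(a))$, i.e. ${(h_{*})}^{*}\circ\phi_L=\phi_K\circ h$. For the second, Theorem~\ref{homeom} shows that $\varepsilon_X$ is an order-isomorphism and homeomorphism with $\varepsilon_X[X_0]={X^{*}}_{+}$; from this one checks that $R_{\varepsilon_X}$ is a generalized Priestley morphism, with $\mathsf{GPS}$-inverse the relation $\{(z,x):z\subseteq\varepsilon_X(x)\}$, the two composites reducing to the identity morphisms $\subseteq$ via the $\Box$-characterization of $\ast$. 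Naturality of $R_{\varepsilon}$ is Proposition~\ref{R-hR} read relationally: the equivalence $x\,R\,y$ iff $\varepsilon_X(x)\,R_{h_R}\,\varepsilon_Y(y)$ is exactly the statement that $R_{\varepsilon_Y}{\ast}R=R_{h_R}{\ast}R_{\varepsilon_X}$.

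I expect the main obstacles to be the two places where the nonstandard composition $\ast$ intervenes: proving $R_{h'\circ h}=R_h{\ast}R_{h'}$, which is why the determination-by-$\Box$ observation is needed, since $\ast$ is not ordinary relational composition; and verifying that $R_{\varepsilon_X}$ is genuinely invertible in $\mathsf{GPS}$, i.e. that both $\ast$-composites of $R_{\varepsilon_X}$ with its candidate inverse collapse to $\subseteq$, rather than merely that $\varepsilon_X$ is a bijection. Everything else reduces to the cited results.
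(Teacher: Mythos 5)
Your proposal is correct and follows essentially the same route as the paper's own proof: well-definedness via Propositions \ref{proposition6}, \ref{proposition9}, \ref{proposition7} and Lemma \ref{lemma5}, and then the two natural isomorphisms supplied by Theorems \ref{proposition8} and \ref{homeom} together with Propositions \ref{h-Rh} and \ref{R-hR}. The only difference is that you explicitly verify what the paper leaves implicit --- the functoriality laws (in particular $R_{h'\circ h}=R_h\ast R_{h'}$ via your determination-by-$\Box$ observation) and the invertibility of $R_{\varepsilon_X}$ in $\mathsf{GPS}$ --- and these verifications are sound.
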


\begin{proof}
It follows from Propositions \ref{proposition6} and \ref{proposition9} that $(-)_{*}$ is well-defined. Proposition \ref{proposition7} and Lemma \ref{lemma5} imply that $(-)^{*}$ is well-defined. Now apply Theorems \ref{proposition8} and \ref{homeom} and Propositions \ref{h-Rh} and \ref{R-hR}.
\end{proof}

Now we turn our attention to meet semi-lattice homomorphisms preserving bottom and to sup-homomorphisms.

\begin{lemma} \label{tot-fun}
Let $L$ and $K$ be bounded distributive meet semi-lattices and let $h: L \to K$ be a meet semi-lattice homomorphism preserving top. Then:
\begin{enumerate}
\item $h$ preserves bottom iff ${R_h}^{-1}[L_*] = K_*$.
\item $h$ is a sup-homomorphism iff $R_{h}[x]$ has a least element for each $x \in K_{*}$.
\end{enumerate}
\end{lemma}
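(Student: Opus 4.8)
The plan is to reformulate both equivalences in terms of optimal filters and F-ideals and then read them off from Proposition \ref{proposition1} and the Optimal Filter Lemma. The basic objects are $R_h[x]=\{y\in L_*:h^{-1}(x)\subseteq y\}$ and $R_h^{-1}[L_*]=\{x\in K_*:R_h[x]\neq\emptyset\}$, and the recurring fact is that $h^{-1}(x)$ is a filter of $L$ for each $x\in K_*$, proper exactly when $h(\bot)\notin x$ (since a filter is improper iff it contains $\bot$, and $\bot\in h^{-1}(x)$ iff $h(\bot)\in x$).

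For (1) I would first note that $R_h[x]\neq\emptyset$ iff $h^{-1}(x)$ is proper: if $h^{-1}(x)$ is proper then the corollary to the Optimal Filter Lemma (every proper filter is the intersection of the optimal filters containing it) supplies some $y\in L_*$ above it, while $h^{-1}(x)=L$ cannot be contained in any optimal (hence proper) filter. By the remark above this gives $R_h^{-1}[L_*]=\{x\in K_*:h(\bot)\notin x\}=K_*-\phi(h(\bot))$. Since the Stone map $\phi$ of $K$ is injective and $\phi(\bot_K)=\emptyset$, the equality $R_h^{-1}[L_*]=K_*$ holds iff $\phi(h(\bot))=\emptyset=\phi(\bot_K)$ iff $h(\bot)=\bot_K$, i.e.\ iff $h$ preserves bottom.

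For (2) the crucial reduction is that $R_h[x]$ has a least element iff $h^{-1}(x)$ is itself an optimal filter of $L$. If $h^{-1}(x)$ is optimal it belongs to $R_h[x]$ and sits below every member, so it is least; conversely, a least element $y_0$ of $R_h[x]$ forces $R_h[x]\neq\emptyset$, hence $h^{-1}(x)$ proper, whence $y_0=\bigcap R_h[x]=h^{-1}(x)$ by the same corollary, so $h^{-1}(x)$ is optimal (this also rules out the improper case $h^{-1}(x)=L$, where $R_h[x]=\emptyset$ has no least element). By Proposition \ref{proposition1}, $h^{-1}(x)$ is optimal iff $L-h^{-1}(x)$ is an F-ideal of $L$. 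Hence the dual condition in (2) is exactly that $L-h^{-1}(x)$ is an F-ideal for every $x\in K_*$, and it remains to identify this with $h$ being a sup-homomorphism.

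Assuming the defining property of a sup-homomorphism — that $\bigcap_{i=1}^n{\uparrow}b_i\subseteq{\uparrow}a$ in $L$ implies $\bigcap_{i=1}^n{\uparrow}h(b_i)\subseteq{\uparrow}h(a)$ in $K$, and in particular (the case $n=0$) that $h$ preserves bottom — the forward implication runs as follows: for $b_1,\dots,b_n\in L-h^{-1}(x)$ and $c$ with $\bigcap{\uparrow}b_i\subseteq{\uparrow}c$ we obtain $\bigcap{\uparrow}h(b_i)\subseteq{\uparrow}h(c)$, and since $x$ is optimal, $K-x$ is an F-ideal (Proposition \ref{proposition1}) containing each $h(b_i)$, so $h(c)\in K-x$, i.e.\ $c\in L-h^{-1}(x)$; bottom preservation gives $\bot\in L-h^{-1}(x)$, so this set is a nonempty proper F-ideal and $h^{-1}(x)$ is optimal. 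For the converse, fix $a,b_1,\dots,b_n$ with $\bigcap{\uparrow}b_i\subseteq{\uparrow}a$; by Lemma \ref{basic} it suffices to show $\phi(h(a))\subseteq\bigcup\phi(h(b_i))$, so take $x\in K_*$ with $h(a)\in x$ and note that if every $b_i$ avoided $h^{-1}(x)$ then the F-ideal $L-h^{-1}(x)$ would force $a\in L-h^{-1}(x)$, contradicting $a\in h^{-1}(x)$; hence some $h(b_i)\in x$. The main obstacle is the reduction step in the third paragraph: one must argue carefully that a least element of $R_h[x]$ is forced to be $h^{-1}(x)$ itself (so the improper case is genuinely excluded) and keep track of bottom preservation, as these are precisely the points where the F-ideal description of optimality and the injectivity of $\phi$ are indispensable.
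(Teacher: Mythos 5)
Your proof is correct, and its skeleton coincides with the paper's, but it is more self-contained in one essential respect. For (1), the paper runs the short chain $h(\bot)=\bot$ iff $\phi(h(\bot))=\phi(\bot)=\emptyset$ iff $h_{R_h}(\emptyset)=\emptyset$ iff $R_h^{-1}[L_*]=K_*$, using Proposition \ref{h-Rh} and injectivity of $\phi$; your computation $R_h^{-1}[L_*]=K_*-\phi(h(\bot))$, obtained by showing $R_h[x]\neq\emptyset$ iff $h^{-1}(x)$ is proper iff $h(\bot)\notin x$, establishes the same equivalence directly from the optimal filter lemma, so the two arguments differ only in packaging. For (2), your reduction ($R_h[x]$ has a least element iff $h^{-1}(x)\in L_*$, the least element being forced to equal $\bigcap R_h[x]=h^{-1}(x)$) is exactly the paper's reduction; indeed you are more careful, since the paper invokes the intersection formula without remarking that the existence of a least element already guarantees that $h^{-1}(x)$ is a proper filter. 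The genuine divergence comes after the reduction: the paper finishes by citing Proposition \ref{prop-sup} of Section 5, which asserts that $h$ is a sup-homomorphism iff $h^{-1}(x)\in L_*$ for all $x\in K_*$, whereas you re-prove that characterization from scratch, translating optimality of $h^{-1}(x)$ into the F-ideal condition on $L-h^{-1}(x)$ via Proposition \ref{proposition1} and closing the loop with Lemma \ref{basic}. This buys independence from Section 5 (which you could not see), at the cost of having to posit the definition of sup-homomorphism; your argument in fact shows your posited definition is equivalent to the paper's, since both are equivalent to the same dual condition. One small point to make explicit: in the converse direction of your sup-homomorphism argument you quantify over $n\geq 1$, so the empty-family instance, i.e.\ preservation of $\bot$, deserves a separate sentence. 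It follows because every $h^{-1}(x)$ is optimal, hence proper, so $\phi(h(\bot))=\emptyset=\phi(\bot)$ and injectivity of $\phi$ gives $h(\bot)=\bot$ --- the same device you already used in part (1).
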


\begin{proof}
(1) By Proposition \ref{h-Rh}, we have:

\begin{center}
\begin{tabular}{lll}
$h$ preserves bottom & iff & $h(\bot)=\bot$\\
& iff & $\phi(h(\bot))=\phi(\bot)$\\
& iff & $h_{R_h}(\phi(\bot))=\phi(\bot)$\\
& iff & $h_{R_h}(\emptyset)=\emptyset$\\
& iff & $R_h^{-1}[L_*]=K_*$.
\end{tabular}
\end{center}

(2) For each $x \in K_{*}$, we show that $R_{h}[x]$ has a least element iff $h^{-1}[x] \in L_{*}$. If $h^{-1}[x] \in L_{*}$, then it is clear that $h^{-1}[x]$ is the least element of $R_{h}[x]$. Conversely, let $y$ be the least element of $R_{h}[x]$. By the optimal filter lemma, $h^{-1}[x] = \bigcap\{z \in L_{*}: h^{-1}[x] \subseteq z\} = \bigcap R_{h}[x]=y$. Therefore, $h^{-1}[x] \in L_{*}$. By Proposition \ref{prop-sup}, $h$ is a sup-homomorphism iff $h^{-1}[x]\in L_*$ for each $x\in K_*$. Thus, $h$ is a sup-homomorphism iff $R_{h}[x]$ has a least element for each $x \in K_{*}$.
\end{proof}

\begin{definition}
{\rm Let $X$ and $Y$ be generalized Priestley spaces and let $R\subseteq X\times Y$ be a generalized Priestley morphism.
\begin{enumerate}
\item We call $R$ \emph{total} if $R^{-1}[Y] = X$.
\item We call $R$ \emph{functional} if for each $x \in X$ there is $y \in Y$ such that $R[x] = {\uparrow}y$.
\end{enumerate}
}
\end{definition}

Obviously $R$ is functional iff $R[x]$ has a least element. It is also clear that each functional generalized Priestley morphism is total. As an immediate consequence of Theorem \ref{theorem-duality} and Lemma \ref{tot-fun}, we obtain:

\begin{corollary} \label{cor-bot-sup}
Let $X$ and $Y$ be generalized Priestley spaces and $R\subseteq X\times Y$ be a generalized Priestley morphism. Then:
\begin{enumerate}
\item $h_R$ preserves bottom iff $R$ is total.
\item $h_R$ is a sup-homomorphism iff $R$ is functional.
\end{enumerate}
\end{corollary}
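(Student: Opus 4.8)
The plan is to apply Lemma \ref{tot-fun} to the homomorphism $h_R \colon Y^* \to X^*$, which is a meet semi-lattice homomorphism preserving top by Lemma \ref{lemma5}, and then transport the two resulting conditions back from ${X^{*}}_{*}$ and ${Y^{*}}_{*}$ to $X$ and $Y$ using the order-isomorphism and homeomorphism $\psi$ of Theorem \ref{homeom} together with Proposition \ref{R-hR}. The pivotal observation is that, by Proposition \ref{R-hR}, we have $x R y$ iff $\psi(x) R_{h_R} \psi(y)$ for all $x \in X$ and $y \in Y$; thus the bijections $\psi$ carry the relation $R_{h_R} \subseteq {X^{*}}_{*} \times {Y^{*}}_{*}$ exactly onto $R \subseteq X \times Y$. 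Together with Theorem \ref{theorem-duality}, this reduces everything to unwinding Lemma \ref{tot-fun} through these identifications.

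For (1), Lemma \ref{tot-fun}(1) applied to $h_R$ states that $h_R$ preserves bottom iff $R_{h_R}^{-1}[{Y^{*}}_{*}] = {X^{*}}_{*}$. Using the correspondence above and the fact that $\psi \colon X \to {X^{*}}_{*}$ is a bijection, I would verify that $R_{h_R}^{-1}[{Y^{*}}_{*}] = \psi[R^{-1}[Y]]$ while ${X^{*}}_{*} = \psi[X]$. Since $\psi$ is injective, these two sets coincide iff $R^{-1}[Y] = X$, which is precisely the definition of $R$ being total. Hence $h_R$ preserves bottom iff $R$ is total.

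For (2), Lemma \ref{tot-fun}(2) applied to $h_R$ states that $h_R$ is a sup-homomorphism iff $R_{h_R}[x]$ has a least element for each $x \in {X^{*}}_{*}$. Because $\psi \colon Y \to {Y^{*}}_{*}$ is an order-isomorphism and $R_{h_R}[\psi(x)] = \psi[R[x]]$ by Proposition \ref{R-hR}, and an order-isomorphism preserves and reflects least elements, the set $R_{h_R}[\psi(x)]$ has a least element iff $R[x]$ does. Since every point of ${X^{*}}_{*}$ is of the form $\psi(x)$, and since $R$ is functional iff $R[x]$ has a least element for each $x \in X$ (as noted just after the definition), it follows that $h_R$ is a sup-homomorphism iff $R$ is functional.

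The individual calculations are routine; the only point demanding attention is the bookkeeping of the two copies of the duality, namely the $\psi$-identifications of $X$ with ${X^{*}}_{*}$ and of $Y$ with ${Y^{*}}_{*}$, and the verification of the two set equalities $R_{h_R}[\psi(x)] = \psi[R[x]]$ and $R_{h_R}^{-1}[{Y^{*}}_{*}] = \psi[R^{-1}[Y]]$. Both are immediate from Proposition \ref{R-hR} and the bijectivity of $\psi$, and it is exactly this transport that makes the corollary an immediate consequence of Theorem \ref{theorem-duality} and Lemma \ref{tot-fun}.
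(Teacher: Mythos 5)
Your proof is correct and follows essentially the same route as the paper: the paper derives this corollary precisely as an immediate consequence of Theorem \ref{theorem-duality} and Lemma \ref{tot-fun}, and your argument simply makes explicit the transport of Lemma \ref{tot-fun} applied to $h_R\colon Y^*\to X^*$ back along the isomorphisms $\psi$ via Proposition \ref{R-hR}. The bookkeeping identities $R_{h_R}[\psi(x)]=\psi[R[x]]$ and $R_{h_R}^{-1}[{Y^*}_*]=\psi[R^{-1}[Y]]$ that you verify are exactly the details the paper leaves implicit.
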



In particular, it follows that each sup-homomorphism preserves bottom.

\begin{lemma} \label{tot-fun-comp}
Let $X,Y,$ and $Z$ be generalized Priestley spaces, and $R\subseteq X\times Y$ and $S\subseteq Y\times Z$ be generalized Priestley morphisms.
\begin{enumerate}
\item $S {\ast} R$ is total whenever $R$ and $S$ are total.
\item $S {\ast} R$ is functional whenever $R$ and $S$ are functional.
\end{enumerate}
\end{lemma}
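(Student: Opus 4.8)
The plan is to reduce both claims to algebraic facts about the dual homomorphisms, exploiting the composition formula already available. The crucial observation is Lemma \ref{catGPS-1}: for every $U \in Z^*$ we have $\Box_{(S \ast R)} U = (h_R \circ h_S)(U)$, which says precisely that $h_{S \ast R} = h_R \circ h_S$ as maps $Z^* \to X^*$. Once this identification is in hand, I can transport the hypotheses and the desired conclusions across Corollary \ref{cor-bot-sup}, which characterizes totality and functionality of a generalized Priestley morphism in terms of bottom-preservation and the sup-homomorphism property of its dual. Note also that $S \ast R$ is itself a generalized Priestley morphism by Lemma \ref{catGPS-2}, so Corollary \ref{cor-bot-sup} legitimately applies to it.

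For (1), I would argue as follows. Since $R$ and $S$ are total, Corollary \ref{cor-bot-sup}(1) gives that $h_R$ and $h_S$ preserve bottom. The bottom of each of $X^*, Y^*, Z^*$ is $\emptyset$, so $(h_R \circ h_S)(\emptyset) = h_R(h_S(\emptyset)) = h_R(\emptyset) = \emptyset$; that is, $h_{S \ast R} = h_R \circ h_S$ preserves bottom. Applying Corollary \ref{cor-bot-sup}(1) in the reverse direction then yields that $S \ast R$ is total.

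For (2), the same strategy applies with functionality in place of totality. By Corollary \ref{cor-bot-sup}(2), $R$ and $S$ functional means $h_R$ and $h_S$ are sup-homomorphisms, so it suffices to check that $h_R \circ h_S = h_{S \ast R}$ is again a sup-homomorphism and then conclude that $S \ast R$ is functional by Corollary \ref{cor-bot-sup}(2). The cleanest way to verify closure under composition is through Proposition \ref{prop-sup}: a map is a sup-homomorphism exactly when the inverse image of every optimal filter is optimal. Then $h_R^{-1}$ sends optimal filters of $X^*$ to optimal filters of $Y^*$, and $h_S^{-1}$ sends those to optimal filters of $Z^*$, so $(h_R \circ h_S)^{-1} = h_S^{-1} \circ h_R^{-1}$ preserves optimality, as required.

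I expect the only point requiring any care to be the closure of sup-homomorphisms under composition in step (2); everything else is a direct transcription through the duality dictionary of Corollary \ref{cor-bot-sup}. This closure is in any case forced by the fact that $\mathsf{BDM}^{\mathsf{S}}$ is a category, but I would prefer to record the short argument via Proposition \ref{prop-sup} to keep the proof self-contained.
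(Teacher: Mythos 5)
Your proposal is correct, and it is not circular: Corollary \ref{cor-bot-sup} and Lemma \ref{catGPS-1} both precede Lemma \ref{tot-fun-comp} in the paper, and neither depends on it. However, your route is genuinely different from the paper's. The paper argues directly on the space side: for (1) it picks, for a given $x\in X$, intermediate points $y\in Y$ and $z\in Z$ with $xRy$ and $ySz$ and verifies $x\,(S\ast R)\,z$ straight from the definition (if $x\in\Box_R\Box_S U$ then $y\in\Box_S U$, so $z\in U$); for (2) it takes $y,z$ with $R[x]={\uparrow}y$ and $S[y]={\uparrow}z$ and proves the two inclusions $(S\ast R)[x]={\uparrow}z$, using the separation property of $Z^*$ for the nontrivial one. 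You instead transport both statements through the duality: $h_{S\ast R}=h_R\circ h_S$ by Lemma \ref{catGPS-1}, bottom-preservation is trivially closed under composition, and sup-homomorphisms are closed under composition via the optimal-filter characterization of Proposition \ref{prop-sup}, after which Corollary \ref{cor-bot-sup} converts back. What each approach buys: yours is shorter and makes the conceptual point that totality and functionality are exactly the duals of algebraic properties stable under composition, but it leans on heavier machinery (Corollary \ref{cor-bot-sup} rests on the full duality Theorem \ref{theorem-duality}, and Proposition \ref{prop-sup} lives in Section 5); the paper's element chase is more elementary and self-contained, and in part (2) it exhibits the least element of $(S\ast R)[x]$ explicitly, which is precisely what feeds the subsequent Remark that $S\ast R=S\circ R$ for functional morphisms -- information your argument does not produce.
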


\begin{proof}
(1) Let $R$ and $S$ be total generalized Priestley morphisms and let $z \in Z$. Since $S$ and $R$ are total, there exist $y\in Y$ and $x \in X$ such that $y S z$ and $x R y$. We show that $x (S {\ast} R) z$. Let $U \in Z^{*}$ be such that $x \in \Box_{R}\Box_{S}U$. Then $y \in \Box_S U$, and so $z \in U$. Therefore, $x (S {\ast} R) z$, which implies that $S {\ast} R$ is total.

(2) Let $R$ and $S$ be functional generalized Priestley morphisms and let $x \in X$. Then there exist $y \in Y$ and $z\in Z$ such that $R[x] = {\uparrow}y$ and $S[y] = {\uparrow}z$. We show that $(S \ast R)[x] = {\uparrow}z$. Let $U \in Z^{\ast}$ and let $x \in \Box_{R}\Box_{S}U$. From $xRy$ it follows that $y \in \Box_{S}U$; and $y S z$ implies $z \in U$. Therefore, $x (S \ast R) z$. Thus, ${\uparrow}z\subseteq (S*R)[x]$. Conversely, let $x (S \ast R) u$. If $z \not \leq u$, then there exists $V \in Z^{\ast}$ such that $z \in V$ and $u \notin V$. Therefore, $x \notin \Box_{R}\Box_{S}V$. Thus, there exist $y'\in Y$ and $z'\in Z$ such that $xRy'$, $y'Sz'$, and $z' \notin V$. Since $R[x]={\uparrow}y$, we obtain $y \leq y'$, and so $y S z'$. From $S[y]={\uparrow}z$ it follows that $z \leq z'$. Therefore, $z' \in V$, a contradiction. We conclude that $u\in{\uparrow}z$, so $(S \ast R)[x] \subseteq {\uparrow} z$, and so $(S \ast R)[x] = {\uparrow} z$. Thus, $S \ast R$ is functional.
\end{proof}

\begin{remark}
Let $R$ and $S$ be functional generalized Priestley morphisms. Then it is easy to verify that $S\circ R$ is again a functional generalized Priestley morphism. Therefore, $S*R=S\circ R$.
\end{remark}

Let $\mathsf{GPS}^{\mathsf{T}}$ denote the category of generalized Priestley spaces and total generalized Priestley morphisms. It follows from Lemma \ref{tot-fun-comp}.1 that $\mathsf{GPS}^{\mathsf{T}}$ forms a category, which is obviously a proper subcategory of $\mathsf{GPS}$. Let also $\mathsf{GPS}^{\mathsf{F}}$ denote the category of generalized Priestley spaces and functional generalized Priestley morphisms. By Lemma \ref{tot-fun-comp}.2, $\mathsf{GPS}^{\mathsf{F}}$ forms a category, which is clearly a proper subcategory of $\mathsf{GPS}^{\mathsf{T}}$.

We let $\mathsf{BDM^{\bot}}$ denote the category of bounded distributive meet semi-lattices and bounded semi-lattice homomorphisms, and $\mathsf{BDM}^{\mathsf{S}}$ denote the category of bounded distributive meet semi-lattices and sup-homomorphisms. Similarly, we have that $\mathsf{BDM}^{\mathsf{S}}$ is a proper subcategory of $\mathsf{BDM^{\bot}}$, and that $\mathsf{BDM^{\bot}}$ is a proper subcategory of $\mathsf{BDM}$.
\begin{theorem}\label{theorem-duality-sup-hom}
\begin{enumerate}
\item[]
\item $\mathsf{BDM}^{\bot}$ is dually equivalent to $\mathsf{GPS}^{\mathsf{T}}$.
\item $\mathsf{BDM}^{\mathsf{S}}$ is dually equivalent to $\mathsf{GPS}^{\mathsf{F}}$.
\end{enumerate}
\end{theorem}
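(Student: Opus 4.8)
The plan is to derive both equivalences from the master dual equivalence of Theorem \ref{theorem-duality} by restriction, since all four categories here share their objects with either $\mathsf{BDM}$ or $\mathsf{GPS}$ and differ only by shrinking the morphism classes (these restricted classes are already known to be categories, by the remarks following Lemma \ref{tot-fun-comp}). Thus I need only check three things: that the functors $(-)_*$ and $(-)^*$ carry the distinguished morphisms of one subcategory into those of the other; that the hom-set bijection underlying Theorem \ref{theorem-duality} restricts to a bijection between the two distinguished morphism classes; and that the natural isomorphisms already produced in Theorem \ref{theorem-duality} still belong to (and are invertible in) the subcategories, so that they continue to witness the equivalence.

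For part (1), recall from Theorem \ref{theorem-duality} that $h \mapsto R_h$ and $R \mapsto h_R$ are mutually inverse assignments on the relevant hom-sets. By Lemma \ref{tot-fun}.1, if $h$ preserves $\bot$ then $R_h$ is total, and by Corollary \ref{cor-bot-sup}.1, if $R$ is total then $h_R$ preserves $\bot$. Since the two assignments are mutually inverse, their restrictions are mutually inverse bijections between the bounded homomorphisms $L\to K$ and the total morphisms $K_*\to L_*$. Hence $(-)_*$ sends $\mathsf{BDM}^{\bot}$-morphisms to $\mathsf{GPS}^{\mathsf{T}}$-morphisms and $(-)^*$ sends them back, and the restricted functors remain full and faithful on the (restricted) hom-sets while acting identically on objects.

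For part (2) the argument is word for word the same, replacing ``preserves $\bot$'' by ``is a sup-homomorphism'' and ``total'' by ``functional'', and invoking Lemma \ref{tot-fun}.2 and Corollary \ref{cor-bot-sup}.2 in place of Lemma \ref{tot-fun}.1 and Corollary \ref{cor-bot-sup}.1. Closure under composition of the functional morphisms is Lemma \ref{tot-fun-comp}.2, which guarantees that $\mathsf{GPS}^{\mathsf{F}}$ is a category and that $(-)^*$ is functorial on it.

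The one genuinely new point, and where I expect to spend the most care, is verifying that the two natural isomorphisms of Theorem \ref{theorem-duality} lie inside the subcategories. On the algebraic side each component is the bounded semi-lattice isomorphism $\phi_L\colon L\to {L_*}^*$; being an isomorphism it preserves $\bot$ and every existing join, so it is simultaneously a bounded homomorphism and a sup-homomorphism, and its inverse is too, so it is invertible in both $\mathsf{BDM}^{\bot}$ and $\mathsf{BDM}^{\mathsf{S}}$. On the spatial side the component is $R_{\varepsilon_X}$, whose defining map $\varepsilon_X$ is the order-isomorphism of Theorem \ref{homeom}; since $\varepsilon_X$ is a bijection, its defining clause gives $R_{\varepsilon_X}[x]={\uparrow}\varepsilon_X(x)$, which has least element $\varepsilon_X(x)$. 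Hence $R_{\varepsilon_X}$ is functional, a fortiori total, and invertible in both $\mathsf{GPS}^{\mathsf{F}}$ and $\mathsf{GPS}^{\mathsf{T}}$. Therefore the restricted functors, equipped with these same natural isomorphisms, establish the dual equivalences $\mathsf{BDM}^{\bot}\simeq(\mathsf{GPS}^{\mathsf{T}})^{\mathrm{op}}$ and $\mathsf{BDM}^{\mathsf{S}}\simeq(\mathsf{GPS}^{\mathsf{F}})^{\mathrm{op}}$, as claimed.
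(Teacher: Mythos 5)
Your proposal is correct and follows essentially the same route as the paper: restrict the functors $(-)_*$ and $(-)^*$ of Theorem \ref{theorem-duality}, use Lemma \ref{tot-fun} and Corollary \ref{cor-bot-sup} to see that bottom-preserving (resp.\ sup-) homomorphisms correspond exactly to total (resp.\ functional) generalized Priestley morphisms, and conclude from the master duality. Your explicit check that the natural isomorphisms $\phi_L$ and $R_{\varepsilon_X}$ live in the subcategories (the latter being functional since $R_{\varepsilon_X}[x]={\uparrow}\varepsilon_X(x)$) is a detail the paper leaves implicit in ``now apply Theorem \ref{theorem-duality},'' and is a worthwhile addition.
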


\begin{proof}
(1) Let $(-)_{*}:\mathsf{BDM}^{\bot}\to\mathsf{GPS}^{\mathsf{T}}$ and $(-)^{*}:\mathsf{GPS}^{\mathsf{T}}\to\mathsf{BDM}^{\bot}$ denote the restrictions of $(-)_*$ and $(-)^*$ to $\mathsf{BDM}^{\bot}$ and $\mathsf{GPS}^{\mathsf{T}}$, respectively. By Lemma \ref{tot-fun}.1 and Corollary \ref{cor-bot-sup}.1, whenever $h:L\to K$ preserves bottom, then $R_h$ is total, and conversely, whenever $R\subseteq X\times Y$ is total, then $h_R$ preserves bottom. Now apply Theorem \ref{theorem-duality}.

(2) Let $(-)_{*}:\mathsf{BDM}^{\mathsf{S}}\to\mathsf{GPS}^{\mathsf{F}}$ and $(-)^{*}:\mathsf{GPS}^{\mathsf{F}}\to\mathsf{BDM}^{\mathsf{S}}$ denote the restrictions of $(-)_*$ and $(-)^*$ to $\mathsf{BDM}^{\mathsf{S}}$ and $\mathsf{GPS}^{\mathsf{F}}$, respectively. By Lemma \ref{tot-fun}.2 and Corollary \ref{cor-bot-sup}.2, whenever $h:L\to K$ is a sup-homomorphism, then $R_h$ is functional, and conversely, whenever $R\subseteq X\times Y$ is functional, then $h_R$ is a sup-homomorphism. Now apply Theorem \ref{theorem-duality}.
\end{proof}

\subsection{The categories $\mathsf{GES}$, $\mathsf{GES}^{\mathsf{T}}$, and $\mathsf{GES}^{\mathsf{F}}$}

\begin{proposition}\label{prop-gem}
Let $L$ and $K$ be bounded implicative meet semi-lattices and let
$h: L \to K$ be an implicative meet semi-lattice homomorphism. Then
for each $x \in K_{*}$ and $y \in L_{+}$ we have $x R_{h}y$ implies
there exists $z \in K_{+}$ such that $x \subseteq z$ and $R_{h}[z] =
{\uparrow}y$.
\end{proposition}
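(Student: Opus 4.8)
The plan is to reduce the relational condition $R_h[z] = {\uparrow}y$ to the purely filter-theoretic identity $h^{-1}(z) = y$, and then to manufacture such a prime filter $z$ by a single application of the prime filter lemma (Lemma \ref{meetprime}) inside $K$, which is a distributive meet semi-lattice by Proposition \ref{prop}. First I would record the reduction. Since $y \in L_+ \subseteq L_*$, we have ${\uparrow}y = \{w \in L_* : y \subseteq w\}$, while by the definition of $R_h$ we have $R_h[z] = \{w \in L_* : h^{-1}(z) \subseteq w\}$. Hence, if I can find a prime filter $z$ of $K$ with $x \subseteq z$ and $h^{-1}(z) = y$, then $R_h[z] = {\uparrow}y$ is immediate and $z \in K_+$, as required. (The converse direction, obtained by intersecting both families and invoking the corollary that a proper filter equals the meet of the optimal filters above it, shows $h^{-1}(z)=y$ is also necessary, but only the forward implication is used.)

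Next I set up the data for the prime filter lemma. Let $F$ be the filter of $K$ generated by $x \cup h[y]$, and let $I = {\downarrow}_K\, h[L - y] = \{k \in K : k \leq h(b) \text{ for some } b \in L - y\}$. Any prime filter $z \supseteq F$ disjoint from $I$ automatically satisfies $x \subseteq z$, and $h[y] \subseteq z$ gives $y \subseteq h^{-1}(z)$, while $z \cap I = \emptyset$ gives $h(b) \notin z$ for all $b \notin y$, i.e.\ $h^{-1}(z) \subseteq y$; together these force $h^{-1}(z) = y$. It remains to check that $I$ is genuinely an ideal and that $F \cap I = \emptyset$. For the first, by Proposition \ref{filterideal} the set $L - y$ is a prime ideal of $L$; given $k_1 \leq h(b_1)$ and $k_2 \leq h(b_2)$ with $b_1, b_2 \in L - y$, the ideal $L - y$ supplies $e \in \{b_1,b_2\}^u \cap (L - y)$, whence $h(e) \in \{k_1,k_2\}^u \cap I$, so condition (ii) of the definition of ideal holds; and $I$ is clearly a nonempty downset. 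This use of $L-y$ being an ideal is exactly what makes $I$ an ideal — a point worth flagging, since ideals of a meet semi-lattice need not be closed under intersection, so one cannot simply invoke ``the ideal generated by a set.''

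The crux is $F \cap I = \emptyset$, and this is where both the implicative structure of $h$ and the hypothesis $x R_h y$ (that is, $h^{-1}(x) \subseteq y$) enter. Suppose $k \in F \cap I$. Unwinding membership in the generated filter produces $x_0 \in x$ and $a \in y$ with $x_0 \wedge h(a) \leq k$ (here one collapses the finitely many generators from $x$ to a single $x_0 \in x$ and the finitely many $h(a_i)$ to $h(a)$ with $a = \bigwedge a_i \in y$), while membership in $I$ gives $b \in L - y$ with $k \leq h(b)$. Then $x_0 \wedge h(a) \leq h(b)$, so by the adjunction and the fact that $h$ is an implicative homomorphism, $x_0 \leq h(a) \to h(b) = h(a \to b)$; as $x$ is upward closed, $h(a \to b) \in x$, whence $a \to b \in h^{-1}(x) \subseteq y$. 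Since $a \in y$ and $a \wedge (a \to b) \leq b$, it follows that $b \in y$, contradicting $b \in L - y$.

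Having established $F \cap I = \emptyset$ (and noting $F$ is proper because $I \neq \emptyset$), Lemma \ref{meetprime} yields a prime filter $z$ of $K$ with $F \subseteq z$ and $z \cap I = \emptyset$, which by the reduction in the first paragraph is precisely the $z \in K_+$ sought. I expect the main obstacle to be exactly the verification that $F \cap I = \emptyset$: one must choose the ideal $I$ so that the implication identity $x_0 \leq h(a \to b)$ becomes available, and then route $a \to b$ through the hypothesis $h^{-1}(x) \subseteq y$; the subsidiary care needed to confirm that $I$ satisfies the meet-semilattice ideal axiom (ii) is the other place where a naive argument could go wrong.
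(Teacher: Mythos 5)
Your proposal is correct and follows essentially the same route as the paper's proof: the same filter $F$ generated by $x \cup h[y]$, the same ideal ${\downarrow}_K h[L-y]$ (verified to be an ideal via primeness of $y$ exactly as in the paper), the same disjointness argument routing $a \to b$ through $h^{-1}(x) \subseteq y$ via $h(a \to b) = h(a) \to h(b)$, and the same final application of the prime filter lemma. The only cosmetic difference is that you state the reduction $h^{-1}(z) = y \Rightarrow R_h[z] = {\uparrow}y$ explicitly up front, which the paper leaves implicit.
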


\begin{proof}
Suppose that $x \in K_{*}$, $y \in L_{+}$, and $xR_{h}y$. Then
$h^{-1}(x) \subseteq y$. We show that ${\downarrow}_{K}h(L - y)$ is
an ideal of $K$. Let $a, b \in {\downarrow}_{K}h(L - y)$. Then there
exist $c,d \in L - y$ such that $a \leq h(c)$ and $b \leq h(d)$.
Since $y$ is a prime filter of $L$, we have $L - y$ is a (prime)
ideal of $L$, so ${\uparrow}_{L}c \cap {\uparrow}_{L}d \cap (L - y)
\neq \emptyset$. Therefore, there exists $e \in {\uparrow}_{L}c \cap
{\uparrow}_ {L}d$ such that $e \notin y$. Thus, $a \leq h(e)$, $b
\leq h(e)$, and $h(e) \in {\downarrow}_{S} h(L - y)$. It follows
that ${\uparrow}_{K}a \cap {\uparrow}_{K}b \cap {\downarrow}_{K} h(L
- y) \neq \emptyset$, and so ${\downarrow}_{K} h(L - y)$ is an ideal
of $K$. Let $F$ be the filter of $K$ generated by $x \cup h(y)$. If
$a \in F \cap {\downarrow}_{K} h(L - y)$, then there exist $b \in
x$, $c \in y$, and $d \in L - y$ such that $b \wedge h(c) \leq a
\leq h(d)$. Then $b \leq h(c \to d)$. Therefore, $h(c \to d) \in x$,
and so $c \to d \in y$. Now $c \in y$ and $c \to d \in y$ imply $d
\in y$, a contradiction. Thus, $F \cap {\downarrow}_{K} h(L -
y)=\emptyset$, and so, by the prime filter lemma, there exists a
prime filter $z$ of $K$ such that $F \subseteq z$ and $z \cap
{\downarrow}_{K} h(L - y) = \emptyset$. Then $x \subseteq z$ and
$h^{-1}(z) = y$. Thus, there exists $z \in K_{+}$ such that $x
\subseteq z$ and $R_{h}[z] = {\uparrow}y$.
\end{proof}

Let $L$ and $K$ be bounded implicative meet semi-lattices and let
$h: L \to K$ be an implicative meet semi-lattice homomorphism.
Consider the following condition, which is a strengthening of the
condition of Proposition \ref{prop-gem}: For each $x \in K_{*}$ and
$y \in L_{*}$ we have $x R_{h}y$ implies there exists $z \in K_{*}$
such that $x \subseteq z$ and $R_{h}[z] = {\uparrow}y$. The next
example shows that this stronger condition does not necessarily
hold.

\begin{example}
Let $X$ and $Y$ be generalized Esakia spaces shown in Fig.4, where
$Y_0=Y-\{z\}$, and as a topological space, $Y$ is the one-point
compactification of $Y_0$ (as a discrete space). Then each point of
$Y_0$ is an isolated point of $Y$, and $z$ is the only limit point
of $Y$. Let $R\subseteq X\times Y$ be defined as follows:
$R[x_1]={\uparrow}y_1$, $R[x_2]=\{y_2\}$, and $R[x_3]=\{y_3\}$. It
is easy to verify that $R$ is a generalized Esakia morphism, that
$x_1Rz$, but that there is no $x\in X$ such that $R[x]={\uparrow}z$.
Thus, the condition above is not satisfied. The dual implicative
meet semi-lattice $X^*\simeq D(X^*)$ of $X$, which is a Heyting
algebra, together with the dual implicative meet semi-lattice $Y^*$
and its distributive envelope $D(Y^*)$ are shown in Fig.4. The
elements of $Y^*$ are depicted in the black circles, while the only
element of $D(Y^*)-Y^*$ is depicted in the white circle. The
implicative meet semi-lattice homomorphism $h_R:Y^*\to X^*$ is shown
in Fig.4 by means of brown arrows. Note that $h_R$ is undefined on
$\{y_2,y_3\}$, thus $h_R$ is not a homomorphism from $D(Y^*)$ to
$D(X^*)\simeq X^*$.
\end{example}


\

\begin{center}
$$
\begin{array}{ccc}
\begin{tikzpicture}[scale=.5,inner sep=.5mm]
\node[bull] (u1) at (0,0) [label=right:$u_1$] {};%
\node[bull] (u2) at (0,1) [label=right:$u_2$] {};%
\node[bull] (u3) at (0,2) [label=right:$u_3$] {};%
\node (empdown) at (0,3) {};
\node (dots) at (0,4) {$\vdots$};%
\node[bull] (y1) at (-1,5) [label=below:$y_1$] {};%
\node[holl] (z) at (0,6) [label=left:$z$] {};
\node[bull] (y2) at (-1,7) [label=left:$y_2$] {};%
\node[bull] (y3) at (1,7) [label=right:$y_3$] {};%
\node[bull] (x1) at (-5,5) [label=below:$x_1$] {};%
\node[bull] (x2) at (-6,7) [label=left:$x_2$] {};%
\node[bull] (x3) at (-4,7) [label=right:$x_3$] {};%
\draw (u1) -- (u2) -- (u3) -- (empdown);%
\draw (y1) -- (z) -- (y3);%
\draw (z) -- (y2);%
\draw (x1) -- (x2);%
\draw (x1) -- (x3);%
\draw [brown,->] (x1.north east).. controls +(1,1) and +(-1,1) ..
(y1.north west);%
\draw [brown,->] (x2.north east).. controls +(1,1) and
+(-1,1) .. (y2.north west);%
\draw [brown,->] (x3.north east).. controls +(1,1) and +(-1,1) ..
(y3.north west);
\end{tikzpicture}&\ &
\begin{tikzpicture}[scale=.5,inner sep=.5mm]
\node[bull] (emp) at (0,0) [label=below:$\emptyset$] {};%
\node[bull] (y2) at (-1,1) [label=left:$\{y_2\}$] {};%
\node[bull] (y3) at (1,1) [label=right:$\{y_3\}$] {};%
\node[holl] (join) at (0,2) [label=above:$\qquad\{y_2{,}y_3\}$] {};%
\node (lvdots) at (-1,4) {$\vdots$};%
\node (rvdots) at (1,5) {$\vdots$};%
\node (empl) at (-1,4.5) {};%
\node (empr) at (1,5.5) {};
\node (l) at (-.9,3.2) {};%
\node[bull] (u3) at (-1,5) [label=left:${\uparrow}u_3$] {};%
\node[bull] (u2) at (-1,6) [label=left:${\uparrow}u_2$] {};%
\node[bull] (u1) at (-1,7) [label=left:${\uparrow}u_1$] {};%
\node (r) at (1.1,4.3) {};%
\node[bull] (yu3) at (1,6) [label=right:$\{y_1\}\cup{\uparrow}u_3$] {};%
\node[bull] (yu2) at (1,7) [label=right:$\{y_1\}\cup{\uparrow}u_2$] {};%
\node[bull] (yu1) at (1,8) [label=right:$Y$] {};%
\node[bull] (empx) at (7,2) [label=below:$\emptyset$] {};%
\node[bull] (x2) at (6,3) [label=left:$\{x_2\}$] {};%
\node[bull] (x3) at (8,3) [label=right:$\{x_3\}$] {};%
\node[bull] (joinr) at (7,4) [label=right:$\ \{x_2{,}x_3\}$] {};%
\node[bull] (X) at (7,5) [label=above:$X$] {};%
\draw[brown,rounded corners=.3ex] (-1.2,3) rectangle (-.8,7.4);
\draw[brown,rounded corners=.3ex] (.8,4) rectangle (1.2,8.4);
\draw (y2) -- (emp) -- (y3);%
\draw (y2) -- (join) -- (y3);%
\draw (empl) -- (u3) -- (u2) -- (u1);%
\draw (empr) -- (yu3) -- (yu2) -- (yu1);%
\draw (u3) -- (yu3);%
\draw (u2) -- (yu2);%
\draw (u1) -- (yu1);%
\draw (X) -- (joinr) -- (x2) -- (empx) -- (x3) -- (joinr);%
\draw [brown,->] (emp) -- (empx);%
\draw [brown,->] (y2) -- (x2);%
\draw [brown,->] (y3) -- (x3);%
\draw [brown,->] (l) -- (joinr);%
\draw [brown,->] (r) -- (X);%
\end{tikzpicture}\\
X\ \ \ \ \ \ \ \ \ \ \ \ \ \ \ \ Y& &\textrm{$Y^*$ and $D(Y^*)$}\ \
\ \ \ X^*\simeq D(X^*)
\end{array}
$$

\

Fig.4\ \ \ \ \ \ \ \ \ \ \ \ \ \ {}

\end{center}

\begin{definition}\label{def-gem}
{\rm Let $X$ and $Y$ be generalized Esakia spaces. We call a
generalized Priestley morphism $R\subseteq X\times Y$ a
\textit{generalized Esakia morphism} if for each $x \in X$ and $y
\in Y_0$, from $x R y$ it follows that there exists $z \in X_0$ such
that $x \leq z$ and $R[z] = {\uparrow}y$. }
\end{definition}

\begin{proposition}\label{prop-gem1}
Let $X$ and $Y$ be generalized Esakia spaces and let $R\subseteq
X\times Y$ be a generalized Esakia morphism. Then $h_{R}:Y^*\to X^*$
is an implicative meet semi-lattice homomorphism.
\end{proposition}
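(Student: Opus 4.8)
The plan is to reduce the statement to a single set-theoretic identity. By Lemma \ref{lemma5} we already know that $h_R$ is a meet semi-lattice homomorphism preserving top, so the only thing left to verify is that $h_R$ preserves implication. Since $h_R(U)=\Box_R U$, and since $Y^*$ and $X^*$ are bounded implicative meet semi-lattices by Proposition \ref{lemma-ims-ges}(2), this amounts to proving
\[
\Box_R(U\to V) = (\Box_R U)\to(\Box_R V)\qquad(U,V\in Y^*),
\]
where the left-hand $\to$ is computed in $Y^*$ and the right-hand one in $X^*$. Unravelling the definition $A\to B=\{x:{\uparrow}x\cap A\subseteq B\}$, the left side is $\{x\in X:\forall y\,(xRy\Rightarrow {\uparrow}y\cap U\subseteq V)\}$ and the right side is $\{x\in X:{\uparrow}x\cap\Box_R U\subseteq\Box_R V\}$. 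I would prove the two inclusions separately.

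For the inclusion $\subseteq$, which uses only that $R$ is a generalized Priestley morphism, I take $x\in\Box_R(U\to V)$ and $w\in{\uparrow}x\cap\Box_R U$ and show $w\in\Box_R V$. Given any $y$ with $wRy$, Lemma \ref{lemma-g-morphisms}(1) applied to $x\leq w$ gives $xRy$, hence $y\in U\to V$, i.e.\ ${\uparrow}y\cap U\subseteq V$; since $R[w]\subseteq U$ forces $y\in U$ and trivially $y\in{\uparrow}y$, we get $y\in V$. Thus $R[w]\subseteq V$, so $w\in\Box_R V$, and therefore $x\in(\Box_R U)\to(\Box_R V)$. This direction is routine.

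The reverse inclusion $\supseteq$ is where the Esakia hypotheses enter, and it is the main obstacle. I take $x$ with ${\uparrow}x\cap\Box_R U\subseteq\Box_R V$ together with $y$ such that $xRy$, and I must show ${\uparrow}y\cap U\subseteq V$. Arguing by contradiction, suppose some $y'\in{\uparrow}y\cap U$ satisfies $y'\notin V$. By Lemma \ref{lemma-g-morphisms}(2) (applied to $y\leq y'$) we have $xRy'$, and $y'\in U-V$. Since $U,V\in Y^*$, the set $U-V$ is Esakia clopen, hence closed, and by Lemma \ref{lemma-Es-clo} its maximal points lie in $Y_0$. In the Priestley space $Y$ every point lies below a maximal point of any closed set containing it, so I may choose $m\in\mathrm{max}(U-V)\subseteq Y_0$ with $y'\leq m$; then $m\in U$, $m\notin V$, $m\in Y_0$, and $xRm$ again by Lemma \ref{lemma-g-morphisms}(2). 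Now the generalized Esakia morphism condition applies to $xRm$ with $m\in Y_0$, yielding $z\in X_0$ with $x\leq z$ and $R[z]={\uparrow}m$. As $U$ is an upset containing $m$, we get $R[z]={\uparrow}m\subseteq U$, so $z\in{\uparrow}x\cap\Box_R U\subseteq\Box_R V$, whence $R[z]\subseteq V$ and in particular $m\in V$, contradicting $m\notin V$.

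Therefore ${\uparrow}y\cap U\subseteq V$, which yields the reverse inclusion and hence the displayed identity, completing the proof that $h_R$ preserves $\to$. The crux — and the only place the Esakia structure is genuinely needed — is the passage from an arbitrary witness $y'$ to a \emph{maximal} witness $m\in Y_0$ via Lemma \ref{lemma-Es-clo}, since the generalized Esakia morphism condition supplies preimages $z$ with $R[z]={\uparrow}y$ only for points $y\in Y_0$. Everything else is bookkeeping with the order-compatibility of $R$ from Lemma \ref{lemma-g-morphisms}.
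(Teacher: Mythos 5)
Your proposal is correct and follows essentially the same route as the paper: both reduce the claim to the identity $\Box_R(U\to V)=(\Box_R U)\to(\Box_R V)$, prove the easy inclusion from the order-compatibility of $R$ (Lemma \ref{lemma-g-morphisms}), and prove the hard inclusion by passing from a witness in $U-V$ to a maximal point of $U-V$, which lies in $Y_0$, and then invoking the generalized Esakia morphism condition to get $z\in X_0$ with $R[z]={\uparrow}m$. The only (cosmetic) differences are that the paper argues the first inclusion by contradiction and phrases the maximal-point step via clopenness of ${\downarrow}(U-V)$, whereas you work directly with the closed set $U-V$ and cite Lemma \ref{lemma-Es-clo}.
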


\begin{proof}
Since $R\subseteq X\times Y$ is a generalized Esakia morphism, it is
a generalized Priestley morphism, so $h_R:Y^*\to X^*$ is a meet
semi-lattice homomorphism preserving top. We show that for each $U,
V \in Y^{*}$, we have $h_{R}(U \to V) = h_{R}(U) \to h_{R}(V)$.
Suppose that $x \in h_{R}(U \to V)$. Then $R[x] \subseteq U \to V$.
If $x \notin h_{R}(U) \to h_{R}(V)$, then there exists $y\in
{\uparrow}x\cap h_R(U)$ such that $y \notin h_{R}(V)$. Therefore,
$R[y] \subseteq U$ and $R[y] \not \subseteq V$. Thus, there exists
$z\in Y$ such that $y Rz$ and $z \notin V$. Since $x \leq y$, we
have $R[y] \subseteq R[x]$. Therefore, $R[y] \subseteq U \to V$, and
so $z \in U \to V$. This together with $z \in U$ imply $z \in V$, a
contradiction. Thus, $x \in h_{R}(U) \to h_{R}(V)$, and so $h_{R}(U
\to V)\subseteq h_{R}(U) \to h_{R}(V)$. Conversely, suppose that $x
\in h_{R}(U) \to h_{R}(V)$ and $x \notin h_{R}(U \to V)$. Then
${\uparrow}x\cap h_R(U)\subseteq h_R(V)$ and $R[x]\not\subseteq U\to
V$. Therefore, there exists $y\in Y$ such that $xRy$ and $y\notin
U\to V = [{\downarrow}(U - V)]^c$. Thus, $y\in {\downarrow}(U - V)$.
Since $Y$ is a generalized Esakia space, ${\downarrow}(U - V)$ is
clopen in $Y$ and $\mathrm{max}[{\downarrow}(U - V)]=\mathrm{max}(U
- V)\subseteq Y_0$. Therefore, there exists $z\in \mathrm{max}(U -
V)\subseteq Y_0$ such that $y\leq z$. Then $xRy\leq z\in Y_0$, so
$xRz\in Y_0$, and since $R$ is a generalized Esakia morphism, there
exists $u \in X_0$ such that $x \leq u$ and $R[u] = {\uparrow}z$. As
$z\in U$, we have $R[u]\subseteq U$, so $u\in h_R(U)$. Therefore, $u
\in {\uparrow}x\cap h_R(U)\subseteq h_R(V)$, so $R[u]\subseteq V$.
Thus, $z\in{\uparrow}z=R[u]\subseteq V$, a contradiction.
Consequently, our assumption that $x \notin h_{R}(U \to V)$ is
false, so $h_{R}(U) \to h_{R}(V)\subseteq h_{R}(U \to V)$, and so
$h_{R}(U \to V)=h_{R}(U) \to h_{R}(V)$.
\end{proof}

\begin{lemma}
Let $X$,  $Y$, and $Z$ be generalized Esakia spaces, and $R\subseteq X\times Y$ and $S \subseteq Y \times Z$ be generalized Esakia morphisms. Then $S \ast R$ is a generalized Esakia morphism.
\end{lemma}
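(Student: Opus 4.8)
The plan is to reduce the Esakia lifting condition of Definition \ref{def-gem} for $S\ast R$ to Proposition \ref{prop-gem} applied to the \emph{composite} homomorphism $h_R\circ h_S$, and then to transport the conclusion back along the isomorphisms $\psi$ of Theorem \ref{homeom}. Since $R$ and $S$ are generalized Esakia morphisms, Proposition \ref{prop-gem1} tells us that $h_R:Y^*\to X^*$ and $h_S:Z^*\to Y^*$ are implicative meet semi-lattice homomorphisms, and hence so is their composite $g:=h_R\circ h_S:Z^*\to X^*$, as composition manifestly preserves $\cap$, $\to$, and $\top$. By Lemma \ref{catGPS-2}, $S\ast R$ is already a generalized Priestley morphism, so the only thing left to verify is the lifting condition.

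Write $\psi_X:X\to {X^{*}}_{*}$ and $\psi_Z:Z\to {Z^{*}}_{*}$ for the maps of Theorem \ref{homeom}; recall they are order-isomorphisms and homeomorphisms with $\psi_X[X_0]={X^{*}}_{+}$ and $\psi_Z[Z_0]={Z^{*}}_{+}$. By the very definition of $\ast$, for $x\in X$ and $w\in Z$ we have $x\,(S\ast R)\,w$ iff $\psi_X(x)\,R_g\,\psi_Z(w)$, where $R_g\subseteq {X^{*}}_{*}\times {Z^{*}}_{*}$ is the dual of $g$. So suppose $x\in X$, $w\in Z_0$, and $x\,(S\ast R)\,w$. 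Put $\xi=\psi_X(x)\in {X^{*}}_{*}$ and $\eta=\psi_Z(w)$. Since $w\in Z_0$, we have $\eta\in {Z^{*}}_{+}$, and $\xi\,R_g\,\eta$ holds.

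Next I would apply Proposition \ref{prop-gem} to the implicative meet semi-lattice homomorphism $g:Z^*\to X^*$ with the points $\xi\in {X^{*}}_{*}$ and $\eta\in {Z^{*}}_{+}$: from $\xi\,R_g\,\eta$ it follows that there is $\zeta\in {X^{*}}_{+}$ with $\xi\subseteq\zeta$ and $R_g[\zeta]={\uparrow}\eta$. Let $u=\psi_X^{-1}(\zeta)$. Since $\zeta\in {X^{*}}_{+}=\psi_X[X_0]$, we get $u\in X_0$; and since $\psi_X$ is an order-isomorphism with $\psi_X(x)=\xi\subseteq\zeta=\psi_X(u)$, we get $x\leq u$. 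Finally, for any $w'\in Z$ we have $u\,(S\ast R)\,w'$ iff $\psi_X(u)\,R_g\,\psi_Z(w')$ iff $\psi_Z(w')\in R_g[\zeta]={\uparrow}\eta$ iff $\eta\subseteq\psi_Z(w')$ iff $w\leq w'$, the last equivalence because $\psi_Z$ is an order-isomorphism and $\eta=\psi_Z(w)$. Hence $(S\ast R)[u]={\uparrow}w$, which is exactly the condition of Definition \ref{def-gem}; thus $S\ast R$ is a generalized Esakia morphism.

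The main obstacle here is conceptual rather than computational: one should resist trying to verify the condition by first producing an intermediate point $y\in Y$ with $x\,R\,y$ and $y\,S\,w$ and then lifting twice through the Esakia conditions of $S$ and $R$. Such a point need not exist — which is precisely why the composition $\ast$ was defined algebraically rather than as set-theoretic composition. A naive compactness argument for producing $y$ breaks down because $\Box_S$ commutes with finite intersections but not with unions, so the finite subcover of $R[x]$ obtained from compactness cannot be collapsed to a single $\Box_S U$. Routing the whole argument through Proposition \ref{prop-gem} for $h_R\circ h_S$ together with the duality $\psi$ of Theorem \ref{homeom} sidesteps this difficulty entirely.
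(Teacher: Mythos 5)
Your proof is correct and takes essentially the same route as the paper's: both reduce the claim to Proposition \ref{prop-gem} applied to the composite homomorphism $h_R \circ h_S$ (which, by Lemma \ref{catGPS-1}, is exactly $h_{S \ast R}$) and then transport the resulting lifting property back to $S \ast R$ through the duality. The only difference is presentational: where the paper compresses the transport step into a citation of Theorem \ref{theorem-duality}, you unpack it explicitly by chasing the order-isomorphisms $\psi_X$ and $\psi_Z$ of Theorem \ref{homeom}, making the argument self-contained.
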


\begin{proof}
By Proposition \ref{prop-gem1}, $h_{R}: Y^{*} \to X^{*}$ and $h_{S}: Z^{*} \to Y^{*}$ are implicative meet semi-lattice homomorphisms. Therefore, $h_{R} \circ h_{S}: Z^{*} \to X^{\ast}$ is an implicative meet semi-lattice homomorphism. By Lemma \ref{catGPS-1}, $h_{(S \ast R)} = h_{R} \circ h_{S}$. Therefore, $h_{(S \ast R)}$ is an implicative  meet semi-lattice homomorphism. By Proposition \ref{prop-gem}, $R_{h_{(S \ast R)}}$ is a generalized Esakia morphism. This, by Theorem \ref{theorem-duality}, implies that $S \ast R$ is a generalized Esakia morphism.
\end{proof}

Let $\mathsf{GES}$ denote the category of generalized Esakia spaces and generalized Esakia morphisms, in which $*$ is the composition of two morphisms and $\leq_X$ is the identity morphism for each object $X$. Let also $\mathsf{GES}^{\mathsf{T}}$ denote the subcategory of $\mathsf{GES}$ whose objects are generalized Esakia spaces and whose morphisms are total generalized Esakia morphisms, and $\mathsf{GES}^{\mathsf{F}}$ denote the subcategory of $\mathsf{GES}^{\mathsf{T}}$ whose objects are generalized Esakia spaces and whose morphisms are functional generalized Esakia morphisms. Clearly $\mathsf{GES}^{\mathsf{F}}$ is a proper subcategory of $\mathsf{GES}^{\mathsf{T}}$ and $\mathsf{GES}^{\mathsf{T}}$ is a proper subcategory of $\mathsf{GES}$.

We let $\mathsf{BIM}$ denote the category of bounded implicative meet semi-lattices and implicative meet semi-lattice homomorphisms, $\mathsf{BIM}^{\bot}$ denote the category of bounded implicative meet semi-lattices and bounded implicative meet semi-lattice homomorphisms, and $\mathsf{BIM}^{\mathsf{S}}$ denote the category of bounded implicative meet semi-lattices and implicative meet semi-lattice sup-homomorphisms. We have that $\mathsf{BIM}^{\mathsf{S}}$ is a proper subcategory of $\mathsf{BIM}^{\bot}$ and that $\mathsf{BIM}^{\bot}$ is a proper subcategory of $\mathsf{BIM}$.

\begin{theorem}\label{theorem-ges}
\begin{enumerate}
\item[]
\item The category $\mathsf{BIM}$ is dually equivalent to the category $\mathsf{GES}$.
\item The category $\mathsf{BIM}^{\bot}$ is dually equivalent to the category $\mathsf{GES}^{\mathsf{T}}$.
\item The category $\mathsf{BIM}^{\mathsf{S}}$ is dually equivalent to the category $\mathsf{GES}^{\mathsf{F}}$.
\end{enumerate}
\end{theorem}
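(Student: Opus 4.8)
The plan is to derive all three dual equivalences not by rebuilding the apparatus from scratch, but by \emph{restricting} the dual equivalence between $\mathsf{BDM}$ and $\mathsf{GPS}$ already established in Theorem \ref{theorem-duality}. Since $\mathsf{BIM}$, $\mathsf{BIM}^{\bot}$, $\mathsf{BIM}^{\mathsf{S}}$ sit as (non-full) subcategories of $\mathsf{BDM}$, and $\mathsf{GES}$, $\mathsf{GES}^{\mathsf{T}}$, $\mathsf{GES}^{\mathsf{F}}$ sit as (non-full) subcategories of $\mathsf{GPS}$, it suffices to verify three things: that the functors $(-)_{*}$ and $(-)^{*}$ carry each subcategory into the corresponding one, that $h \mapsto R_h$ and $R \mapsto h_R$ induce mutually inverse bijections on the \emph{restricted} hom-sets, and that the unit $\phi$ and counit $R_{\varepsilon_X}$ of the ambient equivalence remain isomorphisms inside the subcategories. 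Because these subcategories are not full, matching the morphisms exactly is the crucial point.

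For (1), I would argue as follows. On objects, Proposition \ref{lemma-ims-ges} shows that $(-)_{*}$ sends a bounded implicative meet semi-lattice $L$ to a generalized Esakia space $L_{*}$, and that $(-)^{*}$ sends a generalized Esakia space $X$ to a bounded implicative meet semi-lattice $X^{*}$. On morphisms, Proposition \ref{prop-gem} yields that $R_h$ is a generalized Esakia morphism whenever $h$ is an implicative homomorphism, while Proposition \ref{prop-gem1} gives the converse, that $h_R$ is an implicative homomorphism whenever $R$ is a generalized Esakia morphism. Since in the ambient duality $h \mapsto R_h$ and $R \mapsto h_R$ are already mutually inverse bijections on hom-sets (Propositions \ref{h-Rh} and \ref{R-hR}), these two facts show they restrict to mutually inverse bijections between $\mathsf{BIM}(L,K)$ and $\mathsf{GES}(K_{*}, L_{*})$. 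The unit component $\phi_L: L \to {L_{*}}^{*}$ is an isomorphism of implicative meet semi-lattices because $\phi$ commutes with $\to$ by Lemma \ref{leo-1}, and the counit component $R_{\varepsilon_X}$ is an isomorphism in $\mathsf{GES}$ because it is carried by the order-isomorphism and homeomorphism $\psi$ of Theorem \ref{homeom}, which moreover satisfies $\psi[X_0] = {X^{*}}_{+}$ and so respects the distinguished dense subsets. Assembling these observations with the representation Theorem \ref{proposition8rep} gives the dual equivalence of $\mathsf{BIM}$ and $\mathsf{GES}$.

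For (2) and (3), I would intersect the correspondence of (1) with that of Theorem \ref{theorem-duality-sup-hom}. By Lemma \ref{tot-fun}.1 and Corollary \ref{cor-bot-sup}.1, an implicative homomorphism $h$ preserves $\bot$ iff $R_h$ is total, and conversely a generalized Esakia morphism $R$ yields a bottom-preserving $h_R$ iff $R$ is total; since totality is closed under composition by Lemma \ref{tot-fun-comp}.1, the functors restrict to a dual equivalence of $\mathsf{BIM}^{\bot}$ and $\mathsf{GES}^{\mathsf{T}}$. Similarly, Lemma \ref{tot-fun}.2 and Corollary \ref{cor-bot-sup}.2, together with the closure of functionality under composition from Lemma \ref{tot-fun-comp}.2, show that $h$ is a sup-homomorphism iff $R_h$ is functional, whence the dual equivalence of $\mathsf{BIM}^{\mathsf{S}}$ and $\mathsf{GES}^{\mathsf{F}}$.

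The main obstacle is not computational but conceptual: confirming that the hom-set bijection of the ambient duality restricts \emph{exactly}, i.e.\ that under $h \mapsto R_h$ the property of being an implicative homomorphism corresponds precisely to the property of being a generalized Esakia morphism. This equivalence is exactly the combined content of Propositions \ref{prop-gem} and \ref{prop-gem1}, so once those are invoked the remaining verification is bookkeeping; the one delicate point is ensuring that $\phi_L$ is an isomorphism in $\mathsf{BIM}$ (so the unit of the equivalence lands inside the subcategory), which rests on Lemma \ref{leo-1}.
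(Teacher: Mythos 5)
Your proposal is correct and follows essentially the same route as the paper: the paper's own proof is a one-line application of Theorems \ref{theorem-duality} and \ref{theorem-duality-sup-hom} together with Propositions \ref{lemma-ims-ges}, \ref{prop-gem}, and \ref{prop-gem1}, i.e.\ precisely the restriction argument you spell out. Your write-up merely makes explicit the bookkeeping (hom-set bijections via Propositions \ref{h-Rh} and \ref{R-hR}, unit via Lemma \ref{leo-1}, counit via Theorem \ref{homeom}) that the paper leaves implicit.
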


\begin{proof}
Apply Theorems \ref{theorem-duality} and \ref{theorem-duality-sup-hom} and Propositions \ref{lemma-ims-ges}, \ref{prop-gem}, and \ref{prop-gem1}.
\end{proof} 
\section{Functional morphisms}

In this section we show that functional generalized Priestley (resp.\ Esakia) morphisms can be characterized by means of special functions between generalized Priestley (resp.\ Esakia) spaces we call strong Priestley (resp.\ Esakia) morphisms.

\subsection{Functional generalized Priestley morphisms}


Let $X$ and $Y$ be Priestley spaces. We recall that a map $f:X\to Y$ is a Priestley morphism if $f$ is continuous and order-preserving.

\begin{definition}
{\rm Let $X$ and $Y$ be generalized Priestley spaces. We call a map $f: X \to Y$ a \textit{strong Priestley morphism} if $f$ is order-preserving and $U \in Y^{*}$ implies $f^{-1}(U) \in X^{*}$.
}
\end{definition}

Since $X^*\cup\{U^c:U\in X^*\}$ and $Y^*\cup\{V^c:V\in Y^*\}$ form subbases for the Priestley topologies on $X$ and $Y$, respectively, and $f^{-1}(V^{c}) = f^{-1}(V)^{c}$ for each $V \subseteq Y$, it follows that each strong Priestley morphism is a continuous function, hence a Priestley morphism. We note that the composition of strong Priestley morphisms is again a strong Priestley morphism, and that the identity map $\mathrm{id}_X:X\to X$ is a strong Priestley morphism. Therefore, generalized Priestley spaces and strong Priestley morphisms form a category in which composition is the usual set-theoretic composition of functions and the identity morphism is the usual identity function.
We denote this category by $\mathsf{PS}^{\mathsf{S}}$.
%
%
%
%
%

Let $X$ and $Y$ be generalized Priestley spaces and $R\subseteq X\times Y$ be a functional generalized Priestley morphism. We define
$f^{R}: X\to Y$ by $$f^R(x) = \text{the least element of } R[x].$$

\begin{lemma}\label{lemma-a}
If $X$ and $Y$ are generalized Priestley spaces and $R\subseteq X\times Y$ is a functional generalized Priestley morphism, then $f^{R}: X \to Y$
is a strong Priestley morphism. Moreover, if $X, Y,$ and $Z$ are generalized Priestley spaces and $R\subseteq X\times Y$ and $S\subseteq Y\times Z$ are functional generalized Priestley morphisms, then $f^{S * R} = f^{S} \circ f^{R}$.
\end{lemma}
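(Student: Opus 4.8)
The plan is to verify the three claims in turn: that $f^R$ is well-defined and order-preserving, that $f^R$ pulls back admissible clopen upsets to admissible clopen upsets, and finally that composition is respected. Since $R$ is functional, for each $x\in X$ the set $R[x]$ has a least element by definition, so $f^R(x)$ is well-defined. For order-preservation, suppose $x\leq_X x'$. By Lemma \ref{lemma-g-morphisms}(1) we have ${\leq_X}\circ R\subseteq R$, so from $x\leq_X x'$ and $x' R f^R(x')$ we obtain $x R f^R(x')$, hence $f^R(x')\in R[x]$. Since $f^R(x)$ is the least element of $R[x]$, it follows that $f^R(x)\leq_Y f^R(x')$.

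For the key step, let $U\in Y^*$; I want to show $f^{-R}(U):=(f^R)^{-1}(U)\in X^*$. The natural move is to identify $(f^R)^{-1}(U)$ with $\Box_R U = h_R(U)$, which lies in $X^*$ by condition (2) of Definition \ref{definition}. Indeed, $x\in (f^R)^{-1}(U)$ means $f^R(x)\in U$; since $U$ is an upset and $R[x]={\uparrow}f^R(x)$ (because $f^R(x)$ is the least element of $R[x]$ and $R$ satisfies $R\circ{\leq_Y}\subseteq R$ by Lemma \ref{lemma-g-morphisms}(2)), we get $f^R(x)\in U$ iff $R[x]\subseteq U$ iff $x\in\Box_R U$. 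Thus $(f^R)^{-1}(U)=\Box_R U=h_R(U)\in X^*$, and so $f^R$ is a strong Priestley morphism.

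For the composition claim, recall that $S\ast R$ is functional by Lemma \ref{tot-fun-comp}(2), so $f^{S\ast R}$ is defined. From the proof of Lemma \ref{tot-fun-comp}(2), if $R[x]={\uparrow}y$ and $S[y]={\uparrow}z$ then $(S\ast R)[x]={\uparrow}z$; that is, writing $y=f^R(x)$ and $z=f^S(y)$, the least element of $(S\ast R)[x]$ is exactly $z=f^S(f^R(x))$. Hence $f^{S\ast R}(x)=f^S(f^R(x))=(f^S\circ f^R)(x)$ for every $x\in X$, giving $f^{S\ast R}=f^S\circ f^R$.

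I expect the main obstacle to be the admissibility step, i.e.\ confirming that $(f^R)^{-1}(U)$ is genuinely an \emph{admissible} clopen upset and not merely a clopen upset. This is handled cleanly by the identification $(f^R)^{-1}(U)=\Box_R U$, which reduces the admissibility of the preimage to condition (2) of the definition of a generalized Priestley morphism, so no separate topological verification of $\mathrm{max}$-cofinality is needed. The remaining verifications are routine, relying only on $R$ being functional together with the two containment properties of Lemma \ref{lemma-g-morphisms}.
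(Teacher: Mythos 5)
Your proof is correct and follows essentially the same route as the paper: order-preservation from the antitonicity of $x\mapsto R[x]$, the identification $(f^R)^{-1}(U)=\Box_R U$ (which reduces admissibility of the preimage to condition (2) of Definition \ref{definition}), and the composition claim via Lemma \ref{tot-fun-comp}(2). The only cosmetic difference is that for $f^{S*R}=f^S\circ f^R$ the paper invokes the remark that $*$ coincides with $\circ$ for functional morphisms, whereas you extract the identity $(S*R)[x]={\uparrow}f^S(f^R(x))$ directly from the proof of Lemma \ref{tot-fun-comp}(2); these amount to the same argument.
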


\begin{proof}
Let $x, y \in X$. If $x \leq y$, then $R[y] \subseteq R[x]$, and so $f^R(x)\leq f^{R}(y)$. Therefore, $f^R$ is order-preserving. Now let $U \in Y^{*}$. Then $x\in (f^R)^{-1}(U)$ iff $f^{R}(x)\in U$ iff ${\uparrow}f^{R}(x)\subseteq U$ iff $R[x]\subseteq U$ iff $x\in\Box_R(U)$. Thus, $(f^R)^{-1}(U) = \Box_{R}U$, so $(f^R)^{-1}(U) \in X^{*}$, and so $f^R$ is a strong Priestley morphism. Moreover, since for functional generalized Priestley morphisms we have that $*$ coincides with $\circ$, it is easy to see that if  $R\subseteq X\times Y$ and $S\subseteq Y\times Z$ are functional generalized Priestley morphisms, then $f^{S * R} = f^{R \circ S} = f^{S} \circ f^{R}$.
\end{proof}

Now let $X$ and $Y$ be generalized Priestley spaces and $f:X\to Y$ be a strong Priestley morphism. Define $R^{f}\subseteq X \times Y$ by
$$\text{$x R^{f} y\ $ iff $\ f(x)\leq y$.}$$

\begin{lemma}\label{lemma-b}
If $X$ and $Y$ are generalized Priestley spaces and  $f:X\to Y$ is a strong Priestley morphism, then $R^{f}$ is a functional generalized Priestley morphism. Moreover, if $X, Y,$ and $Z$ are generalized Priestley spaces and $f:X\to Y$ and $g:Y\to Z$ are strong Priestley morphisms, then $R^{g \circ f} =  R^{g} * R^{f}$.
\end{lemma}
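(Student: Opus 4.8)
The plan is to treat the two assertions in turn: first establish that $R^f$ is a functional generalized Priestley morphism, and then verify the compositional identity $R^{g\circ f} = R^g * R^f$.

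For the first assertion I would begin by unwinding the definition of $R^f$ to observe that for each $x \in X$ we have $R^f[x] = \{y \in Y : f(x) \leq y\} = {\uparrow}f(x)$. This already shows that $R^f[x]$ has a least element, namely $f(x)$, so once $R^f$ is known to be a generalized Priestley morphism, functionality is immediate. To check condition (2) of Definition \ref{definition}, I would compute $\Box_{R^f}U$ for $U \in Y^*$: since $U$ is an upset, $x \in \Box_{R^f}U$ iff $R^f[x] = {\uparrow}f(x) \subseteq U$ iff $f(x) \in U$ iff $x \in f^{-1}(U)$, whence $\Box_{R^f}U = f^{-1}(U)$; because $f$ is a strong Priestley morphism, $f^{-1}(U) \in X^*$, giving condition (2). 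For condition (1), suppose $x \nR^f y$, i.e.\ $f(x) \not\leq y$. By condition (5) of Definition \ref{g-Priestley} there is $U \in Y^*$ with $f(x) \in U$ and $y \notin U$, and since $U$ is an upset we get ${\uparrow}f(x) = R^f[x] \subseteq U$, which is exactly condition (1). Thus $R^f$ is a generalized Priestley morphism, and by the observation above it is functional.

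For the second assertion I would first note that $g \circ f$ is again a strong Priestley morphism, since the composition of strong Priestley morphisms is strong; hence $R^{g\circ f}$ is a well-defined functional generalized Priestley morphism. By the first part, $R^f$ and $R^g$ are functional, so by the remark preceding this lemma their $*$-composition coincides with ordinary relational composition, i.e.\ $R^g * R^f = R^g \circ R^f$. It then remains to verify the set equality $R^{g\circ f} = R^g \circ R^f$. Here $x(R^g \circ R^f)z$ means there is $y \in Y$ with $f(x) \leq y$ and $g(y) \leq z$; taking $y = f(x)$ yields $R^{g\circ f} \subseteq R^g \circ R^f$, while order-preservation of $g$ gives $g(f(x)) \leq g(y) \leq z$, which furnishes the reverse inclusion.

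The only genuinely delicate point is the bookkeeping about composition conventions: one must be sure that the $*$-operation applied to the functional morphisms $R^f$ and $R^g$ really reduces to the naive relational composition $R^g \circ R^f$ before the final two-line order-theoretic calculation can be invoked. This reduction is precisely what the remark on functional morphisms guarantees, so once it is in hand the remaining verifications are routine, and both assertions of the lemma follow.
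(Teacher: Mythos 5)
Your proof is correct and follows essentially the same route as the paper's: the same computation $\Box_{R^f}U = f^{-1}(U)$ for condition (2), the same use of condition (5) of Definition \ref{g-Priestley} for condition (1), and the same reduction of $*$ to ordinary relational composition via the remark on functional morphisms for the second assertion. The only difference is that you spell out the two-inclusion verification of $R^{g\circ f} = R^g \circ R^f$, which the paper dismisses as "easy to see."
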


\begin{proof}
Let $x\in X$ and $y\in Y$. If $x\nR^f y$, then $f(x)\not\leq y$. Therefore, there exists $U \in Y^{*}$ such that $f(x)\in U$ and $y\notin U$. Thus, $R^f[x]={\uparrow}f(x)\subseteq U$ and $y\notin U$, and so condition (1) of Definition \ref{definition} is satisfied. Let $U \in Y^{*}$. For $x\in X$ we have $x\in\Box_{R^f}(U)$ iff $R^f[x]\subseteq U$ iff ${\uparrow}f(x)\subseteq U$ iff $f(x)\in U$ iff $x\in f^{-1}(U)$.
Therefore, $\Box_{R^f}(U)=f^{-1}(U)$, so $\Box_{R^f}(U) \in X^{*}$ and so condition (2) of Definition \ref{definition} is satisfied. Thus, $R^f$ is a  generalized Priestley morphism. Now, since $R^f[x]={\uparrow} f(x)$ for each $x\in X$, we have that $R^f$ is a functional generalized Priestley morphism. Moreover, since for functional generalized Priestley morphisms we have that $*$ coincides with $\circ$, it is easy to see that if  $f:X\to Y$ and $g:Y\to Z$ are strong Priestley morphisms, then $R^{g \circ f} = R^{f} \circ R^{g} = R^{g} * R^{f}$.
\end{proof}

\begin{lemma}\label{lemma-c}
Let $X$ and $Y$ be generalized Priestley spaces, $R\subseteq X\times Y$ be a functional generalized Priestley morphism, and $f:X\to Y$ be a strong Priestley morphism. Then $R^{f^R}=R$ and $f^{R^f}=f$.
\end{lemma}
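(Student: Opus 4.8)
The plan is to verify both identities by directly unwinding the definitions of the two passages $R \mapsto f^R$ and $f \mapsto R^f$, in each case exploiting the fact that the relevant section is a principal upset. Before starting I would record that both composites make sense: by Lemma \ref{lemma-a}, $f^R$ is a strong Priestley morphism, so $R^{f^R}$ is defined; and by Lemma \ref{lemma-b}, $R^f$ is a functional generalized Priestley morphism, so $f^{R^f}$ is defined.

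The two observations that drive everything are the following. Since $R$ is a functional generalized Priestley morphism, by definition $R[x] = {\uparrow}y$ for some $y \in Y$, for each $x \in X$; as $f^R(x)$ is precisely the least element of $R[x]$, this means $R[x] = {\uparrow}f^R(x)$. Dually, for a strong Priestley morphism $f$, the definition of $R^f$ gives, for each $x \in X$, that $R^f[x] = \{y \in Y : f(x) \leq y\} = {\uparrow}f(x)$.

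To prove $R^{f^R} = R$, I would fix $x \in X$ and $y \in Y$ and chain equivalences: $x\, R^{f^R}\, y$ holds iff $f^R(x) \leq y$ by the definition of $R^{f^R}$, which holds iff $y \in {\uparrow}f^R(x) = R[x]$ by the first observation, which is exactly $x R y$. Hence the two relations coincide. To prove $f^{R^f} = f$, I would fix $x \in X$ and note that $f^{R^f}(x)$ is by definition the least element of $R^f[x]$; by the second observation $R^f[x] = {\uparrow}f(x)$, whose least element is $f(x)$, so $f^{R^f}(x) = f(x)$ for every $x$.

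There is essentially no genuine obstacle here: the content of the lemma is just that the two constructions are mutually inverse, and this is immediate once one sees that functionality of $R$ encodes $R[x]$ as the principal upset generated by $f^R(x)$, while $R^f$ is by construction the relation whose sections are the principal upsets ${\uparrow}f(x)$. The only point deserving a word of care is the well-definedness of $f^R(x)$, i.e.\ the existence and uniqueness of the least element of $R[x]$; existence is exactly the functionality hypothesis, and uniqueness follows from the antisymmetry of $\leq$ on the underlying Priestley space.
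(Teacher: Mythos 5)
Your proof is correct and follows essentially the same route as the paper's: both arguments unwind the definitions directly, using that functionality makes $R[x]$ the principal upset ${\uparrow}f^R(x)$ and that $R^f[x]={\uparrow}f(x)$ by construction. The only cosmetic difference is that the paper phrases the first equivalence via the least element $l_x$ and the compatibility $x\,R\,l_x\leq y \Rightarrow x\,R\,y$, whereas you fold this into the single identity $R[x]={\uparrow}f^R(x)$; the content is identical.
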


\begin{proof}
For each $x\in X$ let $l_x$ be the least element of $R[x]$. Then $x R^{f^R} y$ iff $f^R(x)\leq y$ iff $l_x\leq y$ iff $x R l_x \leq y$ iff $x R y$. Thus, $R^{f^R}=R$. Also, $f^{R^f}(x)= $ the least element of $R^f[x]=f(x)$, so $f^{R^f}=f$.
\end{proof}


As an immediate consequence of Lemmas \ref{lemma-a}, \ref{lemma-b}, and \ref{lemma-c}, we obtain:

\begin{proposition} \label{sp-fr-isom}
The categories $\mathsf{GPS}^{\mathsf{F}}$ and $\mathsf{GPS}^{\mathsf{S}}$ are isomorphic.
\end{proposition}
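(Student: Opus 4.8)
The plan is to exhibit a pair of mutually inverse functors between $\mathsf{GPS}^{\mathsf{F}}$ and $\mathsf{GPS}^{\mathsf{S}}$ that act as the identity on objects. Both categories have the same objects, namely generalized Priestley spaces, so I would define $\Phi\colon\mathsf{GPS}^{\mathsf{F}}\to\mathsf{GPS}^{\mathsf{S}}$ to be the identity on objects and to send a functional generalized Priestley morphism $R\subseteq X\times Y$ to the strong Priestley morphism $f^{R}\colon X\to Y$; dually, I would define $\Psi\colon\mathsf{GPS}^{\mathsf{S}}\to\mathsf{GPS}^{\mathsf{F}}$ to be the identity on objects and to send a strong Priestley morphism $f\colon X\to Y$ to the functional generalized Priestley morphism $R^{f}\subseteq X\times Y$.

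First I would check that $\Phi$ and $\Psi$ are genuine functors. That they are well-defined on morphisms is exactly the first assertions of Lemmas \ref{lemma-a} and \ref{lemma-b}, respectively. That they preserve composition is the ``Moreover'' parts of those lemmas: $f^{S\ast R}=f^{S}\circ f^{R}$ for $\Phi$, and $R^{g\circ f}=R^{g}\ast R^{f}$ for $\Psi$, where in each case the ordering of the composite is consistent with covariance. The only point not recorded verbatim in the three lemmas is preservation of identities, which is a one-line computation: the identity morphism of $X$ in $\mathsf{GPS}^{\mathsf{F}}$ is $\leq_{X}$, and since the least element of $\leq_{X}[x]={\uparrow}x$ is $x$, we get $f^{\leq_{X}}=\mathrm{id}_{X}$; conversely the identity morphism of $X$ in $\mathsf{GPS}^{\mathsf{S}}$ is $\mathrm{id}_{X}$, and $x\,R^{\mathrm{id}_{X}}\,y$ iff $\mathrm{id}_{X}(x)\leq y$ iff $x\leq y$, so $R^{\mathrm{id}_{X}}={\leq_{X}}$.

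Finally I would verify that $\Phi$ and $\Psi$ are mutually inverse. On objects this is immediate since both functors fix objects. On morphisms it is precisely Lemma \ref{lemma-c}: the identity $R^{f^{R}}=R$ shows that $\Psi\circ\Phi$ is the identity on the morphisms of $\mathsf{GPS}^{\mathsf{F}}$, and $f^{R^{f}}=f$ shows that $\Phi\circ\Psi$ is the identity on the morphisms of $\mathsf{GPS}^{\mathsf{S}}$. Hence $\Phi$ and $\Psi$ are mutually inverse isomorphisms of categories, which establishes the claim. There is no substantive obstacle here: all of the analytic content---well-definedness of the assignments $R\mapsto f^{R}$ and $f\mapsto R^{f}$, their functoriality, and the fact that they undo each other---has already been isolated in Lemmas \ref{lemma-a}--\ref{lemma-c}, so the remaining work is simply to package these facts, together with the routine identity check above, into the assertion that the two functors constitute an isomorphism of categories.
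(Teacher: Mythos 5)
Your proposal is correct and is essentially the paper's own argument: the paper derives Proposition \ref{sp-fr-isom} as an immediate consequence of Lemmas \ref{lemma-a}, \ref{lemma-b}, and \ref{lemma-c}, exactly the packaging you describe (identity-on-objects functors $R\mapsto f^{R}$ and $f\mapsto R^{f}$, with those lemmas supplying well-definedness, functoriality, and mutual inverseness). Your explicit check that identities are preserved ($f^{\leq_X}=\mathrm{id}_X$ and $R^{\mathrm{id}_X}={\leq_X}$) is a small detail the paper leaves implicit, and it is carried out correctly.
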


This together with Theorem \ref{theorem-duality-sup-hom}.2, immediately give us:

\begin{theorem} \label{bdmsh-gpsf}
The categories $\mathsf{BDM}^{\mathsf{S}}$ and $\mathsf{GPS}^{\mathsf{S}}$ are dually equivalent.
\end{theorem}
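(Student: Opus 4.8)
The plan is to obtain the dual equivalence by simply composing two results already established: the dual equivalence of $\mathsf{BDM}^{\mathsf{S}}$ and $\mathsf{GPS}^{\mathsf{F}}$ from Theorem \ref{theorem-duality-sup-hom}.2, and the covariant isomorphism of $\mathsf{GPS}^{\mathsf{F}}$ and $\mathsf{GPS}^{\mathsf{S}}$ from Proposition \ref{sp-fr-isom}. Since an isomorphism of categories is in particular an equivalence, and the composite of a dual equivalence with a (covariant) equivalence is again a dual equivalence, the statement follows at once. The whole task reduces to exhibiting the composite functors and checking that variance is tracked correctly.

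More explicitly, I would assemble the two pairs of functors into one. On one side, take the functor $(-)_{*}:\mathsf{BDM}^{\mathsf{S}}\to\mathsf{GPS}^{\mathsf{F}}$ of Theorem \ref{theorem-duality-sup-hom}.2 and postcompose it with the isomorphism $\mathsf{GPS}^{\mathsf{F}}\to\mathsf{GPS}^{\mathsf{S}}$ sending $R\mapsto f^{R}$ (Lemma \ref{lemma-a}); this sends a bounded distributive meet semi-lattice $L$ to the generalized Priestley space $L_{*}$ and a sup-homomorphism $h$ to the strong Priestley morphism $f^{R_{h}}$. On the other side, take the isomorphism $\mathsf{GPS}^{\mathsf{S}}\to\mathsf{GPS}^{\mathsf{F}}$ sending $f\mapsto R^{f}$ (Lemma \ref{lemma-b}) and postcompose it with $(-)^{*}:\mathsf{GPS}^{\mathsf{F}}\to\mathsf{BDM}^{\mathsf{S}}$. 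Functoriality of the composites is guaranteed by the relations $f^{S\ast R}=f^{S}\circ f^{R}$ and $R^{g\circ f}=R^{g}\ast R^{f}$ of Lemmas \ref{lemma-a} and \ref{lemma-b}, together with the functoriality of $(-)_{*}$ and $(-)^{*}$.

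It then remains to transport the natural isomorphisms. The natural isomorphisms witnessing $\mathrm{id}_{\mathsf{BDM}^{\mathsf{S}}}\cong ((-)_{*})^{*}$ and $\mathrm{id}_{\mathsf{GPS}^{\mathsf{F}}}\cong ((-)^{*})_{*}$ supplied by Theorem \ref{theorem-duality-sup-hom}.2 carry over verbatim under the categorical isomorphism of Proposition \ref{sp-fr-isom}, since the latter is the identity on objects and its morphism assignments $R\mapsto f^{R}$ and $f\mapsto R^{f}$ are mutually inverse by Lemma \ref{lemma-c} (that is, $R^{f^{R}}=R$ and $f^{R^{f}}=f$). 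Thus the unit and counit of the dual adjunction between $\mathsf{BDM}^{\mathsf{S}}$ and $\mathsf{GPS}^{\mathsf{S}}$ are precisely the images of those between $\mathsf{BDM}^{\mathsf{S}}$ and $\mathsf{GPS}^{\mathsf{F}}$.

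There is no genuine obstacle: all the substance has been isolated into Theorem \ref{theorem-duality-sup-hom}.2 and Proposition \ref{sp-fr-isom}, and the present statement is a formal consequence of their juxtaposition. The only point requiring a moment's care is the bookkeeping of variance, namely verifying that the isomorphism $\mathsf{GPS}^{\mathsf{F}}\cong\mathsf{GPS}^{\mathsf{S}}$ is covariant (which it is, as recorded by $f^{S\ast R}=f^{S}\circ f^{R}$), so that composing it with the contravariant functors $(-)_{*}$ and $(-)^{*}$ yields a \emph{dual} rather than an ordinary equivalence.
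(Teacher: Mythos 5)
Your proposal is correct and follows exactly the paper's route: the paper likewise obtains Theorem \ref{bdmsh-gpsf} as an immediate consequence of Theorem \ref{theorem-duality-sup-hom}.2 together with the categorical isomorphism $\mathsf{GPS}^{\mathsf{F}}\cong\mathsf{GPS}^{\mathsf{S}}$ of Proposition \ref{sp-fr-isom}, which in turn rests on Lemmas \ref{lemma-a}, \ref{lemma-b}, and \ref{lemma-c}. Your additional care with functoriality, the transport of the natural isomorphisms, and the variance bookkeeping simply makes explicit what the paper leaves implicit.
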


An explicit construction of functors from $\mathsf{BDM}^{\mathsf{S}}$ to $\mathsf{GPS}^{\mathsf{S}}$ and vice versa can be obtained based on the following observation.

\begin{lemma} \label{obs-functional-1}
\begin{enumerate}
\item[]
\item Let $X$ and $Y$ be generalized Priestley spaces and $f: X \to Y$ be a strong Priestley morphism. Then for each $U \in Y^{\ast}$ we have $h_{R^f}(U) = f^{-1}(U)$.
\item Let $L$ and $K$ be bounded distributive meet semi-lattices and $h: L \to K$ be a sup-homomorphism. Then $f^{R_h}(y) = h^{-1}(y)$ for each $y \in K_{\ast}$.
\end{enumerate}
\end{lemma}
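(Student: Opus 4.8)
The plan is to treat the two items separately, deriving each directly from results already established in this section; neither requires new machinery beyond unfolding the relevant definitions.

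For item (1), I would begin from the definition of $h_R$, which gives $h_{R^f}(U) = \Box_{R^f} U$. Since $f$ is a strong Priestley morphism, $R^f$ is a functional generalized Priestley morphism by Lemma \ref{lemma-b}, so $h_{R^f}$ is indeed defined on $Y^{*}$. The key computation is exactly the one already carried out in the proof of Lemma \ref{lemma-b}: for $x \in X$ we have $x \in \Box_{R^f} U$ iff $R^f[x] = {\uparrow}f(x) \subseteq U$, and since $U$ is an upset this holds iff $f(x) \in U$, i.e.\ iff $x \in f^{-1}(U)$. Hence $h_{R^f}(U) = \Box_{R^f} U = f^{-1}(U)$.

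For item (2), I would start from the definition $f^{R_h}(y) = \text{the least element of } R_h[y]$, which makes sense precisely because $h$ is a sup-homomorphism and hence $R_h$ is functional by Corollary \ref{cor-bot-sup}.2. By the definition of $R_h$ we have $R_h[y] = \{z \in L_* : h^{-1}(y) \subseteq z\}$. The proof of Lemma \ref{tot-fun}.2 shows exactly that, for a sup-homomorphism $h$, the filter $h^{-1}(y)$ is itself optimal, i.e.\ $h^{-1}(y) \in L_*$, and that it is the least element of $R_h[y]$: it lies in $R_h[y]$ and is contained in every member of $R_h[y]$. Therefore $f^{R_h}(y) = h^{-1}(y)$.

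I do not expect a genuine obstacle here, as both items reduce to computations already recorded earlier in the section. The only points demanding care are the well-definedness conditions needed to invoke the constructions $h_{R^f}$ and $f^{R_h}$ — namely that $R^f$ is functional in (1) and that $R_h$ is functional, equivalently $h^{-1}(y) \in L_*$, in (2). Both are guaranteed by the hypotheses via Lemma \ref{lemma-b} and, respectively, Corollary \ref{cor-bot-sup}.2 together with Lemma \ref{tot-fun}.2.
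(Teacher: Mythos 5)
Your proposal is correct and follows essentially the same route as the paper: item (1) is the unwinding of $\Box_{R^f}U$ via $R^f[x]={\uparrow}f(x)$ and the fact that $U$ is an upset (the computation already present in Lemma \ref{lemma-b}), and item (2) rests on $R_h$ being functional with least element $h^{-1}(y)\in L_*$, exactly as the paper argues via Lemma \ref{tot-fun}.2. The only difference is presentational — you cite the earlier computations rather than repeating them inline, which the paper partially re-derives — but the mathematical content coincides.
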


\begin{proof}
(1) Let $x \in h_{R^f}(U) =  \Box_{R^f}U$. Then $x R^f y$ implies $y \in U$. Since $x R^f f(x)$, it follows that $f(x) \in U$, and so $x \in f^{-1}(U)$. Now let $x \in f^{-1}(U)$. Then $f(x) \in U$. Therefore, if $x R^f y$, then $f(x) \leq y$, and so $y \in U$. Thus, $x \in \Box_{R^f}U = h_{R^f}(U)$, and so $h_{R^f}(U) = f^{-1}(U)$.

(2) Since $h$ is a sup-homomorphism, $R_h$ is a functional generalized Priestley morphism, and $h^{-1}(y)$ is the least element of $R_h[y]$. Thus, $a \in f^{R_h}(y)$ iff $a$ belongs to the least element of $R_h[y]$ iff $a \in h^{-1}(y)$.
\end{proof}

Now we can define the functors $(-)_{\star}: \mathsf{BDM}^{\mathsf{S}} \to \mathsf{GPS}^{\mathsf{S}}$ and $(-)^{\star}: \mathsf{GPS}^{\mathsf{S}} \to \mathsf{BDM}^{\mathsf{S}}$ explicitly as follows: If $L$ is a bounded distributive meet semi-lattice, then $L_\star=L_{*}$ and if $h:L\to K$ is a sup-homomorphism, then $h_{\star} = h^{-1}$; also, if $X$ is a generalized Priestley space, then $X^\star=X^*$, and if $f:X \to Y$ is a strong Priestley morphism, then $f^{\star} = f^{-1}$. Therefore, the functors $(-)_\star:\mathsf{BDM}^{\mathsf{S}}\to\mathsf{GPS}^{\mathsf{S}}$ and $(-)^\star:\mathsf{GPS}^{\mathsf{S}}\to\mathsf{BDM}^{\mathsf{S}}$ behave exactly like the Priestley functors $(-)_*:\mathsf{BDL}\to\mathsf{PS}$ and $(-)^*:\mathsf{PS}\to\mathsf{BDL}$.


\subsection{Functional generalized Esakia morphisms}

Our next task is to discuss the strong Priestley morphisms between generalized Esakia spaces which correspond to the functional generalized Esakia morphisms.

\begin{lemma} \label{funtionmorphism-Esakiamorphisms}
Let $X$ and $Y$ be generalized Esakia spaces and $f: X \to Y$ be a strong Priestley morphism. Then $R^{f}$ is a generalized Esakia morphism iff for each $x \in X$ and $y \in Y_0$, from $f(x) \leq y$ it follows that there exists $z \in X_0$ such that $x \leq z$ and $f(z) = y$.
\end{lemma}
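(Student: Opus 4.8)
The plan is to observe that this lemma is essentially a translation of the defining clause of a generalized Esakia morphism (Definition \ref{def-gem}) through the identity $R^f[x] = {\uparrow}f(x)$, so almost all of the content is definition-chasing once the right preliminary facts are in place.

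First I would invoke Lemma \ref{lemma-b}, which tells us that $R^f$ is \emph{always} a functional generalized Priestley morphism, regardless of whether the extra condition in the statement holds. Consequently, whether $R^f$ is a generalized Esakia morphism depends solely on whether it satisfies the one additional clause in Definition \ref{def-gem}: for each $x \in X$ and $y \in Y_0$, $x R^f y$ implies there is $z \in X_0$ with $x \leq z$ and $R^f[z] = {\uparrow}y$. Thus the whole lemma reduces to showing that this clause is equivalent to the condition in the statement.

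Next I would unwind the clause using the definition of $R^f$. By definition $x R^f y$ means exactly $f(x) \leq y$, and for every $z \in X$ we have $R^f[z] = {\uparrow}f(z)$. Hence $R^f[z] = {\uparrow}y$ reads ${\uparrow}f(z) = {\uparrow}y$. Since $X$ and $Y$ are Priestley spaces, $\leq$ is a partial order, so by antisymmetry ${\uparrow}f(z) = {\uparrow}y$ holds iff $f(z) = y$. Substituting these equivalences turns the clause of Definition \ref{def-gem} verbatim into: for each $x \in X$ and $y \in Y_0$, from $f(x) \leq y$ it follows that there exists $z \in X_0$ with $x \leq z$ and $f(z) = y$, which is precisely the stated condition. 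This yields the desired biconditional.

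I do not expect any serious obstacle here. The only point requiring a moment's care is the equivalence ${\uparrow}a = {\uparrow}b$ iff $a = b$, which rests on antisymmetry of the order; combined with the fact that $R^f$ is a generalized Priestley morphism for free from Lemma \ref{lemma-b}, the argument is a direct unpacking of the definitions.
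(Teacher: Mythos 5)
Your proof is correct and takes essentially the same route as the paper's: both amount to unpacking the definition of $R^f$ (so that $x R^f y$ reads $f(x)\leq y$, and $R^f[z]={\uparrow}f(z)$, whence $R^f[z]={\uparrow}y$ iff $f(z)=y$ by antisymmetry) inside the defining clause of Definition \ref{def-gem}. The paper writes the two implications out separately while you fold them into a single substitution argument after explicitly citing Lemma \ref{lemma-b} for the generalized Priestley part---a purely organizational difference, and your explicit appeal to Lemma \ref{lemma-b} is in fact slightly more careful than the paper's implicit use of it in the converse direction.
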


\begin{proof}
Suppose that $R^{f}$ is a generalized Esakia morphism. Let $x \in X$ and $y \in Y_0$. If $f(x)\leq y$, then $xR^fy$. Therefore, there exists $z \in X_0$ such that $x \leq z$ and $R^{f}[z] = {\uparrow}y$. It follows from the definition of $R^{f}$ that $f(z) = y$. Conversely, suppose that for each $x \in X$ and $y \in Y_0$, from $f(x) \leq y$ it follows that there exists $z \in X_0$ such that $x \leq z$ and $f(z) = y$. Let $x \in X$ and $y \in Y_0$. If $x R^{f} y$, then $f(x) \leq y$. Therefore, there exists $z \in X_0$ such that $x \leq z$ and $f(z) = y$. Thus, $R^{f}[z] = {\uparrow}y$, and so $R^{f}$ is a generalized Esakia morphism.
\end{proof}

\begin{definition} \label{functionEsakiamorphism}
{\rm Let $X$ and $Y$ be generalized Esakia spaces. We call a map $f: X \to Y$ a \textit{strong Esakia morphism} if it is a strong Priestley morphism such that for each $x \in X$ and $y \in Y_0$, from $f(x) \leq y$ it follows that there exists $z \in X_0$ such that $x \leq z$ and $f(z) = y$.
}
\end{definition}

As an immediate consequence of Lemmas \ref{lemma-c} and \ref{funtionmorphism-Esakiamorphisms}, we obtain:

\begin{lemma} \label{functionEsakiamorphism-2}
Let $X$ and $Y$ be generalized Esakia spaces and $R \subseteq X \times Y$ be a functional generalized Priestley morphism. Then $f^R$ is a strong Esakia morphism iff $R$ is a generalized Esakia morphism.
\end{lemma}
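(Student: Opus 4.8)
The plan is to chain together the two cited lemmas by way of the definition of a strong Esakia morphism. First I would observe that since $R$ is a functional generalized Priestley morphism, Lemma \ref{lemma-a} already guarantees that $f^R \colon X \to Y$ is a strong Priestley morphism. Consequently, by Definition \ref{functionEsakiamorphism}, the map $f^R$ is a strong Esakia morphism if and only if it satisfies the additional lifting property: for each $x \in X$ and $y \in Y_0$, from $f^R(x) \leq y$ it follows that there exists $z \in X_0$ with $x \leq z$ and $f^R(z) = y$. The point is that the ``strong Priestley morphism'' clause of the definition is automatic here, so the only thing left to test is this lifting condition.

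Next I would apply Lemma \ref{funtionmorphism-Esakiamorphisms} to the strong Priestley morphism $f = f^R$. That lemma says precisely that $R^{f^R}$ is a generalized Esakia morphism if and only if the lifting property displayed above holds for $f^R$. Combining this with the previous paragraph, I conclude that $f^R$ is a strong Esakia morphism if and only if $R^{f^R}$ is a generalized Esakia morphism. Finally, Lemma \ref{lemma-c} gives the identity $R^{f^R} = R$, so $R^{f^R}$ is a generalized Esakia morphism exactly when $R$ is. Stringing these three biconditionals together yields the claim.

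I do not expect a genuine obstacle, which is why the statement is flagged as an immediate consequence: all the real content has already been packaged into Lemmas \ref{lemma-c} and \ref{funtionmorphism-Esakiamorphisms}. The only care required is the bookkeeping in the first step, namely checking that the strong-Priestley clause of the strong Esakia definition is met for free (via Lemma \ref{lemma-a}), so that the lifting condition is genuinely the sole predicate appearing on both sides of the equivalence, allowing Lemma \ref{funtionmorphism-Esakiamorphisms} to be invoked cleanly.
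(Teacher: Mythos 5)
Your proposal is correct and matches the paper's own argument: the paper derives this lemma as an immediate consequence of Lemmas \ref{lemma-c} and \ref{funtionmorphism-Esakiamorphisms}, exactly the chain of biconditionals you spell out. Your explicit bookkeeping step (using Lemma \ref{lemma-a} to note that the strong-Priestley clause of Definition \ref{functionEsakiamorphism} holds automatically) is a welcome clarification of what the paper leaves implicit.
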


Let $\mathsf{GES}^{\mathsf{S}}$ denote the category of generalized Esakia spaces and strong Esakia morphisms. By Proposition \ref{sp-fr-isom} and Lemmas \ref{funtionmorphism-Esakiamorphisms} and \ref{functionEsakiamorphism-2}, we obtain:

\begin{proposition} \label{prop-func-str}
The categories $\mathsf{GES}^{\mathsf{S}}$ and $\mathsf{GES}^{\mathsf{F}}$ are isomorphic.
\end{proposition}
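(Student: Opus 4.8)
The plan is to restrict the category isomorphism between $\mathsf{GPS}^{\mathsf{F}}$ and $\mathsf{GPS}^{\mathsf{S}}$ furnished by Proposition \ref{sp-fr-isom} to the subcategories determined by the Esakia morphisms. Since every generalized Esakia space is in particular a generalized Priestley space, the objects of $\mathsf{GES}^{\mathsf{F}}$ and $\mathsf{GES}^{\mathsf{S}}$ coincide and are exactly the objects on which the $\mathsf{GPS}$-level isomorphism already agrees. Thus the only thing left to check is that the two mutually inverse assignments $R\mapsto f^R$ and $f\mapsto R^f$ carry the morphisms of one Esakia category bijectively onto those of the other.

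First I would verify that $R\mapsto f^R$ sends $\mathsf{GES}^{\mathsf{F}}$-morphisms to $\mathsf{GES}^{\mathsf{S}}$-morphisms. Let $R\subseteq X\times Y$ be a functional generalized Esakia morphism between generalized Esakia spaces; by definition $R$ is a functional generalized Priestley morphism that is also a generalized Esakia morphism. By Lemma \ref{functionEsakiamorphism-2}, $f^R$ is a strong Esakia morphism precisely because $R$ is a generalized Esakia morphism. Conversely, I would check that $f\mapsto R^f$ sends $\mathsf{GES}^{\mathsf{S}}$-morphisms to $\mathsf{GES}^{\mathsf{F}}$-morphisms: if $f\colon X\to Y$ is a strong Esakia morphism, then $R^f$ is functional at the $\mathsf{GPS}$-level by Lemma \ref{lemma-b}, and by Lemma \ref{funtionmorphism-Esakiamorphisms} the strong Esakia condition on $f$ is exactly what forces $R^f$ to be a generalized Esakia morphism. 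Hence $R^f$ is a functional generalized Esakia morphism.

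Finally, the identities $R^{f^R}=R$ and $f^{R^f}=f$, as well as the preservation of composition and of identity morphisms, are all inherited verbatim from the $\mathsf{GPS}$-level statements already recorded in Lemmas \ref{lemma-a}, \ref{lemma-b}, and \ref{lemma-c}; nothing new has to be proved at the Esakia level. There is essentially no hard step: the entire content has been pushed into Lemmas \ref{funtionmorphism-Esakiamorphisms} and \ref{functionEsakiamorphism-2}, whose role is exactly to guarantee that the $\mathsf{GPS}$-isomorphism respects the defining Esakia side condition. The only point requiring a moment's care is to observe that both \emph{strong Esakia morphism} and \emph{functional generalized Esakia morphism} are defined as a $\mathsf{GPS}$-level morphism plus the very same extra condition, so that the restricted assignments are well-defined and mutually inverse, yielding the desired isomorphism of categories.
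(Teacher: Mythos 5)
Your proposal is correct and follows essentially the same route as the paper: the paper also obtains this proposition as an immediate consequence of Proposition \ref{sp-fr-isom} together with Lemmas \ref{funtionmorphism-Esakiamorphisms} and \ref{functionEsakiamorphism-2}, restricting the $\mathsf{GPS}^{\mathsf{F}}\cong\mathsf{GPS}^{\mathsf{S}}$ isomorphism to the Esakia subcategories. Your fleshed-out version of which lemma handles which direction matches the paper's intent exactly.
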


Let also $\mathsf{BIM}^{\mathsf{S}}$ denote the category of bounded implicative meet semi-lattices and implicative meet semi-lattice sup-homomorphisms. As an immediate consequence of Theorem \ref{theorem-duality-sup-hom} and Proposition \ref{prop-func-str}, we obtain:

\begin{theorem}
\label{BIM-sup-Esakia-strong}
The categories $\mathsf{BIM}^{\mathsf{S}}$ and $\mathsf{GES}^{\mathsf{S}}$ are dually equivalent.
\end{theorem}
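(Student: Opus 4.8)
The plan is to obtain the theorem by composing two results already in hand, exactly mirroring the way Theorem \ref{bdmsh-gpsf} was deduced in the Priestley setting. First I would invoke Theorem \ref{theorem-ges}.3, which establishes that $\mathsf{BIM}^{\mathsf{S}}$ is dually equivalent to $\mathsf{GES}^{\mathsf{F}}$ through the restrictions of the functors $(-)_{*}$ and $(-)^{*}$: a sup-homomorphism $h$ is sent to the functional generalized Esakia morphism $R_h$, and a functional generalized Esakia morphism $R$ is sent to the implicative sup-homomorphism $h_R$. Second I would invoke Proposition \ref{prop-func-str}, which says that $\mathsf{GES}^{\mathsf{S}}$ and $\mathsf{GES}^{\mathsf{F}}$ are \emph{isomorphic} as categories, the isomorphism being given on objects by the identity and on morphisms by the mutually inverse assignments $f \mapsto R^f$ and $R \mapsto f^R$ of Lemmas \ref{lemma-a}, \ref{lemma-b}, and \ref{lemma-c}.

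The theorem then follows by composition. An isomorphism of categories is in particular a covariant equivalence, so composing the dual equivalence $\mathsf{BIM}^{\mathsf{S}} \simeq \mathsf{GES}^{\mathsf{F}}$ with the isomorphism $\mathsf{GES}^{\mathsf{F}} \cong \mathsf{GES}^{\mathsf{S}}$ yields a dual equivalence $\mathsf{BIM}^{\mathsf{S}} \simeq \mathsf{GES}^{\mathsf{S}}$. Concretely, the resulting contravariant functor sends a bounded implicative meet semi-lattice $L$ to the generalized Esakia space $L_*$ and a sup-homomorphism $h \colon L \to K$ to the strong Esakia morphism $f^{R_h} \colon K_* \to L_*$; by Lemma \ref{obs-functional-1}.2 this morphism is simply $h^{-1}$. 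In the reverse direction a strong Esakia morphism $f$ is sent to $h_{R^f} = f^{-1}$ by Lemma \ref{obs-functional-1}.1. Thus the composed functors behave on morphisms just like the classical Priestley and Esakia functors, taking inverse images.

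I do not expect a genuine obstacle here, since the two ingredients are already proved; the only point requiring a line of care is verifying that the morphism assignments of the composite really reduce to $(-)^{-1}$, i.e.\ that $f^{R_h} = h^{-1}$ and $h_{R^f} = f^{-1}$. Both identities are recorded in Lemma \ref{obs-functional-1}, whose Esakia version is immediate because a strong Esakia morphism is by definition a strong Priestley morphism satisfying an extra back-condition, so the underlying Priestley-level computations of Lemma \ref{obs-functional-1} apply verbatim. Hence the statement is a direct corollary of Theorem \ref{theorem-ges}.3 and Proposition \ref{prop-func-str}.
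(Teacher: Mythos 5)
Your proposal is correct and takes essentially the same route as the paper, which obtains the theorem by composing the dual equivalence $\mathsf{BIM}^{\mathsf{S}} \dueq \mathsf{GES}^{\mathsf{F}}$ (Theorem \ref{theorem-ges}.3, the implicative restriction of Theorem \ref{theorem-duality-sup-hom}) with the categorical isomorphism $\mathsf{GES}^{\mathsf{F}} \cong \mathsf{GES}^{\mathsf{S}}$ of Proposition \ref{prop-func-str}. Your additional check via Lemma \ref{obs-functional-1} that the composite functors act on morphisms by taking inverse images is a harmless refinement consistent with the paper's explicit construction in the Priestley case.
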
 
\section{Priestley and Esakia dualities as particular cases}

In this section we show how the well-known Priestley and Esakia
dualities are particular cases of our dualities for bounded
distributive meet semi-lattices and bounded implicative meet
semi-lattices, respectively. We also obtain an application to modal
logic by showing that descriptive frames, which are duals of modal
algebras, are exactly generalized Priestley morphisms of Stone
spaces into themselves. Finally, we give a new dual representation
of Heyting algebra homomorphisms by means of special partial
functions. This yields a new duality for Heyting algebras, which is
an alternative to the Esakia duality.

\subsection{Priestley duality as a particular case}

We start by showing how the Priestley duality between the category $\mathsf{BDL}$ of  bounded
distributive lattices and bounded lattice homomorphisms on the one
hand and the category $\mathsf{PS}$ of Priestley spaces and Priestley morphisms on the other follows from  Theorem \ref{bdmsh-gpsf}.

Let $L$ be a bounded distributive lattice. Then $L_*=L_+$, and so $\la L_*,\tau,\subseteq,L_+\ra = {\la L_+,\tau,\subseteq\ra}$ is a Priestley space.
Conversely, if $X=\la X,\tau,\leq \ra$ is a Priestley space, then $X^*=\mathfrak{CU}(X)$. Therefore, given two Priestley spaces $X$ and $Y$, a map $f:X\to Y$ is a strong Priestley morphism iff $f$ is order-preserving and $V\in\mathfrak{CU}(Y)$ implies $f^{-1}(V)\in\mathfrak{CU}(X)$. Because $\mathfrak{CU}(X)\cup\{U^c:U\in\mathfrak{CU}(X)$ and $\mathfrak{CU}(Y)\cup\{V^c:V\in\mathfrak{CU}(Y)$ are subbases for the Priestley topologies on $X$ and $Y$, respectively, the last condition is equivalent to $f$ being continuous. Thus, the notions of a strong Priestley morphism and of a Priestley morphism coincide.


\begin{lemma}\label{lemma-dist}
Let $L$ and $K$ be bounded distributive lattices and $h:L\to K$ be a bounded meet semi-lattice homomorphism. Then the following conditions are equivalent:
\begin{enumerate}
\item $h$ preserves $\vee$.
\item $h$ is a sup-homomorphism.
\item $h^{-1}(x)\in L_+$ for each $x\in K_+$.
\end{enumerate}
\end{lemma}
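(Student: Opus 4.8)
The plan is to prove the two equivalences by reducing $(2)\Leftrightarrow(3)$ to the machinery already in place, and then proving $(1)\Leftrightarrow(3)$ directly via the prime filter lemma. For $(2)\Leftrightarrow(3)$: since $L$ and $K$ are bounded distributive lattices, Lemma \ref{lemma3} gives $L_*=L_+$ and $K_*=K_+$, so optimal and prime filters coincide on both. By Proposition \ref{prop-sup}, $h$ is a sup-homomorphism iff $h^{-1}(x)\in L_*$ for every $x\in K_*$; rewriting $L_*,K_*$ as $L_+,K_+$ turns this verbatim into condition $(3)$. Thus $(2)\Leftrightarrow(3)$ requires no further work once those identifications are invoked.

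For $(1)\Rightarrow(3)$, I would fix $x\in K_+$ and check that $h^{-1}(x)$ is a prime filter of $L$. That $h^{-1}(x)$ is a filter is routine from $h$ preserving $\wedge$ and $\top$ together with $x$ being a filter; it is proper because $h$ preserves $\bot$, so $h(\bot)=\bot\notin x$ forces $\bot\notin h^{-1}(x)$. Primeness is where hypothesis $(1)$ enters: if $a\vee b\in h^{-1}(x)$, then $h(a\vee b)=h(a)\vee h(b)\in x$ by $\vee$-preservation, and since $x$ is prime in the lattice $K$ we get $h(a)\in x$ or $h(b)\in x$, that is $a\in h^{-1}(x)$ or $b\in h^{-1}(x)$.

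For $(3)\Rightarrow(1)$, I would show $h(a\vee b)=h(a)\vee h(b)$ for all $a,b\in L$. One inequality is free, since $h$ is order-preserving: $h(a),h(b)\leq h(a\vee b)$ yields $h(a)\vee h(b)\leq h(a\vee b)$ in $K$. For the reverse, suppose $h(a\vee b)\not\leq h(a)\vee h(b)$; then ${\uparrow}h(a\vee b)$ and the ideal ${\downarrow}(h(a)\vee h(b))$ are disjoint, so by the prime filter lemma (Lemma \ref{meetprime}, applied in $K$) there is $x\in K_+$ with $h(a\vee b)\in x$ and $h(a)\vee h(b)\notin x$. As $x$ is a lattice prime filter, the latter forces $h(a)\notin x$ and $h(b)\notin x$, hence $a,b\notin h^{-1}(x)$, while $a\vee b\in h^{-1}(x)$. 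Since $(3)$ makes $h^{-1}(x)$ a prime filter of $L$, this contradicts primeness, giving $h(a\vee b)\leq h(a)\vee h(b)$ and hence equality. The only genuine obstacle lies in this last direction, where the separating prime filter cannot be exhibited explicitly but must be produced by the prime filter lemma; the remaining steps are straightforward transcriptions of the lattice facts and of the sup-homomorphism characterization already available.
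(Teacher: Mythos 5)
Your proof is correct, but it takes a partially different route from the paper's. For (2)$\Leftrightarrow$(3) you do exactly what the paper does: invoke Proposition \ref{prop-sup} together with the identification $L_*=L_+$ and $K_*=K_+$ coming from Lemma \ref{lemma3}. The paper, however, disposes of condition (1) by citing Proposition \ref{sup-hom-pres-exist-joins} (sup-homomorphisms preserve existing joins), so its entire proof is a one-line appeal to previously established facts, whereas you prove (1)$\Leftrightarrow$(3) from scratch: (1)$\Rightarrow$(3) by checking directly that $h^{-1}(x)$ is a proper prime filter of $L$, and (3)$\Rightarrow$(1) by separating the filter ${\uparrow}h(a\vee b)$ from the ideal ${\downarrow}(h(a)\vee h(b))$ via the Prime Filter Lemma \ref{meetprime} and contradicting primeness of $h^{-1}(x)$. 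Both arguments are sound; the paper's is shorter because it reuses the general theory of sup-homomorphisms developed in Section 5, while yours is self-contained and needs only the prime filter lemma plus Proposition \ref{prop-sup} --- a reasonable trade if one does not want to unwind what Section 5 says about existing joins. One small slip worth fixing: in (3)$\Rightarrow$(1), the step ``$h(a)\vee h(b)\notin x$ forces $h(a),h(b)\notin x$'' uses only that $x$ is an upset (since $h(a),h(b)\leq h(a)\vee h(b)$), not that $x$ is prime; the primeness produced by the Prime Filter Lemma is needed only so that condition (3) applies to $x$, and it is the primeness of $h^{-1}(x)$ that actually carries the contradiction.
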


\begin{proof}
This is an immediate consequence of Propositions \ref{prop-sup} and \ref{sup-hom-pres-exist-joins} and the fact that $L_*=L_+$ and $K_*=K_+$.
\end{proof}

The well-known Priestely duality is now an immediate consequence of Theorem \ref{bdmsh-gpsf} and Lemmas \ref{obs-functional-1} and \ref{lemma-dist}.

\begin{corollary}
The category $\mathsf{BDL}$ is dually equivalent to the category
$\mathsf{PS}$.
\end{corollary}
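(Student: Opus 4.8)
The plan is to obtain the corollary by restricting the dual equivalence of Theorem \ref{bdmsh-gpsf} to suitable full subcategories, rather than by building new functors from scratch. Concretely, I would first realize $\mathsf{BDL}$ as a full subcategory of $\mathsf{BDM}^{\mathsf{S}}$ and $\mathsf{PS}$ as a full subcategory of $\mathsf{GPS}^{\mathsf{S}}$, then check that the functors $(-)_\star$ and $(-)^\star$ carry one subcategory into the other, so that the natural isomorphisms already supplied restrict to yield the desired dual equivalence.

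For the object and morphism identifications, on the algebraic side a bounded distributive lattice $\la L,\wedge,\vee,\top,\bot\ra$ has as its $\wedge$-reduct a bounded distributive meet semi-lattice, and since in a lattice the join is uniquely recoverable from the meet order, passing to the $\wedge$-reduct identifies the objects of $\mathsf{BDL}$ with exactly the lattice objects of $\mathsf{BDM}^{\mathsf{S}}$. The morphisms then match by Lemma \ref{lemma-dist}: between bounded distributive lattices a bounded meet semi-lattice homomorphism is a sup-homomorphism iff it preserves $\vee$, i.e.\ iff it is a bounded lattice homomorphism. Hence $\mathsf{BDL}$ is (isomorphic to) the full subcategory of $\mathsf{BDM}^{\mathsf{S}}$ whose objects are lattices. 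On the topological side, a Priestley space $\la X,\tau,\leq\ra$ is precisely a generalized Priestley space with $X_0=X$, for then $X^*=\mathfrak{CU}(X)$ and conditions (2)--(5) of Definition \ref{g-Priestley} become vacuous; and, as observed before Lemma \ref{lemma-dist}, strong Priestley morphisms between two Priestley spaces coincide with the usual Priestley morphisms. Thus $\mathsf{PS}$ is the full subcategory of $\mathsf{GPS}^{\mathsf{S}}$ whose objects are Priestley spaces.

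Next I would check that the functors restrict correctly. For $L\in\mathsf{BDL}$ we have $L_*=L_+$, so $L_\star=\la L_*,\tau,\subseteq,L_+\ra=\la L_+,\tau,\subseteq\ra$ is an ordinary Priestley space, while for a Priestley space $X$ we have $X^\star=X^*=\mathfrak{CU}(X)$, a bounded distributive lattice; on morphisms $(-)_\star$ and $(-)^\star$ act as $h\mapsto h^{-1}$ and $f\mapsto f^{-1}$ by Lemma \ref{obs-functional-1}, landing in the respective subcategories and coinciding with the classical Priestley functors. Since the natural isomorphisms $\phi_L:L\to{L_\star}^\star$ of Theorem \ref{proposition8} and $\psi:X\to{X^\star}_\star$ of Theorem \ref{homeom} restrict componentwise to these subcategories, the restricted functors inherit the dual equivalence, giving $\mathsf{BDL}\simeq\mathsf{PS}^{\mathrm{op}}$. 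I expect no genuine obstacle here: the only point requiring care is the bookkeeping of the two morphism identifications (invoking Lemma \ref{lemma-dist} on the algebraic side and the already-established coincidence of strong Priestley and Priestley morphisms on the topological side), after which the restriction of a dual equivalence to full subcategories preserved by both functors is purely formal.
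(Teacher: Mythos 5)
Your proposal is correct and follows essentially the same route as the paper: the paper also derives the corollary from Theorem \ref{bdmsh-gpsf} together with Lemma \ref{lemma-dist} (sup-homomorphisms between lattices are exactly the bounded lattice homomorphisms), Lemma \ref{obs-functional-1} (the restricted functors act as $h\mapsto h^{-1}$, $f\mapsto f^{-1}$), and the observations that $L_*=L_+$ for a bounded distributive lattice and that strong Priestley morphisms between Priestley spaces coincide with Priestley morphisms.
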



Let $\mathsf{BDL}^{\wedge,\top}$ denote the
category of bounded distributive lattices and meet semi-lattice
homomorphisms preserving top and let
$\mathsf{BDL}^{\wedge,\top,\bot}$ denote the category of bounded
distributive lattices and bounded meet semi-lattice homomorphisms.
Clearly
$\mathsf{BDL}$ is a proper subcategory of
$\mathsf{BDL}^{\wedge,\top,\bot}$ and
$\mathsf{BDL}^{\wedge,\top,\bot}$ is a proper subcategory of
$\mathsf{BDL}^{\wedge,\top}$. Let $\mathsf{PS}^\mathsf{R}$ denote
the category of Priestley spaces and generalized Priestley
morphisms, $\mathsf{PS}^\mathsf{T}$ denote the category
of Priestley spaces and total generalized Priestley morphisms, and $\mathsf{PS}^\mathsf{F}$ denote the category of
Priestley spaces and functional generalized Priestley morphisms. Clearly
$\mathsf{PS}^\mathsf{F}$ is a proper subcategory of
$\mathsf{PS}^\mathsf{T}$ and $\mathsf{PS}^\mathsf{T}$ is a proper subcategory of
$\mathsf{PS}$. As an immediate consequence of Theorems
\ref{theorem-duality} and \ref{theorem-duality-sup-hom} and Proposition \ref{sp-fr-isom},
we obtain:

\begin{corollary}\label{cor-DL}
\begin{enumerate}
\item[]
\item The category $\mathsf{BDL}^{\wedge,\top}$ is dually equivalent to the
category $\mathsf{PS}^\mathsf{R}$.
\item The category $\mathsf{BDL}^{\wedge,\top,\bot}$ is dually equivalent to
the category $\mathsf{PS}^\mathsf{T}$.
\item The category $\mathsf{BDL}$ is dually equivalent to the category
$\mathsf{PS}^\mathsf{F}$, which is isomorphic to $\mathsf{PS}$.
\end{enumerate}
\end{corollary}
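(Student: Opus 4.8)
The plan is to exhibit each of the three categories on the lattice side as the full subcategory of the corresponding bounded distributive meet semi-lattice category cut out by the lattice objects, to exhibit each Priestley-space category on the dual side as the full subcategory of the corresponding generalized Priestley category cut out by the Priestley-space objects, and then to invoke the elementary fact that a (dual) equivalence restricts to a (dual) equivalence between full subcategories that its functors match object-for-object. Thus the statement reduces to a single object-level correspondence together with some bookkeeping of morphism classes.

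First I would record the object correspondence. If $L\in\mathsf{BDM}$ is a bounded distributive lattice, then by Lemma \ref{lemma3} every optimal filter of $L$ is prime, so $L_*=L_+$ and $L_*=\la L_+,\tau,\subseteq\ra$ is a Priestley space; conversely, a generalized Priestley space $X$ has $X_0=X$ exactly when $X^*=\mathfrak{CU}(X)$ is closed under finite unions, that is, when $X^*$ is a bounded distributive lattice and $X$ is a Priestley space. Hence under the contravariant functors $(-)_*$ and $(-)^*$ of Theorem \ref{theorem-duality} the lattice objects of $\mathsf{BDM}$ correspond precisely to the Priestley-space objects of $\mathsf{GPS}$. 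Since $\mathsf{BDL}^{\wedge,\top}$ is by definition the full subcategory of $\mathsf{BDM}$ on the lattice objects and $\mathsf{PS}^{\mathsf{R}}$ is the full subcategory of $\mathsf{GPS}$ on the Priestley-space objects, restricting Theorem \ref{theorem-duality} gives (1). Part (2) is identical, restricting the dual equivalence $\mathsf{BDM}^{\bot}$ with $\mathsf{GPS}^{\mathsf{T}}$ of Theorem \ref{theorem-duality-sup-hom}.1 to the same lattice objects: the bounded meet semi-lattice homomorphisms between bounded distributive lattices are exactly the morphisms of $\mathsf{BDL}^{\wedge,\top,\bot}$, and the total generalized Priestley morphisms between Priestley spaces are exactly those of $\mathsf{PS}^{\mathsf{T}}$.

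For (3) the only extra input is the identification of morphism classes. By Lemma \ref{lemma-dist}, between bounded distributive lattices a bounded meet semi-lattice homomorphism is a sup-homomorphism iff it preserves $\vee$; since a sup-homomorphism preserves $\wedge,\top,\bot$, this shows that the sup-homomorphisms between bounded distributive lattices are precisely the bounded lattice homomorphisms. Hence $\mathsf{BDL}$ is the full subcategory of $\mathsf{BDM}^{\mathsf{S}}$ on the lattice objects, and restricting Theorem \ref{theorem-duality-sup-hom}.2 yields that $\mathsf{BDL}$ is dually equivalent to $\mathsf{PS}^{\mathsf{F}}$, the full subcategory of $\mathsf{GPS}^{\mathsf{F}}$ on Priestley-space objects. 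Finally, Proposition \ref{sp-fr-isom} gives an isomorphism $\mathsf{GPS}^{\mathsf{F}}\cong\mathsf{GPS}^{\mathsf{S}}$ which, restricted to Priestley-space objects, identifies $\mathsf{PS}^{\mathsf{F}}$ with the full subcategory of $\mathsf{GPS}^{\mathsf{S}}$ on Priestley spaces; and by the observation preceding the corollary, on Priestley spaces strong Priestley morphisms coincide with ordinary Priestley morphisms, so that subcategory is exactly $\mathsf{PS}$. Therefore $\mathsf{PS}^{\mathsf{F}}\cong\mathsf{PS}$.

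The argument is almost entirely bookkeeping; the one point that genuinely needs care — and the main obstacle — is verifying that the stated space categories $\mathsf{PS}^{\mathsf{R}},\mathsf{PS}^{\mathsf{T}},\mathsf{PS}^{\mathsf{F}}$ really are the full subcategories of $\mathsf{GPS},\mathsf{GPS}^{\mathsf{T}},\mathsf{GPS}^{\mathsf{F}}$ determined by the Priestley-space objects, so that both the notion of (total, functional) generalized morphism and the composition $\ast$ restrict unchanged. Together with the matching morphism-class identifications — by definition on objects for (1) and (2), and by Lemma \ref{lemma-dist} for (3) — this ensures that each restriction of an equivalence to matched full subcategories is again an equivalence, which is exactly what the three clauses assert.
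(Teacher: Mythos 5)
Your proposal is correct and takes essentially the same route as the paper: the paper also obtains Corollary \ref{cor-DL} as an ``immediate consequence'' of restricting Theorems \ref{theorem-duality} and \ref{theorem-duality-sup-hom} and Proposition \ref{sp-fr-isom} to the full subcategories of lattice objects and Priestley-space objects, relying on the same facts established just before the corollary, namely $L_*=L_+$ for a bounded distributive lattice, $X^*=\mathfrak{CU}(X)$ for a Priestley space, the coincidence of strong Priestley morphisms with Priestley morphisms, and Lemma \ref{lemma-dist}. The only difference is that you make explicit the bookkeeping (fullness of the subcategories, the object-level matching under the functors, and the restriction of the natural isomorphisms) that the paper leaves implicit.
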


\subsubsection{Application to modal logic}

Our results have an application to modal logic. Let $(B,\Box)$ be
a \emph{modal algebra}; that is, $B$ is a Boolean algebra and
$\Box:B\to B$ satisfies $\Box(a\wedge b)=\Box(a) \wedge\Box(b)$
and $\Box \top=\top$. It follows from the well-known duality for
modal algebras that the dual objects corresponding to modal
algebras are \emph{descriptive frames} $(X,R)$, where $X$ is a
Stone space and $R$ is a binary relation on $X$ such that (i)
$R[x]$ is closed for each $x\in X$, and (ii) $R^{-1}[U]$ is clopen
whenever $U$ is clopen. Obviously $\Box:B\to B$ is a particular
case of a meet semi-lattice homomorphism (between Boolean
algebras) preserving top. We show that for Stone spaces $X$,
generalized Priestley morphisms $R\subseteq X\times X$ are exactly
those binary relations $R$ on $X$ for which $(X,R)$ is a
descriptive frame. Let $X$ be a Stone space and let $R\subseteq
X\times X$ be a generalized Priestley morphism. Since $\leq_X$ is simply equality, $X^*$ coincides with the
Boolean algebra of clopen subsets of $X$. Therefore, condition (2) of
Definition \ref{definition} is equivalent to condition (ii). We
show that condition (1) of Definition \ref{definition} is
equivalent to condition (i). Suppose that condition (1) is
satisfied and $y\notin R[x]$. Then $x\nR y$, and by condition (1),
there exists a clopen subset $U$ of $X$ such that $R[x]\subseteq
U$ and $y\notin U$. Thus, $R[x]$ is closed and condition (i) is
satisfied. Conversely suppose that condition (i) is satisfied and
$x\nR y$. Then $y\notin R[x]$. Since $R[x]$ is closed and $X$ is a
Stone space, there exists a clopen subset $U$ of $X$ such that
$R[x]\subseteq U$ and $y\notin U$. Thus, condition (1) is
satisfied.

\subsection{Esakia duality as a particular case}

Now we show that Esakia's duality between the category $\mathsf{HA}$ of Heyting algebras and Heyting algebra homomorphisms and the category $\mathsf{ES}$ of Esakia spaces and Esakisa morphisms follows from Theorem \ref{BIM-sup-Esakia-strong}. As with Priestely spaces, Esakia spaces are simply generalized Esakis spaces $X = \la X, \leq, \tau, X_0\ra$ in which $X_0 = X$. Consequently, the concepts of an Esakia morphism and of a strong Esakia morphism coincide. Thus, by Lemmas \ref{lemma-a}, \ref{lemma-b}, \ref{funtionmorphism-Esakiamorphisms}, and \ref{functionEsakiamorphism-2}, for Esakia spaces $X$ and $Y$, if $R\subseteq X\times Y$ is a functional generalized Esakia morphism, then $f^R:X\to Y$ is an Esakia morphism, and if $f:X\to Y$ is an Esakia morphism, then $R^{f}\subseteq X\times Y$ is a functional generalized Esakia morphism.
%
%
This together with Theorem \ref{BIM-sup-Esakia-strong} gives us the well-known Esakia duality:

\begin{corollary}
The category $\mathsf{HA}$ of Heyting algebras and Heyting algebra
homomorphisms is dually equivalent to the category $\mathsf{ES}$.
\end{corollary}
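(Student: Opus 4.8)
The plan is to exhibit $\mathsf{HA}$ and $\mathsf{ES}$ as full subcategories of $\mathsf{BIM}^{\mathsf{S}}$ and $\mathsf{GES}^{\mathsf{S}}$, respectively, and then to check that the dual equivalence of Theorem~\ref{BIM-sup-Esakia-strong} restricts to them. Since a Heyting algebra is precisely a bounded implicative meet semi-lattice whose $\wedge$-reduct happens to be a lattice (the implication then being the relative pseudocomplement), the objects of $\mathsf{HA}$ are exactly the objects of $\mathsf{BIM}^{\mathsf{S}}$ that are lattices, while by the observation preceding the statement the objects of $\mathsf{ES}$ are exactly the generalized Esakia spaces $X$ with $X_0=X$. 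Thus the whole corollary reduces to verifying that (i) the functors $(-)_*$ and $(-)^*$ carry these two classes of objects to one another, and (ii) the two inclusions are full on morphisms.

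First I would settle the object correspondence. If $L\in\mathsf{HA}$, then $L$ is a bounded distributive lattice, so by Lemma~\ref{lemma3} every optimal filter of $L$ is prime, whence $L_*=L_+$; hence $L_*$ is a generalized Esakia space with $X_0=X$, i.e.\ an Esakia space. Conversely, if $X\in\mathsf{ES}$, then $X_0=X$ forces $X^*=\mathfrak{CU}(X)$ (every clopen upset is admissible when $X_0=X$); by Proposition~\ref{lemma-ims-ges} this is a bounded implicative meet semi-lattice, and since $\mathfrak{CU}(X)$ is moreover closed under finite unions it is a lattice, hence a Heyting algebra. Using the representation isomorphisms $L\simeq{L_*}^*$ (Theorem~\ref{proposition8rep}) and $X\simeq{X^*}_*$ (Theorem~\ref{homeom}), one sees that $L\mapsto L_*$ and $X\mapsto X^*$ match the objects of $\mathsf{HA}$ with those of $\mathsf{ES}$.

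For the morphisms I would argue fullness on both sides. Between two Heyting algebras a $\mathsf{BIM}^{\mathsf{S}}$-morphism is an implicative meet semi-lattice sup-homomorphism; by Lemma~\ref{lemma-dist} a sup-homomorphism between bounded distributive lattices is exactly a $\vee$-preserving bounded homomorphism, and since such a map already preserves $\wedge$, $\top$, $\bot$, and $\to$, it is precisely a Heyting algebra homomorphism. Hence $\mathsf{HA}$ is full in $\mathsf{BIM}^{\mathsf{S}}$. On the dual side, the discussion preceding the statement, via Lemmas~\ref{lemma-a}, \ref{lemma-b}, \ref{funtionmorphism-Esakiamorphisms}, and \ref{functionEsakiamorphism-2}, already shows that for Esakia spaces strong Esakia morphisms coincide with Esakia morphisms, so $\mathsf{ES}$ is full in $\mathsf{GES}^{\mathsf{S}}$. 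Combining (i) and (ii), the functors of Theorem~\ref{BIM-sup-Esakia-strong} restrict to mutually quasi-inverse contravariant functors between $\mathsf{HA}$ and $\mathsf{ES}$, and the associated natural isomorphisms restrict as well, which yields the asserted dual equivalence.

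The main obstacle I anticipate is conceptual rather than computational: one must invoke the \emph{sup}-version of the duality (Theorem~\ref{BIM-sup-Esakia-strong}) and not the plain duality between $\mathsf{BIM}$ and $\mathsf{GES}$. The reason is exactly the fullness step for morphisms, since an arbitrary implicative meet semi-lattice homomorphism between Heyting algebras need not preserve $\vee$ (so it would fail to be a Heyting homomorphism), and dually a generalized Esakia morphism between Esakia spaces need not be functional. It is the passage to sup-homomorphisms (equivalently, functional, equivalently strong morphisms) that pins the morphisms down to exactly the Heyting homomorphisms and the Esakia morphisms; ensuring this matching, and in particular that $\vee$-preservation is correctly detected by the sup-homomorphism condition through Lemma~\ref{lemma-dist}, is where the care is needed.
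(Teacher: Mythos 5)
Your proposal is correct and follows essentially the same route as the paper: the paper also obtains the Esakia duality by restricting the dual equivalence of Theorem \ref{BIM-sup-Esakia-strong} between $\mathsf{BIM}^{\mathsf{S}}$ and $\mathsf{GES}^{\mathsf{S}}$, identifying Heyting algebras as the bounded implicative meet semi-lattices with $L_*=L_+$ (so that $L_*$ is an Esakia space and $X^*=\mathfrak{CU}(X)$ is a Heyting algebra), matching Heyting homomorphisms with sup-homomorphisms via Lemma \ref{lemma-dist}, and noting that for $X_0=X$ the strong Esakia morphisms coincide with Esakia morphisms. Your write-up merely makes explicit (as fullness of the two subcategory inclusions) what the paper leaves implicit in its reference to the Priestley subsection.
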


Let $\mathsf{HA}^{\wedge,\to}$ denote the category of Heyting
algebras and implicative meet semi-lattice homomorphisms, and let
$\mathsf{HA}^{\wedge,\to,\bot}$ denote the category of Heyting
algebras and bounded implicative meet semi-lattice homomorphisms.
Clearly $\mathsf{HA}$ is
a proper subcategory of $\mathsf{HA}^{\wedge,\to,\bot}$ and
$\mathsf{HA}^{\wedge,\to,\bot}$ is a proper subcategory of
$\mathsf{HA}^{\wedge,\to}$. Let $\mathsf{ES}^\mathsf{R}$ denote the
category of Esakia spaces and generalized Esakia morphisms,
$\mathsf{ES}^\mathsf{T}$ denote the category of Esakia spaces and
total generalized Esakia morphisms, and $\mathsf{ES}^\mathsf{F}$
denote the category of Esakia spaces and functional generalized
Esakia morphisms. Clearly $\mathsf{ES}^\mathsf{F}$ is a proper
subcategory of $\mathsf{ES}^\mathsf{T}$ and
$\mathsf{ES}^\mathsf{T}$ is a proper subcategory of
$\mathsf{ES}^\mathsf{R}$.  As an immediate
consequence of Theorem \ref{theorem-ges} and Corollary \ref{cor-DL},
we obtain:

\begin{corollary}\label{cor-HA}
\begin{enumerate}
\item[]
\item The category $\mathsf{HA}^{\wedge,\to}$ is dually equivalent
to the category $\mathsf{ES}^\mathsf{R}$.
\item The category $\mathsf{HA}^{\wedge,\to,\bot}$ is dually equivalent
to the category $\mathsf{ES}^\mathsf{T}$.
\item The category $\mathsf{HA}$ is dually equivalent to the category
$\mathsf{ES}^\mathsf{F}$, which is isomorphic to $\mathsf{ES}$.
\end{enumerate}
\end{corollary}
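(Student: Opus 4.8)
The plan is to obtain all three dual equivalences by restricting the three equivalences of Theorem \ref{theorem-ges} to the subcategories determined by Heyting algebras on the algebraic side and Esakia spaces on the spatial side. The first step is the object-level correspondence: a bounded implicative meet semi-lattice $L$ is (the reduct of) a Heyting algebra if and only if its dual generalized Esakia space $L_*$ is an Esakia space. Indeed, $L$ is a Heyting algebra precisely when its $\wedge$-reduct is a bounded distributive lattice, and, exactly as in the Priestley situation underlying Corollary \ref{cor-DL}, a bounded distributive meet semi-lattice is a lattice iff $L_*=L_+$. The forward direction is Lemma \ref{lemma3}; for the converse, $L_*=L_+$ means $X_0=X$ in $X=L_*$, so $X^*=\mathfrak{CU}(X)$ is closed under finite unions, and since $L\cong X^*$ by the Representation Theorem, $L$ is a lattice. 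As an Esakia space is by definition a generalized Esakia space with $X_0=X$, this yields the desired bijection on objects.

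The second step is to match morphisms in each of the three cases. For (1), the morphisms of $\mathsf{HA}^{\wedge,\to}$ are the implicative meet semi-lattice homomorphisms, that is, exactly the $\mathsf{BIM}$-morphisms between Heyting algebras, while the morphisms of $\mathsf{ES}^\mathsf{R}$ are the generalized Esakia morphisms, that is, the $\mathsf{GES}$-morphisms between Esakia spaces; hence Theorem \ref{theorem-ges}.1 restricts to a dual equivalence of $\mathsf{HA}^{\wedge,\to}$ and $\mathsf{ES}^\mathsf{R}$. For (2), the bounded implicative meet semi-lattice homomorphisms between Heyting algebras are precisely the $\mathsf{BIM}^{\bot}$-morphisms between Heyting algebras, whose duals under Theorem \ref{theorem-ges}.2 are the total generalized Esakia morphisms between Esakia spaces, i.e.\ the morphisms of $\mathsf{ES}^\mathsf{T}$.

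For (3) the key point is that on Heyting algebras the $\mathsf{HA}$-morphisms coincide with the implicative meet semi-lattice sup-homomorphisms. A Heyting algebra homomorphism preserves $\wedge,\top,\bot,\to$ and $\vee$, and since a Heyting algebra is a bounded distributive lattice, Lemma \ref{lemma-dist} shows preservation of $\vee$ is equivalent to being a sup-homomorphism; conversely an implicative meet semi-lattice sup-homomorphism preserves $\wedge,\top,\to$, preserves $\bot$ (every sup-homomorphism does, by Corollary \ref{cor-bot-sup}), and preserves $\vee$ again by Lemma \ref{lemma-dist}, so it is a Heyting algebra homomorphism. Thus $\mathsf{HA}$ is exactly $\mathsf{BIM}^{\mathsf{S}}$ restricted to Heyting algebras, and Theorem \ref{theorem-ges}.3 restricts to a dual equivalence of $\mathsf{HA}$ and $\mathsf{ES}^\mathsf{F}$. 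Finally, $\mathsf{ES}^\mathsf{F}\cong\mathsf{ES}$ follows from Proposition \ref{prop-func-str} (which gives $\mathsf{GES}^{\mathsf{F}}\cong\mathsf{GES}^{\mathsf{S}}$) together with the observation in the Esakia subsection that on Esakia spaces the strong Esakia morphisms and the Esakia morphisms coincide.

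The main obstacle I anticipate is not any single computation but the bookkeeping needed to see that each restriction lands onto exactly the stated subcategory, i.e.\ that the duality functors send Heyting algebras precisely to Esakia spaces and that each morphism class matches its spatial counterpart without gaining or losing morphisms. The most delicate of these is the morphism identification in part (3), where one must combine the fact that sup-homomorphisms preserve bottom with the lattice-specific Lemma \ref{lemma-dist} in order to pin down the $\mathsf{HA}$-homomorphisms as precisely the sup-homomorphisms between Heyting algebras.
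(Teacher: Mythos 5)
Your proposal is correct and takes essentially the same route as the paper, which obtains Corollary \ref{cor-HA} as an immediate consequence of Theorem \ref{theorem-ges} together with the lattice-level facts behind Corollary \ref{cor-DL}: the object correspondence between Heyting algebras and Esakia spaces (via $L_*=L_+$ for lattices), the identification of Heyting algebra homomorphisms with implicative meet semi-lattice sup-homomorphisms through Lemma \ref{lemma-dist}, and the coincidence of strong Esakia morphisms with Esakia morphisms when $X_0=X$. Your write-up simply makes explicit the restriction bookkeeping that the paper compresses into one sentence.
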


%

\subsection{Partial Esakia functions}

Now we give an alternative description of morphisms of
$\mathsf{ES}^\mathsf{R}$, $\mathsf{ES}^\mathsf{T}$, and
$\mathsf{ES}\simeq\mathsf{ES}^{\mathsf{F}}$ by means of special partial functions between Esakia
spaces, thus obtaining new dualities for
$\mathsf{HA}^{\wedge,\to}$, $\mathsf{HA}^{\wedge,\to,\bot}$, and
$\mathsf{HA}$. The new duality for $\mathsf{HA}$ is an alternative
to the Esakia duality.

Let $X$ and $Y$ be Esakia spaces and let $R\subseteq X\times Y$ be a
generalized Esakia morphism. We define an equivalence relation $E_R$
on $X$ by $$x E_R y \mbox{ iff } R[x] = R[y].$$

\begin{lemma}
Let $X$ and $Y$ be Esakia spaces and let $R\subseteq X\times Y$ be a
generalized Esakia morphism. For each $x\in X$ we have that the
equivalence class $E_R[x]$ is closed.
\end{lemma}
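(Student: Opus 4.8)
The plan is to exhibit $E_R[x]$ as an intersection of clopen subsets of $X$, exploiting the two defining conditions of a generalized Priestley morphism (Definition \ref{definition}); note that we only use that $R$ is a generalized Priestley morphism, not the full Esakia structure. The first ingredient is that a generalized Esakia morphism is in particular a generalized Priestley morphism, so by condition (2) of Definition \ref{definition} we have $\Box_R U \in X^*$ for every $U \in Y^*$, and since every member of $X^*$ is a clopen upset, each set $\Box_R U$ is clopen in $X$.

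The crux is to recast the condition $R[x] = R[y]$ in terms of the operators $\Box_R U$. First I would record that $R[x]$ is an upset (Lemma \ref{lemma-g-morphisms}) and that, by condition (1) of Definition \ref{definition}, whenever $y \notin R[x]$ there is $U \in Y^*$ with $R[x] \subseteq U$ and $y \notin U$. Combined with the trivial inclusion $R[x] \subseteq \bigcap\{U \in Y^* : R[x] \subseteq U\}$, and using that $R[x] \subseteq U$ is equivalent to $x \in \Box_R U$, this yields the representation $R[x] = \bigcap\{U \in Y^* : x \in \Box_R U\}$. I expect this representation to be the main point of the proof; the rest is formal.

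From the representation it follows at once that $R[x] = R[y]$ holds if and only if $x$ and $y$ lie in exactly the same members of the family $\{\Box_R U : U \in Y^*\}$: if these families of containing boxes agree then the two intersections coincide, and conversely $R[x] = R[y]$ forces the equivalence $x \in \Box_R U \Leftrightarrow y \in \Box_R U$ for every $U \in Y^*$. Consequently
$$E_R[x] \;=\; \bigcap_{U \in Y^*} C_U,$$
where $C_U = \Box_R U$ if $x \in \Box_R U$ and $C_U = (\Box_R U)^c$ otherwise. Each $C_U$ is clopen, since $\Box_R U$ is clopen, so $E_R[x]$ is an intersection of closed sets and therefore closed. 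This completes the argument.
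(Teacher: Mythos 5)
Your proof is correct and is essentially the paper's argument in repackaged form: the paper argues pointwise, using condition (1) of Definition \ref{definition} to give each $y \notin E_R[x]$ a clopen neighborhood of the form $\Box_R U$ or $(\Box_R V)^c$ disjoint from $E_R[x]$, while you assemble exactly these same sets into a global identity $E_R[x] = \bigcap_{U \in Y^*} C_U$ via the representation $R[x] = \bigcap\{U \in Y^* : x \in \Box_R U\}$. Both arguments rest on the same two facts --- the separation property (1) and the fact that $\Box_R U \in X^*$ is clopen by (2) --- so there is no substantive difference, though your version has the minor bonus of exhibiting $E_R[x]$ explicitly as an intersection of clopens.
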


\begin{proof}
Let $y\notin E_R[x]$. Then $R[x]\not\subseteq R[y]$ or $R[y]
\not\subseteq R[x]$. Therefore, there exists $u\in Y$ such that
$xRu$ and $y\nR u$ or there exists $v\in Y$ such that $x\nR v$ and
$yRv$. Since $R$ is a generalized Esakia morphism, hence a
generalized Priestley morphism, there exists a clopen upset $U$ of
$Y$ such that $R[y]\subseteq U$ and $u\notin U$ or there exists a
clopen upset $V$ of $Y$ such that $R[x]\subseteq V$ and $v\notin V$.
Thus, $y\in \Box_R U$ and $E_R[x]\cap\Box_R U = \emptyset$ or
$E_R[x]\subseteq\Box_R V$ and $y\notin\Box_R V$, and so $E_R[x]\cap
(\Box_R V)^c=\emptyset$ and $y\in (\Box_R V)^c$. In either case,
there exists a clopen subset $W$ of $X$ ($W=\Box_R U$ or $W=(\Box_R
V)^c$) such that $y\in W$ and $E_R[x]\cap W=\emptyset$.
Consequently, $E_R[x]$ is a closed subset of $X$.
\end{proof}

Since $X$ is an Esakia space, hence a Priestley space, and $E_R[x]$
is closed, for each $y\in E_R[x]$ there exists $z\in
\mathrm{max}E_R[x]$ such that $y\leq z$. We define a partial
function $f_R: X\to Y$ as follows. Let $$\mathrm{dom}(f_R) = \{x\in
X: R[x] \mbox{ has a least element and } x \in
\mathrm{max}E_R[x]\},$$ and for $x \in \mathrm{dom}(f_R)$ let
$$f_R(x)=\mbox{the least element of } R[x].$$ For $x,y\in X$ we use the
standard notation $x<y$ whenever $x\leq y$ and $x\neq y$.

\begin{lemma}\label{lemma-a1}
Let $X$ and $Y$ be Esakia spaces and let $R\subseteq X\times Y$ be a
generalized Esakia morphism. Then:
\begin{enumerate}
\item For $x,y\in\mathrm{dom}(f_R)$, if $x<y$, then $f_R(x)<f_R(y)$.
\item For $x\in\mathrm{dom}(f_R)$ and $y\in Y$, from $f_R(x)<y$
it follows that there exists $z\in \mathrm{dom}(f_R)$ such that
$x<z$ and $f_R(z)=y$.
\item For $x\in X$ and $y\in Y$, we have $xRy$ iff there exists
$z\in\mathrm{dom}(f_R)$ such that $x\leq z$ and $f_R(z)=y$.
\item If $U\in\mathfrak{CU}(Y)$, then $\Box_R U = ({\downarrow}
f_R^{-1}(U^c))^c$.
\item For $x\in X$ and $y\in Y$, if $y\notin f_R[{\uparrow}x]$,
then there exists $U\in\mathfrak{CU}(Y)$ such that $x\in
({\downarrow}f_R^{-1}(U^c))^c$ and $y\notin U$.
\end{enumerate}
In addition, if $R$ is total, then $\mathrm{max}X\subseteq
\mathrm{dom}(f_R)$.
\end{lemma}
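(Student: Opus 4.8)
The plan is to make part (3) the backbone and derive everything else from it, using two structural facts about $R$-images. Since $R$ is a generalized Priestley morphism, Lemma \ref{lemma-g-morphisms} gives $(R\circ{\leq_Y})\subseteq R$, so each $R[x]$ is an upset of $Y$; hence whenever $z\in\mathrm{dom}(f_R)$ the image $R[z]$ is an upset with least element $f_R(z)$, i.e.\ $R[z]={\uparrow}f_R(z)$ is principal. Likewise $({\leq_X}\circ R)\subseteq R$ yields $x\leq y\Rightarrow R[y]\subseteq R[x]$. Throughout I use that $X,Y$ are Esakia spaces, so $X_0=X$, $Y_0=Y$, whence $Y^*=\mathfrak{CU}(Y)$ and the defining clause of a generalized Esakia morphism (Definition \ref{def-gem}) reads: $xRy$ implies there is $z\in X$ with $x\leq z$ and $R[z]={\uparrow}y$.

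For (3), the backward direction is immediate: if $z\in\mathrm{dom}(f_R)$, $x\leq z$, and $f_R(z)=y$, then $zRy$ (as $y$ is least in $R[z]$), and $x\leq z$ with $({\leq_X}\circ R)\subseteq R$ gives $xRy$. For the forward direction, assume $xRy$; the Esakia clause supplies $z$ with $x\leq z$ and $R[z]={\uparrow}y$. This $z$ need not be maximal in its $E_R$-class, so I pass to a maximal representative: by the preceding lemma $E_R[z]$ is closed, and since $X$ is a Priestley space there is $z'\in\mathrm{max}\,E_R[z]$ with $z\leq z'$. As $z'\in E_R[z]$ we have $R[z']=R[z]={\uparrow}y$, so $y$ is least in $R[z']$ and $z'\in\mathrm{max}\,E_R[z']$; thus $z'\in\mathrm{dom}(f_R)$, $f_R(z')=y$, and $x\leq z\leq z'$. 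This ``push up to a maximal representative'' step, resting on closedness of $E_R[z]$ and the Priestley property, is the main obstacle of the whole lemma; the rest is bookkeeping.

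With (3) in hand, the remaining items follow quickly. For (1), $x\leq y$ gives $R[y]\subseteq R[x]$, so $f_R(x)\leq f_R(y)$; if equality held, then ${\uparrow}f_R(x)=R[x]=R[y]={\uparrow}f_R(y)$, so $x\,E_R\,y$, contradicting that $x<y$ while both are maximal in the common class $E_R[x]$. For (2), $f_R(x)<y$ forces $y\in{\uparrow}f_R(x)=R[x]$, i.e.\ $xRy$; applying (3) gives $z\in\mathrm{dom}(f_R)$ with $x\leq z$ and $f_R(z)=y$, and $x=z$ is impossible (it would give $f_R(x)=y$), so $x<z$. Part (4) is essentially a restatement of (3): unwinding the definitions, $x\in({\downarrow}f_R^{-1}(U^c))^c$ says exactly that every $z\in\mathrm{dom}(f_R)$ with $x\leq z$ satisfies $f_R(z)\in U$, and by (3) this is equivalent to $R[x]\subseteq U$, i.e.\ $x\in\Box_R U$.

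Finally, for (5) I would first note that $y\notin f_R[{\uparrow}x]$ forces $x\nR y$: otherwise (3) produces $z\in\mathrm{dom}(f_R)$ with $x\leq z$ and $f_R(z)=y$, putting $y$ in $f_R[{\uparrow}x]$. Then condition (1) of Definition \ref{definition} yields $U\in\mathfrak{CU}(Y)=Y^*$ with $y\notin U$ and $R[x]\subseteq U$, and (4) rewrites $R[x]\subseteq U$ as $x\in({\downarrow}f_R^{-1}(U^c))^c$. For the added claim, let $R$ be total and $x\in\mathrm{max}X$: totality gives some $y$ with $xRy$, the Esakia clause gives $z\geq x$ with $R[z]={\uparrow}y$, and maximality of $x$ forces $z=x$, so $R[x]={\uparrow}y$ has least element $y$; moreover $x\in\mathrm{max}X$ trivially gives $x\in\mathrm{max}\,E_R[x]$. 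Hence $x\in\mathrm{dom}(f_R)$.
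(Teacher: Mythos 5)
Your proof is correct and follows essentially the same route as the paper's: the crux in both is the identical ``pass to a maximal representative of the closed $E_R$-class'' step, resting on the preceding lemma and the Priestley property of $X$, together with the Esakia clause of Definition \ref{def-gem}. The only difference is organizational: you establish (3) once and derive (2), (4), and (5) from it, whereas the paper repeats the maximal-representative argument separately inside (2), (3), and (4) (and, in the total claim, first passes to a maximal point of $Y$, a detour your version shows to be unnecessary).
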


\begin{proof}
Let $X$ and $Y$ be Esakia spaces and let $R\subseteq X\times Y$ be
a generalized Esakia morphism. To see (1), let $x, y \in
\mathrm{dom}(f_R)$ with $x<y$. Then $x\leq y$, and so $R[y]
\subseteq R[x]$. Thus, $f_R(y) \in R[x]$, and so $f_R(x) \leq
f_R(y)$. If $f_R(x) = f_R(y)$, then $R[x] = R[y]$. Therefore, $x
E_R y$ and $x<y$, so $x\notin \mathrm{max}E_R[x]$, a
contradiction. Thus, $f_R(x) < f_R(y)$. To see (2), let
$x\in\mathrm{dom}(f_R)$, $y\in Y$, and $f_R(x)<y$. Then
$f_R(x)\leq y$, so $x R y$. Since $R$ is a generalized Esakia
morphism, there exists $z \in X$ such that $x \leq z$ and $R[z] =
{\uparrow} y$. If $x = z$, then $R[x] = R[z]$, and so $R[x] =
{\uparrow} y$, which implies that $f_R(x) = y$, a contradiction.
Thus, $x < z$. Let $u \in \mathrm{max}E_R[z]$ be such that $z \leq
u$. Then $x < u$, $u \in \mathrm{dom}(f_R)$, and $R[u] = R[z] =
{\uparrow} y$. Thus, $f_R(u) = y$. For (3), it is clear that if
there exists $z\in\mathrm{dom}(f_R)$ such that $x\leq z$ and
$f_R(z)=y$, then $x\leq zRy$, and so $xRy$. Conversely, if $xRy$,
then as $R$ is a generalized Esakia morphism, there exists $z\in
X$ such that $x\leq z$ and $R[z]={\uparrow}y$. Let $u \in
\mathrm{max}E_R[z]$ be such that $z \leq u$. Then $x\leq u$, $u
\in \mathrm{dom}(f_R)$, and $f_R(u)=y$. To see (4), let
$U\in\mathfrak{CU}(Y)$. We have that $x\in \Box_R U$ iff
$R[x]\subseteq U$, and that $x\in ({\downarrow}f_R^{-1}(U^c))^c$
iff $(\forall z\in \mathrm{dom}(f_R))(x\leq z$ implies $f_R(z)\in
U)$. First suppose that $x\in \Box_R U$, $z\in\mathrm{dom}(f_R)$,
and $x\leq z$. Then $f_R(z)\in R[z]\subseteq R[x]\subseteq U$, so
$f_R(z)\in U$, and so $x\in ({\downarrow}f_R^{-1}(U^c))^c$. Now
suppose that $x\in ({\downarrow}f_R^{-1}(U^c))^c$ and $xRy$. Since
$R$ is a generalized Esakia morphism, there exists $z\in X$ such
that $x\leq z$ and $R[z]={\uparrow}y$. Let $m\in\mathrm{max}
E_R[z]$ be such that $z\leq m$. Then $m\in \mathrm{dom}(f_R)$,
$x\leq m$, and $f_R(m)=y$. Thus, $f_R(m)\in U$, so $y\in U$, and
so $x\in\Box_R U$. To see (5), let $x\in X$, $y\in Y$, and
$y\notin f_R[{\uparrow}x]$. By (3), $f_R[{\uparrow}x]=R[x]$.
Therefore, $y\notin f_R[{\uparrow}x]$ implies $x\nR y$. Since $R$
is a generalized Esakia morphism, hence a generalized Priestley
morphism, there exists $U\in\mathfrak{CU}(Y)$ such that
$x\in\Box_R U$ and $y\notin U$. By (4), $\Box_R U =
({\downarrow}f_R^{-1}(U^c))^c$. Thus, $x\in
({\downarrow}f_R^{-1}(U^c))^c$ and $y\notin U$. Finally, let $R$
be total and let $x\in\mathrm{max}X$. Then there exists $y\in Y$
such that $xRy$. From $y\in Y$ it follows that there exists
$u\in\mathrm{max}Y$ such that $y\leq u$. So $xRy\leq u$, implying
that $xRu$. Since $R$ is a generalized Esakia morphism, there
exists $z\in X$ such that $x\leq z$ and $R[z]={\uparrow}u=\{u\}$.
But $x\in \mathrm{max}X$ and $x\leq z$ imply $x=z$. Therefore,
$x\in \mathrm{max}E_R[x]$ and $R[x]=\{u\}$. Thus,
$x\in\mathrm{dom}(f_R)$ and $f_R(x)=u$. Consequently,
$\mathrm{max}X\subseteq \mathrm{dom}(f_R)$.
\end{proof}

Lemma \ref{lemma-a1} motivates the following definition of a partial
Esakia function between Esakia spaces.

\begin{definition}\label{def-pEf}
{\rm Let $X$ and $Y$ be Esakia spaces and let $f:X\to Y$ be a partial
function. We call $f$ a \emph{partial Esakia function} if $f$
satisfies the following four conditions:
\begin{enumerate}
\item For $x,y\in\mathrm{dom}(f)$, if $x<y$, then $f(x)<f(y)$.
\item For $x\in\mathrm{dom}(f)$ and $y\in Y$, from $f(x)<y$
it follows that there exists $z\in \mathrm{dom}(f)$ such that $x<z$
and $f(z)=y$.
\item If $U\in\mathfrak{CU}(Y)$, then $({\downarrow}f^{-1}(U^c))^c\in
\mathfrak{CU}(X)$.
\item For $x\in X$ and $y\in Y$, if $y\notin f[{\uparrow}
x]$, then there exists $U\in\mathfrak{CU}(Y)$ such that
$f[{\uparrow} x]\subseteq U$ and $y\notin U$.
\end{enumerate}
If in addition $\mathrm{max}X\subseteq\mathrm{dom}(f)$, then we call
$f$ \emph{well}.
}
\end{definition}

Let $X$ and $Y$ be Esakia spaces and let $f:X\to Y$ be a partial
Esakia function. We define $R_f\subseteq X \times Y$ by
$$\text{$x R_f y\ $ iff $\ \exists z\in \mathrm{dom}(f)\ $ such that
$\ x\leq z\ $ and $\ f(z) = y$.}$$

\begin{lemma}\label{lemma-b1}
Let $X$ and $Y$ be Esakia spaces and let $f:X\to Y$ be a partial
Esakia function. Then $R_f$ is a generalized Esakia morphism. If in
addition $f$ is well, then $R_f$ is total.
\end{lemma}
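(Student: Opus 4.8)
The plan is to reduce each defining clause of a generalized Esakia morphism to the corresponding clause of Definition \ref{def-pEf} by means of two bookkeeping identities. Throughout I use that $X$ and $Y$, being Esakia spaces, are generalized Esakia spaces with $X_0 = X$ and $Y_0 = Y$, so that $X^* = \mathfrak{CU}(X)$ and $Y^* = \mathfrak{CU}(Y)$. The first identity is $R_f[x] = f[{\uparrow}x]$: indeed $y \in R_f[x]$ means there is $z \in \mathrm{dom}(f)$ with $x \leq z$ and $f(z) = y$, which is precisely the assertion that $y$ lies in the image under $f$ of ${\uparrow}x \cap \mathrm{dom}(f)$. The second identity is $\Box_{R_f} U = ({\downarrow}f^{-1}(U^c))^c$ for every $U \in \mathfrak{CU}(Y)$: unwinding the definitions, $x \in \Box_{R_f} U$ says $f[{\uparrow}x] \subseteq U$, while $x \in ({\downarrow}f^{-1}(U^c))^c$ says that no $z \in \mathrm{dom}(f)$ above $x$ has $f(z) \in U^c$, and these are the same condition.

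With these identities in hand, verifying that $R_f$ is a generalized Priestley morphism becomes immediate. For condition (1) of Definition \ref{definition}, suppose $(x,y) \notin R_f$, that is $y \notin f[{\uparrow}x]$; condition (4) of Definition \ref{def-pEf} furnishes $U \in \mathfrak{CU}(Y) = Y^*$ with $R_f[x] = f[{\uparrow}x] \subseteq U$ and $y \notin U$. For condition (2), given $U \in Y^*$, the second identity together with condition (3) of Definition \ref{def-pEf} gives $\Box_{R_f} U = ({\downarrow}f^{-1}(U^c))^c \in \mathfrak{CU}(X) = X^*$.

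The substantive step is the Esakia clause of Definition \ref{def-gem}: given $x \in X$ and $y \in Y = Y_0$ with $x R_f y$, I must produce $z \in X = X_0$ with $x \leq z$ and $R_f[z] = {\uparrow}y$. By the definition of $R_f$ there is $w \in \mathrm{dom}(f)$ with $x \leq w$ and $f(w) = y$, and I claim $z = w$ works, i.e.\ $f[{\uparrow}w] = {\uparrow}y$. The inclusion $\subseteq$ follows from condition (1) of Definition \ref{def-pEf}: any $w' \in {\uparrow}w \cap \mathrm{dom}(f)$ satisfies $w' = w$ (whence $f(w') = y$) or $w' > w$ (whence $f(w') > f(w) = y$), so $f(w') \in {\uparrow}y$ in either case. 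The reverse inclusion is where condition (2) is used: given $u \geq y = f(w)$, either $u = f(w) \in f[{\uparrow}w]$, or $u > f(w)$, in which case condition (2) yields $z \in \mathrm{dom}(f)$ with $w < z$ and $f(z) = u$, so again $u \in f[{\uparrow}w]$. Hence $R_f[w] = f[{\uparrow}w] = {\uparrow}y$, and $R_f$ is a generalized Esakia morphism.

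Finally, suppose $f$ is well, so $\mathrm{max}X \subseteq \mathrm{dom}(f)$. To see that $R_f$ is total I must show $R_f[x] \neq \emptyset$ for every $x \in X$. Since $X$ is a Priestley space, ${\uparrow}x$ is closed, and compactness yields a maximal point $m$ with $x \leq m$; then $m \in \mathrm{max}X \subseteq \mathrm{dom}(f)$, so $x R_f f(m)$ and $R_f[x] \neq \emptyset$. I expect the only delicate point in the whole argument to be the equality $f[{\uparrow}w] = {\uparrow}f(w)$, which is exactly the mechanism by which the order-theoretic conditions (1) and (2) on $f$ encode the Esakia lifting property $R_f[z] = {\uparrow}y$; once it is established, the two bookkeeping identities and the morphism conditions are routine.
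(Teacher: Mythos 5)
Your proof is correct and follows essentially the same route as the paper's: both rest on the identities $R_f[x]=f[{\uparrow}x]$ and $\Box_{R_f}U=({\downarrow}f^{-1}(U^c))^c$, use conditions (3) and (4) of Definition \ref{def-pEf} for the generalized Priestley morphism clauses, conditions (1) and (2) for the lifting equality $R_f[w]={\uparrow}f(w)$ with $w\in\mathrm{dom}(f)$, and maximal points for totality. If anything, your handling of the strict-versus-weak inequality case splits in conditions (1) and (2) is more careful than the paper's, which glosses over them (and contains a notational slip writing $R_f[x]$ where $R_f[z]$ is meant).
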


\begin{proof}
Let $x,y\in X$, $u\in Y$, $x\leq y$, and $yR_fu$. Then there
exists $z\in\mathrm{dom}(f)$ such that $y\leq z$ and $f(z)=u$.
Therefore, $x\leq z$ and $f(z)=u$, implying that $xR_fu$. Thus,
condition (1) of Definition \ref{definition} is satisfied. Let
$x\in X$, $u,v\in Y$, $xR_fu$, and $u\leq v$. Then there exists
$y\in\mathrm{dom}(f)$ such that $x\leq y$ and $f(y)=u$. Therefore,
$f(y)\leq v$, and as $R$ is a generalized Esakia morphism, there
exists $z\in\mathrm{dom}(f)$ such that $y\leq z$ and $f(z)=v$.
Thus, $x\leq z$ and $f(z)=v$, so $xR_fv$, and so condition (2) of
Definition \ref{definition} is satisfied. Let $x\in X$, $y\in Y$,
and $x\nR_f y$. Therefore, $y\notin f[{\uparrow}x]$. Since $f$ is
a partial Esakia function, there exists $U\in\mathfrak{CU}(Y)$
such that $f[{\uparrow}x]\subseteq U$ and $y\notin U$. Thus, $x\in
({\downarrow}f^{-1}(U^c))^c$ and $y\notin U$. We show that
$\Box_{R_f} U = ({\downarrow}f^{-1}(U^c))^c$. For $x\in X$ we have
$x\in\Box_{R_f} U$ iff $R_f[x]\subseteq U$, and $x\in
({\downarrow}f^{-1}(U^c))^c$ iff $(\forall z\in\mathrm{dom}(f))
(x\leq z$ implies $f(z)\in U)$. First suppose that $x\in\Box_{R_f}
U$, $z\in \mathrm{dom}(f)$, and $x\leq z$. Then $xR_f f(z)$, so
$f(z)\in U$, and so $x\in ({\downarrow}f^{-1}(U^c))^c$. Now
suppose that $x\in ({\downarrow}f^{-1}(U^c))^c$ and $xR_f y$. Then
there exists $z\in \mathrm{dom}(f)$ such that $x\leq z$ and
$f(z)=y$. Therefore, $f(z)\in U$, so $y\in U$, and so
$x\in\Box_{R_f} U$. Consequently, $\Box_{R_f} U =
({\downarrow}f^{-1}(U^c))^c$. But then $x\in \Box_{R_f}U$ and
$y\notin U$, and so condition (3) of Definition \ref{definition}
is satisfied. Let $U\in\mathfrak{CU}(Y)$. Since $\Box_{R_f} U =
({\downarrow}f^{-1}(U^c))^c$ and $({\downarrow}f^{-1}(U^c))^c\in
\mathfrak{CU}(X)$, we have $\Box_{R_f} U\in\mathfrak{CU}(X)$, and
so condition (4) of Definition \ref{definition} is satisfied. It
follows that $R_f$ is a generalized Priestley morphism. To see
that $R_f$ is a generalized Esakia morphism, let $x\in X$, $y\in
Y$, and $xR_fy$. Then there exists $z\in\mathrm{dom}(f)$ such that
$x\leq z$ and $f(z)=y$. We show that $R_f[x] = {\uparrow} y$.
Clearly ${\uparrow} y \subseteq R_f[x]$. If $x R_f u$, then there
exists $v\in \mathrm{dom}(f)$ such that $x \leq v$ and $f(v)=u$.
Therefore, $f(x) \leq u$, so $y\leq u$, and so $u\in {\uparrow}
y$. Thus, $R_f[x]\subseteq {\uparrow} y$, and so $R_f[x] =
{\uparrow} y$. Consequently, there exists $z\in X$ such that
$x\leq z$ and $R_f[z]={\uparrow}y$, and so $R_f$ is a generalized
Esakia morphism. Lastly suppose that $f$ is a well partial Esakia
function and $x\in X$. Then there exists $z\in\mathrm{max}X$ such
that $x\leq z$. Since $f$ is well, $z\in\mathrm{dom}(f)$.
Therefore, $f(z)\in Y$ and $xR_f f(z)$. Thus, there exists
$y=f(z)$ in $Y$ such that $xR_f y$, so ${R_f}^{-1}[Y]=X$, and so
$R_f$ is a total generalized Esakia morphism.
\end{proof}

\begin{lemma}\label{lemma-c1}
Let $X$ and $Y$ be Esakia spaces, $R\subseteq X\times Y$ be a
generalized Esakia morphism, and $f:X\to Y$ be a partial Esakia
function. Then $R_{f_R}=R$ and $f_{R_f}=f$.
\end{lemma}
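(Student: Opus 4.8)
The plan is to prove the two equalities separately. The first, $R_{f_R}=R$, is essentially a restatement of an identity already in hand, whereas the second, $f_{R_f}=f$, is where the real work lies. To dispatch $R_{f_R}=R$, I would unwind the definition of $R_{(-)}$ applied to the partial Esakia function $f_R$: one has $x R_{f_R} y$ iff there exists $z\in\mathrm{dom}(f_R)$ with $x\leq z$ and $f_R(z)=y$. But this is precisely the right-hand side of the biconditional in Lemma \ref{lemma-a1}(3), which asserts it is equivalent to $xRy$. Hence $R_{f_R}=R$, and no further argument is needed.

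For $f_{R_f}=f$, I would first record that $R_f$ is a generalized Esakia morphism by Lemma \ref{lemma-b1}, so that $f_{R_f}$ is well-defined, and note that $R_f[x]=\{f(z):z\in\mathrm{dom}(f)\text{ and }x\leq z\}$. The goal is to show $\mathrm{dom}(f_{R_f})=\mathrm{dom}(f)$ together with pointwise agreement. For the inclusion $\mathrm{dom}(f)\subseteq\mathrm{dom}(f_{R_f})$, I would take $x\in\mathrm{dom}(f)$ and argue that $f(x)$ is the least element of $R_f[x]$: indeed $f(x)\in R_f[x]$, and for arbitrary $y\in R_f[x]$ the witness $z\geq x$ in $\mathrm{dom}(f)$ gives $f(x)\leq f(z)=y$, using condition (1) of Definition \ref{def-pEf} when $x<z$ and triviality when $x=z$. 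To see $x\in\mathrm{max}E_{R_f}[x]$, I would suppose $x<x'$ with $R_f[x']=R_f[x]$; then $f(x)\in R_f[x']$ yields $z'\in\mathrm{dom}(f)$ with $x'\leq z'$ and $f(z')=f(x)$, whence $x<z'$ forces $f(x)<f(z')=f(x)$ by condition (1), a contradiction. Thus $x\in\mathrm{dom}(f_{R_f})$ and $f_{R_f}(x)=f(x)$.

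For the reverse inclusion $\mathrm{dom}(f_{R_f})\subseteq\mathrm{dom}(f)$, I would take $x\in\mathrm{dom}(f_{R_f})$, let $l$ be the least element of $R_f[x]$, and pick $z\in\mathrm{dom}(f)$ with $x\leq z$ and $f(z)=l$. The key claim is $R_f[z]=R_f[x]$: the inclusion $R_f[z]\subseteq R_f[x]$ is immediate from $x\leq z$ via Lemma \ref{lemma-g-morphisms}(1), while for $R_f[x]\subseteq R_f[z]$ one uses that every $y\in R_f[x]$ satisfies $l\leq y$, together with $z R_f l$, to conclude $z R_f y$ by Lemma \ref{lemma-g-morphisms}(2). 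Then $z\in E_{R_f}[x]$ with $x\leq z$, so maximality of $x$ in $E_{R_f}[x]$ forces $x=z\in\mathrm{dom}(f)$ and $f_{R_f}(x)=l=f(x)$. Combining the two inclusions gives $f_{R_f}=f$.

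I expect the main obstacle to be the reverse inclusion, specifically the interaction of the maximality clause in the definition of $\mathrm{dom}(f_{R_f})$ with the equivalence relation $E_{R_f}$: one must show that the witness $z$ realizing the least element of $R_f[x]$ cannot lie strictly above $x$, and this reduces to the identity $R_f[z]=R_f[x]$. That identity rests on the composition properties ${\leq_X}\circ R_f\subseteq R_f$ and $R_f\circ{\leq_Y}\subseteq R_f$ of Lemma \ref{lemma-g-morphisms}, which are available because $R_f$ is a generalized Priestley morphism.
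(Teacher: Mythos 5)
Your proposal is correct and takes essentially the same approach as the paper: the identity $R_{f_R}=R$ falls out of Lemma \ref{lemma-a1}(3) exactly as you say, and $f_{R_f}=f$ is proved by the same mutual-inclusion-of-domains argument, where in each direction the witness $z\in\mathrm{dom}(f)$ for the least element is shown to lie in $E_{R_f}[x]$ so that maximality forces $x=z$. The only cosmetic difference is that the paper invokes the identity $R_f[w]={\uparrow}f(w)$ for $w\in\mathrm{dom}(f)$ (established in the proof of Lemma \ref{lemma-b1}), whereas you re-derive the needed containments directly from condition (1) of Definition \ref{def-pEf} and Lemma \ref{lemma-g-morphisms}.
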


\begin{proof}
Let $x\in X$ and $y\in Y$. By Lemma \ref{lemma-a1}, $xRy$ iff
there exists $z\in\mathrm{dom}(f_R)$ such that $x\leq z$ and
$f_R(z)=y$, which by the definition of $R_{f_R}$ means that
$xR_{f_R}y$. Thus, $R_{f_R}=R$. Now let $x \in
\mathrm{dom}(f_{R_f})$ and let $f_{R_f}(x)=y$. Then $x\in
\mathrm{max}E_{R_f}[x]$ and $R_f[x] = {\uparrow}y$. Therefore,
$xR_fy$, and so there exists $z\in\mathrm{dom}(f)$ such that
$x\leq z$ and $f(z)=y$. Thus, $R_f[z] ={\uparrow} y$. It follows
that $z\in E_{R_f}[x]$ and since $x\in\mathrm{max}E_{R_f}[x]$ and
$x\leq z\in E_{R_f}[x]$, we obtain $x=z$. Thus,
$x\in\mathrm{dom}(f)$ and $f_{R_f}(x)=f(x)$, so $f_{R_f} = f$.
Conversely, let $x\in\mathrm{dom}(f)$ and let $f(x)=y$. Then
$R_f[x]={\uparrow}y$. We show that $x\in \mathrm{max}E_{R_f}[x]$.
Let $z\in E_{R_f}[x]$ and $x\leq z$. Then
$R_f[z]=R_f[x]={\uparrow}y$. Therefore, $zR_fy$, and so there
exists $z'\in\mathrm{dom}(f)$ such that $z\leq z'$ and $f(z')=y$.
Thus, $x\leq z'$ and $f(x)=f(z')$. This implies $x=z=z'$, and so
$x\in \mathrm{max}E_{R_f}[x]$. Therefore,
$x\in\mathrm{dom}(f_{R_f})$ and $f_{R_f}(x)=y=f(x)$. It follows
that $\mathrm{dom}(f_{R_f})=\mathrm{dom}(f)$ and $f_{R_f}(x)=f(x)$
for each $x\in\mathrm{dom}(f_{R_f})= \mathrm{dom}(f)$. Thus,
$f_{R_f}=f$.
\end{proof}

\begin{definition}\label{def-pHf}
{\rm Let $X$ and $Y$ be Esakia spaces and let $f:X\to Y$ be a partial
Esakia function. We call $f$ a \emph{partial Heyting function} if
for each $x\in X$ there exists $z\in\mathrm{dom}(f)$ such that
$x\leq z$ and $f[{\uparrow}x]={\uparrow}f(z)$.
}
\end{definition}

It is easy to verify that each partial Heyting function is well.

\begin{lemma}\label{lem-pHf}
Let $X$ and $Y$ be Esakia spaces. If $R\subseteq X\times Y$ is a
functional generalized Esakia morphism, then the partial Esakia
function $f_R$ is a partial Heyting function. Conversely, if $f:X\to
Y$ is a partial Heyting function, then the generalized Esakia
morphism $R_f$ is a functional generalized Esakia morphism.
\end{lemma}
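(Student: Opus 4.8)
The plan is to handle the two implications separately, leaning on the three lemmas that already do the heavy lifting: Lemma~\ref{lemma-a1} shows that $f_R$ is a partial Esakia function, Lemma~\ref{lemma-b1} shows that $R_f$ is a (total) generalized Esakia morphism, and Lemma~\ref{lemma-c1} shows that $R\mapsto f_R$ and $f\mapsto R_f$ are mutually inverse. Consequently nothing about the partial-Esakia/generalized-Esakia correspondence needs to be re-proved; the only task is to match the \emph{functional} condition on morphisms with the \emph{Heyting} condition on partial functions. Throughout I use that $X,Y$ are Esakia spaces, so $X_0=X$ and $Y_0=Y$, which specializes Definition~\ref{def-gem}.

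For the forward direction, assume $R\subseteq X\times Y$ is a functional generalized Esakia morphism. By Lemma~\ref{lemma-a1}, $f_R$ is a partial Esakia function, so it remains only to verify the defining clause of Definition~\ref{def-pHf}. Fix $x\in X$. Functionality gives $R[x]={\uparrow}y$ for some $y\in Y$, and Lemma~\ref{lemma-a1}(3) gives $f_R[{\uparrow}x]=R[x]={\uparrow}y$. Since $xRy$, the generalized Esakia morphism property produces $z\in X$ with $x\leq z$ and $R[z]={\uparrow}y$. The one genuine step is to upgrade $z$ to a point of $\mathrm{dom}(f_R)$: because the class $E_R[z]$ is closed (as shown just before Lemma~\ref{lemma-a1}) and $X$ is a Priestley space, there is $m\in\mathrm{max}\,E_R[z]$ with $z\leq m$; as $m\,E_R\,z$ forces $R[m]=R[z]={\uparrow}y$, we get $m\in\mathrm{dom}(f_R)$ with $f_R(m)=y$. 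Then $x\leq m$ and $f_R[{\uparrow}x]={\uparrow}y={\uparrow}f_R(m)$, which is exactly the partial Heyting condition.

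For the converse, assume $f:X\to Y$ is a partial Heyting function. Since every partial Heyting function is a partial Esakia function (indeed a well one), Lemma~\ref{lemma-b1} makes $R_f$ a generalized Esakia morphism, so I only need functionality. Fix $x\in X$. Unwinding the definition of $R_f$ yields $R_f[x]=f[{\uparrow}x]$, and the defining property of a partial Heyting function supplies $z\in\mathrm{dom}(f)$ with $x\leq z$ and $f[{\uparrow}x]={\uparrow}f(z)$. Hence $R_f[x]={\uparrow}f(z)$ has a least element, so $R_f$ is functional. The only delicate point in the whole argument is the promotion of $z$ to a domain element in the forward direction: it requires combining closedness of the $E_R$-classes with the existence of maximal elements in closed subsets of a Priestley space, while exploiting that $R$ is constant on $E_R$-classes so that the value $y$ and the inequality $x\leq m$ are both preserved; the remainder is routine bookkeeping on the established equivalences.
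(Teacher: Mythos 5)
Your proof is correct and follows essentially the same route as the paper: both directions reduce to matching the functionality of $R$ with the Heyting condition on $f$, using $f_R[{\uparrow}x]=R[x]$ and $R_f[x]=f[{\uparrow}x]$. The only difference is that where the paper simply cites Lemma \ref{lemma-a1}(3) to produce $z\in\mathrm{dom}(f_R)$ with $x\leq z$ and $f_R(z)=y$, you re-derive it by passing to a maximal point of the closed class $E_R[z]$ --- which is exactly the argument inside the paper's proof of that lemma, so nothing substantive changes.
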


\begin{proof}
Let $R\subseteq X\times Y$ be a functional generalized Esakia
morphism and let $x\in X$. Then $R[x]={\uparrow}y$ for some $y\in
Y$. Therefore, $xRy$, and so there exists $z\in \mathrm{dom}(f_R)$
such that $x\leq z$ and $f_R(z)=y$. Thus,
$f_R[{\uparrow}x]=R[x]={\uparrow}y= {\uparrow}f_R(z)$.
Consequently, $f_R$ is a partial Heyting function. Now let $f:X\to
Y$ be a partial Heyting function and let $x\in X$. Then there
exists $z\in\mathrm{dom}(f)$ such that $x\leq z$ and
$f[{\uparrow}x]={\uparrow}f(z)$. Thus, $R[x] =
f[{\uparrow}x]={\uparrow}f(z)$, and so $R$ is a functional
generalized Esakia morphism.
\end{proof}

Let $X,Y,$ and $Z$ be Esakia spaces and $f:X\to Y$ and $g:Y\to Z$ be partial Esakia functions. We define $g*f:X\to Y$ as the partial Esakia function $g*f=f_{(R_g*R_f)}$. By Lemma \ref{lemma-c1}, it is not difficult to see that Esakia spaces and partial Esakia functions form a category, we denote by $\mathsf{ES}^{\mathsf{P}}$. It is also clear that if both $f$ and $g$ are well (resp.\ Heyting), then so is $g*f$. Therefore, Esakia spaces and well partial Esakia functions as well as Esakia spaces and partial Heyting functions also form categories, we denote by $\mathsf{ES}^{\mathsf{W}}$ and $\mathsf{ES^{H}}$, respectively.
Obviously $\mathsf{ES^{H}}$ is a proper
subcategory of $\mathsf{ES}^{\mathsf{W}}$ and 
$\mathsf{ES}^{\mathsf{W}}$ is a proper subcategory of
$\mathsf{ES}^{\mathsf{P}}$. As an immediate consequence of
Corollary \ref{cor-HA} and Lemmas \ref{lemma-a1}, \ref{lemma-b1},
\ref{lemma-c1}, and \ref{lem-pHf}, we obtain:

\begin{corollary}\label{cor-a1}
\begin{enumerate}
\item[]
\item The category $\mathsf{ES}^\mathsf{R}$ is isomorphic to the
category $\mathsf{ES}^{\mathsf{P}}$.
\item The category $\mathsf{ES}^{\mathsf{T}}$ is isomorphic to the
category $\mathsf{ES}^{\mathsf{W}}$.
\item The categories $\mathsf{ES}$, $\mathsf{ES}^{\mathsf{F}}$, and
$\mathsf{ES}^{\mathsf{H}}$ are isomorphic.
\end{enumerate}
\end{corollary}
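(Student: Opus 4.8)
The plan is to prove all three isomorphisms at once by exhibiting a single pair of functors, identity on objects and mutually inverse on morphisms, with all the substantive work already packaged in Lemmas \ref{lemma-a1}--\ref{lem-pHf}. For part (1) I would define $F\colon\mathsf{ES}^{\mathsf{R}}\to\mathsf{ES}^{\mathsf{P}}$ to be the identity on objects and to send a generalized Esakia morphism $R\subseteq X\times Y$ to the partial function $f_R$, and define $G\colon\mathsf{ES}^{\mathsf{P}}\to\mathsf{ES}^{\mathsf{R}}$ to be the identity on objects and to send a partial Esakia function $f$ to the relation $R_f$. Lemma \ref{lemma-a1} shows that $f_R$ satisfies the four conditions of Definition \ref{def-pEf}: clauses (1) and (2) of the lemma are conditions (1) and (2) verbatim; clause (4), namely $\Box_R U=({\downarrow}f_R^{-1}(U^c))^c$, together with the fact that $X$ is an Esakia space (so $X^{*}=\mathfrak{CU}(X)$ and $\Box_R U\in X^{*}$), gives condition (3); and clause (5), combined with the identity $f_R[{\uparrow}x]=R[x]$ from clause (3), gives condition (4). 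Hence $F$ is well defined on morphisms, and Lemma \ref{lemma-b1} shows $R_f$ is a generalized Esakia morphism, so $G$ is well defined.

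Next I would invoke Lemma \ref{lemma-c1}, which gives $R_{f_R}=R$ and $f_{R_f}=f$, to conclude that $F$ and $G$ are mutually inverse bijections on hom-sets. Functoriality is then almost automatic from the way composition in $\mathsf{ES}^{\mathsf{P}}$ was defined, namely $g*f=f_{(R_g*R_f)}$: for composable generalized Esakia morphisms $R,S$ I have $F(S)*F(R)=f_S*f_R=f_{(R_{f_S}*R_{f_R})}=f_{(S*R)}=F(S*R)$ by Lemma \ref{lemma-c1}, and dually $R_{g*f}=R_{f_{(R_g*R_f)}}=R_g*R_f$, so $G$ respects composition as well. For identities, $\leq_X$ is the identity of $\mathsf{ES}^{\mathsf{R}}$, and a direct computation gives $f_{\leq_X}=\mathrm{id}_X$: here $R[x]={\uparrow}x$ has least element $x$, and $E_{\leq_X}[x]=\{y:{\uparrow}y={\uparrow}x\}=\{x\}$ by antisymmetry, so $x\in\mathrm{max}E_{\leq_X}[x]$ and $\mathrm{dom}(f_{\leq_X})=X$ with $f_{\leq_X}(x)=x$. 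Thus $\mathrm{id}_X$ is the identity of $\mathsf{ES}^{\mathsf{P}}$ and is preserved by $F$ and $G$, establishing (1).

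Parts (2) and (3) are obtained by restricting the same pair $(F,G)$. For (2), the ``in addition'' clause of Lemma \ref{lemma-a1} states that $R$ total forces $\mathrm{max}X\subseteq\mathrm{dom}(f_R)$, i.e.\ $f_R$ is well, while Lemma \ref{lemma-b1} states that $f$ well forces $R_f$ total; hence $(F,G)$ cuts down to an isomorphism $\mathsf{ES}^{\mathsf{T}}\cong\mathsf{ES}^{\mathsf{W}}$. For (3), Lemma \ref{lem-pHf} states that $R$ is functional iff $f_R$ is a partial Heyting function and that $f$ is a partial Heyting function iff $R_f$ is functional, so $(F,G)$ cuts down further to $\mathsf{ES}^{\mathsf{F}}\cong\mathsf{ES}^{\mathsf{H}}$. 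Combining this with the isomorphism $\mathsf{ES}\simeq\mathsf{ES}^{\mathsf{F}}$ recorded in Corollary \ref{cor-HA}(3) yields that $\mathsf{ES}$, $\mathsf{ES}^{\mathsf{F}}$, and $\mathsf{ES}^{\mathsf{H}}$ are all isomorphic.

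I expect the only genuinely delicate point to be bookkeeping rather than new mathematics: one must confirm that the morphism bijections respect the non-standard composition $*$ and carry identities to identities, so that they are functors and not merely bijections on arrows. As noted, this is engineered into the definition of composition in $\mathsf{ES}^{\mathsf{P}}$ together with Lemma \ref{lemma-c1}, so the whole argument should be short, matching the billing of the statement as an immediate consequence of Corollary \ref{cor-HA} and Lemmas \ref{lemma-a1}, \ref{lemma-b1}, \ref{lemma-c1}, and \ref{lem-pHf}.
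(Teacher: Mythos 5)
Your proposal is correct and is essentially the paper's own argument: the paper derives Corollary \ref{cor-a1} as an immediate consequence of Corollary \ref{cor-HA} and Lemmas \ref{lemma-a1}, \ref{lemma-b1}, \ref{lemma-c1}, and \ref{lem-pHf}, using exactly the correspondences $R\mapsto f_R$ and $f\mapsto R_f$ (with composition in $\mathsf{ES}^{\mathsf{P}}$ defined as $g*f=f_{(R_g*R_f)}$ precisely so that functoriality holds). Your write-up simply makes explicit the functorial bookkeeping (identities, composition, restriction to the total/well and functional/Heyting subcategories) that the paper leaves to the reader.
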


Now putting Corollaries \ref{cor-HA} and \ref{cor-a1} together, we
obtain:

\begin{corollary}\label{cor-parfun}
\begin{enumerate}
\item[]
\item The category $\mathsf{HA}^{\wedge,\to}$ is dually equivalent
to the category $\mathsf{ES}^{\mathsf{P}}$.
\item The category $\mathsf{HA}^{\wedge,\to,\bot}$ is dually
equivalent to the category $\mathsf{ES}^{\mathsf{W}}$.
\item The category $\mathsf{HA}$ is dually equivalent to the
category $\mathsf{ES}^{\mathsf{H}}$.
\end{enumerate}
\end{corollary}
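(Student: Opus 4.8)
The plan is to obtain all three dual equivalences by composing the dual equivalences already established in Corollary~\ref{cor-HA} with the category isomorphisms of Corollary~\ref{cor-a1}. The underlying categorical principle is simply that a dual equivalence composed with an isomorphism of categories is again a dual equivalence: if $\mathcal{A}$ is dually equivalent to $\mathcal{B}$ and $\mathcal{B}$ is isomorphic to $\mathcal{C}$, then $\mathcal{A}$ is dually equivalent to $\mathcal{C}$, since one may post-compose the contravariant functors witnessing the first dual equivalence with the isomorphism functor and its inverse, and the unit and counit natural isomorphisms transport verbatim. No new analysis of Esakia spaces is needed; the argument is purely formal once the two earlier corollaries are in hand.

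Concretely, I would first record the three matchings. For (1), Corollary~\ref{cor-HA}.1 gives that $\mathsf{HA}^{\wedge,\to}$ is dually equivalent to $\mathsf{ES}^{\mathsf{R}}$, while Corollary~\ref{cor-a1}.1 gives that $\mathsf{ES}^{\mathsf{R}}$ is isomorphic to $\mathsf{ES}^{\mathsf{P}}$; composing yields the dual equivalence of $\mathsf{HA}^{\wedge,\to}$ and $\mathsf{ES}^{\mathsf{P}}$. For (2), I would pair Corollary~\ref{cor-HA}.2 (duality of $\mathsf{HA}^{\wedge,\to,\bot}$ and $\mathsf{ES}^{\mathsf{T}}$) with Corollary~\ref{cor-a1}.2 (isomorphism of $\mathsf{ES}^{\mathsf{T}}$ and $\mathsf{ES}^{\mathsf{W}}$). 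For (3), I would pair Corollary~\ref{cor-HA}.3 (duality of $\mathsf{HA}$ and $\mathsf{ES}^{\mathsf{F}}$) with Corollary~\ref{cor-a1}.3 (isomorphism of $\mathsf{ES}^{\mathsf{F}}$ and $\mathsf{ES}^{\mathsf{H}}$).

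The genuine content has in fact already been discharged in Lemmas~\ref{lemma-a1}, \ref{lemma-b1}, \ref{lemma-c1}, and \ref{lem-pHf}, which produce the mutually inverse assignments $R\mapsto f_R$ and $f\mapsto R_f$ between generalized Esakia morphisms and partial Esakia functions, and identify the well and Heyting subclasses with the total and functional morphisms; the composition law $g*f=f_{(R_g*R_f)}$ was defined precisely so that these assignments respect composition and identities, making them functorial. Thus the only point requiring care---and the place I would be most cautious---is the bookkeeping that the subclass inclusions line up exactly across the two corollaries: that $\mathsf{R}$ corresponds to $\mathsf{P}$, $\mathsf{T}$ to $\mathsf{W}$, and $\mathsf{F}$ to $\mathsf{H}$, with no crossing between, say, the total and the functional morphism classes. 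Once this correspondence of morphism classes is confirmed, each of (1)--(3) follows immediately by the composition principle above.
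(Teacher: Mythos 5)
Your proposal is correct and takes exactly the route of the paper: the paper derives Corollary \ref{cor-parfun} precisely by putting Corollary \ref{cor-HA} (the dual equivalences with $\mathsf{ES}^{\mathsf{R}}$, $\mathsf{ES}^{\mathsf{T}}$, $\mathsf{ES}^{\mathsf{F}}$) together with the category isomorphisms of Corollary \ref{cor-a1}, pairing the morphism classes just as you do ($\mathsf{R}\leftrightarrow\mathsf{P}$, $\mathsf{T}\leftrightarrow\mathsf{W}$, $\mathsf{F}\leftrightarrow\mathsf{H}$). Your additional remarks on where the real content lies (Lemmas \ref{lemma-a1}, \ref{lemma-b1}, \ref{lemma-c1}, and \ref{lem-pHf}) also agree with the paper, which cites those same lemmas as the substance behind Corollary \ref{cor-a1}.
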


In particular, this provides an alternative to the Esakia duality.
We give a direct proof of this by establishing that $\mathsf{ES}$
is isomorphic to $\mathsf{ES^{H}}$, thus providing an explicit
construction of an Esakia morphism from a partial Heyting function
and vice versa. Let $X$ and $Y$ be Esakia spaces and let $f:X\to
Y$ be a partial Heyting function. We define a function $g_f:X\to
Y$ as follows. Let $x\in X$. Since $f$ is a partial Heyting
function, there exists $z\in \mathrm{dom}(f)$ such that $x\leq z$
and $f[{\uparrow}x]={\uparrow}f(z)$. Such a $z$ may not be unique,
but all such $z$'s have the same $f$-image. Thus, we set
$g_f(x)=f(z)$.

\begin{lemma}\label{lem-pEf1}
If $X$ and $Y$ are Esakia spaces and $f:X\to Y$ is a partial Heyting
function, then $g_f:X\to Y$ is an Esakia morphism.
\end{lemma}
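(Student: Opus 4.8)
The plan is to recognize $g_f$ as the strong Priestley morphism $f^{R_f}$ attached to the functional generalized Esakia morphism $R_f$, and then to transport the conclusion through the machinery developed in Section 9. First I would observe that, directly from the definition of $R_f$, we have $R_f[x]=\{f(z):z\in\mathrm{dom}(f)\text{ and }x\leq z\}=f[{\uparrow}x]$ for each $x\in X$. Since $f$ is a partial Heyting function, Definition \ref{def-pHf} supplies a $z\in\mathrm{dom}(f)$ with $x\leq z$ and $f[{\uparrow}x]={\uparrow}f(z)$; hence $R_f[x]={\uparrow}f(z)$ is principal, and its least element is $f(z)=g_f(x)$. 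In particular this shows simultaneously that $g_f$ is well defined (the value $f(z)$ is forced to be the least element of $f[{\uparrow}x]$, independently of the chosen $z$) and that $g_f$ coincides with the map $f^{R_f}\colon X\to Y$ of Lemma \ref{lemma-a}, which sends $x$ to the least element of $R_f[x]$.

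With this identification in hand, the remainder is an appeal to results already proved. By Lemma \ref{lem-pHf}, $R_f$ is a functional generalized Esakia morphism. Hence, by Lemma \ref{lemma-a}, $f^{R_f}=g_f$ is a strong Priestley morphism, and by Lemma \ref{functionEsakiamorphism-2}, since $R_f$ is a generalized Esakia morphism, $f^{R_f}=g_f$ is in fact a strong Esakia morphism. Finally, because $X$ and $Y$ are Esakia spaces we have $X_0=X$ and $Y_0=Y$, and in this case the defining clause of a strong Esakia morphism (Definition \ref{functionEsakiamorphism}) is literally the back condition of an Esakia morphism; together with the fact that every strong Priestley morphism is continuous and order-preserving, this yields that $g_f$ is an Esakia morphism.

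The only step that requires real care is the identification $g_f=f^{R_f}$; once $R_f[x]={\uparrow}f(z)={\uparrow}g_f(x)$ is established, everything reduces to the already-proved correspondences between functional generalized Esakia morphisms and strong Esakia morphisms. If one instead prefers a self-contained verification, the three Esakia-morphism requirements can be checked directly: order preservation follows from the antitonicity of $x\mapsto{\uparrow}x$ together with $R_f[x]={\uparrow}g_f(x)$; continuity follows from $g_f^{-1}(U)=\Box_{R_f}U=({\downarrow}f^{-1}(U^c))^c$, which is a clopen upset by condition (3) of Definition \ref{def-pEf}; and the back condition follows from condition (2) of Definition \ref{def-pEf}. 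I expect the bookkeeping around the partial domain $\mathrm{dom}(f)$ in this direct route to be the main nuisance, which is exactly what the passage through $R_f$ and the functional-morphism isomorphism is designed to avoid.
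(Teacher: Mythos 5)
Your proof is correct, but it takes a genuinely different route from the paper's. The paper proves the lemma by direct verification: order-preservation and the back condition are checked straight from Definitions \ref{def-pEf} and \ref{def-pHf} (using that $g_f(z)=f(z)$ for $z\in\mathrm{dom}(f)$), and continuity is obtained by computing $g_f^{-1}(U)=({\downarrow}f^{-1}(U^c))^c$ for clopen upsets $U$ and then decomposing an arbitrary clopen of $Y$ as a finite union of differences of clopen upsets. You instead identify $g_f$ with $f^{R_f}$ --- justified correctly by $R_f[x]=f[{\uparrow}x]={\uparrow}f(z)={\uparrow}g_f(x)$, which also gives well-definedness --- and then chain together Lemma \ref{lem-pHf} ($R_f$ is a functional generalized Esakia morphism), Lemma \ref{lemma-a} ($f^{R_f}$ is a strong Priestley morphism), Lemma \ref{functionEsakiamorphism-2} ($f^{R_f}$ is a strong Esakia morphism), and the observation from Section 10.2 that for Esakia spaces ($X_0=X$, $Y_0=Y$) strong Esakia morphisms are exactly Esakia morphisms. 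All the results you cite are proved before this lemma and do not depend on it, so there is no circularity, and Esakia spaces are indeed generalized Esakia spaces with $X_0=X$, so the machinery applies. What your route buys is economy and conceptual clarity: nothing is re-verified, and the lemma is exposed as an instance of the already-established functional-morphism correspondences. What the paper's route buys is exactly what the authors announce just before the lemma: a \emph{direct}, self-contained construction of an Esakia morphism from a partial Heyting function, meant to duplicate, without the Section 9 apparatus, the isomorphism $\mathsf{ES}\cong\mathsf{ES^H}$ that Corollary \ref{cor-a1} already yields abstractly; your argument, while valid, re-imports that apparatus and so sacrifices the explicitness the paper was aiming for.
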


\begin{proof}
We need to show that $g_f$ is continuous, order-preserving, and
satisfies the following condition: For $x\in X$ and $y\in Y$, from
$g_f(x)\leq y$ it follows that there exists $z\in X$ with $x\leq
z$ and $g_f(z)=y$. Let $x,y\in X$ and $x\leq y$. Since $f$ is a
partial Heyting function, there exist $u,v\in \mathrm{dom}(f)$
such that $x\leq u$, $y\leq v$, $f[{\uparrow}x]={\uparrow}f(u)$,
and $f[{\uparrow}y]={\uparrow}f(v)$. Therefore, ${\uparrow}f(v)
\subseteq {\uparrow}f(u)$, so $f(u)\leq f(v)$, and so $g_f(x)\leq
g_f(y)$. Now let $x\in X$, $y\in Y$, and $g_f(x)\leq y$. Then
there exists $u\in\mathrm{dom}(f)$ such that $x\leq u$ and
$g_f(x)=f(u)$. Therefore, $f(u)\leq y$, and since $f$ is a partial
Heyting function, hence a partial Esakia function, there is $z\in
\mathrm{dom}(f)$ such that $u\leq z$ and $f(z)=y$. Thus, $x\leq z$
and $g_f(z)=f(z)=y$. Next suppose that $U$ is a clopen upset of
$Y$. Then $x\in g_f^{-1}(U)$ iff $g_f(x)\in U$ iff
$f[{\uparrow}x]\subseteq U$ iff $x\in
({\downarrow}f^{-1}(U^c))^c$. Thus, $g_f^{-1}(U) =
({\downarrow}f^{-1}(U^c))^c$, and so $g_f^{-1}(U)$ is a clopen
upset of $X$. Now suppose that $U$ is clopen in $Y$. Then
$U=\displaystyle{\bigcup_{i=1}^n}(U_i-V_i)$, where $U_i,V_i$ are
clopen upsets of $X$. Therefore,
$g_f^{-1}(U)=g_f^{-1}(\displaystyle{\bigcup_{i=1}^n}(U_i-V_i)) =
\displaystyle{\bigcup_{i=1}^n}(g_f^{-1}(U_i)-g_f^{-1}(V_i))=
\displaystyle{\bigcup_{i=1}^n}(({\downarrow}f^{-1}(U_i^c))^c -
({\downarrow}f^{-1}(V_i^c))^c)$, and so $g_f^{-1}(U)$ is clopen in
$X$. Thus, $g_f$ is continuous, and so $g_f$ is an Esakia
morphism.
\end{proof}

Now let $X$ and $Y$ be Esakia spaces and let $g:X\to Y$ be an Esakia
morphism. We define a partial function $f_g:X\to Y$ as follows. We
let $$\mathrm{dom}(f_g)=\{x\in X: x\in\mathrm{max}g^{-1}g(x)\},$$
and for $x\in \mathrm{dom}(f_g)$ we set $f_g(x)=g(x)$.

\begin{lemma}\label{lem-pEf2}
If $X$ and $Y$ are Esakia spaces and $g:X\to Y$ is an Esakia
morphism, then $f_g:X\to Y$ is a partial Heyting function.
\end{lemma}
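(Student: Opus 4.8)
The plan is to verify that $f_g$ meets all four clauses of Definition~\ref{def-pEf} and, on top of them, the defining clause of a partial Heyting function from Definition~\ref{def-pHf}. Everything will rest on two bookkeeping identities that hold precisely because $g$ is an Esakia morphism:
\[
f_g[{\uparrow}x]={\uparrow}g(x)\quad\text{for each }x\in X,\qquad
({\downarrow}f_g^{-1}(U^c))^c=g^{-1}(U)\quad\text{for each }U\in\mathfrak{CU}(Y).
\]
Once these are in place, each required property drops out almost formally, using only that $g$ is continuous and order-preserving and that $Y$ is Hausdorff.

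First I would prove $f_g[{\uparrow}x]={\uparrow}g(x)$. The inclusion $\subseteq$ is immediate from monotonicity of $g$, since $f_g$ is a restriction of $g$. For $\supseteq$, given $g(x)\le y$ the Esakia back-condition yields $w\ge x$ with $g(w)=y$; the fibre $g^{-1}(y)$ is closed (as $g$ is continuous and singletons are closed in the Priestley space $Y$), so in the Priestley space $X$ there is a point $z\in\mathrm{max}\,g^{-1}(y)$ with $w\le z$. Then $z\in\mathrm{dom}(f_g)$, $x\le z$, and $f_g(z)=y$, so $y\in f_g[{\uparrow}x]$. With this identity, clause~(4) is just the Priestley separation axiom applied to $g(x)\not\le y$, and the Heyting clause follows by choosing $z$ maximal in the fibre $g^{-1}g(x)$ above $x$, which gives ${\uparrow}f_g(z)={\uparrow}g(z)={\uparrow}g(x)=f_g[{\uparrow}x]$. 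Clauses~(1) and~(2) are handled similarly: monotonicity of $g$ gives the weak inequalities, strictness comes from the fact that a domain point is maximal in its fibre (so two comparable domain points cannot share a $g$-value), and for~(2) the strictly larger domain witness is produced exactly as in the proof of the first identity.

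For clause~(3) I would prove the second identity. Unwinding the definitions, $x\in({\downarrow}f_g^{-1}(U^c))^c$ means that every $z\in\mathrm{dom}(f_g)$ with $x\le z$ satisfies $g(z)\in U$. If $g(x)\in U$ this holds because $U$ is an upset and $g$ is monotone; conversely, testing against a maximal point of $g^{-1}g(x)$ lying above $x$ forces $g(x)\in U$. Hence $({\downarrow}f_g^{-1}(U^c))^c=g^{-1}(U)$, which is a clopen upset of $X$ since $g$ is an Esakia, hence Priestley, morphism; this is exactly clause~(3). Combining clauses~(1)--(4) with the Heyting clause already obtained shows that $f_g$ is a partial Heyting function.

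The one step that needs care, and the main obstacle, is passing from the raw witness $w$ supplied by the Esakia back-condition to a witness lying in $\mathrm{dom}(f_g)$, that is, to a point maximal in its $g$-fibre. This is where the topology enters: I must invoke closedness of the fibres $g^{-1}(y)$ (continuity of $g$ together with Hausdorffness of $Y$) and the standard fact, already used repeatedly in the paper, that in a Priestley space every point of a closed set lies below a maximal point of that set. Everything else is order-theoretic routine built on the two displayed identities.
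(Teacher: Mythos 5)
Your proof is correct and takes essentially the same route as the paper's: the same fibre-maximality argument for clauses (1)--(2), the same identity $({\downarrow}f_g^{-1}(U^c))^c=g^{-1}(U)$ for clause (3), and the same combination of closed fibres (continuity plus Hausdorffness), maximal points above points of closed sets, the Esakia back-condition, and the Priestley separation axiom for clause (4) and the Heyting condition. The only difference is organizational---you isolate $f_g[{\uparrow}x]={\uparrow}g(x)$ as an explicit lemma up front, whereas the paper establishes and uses this fact inline while verifying the conditions in sequence.
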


\begin{proof}
Let $x,y\in\mathrm{dom}(f_g)$ and let $x<y$. Then
$x\in\mathrm{max}g^{-1}g(x)$ and $y\in \mathrm{max}g^{-1}g(y)$.
From $x<y$ it follows that $x\leq y$, so $g(x)\leq g(y)$. If
$g(x)=g(y)$, then $x,y\in \mathrm{max}g^{-1}g(x)$, which together
with $x<y$ leads to a contradiction. Thus, $g(x)<g(y)$, so
$f_g(x)<f_g(y)$, and so condition (1) of Definition \ref{def-pEf}
is satisfied. Now let $x\in\mathrm{dom}(f_g)$, $y\in Y$, and
$f_g(x)<y$. Then $g(x)<y$, so $g(x)\leq y$, and since $g$ is an
Esakia morphism, there exists $z\in X$ such that $x\leq z$ and
$g(z)=y$. Since $g$ is continuous, $g^{-1}g(z)$ is closed, so
there exists $u\in \mathrm{max}g^{-1}g(z)$ such that $z\leq u$.
Therefore, $x\leq u$, $u\in \mathrm{dom}(f_g)$, and
$f_g(u)=g(u)=y$. If $x=u$, then $g(x)=g(u)=y$, a contradiction.
Thus, $x<u$ and $f_g(u)=y$, and so condition (2) of Definition
\ref{def-pEf} is satisfied. Next let $U$ be a clopen upset of $Y$.
We show that $({\downarrow}f_g^{-1}(U^c))^c=g^{-1}(U)$. We have
$x\in ({\downarrow}f_g^{-1}(U^c))^c$ iff $(\forall
z\in\mathrm{dom}(f_g)) (x\leq z\Rightarrow f_g(z)\in U)$ and $x\in
g^{-1}(U)$ iff $g(x)\in U$. First let $x\in g^{-1}(U)$. Then for
each $z\in\mathrm{dom}(f_g)$ with $x\leq z$ we have $g(x)\leq
g(z)=f_g(z)$. Thus, $f_g(z)\in U$, and so $x\in
({\downarrow}f_g^{-1}(U^c))^c$. Conversely, let $x\notin
g^{-1}(U)$. Then $g(x)\notin U$. Let $z\in\mathrm{max}g^{-1}g(x)$
be such that $x\leq z$. Then $z\in\mathrm{dom}(f_g)$, $x\leq z$,
and $f_g(z)=g(z)=g(x)\notin U$. Thus, $x\notin
({\downarrow}f_g^{-1}(U^c))^c$. Consequently,
$({\downarrow}f_g^{-1}(U^c))^c=g^{-1}(U)$, so
$({\downarrow}f_g^{-1}(U^c))^c$ is clopen in $X$, and so condition
(3) of Definition \ref{def-pEf} is satisfied. Next we show that
$f_g$ satisfies the condition of Definition \ref{def-pHf}. Let
$x\in X$ and let $z\in\mathrm{max}g^{-1}g(x)$ be such that $x\leq
z$. Then $z\in\mathrm{dom}(f_g)$, $x\leq z$, and $g(x)=g(z)$.
Thus, $f_g[{\uparrow}x]={\uparrow}g(x)={\uparrow}g(z)=
{\uparrow}f_g(z)$, and so $f_g$ satisfies the condition of
Definition \ref{def-pHf}. Lastly, let $x\in X$, $y\in Y$, and
$y\notin f_g[{\uparrow}x]$. By the above, there exists
$z\in\mathrm{dom}(f_g)$ such that $x\leq z$ and
$f_g[{\uparrow}x]={\uparrow}f_g(z)$. Therefore, $y\notin
{\uparrow}f_g(z)$. So $f_g(z)\not\leq y$, and by the Priestley
separation axiom, there exists a clopen upset $U$ of $Y$ such that
$f_g(z)\in U$ and $y\notin U$. Thus, $f_g[{\uparrow}x] =
{\uparrow}f_g(z) \subseteq U$ and $y\notin U$. Consequently, $f_g$
satisfies condition (4) of Definition \ref{def-pEf}, so $f_g$ is a
partial Esakia function satisfying the condition of Definition
\ref{def-pHf}, and so $f_g$ is a partial Heyting function.
\end{proof}

\begin{lemma}\label{lem-pEf3}
Let $X$ and $Y$ be Esakia spaces, let $g:X\to Y$ be an Esakia
morphism, and let $f:X\to Y$ be a partial Heyting function. Then
$g_{f_g}=g$ and $f_{g_f}=f$.
\end{lemma}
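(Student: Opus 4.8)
The plan is to establish the two identities separately, each by unwinding the definitions of the four constructions $f_g$, $g_f$, $f_{g_f}$, $g_{f_g}$. Before doing so, I would isolate one technical fact about partial Esakia functions: for any $x\in\mathrm{dom}(f)$ one has $f[{\uparrow}x]={\uparrow}f(x)$. The inclusion $\subseteq$ follows since $f$ is order-preserving on its domain (condition (1) of Definition \ref{def-pEf}), and the inclusion $\supseteq$ is exactly the lifting property (condition (2)): given $v\geq f(x)$, either $v=f(x)$ or $f(x)<v$, and in the latter case there is $z\in\mathrm{dom}(f)$ with $x<z$ and $f(z)=v$. An immediate consequence is that for a partial Heyting function $f$ and $x\in\mathrm{dom}(f)$, the point $z=x$ is a legitimate witness in the definition of $g_f$, so $g_f(x)=f(x)$ on $\mathrm{dom}(f)$. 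This identity will carry most of the weight in the second half.

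For $g_{f_g}=g$: fix $x\in X$ and choose $z\in\mathrm{max}\,g^{-1}g(x)$ with $x\leq z$ (such $z$ exists because $g$ is continuous, so $g^{-1}g(x)$ is closed, and in a Priestley space every point of a closed set lies below a maximal point of it). Since $g(z)=g(x)$ we have $g^{-1}g(z)=g^{-1}g(x)$, whence $z\in\mathrm{dom}(f_g)$ and $f_g(z)=g(z)=g(x)$. I then verify that $z$ is a valid witness in the definition of $g_{f_g}(x)$, i.e.\ $f_g[{\uparrow}x]={\uparrow}f_g(z)={\uparrow}g(x)$: the inclusion $\subseteq$ is order-preservation of $g$, and for $\supseteq$, given $v\geq g(x)$ I use that $g$ is an Esakia morphism to find $w\geq x$ with $g(w)=v$, then lift $w$ to a maximal point of its fibre lying in $\mathrm{dom}(f_g)$ with $f_g$-value $v$. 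Hence $g_{f_g}(x)=f_g(z)=g(x)$.

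For $f_{g_f}=f$: I must show $\mathrm{dom}(f_{g_f})=\mathrm{dom}(f)$ together with agreement of values. Recall $\mathrm{dom}(f_{g_f})=\{x: x\in\mathrm{max}\,g_f^{-1}g_f(x)\}$ and $f_{g_f}(x)=g_f(x)$ there. If $x\in\mathrm{dom}(f)$, then $g_f(x)=f(x)$; to see $x$ is maximal in its $g_f$-fibre, suppose $x\leq w$ with $g_f(w)=f(x)$, note $g_f(w)$ is the least element of $f[{\uparrow}w]$, produce $z\in{\uparrow}w\cap\mathrm{dom}(f)$ with $f(z)=f(x)$, and apply the strictness in condition (1) to $x\leq z$ to force $x=z$, hence $w=x$; thus $x\in\mathrm{dom}(f_{g_f})$ and $f_{g_f}(x)=f(x)$. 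Conversely, if $x\in\mathrm{dom}(f_{g_f})$, take the witness $z\in\mathrm{dom}(f)$ with $x\leq z$ and $g_f(x)=f(z)$; since $z\in\mathrm{dom}(f)$ the isolated fact gives $g_f(z)=f(z)=g_f(x)$, so $z\in g_f^{-1}g_f(x)$, and maximality of $x$ forces $x=z\in\mathrm{dom}(f)$. Combining the two directions yields $f_{g_f}=f$.

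I expect the second identity to be the main obstacle, precisely because it is the only place where the full strength of the partial-Esakia axioms is needed: establishing $f[{\uparrow}x]={\uparrow}f(x)$ on the domain uses both the strict monotonicity and the lifting property, and the two-way domain comparison then hinges on converting ``same $g_f$-value'' back into ``same $f$-value'' and invoking strictness to collapse the inequality $x\leq z$ to equality. By contrast, $g_{f_g}=g$ is essentially a bookkeeping argument once the existence of maximal points in closed fibres is in hand.
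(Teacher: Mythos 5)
Your proof is correct and takes essentially the same route as the paper's: both identities are obtained by unwinding the definitions, using maximal points of (closed) fibres, the Esakia/partial-Heyting witness, and the strict monotonicity of condition (1) of Definition \ref{def-pEf} to collapse an inequality $x\leq z$ to an equality. Your isolated fact that $f[{\uparrow}x]={\uparrow}f(x)$ for $x\in\mathrm{dom}(f)$, whence $g_f(x)=f(x)$ on $\mathrm{dom}(f)$, is used implicitly but never stated in the paper's proof, so your write-up is simply a more detailed rendering of the same argument.
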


\begin{proof}
Let $g:X\to Y$ be an Esakia morphism and let $x\in X$. Since $f_g$
is a partial Heyting function, there exists $z\in
\mathrm{dom}(f_g)$ such that $x\leq z$ and $f_g[{\uparrow}x] =
{\uparrow}f_g(z)$. Therefore, $z\in \mathrm{max}g^{-1}g(z)$ and
$f_g(z)=g(z)$. But $f_g[{\uparrow}x]={\uparrow}g(x)$. Thus,
${\uparrow}g(x)={\uparrow}g(z)$, and so $g(x)=g(z)$. It follows
that $g_{f_g}(x)=f_g(z)=g(z)=g(x)$, and so $g_{f_g}=g$. Now let
$f:X\to Y$ be a partial Heyting function. If $x\in
\mathrm{dom}(f_{g_f})$, then $x\in\mathrm{max}g_f^{-1}g_f(x)$ and
$f_{g_f}(x)=g_f(x)$. Since $f$ is a partial Heyting function,
there exists $z\in\mathrm{dom}(f)$ such that $x\leq z$ and
$f[{\uparrow}x] = {\uparrow}f(z)$. But then $g_f(x)=f(z)=g_f(z)$,
so $x=z$ as $x\in\mathrm{max}g_f^{-1}g_f(x)$. Thus,
$x\in\mathrm{dom}(f)$ and $f_{g_f}(x)=f(z)=f(x)$. If $x\in
\mathrm{dom}(f)$, then $f(x)=g_f(x)$. So $x\in\mathrm{max}g_f^{-1}
g_f(x)$ and $f_{g_f}(x)=g_f(x)=f(x)$. Thus, $\mathrm{dom}(f_{g_f})
= \mathrm{dom}(f)$ and for each $x\in \mathrm{dom}(f_{g_f}) =
\mathrm{dom}(f)$ we have $f_{g_f}(x)=f(x)$. Consequently,
$f_{g_f}=f$.
\end{proof}

As an immediate consequence of Lemmas \ref{lem-pEf1},
\ref{lem-pEf2}, and \ref{lem-pEf3}, we obtain a direct proof of the
fact that $\mathsf{ES}$ is isomorphic to $\mathsf{ES^{H}}$. For
the reader's convenience we give a table that gathers together the
dual equivalences of different categories that we obtained in the
last two sections. For two categories $C$ and $D$, we use $C \dueq
D$ to denote that $C$ is dually equivalent to $D$, and $C \cong D$
to denote that $C$ is isomorphic to $D$.

\newpage

\

\begin{center}

{\large\bf Categories of algebras:}

\

\

\begin{tabular}{p{5em}|p{10em}|p{15em}}
Category & Objects & Morphisms\\
\hline \hline
 {\sf BDM} & Bounded distributive meet semi-lattices &
Top-preserving
meet semi-lattice homomorphisms\\
\hline {\sf BDM$^\bot$} & ``\_\_\_\_\_\_\_\_'' & Bounded meet
semi-lattice
homomorphisms\\
\hline {\sf BDM$^{\mathsf{S}}$} & ``\_\_\_\_\_\_\_\_'' & Sup-homomorphisms\\
\hline \vspace{.01mm} {\sf BDL$^{\land,\top}$} & Bounded distributive lattices &
Top-preserving meet semi-lattice homomorphisms\\
\hline \vspace{.01mm} {\sf BDL$^{\land,\top,\bot}$} & ``\_\_\_\_\_\_\_\_'' &
Bounded meet
semi-lattice homomorphisms\\
\hline
{\sf BDL} & ``\_\_\_\_\_\_\_\_'' & Bounded lattice homomorphisms\\
\hline {\sf BIM} & Bounded implicative meet semi-lattices &
Implicative
meet semi-lattice homomorphisms\\
\hline {\sf BIM$^\bot$} & ``\_\_\_\_\_\_\_\_'' & Bounded implicative
meet
semi-lattice homomorphisms\\
\hline {\sf BIM$^{\mathsf{S}}$} & ``\_\_\_\_\_\_\_\_'' & Implicative
meet
semi-lattice sup-homomorphisms\\
\hline {\sf HA$^{\land,\to}$} & Heyting algebras & Implicative meet
semi-lattice homomorphisms\\
\hline {\sf HA$^{\land,\to,\bot}$} & ``\_\_\_\_\_\_\_\_'' & Bounded
implicative meet
semi-lattice homomorphisms\\
\hline {\sf HA} & ``\_\_\_\_\_\_\_\_'' & Heyting algebra
homomorphisms
\end{tabular}

\end{center}

\newpage

\

\begin{center}

{\large\bf Categories of spaces:}

\

\

\begin{tabular}{p{5em}|p{10em}|p{15em}}
Category & Objects & Morphisms\\
\hline \hline%
{\sf GPS} & Generalized Priestley spaces & Generalized Priestley
morphisms\\
\hline {\sf GPS$^{\mathsf T}$} & ``\_\_\_\_\_\_\_\_'' & Total
generalized Priestley morphisms\\
\hline {\sf GPS$^{\mathsf F}$} & ``\_\_\_\_\_\_\_\_'' & Functional
generalized Priestley morphisms\\
\hline {\sf GPS$^{\mathsf{S}}$} & ``\_\_\_\_\_\_\_\_'' & strong Priestley
 morphisms\\
\hline {\sf PS$^{\mathsf R}$} & Priestley spaces & Generalized
Priestley morphisms\\
\hline {\sf PS$^{\mathsf{T}}$} & ``\_\_\_\_\_\_\_\_'' & Total
generalized Priestley morphisms\\
\hline {\sf PS$^{\mathsf{F}}$} & ``\_\_\_\_\_\_\_\_'' & Functional
generalized Priestley morphisms\\
\hline
{\sf PS} & ``\_\_\_\_\_\_\_\_'' & Priestley morphisms\\
\hline {\sf GES} & Generalized Esakia spaces & Generalized Esakia
morphisms\\
\hline {\sf GES$^{\mathsf T}$} & ``\_\_\_\_\_\_\_\_'' & Total
generalized Esakia morphisms\\
\hline {\sf GES$^{\mathsf F}$} & ``\_\_\_\_\_\_\_\_'' & Functional
generalized Esakia morphisms\\
\hline {\sf GES$^{\mathsf{S}}$} & ``\_\_\_\_\_\_\_\_'' & strong
 Esakia morphisms\\
\hline {\sf ES$^{\mathsf R}$} & Esakia spaces & Generalized Esakia
morphisms\\
\hline {\sf ES$^{\mathsf{T}}$} & ``\_\_\_\_\_\_\_\_'' & Total
generalized Esakia morphisms\\
\hline {\sf ES$^{\mathsf{F}}$} & ``\_\_\_\_\_\_\_\_'' & Functional
generalized Esakia morphisms\\
\hline {\sf ES$^{\mathsf{P}}$} & ``\_\_\_\_\_\_\_\_'' & Partial
Esakia functions\\
\hline {\sf ES$^{\mathsf{W}}$} & ``\_\_\_\_\_\_\_\_'' & Well
partial Esakia functions\\
\hline {\sf ES$^{\mathsf{H}}$} & ``\_\_\_\_\_\_\_\_'' & Partial
Heyting functions\\
\hline {\sf ES} & ``\_\_\_\_\_\_\_\_'' & Esakia morphisms
\end{tabular}

\end{center}

\newpage

\

\begin{center}

{\large\bf Dualities:}

\

\

\begin{tabular}{rclclll}
{\sf BDM}&$\dueq$&{\sf GPS}\\
{\sf BDM}$^\bot$&$\dueq$&{\sf GPS}$^{\mathsf T}$\\
{\sf BDM}$^{\mathsf{S}}$&$\dueq$&{\sf GPS}$^{\mathsf F}$&$\cong$&{\sf GPS}$^{\mathsf S}$\\
{\sf BDL}$^{\land,\top}$&$\dueq$&{\sf PS}$^{\mathsf R}$\\
{\sf BDL}$^{\land,\top,\bot}$&$\dueq$&{\sf PS}$^{\mathsf{T}}$\\
{\sf BDL}&$\dueq$&{\sf PS}$^{\mathsf{F}}$&$\cong$&{\sf PS}\\
\\
{\sf BIM}&$\dueq$&{\sf GES}\\
{\sf BIM}$^\bot$&$\dueq$&{\sf GES}$^{\mathsf T}$\\
{\sf BIM}$^{\mathsf{S}}$&$\dueq$&{\sf GES}$^{\mathsf F}$&$\cong$&{\sf GES}$^{\mathsf S}$\\
{\sf HA}$^{\land,\to}$&$\dueq$&{\sf ES}$^{\mathsf R}$&$\cong$&{\sf ES}$^{\mathsf{P}}$\\
{\sf HA}$^{\land,\to,\bot}$&$\dueq$&{\sf ES}$^{\mathsf{T}}$&$\cong$&{\sf ES}$^{\mathsf{W}}$\\
{\sf HA}&$\dueq$&{\sf ES}$^{\mathsf{F}}$&$\cong$&{\sf ES}$^{\mathsf{H}}$&$\cong$&{\sf ES}
\end{tabular}

\end{center}

\

\

We conclude this section by giving two counterexamples. The first
one shows that it is impossible to characterize generalized
Priestley morphisms between Priestley spaces in terms of partial
functions, and the second one shows that it is impossible to
characterize generalized Esakia morphisms between generalized Esakia
spaces in terms of partial functions. Thus, there is no
generalization of Corollary \ref{cor-a1} to neither Priestley spaces
nor generalized Esakia spaces.

\begin{example}

Consider the Priestley spaces $X$ and $Y$ and the generalized
Priestley morphism $R\subseteq X\times Y$ shown in Fig.5. The
corresponding top preserving meet semi-lattice homomorphism
$h_R:\mathfrak{CU}(Y)\to\mathfrak{CU}(X)$ is also shown in Fig.5.
There are only three partial functions from $X$ to $Y$: the empty
function, the total function sending $x$ to $y$, and the total
function sending $x$ to $z$. It is easy to see that their
corresponding meet semi-lattice homomorphisms from
$\mathfrak{CU}(Y)$ to $\mathfrak{CU}(X)$ are different from $h_R$.
Thus, it is impossible to characterize $R\subseteq X\times Y$ in
terms of partial functions from $X$ to $Y$.


\begin{center}

\

\begin{tikzpicture}[scale=.5,inner sep=.5mm]
\node[bull] (y) at (-3,0) [label=left:$y$] {};%
\node[bull] (z) at (-2,0) [label=right:$z$] {};%
\node[bull] (x) at (1,0) [label=right:$x$] {};%
\node at (1,-3) {$X$};%
\node at (-2.5,-3) {$Y$};%
\draw[color=brown,->] (x) to [out=-135,in=-45] (y);
\draw[color=brown,->] (x) to [out=-135,in=-45] (z);%
\node[bull] (sy) at (6,0) [label=left:$\{y\}\ $] {};%
\node[bull] (sz) at (8,0) [label=right:$\ \{z\}$] {};%
\node[bull] (Y) at (7,1) [label=left:$Y\ $] {};%
\node[bull] (eY) at (7,-1) [label=left:$\emptyset\ \ $] {};%
\node[bull] (X) at (11,1) [label=right:$\ X$] {};%
\node[bull] (eX) at (11,-1) [label=right:$\ \emptyset$] {};%
\draw (Y) -- (sy) -- (eY) -- (sz) -- (Y);%
\draw (X) -- (eX);
\node at (7,-3) {$\mathfrak{CU}(Y)$};%
\node at (11,-3) {$\mathfrak{CU}(X)$};
\draw[color=brown,rounded corners=1.5ex] (5.5,.25) -- (8.5,.25) -- (7,-1.5) -- cycle;%
\draw[color=brown,->] (Y) -- (X);%
\draw[color=brown,->] (7.5,-1) -- (eX);
\end{tikzpicture}

\

Fig.5\ \ \ \ \ \ \ \
\end{center}

\end{example}

\begin{example}

Consider the generalized Esakia spaces $X$ and $Y$ and the
generalized Esakia morphism $R\subseteq X\times Y$ shown in Fig.6,
where $Y_0=Y-\{z_1,z_2\}$, the elements of $Y_0$ are isolated points
of $Y$, $z_1$ is the limit point of $\{x_1,x_2,\ldots\}$, $z_2$ is
the limit point of $\{y_1,y_2, \ldots\}$,
$R[r]={\uparrow}z_1\cup{\uparrow}z_2$, and $R[w_i]=\{u_i\}$ for
$i=1,2,3$. Then $\mathrm{dom}(f_R) =\{w_1,w_2,w_3\}$ and
$R_{f_R}[r]=\{u_1,u_2,u_3\}$. Thus, $R[r]\neq R_{f_R}[r]$, so $R\neq
R_{f_R}$, and so Corollary \ref{cor-a1} does not extend to
generalized Esakia spaces.

\begin{center}

$$
\begin{array}{ccc}
\begin{tikzpicture}[scale=.5,inner sep=.5mm]
\node[bull] (x1) at (-1,0) [label=left:$x_1$] {};%
\node[bull] (x2) at (-1,1) [label=left:$x_2$] {};%
\node[bull] (x3) at (-1,2) [label=left:$x_3$] {};%
\node (empx) at (-1,3) {};
\node (dots) at (-1,4) {$\vdots$};%
\node[holl] (z1) at (-1,5) [label=left:$z_1$] {};
\node[bull] (y1) at (1,0) [label=right:$y_1$] {};%
\node[bull] (y2) at (1,1) [label=right:$y_2$] {};%
\node[bull] (y3) at (1,2) [label=right:$y_3$] {};%
\node (empy) at (1,3) {};
\node (dots) at (1,4) {$\vdots$};%
\node[holl] (z2) at (1,5) [label=right:$z_2$] {};
\node[bull] (u1) at (-2,7) [label=right:$u_1$] {};%
\node[bull] (u2) at (0,7) [label=right:$u_2$] {};%
\node[bull] (u3) at (2,7) [label=right:$u_3$] {};%
\node[bull] (w1) at (-8,7) [label=left:$w_1$] {};%
\node[bull] (w2) at (-6,7) [label=left:$w_2$] {};%
\node[bull] (w3) at (-4,7) [label=left:$w_3$] {};%
\node[bull] (r) at (-6,5) [label=below:$r$] {};%
\node (R) at (-3,9) {$R$};%
\node (X) at (-6,-3) {$X$};%
\node (Y) at (0,-3) {$Y$};%
\draw (x1) -- (x2) -- (x3) -- (empx);%
\draw (y1) -- (y2) -- (y3) -- (empy);%
\draw (u1) -- (z1) -- (u2) -- (z2) -- (u3);%
\draw (w1) -- (r) -- (w3);%
\draw (w2) -- (r);%
\draw [brown,->] (w1.north east).. controls +(1,1) and +(-1,1) .. (u1.north west);%
\draw [brown,->] (w2.north east).. controls +(1,1) and +(-1,1) .. (u2.north west);%
\draw [brown,->] (w3.north east).. controls +(1,1) and +(-1,1) .. (u3.north west);%
\draw [brown,->] (r.south east).. controls +(1,-.7) and +(-1,-.7) .. (z1.south west);%
\draw [brown,->] (r.south east).. controls +(1,-1) and +(-1,-1) .. (z2.south west);%
\end{tikzpicture}&\ &
\begin{tikzpicture}[scale=.5,inner sep=.5mm]
\node[bull] (x1) at (-1,0) [label=left:$x_1$] {};%
\node[bull] (x2) at (-1,1) [label=left:$x_2$] {};%
\node[bull] (x3) at (-1,2) [label=left:$x_3$] {};%
\node (empx) at (-1,3) {};
\node (dots) at (-1,4) {$\vdots$};%
\node[holl] (z1) at (-1,5) [label=left:$z_1$] {};
\node[bull] (y1) at (1,0) [label=right:$y_1$] {};%
\node[bull] (y2) at (1,1) [label=right:$y_2$] {};%
\node[bull] (y3) at (1,2) [label=right:$y_3$] {};%
\node (empy) at (1,3) {};
\node (dots) at (1,4) {$\vdots$};%
\node[holl] (z2) at (1,5) [label=right:$z_2$] {};
\node[bull] (u1) at (-2,7) [label=right:$u_1$] {};%
\node[bull] (u2) at (0,7) [label=right:$u_2$] {};%
\node[bull] (u3) at (2,7) [label=right:$u_3$] {};%
\node[bull] (w1) at (-8,7) [label=left:$w_1$] {};%
\node[bull] (w2) at (-6,7) [label=left:$w_2$] {};%
\node[bull] (w3) at (-4,7) [label=left:$w_3$] {};%
\node[bull] (r) at (-6,5) [label=below:$r$] {};%
\node (R) at (-3,9) {$f_R$};%
\node (X) at (-6,-3) {$X$};%
\node (Y) at (0,-3) {$Y$};%
\draw (x1) -- (x2) -- (x3) -- (empx);%
\draw (y1) -- (y2) -- (y3) -- (empy);%
\draw (u1) -- (z1) -- (u2) -- (z2) -- (u3);%
\draw (w1) -- (r) -- (w3);%
\draw (w2) -- (r);%
\draw [brown,->] (w1.north east).. controls +(1,1) and +(-1,1) .. (u1.north west);%
\draw [brown,->] (w2.north east).. controls +(1,1) and +(-1,1) .. (u2.north west);%
\draw [brown,->] (w3.north east).. controls +(1,1) and +(-1,1) .. (u3.north west);%
\end{tikzpicture}
\end{array}
$$

\

Fig.6
\end{center}

\end{example}

\section{Duality at work}

In this section we show how the duality developed in the previous
sections works by establishing dual descriptions of a number of
algebraic concepts that play an important role in the theory of
distributive meet semi-lattices and implicative meet semi-lattices.

\subsection{Dual description of Frink ideals, ideals, and filters}

We start by recalling that for a bounded distributive lattice
(resp.\ Heyting algebra) $L$ and its dual Priestley space (resp.\
Esakia space) $X$, there is a lattice isomorphism between the
lattice of ideals of $L$ and the lattice of open upsets of $X$,
and the lattice of filters of $L$ (ordered by $\supseteq$) and the
lattice of closed upsets of $X$.
These isomorphisms are obtained as follows. If $I$ is an ideal of
$L$, then $U(I)=\displaystyle{\bigcup}\{\phi(a):a\in I\}$ is the
open upset of $X$ corresponding to $I$, and if $U$ is an open upset
of $X$, then $I(U)=\{a\in L:\phi(a)\subseteq U\}$ is the ideal of
$L$ corresponding to $U$; if $F$ is a filter of $L$, then
$C(F)=\displaystyle{\bigcap}\{\phi(a):a\in F\}$ is the closed upset
of $X$ corresponding to $F$, and if $C$ is a closed upset of $X$,
then $F(C)=\{a\in L:C\subseteq\phi(a)\}$ is the filter of $L$
corresponding $C$. Then we have $I\subseteq J$ iff $U(I)\subseteq
U(J)$, $I=I(U(I))$, and $U(I(U))=U$; and $F\supseteq G$ iff
$C(F)\subseteq C(G)$, $F=F(C(F))$, and $C(F(C))=C$. Now we show how
these correspondences work for Frink ideals, ideals, and filters of
bounded distributive meet semi-lattices and bounded implicative meet
semi-lattices.

Let $L$ be a bounded distributive meet semi-lattice and let $D(L)$
be its distributive envelope. Let also $X=\la X,\tau,\leq,X_0\ra$ be
the generalized Priestley space of $L$. We know that $\la
X,\tau,\leq\ra$ is order-isomorphic and homeomorphic to the
Priestley space of $D(L)$. Since the lattice of Frink ideals of $L$
is isomorphic to the lattice of ideals of $D(L)$, we immediately
obtain from the above:

\begin{proposition}\label{prop9.1}
Let $L$ be a bounded distributive meet semi-lattice and let $X$ be
its generalized Priestley space. Then the maps $I\mapsto U(I)$ and
$U\mapsto I(U)$ set an isomorphism of the lattice of Frink ideals
of $L$ with the lattice of open upsets of $X$.
\end{proposition}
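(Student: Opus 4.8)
The plan is to realize the claimed bijection as the composite of three isomorphisms already in hand, and then to check that this composite is computed by the explicit formulas $I\mapsto U(I)$ and $U\mapsto I(U)$. First I would assemble the three links in the chain. The corollary following Theorem~\ref{theorem} gives an order-isomorphism between the Frink ideals of $L$ and the ideals of $D(L)$, sending a Frink ideal $I$ to the ideal $J$ of $D(L)$ generated by $\sigma[I]$, with inverse $J\mapsto\sigma^{-1}(J)$. Classical Priestley duality for the bounded distributive lattice $D(L)$ (recalled just before the proposition) gives an isomorphism between the ideals of $D(L)$ and the open upsets of $D(L)_*$, via $J\mapsto\bigcup_{A\in J}\phi_D(A)$ with inverse $V\mapsto\{A\in D(L):\phi_D(A)\subseteq V\}$. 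Finally, Corollary~\ref{isom-1} together with Lemma~\ref{Priestley-1} supplies the order-isomorphism and homeomorphism $\kappa\colon D(L)_*\to X$, $\kappa(P)=\sigma^{-1}(P)$, which carries open upsets of $D(L)_*$ to open upsets of $X$. Since each link is an order-isomorphism for inclusion, the composite is an order-isomorphism between the lattice of Frink ideals of $L$ and the lattice of open upsets of $X$; being an order-isomorphism between lattices, it is automatically a lattice isomorphism, and in particular the well-definedness of $U(I)$ as an open upset and of $I(U)$ as a Frink ideal comes for free.

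The main step is to verify that this composite equals $I\mapsto U(I)$. The computation rests on the identity $\phi(a)=\kappa[\phi_D(\sigma(a))]$, i.e.\ $\phi(a)=\{\sigma^{-1}(x):x\in\phi_D(\sigma(a))\}$, recorded before Proposition~\ref{proposition2}. Given a Frink ideal $I$ with associated $D(L)$-ideal $J$, I would show $\bigcup_{A\in J}\phi_D(A)=\bigcup_{a\in I}\phi_D(\sigma(a))$. The inclusion $\supseteq$ is immediate, since $\sigma(a)\in J$ for $a\in I$; for $\subseteq$ I would use that $J$ is generated by $\sigma[I]$ in the distributive lattice $D(L)$, so any $A\in J$ satisfies $A\subseteq\sigma(a_1)\cup\cdots\cup\sigma(a_n)$ for some $a_1,\dots,a_n\in I$ (joins in $D(L)\subseteq\mathcal{P}(\mathrm{Pr}(L))$ being unions), whence $\phi_D(A)\subseteq\bigcup_{i}\phi_D(\sigma(a_i))$. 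Applying $\kappa$ and the displayed identity then turns $\bigcup_{A\in J}\phi_D(A)$ into $\bigcup_{a\in I}\phi(a)=U(I)$.

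For the inverse I would run the same identity backwards. Starting from an open upset $U$ of $X$, its preimage $V=\kappa^{-1}[U]$ is an open upset of $D(L)_*$, the associated ideal of $D(L)$ is $\{A:\phi_D(A)\subseteq V\}$, and applying $\sigma^{-1}$ yields $\{a\in L:\phi_D(\sigma(a))\subseteq V\}$. Since $\kappa$ is a bijection, $\phi_D(\sigma(a))\subseteq\kappa^{-1}[U]$ holds iff $\kappa[\phi_D(\sigma(a))]=\phi(a)\subseteq U$, so this set is exactly $\{a\in L:\phi(a)\subseteq U\}=I(U)$. Thus the inverse of the composite is $U\mapsto I(U)$, and the two maps are mutually inverse order-isomorphisms, which gives the asserted lattice isomorphism.

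The only genuine content, and hence the place demanding care, is the identification $\bigcup_{A\in J}\phi_D(A)=\bigcup_{a\in I}\phi_D(\sigma(a))$---that is, collapsing the union over all members of the generated ideal $J$ to the union over the generators $\sigma[I]$. This uses the join-density of $\sigma[L]$ in $D(L)$ and the fact that the lattice operations of $D(L)\subseteq\mathcal{P}(\mathrm{Pr}(L))$ are the set-theoretic ones; everything else is bookkeeping through the three established isomorphisms. One could alternatively phrase the key inclusion via Lemma~\ref{sigmaup}, but the direct set-theoretic computation is the cleaner route.
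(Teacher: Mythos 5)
Your proposal is correct and follows essentially the same route as the paper: the paper also obtains the result by composing the isomorphism between Frink ideals of $L$ and ideals of $D(L)$ (Theorem \ref{theorem} and its corollary), classical Priestley duality for the bounded distributive lattice $D(L)$, and the order-homeomorphism $L_*\cong D(L)_*$ (Corollary \ref{isom-1}, Lemma \ref{Priestley-1}), declaring the conclusion "immediate." The only difference is that you explicitly verify that this composite is computed by the formulas $I\mapsto U(I)$ and $U\mapsto I(U)$ (via $\phi(a)=\kappa[\phi_D(\sigma(a))]$ and the collapse of the union over $J$ to the union over the generators $\sigma[I]$), a bookkeeping step the paper leaves to the reader.
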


In particular, prime F-ideals of $L$ correspond to the open upsets
of $X$ of the form $({\downarrow}x)^c$ for $x\in X$. Now we give a
dual description of ideals of $L$. Since each ideal of $L$ is an
F-ideal, ideals correspond to special open upsets of $X$.

\begin{lemma}\label{lem9.2}
Let $L$ be a bounded distributive meet semi-lattice and let $X$ be
its generalized Priestley space. If $I$ is an ideal of $L$, then $X
- U(I) = {\downarrow}(X_0 - U(I))$.
\end{lemma}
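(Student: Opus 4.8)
The plan is to prove the two inclusions separately, with the substantive work concentrated in the reverse inclusion, where the Prime Filter Lemma does all the heavy lifting. Recall that here $X = L_*$ and $X_0 = L_+$, that $\phi(a) = \{x \in L_*: a \in x\}$, and that $U(I) = \bigcup\{\phi(a): a \in I\}$, so that for any $x \in L_*$ we have $x \in U(I)$ iff $x \cap I \neq \emptyset$; equivalently, $x \in X - U(I)$ iff $x \cap I = \emptyset$. This reformulation of membership in $X - U(I)$ as disjointness from $I$ is the translation that makes the argument transparent.

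For the inclusion ${\downarrow}(X_0 - U(I)) \subseteq X - U(I)$, I would simply observe that $U(I)$, being a union of the upsets $\phi(a)$, is itself an upset, so $X - U(I)$ is a downset. Since $X_0 - U(I) \subseteq X - U(I)$ and the latter is downward closed, taking downsets preserves the inclusion and yields ${\downarrow}(X_0 - U(I)) \subseteq X - U(I)$. This step is routine.

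The forward inclusion $X - U(I) \subseteq {\downarrow}(X_0 - U(I))$ is where the hypothesis that $I$ is a genuine \emph{ideal} (and not merely a Frink ideal) is essential. Given $x \in X - U(I)$, the reformulation above gives $x \cap I = \emptyset$, where $x$ is a filter of $L$ (being an optimal filter). I would then apply the Prime Filter Lemma (Lemma \ref{meetprime}) to the filter $x$ and the ideal $I$ to produce a prime filter $y$ with $x \subseteq y$ and $y \cap I = \emptyset$. Then $y \in L_+ = X_0$, and $y \cap I = \emptyset$ means $y \in X - U(I)$, so $y \in X_0 - U(I)$; finally $x \subseteq y$ gives $x \in {\downarrow} y \subseteq {\downarrow}(X_0 - U(I))$. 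Combining both inclusions gives $X - U(I) = {\downarrow}(X_0 - U(I))$.

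The main (and really only) obstacle is recognizing that the Prime Filter Lemma applies, which hinges on $I$ being an ideal so that $x \cap I = \emptyset$ can be extended to a prime filter disjoint from $I$; for an arbitrary Frink ideal this extension to a \emph{prime} (rather than merely optimal) filter would fail, which is precisely why the statement is restricted to ideals and why the distinguished subset $X_0 = L_+$ of prime filters appears on the right-hand side.
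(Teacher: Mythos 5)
Your proof is correct and follows essentially the same route as the paper's: the easy inclusion via $U(I)$ being an upset, and the substantive inclusion by translating $x \in X - U(I)$ into $x \cap I = \emptyset$ and applying the Prime Filter Lemma to the filter $x$ and the ideal $I$ to obtain a prime filter $y \in X_0 - U(I)$ above $x$. Your closing remark about why the hypothesis that $I$ is an ideal (not merely a Frink ideal) is needed is also exactly the point the paper is making by restricting this lemma to ideals.
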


\begin{proof}
The inclusion ${\downarrow}(X_0 - U(I)) \subseteq X - U(I)$ is
trivial. To prove the other inclusion, let $x \in X - U(I)$. Then $x
\cap I = \emptyset$. By the prime filter lemma, there is a prime
filter $y$ of $L$ such that $x \subseteq y$ and $y \cap I =
\emptyset$. Thus, $y \in X_0 - U(I)$, and so $x \in {\downarrow}(X_0
- U(I))$.
\end{proof}

\begin{lemma}\label{lem9.3}
Let $L$ be a bounded distributive meet semi-lattice and let $X$ be
its generalized Priestley space. If $U$ is an open upset of $X$ such
that $X - U = {\downarrow}(X_0 - U)$, then $I(U)$ is an ideal of
$L$.
\end{lemma}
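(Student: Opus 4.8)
The plan is to show that $I(U)=\{a\in L:\phi(a)\subseteq U\}$ satisfies the two defining conditions of an ideal. Condition (i) (downward closure) together with nonemptiness is immediate: $\phi(\bot)=\emptyset\subseteq U$ gives $\bot\in I(U)$, and if $a\in I(U)$ and $b\leq a$, then $\phi(b)\subseteq\phi(a)\subseteq U$, so $b\in I(U)$. (Alternatively, $I(U)$ is already known to be an F-ideal, hence a downset, by Proposition \ref{prop9.1}.) All the work is therefore in verifying condition (ii): given $a,b\in I(U)$, I must produce $c\in\{a,b\}^u\cap I(U)$. The difficulty is that $a\vee b$ need not exist in $L$, so I cannot simply take the join; I must manufacture a \emph{common upper bound} of $a$ and $b$ whose image under $\phi$ still sits inside $U$.

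Here are the key steps. Write $T=X_0-U$, so the hypothesis reads $X-U={\downarrow}T$. For each $y\in T$, note that $y$ is a prime filter of $L$ (as $X_0=L_+$) with $y\notin U$; since $\phi(a),\phi(b)\subseteq U$, this forces $a,b\notin y$. By Proposition \ref{filterideal}, $L-y$ is a prime ideal, in particular an ideal, so from $a,b\in L-y$ and condition (ii) of the definition of ideal I obtain some $c_y\in\{a,b\}^u$ with $c_y\notin y$. Then $\phi(a)\cup\phi(b)\subseteq\phi(c_y)$ and $y\in\phi(c_y)^c$. The crucial observation is that the clopen family $\{\phi(c_y)^c:y\in T\}$ covers $X-U={\downarrow}T$: any $z\in X-U$ lies below some $y\in T$, and $c_y\notin y$ forces $c_y\notin z$, i.e.\ $z\in\phi(c_y)^c$. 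Since $U$ is open, $X-U$ is a closed subset of the compact space $X$, hence compact, so finitely many members suffice: $X-U\subseteq\phi(c_{y_1})^c\cup\cdots\cup\phi(c_{y_n})^c$. Setting $c=c_{y_1}\wedge\cdots\wedge c_{y_n}$, I get $\phi(c)=\phi(c_{y_1})\cap\cdots\cap\phi(c_{y_n})\subseteq U$, so $c\in I(U)$; and since $a\leq c_{y_i}$ and $b\leq c_{y_i}$ for every $i$, the greatest-lower-bound property gives $a,b\leq c$, i.e.\ $c\in\{a,b\}^u$. Thus $c\in\{a,b\}^u\cap I(U)$, as required. (When $T=\emptyset$ the cover is empty, $X-U=\emptyset$, $U=X$, and the empty meet $c=\top$ works, using the convention $\bigwedge\emptyset=\top$.)

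The main obstacle I anticipate is exactly the passage from the pointwise upper bounds $c_y$ to a single element of $I(U)$: each $c_y$ individually satisfies $\phi(a)\cup\phi(b)\subseteq\phi(c_y)$ but may well fail $\phi(c_y)\subseteq U$ globally. The hypothesis $X-U={\downarrow}(X_0-U)$ is precisely what makes the compactness argument go through, since it guarantees that the prime-filter points $T=X_0-U$ are cofinal in $X-U$, so that covering $T$ by the sets $\phi(c_y)^c$ automatically covers all of $X-U$. Replacing the finite subfamily by its meet is then harmless, because a finite meet of common upper bounds of $\{a,b\}$ is again a common upper bound of $\{a,b\}$, and $\phi$ sends meets to intersections.
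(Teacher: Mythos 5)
Your proof is correct, but it takes a genuinely different route from the paper's. The paper argues by contradiction: assuming ${\uparrow}a\cap{\uparrow}b\cap I(U)=\emptyset$, it applies the Optimal Filter Lemma to the filter ${\uparrow}a\cap{\uparrow}b$ and the F-ideal $I(U)$ (an F-ideal by Proposition \ref{prop9.1}), producing an optimal filter $x$ with ${\uparrow}a\cap{\uparrow}b\subseteq x$ and $x\cap I(U)=\emptyset$; disjointness forces $x\notin U$, the hypothesis $X-U={\downarrow}(X_0-U)$ lifts $x$ to a prime point $y\in X_0-U$, and primality of $y$ yields $a\in y$ or $b\in y$, contradicting $\phi(a),\phi(b)\subseteq U$. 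You instead construct the required common upper bound explicitly: for each prime $y\in X_0-U$, the complement trick of Proposition \ref{filterideal} gives an upper bound $c_y$ of $\{a,b\}$ outside $y$; the hypothesis makes the clopen sets $\phi(c_y)^c$ cover the closed, hence compact, set $X-U$, and a finite subcover yields $c=c_{y_1}\wedge\cdots\wedge c_{y_n}\in\{a,b\}^u\cap I(U)$. Both proofs use the hypothesis at exactly the same spot (to climb from an arbitrary point of $X-U$ to a prime point of $X_0-U$), but yours replaces the algebraic separation principle (the Optimal Filter Lemma) with topological compactness of the Priestley space plus the elementary prime filter/prime ideal duality, and has the merit of exhibiting a concrete witness for condition (ii) of the definition of ideal; the paper's argument is shorter and leans on machinery already in place. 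Your handling of the degenerate case $X_0-U=\emptyset$ via the empty-meet convention $\bigwedge\emptyset=\top$ is also consistent with the paper's conventions.
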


\begin{proof}
Since $U$ is an open upset of $X$, it follows from Proposition
\ref{prop9.1} that $I(U)$ is an F-ideal of $L$. Let $a, b \in I(U)$
with ${\uparrow}a\cap{\uparrow}b \cap I(U) = \emptyset$. By the
optimal filter lemma, there exists $x\in X$ such that
${\uparrow}a\cap{\uparrow}b\subseteq x$ and $x\cap I(U)=\emptyset$.
Therefore, $x\notin U$, so $x\in X-U$, and so there exists $y\in X_0
- U$ such that $x\leq y$. It follows that ${\uparrow}a \cap
{\uparrow} b \subseteq y$, and as $y$ is a prime filter, we have
${\uparrow}a \subseteq y$ or ${\uparrow}b \subseteq y$. Thus, $a\in
y$ or $b\in y$, which is a contradiction because $a,b\in I(U)$.
Consequently, ${\uparrow}a\cap{\uparrow}b \cap I(U) \neq \emptyset$,
and so $I(U)$ is an ideal of $L$.
\end{proof}

Putting Proposition \ref{prop9.1} and Lemmas \ref{lem9.2} and
\ref{lem9.3} together, we obtain:

\begin{theorem}\label{prop9.4}
Let $L$ be a bounded distributive meet semi-lattice and let $X$ be
its generalized Priestley space. Then the maps $I\mapsto U(I)$ and
$U\mapsto I(U)$ set an isomorphism of the ordered set of ideals of
$L$ with the ordered set of open upsets $U$ of $X$ such that $X - U
= {\downarrow}(X_0 - U)$.
\end{theorem}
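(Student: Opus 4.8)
The goal is to prove Theorem \ref{prop9.4}, which combines the three preceding results into a single order-isomorphism statement. The plan is to assemble the pieces rather than prove anything substantially new, since Proposition \ref{prop9.1} already gives the order-isomorphism between Frink ideals of $L$ and open upsets of $X$ via the mutually inverse maps $I\mapsto U(I)$ and $U\mapsto I(U)$. What remains is to show that this isomorphism restricts correctly to the subsets of interest: that the maps carry ideals of $L$ exactly onto those open upsets $U$ satisfying $X-U={\downarrow}(X_0-U)$.

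First I would recall that by Lemma \ref{semi} every ideal of $L$ is an F-ideal, so the map $I\mapsto U(I)$ is defined on ideals as a restriction of the map from Proposition \ref{prop9.1}. I would then invoke Lemma \ref{lem9.2} to show that if $I$ is an ideal, then its image $U(I)$ satisfies the condition $X-U(I)={\downarrow}(X_0-U(I))$; this establishes that the forward map sends ideals into the distinguished class of open upsets. Conversely, I would invoke Lemma \ref{lem9.3} to show that if $U$ is an open upset satisfying $X-U={\downarrow}(X_0-U)$, then $I(U)$ is an ideal of $L$, establishing that the backward map sends the distinguished open upsets into ideals.

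The final step is to note that the two restricted maps are mutually inverse and order-preserving. This is immediate: since $I=I(U(I))$ and $U(I(U))=U$ hold for all Frink ideals $I$ and open upsets $U$ by Proposition \ref{prop9.1}, the same identities hold a fortiori on the subclasses, and the order-isomorphism property $I\subseteq J$ iff $U(I)\subseteq U(J)$ likewise restricts. Therefore the two maps set up an order-isomorphism between the ordered set of ideals of $L$ and the ordered set of open upsets $U$ of $X$ with $X-U={\downarrow}(X_0-U)$.

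I do not anticipate a genuine obstacle here, as all the analytic content has been isolated into the two lemmas; the only thing to be careful about is verifying that the image of the ideal-to-open-upset map lands exactly in, and surjects onto, the distinguished class. The surjectivity direction is precisely what Lemma \ref{lem9.3} provides (every distinguished open upset is $U(I)$ for the ideal $I=I(U)$, using $U=U(I(U))$), and the well-definedness direction is Lemma \ref{lem9.2}, so the bookkeeping closes without gaps. The proof is thus a short paragraph citing Proposition \ref{prop9.1} together with Lemmas \ref{lem9.2} and \ref{lem9.3}.
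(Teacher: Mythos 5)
Your proposal is correct and is exactly the paper's argument: the theorem is stated there as an immediate consequence of Proposition \ref{prop9.1} together with Lemmas \ref{lem9.2} and \ref{lem9.3}, with the restriction bookkeeping left implicit. You have merely spelled out that bookkeeping, which closes without any gap.
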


\begin{remark}
Since for an open upset $U$ of $X$, $X-U$ is closed in $X$, and so
for each $x\in X-U$ there is $y\in\mathrm{max}(X-U)$ with $x\leq y$,
the condition $X - U = {\downarrow}(X_0 - U)$ is obviously
equivalent to the condition $\mathrm{max}(X-U)\subseteq X_0$.
\end{remark}

Our next task is to give a dual description of prime ideals of $L$.

\begin{lemma}\label{lem9.6}
Let $L$ be a bounded distributive meet semi-lattice and let $X$ be
its generalized Priestley space. If $I$ is a prime ideal of $L$,
then $U(I)=({\downarrow}x)^c$ for some $x\in X_0$.
\end{lemma}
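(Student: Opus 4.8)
The plan is to realize the prime ideal $I$ as the set-theoretic complement of a prime filter, and then to check that under the map $I \mapsto U(I)$ this complement turns into the complement of a principal downset. The crucial input is Proposition \ref{filterideal}: since $I$ is a prime ideal of $L$, the set $F := L - I$ is a prime filter of $L$. Prime filters are exactly the points of $L_{+} = X_0$ (they are optimal by Lemma \ref{lemma3}), so $F \in X_0$, and this $F$ is the point $x$ we are after.

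First I would unwind what membership in $U(I)$ means. Recall that $U(I) = \bigcup\{\phi(a): a \in I\}$, where $\phi(a) = \{y \in X : a \in y\}$. Hence for $y \in X = L_{*}$ we have $y \in U(I)$ iff $a \in y$ for some $a \in I$, that is, iff $y \cap I \neq \emptyset$. Next I would compute the complement of the downset of $F$. Since the order on $X$ is set inclusion, $y \in ({\downarrow}F)^c$ iff $y \not\subseteq F$, i.e.\ iff there is some $a \in y$ with $a \notin F = L - I$, which is to say $a \in I$. Thus $y \in ({\downarrow}F)^c$ iff $y \cap I \neq \emptyset$ as well. Comparing the two descriptions yields $U(I) = ({\downarrow}F)^c$ with $F \in X_0$, as required.

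I do not anticipate any genuine obstacle here: the entire content is the translation supplied by Proposition \ref{filterideal}, and the remainder is a pointwise unwinding of the definitions of $\phi$, $U(I)$, and ${\downarrow}$. The only point that warrants a moment's care is that $\phi(a)$ ranges over all of $X = L_{*}$ (optimal filters), not merely over $L_{+}$; but this is harmless, because the identity $U(I) = ({\downarrow}F)^c$ is verified for every $y \in L_{*}$, so the two sets agree as subsets of $X$ and the witnessing point $F$ nonetheless lies in $X_0$.
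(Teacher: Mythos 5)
Your proposal is correct and takes essentially the same route as the paper's own proof: both set $x = L - I$, invoke Proposition \ref{filterideal} to conclude $x \in X_0$, and then establish $U(I) = ({\downarrow}x)^c$ via the same chain of pointwise equivalences, namely $y \in U(I)$ iff $y \cap I \neq \emptyset$ iff $y \not\subseteq x$ iff $y \in ({\downarrow}x)^c$.
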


\begin{proof}
Let $I$ be a prime ideal of $L$ and let $x= L - I$. By Proposition
\ref{filterideal}, $x\in X_0$. Moreover, we have $y\in U(I)$ iff
$y\cap I\neq\emptyset$ iff $y\not\subseteq x$ iff $y\in
({\downarrow}x)^c$. Thus, $U(I)=({\downarrow}x)^c$.
\end{proof}

\begin{lemma}\label{lem9.7}
Let $L$ be a bounded distributive meet semi-lattice and let $X$ be
its generalized Priestley space. If $U=({\downarrow}x)^c$ for $x\in
X_0$, then $I(U)$ is a prime ideal of $L$.
\end{lemma}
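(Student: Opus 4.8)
The plan is to reduce the statement to the already-proved Proposition \ref{filterideal} by identifying the ideal $I(U)$ explicitly. Since $x\in X_0=L_+$, the point $x$ is a prime filter of $L$, and Proposition \ref{filterideal} tells us that $L-x$ is a prime ideal of $L$. So the whole lemma will follow at once if I can establish the single identity $I(U)=L-x$. Note first that $U=({\downarrow}x)^c$ is indeed an open upset (as $x\in X$ and $X$ is a Priestley space, ${\downarrow}x$ is a closed downset), so $I(U)=\{a\in L:\phi(a)\subseteq U\}$ is meaningful and, by Proposition \ref{prop9.1}, is at least a Frink ideal; the point is to show it is actually the \emph{prime} ideal $L-x$.

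The key computation I would carry out is the equivalence $\phi(a)\subseteq U \Leftrightarrow a\notin x$. Rewriting the defining condition, $\phi(a)\subseteq({\downarrow}x)^c$ holds iff $\phi(a)\cap{\downarrow}x=\emptyset$, and in the space $X=L_*$ this reads: there is no optimal filter $y$ with $y\subseteq x$ and $a\in y$. For the direction $a\notin x\Rightarrow a\in I(U)$, I would argue that any $y\in\phi(a)\cap{\downarrow}x$ would satisfy $a\in y\subseteq x$, forcing $a\in x$, a contradiction; hence $\phi(a)\cap{\downarrow}x=\emptyset$. For the converse, if $a\in x$ I would use $x$ itself as the witness: $x$ is a prime filter, hence an optimal filter by Lemma \ref{lemma3}, and $x\subseteq x$ together with $a\in x$ places $x$ in $\phi(a)\cap{\downarrow}x$, so $a\notin I(U)$. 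Combining the two directions yields $I(U)=L-x$, and Proposition \ref{filterideal} finishes the argument.

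I do not expect a genuine obstacle here; the only thing to be careful about is the bookkeeping that translates the topological downset ${\downarrow}x$ in $X$ into filter inclusion $\subseteq$ and recognizes $x$ itself as the optimal filter witnessing $a\in x$. As a sanity check, the same conclusion can be reached indirectly: by Lemma \ref{lem9.6} the prime ideal $L-x$ has $U(L-x)=({\downarrow}x)^c=U$, and since $I=I(U(I))$ for ideals (Theorem \ref{prop9.4}) one gets $I(U)=I(U(L-x))=L-x$; but the direct identification above is cleaner and self-contained.
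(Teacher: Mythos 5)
Your proposal is correct, but it proceeds differently from the paper. The paper splits the task in two: it first notes that $I(U)$ is an ideal via the dual criterion $\mathrm{max}(U^c)=\mathrm{max}({\downarrow}x)=\{x\}\subseteq X_0$ (Lemma \ref{lem9.3} together with the remark after Theorem \ref{prop9.4}), and then proves primeness by a direct point-set argument on the dual side: from $a\wedge b\in I(U)$ one gets $\phi(a)\cap\phi(b)\subseteq({\downarrow}x)^c$, hence $x\in\phi(a)^c\cup\phi(b)^c$; since $\phi(a)^c$ and $\phi(b)^c$ are downsets, ${\downarrow}x\subseteq\phi(a)^c$ or ${\downarrow}x\subseteq\phi(b)^c$, i.e.\ $a\in I(U)$ or $b\in I(U)$. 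You instead pin down the ideal exactly, proving $I(U)=L-x$ via the equivalence $\phi(a)\subseteq({\downarrow}x)^c\Leftrightarrow a\notin x$ (both directions of which you argue correctly, using that $x\in L_+\subseteq L_*$ so that $x$ itself witnesses $a\in x\Rightarrow\phi(a)\cap{\downarrow}x\neq\emptyset$), and then invoke Proposition \ref{filterideal} from Section 3. Your route yields strictly more information: it identifies which prime ideal $I(U)$ is, and combined with Lemma \ref{lem9.6} it makes explicit that the correspondences $I\mapsto U(I)$ and $U\mapsto I(U)$ are mutually inverse on prime ideals, which is exactly what the subsequent proposition asserts. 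The paper's argument, by contrast, never needs Proposition \ref{filterideal} and stays entirely within the topology of $L_*$, verifying the abstract prime-ideal property without naming the ideal; it is the natural choice if one wants the section to read as a self-contained exercise in the duality machinery. Your secondary "sanity check" via Lemma \ref{lem9.6} and the bijectivity in Theorem \ref{prop9.4} is also a valid third proof.
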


\begin{proof}
Clearly $I(U)$ is an ideal of $L$ because
$\mathrm{max}(U^c)=\mathrm{max}({\downarrow}x)=\{x\}\subseteq X_0$.
We show that it is prime. Let $a\wedge b\in I(U)$. Then $\phi(a)\cap
\phi(b)=\phi(a\wedge b)\subseteq U$. So $\phi(a)\cap
\phi(b)\subseteq({\downarrow}x)^c$, and so ${\downarrow}x\subseteq
\phi(a)^c\cup\phi(b)^c$. Therefore, $x\in\phi(a)^c\cup\phi(b)^c$,
which implies that $x\in\phi(a)^c$ or $x\in\phi(b)^c$. Thus,
${\downarrow}x\subseteq \phi(a)^c$ or ${\downarrow}x\subseteq
\phi(b)^c$, so $\phi(a)\subseteq({\downarrow}x)^c$ or $\phi(b)
\subseteq({\downarrow}x)^c$. It follows that $\phi(a)\subseteq U$ or
$\phi(b)\subseteq U$, so $a\in I(U)$ or $b\in I(U)$, and so $I(U)$
is a prime ideal.
\end{proof}

Putting Theorem \ref{prop9.4} and Lemmas \ref{lem9.6} and
\ref{lem9.7} together, we obtain:

\begin{proposition}
Let $L$ be a bounded distributive meet semi-lattice and let $X$ be
its generalized Priestley space. Then the maps $I\mapsto U(I)$ and
$U\mapsto I(U)$ set an isomorphism of the ordered set of prime
ideals of $L$ with the ordered set of open upsets of $X$ of the form
$({\downarrow}x)^c$ for some $x\in X_0$.
\end{proposition}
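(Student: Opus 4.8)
The plan is to obtain the statement by restricting the order-isomorphism of Theorem \ref{prop9.4} to the relevant subposets, using Lemmas \ref{lem9.6} and \ref{lem9.7} to pin down the image on both sides. By Theorem \ref{prop9.4}, the maps $I\mapsto U(I)$ and $U\mapsto I(U)$ are mutually inverse order-isomorphisms between the poset of all ideals of $L$ and the poset of all open upsets $U$ of $X$ satisfying $X-U={\downarrow}(X_0-U)$. It therefore suffices to show that these mutually inverse maps carry the subposet of prime ideals onto the subposet of open upsets of the form $({\downarrow}x)^c$ with $x\in X_0$, and back.

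First I would record that each set $({\downarrow}x)^c$ with $x\in X_0$ is an open upset lying in the domain of the isomorphism of Theorem \ref{prop9.4}: since $X-({\downarrow}x)^c={\downarrow}x$ is a closed downset whose only maximal point is $x\in X_0$, the Remark following Theorem \ref{prop9.4} gives $X-({\downarrow}x)^c={\downarrow}(X_0-({\downarrow}x)^c)$; this is exactly the observation that opens the proof of Lemma \ref{lem9.7}. Next, Lemma \ref{lem9.6} shows that $U(-)$ sends every prime ideal into the family $\{({\downarrow}x)^c:x\in X_0\}$, while Lemma \ref{lem9.7} shows that $I(-)$ sends every member of that family to a prime ideal.

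Combining these, the already-established mutual inverses restrict to mutually inverse maps between the prime ideals of $L$ and the open upsets of the form $({\downarrow}x)^c$: given a prime ideal $I$, Lemma \ref{lem9.6} places $U(I)$ in the target family and $I(U(I))=I$ by Theorem \ref{prop9.4}; given $U=({\downarrow}x)^c$, Lemma \ref{lem9.7} makes $I(U)$ prime and $U(I(U))=U$. Since both maps are order-preserving and order-reflecting as restrictions of the isomorphism of Theorem \ref{prop9.4}, this yields the desired order-isomorphism.

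There is essentially no genuine obstacle here: all the mathematical content is already isolated in Theorem \ref{prop9.4} and Lemmas \ref{lem9.6} and \ref{lem9.7}, and the proof amounts to checking that the two subfamilies correspond exactly under the global bijection. The only point requiring a moment's care is the bookkeeping that the two restrictions have matching codomains, that is, that the image of the prime ideals under $U(-)$ is not merely contained in but equal to $\{({\downarrow}x)^c:x\in X_0\}$, which is guaranteed precisely because Lemma \ref{lem9.7} supplies, for each such $U$, a prime-ideal preimage.
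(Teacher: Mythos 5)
Your proof is correct and follows exactly the paper's route: the paper likewise obtains this proposition by "putting Theorem \ref{prop9.4} and Lemmas \ref{lem9.6} and \ref{lem9.7} together," i.e.\ by restricting the global ideal/open-upset isomorphism to the two subfamilies that those lemmas show correspond to each other. Your write-up merely makes explicit the bookkeeping (membership of $({\downarrow}x)^c$ in the domain of Theorem \ref{prop9.4}, and mutual inverses restricting correctly) that the paper leaves implicit.
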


Now we give a dual description of filters of $L$. Since there are
less filters in $L$ than in $D(L)$, not every closed upset of $X$
corresponds to a filter of $L$.

\begin{lemma}\label{lem9.9}
Let $L$ be a bounded distributive meet semi-lattice and let $X$ be
its generalized Priestley space. If $F$ is a filter of $L$, then $X
- C(F) = {\downarrow}(X_0 - C(F))$.
\end{lemma}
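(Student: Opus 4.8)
The plan is to mirror the proof of Lemma~\ref{lem9.2}, replacing the open upset $U(I)$ by the closed upset $C(F)=\bigcap\{\phi(a):a\in F\}$ and exploiting that $C(F)$ is an intersection of the clopen upsets $\phi(a)$. First I would dispose of the trivial inclusion ${\downarrow}(X_0-C(F))\subseteq X-C(F)$: since each $\phi(a)$ is an upset, so is $C(F)$, whence $X-C(F)$ is a downset; as it contains $X_0-C(F)$, it contains the downward closure of that set.

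For the nontrivial inclusion $X-C(F)\subseteq{\downarrow}(X_0-C(F))$, I would take $x\in X-C(F)$. The key reduction is that, because $C(F)=\bigcap\{\phi(a):a\in F\}$, the condition $x\notin C(F)$ already produces a single witness: there is $a\in F$ with $x\notin\phi(a)$, i.e.\ $a\notin x$. Since $x$ is a filter of $L$ (in particular an upset) and $a\notin x$, no element below $a$ can lie in $x$, so $x\cap{\downarrow}a=\emptyset$, and ${\downarrow}a$ is an ideal of $L$.

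Now I would invoke the Prime Filter Lemma (Lemma~\ref{meetprime}) for the filter $x$ and the ideal ${\downarrow}a$, which applies because $L$ is a distributive meet semi-lattice: there is a prime filter $y$ of $L$ with $x\subseteq y$ and $y\cap{\downarrow}a=\emptyset$, whence $a\notin y$. Since prime filters are optimal (Lemma~\ref{lemma3}), $y\in L_+=X_0$; and $a\notin y$ gives $y\notin\phi(a)\supseteq C(F)$, so $y\in X_0-C(F)$. As $x\subseteq y$, this places $x$ in ${\downarrow}(X_0-C(F))$, completing the argument.

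I expect no serious obstacle here; the only point requiring care is the reduction from the ``global'' non-membership $x\notin C(F)$ to a single defining element $a\in F$ lying outside $x$, after which separating $x$ from the principal ideal ${\downarrow}a$ by a prime filter is exactly the situation handled by the Prime Filter Lemma. The whole proof is the filter-theoretic mirror of Lemma~\ref{lem9.2}.
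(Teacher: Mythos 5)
Your proof is correct and follows essentially the same route as the paper's: both reduce $x\notin C(F)$ to a single witness $a\in F$ with $a\notin x$, then apply the Prime Filter Lemma to the filter $x$ and the ideal ${\downarrow}a$ to obtain a prime filter $y\in X_0$ above $x$ avoiding $a$, hence avoiding $C(F)$. The only difference is that you spell out the details (disjointness of $x$ from ${\downarrow}a$, that ${\downarrow}a$ is an ideal) that the paper leaves implicit.
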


\begin{proof}
The inclusion ${\downarrow}(X_0 - C(F)) \subseteq X - C(F)$ is
trivial. For the other inclusion, let $x \in X - C(F)$. Then
$x\notin C(F)$, and so there exists $a \in F$ such that $a \not \in
x$. By the prime filter lemma, there is $y \in X_0$ such that $x
\subseteq y$ and $a \not \in y$. Thus, $y \not \in C(F)$, so $y \in
X_0 - C(F)$, and so $x \in {\downarrow}(X_0 - C(F))$.
\end{proof}

\begin{lemma}\label{lem9.10}
Let $L$ be a bounded distributive meet semi-lattice and let $X$ be
its generalized Priestley space. If $C$ is a closed upset of $X$,
then $F(C)$ is a filter of $L$, and $C=C(F(C))$ iff $X - C =
{\downarrow}(X_0 - C)$.
\end{lemma}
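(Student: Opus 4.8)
The plan is to separate the statement into the unconditional claim that $F(C)$ is a filter and the equivalence $C = C(F(C)) \Leftrightarrow X - C = {\downarrow}(X_0 - C)$, treating the two directions of the equivalence independently and reducing the forward direction to Lemma \ref{lem9.9}. First I would check that $F(C) = \{a \in L : C \subseteq \phi(a)\}$ is a filter, which is routine: $\top \in F(C)$ since $\phi(\top) = X \supseteq C$; if $a,b \in F(C)$ then $C \subseteq \phi(a) \cap \phi(b) = \phi(a\wedge b)$, so $a\wedge b \in F(C)$; and if $a \in F(C)$ with $a \leq b$, then $\phi(a) \subseteq \phi(b)$ forces $C \subseteq \phi(b)$, so $b \in F(C)$. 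At this point I would record the two facts used throughout: $C \subseteq C(F(C))$ always holds, because the intersection $C(F(C)) = \bigcap\{\phi(a): a \in F(C)\}$ only involves clopen upsets containing $C$; and $x \in C(F(C))$ iff $F(C) \subseteq x$, obtained by unwinding the definitions of $C(F(C))$ and $\phi$.

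For the forward direction, suppose $C = C(F(C))$. Since $F(C)$ is a filter, Lemma \ref{lem9.9} applied to the filter $F = F(C)$ gives $X - C(F(C)) = {\downarrow}(X_0 - C(F(C)))$, and substituting $C = C(F(C))$ yields $X - C = {\downarrow}(X_0 - C)$ at once. So this direction costs essentially nothing beyond the filter check.

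The content lies in the reverse direction. Assume $X - C = {\downarrow}(X_0 - C)$; by the first paragraph it suffices to prove $C(F(C)) \subseteq C$, that is, every $x \notin C$ admits some $a \in F(C)$ with $a \notin x$ (then $F(C) \not\subseteq x$, so $x \notin C(F(C))$). Given $x \notin C$, the hypothesis provides $y \in X_0 - C$ with $x \leq y$. The goal is now to separate the closed upset $C$ from $y$ by a \emph{single} admissible clopen upset, i.e. an element of $X^* = \phi[L]$, rather than merely an element of $\mathfrak{CU}(X)$. The plan is: for each $c \in C$ we have $c \not\leq y$ since $C$ is an upset and $y \notin C$, so condition (5) of Definition \ref{g-Priestley} supplies $U_c \in X^*$ with $c \in U_c$ and $y \notin U_c$; compactness of $C$ then yields finitely many $U_{c_1},\dots,U_{c_n} \in X^*$ with $C \subseteq U_{c_1} \cup \dots \cup U_{c_n}$ and $y$ in none of them.

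The main obstacle is precisely here: $U_{c_1} \cup \dots \cup U_{c_n}$ need not belong to $X^*$, because $X^*$ is closed under finite intersections but not unions. This is exactly what $y \in X_0$ repairs. By condition (4) of Definition \ref{g-Priestley}, the family $\mathcal{I}_y = \{U \in X^* : y \notin U\}$ is directed, so since $U_{c_1},\dots,U_{c_n} \in \mathcal{I}_y$ there is $V \in X^*$ with $y \notin V$ and $U_{c_1} \cup \dots \cup U_{c_n} \subseteq V$; hence $C \subseteq V$. Writing $V = \phi(a)$ gives $a \in F(C)$, while $y \notin V$ together with $x \subseteq y$ gives $a \notin x$. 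Thus $x \notin C(F(C))$, which with $C \subseteq C(F(C))$ completes the equality $C = C(F(C))$. I expect the upgrade from a finite-union clopen-upset separator to an admissible one to be the crux of the argument, and the directedness of $\mathcal{I}_y$ for $y \in X_0$ is the tool that drives it; the remaining steps are either definitional or direct appeals to Lemma \ref{lem9.9} and the prime-filter/compactness machinery already in place.
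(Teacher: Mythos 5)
Your proposal is correct, and its overall skeleton matches the paper's: the same routine filter check, the same one-line reduction of the forward direction to Lemma \ref{lem9.9}, and the same strategy for the reverse direction, namely, given $x \notin C$, take $y \in X_0 - C$ with $x \leq y$ and separate $C$ from $y$ by a \emph{single} element of $X^{*} = \phi[L]$. Where you diverge is in the mechanism producing that separating set. The paper argues concretely in $L_*$: a closed upset of a Priestley space is the intersection of the clopen upsets containing it, so there is a clopen upset $U \supseteq C$ with $y \notin U$; by Theorem \ref{isom-2} one writes $U = \phi(a_1) \cup \ldots \cup \phi(a_n)$, and then the \emph{primality} of $y$ as a prime filter of $L$ yields $a$ with $a_i \leq a$ for all $i$ and $a \notin y$, so that $C \subseteq \phi(a)$ and $x \notin \phi(a)$. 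You instead stay at the axiomatic level of Definition \ref{g-Priestley}: condition (5) together with compactness of $C$ gives a finite cover of $C$ by elements of $\mathcal{I}_y$, and condition (4) (directedness of $\mathcal{I}_y$, since $y \in X_0$) upgrades the union of that cover to a single $V \in \mathcal{I}_y$ containing $C$. The two mechanisms are equivalent---Proposition \ref{proposition5} is exactly the statement that primality of $y$ corresponds to directedness of $\mathcal{I}_y$---so in effect you have inlined that proposition at the point where the paper invokes primality. What your variant buys is independence from the concrete description of points as filters of $L$: apart from the final translation $V = \phi(a)$, the separation argument works verbatim in an arbitrary generalized Priestley space. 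What the paper's variant buys is brevity: the finite decomposition of $U$ is already on the shelf from Theorem \ref{isom-2}, and primality hands over the required upper bound $a \notin y$ in a single step, with no need for the pointwise cover-and-compactness argument.
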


\begin{proof}
If $C$ is a closed upset, it is routine to see that $F(C)$ is a
filter of $L$. Suppose that $C=C(F(C))$. Since $F(C)$ is a filter of
$L$, Lemma \ref{lem9.9} implies that $X - C(F(C)) = {\downarrow}(X_0
- C(F(C)))$. From $C=C(F(C))$ and the last equality we get $X - C =
{\downarrow}(X_0 - C)$. Conversely, suppose that $X - C =
{\downarrow}(X_0 - C)$. We show that $C=C(F(C))$. Since
$C(F(C))=\displaystyle{\bigcap}\{\phi(a):C \subseteq\phi(a)\}$, it
is obvious that $C\subseteq C(F(C))$. For the converse, suppose that
$x\notin C$. Then there exists $y \in X_0-C$ such that $x \leq y$.
Since $C$ is a closed upset of $X$, $C$ is the intersection of
clopen upsets of $X$ containing $C$. Therefore, from $y\notin C$ it
follows that there is a clopen upset $U$ of $X$ such that
$C\subseteq U$ and $y\notin U$. As each clopen upset of $X$ is a
finite union of elements of $\phi[L]$, there exist $a_1, \ldots, a_n
\in L$ such that $U = \phi(a_{1}) \cup \ldots \cup \phi(a_{n})$.
Thus, $y \not \in \phi(a_{1}) \cup \ldots \cup \phi(a_{n})$, and so
$a_1, \ldots, a_n \not \in y$. Since $y$ is a prime filter of $L$,
we have $\displaystyle{\bigcap_{i=1}^n}{\uparrow}a_i \not \subseteq
y$. Therefore, there exists $a \in \displaystyle{\bigcap_{i=1}^n}
{\uparrow}a_i$ such that $a \not \in y$. But then
$\phi(a)\supseteq\phi(a_1)\cup\ldots\cup\phi(a_n) = U \supseteq C$
and $y\notin \phi(a)$. Consequently, $C\subseteq \phi(a)$ and
$x\notin\phi(a)$, implying that $x \not \in C(F(C))$.
\end{proof}

Putting Lemmas \ref{lem9.9} and \ref{lem9.10} together, we obtain:

\begin{theorem}
Let $L$ be a bounded distributive meet semi-lattice and let $X$ be
its generalized Priestley space. Then the maps $F\mapsto C(F)$ and
$C\mapsto F(C)$ set an isomorphism of the lattice of filters of $L$
(ordered by reverse inclusion) and the lattice of closed upsets $C$
of $X$ satisfying the condition $X - C = {\downarrow}(X_0 - C)$.
\end{theorem}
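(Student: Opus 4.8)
The plan is to assemble Lemmas \ref{lem9.9} and \ref{lem9.10} with two elementary facts about the operators $C(\cdot)$ and $F(\cdot)$: that they reverse inclusions, and that $F=F(C(F))$ for every filter $F$. First I would check that both maps land in the stated domains. For a filter $F$, the set $C(F)=\bigcap\{\phi(a):a\in F\}$ is an intersection of clopen upsets of $X$ (each $\phi(a)$ lies in $X^{*}=\phi[L]$), hence a closed upset, and Lemma \ref{lem9.9} gives $X-C(F)={\downarrow}(X_0-C(F))$; thus $F\mapsto C(F)$ does send filters to closed upsets satisfying the required condition. Conversely, the first assertion of Lemma \ref{lem9.10} shows that $F(C)$ is a filter of $L$ for every closed upset $C$, so $C\mapsto F(C)$ is well defined on the relevant collection.

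Next I would prove that the two maps are mutually inverse. The identity $C=C(F(C))$ for a closed upset $C$ with $X-C={\downarrow}(X_0-C)$ is exactly the nontrivial direction of the equivalence established in Lemma \ref{lem9.10}. For the other composite I would verify $F=F(C(F))$ directly, using that $C(F)=\{x\in L_{*}:F\subseteq x\}$. The inclusion $F\subseteq F(C(F))$ is immediate, since $a\in F$ forces $C(F)\subseteq\phi(a)$. For the reverse inclusion I would invoke the corollary to the optimal filter lemma: if $F$ is a proper filter, then $F$ is the intersection of the optimal filters containing it, so whenever $a\notin F$ there is an optimal filter $x\supseteq F$ (hence $x\in C(F)$) with $a\notin x$, whence $C(F)\not\subseteq\phi(a)$ and $a\notin F(C(F))$; the case $F=L$ is vacuous. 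This re-proof of $F=F(C(F))$ in the semi-lattice setting is really the only place where content beyond the two cited lemmas enters, and it is the step I expect to be the main (though modest) obstacle, since it is precisely here that optimal filters, rather than prime filters, are needed to recover the filter from its closed upset.

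Finally I would show the bijection reverses order. If $F\supseteq G$, then $\{\phi(a):a\in F\}\supseteq\{\phi(a):a\in G\}$ immediately yields $C(F)\subseteq C(G)$. Conversely, $F(\cdot)$ is order-reversing by inspection (if $C\subseteq D$ then $F(D)\subseteq F(C)$), so $C(F)\subseteq C(G)$ gives $F=F(C(F))\supseteq F(C(G))=G$. Hence $F\mapsto C(F)$ is an order-isomorphism from the poset of filters of $L$ ordered by reverse inclusion onto the poset of closed upsets $C$ of $X$ satisfying $X-C={\downarrow}(X_0-C)$ ordered by inclusion, with inverse $C\mapsto F(C)$. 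Since $\langle\mathsf{F}(L),\cap,\vee\rangle$ is a lattice, this order-isomorphism transports the lattice structure, so both posets are lattices and the two maps are lattice isomorphisms, which is the assertion of the theorem.
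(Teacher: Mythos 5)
Your proof is correct and follows essentially the same route as the paper: the paper obtains this theorem precisely by putting Lemmas \ref{lem9.9} and \ref{lem9.10} together. The only content you add is the explicit verification of $F=F(C(F))$ via the corollary to the optimal filter lemma (a step the paper leaves implicit, having recalled the analogous identity only for the lattice case in the section's preamble), so your write-up is a faithful, slightly more detailed rendering of the intended argument.
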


In particular, since there is a 1-1 correspondence between prime
filters and prime ideals of $L$, we obtain that prime filters of $L$
correspond to closed upsets of $X$ of the form ${\uparrow}x$ for
$x\in X_0$. Also, since there is a 1-1 correspondence between
optimal filters and prime F-ideals of $L$, optimal filters of $L$
correspond to closed upsets of $X$ of the form ${\uparrow}x$ for
$x\in X$. In the next table we gather together the dual descriptions
of different notions of filters and ideals of a bounded distributive
meet semi-lattice $L$ and its distributive envelope $D(L)$.

\

\begin{center}

{\large\bf Dual description of filters of $L$ and $D(L)$}

\

\begin{tabular}{p{15em}|p{15em}}
Filters of $D(L)$ &Closed upsets of $L_*$\\
\hline%
Filters of $L$& Closed upsets $C$ of $L_*$ such that
$L_*-C={\downarrow}(L_+-C)$\\
\hline%
Optimal filters of $L$ = Prime filters of $D(L)$&${\uparrow} x$,
$x\in L_*$\\
\hline%
Prime filters of $L$&
${\uparrow} x$, $x\in L_+$
\end{tabular}


\

\

{\large\bf Dual description of ideals of $L$ and $D(L)$}

\

\begin{tabular}{p{15em}|p{15em}}
F-ideals of $L$ = Ideals of $D(L)$&Open upsets of $L_*$\\
\hline%
Prime F-ideals of $L$ = Prime ideals of $D(L)$&$({\downarrow}x)^c$,
$x\in L_*$\\
\hline%
Ideals of $L$&Open upsets $U$ of
$L_*$ such that $L_*-U={\downarrow}(L_+-U)$\\
\hline%
Prime ideals of $L$&$({\downarrow} x)^c$, $x\in L_+$
\end{tabular}
\end{center}

\

\

The picture remains unchanged for bounded implicative meet
semi-lattices.

\subsection{Dual description of 1-1 and onto homomorphisms}

Our next task is to give a dual description of 1-1 and onto
homomorphisms.

\begin{lemma}\label{theorem1}
Let $X$ and $Y$ be generalized Priestley spaces and let $R\subseteq
X\times Y$ be a generalized Priestley morphism.
\begin{enumerate}
\item If $F$ is a closed subset of $X$, then $R[F]$ is a closed
upset of $Y$.
\item If $G$ is a closed subset of $Y$, then $R^{-1}[G]$ is a
closed downset of $X$.
\end{enumerate}
\end{lemma}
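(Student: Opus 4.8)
The plan is to prove each part by separating the order-theoretic claim from the topological one. The order-theoretic halves are free: by the Remark following Lemma \ref{lemma-g-morphisms}, condition (2) of that lemma is equivalent to $R[A]$ being an upset of $Y$ for every $A\subseteq X$, and condition (1) is equivalent to $R^{-1}[B]$ being a downset of $X$ for every $B\subseteq Y$. Hence $R[F]$ is automatically an upset and $R^{-1}[G]$ automatically a downset, and all the genuine work lies in showing that these sets are closed. The engine for that is compactness of the underlying Priestley spaces together with the two defining conditions of a generalized Priestley morphism (Definition \ref{definition}).

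For part (1), I would show that $Y\setminus R[F]$ is open. Fix $y\notin R[F]$, so that $x\nR y$ for every $x\in F$. Condition (1) of Definition \ref{definition} then supplies, for each $x\in F$, some $U_x\in Y^{*}$ with $R[x]\subseteq U_x$ and $y\notin U_x$. The inclusion $R[x]\subseteq U_x$ says exactly $x\in\Box_R U_x$, and $\Box_R U_x\in X^{*}$ is clopen by condition (2) of Definition \ref{definition}; thus $\{\Box_R U_x:x\in F\}$ is an open cover of $F$. Since $F$ is closed in the compact space $X$, it is compact, so finitely many $\Box_R U_{x_1},\dots,\Box_R U_{x_n}$ already cover $F$. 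Then $W=(\bigcup_{i=1}^{n}U_{x_i})^{c}$ is clopen, contains $y$ (because $y\notin U_{x_i}$ for all $i$), and is disjoint from $R[F]$: any $w\in R[F]$ lies in some $R[x]$ with $x\in F$, hence $x\in\Box_R U_{x_i}$ for some $i$, so $w\in R[x]\subseteq U_{x_i}\subseteq\bigcup_{i}U_{x_i}$, i.e. $w\notin W$. So $W$ is a neighbourhood of $y$ missing $R[F]$, and $R[F]$ is closed.

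Part (2) is dual, but with the roles of union/intersection and of $X$/$Y$ interchanged. To see that $X\setminus R^{-1}[G]$ is open, fix $x\notin R^{-1}[G]$, so $x\nR y$ for every $y\in G$; for each such $y$ pick $U_y\in Y^{*}$ with $R[x]\subseteq U_y$ and $y\notin U_y$, i.e. $y\in U_y^{c}$. Now $\{U_y^{c}:y\in G\}$ is an open cover of $G$, and compactness of the closed set $G$ yields $y_1,\dots,y_m$ with $G\subseteq\bigcup_{i=1}^{m}U_{y_i}^{c}$. Put $U=\bigcap_{i=1}^{m}U_{y_i}$; since $Y^{*}$ is closed under finite intersections, $U\in Y^{*}$ and $\Box_R U\in X^{*}$ is clopen. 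From $R[x]\subseteq U_{y_i}$ for all $i$ we get $R[x]\subseteq U$, so $x\in\Box_R U$; and $\Box_R U$ misses $R^{-1}[G]$, for if $x'Ry'$ with $y'\in G$, then $y'\notin U_{y_i}$ for some $i$, whence $R[x']\not\subseteq U_{y_i}\supseteq U$ and $x'\notin\Box_R U$. Thus $\Box_R U$ is a clopen neighbourhood of $x$ disjoint from $R^{-1}[G]$, and $R^{-1}[G]$ is closed.

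The one point I would be most careful about is making the two compactness arguments dualize correctly: in (1) one covers the closed set $F\subseteq X$ by the clopen sets $\Box_R U_x$ and then unions the $U_{x_i}$ on the $Y$-side, while in (2) one covers $G\subseteq Y$ directly by the clopen complements $U_y^{c}$ and then intersects the $U_{y_i}$, relying on closure of $Y^{*}$ under finite meets so that $\Box_R$ can be applied to $U$. Choosing the wrong operation (intersecting in (1) or unioning in (2)) would destroy the neighbourhood one is trying to build, so the bookkeeping of which set-operation is performed on which side is the step that must be pinned down precisely.
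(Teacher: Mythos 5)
Your proof is correct and follows essentially the same route as the paper's: separate the order-theoretic part (handled by Lemma \ref{lemma-g-morphisms} and its Remark), then use condition (1) of Definition \ref{definition} to separate points and compactness of the closed set to extract a finite subfamily, producing a clopen neighbourhood $\bigl(\bigcup_i U_{x_i}\bigr)^c$ of $y$ in part (1) and a clopen neighbourhood of $x$ in part (2). The only cosmetic difference is that in part (2) you apply $\Box_R$ to $\bigcap_i U_{y_i}$ while the paper intersects the sets $\Box_R U_{y_i}$; these coincide since $\Box_R$ preserves finite intersections.
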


\begin{proof}
(1) Suppose that $F$ is a closed subset of $X$. It follows from
condition (2) of Definition \ref{definition} that $R[F]$ is an
upset of $Y$. We show that $R[F]$ is closed in $Y$. Let $y \not
\in R[F]$. Then for each $x \in F$ we have $x \nR y$. By condition
(3) of Definition \ref{definition}, there is $U_x \in Y^{*}$ such
that $R[x] \subseteq U_x$ and $y \notin U_x$. Thus, $x \in \Box_R
U_x$ and by condition (4) of Definition \ref{definition}, $\Box_R
U_x \in X^{*}$, so $\Box_R U_x$ is clopen. Then we have
$F\subseteq \displaystyle{\bigcup} \{\Box_R U_x: x\in F\}$. Since
$F$ is a closed subset of a compact space, $F$ is compact.
Therefore, there are $x_1, \ldots, x_n \in F$ such that $F
\subseteq \displaystyle{\bigcup_{i=1}^n} \Box_R U_{x_i}$. We claim
that $U_{x_1}^{c} \cap \ldots \cap U_{x_n}^{c} \cap R[F] =
\emptyset$. If not, then there exists $z \in U_{x_1}^{c} \cap
\ldots \cap U_{x_n}^{c} \cap R[F]$. Thus, there is $u \in F$ such
that $u R z$. But then $u \in \Box_R U_{x_i}$, so $z \in U_{x_i}$
for some $i \leq n$, which is a contradiction. It follows that
there is an open neighborhood $U_{x_1}^c \cap \ldots \cap
U_{x_n}^c$ of $y$ missing $R[F]$, so $R[F]$ is closed in $Y$.

(2) Suppose that $G$ is a closed subset of $Y$. It follows from
condition (1) of Definition \ref{definition} that $R^{-1}[G]$ is a
downset of $X$. We show that $R^{-1}[G]$ is closed in $X$. Let $x
\notin R^{-1}[G]$. Then for each $y \in G$ we have $x \nR y$. So,
by condition (3) of Definition \ref{definition}, there is $U_y \in
Y^{*}$ such that $x \in \Box_{R} U_y$ and $y \notin U_y$.
Therefore, $G\subseteq \displaystyle{\bigcup} \{U_y^c: y\in G\}$,
and as $G$ is compact, there are $y_1, \ldots, y_n \in G$ such
that $G \subseteq \displaystyle{\bigcup_{i=1}^n} U_{y_i}^{c}$. We
claim that $\Box_{R} U_{y_1} \cap \ldots \cap \Box_{R} U_{y_n}
\cap R^{-1}[G]= \emptyset$. If not, then there is $z \in \Box_{R}
U_{y_1} \cap \ldots \cap \Box_{R} U_{y_n} \cap R^{-1}[G]$. So
$R[z] \subseteq U_{y_1} \cap \ldots \cap U_{y_n}$ and $z \in
R^{-1}[G]$. Thus, there is $u \in G$ such that $z R u$. But then
$u \in U_{y_1} \cap \ldots \cap U_{y_n} \cap G$, which is a
contradiction. Consequently, there is an open neighborhood
$\Box_{R} U_{y_1} \cap \ldots \cap \Box_{R} U_{y_n}$ of $x$
missing $R^{-1}[G]$, so $R^{-1}[G]$ is closed in $X$.
\end{proof}

\begin{definition}\label{definition1}
{\rm Let $X$ and $Y$ be generalized Priestley spaces and let
$R\subseteq X\times Y$ be a generalized Priestley morphism.
\begin{enumerate}
\item We call $R$ \textit{onto} if for each $y \in Y$ there is $x
\in X$ such that $R[x] = {\uparrow} y$.
\item We call $R$ \textit{1-1} if for each $x \in X$ and
$U\in X^{*}$ with $x \notin U$, there is $V \in Y^{*}$ such that
$R[U] \subseteq V$ and $R[x] \not \subseteq V$.
\end{enumerate}
}
\end{definition}

Let $X$ and $Y$ be generalized Priestley spaces and let $R\subseteq
X\times Y$ be a generalized Priestley morphism. We observe that if
$R$ is 1-1, then $R$ is total. Indeed, if $R$ is not total, then
there exists $x\in X$ such that $R[x]=\emptyset$. Therefore, for
each $V\in Y^*$ we have $R[x]\subseteq V$. Thus, $R$ can not be 1-1.
We also observe that using condition (5) of Definition
\ref{g-Priestley}, it is easy to verify that if a generalized
Priestley morphism $R$ is 1-1, then $x \not \leq y$ implies $R[y]
\not \subseteq R[x]$, and $x \notin U$ implies $R[x] \not \subseteq
R[U]$ for each $x,y\in X$ and $U\in X^*$. However, these two
conditions do not imply that $R$ is 1-1 as the following example
shows.

\begin{example}

Let $X$ and $Y$ be the generalized Esakia spaces and $R\subseteq
X\times Y$ be the generalized Esakia morphism shown in Fig.7. We
have that $Y_0=Y-\{x_1\}$, the elements of $Y_0$ are isolated
points of $Y$, $x_1$ is the only limit point of $Y$,
$R[x]={\uparrow}x_1$, $R[y]=\{y_1\}$, and $R[z]=\{z_1\}$. Then it
is easy to verify that for each $x,y\in X$ and $U\in X^*$, we have
$x \not \leq y$ implies $R[y] \not \subseteq R[x]$, and $x \notin
U$ implies $R[x] \not \subseteq R[U]$. On the other hand, for
$U=\{y,z\}\in X^*$ we have $x\notin U$, $R[x] = {\uparrow}x_1$,
and $R[U]=\{y_1,z_1\}$. Now $R[U]\notin Y^*$ and for each $V\in
Y^*$ with $R[U]\subseteq V$, we have $x_1\in V$, and so
$R[x]\subseteq V$. Thus, there is no $V \in Y^{*}$ such that $R[U]
\subseteq V$ and $R[x] \not \subseteq V$, and so $R$ is not 1-1.
The bounded implicative meet semi-lattices $X^*$ and $Y^*$
corresponding to $X$ and $Y$ together with the bounded implicative
meet semi-lattice homomorphism $h_R:Y^*\to X^*$ corresponding to
$R$ are shown in Fig.7. Clearly $h_R$ is not onto as
$h_R^{-1}(\{y,z\})=\emptyset$.
\end{example}


\

\begin{center}

\

\begin{tikzpicture}[scale=.5,inner sep=.5mm,label distance=.5mm]
\node[bull] (y) at (-7,7) [label=left:$y$] {};%
\node[bull] (z) at (-5,7) [label=left:$z$] {};%
\node[bull] (x) at (-6,6) [label=below left:$x$] {};%
\node[bull] (u1) at (0,0)  [label=right:$u_1$] {};%
\node[bull] (u2) at (0,1) [label=right:$u_2$] {};%
\node[bull] (u3) at (0,2) [label=right:$u_3$] {};%
\node (empdown) at (0,3) {};
\node (dots) at (0,4) {$\vdots$};%
\node[holl] (x1) at (0,6) [label=below right:$x_1$] {};
\node[bull] (y1) at (-1,7) [label=right:$y_1$] {};%
\node[bull] (z1) at (1,7) [label=right:$z_1$] {};%
\node at (-6,-3) {$X$};%
\node at (0,-3) {$Y$};%
\draw (u1) -- (u2) -- (u3) -- (empdown);%
\draw (y1) -- (x1) -- (z1);%
\draw (y) -- (x) -- (z);%
\draw[color=brown,->] (y) to [out=45,in=135] (y1);
\draw[color=brown,->] (z) to [out=45,in=135] (z1);
\draw[color=brown,->] (x) to [out=-45,in=-135] (x1);
\end{tikzpicture}\qquad\qquad
\begin{tikzpicture}[scale=.5,inner sep=.5mm,label distance=.5mm]
\node[bull] (bot) at (0,0) [label=below:$\emptyset$] {};%
\node[bull] (sy1) at (-1,1) [label=left:$\{y_1\}$] {};%
\node[bull] (sz1) at (1,1) [label=right:$\{z_1\}$] {};%
\node (dots) at (0,3.5) {$\vdots$};%
\node (emp) at (0,4) {};
\node[bull] (uu3) at (0,5) [label=right:${\uparrow}u_3$] {};%
\node[bull] (uu2) at (0,6) [label=right:${\uparrow}u_2$] {};%
\node[bull] (Y) at (0,7) [label=above:$Y$] {};%
\draw[color=brown,rounded corners=.4ex] (-.2,2.6) rectangle
(.2,7.2);
\draw (sy1) -- (bot) -- (sz1);%
\draw (emp) -- (uu3) -- (uu2) -- (Y);%
\node[bull] (rttop) at (5,4) [label=above:$X$] {};%
\node[bull] (rtop) at (5,3) [label=right:$\{y{,}z\}$] {};%
\node[bull] (sy) at (4,2) [label=above left:$\{y\}$] {};%
\node[bull] (sz) at (6,2) [label=right:$\{z\}$] {};%
\node[bull] (er) at (5,1) [label=below:$\emptyset$] {};%
\node at (0,-3) {$Y^*$};%
\node at (5,-3) {$X^*$};%
\draw (rtop) -- (sy) -- (er) -- (sz) -- (rtop) -- (rttop);%
\draw[color=brown,->] (bot) -- (er);%
\draw[color=brown,->] (sy1) -- (sy);%
\draw[color=brown,->] (sz1) -- (sz);%
\draw[color=brown,->] (.2,3) -- (rttop);%
\end{tikzpicture}

\

Fig.7\ {}

\end{center}

\begin{lemma}\label{lemma6}
Let $L$ and $K$ be bounded distributive meet semi-lattices and let
$h:L\to K$ be a meet semi-lattice homomorphism preserving top. Then
for $x \in K_{*}$ and $y\in L_*$ we have $R_{h}[x] = {\uparrow}y$
iff $h^{-1}(x) = y$.
\end{lemma}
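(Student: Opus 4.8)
The plan is to treat the two implications separately, with the forward direction being essentially immediate from the definitions and the converse resting on the corollary to the Optimal Filter Lemma that every proper filter of a distributive meet semi-lattice is the intersection of the optimal filters containing it.

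First I would record the relevant definitions and a preliminary observation. Since $x$ is in particular a filter of $K$ and $h$ is a top-preserving meet semi-lattice homomorphism, $h^{-1}(x)$ is a filter of $L$: closure under $\wedge$ and upward closure transfer from $x$ to $h^{-1}(x)$ via $h(a\wedge b)=h(a)\wedge h(b)$ and monotonicity of $h$, and $\top\in h^{-1}(x)$ because $h(\top)=\top\in x$. By definition, $R_h[x]=\{z\in L_*: h^{-1}(x)\subseteq z\}$, while ${\uparrow}y=\{z\in L_*: y\subseteq z\}$.

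For the implication $h^{-1}(x)=y \Rightarrow R_h[x]={\uparrow}y$, I would simply substitute $y$ for $h^{-1}(x)$ in the description of $R_h[x]$, obtaining $R_h[x]=\{z\in L_*: y\subseteq z\}={\uparrow}y$ at once; no distributivity or prime/optimal filter machinery is needed. For the converse $R_h[x]={\uparrow}y \Rightarrow h^{-1}(x)=y$, the key step is first to rule out $h^{-1}(x)=L$: if $h^{-1}(x)=L$, then no optimal (hence proper) filter $z$ could satisfy $h^{-1}(x)\subseteq z$, so $R_h[x]=\emptyset$; but $y\in{\uparrow}y$ shows ${\uparrow}y\neq\emptyset$, a contradiction. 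Thus $h^{-1}(x)$ is a proper filter, and by the corollary to the Optimal Filter Lemma it equals the intersection of all optimal filters containing it, i.e. $h^{-1}(x)=\bigcap R_h[x]$. Combining this with $R_h[x]={\uparrow}y$ and the elementary identity $\bigcap{\uparrow}y=y$ (which holds because $y\in{\uparrow}y$ and $y\subseteq z$ for every $z\in{\uparrow}y$), I would conclude $h^{-1}(x)=\bigcap{\uparrow}y=y$.

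The only subtle point, and hence the main obstacle, is the appeal to the intersection-of-optimal-filters representation, which is valid only because $h^{-1}(x)$ has been shown to be proper; this is exactly the case distinction already exploited in the proof of Proposition \ref{h-Rh}. Everything else is routine bookkeeping with the definition of $R_h$.
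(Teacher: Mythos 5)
Your proposal is correct and follows essentially the same route as the paper: the direction $h^{-1}(x)=y\Rightarrow R_h[x]={\uparrow}y$ by direct substitution, and the converse by establishing that $h^{-1}(x)$ is proper and then invoking the corollary to the optimal filter lemma to write $h^{-1}(x)=\bigcap R_h[x]=\bigcap{\uparrow}y=y$. The only cosmetic difference is that the paper derives properness directly from $y\in R_h[x]$ (so $h^{-1}(x)\subseteq y$ with $y$ proper), whereas you argue by contradiction via $R_h[x]=\emptyset$; these are the same observation.
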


\begin{proof}
First suppose that $R_{h}[x] = {\uparrow}y$. Then $x R_h y$, so
$h^{-1}(x)\subseteq y$, and so $h^{-1}(x)$ is a proper filter of
$L$. Thus, by the optimal filter lemma, $h^{-1}(x) =
\displaystyle{\bigcap}\{z\in L_*:h^{-1}(x)\subseteq z\} =
\displaystyle{\bigcap} \{z\in L_*: xR_h z\} = \displaystyle{\bigcap}
R_{h}[x] = \displaystyle{\bigcap} {\uparrow}y = y$. Now suppose that
$h^{-1}(x) = y$. Then $R_{h}[x] = \{z\in L_*: x R_h z\} = \{z\in
L_*: h^{-1}(x) \subseteq z\} = \{z\in L_*: y \subseteq z\} =
{\uparrow} y$.
\end{proof}

\begin{theorem}\label{onto-oneone}
Let $L$ and $K$ be bounded distributive meet semi-lattices and let
$h:L\to K$ be a meet semi-lattice homomorphism preserving top.
\begin{enumerate}
\item $h$ is 1-1 iff $R_h$ is onto.
\item $h$ is onto iff $R_{h}$ is 1-1.
\end{enumerate}
\end{theorem}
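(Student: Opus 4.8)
The plan is to translate each ``1-1/onto'' condition on $R_h$ into a concrete statement about filters of $L$ and $K$, and then to manufacture the required optimal filters by means of the optimal filter lemma. Throughout I would use two standing facts. First, an injective top-preserving meet-homomorphism is order-reflecting: from $h(a)\le h(c)$, i.e.\ $h(a)=h(a\wedge c)$, injectivity gives $a=a\wedge c$, so $a\le c$. Second, since $\phi$ is an order-embedding and $\phi_K(h(a))=\Box_{R_h}\phi_L(a)$ by Proposition~\ref{h-Rh}, one has $R_h[x]\subseteq\phi_L(a)$ iff $h(a)\in x$, and $R_h[\phi_K(b)]\subseteq\phi_L(a)$ iff $b\le h(a)$.

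For (1), the easy direction is that $R_h$ onto implies $h$ is 1-1: if $a\ne b$, say $a\not\le b$, the prime filter lemma gives $y\in L_+\subseteq L_*$ with $a\in y$ and $b\notin y$; ontoness produces $x\in K_*$ with $R_h[x]={\uparrow}y$, so $h^{-1}(x)=y$ by Lemma~\ref{lemma6}, and then $h(a)\in x$ while $h(b)\notin x$, whence $h(a)\ne h(b)$. For the converse, given $y\in L_*$ I would set $F=[h[y])$ (the filter of $K$ generated by $h[y]$) and $I=(h[L-y]]$ (the F-ideal generated by $h[L-y]$), use the optimal filter lemma to get an optimal filter $x$ of $K$ with $F\subseteq x$ and $x\cap I=\emptyset$, and then verify $h^{-1}(x)=y$; Lemma~\ref{lemma6} gives $R_h[x]={\uparrow}y$, so $R_h$ is onto.

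For (2), I would first rewrite ``$R_h$ is 1-1'' (Definition~\ref{definition1}), using $(K_*)^*=\phi_K[K]$, $(L_*)^*=\phi_L[L]$, and the two identities above, into the condition $(\star)$: for every $x\in K_*$ and every $b\notin x$ there is $a\in L$ with $b\le h(a)$ and $h(a)\notin x$. If $h$ is onto, picking $a$ with $h(a)=b$ witnesses $(\star)$ immediately. For the converse I would argue contrapositively: if $b\in K-h[L]$, set $F=[\,h[L]\cap{\uparrow}b\,)$; every element of $F$ lies in ${\uparrow}b$, and $b\in F$ would force $b=h(a_1\wedge\dots\wedge a_n)\in h[L]$, so in fact $F\cap{\downarrow}b=\emptyset$. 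The optimal filter lemma then yields $x\in K_*$ with $F\subseteq x$ and $b\notin x$, and since $x$ contains $h(a)$ for every $a$ with $b\le h(a)$, this $x$ together with $b$ violates $(\star)$.

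The main obstacle in both converse directions is checking the disjointness hypothesis of the optimal filter lemma. In (1) this is $F\cap I=\emptyset$: a common element $c$ would give $a\in y$ with $h(a)\le c$ and $b_1,\dots,b_n\in L-y$ with $\bigcap_i{\uparrow}_K h(b_i)\subseteq{\uparrow}_K h(a)$, and the crux is transporting this back to $\bigcap_i{\uparrow}_L b_i\subseteq{\uparrow}_L a$ via order-reflection, so that the F-ideal property of $L-y$ forces $a\in L-y$, contradicting $a\in y$. In (2) the corresponding point is the equivalence $b\in F\iff b\in h[L]$, which relies on $h$ being a meet-homomorphism so that finite meets of elements of $h[L]\cap{\uparrow}b$ remain in $h[L]\cap{\uparrow}b$. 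Once these two computations are secured, the remaining steps are routine applications of Lemma~\ref{lemma6}, Proposition~\ref{h-Rh}, and the optimal filter lemma.
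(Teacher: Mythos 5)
Your proposal is correct, and for part (1) as well as the forward half of part (2) it coincides with the paper's own proof: the paper likewise obtains ``$h$ is 1-1 $\Rightarrow$ $R_h$ onto'' by applying the optimal filter lemma to the filter ${\uparrow}h[y]$ and the F-ideal generated by $h[L-y]$, with the same order-reflection step securing the disjointness hypothesis, and obtains ``$R_h$ onto $\Rightarrow$ $h$ 1-1'' from the prime filter lemma together with Lemma \ref{lemma6}; it also proves ``$h$ onto $\Rightarrow$ $R_h$ 1-1'' exactly as you do, via Proposition \ref{h-Rh}. The genuine difference is in ``$R_h$ 1-1 $\Rightarrow$ $h$ onto''. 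The paper argues topologically: for each $x \notin \phi(b)$ it chooses $a_x \in L$ with $\phi(b) \subseteq \Box_{R_h}\phi(a_x)$ and $x \notin \Box_{R_h}\phi(a_x)$, invokes compactness of $K_*$ to pass to finitely many indices, and reads off $\phi(b) = \Box_{R_h}\phi(a_{x_1}\wedge\cdots\wedge a_{x_n}) = \phi(h(a_{x_1}\wedge\cdots\wedge a_{x_n}))$, hence $b = h(a_{x_1}\wedge\cdots\wedge a_{x_n})$. You instead argue contrapositively and purely order-theoretically: if $b \notin h[L]$, the filter $F$ generated by $h[L]\cap{\uparrow}b$ satisfies $F \subseteq {\uparrow}b$ and $b \in F \Rightarrow b \in h[L]$ (both computations are right, the latter using that $h$ preserves finite meets), so $F \cap {\downarrow}b = \emptyset$, and the optimal filter lemma produces $x \in K_*$ with $b \notin x$ while $h(a) \in x$ for every $a$ with $b \leq h(a)$, refuting your condition $(\star)$. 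Each route has its merits: the paper's compactness argument exhibits a preimage of $b$ explicitly as a finite meet and puts the dual topology to work, whereas yours avoids topology altogether, reuses the same tool as part (1), and isolates the reformulation $(\star)$ --- $R_h$ is 1-1 iff for all $x \in K_*$ and all $b \notin x$ there is $a \in L$ with $b \leq h(a)$ and $h(a) \notin x$ --- which is a faithful and independently useful algebraic rendering of Definition \ref{definition1}.
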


\begin{proof}
(1)  Suppose that $h$ is 1-1. We show that $R_h$ is onto. Let $y \in
L_{*}$. Since $h$ preserves $\wedge$, we have ${\uparrow}h[y]$ is a
filter of $K$. Let $J$ be the F-ideal generated by $h[L - y]$. We
claim that ${\uparrow}h[y] \cap J = \emptyset$. If ${\uparrow}h[y]
\cap J\neq\emptyset$, then there exist $a \in y$, $a_1, \ldots, a_n
\in L - y$, and $b \in K$ such that $h(a) \leq b$ and
$\displaystyle{\bigcap_{i=1}^n} {\uparrow}h(a_i) \subseteq
{\uparrow}b$. Therefore, $\displaystyle{\bigcap_{i=1}^n}
{\uparrow}h(a_i) \subseteq {\uparrow}h(a)$. Since $h$ is 1-1, we
have $\displaystyle{\bigcap_{i=1}^n} {\uparrow}a_i \subseteq
{\uparrow} a$. As $y$ is an optimal filter of $L$, we have $L - y$
is an F-ideal of $L$, so $a \in L - y$, a contradiction. Thus,
${\uparrow}h[y] \cap J = \emptyset$, and by the optimal filter
lemma, there is $x \in K_{*}$ such that ${\uparrow}h[y] \subseteq x$
and $x \cap J = \emptyset$. It follows that $h^{-1}(x) = y$, and so
$R_{h}[x] = {\uparrow}y$ by Lemma \ref{lemma6}. Now suppose that
$R_{h}$ is onto. For $a, b \in L$ with $a \not = b$, we may assume
without loss of generality that $a \not \leq b$. Then
${\uparrow}a\cap {\downarrow}b=\emptyset$, and so by the prime
filter lemma, there is $y\in L_+\subseteq L_{*}$ such that $a \in y$
and $b \notin y$. Since $R_h$ is onto, there is $x \in K_{*}$ such
that $R_{h}[x] = {\uparrow}y$. This, by Lemma \ref{lemma6}, implies
that $h^{-1}(x) = y$. Thus, $h(a) \in x$ and $h(b) \notin x$, and so
$h(a) \not \leq h(b)$. It follows that $h$ is 1-1.

(2) Suppose that $h$ is onto. We show that $R_h$ is 1-1. Let $x\in
K_*$, $b \in K$, and $x\notin\phi(b)$. Since $h$ is onto, there is
$a \in L$ such that $h(a) = b$. By Proposition \ref{h-Rh},
$\Box_{R_h}(\phi(a))=\phi(b)$. So $R_h[\phi(b)] \subseteq \phi(a)$
and $R_{h}[x] \not \subseteq \phi(a)$. Thus, $R_h$ is 1-1. Now
suppose that $R_h$ is 1-1. Let $b\in K$. For each $x\in K_*$ such
that $b\notin x$, we have $x\notin\phi(b)$. Since $R_h$ is 1-1,
there exists $a_x \in L$ such that $R_{h}[\phi(b)] \subseteq
\phi(a_x)$ and $R_{h}[x] \not \subseteq \phi(a_x)$. Then $\phi(b)
\subseteq \Box_{R_{h}}\phi(a_x)$ and $x \not \in
\Box_{R_{h}}\phi(a_x)$. Therefore, $\displaystyle{\bigcap}
\{\Box_{R_h}\phi(a_x): x \notin \phi(b)\} \cap \phi(b)^{c} =
\emptyset$. Since $X$ is compact, there exist $x_1, \ldots x_n
\notin \phi(b)$ such that
$\displaystyle{\bigcap_{i=1}^n}\Box_{R_h}\phi(a_{x_i}) \cap
\phi(b)^{c} = \emptyset$. Thus, $ \Box_{R_h}\phi(a_{x_1} \wedge
\ldots \wedge a_{x_n}) \cap \phi(b)^{c} = \emptyset$, so $\phi(b)
= \Box_{R_h}\phi(a_{x_1} \wedge \ldots \wedge
a_{x_n})=\phi(h(a_{x_1} \wedge \ldots \wedge a_{x_n}))$, and so
$b=h(a_{x_1} \wedge \ldots \wedge a_{x_n})$. It follows that $h$
is onto.
\end{proof}

\begin{proposition}\label{6.4}
Let $X$ and $Y$ be generalized Priestley spaces and let $R\subseteq
X\times Y$ be a generalized Priestley morphism.
\begin{enumerate}
\item $R\subseteq X\times Y$ is onto iff $R_{h_{R}}\subseteq
{X^{*}}_{*}\times {Y^{*}}_{*}$ is onto.
\item $R\subseteq X\times Y$ is 1-1 iff $R_{h_{R}}\subseteq
{X^{*}}_{*}\times {Y^{*}}_{*}$ is 1-1.
\end{enumerate}
\end{proposition}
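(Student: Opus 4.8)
The plan is to transfer both properties across the order-isomorphisms and homeomorphisms $\psi_X\colon X\to {X^*}_*$ and $\psi_Y\colon Y\to {Y^*}_*$ supplied by Theorem \ref{homeom}. The first step is to record the single identity that drives everything: for every $A\subseteq X$ we have $R_{h_R}[\psi_X[A]]=\psi_Y[R[A]]$. This is immediate from Proposition \ref{R-hR}, which gives $xRy$ iff $\psi_X(x)\,R_{h_R}\,\psi_Y(y)$, together with the surjectivity and injectivity of $\psi_Y$: taking $A=\{x\}$ yields $R_{h_R}[\psi_X(x)]=\psi_Y[R[x]]$, and the general case follows by taking unions. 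I will also use two bookkeeping facts, both already in the excerpt: $\psi_Y[\uparrow y]=\uparrow\psi_Y(y)$ because $\psi_Y$ is an order-isomorphism, and the admissible clopen upsets of ${X^*}_*$ are exactly $\{\phi(U):U\in X^*\}$ (since $({X^*}_*)^*=\phi[X^*]$ by the representation theorem \ref{proposition8}) with $\psi_X^{-1}(\phi(U))=U$ from the proof of Theorem \ref{homeom}; in particular $\phi(U)=\psi_X[U]$, and likewise for $Y$.

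For (1), every point of ${Y^*}_*$ is of the form $\psi_Y(y)$. If $R$ is onto and $\psi_Y(y)$ is given, I would choose $x$ with $R[x]=\uparrow y$; then the key identity gives $R_{h_R}[\psi_X(x)]=\psi_Y[\uparrow y]=\uparrow\psi_Y(y)$, so $R_{h_R}$ is onto. Conversely, given $y\in Y$ I would choose $x'\in{X^*}_*$ with $R_{h_R}[x']=\uparrow\psi_Y(y)$; writing $x'=\psi_X(x)$ and applying injectivity of $\psi_Y$ to $\psi_Y[R[x]]=\psi_Y[\uparrow y]$ forces $R[x]=\uparrow y$.

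For (2), I will translate the defining condition of Definition \ref{definition1}.2 term by term. An arbitrary point of ${X^*}_*$ and an arbitrary admissible clopen upset of ${X^*}_*$ are $\psi_X(x)$ and $\phi(U)$ for $x\in X$ and $U\in X^*$, and $\psi_X(x)\notin\phi(U)$ is equivalent to $x\notin U$. The key identity gives $R_{h_R}[\phi(U)]=\psi_Y[R[U]]$ and $R_{h_R}[\psi_X(x)]=\psi_Y[R[x]]$, while $\phi(V)=\psi_Y[V]$ for $V\in Y^*$; since $\psi_Y$ is a bijection, $\psi_Y[\,\cdot\,]$ both preserves and reflects inclusions. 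Hence $R_{h_R}[\phi(U)]\subseteq\phi(V)$ iff $R[U]\subseteq V$, and $R_{h_R}[\psi_X(x)]\not\subseteq\phi(V)$ iff $R[x]\not\subseteq V$. Thus a witness $V\in Y^*$ for the $1$-$1$-ness of $R$ produces the witness $\phi(V)$ for $R_{h_R}$, and conversely, giving the equivalence.

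I do not expect a serious obstacle here: once the identity $R_{h_R}[\psi_X[A]]=\psi_Y[R[A]]$ is in place, both parts are a clean dictionary translation, reflecting the fact that $(\psi_X,\psi_Y)$ carries $R$ to $R_{h_R}$ and that ``onto'' and ``$1$-$1$'' are intrinsic to the generalized Priestley structure. The only thing to watch is the bookkeeping of the two copies of $\psi$ and of $\phi$ (one pair for $X$, one for $Y$) and the correct reading of $({X^*}_*)^*$ as $\phi[X^*]$, so that ``admissible clopen upset of ${X^*}_*$'' is literally replaced by ``$\phi(U)$ with $U\in X^*$.''
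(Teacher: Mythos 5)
Your proposal is correct and is essentially the paper's own argument: the paper proves Proposition \ref{6.4} with the single line ``Apply Theorem \ref{homeom} and Propositions \ref{h-Rh} and \ref{R-hR},'' i.e.\ transfer both notions across the order-isomorphisms/homeomorphisms $\psi$, which is exactly what you carry out in detail via the identity $R_{h_R}[\psi_X[A]]=\psi_Y[R[A]]$ and the identification $({X^*}_*)^*=\phi[X^*]$. Your write-up simply makes explicit the dictionary the paper leaves to the reader.
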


\begin{proof}
Apply Theorem \ref{homeom} and Propositions \ref{h-Rh} and
\ref{R-hR}.
\end{proof}

\begin{theorem}
Let $X$ and $Y$ be generalized Priestley spaces and let $R\subseteq
X\times Y$ be a generalized Priestley morphism.
\begin{enumerate}
\item $R$ is 1-1 iff $h_R$ is onto.
\item $R$ is onto iff $h_{R}$ is 1-1.
\end{enumerate}
\end{theorem}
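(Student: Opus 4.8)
The plan is to reduce this dual statement to its algebraic counterpart, Theorem \ref{onto-oneone}, by passing through the double-dual. Given a generalized Priestley morphism $R \subseteq X \times Y$, Lemma \ref{lemma5} guarantees that $h_R : Y^{*} \to X^{*}$ is a meet semi-lattice homomorphism preserving top. Hence Theorem \ref{onto-oneone} applies to $h_R$ (taking $L = Y^{*}$ and $K = X^{*}$), yielding that $h_R$ is 1-1 iff $R_{h_R}$ is onto, and that $h_R$ is onto iff $R_{h_R}$ is 1-1, where $R_{h_R} \subseteq {X^{*}}_{*} \times {Y^{*}}_{*}$.

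The second ingredient is Proposition \ref{6.4}, which transfers the properties of being 1-1 and onto between $R$ and $R_{h_R}$: namely $R$ is onto iff $R_{h_R}$ is onto, and $R$ is 1-1 iff $R_{h_R}$ is 1-1. This transfer rests on Theorem \ref{homeom}, which supplies the order-isomorphism and homeomorphism $\psi : X \to {X^{*}}_{*}$ identifying $R$ with $R_{h_R}$ along the double-dual, together with Propositions \ref{h-Rh} and \ref{R-hR}.

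Chaining these two facts then gives both claims directly. For (1): $R$ is 1-1 iff $R_{h_R}$ is 1-1 (by Proposition \ref{6.4}.2) iff $h_R$ is onto (by Theorem \ref{onto-oneone}.2 applied to $h_R$). For (2): $R$ is onto iff $R_{h_R}$ is onto (by Proposition \ref{6.4}.1) iff $h_R$ is 1-1 (by Theorem \ref{onto-oneone}.1 applied to $h_R$). Thus the statement is an immediate corollary of the results already established.

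Since all the required machinery is in place, there is no genuine obstacle here; the only point demanding care is the bookkeeping of source and target. Because $h_R$ runs \emph{contravariantly} from $Y^{*}$ to $X^{*}$, the notions of 1-1 and onto get matched across the two statements exactly as above, and one must take care not to inadvertently interchange them when invoking the two numbered parts of Theorem \ref{onto-oneone} and Proposition \ref{6.4}.
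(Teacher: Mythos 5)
Your proposal is correct and is exactly the paper's own proof: the paper derives the theorem in one line by combining Theorem \ref{onto-oneone} with Proposition \ref{6.4}, which is precisely your reduction of the dual statement to its algebraic counterpart via the double-dual identification $\psi$. The only difference is that you spell out the contravariant bookkeeping (part (1) pairing with Theorem \ref{onto-oneone}.2 and part (2) with Theorem \ref{onto-oneone}.1), which the paper leaves implicit.
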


\begin{proof}
It follows from Theorem \ref{onto-oneone} and Proposition \ref{6.4}.
\end{proof}

Thus, we obtain that 1-1 homomorphisms of bounded distributive meet
semi-lattices preserving top correspond to onto generalized
Priestley morphisms, and that bounded 1-1 homomorphisms correspond
to total onto generalized Priestley morphisms. Moreover, onto
homomorphisms of bounded distributive meet semi-lattices preserving
top coincide with bounded onto homomorphisms (which is easy to see
either algebraically or by recalling that each 1-1 generalized
Priestley morphism is total) and correspond to 1-1 generalized
Priestley morphisms.

As an immediate consequence of the bounded distributive meet
semi-lattice case, we obtain that 1-1 homomorphisms of bounded
implicative meet semi-lattices correspond to onto generalized
Esakia morphisms, that bounded 1-1 homomorphisms correspond to
total onto generalized Esakia morphisms, and that onto
homomorphisms are the same as bounded onto homomorphisms and
correspond to 1-1 generalized Esakia morphisms.

\subsubsection{Dual description of 1-1 and onto homomorphisms for
bounded distributive lattices and Heyting algebras}

Now we show how our results above imply the well-known dual
description of 1-1 and onto homomorphisms of bounded distributive
lattices and Heyting algebras.

\begin{lemma}\label{lemma-P}
Let $X$ and $Y$ be Priestley spaces and let $R\subseteq X\times Y$
be a functional generalized Priestley morphism.
\begin{enumerate}
\item $R$ is 1-1 iff $f^R$ is an embedding.
\item $R$ is onto iff $f^R$ is onto.
\end{enumerate}
\end{lemma}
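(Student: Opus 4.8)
The plan is to use that, for Priestley spaces, $X_0=X$ and $Y_0=Y$, so that $X^*=\mathfrak{CU}(X)$, $Y^*=\mathfrak{CU}(Y)$, and (as recalled at the start of this subsection) strong Priestley morphisms coincide with Priestley morphisms. Since $R$ is functional, $R[x]={\uparrow}f^R(x)$, where $f^R(x)$ is the least element of $R[x]$; hence for any $U\in\mathfrak{CU}(X)$ we have $R[U]=\bigcup_{u\in U}{\uparrow}f^R(u)={\uparrow}f^R[U]$, and $f^R$ is order-preserving by Lemma \ref{lemma-a}. The constantly used translations are: for an upset $W$ of $Y$, $R[x]\subseteq W$ iff $f^R(x)\in W$, and $R[x]\not\subseteq W$ iff $f^R(x)\notin W$. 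With these in hand, Part (2) is immediate: by antisymmetry of $\le$, the equality $R[x]={\uparrow}y$ holds iff $f^R(x)=y$, so the condition of Definition \ref{definition1}.1 (for each $y\in Y$ there is $x$ with $R[x]={\uparrow}y$) is literally the surjectivity of $f^R$.

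For the forward direction of Part (1), I would assume $R$ is 1-1 and show $f^R$ is an order-embedding; being then a continuous order-embedding between compact Hausdorff spaces, it is automatically injective and a homeomorphism onto its (closed) image, hence an embedding. Given $x\not\le x'$, the Priestley separation axiom yields $U\in\mathfrak{CU}(X)$ with $x\in U$ and $x'\notin U$. Applying Definition \ref{definition1}.2 to $x'$ and $U$ produces $V\in Y^*$ with $R[U]\subseteq V$ and $R[x']\not\subseteq V$; since $x\in U$ gives $R[x]\subseteq R[U]\subseteq V$, we obtain $f^R(x)\in V$ while $f^R(x')\notin V$, and because $V$ is an upset this forces $f^R(x)\not\le f^R(x')$. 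Thus $f^R(x)\le f^R(x')$ implies $x\le x'$, i.e. $f^R$ reflects order.

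For the converse of Part (1), I would assume $f^R$ is an embedding and fix $x\in X$ and $U\in X^*$ with $x\notin U$. The key observation is that $f^R(x)\notin R[U]$: if $f^R(u)\le f^R(x)$ for some $u\in U$, then order-reflection gives $u\le x$, whence $x\in U$ (as $U$ is an upset), a contradiction. Now $U$ is clopen, hence closed, so by Lemma \ref{theorem1}.1 the set $R[U]$ is a closed upset of $Y$; since a closed upset of a Priestley space is the intersection of the clopen upsets containing it and $f^R(x)\notin R[U]$, there is $V\in\mathfrak{CU}(Y)=Y^*$ with $R[U]\subseteq V$ and $f^R(x)\notin V$, that is, $R[x]\not\subseteq V$. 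This is exactly the condition of Definition \ref{definition1}.2, so $R$ is 1-1.

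The main obstacle is the converse of Part (1): converting the purely order-theoretic fact ``$f^R(x)$ lies outside $R[U]$'' into the required \emph{clopen} separating upset $V\in Y^*$. This is precisely where I rely on Lemma \ref{theorem1}.1 to guarantee that $R[U]$ is a closed upset, and on the standard representation of a closed upset of a Priestley space as the intersection of the clopen upsets containing it; without these, the relational ``1-1'' condition and the functional-analytic notion of embedding would not line up.
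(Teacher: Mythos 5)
Your proof is correct and follows essentially the same route as the paper's: part (2) is the same direct translation via $R[x]={\uparrow}f^R(x)$, and in part (1) both arguments reduce ``1-1'' to order-reflection of $f^R$ and back. The only differences are elaborations of detail — you inline the verification that 1-1 implies $x\not\leq x'\Rightarrow f^R(x)\not\leq f^R(x')$ (which the paper cites from its earlier observation after Definition \ref{definition1}), and you justify the existence of the separating clopen upset $V\supseteq R[U]$ via Lemma \ref{theorem1}.1 and the closed-upset representation, a step the paper asserts without comment.
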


\begin{proof}
(1) Suppose that $R$ is 1-1. We show that $f^R$ is an embedding. Let
$x\leq y$. Then $R[y]\subseteq R[x]$. Since $R$ is a functional
generalized Priestley morphism, both $R[x]$ and $R[y]$ have least
elements. Let $l_x$ be the least element of $R[x]$ and $l_y$ be the
least element of $R[y]$. Then $R[y]\subseteq R[x]$ implies $l_x\leq
l_y$. Thus, $f^R(x)\leq f^R(y)$. Now let $x\not\leq y$. Since $R$ is
1-1, we have $R[y]\not\subseteq R[x]$. Therefore, $l_x\not\leq l_y$,
so $f^R(x)\not\leq f^R(y)$, and so $f^R$ is an embedding.
Conversely, suppose that $f^R$ is an embedding, $x\in X$,
$U\in\mathfrak{CU}(X)$, and $x\notin U$. Since $f^R$ is an
embedding, we have $f^R(x)\notin{\uparrow}f^R(U)$. So there exists
$V\in\mathfrak{CU}(Y)$ such that $f^R(x)\notin V$ and
${\uparrow}f^R(U)\subseteq V$. Thus, $R[x]={\uparrow}f^R(x)\not
\subseteq U$ and $R[U]={\uparrow}f^R(U)\subseteq V$, and so $R$ is
1-1.

(2) Suppose that $R$ is onto and $y\in Y$. Then there exists $x\in
X$ such that $R[x]={\uparrow}y$. Thus, $f^R(x)=y$, and so $f^R$ is
onto. Now suppose that $f^R$ is onto and $y\in Y$. Then there exists
$x\in X$ such that $f^R(x)=y$. Thus, $R[x]={\uparrow}f^R(x)=
{\uparrow}y$, and so $R$ is onto.
\end{proof}

\begin{corollary}\label{cor-fR}
Let $X$ and $Y$ be Priestley spaces and let $f:X\to Y$ be a
Priestley morphism.
\begin{enumerate}
\item $f$ is an embedding iff $R^f$ is 1-1.
\item $f$ is onto iff $R^f$ is onto.
\end{enumerate}
\end{corollary}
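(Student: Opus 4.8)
The plan is to obtain Corollary \ref{cor-fR} as a formal consequence of Lemma \ref{lemma-P}, transporting the two equivalences proved there along the mutually inverse assignments $R\mapsto f^R$ and $f\mapsto R^f$ recorded in Lemma \ref{lemma-c}. The key preliminary observation is that when $X$ and $Y$ are Priestley spaces (that is, generalized Priestley spaces with $X_0=X$ and $Y_0=Y$), the notion of a strong Priestley morphism coincides with that of an ordinary Priestley morphism, as noted at the start of this subsection. Consequently a Priestley morphism $f:X\to Y$ is in particular a strong Priestley morphism, so by Lemma \ref{lemma-b} the relation $R^f\subseteq X\times Y$ is a well-defined functional generalized Priestley morphism, and the hypotheses of Lemma \ref{lemma-P} are met for $R=R^f$.

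First I would apply Lemma \ref{lemma-P}(1) to $R=R^f$, obtaining that $R^f$ is 1-1 iff $f^{R^f}$ is an embedding. Since $f^{R^f}=f$ by Lemma \ref{lemma-c}, this reads: $R^f$ is 1-1 iff $f$ is an embedding, which is exactly statement (1). For (2) I would apply Lemma \ref{lemma-P}(2) to $R=R^f$, giving that $R^f$ is onto iff $f^{R^f}$ is onto; again $f^{R^f}=f$ by Lemma \ref{lemma-c}, so $R^f$ is onto iff $f$ is onto. This settles both parts.

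There is essentially no hard step here: all the content is already carried by Lemma \ref{lemma-P}, and the corollary uses only the identity $f^{R^f}=f$ from Lemma \ref{lemma-c} together with the coincidence of Priestley and strong Priestley morphisms. The single point worth double-checking is that the term \emph{embedding} is used in the same sense as in Lemma \ref{lemma-P}, namely as an order-embedding, which for a continuous map between Priestley spaces is automatically a topological embedding onto its image. If a self-contained argument were preferred over the passage through $f^{R^f}$, one could instead inline the reasoning of Lemma \ref{lemma-P}: for (1), unwind $R^f[x]={\uparrow}f(x)$ so that $R^f$ being 1-1 becomes the requirement that $x\notin U$ forces $R^f[x]\not\subseteq R^f[U]$, i.e.\ $f(x)\notin{\uparrow}f(U)$, which by the Priestley separation axiom is equivalent to $f$ reflecting $\leq$, hence to $f$ being an embedding; for (2), use that $R^f[x]={\uparrow}f(x)$ has least element $f(x)$, so $R^f$ meets every principal upset ${\uparrow}y$ precisely when $f$ is surjective. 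The route via Lemmas \ref{lemma-P} and \ref{lemma-c} is cleaner, however, and is the one I would present.
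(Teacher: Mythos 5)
Your proposal is correct and is exactly the paper's own argument: the paper's proof reads ``Apply Lemmas \ref{lemma-c} and \ref{lemma-P},'' which is precisely your route of applying Lemma \ref{lemma-P} to $R=R^f$ and rewriting $f^{R^f}=f$ via Lemma \ref{lemma-c}, with the coincidence of Priestley and strong Priestley morphisms (and Lemma \ref{lemma-b}) supplying the needed hypotheses. You have merely made explicit the bookkeeping the paper leaves implicit.
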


\begin{proof}
Apply Lemmas \ref{lemma-c} and \ref{lemma-P}.
\end{proof}

Let $L$ and $K$ be bounded distributive lattices and let $h:L\to
K$ be a bounded lattice homomorphism. Define $f_h:K_+\to L_+$ by
$f_h(x)=h^{-1}(x)$ for each $x\in K_+$. It is well-known (and
follows from Lemmas \ref{lemma-dist} and
\ref{lemma-a}) that $f_h$ is a well-defined Priestley morphism.
The next proposition is a well-known consequence of the Priestley
duality. We show that it as an easy consequence of Theorem
\ref{onto-oneone} and Lemma \ref{lemma-P}.

\begin{lemma}
Let $L$ and $K$ be bounded distributive lattices and let $h:L\to K$
be a bounded lattice homomorphism.
\begin{enumerate}
\item $h$ is 1-1 iff $f_h$ is onto.
\item $h$ is onto iff $f_h$ is an embedding.
\end{enumerate}
\end{lemma}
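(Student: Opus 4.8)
The plan is to recognize that the statement is a direct specialization of results already in hand, obtained by routing through the dual relation $R_h$ and the strong Priestley morphism it induces. First I would note that since $h$ is a bounded lattice homomorphism it preserves $\vee$, so by Lemma \ref{lemma-dist} it is a sup-homomorphism. Consequently, by Corollary \ref{cor-bot-sup}.2 (obtained via Lemma \ref{tot-fun}.2), the dual $R_h\subseteq K_*\times L_*$ is a \emph{functional} generalized Priestley morphism, and by Lemma \ref{obs-functional-1}.2 its associated strong Priestley morphism satisfies $f^{R_h}(x)=h^{-1}(x)$ for each $x\in K_*$. Because $L$ and $K$ are bounded distributive lattices, we have $L_*=L_+$ and $K_*=K_+$, so $f^{R_h}$ is exactly the map $f_h$ of the statement. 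This identification $f_h=f^{R_h}$ is the only point requiring care, and it is immediate from the cited results together with Lemma \ref{lemma6}, which records that $h^{-1}(x)$ is precisely the least element of $R_h[x]$.

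With $f_h=f^{R_h}$ established, I would prove part (1) by the chain: $h$ is 1-1 iff $R_h$ is onto (Theorem \ref{onto-oneone}.1) iff $f^{R_h}=f_h$ is onto (Lemma \ref{lemma-P}.2). Likewise, part (2) follows from: $h$ is onto iff $R_h$ is 1-1 (Theorem \ref{onto-oneone}.2) iff $f^{R_h}=f_h$ is an embedding (Lemma \ref{lemma-P}.1). Here Theorem \ref{onto-oneone} applies to $h$ regarded as a top-preserving meet semi-lattice homomorphism between the bounded distributive meet semi-lattices underlying $L$ and $K$, while Lemma \ref{lemma-P} applies because $X=K_+$ and $Y=L_+$ are genuine Priestley spaces and $R_h$ is functional.

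There is essentially no hard step: all the substantive topological and order-theoretic work has been carried out in Theorem \ref{onto-oneone} and Lemma \ref{lemma-P}, and what remains is bookkeeping. The only place deserving attention is confirming that the pointwise map $f_h(x)=h^{-1}(x)$ coincides with the abstractly defined $f^{R_h}(x)=$ ``the least element of $R_h[x]$''; once this is settled via Lemma \ref{obs-functional-1}.2 and Lemma \ref{lemma6}, I would present the whole argument as a two-line specialization rather than reproving anything.
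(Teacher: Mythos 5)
Your proposal is correct and follows essentially the same route as the paper: both arguments identify $f_h$ with the strong Priestley morphism arising from the functional relation $R_h$ and then chain Theorem \ref{onto-oneone} with Lemma \ref{lemma-P} to get both equivalences. The only cosmetic difference is that the paper establishes the identification by directly computing $R_h[x]={\uparrow}f_h(x)$, whereas you reach the same fact by citing Lemmas \ref{lemma-dist}, \ref{tot-fun}, and \ref{obs-functional-1}; this changes nothing of substance.
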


\begin{proof}
For $x\in K_+$ and $y\in L_+$ we have $x R_h y$ iff
$h^{-1}(x)\subseteq y$ iff $f_h(x)\subseteq y$. Therefore,
$R_h[x]={\uparrow}f_h(x)$, so $f_h=f_{R_h}$ (and $R_h=R_{f_h}$).
Thus, by Theorem \ref{onto-oneone} and Lemma \ref{lemma-P}, $h$ is
1-1 iff $R_h$ is onto iff $f_h$ is onto, and $h$ is onto iff $R_h$
is 1-1 iff $f_h$ is an embedding.
\end{proof}

The above results immediately imply that for a Heyting algebra
homomorphism $h$, we have that $h$ is 1-1 iff $f_h$ is onto, and
that $h$ is onto iff $f_h$ is 1-1. Another way to see this is
through partial Heyting functions. But first we characterize 1-1 and
onto $(\wedge,\to)$ and $(\wedge,\to,\bot)$-homomorphisms of Heyting
algebras in terms of onto and 1-1 partial Esakia functions.

\begin{definition}
{\rm Let $X$ and $Y$ be Esakia spaces and let $f:X\to Y$ be a partial
Esakia function.
\begin{enumerate}
\item We call $f$ \emph{onto} if the restriction of $f$ to
$\mathrm{dom}(f)$ is an onto function.
\item We call $f$ \emph{1-1} if for each $x\in X$ and $U\in X^*$,
from $x\notin U$ it follows that there exists $V\in Y^*$ such that
$U\subseteq ({\downarrow}f^{-1}(V^c))^c$ and $f[{\uparrow}x] \not
\subseteq V$.
\end{enumerate}
}
\end{definition}

Let $X$ and $Y$ be Esakia spaces and let $f:X\to Y$ be a partial
Esakia function. We observe that if $f$ is 1-1, then $f$ is well.
Indeed, if $f$ is not well, then there exists $x\in \mathrm{max}X$
such that $x\notin\mathrm{dom}(f)$. Therefore,
$f[{\uparrow}x]=\emptyset$, and so $f$ can not be 1-1.

\begin{lemma}
Let $X$ and $Y$ be Esakia spaces and let $f:X\to Y$ be a partial
Esakia function. Then $f$ is onto iff $R^f$ is onto, and $f$ is 1-1
iff $R^f$ is 1-1.
\end{lemma}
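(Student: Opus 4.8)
The plan is to reduce both equivalences to the two defining identities of $R_f$, the generalized Esakia morphism attached to the partial Esakia function $f$ (written $R^f$ in the statement, but constructed as $R_f$ just before Lemma \ref{lemma-b1}). These are $R_f[x]=f[{\uparrow}x]$ for each $x\in X$, which is immediate from the definition of $R_f$, and $\Box_{R_f}V=({\downarrow}f^{-1}(V^c))^c$ for each $V\in\mathfrak{CU}(Y)$, which was already verified inside the proof of Lemma \ref{lemma-b1}. Together with the elementary fact that $R_f[U]\subseteq V$ iff $U\subseteq\Box_{R_f}V$, these identities turn the conditions defining onto and 1-1 for partial Esakia functions into the conditions defining onto and 1-1 for generalized Priestley (hence Esakia) morphisms in Definition \ref{definition1}.

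For the onto equivalence, I would first record that $R_f[z]={\uparrow}f(z)$ whenever $z\in\mathrm{dom}(f)$: clearly $f(z)\in R_f[z]$, and if $u\in R_f[z]$ then $u=f(v)$ for some $v\in\mathrm{dom}(f)$ with $z\leq v$, whence condition (1) of Definition \ref{def-pEf} gives $f(z)\leq f(v)=u$, so $R_f[z]\subseteq{\uparrow}f(z)$. Granting this, if $f$ is onto and $y\in Y$, then choosing $z\in\mathrm{dom}(f)$ with $f(z)=y$ yields $R_f[z]={\uparrow}y$, so $R_f$ is onto. Conversely, if $R_f$ is onto and $y\in Y$, pick $x$ with $R_f[x]={\uparrow}y$; then $y\in R_f[x]$ gives some $z\in\mathrm{dom}(f)$ with $x\leq z$ and $f(z)=y$, so $f$ is onto.

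For the 1-1 equivalence essentially no work remains. Fixing $x\in X$ and $U\in X^*$ with $x\notin U$, the identities $R_f[U]\subseteq V$ iff $U\subseteq\Box_{R_f}V=({\downarrow}f^{-1}(V^c))^c$ make the clause ``$R_f[U]\subseteq V$'' of Definition \ref{definition1} coincide verbatim with the clause ``$U\subseteq({\downarrow}f^{-1}(V^c))^c$'' of the definition of a 1-1 partial Esakia function, while $R_f[x]=f[{\uparrow}x]$ makes ``$R_f[x]\not\subseteq V$'' coincide with ``$f[{\uparrow}x]\not\subseteq V$''. Since both definitions quantify over the same $V\in Y^*$, the existence of a witnessing $V$ for $R_f$ is equivalent to its existence for $f$, so $R_f$ is 1-1 iff $f$ is 1-1.

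Because everything is a direct translation through fixed identities, I expect no genuine obstacle; the only point needing a short argument is the equality $R_f[z]={\uparrow}f(z)$ for $z\in\mathrm{dom}(f)$ used in the onto direction, and this is exactly the computation already performed (for the witness produced by the Esakia condition) in the proof of Lemma \ref{lemma-b1}.
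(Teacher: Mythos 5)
Your proposal is correct and is essentially the paper's own proof: the onto equivalence is reduced to the identity $R^f[z]={\uparrow}f(z)$ for $z\in\mathrm{dom}(f)$ (with the converse obtained by unwinding the definition of $R^f$), and the 1-1 equivalence is the same verbatim translation the paper performs through the identities $\Box_{R^f}(V)=({\downarrow}f^{-1}(V^c))^c$ and $R^f[x]=f[{\uparrow}x]$ together with the adjointness $R^f[U]\subseteq V$ iff $U\subseteq\Box_{R^f}(V)$. One small point: your inline justification of $R^f[z]={\uparrow}f(z)$ only establishes $R^f[z]\subseteq{\uparrow}f(z)$ and $f(z)\in R^f[z]$, whereas ontoness of $R^f$ requires the reverse inclusion ${\uparrow}f(z)\subseteq R^f[z]$ as well; this follows at once from condition (2) of Definition \ref{def-pEf} (or from Lemma \ref{lemma-g-morphisms}.2 applied to $R^f$) and is indeed contained in the Lemma \ref{lemma-b1} computation you cite, so it is an omission in the write-up rather than a gap in the approach.
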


\begin{proof}
First suppose that $f$ is onto and $y\in Y$. Then there exists
$x\in\mathrm{dom}(f)$ such that $f(x)=y$. Therefore, ${\uparrow}f(x)
= {\uparrow}y$. As $x\in\mathrm{dom}(f)$, we have $R^f[x] =
{\uparrow}f(x) = {\uparrow}y$, and so $R^f$ is onto. Conversely,
suppose that $R^f$ is onto and $y\in Y$. Then there is $x\in X$ such
that $R^f[x]={\uparrow}y$. Therefore, $xR^fy$, and so there exists
$z\in\mathrm{dom}(f)$ such that $x\leq z$ and $f(z)=y$. Thus, $f$ is
onto.

Now suppose that $f$ is 1-1, $x\in X$, $U\in X^*$, and $x\notin
U$. Then there exists $V\in Y^*$ such that $U\subseteq
({\downarrow}f^{-1}(V^c))^c$ and $x\notin
({\downarrow}f^{-1}(V^c))^c$. Therefore, $U\subseteq\Box_{R^f}(V)$
and $x\notin\Box_{R^f}(V)$. Thus, $R^f[U]\subseteq V$ and
$R^f[x]\not\subseteq V$, and so $R^f$ is 1-1. Conversely, suppose
that $R^f$ is 1-1, $x\in X$, $U\in X^*$, and $x\notin U$. Then
there exists $V\in Y^*$ such that $R^f[U]\subseteq V$ and
$R^f[x]\not\subseteq V$. Therefore, $U\subseteq\Box_{R^f}(V)$ and
$R^f[x]\not\subseteq V$. Since $\Box_{R^f}(V)=
({\downarrow}f^{-1}(V^c))^c$ and $R^f[x]=f[{\uparrow}x]$, we
obtain $U\subseteq ({\downarrow}f^{-1}(V^c))^c$ and
$f[{\uparrow}x] \not \subseteq V$. Thus, $f$ is 1-1.
\end{proof}

As a consequence, we obtain that 1-1 $(\wedge,\to)$-homomorphisms of
Heyting algebras correspond to onto partial Esakia functions, that
1-1 $(\wedge,\to,\bot)$-homomorphisms correspond to well onto
partial Esakia functions, that onto $(\wedge,\to)$-homomorphisms are
the same as onto $(\wedge,\to,\bot)$-homomorphisms and correspond to
1-1 partial Esakia functions. Moreover, 1-1 Heyting algebra
homomorphisms correspond to onto partial Heyting functions and onto
Heyting algebra homomorphisms correspond to 1-1 partial Heyting
functions. We show that a partial Heyting function $f$ is onto iff
$g_f$ is an onto function and that $f$ is 1-1 iff $g_f$ is a 1-1
function.

Let $X$ and $Y$ be Esakia spaces and let $f:X\to Y$ be a partial
Heyting function. First we show that $f$ is onto iff $g_f$ is an
onto function. Let $f$ be onto and $y\in Y$. Then there exists $x\in
\mathrm{dom}(f)$ such that $f(x)=y$. Since $x\in \mathrm{dom}(f)$,
we have $g_f(x)=f(x)$. Thus, $g_f(x)=y$, and so $g_f$ is an onto
function. Conversely, suppose that $g_f$ is an onto function and
$y\in Y$. Then there exists $x\in X$ such that $g_f(x)=y$.
Therefore, there exists $z\in\mathrm{dom}(f)$ such that $x\leq z$
and $g_f(x)=f(z)$. Thus, $f(z)=y$ and so $f$ is onto.

Next we show that $f$ is 1-1 iff $g_f$ is a 1-1 function. Let $f$
be 1-1. We show that $g_f$ is a 1-1 function. Let $x,y\in X$ with
$x\neq y$. Without loss of generality we may assume that
$x\not\leq y$. Then there exists a clopen upset $U$ of $X$ such
that $x\in U$ and $y\notin U$. Since $f$ is 1-1, there exists a
clopen upset $V$ of $Y$ such that $U\subseteq
({\downarrow}f^{-1}(V^c))^c$ and $f[{\uparrow}y]\not\subseteq V$.
But $({\downarrow}f^{-1}(V^c))^c=g_f^{-1}(V)$ and $f[{\uparrow}y]=
{\uparrow}g_f(y)$. Therefore, $U\subseteq g_f^{-1}(V)$ and
$g_f(y)\notin V$. Thus, $x\in U\subseteq g_f^{-1}(V)$ and $y\notin
g_f^{-1}(V)$, implying that $g_f(x)\not\leq g_f(y)$. Consequently,
$g_f$ is a 1-1 function. Conversely, suppose that $g_f$ is a 1-1
function. Let $x\in X$, $U$ be a clopen upset of $X$, and $x\notin
U$. Since $g_f$ is a 1-1 function, we have $g_f(x)\not\in g_f(U)$.
As $g_f(U)$ is a closed upset of $Y$, there exists a clopen upset
$V$ of $Y$ such that $g_f(U)\subseteq V$ and $g_f(x)\notin V$.
Therefore, $U\subseteq g_f^{-1}(V)=(f^{-1}(V^c))^c$ and
$f[{\uparrow}x]={\uparrow}g_f(x)\not\subseteq V$. Thus,
$U\subseteq ({\downarrow}f^{-1}(V^c))^c$ and
$f[{\uparrow}x]\not\subseteq V$, implying that $f$ is 1-1.

Consequently, we obtain the well-known result that for a Heyting
algebra homomorphism $h$ we have $h$ is 1-1 iff $f_h$ is onto, and
that $h$ is onto iff $f_h$ is 1-1.

\section{Non-bounded case}

The duality we have developed for bounded distributive meet
semi-lattices can be modified accordingly to obtain duality for
non-bounded distributive meet semi-lattices. In this section we
discuss briefly the main ideas of the modification.

First we deal with the case of distributive meet semi-lattices
with top but possibly without bottom. Let $L\in\mathsf{DM}$. If
$L$ does not have bottom, then we have to add $L$ to the set of
optimal filters of $L$. As a result, $L$ is the greatest element
of $L_*$, and so $\mathrm{max}(L_*)$ is not contained in $L_+$.
Thus, we have to drop condition (3) of Definition
\ref{g-Priestley}. Moreover, for $x=L$ we have
$\mathcal{I}_x=\emptyset$, so $\mathcal{I}_x$ is trivially
directed, although $L\notin L_+$. Thus, we have to modify
condition (4) of Definition \ref{g-Priestley} as follows: $x \in
L_{+}$ iff $\mathcal{I}_{x}$ is nonempty and directed. This
suggests the following modification of the definition of a
generalized Priestley space.

\begin{definition}
{\rm A quadruple $X = \la X, \tau, \leq, X_{0}\ra$ is called a
\emph{$*$-generalized Priestley space} if (i) $\la X, \tau, \leq\ra$
is a Priestley space, (ii) $X_{0}$ is a dense subset of $X$, (iii)
$x \in X_ {0}$ iff $\mathcal{I}_{x}$ is nonempty and directed, and
(iv) for all $x, y \in X $, we have $x \leq y$ iff $(\forall U \in
X^{*})(x \in U \Rightarrow y \in U)$.
}
\end{definition}

Clearly each generalized Priestley space is a $*$-generalized
Priestely space, so the concept of $*$-generalized Priestely space
generalizes that of generalized Priestely space. Moreover, a
$*$-generalized Priestely space is a generalized Priestley space iff
$\mathrm{max}(X) \subseteq X_{0}$ iff $X^{*}$ has a bottom element.
Therefore, a $*$-generalized Priestely space is a generalized
Priestely space iff it satisfies condition (4) of Definition
\ref{g-Priestley}. The same generalization of generalized Esakia
spaces yields $*$-generalized Esakia spaces. Let $\mathsf{GPS}^*$
denote the category of $*$-generalized Priestley spaces and
generalized Priestely morphisms, and let $\mathsf{GES}^*$ denote the
category of $*$-generalized Esakia spaces and generalized Esakia
morphisms. Then we immediately obtain the following theorem, which
generalizes the duality we obtained for $\mathsf{BDM}$ and
$\mathsf{BIM}$ to $\mathsf{DM}$ and $\mathsf{IM}$, respectively.

\begin{theorem}
\begin{enumerate}
\item[]
\item $\mathsf{DM}$ is dually equivalent to $\mathsf{GPS}^*$.
\item $\mathsf{IM}$ is dually equivalent to $\mathsf{GES}^*$.
\end{enumerate}
\end{theorem}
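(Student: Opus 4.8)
The plan is to follow the same strategy that produced the bounded duality, namely to reduce the non-bounded case to the bounded one and then invoke the already-established machinery. First I would set up the two contravariant functors. For an object $L\in\mathsf{DM}$ (a distributive meet semi-lattice with top but possibly without bottom), I would define $L_*$ by taking $\mathrm{Opt}(L)$ as the underlying set, with the same topology $\tau$ generated by the subbasis $\{\phi(a):a\in L\}\cup\{\phi(b)^c:b\in L\}$, the inclusion order, and $L_+=\mathrm{Pr}(L)$ as the distinguished subset. The key new point, as flagged in the paragraph preceding the theorem, is that when $L$ lacks a bottom the whole set $L$ becomes an optimal filter (since there is no ideal forcing properness), so $L$ sits as a greatest element of $L_*$ that need not lie in $L_+$. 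This is exactly why condition (3) is dropped and condition (4) is replaced by ``$x\in X_0$ iff $\mathcal{I}_x$ is nonempty and directed.'' On morphisms I would keep $h_*=R_h$ and $R^*=h_R$, verbatim from Sections 8 and 9. Conversely, from a $*$-generalized Priestley space $X$ I would form $X^*=\la X^*,\cap,X\ra$, a distributive meet semi-lattice with top $X$ but possibly without a bottom, precisely because $\emptyset$ need no longer be admissible when $\mathrm{max}(X)\not\subseteq X_0$.

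Next I would verify that these functors are well defined in the new setting. The crucial checks are that $L_*$ satisfies the four axioms of a $*$-generalized Priestley space and that $X^*$ is a distributive meet semi-lattice. For the former, axioms (i), (ii), and (iv) go through unchanged from Propositions \ref{proposition6} and \ref{proposition5}, while the modified axiom (iii) requires re-examining Proposition \ref{proposition5}: one shows $x\in L_+$ iff $\mathcal{I}_x$ is nonempty and directed, the nonemptiness clause being what excludes the new top point $x=L$ (for which $\mathcal{I}_x=\emptyset$). For $X^*$, the argument of Proposition \ref{proposition7} that $X^*$ is closed under $\cap$ and is distributive does not use the existence of a bottom, so only the part asserting $\emptyset\in X^*$ is discarded. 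I would then re-run Theorem \ref{isom-2}, Proposition \ref{CU-finun}, and Corollary \ref{corollary1} to confirm that $\mathfrak{CU}(X)$ is still the closure of $X^*$ under finite unions, which is what guarantees the subbasis and hence the topological comparisons survive.

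The representation and equivalence on objects then follow the pattern of Theorems \ref{proposition8}, \ref{homeom}, and \ref{theorem-duality}. I would establish $L\simeq{L_*}^*$ by checking ${L_*}^*=\phi[L]$ exactly as before (the top of ${L_*}^*$ is $\phi(\top)=L_*$, and no bottom is claimed), and $X\cong{X^*}_*$ via the map $\psi(x)=\{U\in X^*:x\in U\}$, where Propositions \ref{psi-opt-prime} and \ref{psi-bij} need only the revised directedness clause to identify $\psi[X_0]$ with the prime filters of $X^*$. On morphisms, Propositions \ref{h-Rh} and \ref{R-hR} carry over verbatim since their proofs invoke only the optimal filter lemma, which holds for all distributive meet semi-lattices. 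For the Esakia part (2), the implicative structure is local: Lemma \ref{leo-1} and Proposition \ref{lemma-ims-ges} involve only the filter $F$ generated by $\{a\}\cup x$ and the prime filter lemma, neither of which touches the bottom, so $L_*$ is a $*$-generalized Esakia space and $X^*$ an implicative meet semi-lattice with no change.

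The main obstacle I anticipate is not any single deep step but the bookkeeping around the distinguished greatest point: one must consistently track that $L\in L_*$ is a genuine point of the space (it is an optimal filter, and every optimal filter is a point), that it is \emph{not} in $L_+$, and that the topology and order axioms still single out $L_+$ correctly through the nonemptiness-plus-directedness condition. A secondary subtlety is confirming that the composition $\ast$ of morphisms from Section 8 still lands in the right category when objects may lack bottoms; since $\ast$ is defined purely through the $\Box_R$ operators and the representation $\psi$, and these are re-established above, Lemmas \ref{catGPS-1}--\ref{catGPS-4} transfer without incident. Having assembled all this, the theorem follows by the identical formal argument as Theorems \ref{theorem-duality} and \ref{theorem-ges}, now applied to $\mathsf{DM}$ with $\mathsf{GPS}^*$ and to $\mathsf{IM}$ with $\mathsf{GES}^*$.
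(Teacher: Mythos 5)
Your overall route is the same as the paper's: the paper proves this theorem by exactly the adjustment you describe (a distinguished greatest point when $\bot$ is absent, dropping condition (3) of Definition \ref{g-Priestley}, adding the nonemptiness clause to condition (4)) and then transferring the bounded machinery wholesale, so your accounting of which propositions must be re-run is, if anything, more explicit than the original.

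There is, however, one claim you should repair, because as stated it is false and, read literally, would break the construction. You write that when $L$ lacks a bottom ``the whole set $L$ becomes an optimal filter (since there is no ideal forcing properness),'' and accordingly you take $\mathrm{Opt}(L)$ itself as the underlying set of $L_*$. Under the paper's definitions $L$ is \emph{never} an optimal filter: an optimal filter is $\sigma^{-1}[P]$ for a \emph{proper} prime filter $P$ of $D(L)$, and $\sigma^{-1}[P]=L$ would force $P\supseteq\sigma[L]$, hence $P=D(L)$, since every element of $D(L)$ lies above some $\sigma(a)$; equivalently, by Proposition \ref{proposition1} the complement of an optimal filter is a Frink ideal, and Frink ideals are nonempty by definition. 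This is why the paper says one must \emph{adjoin} $L$ to $\mathrm{Opt}(L)$ as a new point, rather than asserting that $L$ qualifies as optimal. The distinction is not cosmetic: if $L$ has no bottom and one sets $L_*=\mathrm{Opt}(L)$ with the paper's definition, the space is not compact --- the subbasic cover $\{\phi(b)^c : b\in L\}$ covers $\mathrm{Opt}(L)$ (every optimal filter is proper) but admits no finite subcover, since for any $b_1,\dots,b_n$ there is $c$ with $b_1\wedge\cdots\wedge b_n\not\leq c$, and the optimal filter lemma applied to ${\uparrow}(b_1\wedge\cdots\wedge b_n)$ and ${\downarrow}c$ yields an optimal filter containing every $b_i$. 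It is precisely the adjoined improper point, lying in every $\phi(a)$ and in no $\phi(b)^c$, that restores compactness (mirroring Priestley's treatment of non-bounded lattices); so your remark that axiom (i) ``goes through unchanged'' is exactly where the new point earns its keep, and likewise the surjectivity of $\psi$ onto the corresponding new point of ${X^*}_*$ needs the compactness observation that $\bigcap X^*\neq\emptyset$ when $\emptyset\notin X^*$. Since the remainder of your argument does treat $L$ as a genuine greatest point of $L_*$ lying outside $L_+$ and derives the axiom modifications from that, the construction you intend coincides with the paper's; only the justification needs to be replaced by the explicit adjunction.
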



If $L$ is a distributive meet-semilattice without top but with
bottom, then two cases are possible: either $D(L)$ has top or $D(L)$
does not have top. If $D(L)$ has top, then we obtain the dual of $L$
in exactly the same way as in the bounded case. But in this case $L$
will be realized as ${L_*}^*-\{L_*\}$. If $D(L)$ does not have top,
then again we construct the dual of $L$ as before, however in this
case the space we obtain is locally compact but not compact. We can
handle this as the case for distributive lattices \cite[Section
10]{Pri72} by adding a new top to $L$. If $L^{\top}$ is the
resulting meet semi-lattice, then the dual space of $L^{\top}$ is
the one-point compactication of the dual of $L$. Moreover, the new
point of $(L^\top)_*$ is the smallest optimal filter $\{\top\}$ of
$L^\top$, which is below every point of $L_*$.

This way we can handle all possible situations; that is, when $L$
has $\top$, but lacks $\bot$; when $L$ has $\bot$, but lacks $\top$;
or the most general case, when $L$ lacks both $\top$ and $\bot$.

\section{Comparison with the relevant work}

In this final section we compare our duality with the existing
dualities for distributive meet semi-lattices and implicative meet
semi-lattices.

\subsection{Comparison with the work of Celani and Hansoul}

The first representation of distributive meet semi-lattices is
already present in the pioneering work of Stone \cite{Sto37}. It was
made more explicit in Gr\"atzer \cite{Gra98}, where with each join
semi-lattice $L$ with bottom is associated the space $S(L)$ of prime
ideals of $L$. The topology on $S(L)$ is generated by the basis
consisting of the sets $r(a) = \{I\in S(L): a \not \in I\}$. The
space $S(L)$ is not in general Hausdorff, and it is compact iff $L$
has a top. In \cite{Gra98} a purely topological characterization of
such spectral-like spaces is given. We call a topological space $\la
X, \tau\ra$ \emph{spectral-like} if (i) $X$ is $T_0$, (ii) the
compact open subsets of $X$ form a basis for $\tau$, and (iii) for
each closed subset $F$ of $X$ and each nonempty down-directed family
of compact open subsets $\{U_i: i \in I\}$ of $X$, from $F \cap U_i
\neq \emptyset$ for each $i\in I$, it follows that $F \cap
\displaystyle{\bigcap}\{U_i: i \in I\} \neq \emptyset$. It was shown
in \cite{Gra98} that $S(L)$ is spectral-like for each $L$, that the
basis of compact open subsets of a spectral-like space forms a
distributive join semi-lattice with bottom, and that this
correspondence between distributive join semi-lattices with bottom
and spectral-like spaces is 1-1. But there was no attempt made in
\cite{Gra98} to obtain categorical duality between the category of
distributive join semi-lattices with bottom and the category of
spectral-like spaces.

In \cite{Cel03b} Celani filled in this gap. He chose to work with
meet semi-lattices instead of join semi-lattices, hence his building
blocks for the dual space were prime filters instead of prime
ideals. To be more specific, let us recall that $\mathsf{DM}$
denotes the category of distributive meet semi-lattices with top as
objects and meet semi-lattice homomorphisms preserving top as
morphisms. Celani's dual category has (in the terminology of
\cite{Cel03b}) DS-spaces as objects and meet-relations between two
DS-spaces as morphisms. We recall from \cite[Definition 14]{Cel03b}
that a \textit{DS-space} is an ordered topological space $X=\la X,
\tau, \leq\ra$ such that (i) $\la X, \tau \ra$ is a spectral-like
space, (ii) each closed subset of $X$ is an upset, and (iii) if $x,
y \in X$ with $x \not \leq y$, then there is a compact open subset
$U$ of $X$ such that $x \not \in U$ and $y \in U$.

\begin{remark}
For a topological space $X$, we recall that the \emph{specialization
order} of $X$ is defined by $x\leq y$ if $x\in\overline{\{y\}}$,
that $\leq$ is reflexive and transitive, and that it is a partial
order iff $X$ is $T_0$. Let $X$ be a spectral-like space. Then it is
easy to see that Celani's order is nothing more than the dual of the
specialization order of $X$.
\end{remark}

For a DS-space  $X$, let $\mathcal{E}(X)$ denote the set of compact
open subsets of $X$, and let $D_X = \{U \subseteq X: U^{c} \in
\mathcal{E}(X)\}$. Let $X$ and $Y$ be two DS-spaces and let $R
\subseteq X \times Y$ be a binary relation. Following
\cite[Definition 19]{Cel03b}, we call $R$ a \textit{meet-relation}
if (i) $U \in D_Y$ implies $\Box_{R}U \in D_X$ and (ii) $R[x] =
\displaystyle{\bigcap}\{U \in D_Y: R[x] \subseteq U\}$ for each
$x\in X$. Let $\mathsf{DS}$ denote the category of DS-spaces as
objects and meet-relations as morphisms.

Celani defines two functors $(-)_{+}:\mathsf{DM}\to\mathsf{DS}$ and
$(-)^{+}:\mathsf{DS}\to\mathsf{DM}$ as follows. If $L \in
\mathsf{DM}$, then $L_+=\la L_{+}, \tau, \subseteq \ra$, where
$L_{+}$ is the set of prime filters of $L$ and $\tau$ is the
topology generated by the basis $\{\sigma(a)^c: a \in L\}$; if $h: L
\to K$ is a meet semi-lattice homomorphism preserving $\top$, then
$h_+=R_h \subseteq K_{+} \times L_{+}$ is defined by $x R_h y$ iff
$h^{-1}(x) \subseteq y$. If $X$ is a DS-space, then $X^+ = \la D_X,
\cap, X\ra$; if $X$ and $Y$ are DS-spaces and $R\subseteq X\times Y$
is a meet-relation, then $R^+=h_R: D_Y \to D_X$ is defined by
$h_R(U) = \Box_{R}U$. Then it follows from \cite{Cel03b} that the
functors $(-)_+$ and $(-)^+$ are well-defined, and that they
establish dual equivalence of the categories $\mathsf{DM}$ and
$\mathsf{DS}$.

\begin{remark}
Although not addressed in \cite{Cel03b}, the composition of two meet-relations is not the usual set-theoretic composition. Rather, similar to the case of $\mathsf{GPS}$, we have that for DS-spaces $X,Y,$ and $Z$ and meet-relations $R\subseteq X\times Y$ and $S\subseteq Y\times Z$, the composition $S*R\subseteq X\times Z$ is given by $$x(S*R)z \mbox{ iff } (\forall U\in Z^+)(x\in (h_R\circ h_S)(U)\Rightarrow z\in U)$$ for each $x\in X$ and $z\in Z$.
\end{remark}

The bounded distributive meet semi-lattices are exactly the objects
of $\mathsf{DM}$ whose dual spaces are compact. Indeed, if $L$ is a
bounded distributive meet semi-lattice, then $\sigma(\bot) =
\emptyset$, so $\sigma(a)^c=L_+$, and so $L_{+}$ is compact as
$\{\sigma(a)^c:a\in L\}=\mathcal{E}(L_+)$. Conversely, if $L_{+}$ is
compact, then $L_{+} = \sigma(a)^c$ for some $a \in L$, so $a$ is
the bottom of $L$, and so $L$ is bounded. It follows that the full
subcategory $\mathsf{BDM}$ of $\mathsf{DM}$ whose objects are
bounded distributive meet semi-lattices is dually equivalent to the
full subcategory $\mathsf{CDS}$ of $\mathsf{DS}$ whose objects are
compact DS-spaces. Now putting Celani's duality together with ours,
we obtain that $\mathsf{CDS}$ is equivalent to the category of
generalized Priestley spaces and generalized Priestley morphisms
introduced in this paper. In order to make the connection easier to
understand, we show how to construct compact DS-spaces from
generalized Priestely spaces, and how to construct meet-relations
from generalized Preistely morphisms.

\begin{proposition}\label{prop-celani}
Let $X = \la X, \tau, \leq, X_0\ra$ be a generalized Priestely
space. Then the space $X_0 = \la X_0, \tau_0, \leq_0 \ra$ is a
compact DS-space, where $\leq_0$ is the restriction of $\leq$ to
$X_0$ and $\tau_0$ is the topology generated by the basis $\{X_0-U :
U \in X^{*}\}$.
\end{proposition}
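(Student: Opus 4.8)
The plan is to verify the three DS-space axioms for $\la X_0,\tau_0,\leq_0\ra$ directly, organizing everything around the bounded distributive meet semi-lattice $L=X^{*}$ (a meet semi-lattice with $\cap$, $X$, $\emptyset$ by Proposition \ref{proposition7}) and the map $U\mapsto X_0-U$ on basic opens. First I would record the two facts that make this map well-behaved. Since each $U\in X^{*}$ is open and $X_0$ is dense (Definition \ref{g-Priestley}(2)), $W\cap X_0\subseteq U$ forces $W\subseteq\overline{W\cap X_0}\subseteq\overline U=U$; hence $X_0-U\subseteq X_0-W$ iff $W\subseteq U$, so $U\mapsto X_0-U$ is an order-reversing bijection of $X^{*}$ onto the declared basis. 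To see that $\{X_0-U:U\in X^{*}\}$ really is a basis, given $x\in(X_0-U)\cap(X_0-V)$ I use that $x\in X_0$, so by Definition \ref{g-Priestley}(4) the family $\mathcal I_x$ is directed; as $U,V\in\mathcal I_x$ there is $Z\in\mathcal I_x$ with $U\cup V\subseteq Z$, giving $x\in X_0-Z\subseteq(X_0-U)\cap(X_0-V)$. Because $X^{*}$ is closed under finite intersection, any finite union of basic opens, $\bigcup_j(X_0-U_j)=X_0-\bigcap_jU_j$, is again basic; thus once I know the basic opens are compact, the compact open sets will be exactly $\{X_0-U:U\in X^{*}\}$.

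Next I would prove compactness of each basic open $X_0-U$ (this simultaneously yields that $X_0=X_0-\emptyset$ is compact). A basic cover $\{X_0-U_i\}$ of $X_0-U$ means $X_0\cap\bigcap_iU_i\subseteq U$. Here the \emph{admissibility} of $U$ is the key: $\mathrm{max}(U^{c})\subseteq X_0$, and in the Priestley space $X$ every point of the closed set $U^{c}$ lies below a maximal point $m\in\mathrm{max}(U^{c})\subseteq X_0$; since $m\in X_0-U$ it is covered, $m\notin U_i$ for some $i$, and as $U_i^{c}$ is a downset the whole $\downarrow m$, in particular the original point, lies in $U_i^{c}$. Thus $U^{c}\subseteq\bigcup_iU_i^{c}$ in $X$; the clopen set $U^{c}$ is $\tau$-compact, so finitely many $U_i^{c}$ suffice, and restricting to $X_0$ gives a finite subcover. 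So all basic opens are compact, and the compact opens form the basis $\{X_0-U:U\in X^{*}\}$, which is the spectral-like axiom (ii).

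I would then dispatch the remaining axioms via the order. Computing the specialization order $\leq_s$ of $\tau_0$ from the basis, $x\leq_s y$ iff every basic open containing $x$ contains $y$ iff $(\forall U\in X^{*})(y\in U\Rightarrow x\in U)$, which by Definition \ref{g-Priestley}(5) is $y\leq x$; hence $\leq_s$ is the dual of $\leq_0$, matching the paper's Remark that Celani's order is the dual of the specialization order. Consequently $X_0$ is $T_0$ (spectral-like (i)): if $x\neq y$ then $x\not\leq y$ or $y\not\leq x$, and (5) separates them by some $X_0-V$. Since closed sets are $\leq_s$-downsets, they are $\leq_0$-upsets, giving DS-axiom (ii). For DS-axiom (iii), if $x\not\leq_0 y$ then (5) gives $V\in X^{*}$ with $x\in V$, $y\notin V$, and $X_0-V$ is a compact open containing $y$ but not $x$.

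The main obstacle is the spectral-like coherence axiom (iii): for a closed $F\subseteq X_0$ and a nonempty down-directed family of compact opens $\{U_i\}$ with all $F\cap U_i\neq\emptyset$, one must produce a point in $F\cap\bigcap_iU_i$; a naive compactness argument in $X$ only yields a point of $G\cap W^{c}$ (where $F=X_0\cap G$, $W=\bigcup W_i$) which need not lie in $X_0$. I would instead argue through the algebra $L=X^{*}$, using that $\psi$ identifies the points of $X_0$ with the prime filters ${X^{*}}_{+}$ (Theorem \ref{homeom} and Propositions \ref{psi-opt-prime}, \ref{psi-bij}), and $x\in W\Leftrightarrow W\in\psi(x)$. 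Writing $F=X_0\cap\bigcap_\alpha W_\alpha$ (a $\tau_0$-closed set), for $x\in X_0$ we have $x\in F$ iff $\Psi\subseteq\psi(x)$, where $\Psi$ is the filter of $L$ generated by $\{W_\alpha\}$. Writing $U_i=X_0-W_i$, down-directedness of $\{U_i\}$ means $\{W_i\}$ is up-directed, and then its down-set $J=\{W\in L:W\subseteq W_i\text{ for some }i\}$ is an \emph{ideal} of $L$ (directedness supplies the required upper bound in axiom (ii) for ideals). The hypothesis $F\cap U_i\neq\emptyset$ forces $\Psi\cap J=\emptyset$: any common element $U$ would satisfy $W_{\alpha_1}\cap\cdots\cap W_{\alpha_n}\subseteq U\subseteq W_i$, contradicting the existence of a point $x_i\in F\cap U_i$ (which has all $W_{\alpha_j}\in\psi(x_i)$ but $W_i\notin\psi(x_i)$). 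The prime filter lemma (Lemma \ref{meetprime}) now yields a prime filter $p$ of $L$ with $\Psi\subseteq p$ and $p\cap J=\emptyset$; the corresponding point $\psi^{-1}(p)\in X_0$ lies in $F$ (as $\Psi\subseteq p$) and avoids every $W_i$ (as $W_i\in J$), i.e. $\psi^{-1}(p)\in F\cap\bigcap_iU_i$. This completes axiom (iii), and together with the compactness of $X_0$ established above shows $X_0$ is a compact DS-space.
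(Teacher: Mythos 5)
Your proof is correct and takes essentially the same route as the paper's: the basis property via directedness of $\mathcal{I}_x$ (condition (4) of Definition \ref{g-Priestley}), compactness of each $X_0-U$ pulled back from compactness of the clopen set $X-U={\downarrow}(X_0-U)$ in the Priestley space $X$, the $T_0$/upset/separation axioms from condition (5), and the coherence axiom via the prime filter lemma in $X^*$ combined with the bijection $\psi$ between prime filters of $X^*$ and points of $X_0$. The one small refinement is that you correctly pass to the downward closure of the up-directed family $\{W_i\}$ to obtain an ideal before invoking the prime filter lemma, a point the paper glosses over by calling the family itself an ideal.
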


\begin{proof}
First we show that $\{X_0-U : U \in X^{*}\}$ is indeed a basis for
$\tau_0$. Let $U,V\in X^*$ and $x\in (X_0-U) \cap (X_0-V)$. Then
$x\in X_0$ and $x\notin U,V$. By condition (4) of Definition
\ref{g-Priestley}, there exists $W\in X^*$ such that $U,V\subseteq
W$ and $x\notin W$. Thus, $x\in X_0-W\subseteq (X_0-U) \cap
(X_0-V)$, and so $\{X_0-U : U \in X^{*}\}$ is a basis for $\tau_0$.
Next we show that compact open subsets of $X_0$ are exactly of the
form $X_0 - U$ for $U \in X^{*}$. Let $U \in X^{*}$ and let $X_0 - U
\subseteq \displaystyle{\bigcup_{i \in I}} (X_0 - U_i)$, where
$\{U_i: i \in I\} \subseteq X^{*}$. Then ${\downarrow}(X_0 - U)
\subseteq \displaystyle{\bigcup_{i \in I}} {\downarrow}(X_0 - U_i)$.
Since for each $V \in X^{*}$ we have $X - V = {\downarrow}(X_0 -
V)$, then $X - U \subseteq \displaystyle{\bigcup_{i \in I}} (X -
U_i)$. As $X-U$ is closed, hence compact in $X$, there is a finite
$J \subseteq I$ such that $X - U \subseteq \displaystyle{\bigcup_{i
\in J}} (X-U_i)$. Hence, $X_0 - U \subseteq \displaystyle{\bigcup_{i
\in J}} (X_0 - U_i)$, and so $X_0 - U$ is compact. Conversely, if
$V$ is a compact open subset of $X_0$, then $V$ is a finite union of
the sets of the form $X_0 - V_i$. Since $X^*$ is closed under finite
intersections, $\{X_0-U:U\in X^*\}$ is closed under finite unions.
Thus, each compact open subset of $X_0$ has the form $X_0-U$ for
$U\in X^*$, and so $\{X_0 - U: U \in X^{*}\}$ is exactly the set of
compact open subsets of $X_0$. Moreover, as $\emptyset\in X^*$, we
have that $X_0 = X_0-\emptyset$ is compact open, and so $X_0$ is
compact. Now suppose that $x, y \in X_0$ are such that $x \not \leq
y$. By condition (5) of Definition \ref{g-Priestley}, there exists
$U \in X^{*}$ such that $x \in U$ and $y \not \in U$. Thus, $x \not
\in X_0 - U$ and $y \in X_0 - U$, and so condition (iii) of the
definition of a DS-space is satisfied. It also follows that $X_0$ is
a $T_0$-space. Since each $U \in X^{*}$ is an upset of $X$, each
$X_0 - U$ is a downset of $X_0$. Thus, open sets of $X_0$ are
downsets, and so closed sets of $X_0$ are upsets. Consequently,
$X_0$ satisfies condition (ii) of the definition of a DS-space.
Finally, let  $F$ be a closed subset of $X_0$ and let $\{U_i: i \in
I\}$ be a down-directed family of compact open subsets of $X_0$ such
that for each $i \in I$ we have $F \cap U_i \neq \emptyset$. Then $F
= \displaystyle{\bigcap} \{W_k: k \in K\}$ for some family $\{W_k: k
\in K\} \subseteq X^{*}$ and $U_i=X_0-V_i$ for some $V_i\in X^*$.
Since $\{X_0 - V_i: i \in I\}$ is down-directed, so is
$\{{\downarrow}(X_0 - V_i): i \in I\}$. But ${\downarrow}(X_0 -
V_i)=X-V_i$, so $\{V_i: i \in I\}\subseteq X^*$ is directed.
Therefore, $\Delta=\{V_i:i\in I\}$ is an ideal of $X^{*}$. Let
$\nabla$ be the filter of $X^{*}$ generated by $\{W_k: k \in K\}$.
Clearly $\nabla \cap \Delta= \emptyset$. Thus, there exists a prime
filter $P$ of $X^{*}$ such that $\nabla \subseteq P$ and $P \cap
\Delta = \emptyset$. Let $x \in X_0$ be such that $\varepsilon(x) =
P$. Then $x \in F$ and $x \notin V_i$ for each $i\in I$.
Consequently, $x\in F\cap \displaystyle{\bigcap_{i\in I}}
(X_0-V_i)$, so $X_0$ is a spectral-like space, so condition (i) of
the definition of a DS-space is satisfied, and so $X_0$ is a compact
DS-space.
\end{proof}

\begin{proposition}\label{prop-celani1}
If $X$ and $Y$ are generalized Priestley spaces and $R \subseteq X
\times Y$ is a generalized Priestley morphism, then $R_0 = R \cap
(X_0 \times Y_{0})$ is a meet-relation between the compact DS-spaces
$X_0$ and $Y_0$.
\end{proposition}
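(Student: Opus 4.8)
The plan is to verify the two defining conditions of a meet-relation for $R_0$, after first recording the relevant descriptions of the compact DS-spaces $X_0$ and $Y_0$. By Proposition \ref{prop-celani}, $X_0$ and $Y_0$ are compact DS-spaces whose compact open sets are exactly $\{X_0 - U : U \in X^*\}$ and $\{Y_0 - U : U \in Y^*\}$, so that $D_{X_0} = \{U \cap X_0 : U \in X^*\}$ and $D_{Y_0} = \{U \cap Y_0 : U \in Y^*\}$. Moreover, since $x \in X_0$, we have $R_0[x] = R[x] \cap Y_0$ for each $x \in X_0$. I would carry out the whole verification in terms of these identifications.

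The crux, and the step I expect to be the main obstacle, is the following compatibility between the restricted and the unrestricted box operators: for each $x \in X_0$ and each $U \in Y^*$, $R[x] \cap Y_0 \subseteq U$ iff $R[x] \subseteq U$. The implication from right to left is trivial. For the converse I would argue by contradiction: if some $y \in R[x]$ lies outside $U$, then since $U$ is admissible we have $U^c = {\downarrow}(Y_0 - U)$, so there is $z \in Y_0 - U$ with $y \leq z$; because $R[x]$ is an upset (Lemma \ref{lemma-g-morphisms}), $z \in R[x]$, whence $z \in R[x] \cap Y_0 \subseteq U$, contradicting $z \in Y_0 - U$. The delicate point is precisely this interplay between the upset property of $R[x]$ and the cofinality of $Y_0$ in $U^c$ encoded in admissibility; everything else is bookkeeping.

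Granting this equivalence, condition (i) of the definition of a meet-relation follows quickly. For $V = U \cap Y_0 \in D_{Y_0}$ with $U \in Y^*$, the equivalence gives $\Box_{R_0}(U \cap Y_0) = \Box_R(U) \cap X_0$, and since $\Box_R(U) \in X^*$ by condition (2) of Definition \ref{definition}, this set lies in $D_{X_0}$. Thus $V \in D_{Y_0}$ implies $\Box_{R_0}V \in D_{X_0}$.

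For condition (ii), I would fix $x \in X_0$ and show $R_0[x] = \bigcap\{V \in D_{Y_0} : R_0[x] \subseteq V\}$. Using the descriptions above together with the equivalence from the second paragraph, the family $\{V \in D_{Y_0} : R_0[x] \subseteq V\}$ equals $\{U \cap Y_0 : U \in Y^*,\ R[x] \subseteq U\}$, so its intersection is $Y_0 \cap \bigcap\{U \in Y^* : R[x] \subseteq U\}$. The inclusion $R_0[x] \subseteq Y_0 \cap \bigcap\{U \in Y^* : R[x] \subseteq U\}$ is immediate. For the reverse inclusion, suppose $y \in Y_0$ with $y \notin R[x]$; then $x \nR y$, and condition (1) of Definition \ref{definition} supplies $U \in Y^*$ with $R[x] \subseteq U$ and $y \notin U$, so $y$ is excluded from the intersection. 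Hence the intersection is contained in $R[x] \cap Y_0 = R_0[x]$, giving equality. This establishes that $R_0$ is a meet-relation between the compact DS-spaces $X_0$ and $Y_0$.
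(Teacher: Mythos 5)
Your proposal is correct and follows essentially the same route as the paper's proof: your key equivalence ($R[x]\cap Y_0\subseteq U$ iff $R[x]\subseteq U$, via admissibility $U^c={\downarrow}(Y_0-U)$ and the fact that $R[x]$ is an upset) is exactly the paper's argument that $\Box_{R_0}(V\cap Y_0)=\Box_R V\cap X_0$, and your treatment of condition (ii) via condition (1) of Definition \ref{definition} matches the paper's as well. The only difference is organizational—you isolate the equivalence up front and reuse it, while the paper embeds it in the verification of condition (i)—which does not change the mathematical content.
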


\begin{proof}
Suppose $R \subseteq X \times Y$ is a generalized Priestley morphism
between two generalized Priestey spaces $X$ and $Y$. We prove that
$R_0$ is a meet-relation between $X_0$ and $Y_0$. Let  $U \in
D_{Y_0}$. Then $Y_0 - U$ is compact open in $Y_0$. Therefore, there
is $V \in Y^{*}$ such that $Y_0 - U = Y_0 - V$. Thus, $U = V \cap
Y_0$ and ${\downarrow}(Y_0 - U) = Y - V$. We show that $\Box_{R_0} U
= \Box_R V\cap X_0$. First suppose that $x \in \Box_R V\cap X_0$.
Then $x\in X_0$ and $R[x] \subseteq V$. Thus, $R_0[x] \subseteq
V\cap Y_0 = U$, and so $x \in \Box_{R_0} U$. Next suppose that $x
\in \Box_{R_0} U$. If $R[x] \not\subseteq V$, then there exists
$y\in Y$ such that $x R y$ and $y \not \in V$. Therefore, $y \in Y -
V = {\downarrow}(Y_0 - U)$. So there exists $z \in Y_0 - U$ such
that $y \leq z$. Since $xRy$ and $y\leq z$, we have $xRz$, so $xR_0
z$. Thus, $z \in R_0[x]$ and $z \not \in U$, a contradiction. We
conclude that $R[x]\subseteq V$, and so $x \in \Box_R V\cap X_0$. It
follows that $\Box_{R_0} U = \Box_R V\cap X_0$, so $X_0-\Box_{R_0}U
= X_0-\Box_R V$, and so $\Box_{R_0}U \in D_{X_0}$. Thus, condition
(i) of the definition of a meet-relation is satisfied. Now we show
that for each $x \in X_0$ we have $R_0[x] = \displaystyle{\bigcap}
\{U \in D_{Y_0}: R_{0}[x] \subseteq U\}$. Clearly $R_0[x] \subseteq
\displaystyle{\bigcap}\{U \in D_{Y_0}: R_{0}[x] \subseteq U\}$.
Suppose that $y\in Y_0$ and $x \nR_0 y$. Then $x\nR y$. By condition
(3) of Definition \ref{definition}, there exists $V \in Y^{*}$ such
that $y \not \in V$ and $R[x] \subseteq V$. Therefore, $V \cap X_0
\in D_{Y_0}$, $R_0[x] \subseteq V \cap X_0$, and $y \notin V\cap
X_0$. Thus, $y\notin \displaystyle{\bigcap}\{U \in D_{Y_0}: R_{0}[x]
\subseteq U\}$, so $R_0[x] = \displaystyle{\bigcap}\{U \in D_{Y_0}:
R_{0}[x] \subseteq U\}$, and so condition (ii) of the definition of
a meet-relation is satisfied. Consequently, $R_0$ is a meet-relation
between the compact DS-spaces $X_0$ and $Y_0$.
\end{proof}

Now we compare our work with that of Hansoul \cite{Han03}. Like
Gr\"atzer, Hansoul prefers to work with distributive join
semi-lattices. But unlike both Gr\"atzer and Celani, he tries to
build a Priestley-like dual of a bounded distributive join
semi-lattice. Thus, his work is the closest to ours. We recall the
main definition of \cite{Han03}. For convenience, we call the spaces
Hansoul calls Priestley structures simply Hansoul spaces.

\begin{definition}
\label{Priestley structure} {\rm A \textit{Hansoul space} is a tuple
$X=\la X, \tau, \leq, X_{0} \ra$, where:
\begin{enumerate}
\item $\la X, \tau, \leq \ra$ is a Priestley space.
\item $X_{0}$ is a dense subset of $X$.
\item If $x, y \in X$ with $x \not \leq y$, then there is
$z \in X_{0}$ such that $x \not \leq z$ and $y\leq z$.
\item $X_{0}$ is the set of elements of $X$ for
which the family of clopen downsets $U$ that contain $x$ and have
the property that $U \cap X_{0}$ is cofinal in $U$ is a basis of
clopen downset neighborhoods of $x$.
\item For each $x\in X$ there exists $y\in X_0$ such that $x\leq y$.
\end{enumerate}
}
\end{definition}

Hansoul constructs the dual $X$ of a bounded distributive join
semi-lattice $L$ by taking what he calls \emph{weakly prime ideals}
of $L$ as points of $X$ and by taking prime ideals of $L$ as points
of the dense subset $X_0$ of $X$. The weakly prime ideals of $L$ are
exactly the optimal filters of the dual $L^d$ of $L$ (see Remark
\ref{Hansoul}) and the prime ideals of $L$ are exactly the prime
filters of $L^d$. Thus, Hansoul's construction is dual to ours. In
fact, we show that Hansoul spaces are the same as our generalized
Priestley spaces.

\begin{proposition}
A tuple $X = \la X, \tau, \leq, X_{0}\ra$ is a Hansoul space iff it
is a generalized Priestely space.
\end{proposition}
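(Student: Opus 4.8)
The plan is to verify the five clauses of the two definitions against each other. Conditions (1) and (2) are verbatim the same in Definitions \ref{g-Priestley} and \ref{Priestley structure}, and Hansoul's condition (5) is word-for-word condition (3) of a generalized Priestley space. So the real content is to reconcile the pair (3),(4) of a Hansoul space with the pair (4),(5) of a generalized Priestley space. Write (P$k$) and (H$k$) for condition ($k$) of Definitions \ref{g-Priestley} and \ref{Priestley structure} respectively. The bridge between the two vocabularies is the elementary observation that, for a clopen downset $U$ of a Priestley space, $U\cap X_0$ is cofinal in $U$ iff $\mathrm{max}(U)\subseteq X_0$ (every point of the compact set $U$ lies below a maximal one), which in turn holds iff $U^c$ is an admissible clopen upset; hence for a fixed $x$ the clopen downsets through $x$ of Hansoul's distinguished type are exactly $\{U^c:U\in\mathcal{I}_x\}$, and (H4) says precisely that $\{U^c:U\in\mathcal{I}_x\}$ is a basis of clopen downset neighborhoods of $x$ exactly when $x\in X_0$.

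First I would record two facts relating (P5) to (H3), assuming throughout (1),(2) and (P3)$=$(H5). (a) (P5) implies (H3): given $x\not\le y$, take the admissible clopen upset $U$ with $x\in U$, $y\notin U$ furnished by (P5); since $U\in X^*$ we have $\mathrm{max}(U^c)\subseteq X_0$, so by compactness there is $z\in\mathrm{max}(U^c)\cap X_0$ with $y\le z$, and $x\in U$ (an upset) forces $x\not\le z$. (b) Conversely (H3) together with (H4) implies (P5): given $x\not\le y$, use (H3) to get $z\in X_0$ with $x\not\le z\ge y$, use the Priestley separation axiom to get a clopen downset $D\ni z$ with $x\notin D$, and then apply (H4) at the point $z\in X_0$ to shrink $D$ to some $U^c$ with $U\in\mathcal{I}_z$ and $z\in U^c\subseteq D$; this $U\in X^*$ satisfies $x\in U$ (because $x\notin D\supseteq U^c$) and $y\notin U$ (because $y\le z\in U^c$), so (P5) holds.

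Next I would establish the translation of (H4) into (P4). The key auxiliary fact is: in the presence of (P5), every clopen upset is a finite union of admissible clopen upsets — this is exactly the argument of Proposition \ref{CU-finun}, which uses only (P5), the fact that $\emptyset,X\in X^*$ (where (P3) enters), and closure of $X^*$ under finite intersection. Using it I would prove that, under (1),(2),(P3),(P5), for every $x$ the family $\mathcal{I}_x$ is directed iff $\{U^c:U\in\mathcal{I}_x\}$ is a basis of clopen downset neighborhoods of $x$. Forward: a clopen downset $N\ni x$ has $N^c=U_1\cup\dots\cup U_n$ with each $U_i\in X^*$, and since $x\notin N^c$ each $U_i\in\mathcal{I}_x$, so directedness yields $W\in\mathcal{I}_x$ with $N^c\subseteq W$, i.e. $x\in W^c\subseteq N$. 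Backward: given $U,V\in\mathcal{I}_x$, the basis property applied to the clopen downset $(U\cup V)^c\ni x$ gives $W\in\mathcal{I}_x$ with $U\cup V\subseteq W$. Since (P4) and (H4) each characterize $X_0$ as the set of points satisfying one of these two now-equivalent properties, (P4) and (H4) coincide.

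Finally I would assemble the two implications so as to sidestep circularity. For generalized Priestley $\Rightarrow$ Hansoul: (1),(2),(H5) are immediate, (H3) follows from (P5) by (a), and (H4) follows from (P4) together with the equivalence of the previous paragraph (legitimate since (P5) is assumed). For Hansoul $\Rightarrow$ generalized Priestley: (1),(2),(P3) are immediate, (P5) is derived from (H3)$+$(H4) by (b) \emph{first}, and only afterwards — with (P5) now available — does the directedness-versus-neighborhood-basis equivalence apply to convert (H4) into (P4). The main obstacle is precisely this bookkeeping: (P5) is what lets one decompose clopen upsets (Proposition \ref{CU-finun}) and hence run the directedness translation, yet in the Hansoul direction (P5) is not an axiom but must be manufactured; the argument closes because step (b) produces (P5) from (H3),(H4) and the bare Priestley structure alone, without invoking either the decomposition fact or (P4).
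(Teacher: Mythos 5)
Your proof is correct and follows essentially the same route as the paper's: both directions hinge on deriving (P5) from (H3) plus (H4) via the Priestley separation axiom, and on the equivalence between directedness of $\mathcal{I}_x$ and Hansoul's neighborhood-basis property, justified by Proposition \ref{CU-finun} together with the observation that its proof does not invoke condition (4) of Definition \ref{g-Priestley}. Your factoring of that equivalence into a standalone lemma, with explicit bookkeeping of when (P5) becomes available, is just a cleaner packaging of the argument the paper runs inline in each direction.
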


\begin{proof}
Since conditions (1) and (2) of Definition \ref{g-Priestley} are the
same as conditions (1) and (2) of Definition \ref{Priestley
structure} and condition (3) of Definition \ref{g-Priestley} is the
same as condition (5) of Definition \ref{Priestley structure}, it is
sufficient to show that if $X$ is a Hansoul space, then $X$
satisfies conditions (4) and (5) of Definition \ref{g-Priestley},
and that if $X$ is a generalized Priestley space, then $X$ satisfies
conditions (3) and (4) of Definition \ref{Priestley structure}.

Let $X$ be a Hansoul space. First we show that $X$ satisfies
condition (5) of Definition \ref{g-Priestley}. If $x, y \in X$ with
$x \leq y$ then it is clear that for each $U \in X^{*}$ we have $x
\in U$ implies $y \in U$ (because $U$ is an upset). Suppose that $x
\not \leq y$. By condition (3) of Definition \ref{Priestley
structure}, there exists $z \in X_{0}$ such that $x \not \leq z$ and
$y \leq z$. Since $x \not \leq z$ and $X$ is a Priestley space,
there is a clopen downset $V$ of $X$ such that $z \in V$ and $x
\notin V$. By condition (4) of Definition \ref{Priestley structure},
we can take $V$ such that $V \cap X_{0}$ is cofinal in $V$. Thus,
there exists $U=V^c$ in $X^*$ such that $x \in U$ and $y \notin U$,
and so $X$ satisfies condition (5) of Definition \ref{g-Priestley}.
Now we show that $X$ satisfies condition (4) of Definition
\ref{g-Priestley}. Suppose that $x \in X_{0}$. We show that
$\mathcal{I}_{x}$ is directed. Let $U, V \in \mathcal{I}_{x}$. Then
$x \in (U \cup V)^{c}$, and so $(U \cup V)^{c}$ is a clopen downset
neighborhood of $x$. By condition (4) of Definition \ref{Priestley
structure}, there is a clopen downset $W$ such that $W \cap X_{0}$
is cofinal in $W$ and $x \in W \subseteq (U \cup V)^{c}$. Therefore,
$x \not \in W^{c} \in X^{*}$, so $W^{c} \in \mathcal{I}_{x}$, and
$U\cup V\subseteq W^c$. Thus, $\mathcal{I}_{x}$ is directed. Now
suppose that $\mathcal{I}_{x}$ is directed. In order to show that $x
\in X_{0}$, let $V$ be a clopen downset neighborhood of $x$. Then
$V^{c}$ is a clopen upset and $x \not \in V^{c}$. Since in the proof
of Proposition \ref{CU-finun} condition (4) of Definition
\ref{g-Priestley} is not used, we can use Proposition \ref{CU-finun}
here, by which there exist $U_{1}, \ldots, U_{n} \in X^{*}$ such
that $V^{c} = U_{1}\cup \ldots \cup U_{n}$. Therefore, $U_{1},
\ldots, U_{n} \in \mathcal{I}_{x}$. Since $\mathcal{I}_{x}$ is
directed, there exists $U \in \mathcal{I}_{x}$ such that $U_{1}\cup
\ldots \cup U_{n} \subseteq U$. Thus, $x \in U^{c} \subseteq V$ and
$U^{c}$ is a clopen downset of $X$ with $U^{c} \cap X_{0}$ cofinal
in $U^{c}$. This, by condition (4) of Definition \ref{Priestley
structure}, implies that $x\in X_0$. Consequently, $x\in X_0$ iff
$\mathcal{I}_x$ is directed, and so $X$ satisfies condition (4) of
Definition \ref{g-Priestley}.

Now let $X$ be a generalized Priestley space. First we show that $X$
satisfies condition (3) of Definition \ref{Priestley structure}.
Suppose $x, y \in X$ are such that $x \not \leq y$. By condition (5)
of definition \ref{g-Priestley}, there is $U \in X^{*}$ such that $x
\in U$ and $y \not \in U$. Since $U^c={\downarrow}(X_{0} - U)$,
there is $z \in X_{0}$ such that $y \leq z$ and $z \not \in U$. As
$U$ is an upset, it follows that $x \not \leq z$. Thus, there is
$z\in X_0$ such that $x\not\leq z$ and $y\leq z$, and so $X$
satisfies condition (3) of Definition \ref{Priestley structure}.
Finally, we show that $X$ satisfies condition (4) of Definition
\ref{Priestley structure}. First suppose that $x \in X_{0}$. By
condition (4) of Definition \ref{g-Priestley}, $\mathcal{I}_{x}$ is
directed. Let $V$ be a clopen downset neighborhood of $x$. Then $x
\not \in V^{c}$ and $V^{c}$ is a clopen upset of $X$. By Proposition
\ref{CU-finun}, $V^c$ is a finite union of elements of
$\mathcal{I}_{x}$. Since $\mathcal{I}_{x}$ is directed, there is $U
\in \mathcal{I}_{x}$ such that $V^{c} \subseteq U$. Therefore, $x
\in U^{c} \subseteq V$ and $U^{c} \cap X_{0}$ is cofinal in $U^{c}$.
Thus, the family of clopen downsets $U$ that contain $x$ and have
the property that $U \cap X_{0}$ is cofinal in $U$ form a basis of
clopen downset neighborhoods of $x$. Conversely, suppose that for
each clopen downset neighborhood $V$ of $x$ there is a clopen
downset $W$ of $X$ such that $W \cap X_{0}$ is cofinal in $W$ and $x
\in W \subseteq V$. We show that $\mathcal{I}_{x}$ is directed.
Suppose that $U,W \in \mathcal{I}_{x}$. Then $x \in (U \cup W)^{c}$
and $(U \cup W)^{c}$ is a clopen downset of $X$. Therefore, there is
a clopen downset $V$ of $X$ such that $V \cap X_{0}$ is cofinal in
$V$ and $x \in V \subseteq (U \cup W)^{c}$. Then $x \not \in V^{c}
\in X^{*}$. Thus, $V^{c} \in \mathcal{I}_{x}$ and  $U \cup W
\subseteq V^{c}$, so $\mathcal{I}_x$ is directed. This, by condition
(4) of Definition \ref{g-Priestley}, implies that $x \in X_{0}$. It
follows that $X$ satisfies condition (4) of Definition
\ref{Priestley structure}.
\end{proof}

Consequently, Hansoul spaces are exactly our generalized Priestley
spaces. In \cite{Han03} Hansoul provided duality for the category whose objects are bounded join semi-lattices and whose morphisms correspond to our sup-homomorphisms. However, he was unable to extend his duality to bounded join semi-lattice homomorphisms. Thus, our work can be viewed as a completion of Hansoul's work.

\subsection{Comparison with the work of K\"ohler}

The first duality between finite implicative meet semi-lattices and
finite partially ordered sets was developed by K\"ohler
\cite{Koh81}.\footnote{K\"ohler called implicative meet
semi-lattices \emph{Brouwerian semi-lattices}.} Let
$\mathsf{IM_{fin}}=\mathsf{BIM_{fin}}$ denote the category of finite
implicative meet semi-lattices (which are always bounded) and
implicative meet semi-lattice homomorphisms. K\"ohler established
that $\mathsf{IM_{fin}}$ is dually equivalent to the category of
finite partially ordered sets and partial functions $f:X\to Y$
satisfying the following two conditions:
\begin{enumerate}
\item If $x, y \in \mathrm{dom}(f)$ and $x < y$, then $f(x) < f(y)$.
\item If $x \in \mathrm{dom}(f)$, $y \in Y$, and $f(x) < y$, then
there is $z \in \mathrm{dom}(f)$ such that $x < z$ and $f(z) =
y$.\footnote{K\"oler developed his duality by working with downsets
of posets. Since we prefer to work with upsets instead of downsets,
we rephrased K\"ohler's conditions using the dual of his order.}
\end{enumerate}
We call such functions \textit{partial K\"ohler functions}, and
denote the category of finite posets and partial K\"ohler functions
by $\mathsf{POS_{fin}^K}$.

Now we show how the dual equivalence of $\mathsf{HA}^{\wedge,\to}$
and $\mathsf{ES}^{\mathsf{pf}}$ restricted to the finite case
implies the K\"ohler duality. Let $\mathsf{ES_{fin}^{pf}}$ denote
the category of finite Esakia spaces and partial Esakia functions.
Since each finite implicative meet-semilattice is in fact a Heyting
algebra, it follows from Corollary \ref{cor-parfun} that
$\mathsf{IM_{fin}}$ is dually equivalent to
$\mathsf{ES_{fin}^{pf}}$. But the objects of
$\mathsf{ES_{fin}^{pf}}$ are simply finite partially ordered sets.
For finite partially ordered sets $X$ and $Y$, a partial function
$f:X\to Y$ is a partial Esakia function iff $f$ is a partial
K\"ohler function. Therefore, the categories
$\mathsf{ES_{fin}^{pf}}$ and $\mathsf{POS_{fin}^K}$ are isomorphic.
Thus, K\"ohler's theorem that $\mathsf{IM_{fin}}$ is dually
equivalent to $\mathsf{POS_{fin}^K}$ is an easy consequence of the
fact that $\mathsf{HA}^{\wedge,\to}$ is dually equivalent to
$\mathsf{ES}^{\mathsf{pf}}$. K\"ohler was unable to extend his
duality to the infinite case. Thus, our work can be viewed as a
completion of K\"ohler's work.

\subsection{Comparison with the work of Celani and Vrancken-Mawet}

In \cite{Cel03a} Celani showed that a restricted version of his
duality of \cite{Cel03b} between $\mathsf{DM}$ and $\mathsf{DS}$
yields duality between the subcategory $\mathsf{IM}$ of
$\mathsf{DM}$ of implicative meet semi-lattices and implicative meet
semi-lattice homomorphisms and the subcategory $\mathsf{IS}$ of
$\mathsf{DS}$ of IS-spaces and functional meet-relations. We recall
from \cite{Cel03a} that an \emph{IS-space} is a DS-space $X$ such
that for each $U,V\in D_X$ we have $[{\downarrow}(U - V)]^{c}\in
D_X$, and that a \emph{functional meet-relation} is a meet-relation
$R$ between two IS-spaces $X$ and $Y$ such that for each  $x \in X$
and $y \in Y$, from $xRy$ it follows that there exists $z \in X$
with $x \leq z$ and $R[z] = {\uparrow}y$.\footnote{Warning: Do not
confuse Celani's notion of functional meet-relation with our notion
of functional generalized Priestley morphism!}

We consider the subcategory $\mathsf{BIM}$ of $\mathsf{IM}$ of
bounded implicative meet semi-lattices, and the subcategory
$\mathsf{CIS}$ of $\mathsf{IS}$ of compact IS-spaces. Then
$\mathsf{BIM}$ is a subcategory of $\mathsf{BDM}$, $\mathsf{CIS}$ is
a subcategory of $\mathsf{CDS}$, and $\mathsf{BIM}$ is dually
equivalent to $\mathsf{CIS}$. From this and our duality for
$\mathsf{BIM}$ it follows that $\mathsf{CIS}$ is equivalent to
$\mathsf{GES}$. We give an explicit construction of a compact
IS-space from a generalized Esakia space and of a functional
meet-relation from a generalized Esakia morphism.

\begin{proposition}
Let $X = \la X, \tau, \leq, X_0\ra$ be a generalized Esakia space
and let $X_0 = \la X_0, \tau_0, \leq_0 \ra$ be the space constructed
in Proposition \ref{prop-celani}. Then $X_0$ is a compact IS-space.
\end{proposition}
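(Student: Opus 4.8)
The plan is to build on Proposition \ref{prop-celani}. Since every generalized Esakia space is in particular a generalized Priestley space, that proposition already guarantees that $X_0=\la X_0,\tau_0,\leq_0\ra$ is a compact DS-space. Hence the only thing left to verify is the defining IS-space condition: for all $\tilde U,\tilde V\in D_{X_0}$ we must have $[{\downarrow}(\tilde U-\tilde V)]^c\in D_{X_0}$, where ${\downarrow}$ and the complement are taken in $X_0$ with respect to $\leq_0$.

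First I would record the shape of $D_{X_0}$. In the proof of Proposition \ref{prop-celani} the compact open subsets of $X_0$ are shown to be exactly the sets $X_0-U$ with $U\in X^*$; consequently $D_{X_0}=\{U\cap X_0: U\in X^*\}$. So I may write the two given members of $D_{X_0}$ as $\tilde U=U\cap X_0$ and $\tilde V=V\cap X_0$ for some $U,V\in X^*$. A direct set computation then gives $\tilde U-\tilde V=(U-V)\cap X_0$.

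The key step is the downset identity ${\downarrow}_{X_0}\big((U-V)\cap X_0\big)={\downarrow}_X(U-V)\cap X_0$. The inclusion $\subseteq$ is immediate. For $\supseteq$, take $x\in X_0$ with $x\leq y$ for some $y\in U-V$. Since $U,V\in X^*$ are clopen upsets, $U-V=U\cap V^c$ is clopen, hence closed and compact, so there is $z\in\mathrm{max}(U-V)$ with $y\leq z$. Because $U-V$ is Esakia clopen, Lemma \ref{lemma-Es-clo} yields $\mathrm{max}(U-V)\subseteq X_0$, so $z\in(U-V)\cap X_0$ and $x\leq z$, giving $x\in{\downarrow}_{X_0}((U-V)\cap X_0)$. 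This is the heart of the argument, and it is precisely where the generalized Esakia hypothesis (through Lemma \ref{lemma-Es-clo}) is used; without it a maximal element above $y$ need not lie in $X_0$, and the identity would fail. I expect this downset equality to be the main obstacle.

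Finally I would cash this out. By Proposition \ref{lemma-ims-ges} the operation $U\to V=[{\downarrow}_X(U-V)]^c$ lies in $X^*$, so ${\downarrow}_X(U-V)=(U\to V)^c$ and therefore ${\downarrow}_{X_0}(\tilde U-\tilde V)={\downarrow}_X(U-V)\cap X_0=X_0-(U\to V)$. As $U\to V\in X^*$, the set $X_0-(U\to V)$ is a compact open subset of $X_0$, whence $[{\downarrow}_{X_0}(\tilde U-\tilde V)]^c\in D_{X_0}$. This verifies the IS-space condition, and since $X_0$ is already a compact DS-space by Proposition \ref{prop-celani}, it is a compact IS-space.
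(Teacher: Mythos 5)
Your proof is correct and follows essentially the same route as the paper's: reduce to Proposition \ref{prop-celani} for the compact DS-space part, write members of $D_{X_0}$ as traces $U\cap X_0$ with $U,V\in X^*$, prove ${\downarrow}_{X_0}(\tilde U-\tilde V)={\downarrow}_X(U-V)\cap X_0$ by pushing up to a maximal point lying in $X_0$ (the paper uses $\mathrm{max}(X-V)\subseteq X_0$, you use $\mathrm{max}(U-V)\subseteq X_0$; both reduce to admissibility of $V$), and conclude that the relative complement is in $D_{X_0}$. One small correction to your commentary: the downset identity itself does \emph{not} use the generalized Esakia hypothesis, since Lemma \ref{lemma-Es-clo} holds in any generalized Priestley space; the Esakia property enters only in your final step, where Proposition \ref{lemma-ims-ges} is needed to guarantee that $[{\downarrow}_X(U-V)]^c = U\to V$ belongs to $X^*$.
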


\begin{proof}
It follows from Proposition \ref{prop-celani} that $X_0$ is a
compact DS-space. It is left to be shown that if $U,V\in D_{X_0}$,
then $X_0-{\downarrow}_0(U - V)\in D_{X_0}$. Since $U,V\in D_X$,
there exist $U',V'\in X^*$ such that $U=U'\cap X_0$ and $V=V'\cap
X_0$. We show that $X_0-{\downarrow}_0(U - V) = X_0-{\downarrow}(U'
- V')$. We have $x\in X_0-{\downarrow}_0(U - V)$ iff $(\forall y\in
X_0)(x\leq_0 y$ and $y\in U\Rightarrow y\in V)$, and $x\in
X_0-{\downarrow}(U' - V')$ iff $(\forall y\in X)(x\leq y$ and $y\in
U'\Rightarrow y\in V')$. Therefore, it is clear that $x\in
X_0-{\downarrow}(U' - V')$ implies $x\in X_0-{\downarrow}_0(U - V)$.
Conversely, suppose that $x\in X_0-{\downarrow}_0(U - V)$. If there
exists $y\in X$ such that $x\leq y$, $y\in U'$ and $y\notin V'$,
then there exists $z\in \mathrm{max}(X-V')$ such that $y\leq z$. But
$\mathrm{max}(X-V') \subseteq X_0$. Thus, there exists $z\in X_0$
such that $x\leq_0 z$, $z\in U$, and $z\notin V$, a contradiction.
Consequently, $x\in X_0-{\downarrow}(U' - V')$, and so
$X_0-{\downarrow}_0(U - V) = X_0-{\downarrow}(U' - V')$. Since $X$
is a generalized Esakia space, $X-{\downarrow}(U' - V')\in X^*$.
Moreover, $X_0-{\downarrow}_0(U - V) = X_0-{\downarrow}(U' -
V')=(X-{\downarrow}(U' - V'))\cap X_0$. So $X_0-{\downarrow}_0(U -
V)\in D_{X_0}$, and so $X_0$ is a compact IS-space.
\end{proof}

\begin{proposition}
If $X$ and $Y$ are generalized Esakia spaces and $R \subseteq X
\times Y$ is a generalized Esakia morphism, then $R_0 = R \cap (X_0
\times Y_{0})$ is a functional meet-relation between the compact
IS-spaces $X_0$ and $Y_0$.
\end{proposition}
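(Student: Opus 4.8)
The plan is to reduce the statement to its single genuinely new ingredient by harvesting everything else from the two immediately preceding results. First I would note that every generalized Esakia space is a generalized Priestley space and every generalized Esakia morphism is a generalized Priestley morphism. Consequently, Proposition \ref{prop-celani1} applies verbatim and already gives that $R_0 = R \cap (X_0 \times Y_0)$ is a meet-relation between the compact DS-spaces $X_0$ and $Y_0$, while the preceding proposition upgrades $X_0$ and $Y_0$ to compact IS-spaces. Thus both the ``meet-relation'' clause and the ``IS-space'' clause are inherited for free, and the only remaining task is to verify that $R_0$ is \emph{functional}.

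For functionality I must show that for each $x \in X_0$ and $y \in Y_0$, if $x R_0 y$, then there exists $z \in X_0$ with $x \leq_0 z$ and $R_0[z] = {\uparrow}_0 y$, where ${\uparrow}_0 y$ denotes the upset of $y$ taken inside $Y_0$. The key step is to feed $x R y$ (which holds since $R_0 \subseteq R$) together with $y \in Y_0$ into the defining condition of a generalized Esakia morphism, Definition \ref{def-gem}: this produces $z \in X_0$ such that $x \leq z$ and $R[z] = {\uparrow}y$, with the upset now taken in the full space $Y$. Since $x, z \in X_0$ and $\leq_0$ is the restriction of $\leq$, the relation $x \leq z$ is exactly $x \leq_0 z$, so the ordering part of the witness is in place.

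It then remains to transfer the equality $R[z] = {\uparrow}y$ from $Y$ down to $Y_0$, which I would do by the direct computation $R_0[z] = R[z] \cap Y_0 = {\uparrow}y \cap Y_0 = {\uparrow}_0 y$, the last equality holding because $y \in Y_0$. This exhibits the required $z$ and completes the verification that $R_0$ is a functional meet-relation. I do not expect a serious obstacle here: the substantive topological content has been discharged in Proposition \ref{prop-celani1} and its IS-space companion, and functionality is a near-immediate translation of the generalized-Esakia-morphism property. The only care needed is the routine bookkeeping that the witness $z$ lands in $X_0$ and that the principal upset of $y$ restricts correctly from $Y$ to $Y_0$.
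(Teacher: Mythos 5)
Your proposal is correct and follows essentially the same route as the paper's own proof: cite Proposition \ref{prop-celani1} for the meet-relation part, then apply the defining condition of a generalized Esakia morphism to $x R y$ with $y \in Y_0$ to produce the witness $z \in X_0$, and restrict $R[z] = {\uparrow}y$ to $Y_0$. The only difference is that you spell out the computation $R_0[z] = R[z] \cap Y_0 = {\uparrow}y \cap Y_0 = {\uparrow}_0 y$, which the paper leaves implicit.
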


\begin{proof}
It follows from Proposition \ref{prop-celani1} that $R_0$ is a
meet-relation. We show that $R_0$ is functional. Let $x\in X_0$,
$y\in Y_0$, and $xR_0y$. Then $xRy$ and since $y\in Y_0$ and $R$ is
a generalized Esakia morphism, there exists $z\in X_0$ such that
$x\leq z$ and $R[z]={\uparrow}y$. Thus, $x\leq_0 z$ and $R_0[z] =
{\uparrow}_0 y$, and so $R_0$ is a functional meet-relation.
\end{proof}

Celani \cite[Theorem 4.11]{Cel03a} also claimed erroneously that
functional meet-relations between IS-spaces $X$ and $Y$ are in a 1-1
correspondence with partial maps $f:X\to Y$, he calls
\emph{IS-morphisms}, that satisfy the following properties: (i) If
$x, y \in \mathrm{dom}(f)$ and $x \leq y$, then $f(x) \leq f(y)$,
(ii) if $x \in \mathrm{dom}(f)$, $y\in Y$, and $f(x) \leq y$, then
there exists $z \in \mathrm{dom}(f)$ such that $x \leq z$ and
$f(z)=y$, and (iii) if $U\in D_Y$, then
$[{\downarrow}f^{-1}(U^c)]^{c}\in D_X$. For a functional
meet-relation $R$ between two IS-spaces $X$ and $Y$, he constructed
the IS-morphism $f_R$ as follows. He set $\mathrm{dom}(f_R)=\{z\in
X: (\exists x\in X)(\exists y\in Y)(x\leq z$ and
$R[z]={\uparrow}y\}$, and for $z\in\mathrm{dom}(f_R)$ he set
$f_R(z)=y$. For an IS-morphism $f$ between two IS-spaces $X$ and
$Y$, he constructed the functional meet-relation $R_f\subseteq
X\times Y$ by $xR_fy$ iff $(\exists z\in\mathrm{dom}(f))(x\leq z$
and $f(z)=y)$. However, his claim that the maps $R\mapsto f_R$ and
$f\mapsto R_f$ establish a 1-1 correspondence between functional
meet-relations and IS-morphisms between IS-spaces is false as the
following simple example shows.

\begin{example}\label{Celani}

Consider the IS-spaces $X$ and $Y$ and the IS-morphism $f:X\to Y$
shown in Fig.8. The functional meet-relation $R_f\subseteq X\times
Y$ corresponding to $f$, together with the IS-morphism $f_{R_f}:X\to
Y$ corresponding to $R_f$ are also shown in Fig.8. 

\

\begin{center}
\begin{tikzpicture}[scale=.8,inner sep=.5mm]
\node[bull] (x) at (0,0) {};%
\node[bull] (b) at (0,1) {};%
\node[bull] (y) at (2,1) {};%
\node at (1,1.5) {$f$};%
\node at (0,-1) {$X$};%
\node at (2,-1) {$Y$};%
\draw (x) -- (b);%
\draw[color=brown,->] (b) -- (y);
\end{tikzpicture}\qquad\qquad\qquad
\begin{tikzpicture}[scale=.8,inner sep=.5mm]
\node[bull] (x) at (0,0) {};%
\node[bull] (b) at (0,1) {};%
\node[bull] (y) at (2,1) {};%
\node at (1,1.5) {$R_f$};%
\node at (0,-1) {$X$};%
\node at (2,-1) {$Y$};%
\draw (x) -- (b);%
\draw[color=brown,->] (x) -- (y);%
\draw[color=brown,->] (b) -- (y);
\end{tikzpicture}\qquad\qquad\qquad
\begin{tikzpicture}[scale=.8,inner sep=.5mm]
\node[bull] (x) at (0,0) {};%
\node[bull] (b) at (0,1) {};%
\node[bull] (y) at (2,1) {};%
\node at (1,1.5) {$f_{R_f}$};%
\node at (0,-1) {$X$};%
\node at (2,-1) {$Y$};%
\draw (x) -- (b);%
\draw[color=brown,->] (x) -- (y);%
\draw[color=brown,->] (b) -- (y);
\end{tikzpicture}

\

Fig.8

\end{center}

Clearly $f\neq
f_{R_f}$, thus the maps $R\mapsto f_R$ and $f\mapsto R_f$ do not
establish a 1-1 correspondence between functional meet-relations and
IS-morphisms between IS-spaces.
\end{example}

In order to obtain a characterization of Celani's functional
meet-relations by means of partial functions, we need to
strengthen the notion of an IS-morphism. For two IS-spaces $X$ and
$Y$, we call a map $f:X \to Y$ a \emph{strong IS-morphism}
(\emph{SIS-morphism} for short) if it is an IS-morphism and
satisfies the two additional conditions:
\begin{enumerate}
\item[$(*)$] For each $x \in X$, $x \in \mathrm{dom}(f)$ iff there exists
$y \in Y$ such that $f[{\uparrow}x]={\uparrow}y $.
\item[$(**)$] For each $x \in X$ and each $y \in Y$, if $y \not \in f[{\uparrow}x]$,
then there is $U \in D_{Y}$ such that $f[{\uparrow}x] \subseteq U$
and $y \notin U$.
\end{enumerate}

We recall that $\mathsf{IS}$ is the category of IS-spaces and
functional meet-relations. Let $\mathsf{SIS}$ denote the category
of IS-spaces and SIS-morphisms. We show that $\mathsf{IS}$ is
isomorphic to $\mathsf{SIS}$.

\begin{lemma}\label{lem:11.9}
Let $X$ and $Y$ be two DS-spaces and let $R \subseteq  X \times Y$
be a meet-relation. Then $x\leq y$ and $yRu$ imply $xRu$, and
$xRu$ and $u\leq v$ imply $xRv$.
\end{lemma}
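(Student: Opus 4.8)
I need to prove Lemma 11.9, which says that for DS-spaces $X$ and $Y$ with a meet-relation $R \subseteq X \times Y$, we have (a) $x \leq y$ and $yRu$ imply $xRu$, and (b) $xRu$ and $u \leq v$ imply $xRv$. Recall the definition of a meet-relation: (i) $U \in D_Y$ implies $\Box_R U \in D_X$, and (ii) $R[x] = \bigcap\{U \in D_Y : R[x] \subseteq U\}$ for each $x \in X$. Also recall the crucial fact about DS-spaces from the Remark: Celani's order $\leq$ is the *dual* of the specialization order; concretely, in a DS-space each closed subset is an upset (condition (ii) of the DS-space definition), and condition (iii) gives a separation property using compact open sets. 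The sets in $D_Y$ are the complements of compact opens, hence they are the *open* upsets (since closed sets are upsets, open sets are downsets, so $U \in D_Y$ with $U^c$ compact open means $U$ is a closed upset — wait, I must pin this down carefully).

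Let me think about this. Actually the analogous statement for generalized Priestley morphisms is Lemma \ref{lemma-g-morphisms}, whose proof uses condition (1) of Definition \ref{definition} and the fact that $\Box_R U$ is an upset. The plan here is to mirror that argument in the DS-space setting.

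**The approach.** The plan is to prove each part by the same strategy used in Lemma \ref{lemma-g-morphisms}: assume the conclusion fails, produce a separating set in $D_Y$, and derive a contradiction using the order-theoretic properties of DS-spaces. For part (a), I would suppose $x \leq y$ and $yRu$ but $x \nR u$. By the meet-relation condition (ii), $R[x] = \bigcap\{U \in D_Y : R[x] \subseteq U\}$, so from $u \notin R[x]$ there exists $U \in D_Y$ with $R[x] \subseteq U$ and $u \notin U$. Then $x \in \Box_R U$, and by meet-relation condition (i), $\Box_R U \in D_X$. The key geometric point is that each element of $D_X$ is an upset of $X$: since $U^c$ is compact open and in a DS-space every closed set is an upset, every open set is a downset, hence the compact open $(\Box_R U)^c$ is a downset, so $\Box_R U$ is an upset. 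Since $x \in \Box_R U$ and $x \leq y$, we get $y \in \Box_R U$, i.e. $R[y] \subseteq U$. But $yRu$ forces $u \in U$, contradicting $u \notin U$. Thus $xRu$.

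**Part (b) and the main obstacle.** For part (b), suppose $xRu$ and $u \leq v$ but $x \nR v$. Again by condition (ii) there is $U \in D_Y$ with $R[x] \subseteq U$ and $v \notin U$. Since $U$ is an upset (same reasoning: $U^c$ compact open hence a downset), and $u \leq v$ with $v \notin U$, I get $u \notin U$. But $xRu$ with $R[x] \subseteq U$ gives $u \in U$, a contradiction; hence $xRv$. The main conceptual obstacle — and the one point deserving care — is establishing rigorously that elements of $D_X$ (and $D_Y$) are upsets. This rests on the Remark identifying Celani's order as the dual of the specialization order together with condition (ii) of the DS-space definition (every closed subset is an upset). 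I would state this as the first observation: since $U^c \in \mathcal{E}(X)$ is compact \emph{open}, and open sets are complements of closed sets which are upsets, $U^c$ is a downset and therefore $U = (U^c)^c$ is an upset. Once this observation is in place, both parts follow by the short arguments above, exactly parallel to Lemma \ref{lemma-g-morphisms}. The routine nature of the separation step means the only substantive ingredient is the upset property, so I would foreground that and keep the two implications brief.
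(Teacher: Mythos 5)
Your proof is correct and follows essentially the same route as the paper's: in both parts you assume the conclusion fails, use condition (ii) of the meet-relation definition to produce $U \in D_Y$ with $R[x] \subseteq U$ and the target point outside $U$, and then derive a contradiction from the fact that $\Box_R U$ (resp.\ $U$) is an upset. Your explicit justification that elements of $D_X$ and $D_Y$ are upsets (complements of compact opens are closed, and closed sets are upsets in a DS-space) is a detail the paper leaves implicit, but it is the same underlying argument.
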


\begin{proof}
Let $x\leq yRu$. If $x\nR u$, then by condition (ii) of the
definition of a meet-relation, there exists $U\in D_Y$ such that
$R[x]\subseteq U$ and $u\notin U$. Therefore, $x\in\Box_R U$. By
condition (i) of the definition of a meet-relation, $\Box_R U$ is
an upset. Thus, $x\leq y$ implies $y\in\Box_R U$. But then
$R[y]\subseteq U$, and since $yRu$, it follows that $u\in U$. The
obtained contradiction proves that $xRu$.

Let $xRu\leq v$. If $x\nR v$, then by condition (ii) of the
definition of a meet-relation, there exists $U\in D_Y$ such that
$R[x]\subseteq U$ and $v\notin U$. From $xRu$ it follows that
$u\in U$, and from $u\leq v$, it follows that $u\notin U$. The
obtained contradiction proves that $xRv$.
\end{proof}

Let $X$ and $Y$ be two IS-spaces and let $R \subseteq  X \times Y$
be a functional meet-relation. Similar to Celani \cite[Theorem
4.11]{Cel03a}, we define a partial function $f_R:X\to Y$ as
follows. We set $\mathrm{dom}(f_R)=\{x\in X:R[x]={\uparrow}y\}$
and for $x\in \mathrm{dom}(f_R)$ we set $f_R(x)=y$.

\begin{proposition}\label{prop:11.10}
Let $X$ and $Y$ be two IS-spaces and let $R \subseteq  X \times Y$
be a functional meet-relation. Then $f_{R}$ is a SIS-morphism.
\end{proposition}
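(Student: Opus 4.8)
The plan is to show that $f_R$ satisfies all the defining properties of an IS-morphism, namely the order-preservation condition (i), the lifting condition (ii), and the clopenness condition (iii), and then verify the two extra conditions $(*)$ and $(**)$ that distinguish a SIS-morphism. Since $R$ is a functional meet-relation, for each $x \in \mathrm{dom}(f_R)$ we have $R[x] = {\uparrow}f_R(x)$ by the very definition of $\mathrm{dom}(f_R)$, so I will repeatedly translate statements about $R$ into statements about the partial function $f_R$ and back. The key bridge is Lemma \ref{lem:11.9}, which tells us that $R$ interacts correctly with the order $\leq$ on $X$ and $Y$; this lets me move freely between $xRy$ and order inequalities.

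First I would verify condition (i): if $x, y \in \mathrm{dom}(f_R)$ and $x \leq y$, then by Lemma \ref{lem:11.9} (its first half, applied with $y R f_R(y)$) we get $x R f_R(y)$, so $f_R(y) \in R[x] = {\uparrow}f_R(x)$, giving $f_R(x) \leq f_R(y)$. Next, for condition (ii), suppose $x \in \mathrm{dom}(f_R)$, $y \in Y$, and $f_R(x) \leq y$. Then $y \in {\uparrow}f_R(x) = R[x]$, so $xRy$, and since $R$ is a functional meet-relation there exists $z \in X$ with $x \leq z$ and $R[z] = {\uparrow}y$; this $z$ lies in $\mathrm{dom}(f_R)$ with $f_R(z) = y$, as required. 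For condition (iii), I would show $[{\downarrow}f_R^{-1}(U^c)]^c = \Box_R U$ for each $U \in D_Y$: unwinding the definitions, $x \in [{\downarrow}f_R^{-1}(U^c)]^c$ means every $z \in \mathrm{dom}(f_R)$ above $x$ has $f_R(z) \in U$, while $x \in \Box_R U$ means $R[x] \subseteq U$; the two are matched using condition (ii) in one direction and the fact that $f_R(z) \in R[z] \subseteq R[x]$ together with Lemma \ref{lem:11.9} in the other. Since $R$ is a meet-relation, $\Box_R U \in D_X$, whence $[{\downarrow}f_R^{-1}(U^c)]^c \in D_X$.

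For the additional conditions, $(*)$ is essentially a restatement that $R[x]$ is principal exactly when $x \in \mathrm{dom}(f_R)$: I would show $f_R[{\uparrow}x] = R[x]$ for every $x \in X$ (using condition (ii) and Lemma \ref{lem:11.9} to show each element of $R[x]$ is $f_R(z)$ for some $z \geq x$, and conversely), so that $f_R[{\uparrow}x] = {\uparrow}y$ for some $y$ precisely when $R[x]$ is principal, which by functionality of $R$ happens precisely when $x \in \mathrm{dom}(f_R)$. Condition $(**)$ then follows directly from condition (ii) of the definition of a meet-relation applied to $R$: if $y \notin f_R[{\uparrow}x] = R[x]$, then $x \nR y$, so there is $U \in D_Y$ with $R[x] \subseteq U$ and $y \notin U$, i.e. $f_R[{\uparrow}x] \subseteq U$ and $y \notin U$.

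The main obstacle I anticipate is the careful identification $f_R[{\uparrow}x] = R[x]$, since this is the linchpin that makes both $(*)$ and $(**)$ work and is exactly the point where Celani's original argument was too hasty (as Example \ref{Celani} shows, without the strengthening the correspondence breaks). Establishing the inclusion $R[x] \subseteq f_R[{\uparrow}x]$ requires producing, for each $u \in R[x]$, a witness $z \in \mathrm{dom}(f_R)$ with $x \leq z$ and $f_R(z) = u$; this uses functionality of $R$ together with the fact that ${\uparrow}u$ has a least element, and I expect the delicate step to be confirming that the lifted point $z$ indeed lands in $\mathrm{dom}(f_R)$ rather than merely satisfying $x \leq z$ and $u \in R[z]$. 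Once this identity is secured, the remaining verifications are routine translations via Lemma \ref{lem:11.9}.
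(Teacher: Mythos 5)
Your proposal is correct and takes essentially the same approach as the paper's proof: conditions (i)--(iii) are verified via Lemma \ref{lem:11.9} and the definition of $\mathrm{dom}(f_R)$, and $(*)$ and $(**)$ are derived from the identity $f_R[{\uparrow}x]=R[x]$, whose nontrivial inclusion $R[x]\subseteq f_R[{\uparrow}x]$ rests on functionality of $R$ --- exactly the linchpin you single out (and which the paper uses inline without separate proof). One small slip: in the direction $[{\downarrow}f_R^{-1}(U^c)]^c\subseteq\Box_R U$ of condition (iii), the witness $z\in\mathrm{dom}(f_R)$ comes from functionality of $R$, not from the IS-morphism condition (ii) (which presupposes $x\in\mathrm{dom}(f_R)$); this is harmless, since your proof of the identity supplies precisely that witness.
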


\begin{proof}
First we show that $f_{R}$ is an IS-morphism. Let $x, y \in
\mathrm{dom}(f_{R})$ and $x \leq y$. Then $R[x] =
{\uparrow}f_{R}(x)$ and $R[y] = {\uparrow}f_{R}(y)$. By Lemma
\ref{lem:11.9}, $x\leq y$ implies $R[y] \subseteq R[x]$.
Therefore, $f_{R}(x) \leq f_{R}(y)$, and so condition (i) of the
definition of an IS-morphism is satisfied. To see that condition
(ii) is also satisfied, let $x \in \mathrm{dom}({f_{R}})$, $y \in
Y$, and $f_{R}(x) \leq y$. Then $xRy$. Since $R$ is a functional
meet-relation, there is $z \in X$ such that $x \leq z$ and $R[z] =
{\uparrow} y$. Therefore, $z \in \mathrm{dom}(f_{R})$, $x \leq z$,
and $f_{R}(z) = y$. Finally, to see that condition (iii) of the
definition of an IS-morphism is satisfied as well, let $U \in
D_{Y}$. We show that $\Box_{R}U =
[{\downarrow}f^{-1}(U^{c})]^{c}$. We have $x\in\Box_R U$ iff $R[x]
\subseteq U$ and $x\in[{\downarrow}f_R^{-1}(U^c)]^{c}$ iff
$(\forall z\in\mathrm{dom}(f_R))(x\leq z\Rightarrow f_R(z)\in U)$.
Let $x\in\Box_R U$, $z\in\mathrm{dom}(f_R)$, and $x\leq z$. Then
$R[x] \subseteq U$, $R[z]={\uparrow}f_R(z)$, and $R[z]\subseteq
R[x]$. Therefore, ${\uparrow}f_R(z)\subseteq U$, so $f_R(z)\in U$,
and so $x\in[{\downarrow}f_R^{-1}(U^c)]^{c}$. Now let $x\in
[{\downarrow}f_R^{-1}(U^c)]^{c}$ and $xRy$. Since $R$ is a
functional meet-relation, there exists $z\in X$ such that $x\leq
z$ and $R[z]= {\uparrow}y$. Then $z\in\mathrm{dom}(f_R)$, $x\leq
z$, and $f_R(z)=y$. Therefore, $f_R(z)\in U$, so $y\in U$, and so
$x\in \Box_R U$. Thus, $\Box_R U=[{\downarrow}f_R^{-1}(U^c)]^{c}$.
Now since $R$ is a meet-relation, $\Box_{R}U \in D_{X}$, so
$[{\downarrow}f_R^{-1}(U^c)]^{c}\in D_X$, and so $f_R$ is an
IS-morphism.

Next we show that $f_{R}$ satisfies condition $(*)$. Suppose that
$x \in \mathrm{dom}(f_{R})$. Then $f_{R}[{\uparrow}x] = R[x] =
{\uparrow}f_{R}(x)$. Therefore, there exists $y\in Y$ ($y=f_R(x)$)
such that $f_{R}[{\uparrow}x]={\uparrow}y$. Now suppose that
$f_{R}[{\uparrow}x] = {\uparrow}y$ for some $y\in Y$. Since
$f_{R}[{\uparrow}x] = R[x]$, we have $R[x] = {\uparrow} y$.
Therefore, $x \in \mathrm{dom}(f_{R})$ by the definition of $f_R$.
Finally, we show that $f_R$ satisfies condition $(**)$. Let $x \in
X$, $y \in Y$, and $y \notin f_R[{\uparrow}x]$. Since
$f_{R}[{\uparrow}x] = R[x]$ and $R$ is a meet-relation, there is
$U \in D_{Y}$ such that $R[x] \subseteq U$ and $y \notin U$. This
implies that $f_{R}[{\uparrow}x] \subseteq U$ and $y \notin U$.
Consequently, $f_R$ is a SIS-morphism.
\end{proof}

Let $X$ and $Y$ be IS-spaces and $f:X\to Y$ be a SIS-morphism.
Following Celani \cite[Theorem 4.11]{Cel03a}, we define
$R_f\subseteq X\times Y$ by
$$xR_fy \mbox{ iff there exists } z\in\mathrm{dom}(f) \mbox{ such
that } x\leq z \mbox{ and } f(z)=y.$$

\begin{proposition}\label{prop:11.11}
Let $X$ and $Y$ be two IS-spaces and let $f: X \to Y$ be a
SIS-morphism. Then $R_{f}$ is a functional meet-relation.
\end{proposition}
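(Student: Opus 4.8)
The plan is to verify that $R_f$ satisfies the two defining conditions of a meet-relation and then the functionality condition. The organizing observation, which I would record at the outset, is the identity $R_f[x] = f[{\uparrow}x]$ for every $x \in X$, where $f[{\uparrow}x] = \{f(z) : z \in {\uparrow}x \cap \mathrm{dom}(f)\}$. This follows immediately from the definition $xR_fy$ iff there exists $z\in\mathrm{dom}(f)$ with $x\leq z$ and $f(z)=y$, and it is the workhorse of the entire proof.

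For condition (i) of a meet-relation, I would fix $U \in D_Y$ and compute $\Box_{R_f}U$ directly, exactly as in the proof of Proposition \ref{prop:11.10}. Unwinding the definitions, $x \in \Box_{R_f}U$ iff $f[{\uparrow}x] \subseteq U$, iff there is no $z \in \mathrm{dom}(f)$ with $x \leq z$ and $f(z) \in U^{c}$, which says precisely $x \in [{\downarrow}f^{-1}(U^{c})]^{c}$. Hence $\Box_{R_f}U = [{\downarrow}f^{-1}(U^{c})]^{c}$, and condition (iii) in the definition of an IS-morphism gives $\Box_{R_f}U \in D_X$. For condition (ii), I would show $R_f[x] = \bigcap\{U \in D_Y : R_f[x] \subseteq U\}$: the inclusion $\subseteq$ is trivial, and for the reverse, if $y \notin R_f[x] = f[{\uparrow}x]$, then condition $(**)$ supplies $U \in D_Y$ with $f[{\uparrow}x] \subseteq U$ and $y \notin U$, witnessing that $y$ lies outside the intersection. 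Together these establish that $R_f$ is a meet-relation.

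The remaining and most delicate step is functionality: given $x R_f y$, I must produce $z$ with $x \leq z$ and $R_f[z] = {\uparrow}y$. By definition of $R_f$ there is $w \in \mathrm{dom}(f)$ with $x \leq w$ and $f(w) = y$, and I claim $z = w$ works, i.e.\ $f[{\uparrow}w] = {\uparrow}y$. The inclusion $f[{\uparrow}w] \subseteq {\uparrow}y$ uses the order-preservation condition (i) of an IS-morphism: any $z' \in {\uparrow}w \cap \mathrm{dom}(f)$ satisfies $w \leq z'$, whence $y = f(w) \leq f(z')$. For the reverse inclusion ${\uparrow}y \subseteq f[{\uparrow}w]$, I take $u \geq y = f(w)$ and apply condition (ii) of an IS-morphism to obtain $z' \in \mathrm{dom}(f)$ with $w \leq z'$ and $f(z') = u$, so $u \in f[{\uparrow}w]$. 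Therefore $R_f[w] = f[{\uparrow}w] = {\uparrow}y$ with $x \leq w$, which is exactly the functionality requirement.

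The step I expect to be the main obstacle is this last one, and the care needed is twofold: first, to interpret $f[{\uparrow}w]$ correctly as the image of the \emph{domain} points above $w$ (so that conditions (i) and (ii), which only speak about $\mathrm{dom}(f)$, apply), and second, to see that the two inclusions are governed by the two separate IS-morphism conditions, order-preservation for one direction and the lifting property for the other. Once the identity $R_f[x] = f[{\uparrow}x]$ is in hand, the meet-relation conditions are essentially bookkeeping, and the whole argument reduces to this single clean computation of $f[{\uparrow}w]$.
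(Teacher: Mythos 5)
Your proposal is correct and follows essentially the same route as the paper's proof: the identity $R_f[x]=f[{\uparrow}x]$, the computation $\Box_{R_f}U=[{\downarrow}f^{-1}(U^c)]^c$ combined with condition (iii) for meet-relation condition (i), condition $(**)$ for condition (ii), and the two IS-morphism conditions giving the two inclusions of $R_f[w]={\uparrow}f(w)$ for functionality. No gaps; this matches the paper step for step.
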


\begin{proof}
First we show that $R_{f}$ is a meet-relation. Let $U\in D_Y$. We
show that $\Box_{R_f}U= [{\downarrow}f^{-1}(U^c)]^{c}$. We have
$x\in\Box_{R_f} U$ iff $R_f[x] \subseteq U$ and
$x\in[{\downarrow}f^{-1}(U^c)]^{c}$ iff $(\forall
z\in\mathrm{dom}(f))(x\leq z\Rightarrow f(z)\in U)$. Let
$x\in\Box_{R_f} U$, $z\in\mathrm{dom}(f)$, and $x\leq z$. Then
$xR_f f(z)$, so $f(z)\in U$, and so
$x\in[{\downarrow}f^{-1}(U^c)]^{c}$. Now let $x\in
[{\downarrow}f^{-1}(U^c)]^{c}$ and $xR_fy$. Then there exists
$z\in\mathrm{dom}(f)$ such that $x\leq z$ and $f(z)=y$. Therefore,
$f(z)\in U$, so $y\in U$, and so $x\in \Box_{R_f} U$. Thus,
$\Box_{R_f} U=[{\downarrow}f^{-1}(U^c)]^{c}$. By (iii) of the
definition of an IS-morphism, $[{\downarrow}f^{-1}(U^c)]^{c}\in
D_X$. So $\Box_{R_f} U\in D_X$, and so $R_f$ satisfies condition
(i) of the definition of a meet-relation. By the definition of
$R_{f}$, we have that $R_{f}[x] = f[{\uparrow} x]$ for each $x\in
X$. Therefore, by condition $(**)$, $R_{f}$ satisfies condition
(ii) of the definition of a meet-relation, and so $R_f$ is a
meet-relation.

In order to show that $R_f$ is functional, we observe that if $z
\in \mathrm{dom}(f)$, then $R_f[z]={\uparrow}f(z)$. To see this,
let $zR_fu$. Then there exists $v\in\mathrm{dom}(f)$ such that
$z\leq v$ and $f(v)=u$. By condition (i) of the definition of an
IS-morphism, $f(z)\leq f(v)=u$. Conversely, if $f(z)\leq u$, then
by condition (ii) of the definition of an IS-morphism, there
exists $v\in\mathrm{dom}(f)$ such that $z\leq v$ and $f(v)=u$.
Thus, $zR_fu$, and so $R_f[z]={\uparrow}f(z)$. Now let $xR_fy$.
Then there exists $z\in\mathrm{dom}(f)$ such that $x\leq z$ and
$f(z)=y$. Therefore, $R_f[z]={\uparrow}f(z)$. Thus, $xR_fy$
implies that there exists $z\in X$ such that $x\leq z$ and
$R_f[z]={\uparrow}y$, and so $R_f$ is a functional meet-relation.
\end{proof}

Now we are in a position to prove that $\mathsf{IS}$ and
$\mathsf{SIS}$ are isomorphic.

\begin{theorem}
The categories $\mathsf{IS}$ and $\mathsf{SIS}$ are isomorphic.
\end{theorem}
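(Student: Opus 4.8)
The plan is to show that the assignments of Propositions \ref{prop:11.10} and \ref{prop:11.11} are mutually inverse and functorial. Concretely, I would introduce a functor $F\colon\mathsf{IS}\to\mathsf{SIS}$ that is the identity on objects and sends a functional meet-relation $R\subseteq X\times Y$ to the SIS-morphism $f_R$, together with a functor $G\colon\mathsf{SIS}\to\mathsf{IS}$ that is the identity on objects and sends an SIS-morphism $f\colon X\to Y$ to the functional meet-relation $R_f$. Propositions \ref{prop:11.10} and \ref{prop:11.11} already guarantee that these two assignments are well-defined on morphisms, so the entire content of the theorem lies in verifying that, for each pair of IS-spaces $X,Y$, the maps $R\mapsto f_R$ and $f\mapsto R_f$ are inverse bijections on the hom-sets, and that they respect composition and identities.

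First I would establish the two equalities $R_{f_R}=R$ and $f_{R_f}=f$. For $R_{f_R}=R$: if $xRy$, then since $R$ is a functional meet-relation there is $z$ with $x\leq z$ and $R[z]={\uparrow}y$, whence $z\in\mathrm{dom}(f_R)$ and $f_R(z)=y$, so $xR_{f_R}y$; conversely $xR_{f_R}y$ gives $z\in\mathrm{dom}(f_R)$ with $x\leq z$ and $R[z]={\uparrow}y$, so $zRy$, and Lemma \ref{lem:11.9} yields $xRy$. For $f_{R_f}=f$ I would use the observation already isolated inside the proof of Proposition \ref{prop:11.11}, namely that $R_f[z]={\uparrow}f(z)$ whenever $z\in\mathrm{dom}(f)$; this immediately gives $\mathrm{dom}(f)\subseteq\mathrm{dom}(f_{R_f})$ with the two functions agreeing there. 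For the reverse inclusion, if $x\in\mathrm{dom}(f_{R_f})$ then $R_f[x]={\uparrow}y$ for some $y$, and since $R_f[x]=f[{\uparrow}x]$, condition $(*)$ of the definition of an SIS-morphism forces $x\in\mathrm{dom}(f)$; then ${\uparrow}f(x)=R_f[x]={\uparrow}y$ gives $f(x)=y=f_{R_f}(x)$.

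Finally I would address composition and identities. Taking the composition in $\mathsf{SIS}$ to mirror that of functional meet-relations, exactly as was done earlier for partial Esakia functions, namely $g*f:=f_{R_g*R_f}$, and the identity on $X$ to be the SIS-morphism associated with the identity meet-relation, the mutual-inverse property of the previous paragraph makes functoriality automatic: $G(g*f)=R_{f_{R_g*R_f}}=R_g*R_f=G(g)*G(f)$, and identities are preserved for the same reason, so $F$ and $G$ are inverse functors. I expect the genuine obstacle to be the equality $f_{R_f}=f$, and more precisely the reverse inclusion $\mathrm{dom}(f_{R_f})\subseteq\mathrm{dom}(f)$: this is exactly the point at which Celani's plain IS-morphisms fail, as Example \ref{Celani} shows, and it is rescued only by the extra conditions $(*)$ and $(**)$ built into SIS-morphisms. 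Everything else — well-definedness, the identity $R_{f_R}=R$, and the bookkeeping for composition — is routine once Propositions \ref{prop:11.10} and \ref{prop:11.11} and Lemma \ref{lem:11.9} are in hand.
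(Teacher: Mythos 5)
Your proof is correct and takes essentially the same route as the paper: both are the identity on objects, send $R\mapsto f_R$ and $f\mapsto R_f$ on morphisms, invoke Propositions \ref{prop:11.10} and \ref{prop:11.11} for well-definedness, and reduce everything to the two identities $R_{f_R}=R$ and $f_{R_f}=f$, proved exactly as in the paper (in particular using condition $(*)$ to get $\mathrm{dom}(f_{R_f})\subseteq\mathrm{dom}(f)$, the point where plain IS-morphisms fail). Your explicit treatment of composition and identities, via $g*f=f_{R_g*R_f}$ in $\mathsf{SIS}$, is a minor but welcome completion of a functoriality check that the paper's proof leaves implicit.
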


\begin{proof}
Define two functors $\Phi:\mathsf{IS}\to\mathsf{SIS}$ and
$\Psi:\mathsf{SIS}\to\mathsf{IS}$ as follows. For an IS-space $X$
set $\Phi(X)=X=\Psi(X)$, for a functional meet relation $R$ set
$\Phi(R)=f_R$, and for a SIS-morphism $f$ set $\Psi(f)=f_R$. Then
it follows from Propositions \ref{prop:11.10} and \ref{prop:11.11}
that both $\Phi$ and $\Psi$ are well-defined. It is left to be
shown that $R = R_{f_{R}}$ and $f = f_{R_{f}}$.

First we show that $R = R_{f_{R}}$. For $x\in X$ and $y\in Y$, we
have $xRy$ iff there exists $z\in X$ such that $x\leq z$ and
$R[z]= {\uparrow}y$. On the other hand, $xR_{f_R}y$ iff there
exists $z\in \mathrm{dom}(f_R)$ such that $x\leq z$ and
$f_R(z)=y$. But $R[z]= {\uparrow}f_R(z)$, so $R[z]= {\uparrow}y$
iff $f_R(z)=y$. It follows that $xR_{f_R}y$ iff there exists $z\in
X$ such that $x\leq z$ and $R[z]= {\uparrow}y$. Thus, $xRy$ iff
$xR_{f_R}y$, and so $R=R_{f_R}$. Next we show that $f =
f_{R_{f}}$. Suppose that $x \in \mathrm{dom}(f)$. Then $R_{f}[x] =
{\uparrow}f(x)$. Therefore, $f(x) \in \mathrm{dom}(f_{R_{f}})$ and
$f_{R_{f}}(x) = f(x)$. Now suppose that $x \in
\mathrm{dom}(f_{R_{f}})$. Let $f_{R_f}(x) = y$. Then $R_{f}[x] =
{\uparrow}y$. Since $R_{f}[x] = f[{\uparrow} x]$, we have
$f[{\uparrow} x] = {\uparrow}y$. By condition $(*)$, $x \in
\mathrm{dom}(f)$. Thus, $f(x) = f_{R_{f}}(x)$, so
$\mathrm{dom}(f)=\mathrm{dom}(f_{R_f})$ and whenever
$x\in\mathrm{dom}(f)=\mathrm{dom}(f_{R_f})$, then
$f(x)=f_{R_f}(x)$. Consequently, $f=f_{R_f}$, and so $\mathsf{IS}$
is isomorphic to $\mathsf{SIS}$.
\end{proof}

As a result, we obtain that $\mathsf{IM}$ is also dually
equivalent to $\mathsf{SIS}$. Thus, we can represent dually
implicative meet semi-lattice homomorphisms by either functional
meet-relations or by strong IS-morphisms between IS-spaces.

We conclude our comparison section by looking at the work of
Vrancken-Mawet \cite{VM86}. We recall that Vrancken-Mawet
\cite{VM86} constructed a dual category of $\mathsf{BIM}$, which
is similar to that of Celani \cite{Cel03a}. The objects of the
Vrancken-Mawet category are essentially Celani's compact IS-spaces
with the only difference that Vrancken-Mawet works with the
specialization order, while Celani prefers to work with the dual
of the specialization order. Because of this, Celani works with
the bounded implicative meet semi-lattice of complements of
compact open subsets of a compact IS-space $X$, while
Vrancken-Mawet works with the bounded implicative meet
semi-lattice of compact open subsets of $X$. Both these meet
semi-lattices are isomorphic though because Celani's order is dual
to Vrancken-Mawet's order. In order to simplify our comparison, we
work with Celani's compact IS-spaces instead of Vrancken-Mawet's
spaces and adjust the definition of Vrancken-Mawet morphisms
accordingly.
Note that unlike the case with objects, Vrancken-Mawet morphisms
satisfy an extra condition that Celani's morphisms do not satisfy.
Let $X$ and $Y$ be two compact IS-spaces and let $f:X\to Y$ be an
IS-morphism. We call $f$ a \emph{Vrancken-Mawet morphism} if it
satisfies the following additional condition:
\begin{enumerate}
\item[(VM)] $x\in \mathrm{dom}(f)$ iff $(\exists y\in Y)(\forall U\in
D_Y)(y\notin U \Leftrightarrow (\exists z\in\mathrm{dom}(f))(x\leq
z$ and $f(z)\notin U)$.
\end{enumerate}
Observe that in Example \ref{Celani}, the $f$ we start with does
not satisfy (VM), and so it is not a Vrancken-Mawet morphism. In
fact, given two compact IS-spaces $X$ and $Y$, it is relatively
easy to verify that our conditions $(*)$ and $(**)$ imply (VM),
and that (VM) implies $(*)$. It requires more work to show that
(VM) also implies $(**)$. We skip the details and only mention
that the upshot of these observations is that the category
$\mathsf{CSIS}$ of compact IS-spaces and SIS-morphisms is
isomorphic to the category $\mathsf{VM}$ of compact IS-spaces and
Vrancken-Mawet morphisms. We feel that our conditions $(*)$ and
$(**)$ are more natural and easier to work with than the (VM)
condition.
%
%
%

\section*{Acknowledgements}

Many thanks to Mamuka Jibladze for drawing all the diagrams in the
paper. We are also thankful to Jonathan Farley for drawing our
attention to the Vrancken-Mawet paper \cite{VM86},
and to Sergio Celani for simplifying our original definition of a generalized Priestley morphism, as well as for pointing out that the usual set-theoretic composition of generalized Priestley morphisms may not be a generalized Priestley morphism. 

\bibliographystyle{amsplain}
\bibliography{MyBib}


\bigskip

\noindent Guram Bezhanishvili: Department of Mathematical Sciences, New Mexico State
University, Las Cruces NM 88003, USA. Email:
\email{guram@nmsu.edu}

\bigskip

\noindent Ramon Jansana: Dept. Filosofia,  
Universitat de Barcelona, Montalegre 6, 08001 Barcelona, Spain.
Email: \email{jansana@ub.edu}

\end{document}